
\documentclass[10pt]{amsart}


\usepackage{latexsym,exscale,enumerate,amsfonts,amssymb,xparse, mathtools}
\usepackage[normalem]{ulem}
\usepackage{amsmath,amsthm,amsfonts,amssymb,amscd, stmaryrd,textcomp,bbm,euscript}
\usepackage[bookmarks=false]{hyperref}
\usepackage{bbold}
\usepackage[usenames,dvipsnames]{xcolor}
\usepackage{enumitem}
\usepackage{verbatim}
\usepackage{ytableau}


\addtolength{\hoffset}{-1.6cm}
\addtolength{\textwidth}{3cm}
\hfuzz=6pc

\raggedbottom
\allowdisplaybreaks
\usepackage{array}


\definecolor{ourblue}{RGB}{109, 156, 179}

\usepackage{xcolor}
\definecolor{myorange}{rgb}{1,0.647,0}

\definecolor{mypurple}{cmyk}{.6,.9,0, .11}


\numberwithin{equation}{section}



\theoremstyle{definition}
\newtheorem{thm}{Theorem}[section]
\newtheorem{cor}[thm]{Corollary}

\newtheorem{conj}[thm]{Conjecture}
\newtheorem{lem}[thm]{Lemma}
\newtheorem{rem}[thm]{Remark}
\newtheorem{conv}[thm]{Convention}

\newtheorem{inter}[thm]{Interregnum}
\newtheorem{prop}[thm]{Proposition}
\newtheorem{defn}[thm]{Definition}
\newtheorem{example}[thm]{Example}

\usepackage{tikz}
\usetikzlibrary{cd}
\usetikzlibrary{snakes}
\usetikzlibrary{decorations.markings}
\usetikzlibrary{decorations.pathreplacing}
\usetikzlibrary{arrows,shapes,positioning}
\usetikzlibrary{decorations.pathmorphing}


\tikzset{anchorbase/.style={baseline={([yshift=-0.5ex]current bounding box.center)}},
tinynodes/.style={font=\tiny,text height=0.75ex,text depth=0.15ex},
smallnodes/.style={font=\scriptsize,text height=0.75ex,text depth=0.15ex},
>={Latex[length=1mm, width=1.5mm]},
overcross/.style={line width=4pt,color=white},
1label/.style={very thick, black},
2label/.style={very thick, ourblue},
m1label/.style={very thick, double, black},
m2label/.style={very thick, double, ourblue},
glabel/.style={very thick, gray},
mglabel/.style={very thick, double, gray},
Klabel/.style={line width=5pt, mypurple},
webs/.style={line width=.9,color=black},
}

\tikzstyle directed=[postaction={decorate,decoration={markings,
    mark=at position #1 with {\arrow{>}}}}]
\tikzstyle rdirected=[postaction={decorate,decoration={markings,
    mark=at position #1 with {\arrow{<}}}}]

\newcommand{\Hom}{{\rm Hom}}

\newcommand{\End}{{\rm End}}
\newcommand{\END}{{\rm End}}

\renewcommand{\to}{\rightarrow}

\let\hat=\widehat
\let\tilde=\widetilde


\newcommand{\strand}[1]{\mathbb{1}_{#1}}

\newcommand{\SG}{\mathfrak{S}}
\newcommand{\BG}{\mathrm{Br}}
\newcommand{\BGpd}[2]{\BG^{#2}_{#1}}

\newcommand{\AS}{\EuScript A}
\newcommand{\BS}{\EuScript B}
\newcommand{\CS}{\EuScript C}
\newcommand{\DS}{\EuScript D}

\newcommand{\YS}{\EuScript Y}
\newcommand{\US}{\EuScript U}
\newcommand{\Bim}{\mathrm{Bim}}
\newcommand{\SBim}{\mathrm{SBim}}
\newcommand{\SSBim}{\mathrm{SSBim}}

\renewcommand{\aa}{\mathbf{a}}
\newcommand{\bb}{\mathbf{b}}
\newcommand{\cc}{\mathbf{c}}
\newcommand{\blam}{\boldsymbol{\lambda}}
\newcommand{\bnu}{\boldsymbol{\nu}}

\newcommand{\rfray}[2]{\mathrm{RedFray}^{#1}_{#2}}
\newcommand{\fray}[2]{\mathrm{Fray}^{#1}_{#2}}
\newcommand{\yfray}[2]{\YS\mathrm{Fray}^{#1}_{#2}}
\newcommand{\ufray}[2]{\US\mathrm{Fray}^{#1}_{#2}}
\newcommand{\yufray}[2]{\YS\US\mathrm{Fray}^{#1}_{#2}}

\newcommand{\finproj}[1]{K^{#1}_{(1^n)}}
\newcommand{\yfinproj}[1]{K^{#1, y}_{(1^n)}}
\newcommand{\infproj}[1]{( P^{#1}_{(1^n)} )^{\vee}}
\newcommand{\yinfproj}[1]{( P^{#1, y}_{(1^n)} )^{\vee}}


\definecolor{revisions}{RGB}{0,0,0}
\definecolor{revisions2}{RGB}{0,0,0}

%
\begin{document}
%

\title{Fray functors and equivalence of colored HOMFLYPT homologies}
\author{Luke Conners}
\address{Department of Mathematics, University of North Carolina, 
Phillips Hall CB \#3250, UNC-CH, Chapel Hill, NC 27599-3250, USA}
\email{lconners@live.unc.edu}

\begin{abstract}
We construct several families of functors on the homotopy category of singular Soergel bimodules that mimic cabling and insertion of column-colored projectors. We use these functors to identify the intrinsically-colored homology of Webster--Williamson and the projector-colored homology of Elias--Hogancamp for an arbitrary link, up to multiplication by a polynomial in the quantum degree $q$. Combined with the results of \cite{Con23}, this establishes parity results for the intrinsic column-colored homology of positive torus knots, partially resolving a conjecture of Hogancamp--Rose--Wedrich in \cite{HRW21}.
\end{abstract}

\maketitle

\setcounter{tocdepth}{2}
\tableofcontents

\section{Introduction}

There has been a great deal of interest in categorifying classical link invariants since Khovanov's categorification of the Jones polynomial at the turn of the century \cite{Kh00}. One particularly rich line of research in this direction has been the categorification of the Type A Reshetikhin--Turaev polynomials and their common generalization, the HOMFLYPT polynomial. The HOMFLYPT polynomial was first categorified by Khovanov-Rozansky in 2005 (\cite{KhR08}) using matrix factorizations, producing a triply-graded link homology theory. In \cite{Kh07}, Khovanov produced an \textit{a priori} distinct categorification of the HOMFLYPT polynomial using the Hochschild homology of Soergel bimodules and showed that this invariant is isomorphic to the original triply-graded Khovanov--Rozansky homology.

Many authors have since constructed categorifications of the \textcolor{revisions}{HOMFLYPT} polynomial from a variety of perspectives. In each of these instances, it is an interesting question whether the resulting invariant is isomorphic to the original triply-graded Khovanov--Rozansky homology. We mention in particular the constructions by Galashin--Lam via cohomology of positroid varieties (\cite{GL20}); by Oblomkov--Rozansky using coherent sheaves on the Hilbert scheme of points in $\mathbb{C}^2$ (\cite{OR17, OR18, OR19a, OR19b, OR20}); and by Mellit, Shende--Treumann--Zaslow, and Trinh using positive braid varieties (\cite{Mel19, STZ17, Tri21}); each of which is known to agree with triply-graded Khovanov--Rozansky homology. 

Oblomkov--Rasmussen--Shende in \cite{ORS18} conjecturally relate the triply-graded Khovanov--Rozansky homology of an algebraic link to a categorification of the HOMFLYPT polynomial defined using the Hilbert scheme of its plane-curve singularity, and Gorsky--Oblomkov--Rasmussen--Shende in \cite{GORS14} recover the same invariant for positive torus knots using the representation theory of Cherednik algebras. Matching this invariant with triply-graded Khovanov--Rozansky homology remains a major outstanding conjecture. The list goes on; rather than attempt (and inevitably fail) to be comprehensive here, we refer the reader to the excellent survey \cite{GKS21} and the references therein.

\subsection{Colored HOMFLYPT Homologies} \label{sec: col_homs}

There has also been a great deal of interest in categorifying the \textit{colored} HOMFLYPT polynomial, in which each strand of a link is labeled by a Young diagram. Here the landscape of homologies is more disordered and naturally separates into two types, roughly corresponding to two distinct formulations of colored HOMFLYPT homology via MOY graphs (\cite{MOY98}) and via cabling and insertion (\cite{Ais96}). We discuss each type separately.

The first type of colored HOMFLYPT homology associates the Hochschild homology of a complex of singular Soergel bimodules to a colored link. We call these invariants \textit{intrinsically-colored} homologies. An intrinsically-colored homology for links with components labeled by a single column of length $1$ or $2$ was constructed by Mackaay--Stošić--Vaz in \cite{MSV11}. This invariant was extended to links with components labeled by a single column of arbitrary length by Webster--Williamson in \cite{WW17} and to components labeled by arbitrary Young diagrams by Cautis in \cite{Cau17}. We describe this invariant in detail in Section \ref{sec: int_col_hom}.

The second type of colored HOMFLYPT homology replaces a component of a link labeled by a Young diagram $\lambda$ with $n$ boxes with $n$ parallel strands, then inserts a chain complex of Soergel bimodules associated to $\lambda$ called a \textit{projector} before taking Hochschild homology. We call these invariants \textit{projector-colored} homologies\footnote{The invariant constructed by Cautis in \cite{Cau17} also proceeds by cabling and inserting a projector. \textcolor{revisions}{When $\lambda$ is a single column, the relevant projector is exactly an identity singular Soergel bimodule, so this construction agrees with Webster-Williamson's intrinsically-colored homology. For this reason, w}e label Cautis's invariant as \textit{intrinsically-colored} rather than projector-colored.}. The appropriate projectors were constructed by Hogancamp in \cite{Hog18} for $\lambda$ a single row, by Abel--Hogancamp in \cite{AH17} for $\lambda$ a single column, and by Elias--Hogancamp in \cite{EH17b} for arbitrary $\lambda$. Each such projector has a finite and infinite version giving rise to distinct homologies, which we refer to as \textit{finite projector-colored} and \textit{infinite projector-colored}, respectively. We describe these invariants in detail in Section \ref{sec: proj_link_hom}.

Each of the homologies described above assigns a triply-graded vector space to a colored link, which we describe up to isomorphism by giving its Hilbert--Poincar\'e series in three variables $a, q, t$. Already for the unknot colored by a single column of length $k$, these invariants disagree, as we demonstrate below\footnote{The intrinsically-colored homology of the $(1^k)$-colored unknot was computed in \cite{WW17}. To the best of our knowledge, the finite projector-colored homology has not been explicitly written down before, but it can be distilled from the results of Section \ref{sec: comp_hom} of this work. The infinite projector-colored homology was computed in \cite{AH17}.}:

\renewcommand{\arraystretch}{3}

\begin{center}
\begin{tabular}{c|c}
\textbf{Homology} & \textbf{Invariant of $(1^k)$-colored unknot} \\ \hline
Intrinsically-colored & $\prod_{j = 1}^k \left( \cfrac{1 + aq^{-2j}}{1 - q^{2j}} \right) $ \\[1ex] \hline
Finite projector-colored & $[k]! \prod_{j = 1}^k \left( \cfrac{(1 + tq^{-2j})(1 + aq^{-2j})}{1 - q^{2j}} \right)$ \\[1ex] \hline
Infinite projector-colored & $\left( \cfrac{1 - t^2q^{-2}}{1 - q^2} \right)^k \prod_{j = 1}^k \left( \cfrac{1 + aq^{-2j}}{1 - t^2q^{-2j}} \right)$
\end{tabular}
\end{center}

Here and throughout, $[k]!$ is the quantum factorial $[k]! = \prod_{j = 1}^k [j]$, where $[j] = q^{j - 1} + q^{j - 3} + \dots + q^{-j + 3} + q^{-j + 1}$ is the usual quantum integer.

Each of these invariants also admits \textit{deformations}, assigning to each colored link a curved complex of singular/non-singular Soergel bimodules before taking Hochschild homology. These deformations were originally constructed by Gorsky--Hogancamp in \cite{GH22} for the trivial coloring $\lambda = (1)$ and extended to intrinsically-colored homology for $\lambda$ a single column by Hogancamp--Rose--Wedrich in \cite{HRW21}. We discuss the deformed intrinsically-colored invariant in detail in Section \ref{sec: def_int_col_hom}. In \cite{Con23}, we define the deformed projector-colored homology for $\lambda$ a single column in both the finite and infinite case and for $\lambda$ a single row in the infinite case, and we compute these invariants for all positive torus knots. We discuss these deformed projector-colored invariants for $\lambda$ a single column in Section \ref{sec: proj_link_hom}.

Again, already for the unknot colored by a column of length $k$, all of these deformed invariants disagree\footnote{The deformed intrinsically-colored homology of the $(1^k)$-colored unknot was computed in \cite{HRW21}. Both the deformed finite projector-colored homology and the deformed infinite projector-colored homology are computed in \cite{Con23}.}:

\renewcommand{\arraystretch}{3}

\begin{center}
\begin{tabular}{c|c}
\textbf{Homology} & \textbf{Invariant of $(k)$-colored unknot} \\ \hline
Deformed intrinsically-colored & $\prod_{j = 1}^k \left( \cfrac{1 + aq^{-2j}}{(1 - t^2q^{-2j})(1 - q^{2j})} \right) $ \\[1ex] \hline
Deformed finite projector-colored & $[k]! \prod_{j = 1}^k \left( \cfrac{1 + aq^{-2j}}{1 - q^{2j}} \right)$ \\[1ex] \hline
Deformed infinite projector-colored & $[k]! \prod_{j = 1}^k \left( \cfrac{1 + aq^{-2j}}{(1 - t^2q^{-2j})(1 - q^{2j})} \right)$
\end{tabular}
\end{center}

\renewcommand{\arraystretch}{1}

Several of these invariants are related by a simple multiplicative factor. Specifically, both the finite projector-colored invariant and the deformed finite projector-colored invariant are multiples of the intrinsically-colored invariant, and the deformed infinite projector-colored invariant is a multiple of the deformed intrinsically-colored invariant\footnote{The deformed intrinsically-colored invariant is also a multiple of the (undeformed) intrinsically-colored invariant; this is true for all knots by Proposition 5.33 of \cite{HRW21}.}. We conjectured that this behavior persists in \cite{Con23}. Our first main theorem verifies this conjecture.

\begin{thm} \label{thm: intrinsic_vs_cabled_intro}
	Let $\mathcal{L}$ be an arbitrary framed, oriented, $r$-component link with components colored by single-column Young diagrams $\lambda_1 = (1^{k_1}), \dots, \lambda_r = (1^{k_r})$. Then:
	\begin{itemize}
	\item The (graded dimension of the) finite projector-colored homology of $\mathcal{L}$ is obtained from the (graded dimension of the) intrinsically-colored homology of $\mathcal{L}$ via multiplication by
	\[
	\prod_{i = 1}^r \left( [k_i]! \prod_{j = 1}^{k_i} (1 + tq^{-2j}) \right)
	\]
	
	\item The (graded dimension of the) deformed finite projector-colored homology of $\mathcal{L}$ is obtained from the (graded dimension of the) intrinsically-colored homology of $\mathcal{L}$ via multiplication by
	\[
	\prod_{i = 1}^r [k_i]!
	\]
	
	\item The (graded dimension of the) deformed infinite projector-colored homology of $\mathcal{L}$ is obtained from the (graded dimension of the) deformed intrinsically-colored homology of $\mathcal{L}$ via multiplication by 
	\[
	\prod_{i = 1}^r [k_i]!
	\]
	\end{itemize}
\end{thm}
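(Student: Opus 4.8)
The plan is to reduce all three statements to a single structural comparison between projectors and their intrinsically-colored counterparts at the level of Hochschild homology. The key observation is that the three target multiplicative factors, namely $\prod_i \big([k_i]!\prod_{j=1}^{k_i}(1+tq^{-2j})\big)$, $\prod_i [k_i]!$, and $\prod_i [k_i]!$, are precisely the ratios that appear already for the $(1^k)$-colored unknot in the tables of the introduction. So the strategy is: (i) isolate the local algebraic difference between ``cable and insert a column projector'' and ``apply the intrinsic column-coloring functor'' as an operation on the homotopy category of singular Soergel bimodules; (ii) show this local difference is, up to the stated scalar, invisible to the closure/Hochschild homology pairing; and (iii) assemble the local statements over the $r$ components.

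First I would set up the comparison functor. Using the Fray functors constructed earlier in the paper --- which by design mimic cabling together with insertion of a column-colored projector --- the plan is to exhibit, for each single column $\lambda_i=(1^{k_i})$, an explicit morphism (or filtration) in $K^b(\SSBim)$ relating the projector-colored cabling of a strand to the image of that strand under the intrinsic column-coloring. Concretely, I expect the finite column projector $P^{\mathrm{fin}}_{(1^{k})}$ to decompose, after applying the relevant Fray functor and up to homotopy, as a direct sum of shifted copies of the ``intrinsic'' column object indexed by a basis whose graded count is exactly $[k]!\prod_{j=1}^k(1+tq^{-2j})$; the $[k]!$ comes from the $S_k$-worth of ways of resolving the projector into Rickard-type complexes (equivalently, from the Poincaré polynomial of the finite flag variety), and the $\prod_j(1+tq^{-2j})$ comes from the Koszul/exterior factor that the finite projector carries relative to the intrinsic bimodule. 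For the deformed finite and deformed infinite cases, the same decomposition should hold but with the exterior factor $\prod_j(1+tq^{-2j})$ \emph{absorbed} --- in the deformed setting that factor is already built into the intrinsically-colored invariant (compare the two tables), so only the $[k]!$ survives. I would prove the deformed infinite statement by the same decomposition after invoking the earlier identification (or the cited Proposition~5.33 of \cite{HRW21}) that relates infinite and finite deformed projectors up to an invertible scalar, so that the infinite case inherits the finite computation.

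Next I would globalize. Since Hochschild homology (the closure operation) is additive and sends a direct sum of grading-shifted copies of a complex to the correspondingly shifted direct sum, and since the Fray functors are constructed to intertwine cabling-with-projector-insertion with closure, applying the local decomposition to each of the $r$ strands simultaneously multiplies the intrinsically-colored homology of $\mathcal L$ by the product over $i$ of the corresponding local scalar. The only subtlety here is that the decomposition must be natural enough to be performed on all components at once inside a single complex of singular Soergel bimodules representing a braid closure presentation of $\mathcal L$; this follows because the column projectors for distinct components are supported on disjoint groups of strands and hence their insertions commute, and the Fray functors can be applied one component at a time. Then one takes the braid closure (Hochschild homology) and reads off graded dimensions.

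The main obstacle I anticipate is step (i)/(ii): proving the local decomposition of the projector-colored cable into shifted copies of the intrinsic column object \emph{as a statement about Hochschild homology rather than about bimodules themselves}. The projectors and the intrinsic column bimodules do not literally differ by a direct sum of shifts in $K^b(\SSBim)$ --- the projector is an infinite (or large finite) complex --- so one cannot simply cite a splitting. Instead I would argue at the level of traces: establish that the Fray functor applied to the identity complex on the relevant strands, after closure, computes the intrinsically-colored homology, and that the projector-colored invariant computes the closure of the Fray image twisted by the projector; then compute the effect of that twist on the trace using the known value on the colored unknot (from the tables) together with a locality/base-change argument showing the twist contributes the \emph{same} scalar regardless of the ambient link. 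This last universality claim --- that the multiplicative correction is link-independent --- is where the real work lies, and I would prove it by a dévissage on the projector: filter the (deformed or undeformed) finite projector by intrinsic column objects, check the associated graded contributes the claimed Poincaré polynomial, and check the filtration is preserved by closure because each graded piece is a shift of a single indecomposable whose Hochschild homology is flat in the appropriate sense. Once universality is in hand, the three bullet points follow by comparing with the unknot entries in the two tables.
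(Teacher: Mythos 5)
There is a genuine gap, and it sits exactly where you flag the ``real work.'' Your central technical move is a d\'evissage: filter the finite projector $K_{(1^n)}$ (or its deformations) by shifted copies of the intrinsic column object, check the associated graded gives the Poincar\'e polynomial, and argue this filtration is preserved by closure. But no such filtration exists in a usable form: $K_{(1^n)}$ is a Koszul complex on the full merge--split bimodule $W_{(1^n)} = B_{w_0}$, living in $K(\SBim_n)$, while the intrinsic column object $\strand{(n)}$ lives in $\SSBim_{(n)}^{(n)}$; they are objects of different categories, and even after cabling or applying a Fray functor they are not related by a filtration with graded pieces equal to shifts of the intrinsic object. The paper's argument proceeds differently and avoids this problem entirely. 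The $[k]!$ factor comes from the cyclic trace property of Hochschild homology (Proposition~\ref{prop: hh_is_trace}) combined with the generalized digon removal $_{(n)}M_{(1^n)} \star {}_{(1^n)}S_{(n)} \cong [n]!\, \strand{(n)}$ (Proposition~\ref{prop: blamgon_removal}): one rotates the split and merge past $C$ and collapses them, yielding $HH_\bullet(\rfray{n}{\blam}(C)) \cong [n]!\, HH_\bullet(C)$ directly (Proposition~\ref{prop: tr_rfray}). The exterior factor $\prod_j (1+tq^{-2j})$ appears because $HH_\bullet$ identifies the top and bottom alphabets $\mathbb{X}_j, \mathbb{X}'_j$, killing the Koszul differential $\alpha$ outright and leaving a free tensor factor $R[\Theta_\blam]$ (Proposition~\ref{prop: tr_fray}). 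These are local algebraic facts about the functors, not structural decompositions of the projector.

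Your treatment of the deformed cases has the same issue but more acutely. You reason that the factor $\prod_j(1+tq^{-2j})$ ``is absorbed'' because it is ``already built into the intrinsically-colored invariant'' --- this is consistent with the unknot tables but is not a proof, and it cannot be established by the d\'evissage you propose. The actual cancellation mechanism (Propositions~\ref{prop: tr_yfray} and~\ref{prop: tr_yufray}) is that after $HH_\bullet$ kills $\alpha$, the remaining deformation twist (by $\theta^\vee_{jk} y_{jk}$ in the $\yfray{n}{\blam}$ case, and by $\theta^\vee_{jk}(y_{jk} - \sum_i a^\blam_{ijk}u_i)$ in the $\yufray{n}{\blam}$ case) becomes a Koszul complex on a polynomial ring for a regular sequence, hence quasi-isomorphic to the base ring by Proposition~\ref{prop: kosz_reg_seq}. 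This regular-sequence argument is essential and does not follow from the unknot computation together with any ``base-change'' universality --- indeed the universality you want to prove is precisely the content of these three propositions, and their proofs are direct computations rather than reductions to the unknot. Finally, note that the globalization step also requires a preliminary reduction to pure braids via the trace isomorphism together with bundled $\Delta e$-curvature (Section~\ref{sec: comp_hom}); this is another piece of machinery your proposal elides.
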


Before discussing the proof of Theorem \ref{thm: intrinsic_vs_cabled_intro}, we point out an important Corollary. In \cite{HRW21}, Hogancamp--Rose--Wedrich conjecture that the intrinsically-colored homology of positive torus links is concentrated in even $t$-degree. We compute the deformed finite projector-colored homology of all positive torus knots in \cite{Con23} and show that the result is concentrated in even $t$-degree. Combined with the second relationship of Theorem \ref{thm: intrinsic_vs_cabled_intro}, our computation confirms this conjecture for positive torus knots:

\begin{cor}
	The intrinsically-colored homology of all positive torus knots is concentrated in even $t$-degree.
\end{cor}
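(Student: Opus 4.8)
The plan is to combine the second bullet of Theorem~\ref{thm: intrinsic_vs_cabled_intro} with the torus-knot computation of \cite{Con23}, so that no new link-homological input is required. Let $T$ be a positive torus knot, regarded as a one-component link colored by a single column $\lambda = (1^k)$, and let $P = P(a,q,t)$ denote the graded dimension (Hilbert--Poincar\'e series in $a,q,t$) of its intrinsically-colored homology. First I would invoke \cite{Con23}, where the deformed finite projector-colored homology of every positive torus knot is computed and is shown to be concentrated in even $t$-degree. By the $r = 1$ case of the second bullet of Theorem~\ref{thm: intrinsic_vs_cabled_intro}, the graded dimension of that deformed finite projector-colored homology is exactly $[k]! \cdot P$.

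It then remains to observe that passing from $[k]! \cdot P$ back to $P$ cannot disturb the $t$-support. Expanding by homological degree, write $P = \sum_{d} P_d(a,q)\, t^d$. These series take values in a ring of the form $S[t^{\pm 1}]$, where $S$ is the (commutative, torsion-free) ring in which $a,q$-graded dimensions of the invariants in play naturally live; in particular $S$ is an integral domain. The quantum factorial $[k]! = \prod_{j = 1}^{k} [j]$ is a nonzero element of $S$ and is independent of $t$, so $[k]! \cdot P = \sum_d ([k]!\, P_d)\, t^d$ with $[k]!\, P_d = 0 \iff P_d = 0$. Hence the set of $t$-degrees occurring in the intrinsically-colored homology of $T$ coincides with the set of $t$-degrees occurring in its deformed finite projector-colored homology, which by the previous paragraph consists only of even integers. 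This is the assertion of the Corollary.

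There is essentially no obstacle internal to the Corollary: all of the genuine content sits upstream, in Theorem~\ref{thm: intrinsic_vs_cabled_intro} and in the explicit torus-knot computation imported from \cite{Con23}. The one point I would double-check is that the normalization of the triple grading $(a,q,t)$ used in the statement of Theorem~\ref{thm: intrinsic_vs_cabled_intro} matches the one used for the torus-knot computation in \cite{Con23}, or at worst differs from it by a grading shift that preserves the parity of the $t$-degree; granting this bookkeeping, the argument above applies verbatim.
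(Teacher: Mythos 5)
Your argument is exactly the paper's: import the even-$t$-degree computation of the deformed finite projector-colored homology of positive torus knots from \cite{Con23} and transfer the parity statement across the second bullet of Theorem~\ref{thm: intrinsic_vs_cabled_intro}, noting that multiplication of graded dimensions by the nonzero, $t$-independent factor $[k]!$ cannot create or destroy support in any $t$-degree. The extra care you take about the coefficient ring and about matching normalizations with \cite{Con23} is reasonable diligence but adds nothing beyond what Theorem~\ref{thm: intrinsic_vs_cabled_intro} already guarantees.
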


\subsection{Proof Strategy} \label{sec: proof_strat}

We sketch the main ideas involved in the proof of Theorem \ref{thm: intrinsic_vs_cabled_intro} assuming some familiarity with the notion of singular Soergel bimodules; for background on these objects, see Section \ref{sec: ssbim_main}. Our main techincal tool is a family of functors that mimic the passage from a single $n$-colored strand $\strand{(n)} \in \SSBim_{(n)}^{(n)}$ to the infinite Abel--Hogancamp projector\textcolor{revisions}{, which we now explain}.

\subsubsection{Fray Functors: Motivation} In \cite{AH17}, the undeformed infinite projector is built in the following way:

\begin{enumerate}
	\item Pass from $\strand{(n)}$ to the full merge-split Soergel bimodule $B_{w_0}$. Graphically, this corresponds to splitting an $n$ labeled strand into $n$ $1$-labeled edges at the top and bottom.
	
	\item Pass from $B_{w_0}$ to the Koszul complex on $B_{w_0}$ for the action of the polynomials $x_2 - x_2', \dots, x_n - x_n'$. The result is the finite Abel--Hogancamp projector, which we denote by $K_{(1^n)}$.
	
	\item Pass to the formal tensor product $K_{(1^n)} \otimes \mathbb{Z}[u_2, \dots, u_n]$ with $\mathrm{deg}(u_i) = t^2q^{-2i}$, and twist by a family of polynomials $g_{ij}(\mathbb{X}, \mathbb{X}')$ such that $\sum_{j = 2}^n g_{ij}(\mathbb{X}, \mathbb{X}') (x_j -x_j')$ vanishes on $B_{w_0}$. The result is the infinite Abel--Hogancamp projector, which we denote by $P_{(1^n)}^{\vee}$.
\end{enumerate}

\textcolor{revisions}{We offer some brief motivation at the decategorified level for the generalization of this construction we employ here. The Abel-Hogancamp projector $P^{\vee}_{(1^n)}$ categorifies a certain idempotent element $p_{(1^n)}$ in the Hecke algebra $H_n$, which itself governs the behavior of the colored HOMFLYPT link \textit{polynomial}; this is the approach due to Aiston, developed in \cite{Ais96}, referenced above. On the other hand, the MOY approach to the colored HOMFLYPT polynomial developed in \cite{MOY98} can be roughly considered as assigning to an $n$-colored strand the trivial idempotent $1_{(n)}$ in a certain endomorphism algebra of the locally unital $q$-\textbf{Schur} category, as studied e.g. by Brundan in \cite{Bru24}. The Hecke algebra $H_n$ itself appears as an endomorphism algebra in this category, and there is a whole family of objects with endomorphism algebras interpolating between the two mentioned here, each with their own incarnation of such an idempotent. At the categorified level, this family roughly corresponds to changing the domain and codomain sequences in the category $\SSBim_{\aa}^{\bb}$, and it is natural to look for analogues of the Abel-Hogancamp projector in these settings which categorify these idempotents.}

\textcolor{revisions}{With this goal in mind, w}e generalize each of these steps in the following way. Fix $n \geq 0$, and let $\blam = (\lambda_1, \dots, \lambda_m)$ be a sequence of positive integers satisfying $\sum_{i = 1}^m \lambda_i = n$. \textcolor{revisions}{Such a sequence is called a \textit{composition} of $n$; we denote compositions by} $\blam \vdash n$. Given such a \textcolor{revisions}{composition}, we consider the following procedure:

\begin{enumerate}
	\item Pass from $\strand{(n)}$ to a partial merge-split singular Soergel bimodule $W_{\blam}$. Graphically, this corresponds to splitting an $n$-labeled strand into $m$ edges with labels $\lambda_1, \dots, \lambda_m$ at the top and bottom.
	
	\item Pass from $W_{\blam}$ to a Koszul complex on $W_{\blam}$ for the action of the polynomials $e_k(\mathbb{X}_j) - e_k(\mathbb{X}'_j)$ for each pair of external alphabets $\mathbb{X}_j, \mathbb{X}'_j$, $1 \leq j \leq m$ and each polynomial degree $1 \leq k \leq \lambda_j$. We denote the resulting complex by $\finproj{\blam}$ and call $\finproj{\blam}$ a \textit{finite frayed projector}.
	
	\item Pass to the formal tensor product $\finproj{\blam} \otimes \mathbb{Z}[u_1, u_2, \dots, u_n]$ with $\mathrm{deg}(u_i) = t^2q^{-2i}$, and \textcolor{revisions}{add to the differential a term $\sum_{i, j, k} a_{ijk}^{\blam}(\mathbb{X}, \mathbb{X}') \otimes u_i$, where $a_{ijk}^{\blam}$ are} a family of polynomials satisfying
	
	\begin{align} \label{cond: interpolation}
	\sum_{j = 1}^m \sum_{k = 1}^{\lambda_j} a_{ijk}^{\blam}(\mathbb{X}, \mathbb{X}') (e_k(\mathbb{X}_j) - e_k(\mathbb{X}'_j)) = e_i(\mathbb{X}') - e_i(\mathbb{X}).
	\end{align}
	
	Here $\mathbb{X}$, $\mathbb{X}'$ denote the total external alphabets $\mathbb{X} = \sum_{j = 1}^m \mathbb{X}_j$, $\mathbb{X}' = \sum_{j = 1}^m \mathbb{X}'_j$. \textcolor{revisions}{With this convention, the endomorphism given by the action of $e_i(\mathbb{X}') - e_i(\mathbb{X})$ vanishes identically on all singular Soergel bimodules, providing a natural generalization of step (3) in the Abel--Hogancamp construction above.} The result is a chain complex, which we denote by $\infproj{\blam}$ and call an \textit{infinite frayed projector}.
\end{enumerate}

\begin{rem}

A few comments are in order. First, when $\blam = (1^n) = (1, 1, \dots, 1) \vdash n$, our $\finproj{\blam}$ is nearly identical to the finite Abel--Hogancamp projector $K_{(1^n)}$; the only difference is that we pass to a Koszul complex on $x_1 - x_1'$, while Abel--Hogancamp do not. This also accounts for our extra polynomial variable $u_1$ in Step 3.

Second, Abel--Hogancamp pick their polynomials $g_{ij}$ so that there are natural inclusion maps relating $P_{(1^n)}^{\vee}$ and $P_{(1^{n + 1})}^{\vee}$; they then check that $\sum_{j = 2}^n g_{ij} (x_j - x_j')$ vanishes on $B_{w_0}$, which guarantees that their twist is a well-defined chain complex. Conversely, we \textit{insist} that our $a_{ijk}^{\blam}$ satisfy Equation \eqref{cond: interpolation}. \textcolor{revisions}{This sacrifices the inclusion maps present in the Abel--Hogancamp construction but generalizes more naturally to arbitrary compositions (and ensures better functoriality properties, as discussed below).} Since $e_i(\mathbb{X}') - e_i(\mathbb{X})$ vanishes on all singular Soergel bimodules, our $\infproj{\blam}$ is also a well-defined chain complex. We show in Section \ref{subsec: idem} that when $\blam = (1^n) \vdash n$, $\infproj{\blam} \simeq P_{(1^n)}^{\vee}$ up to a shift in $q$-degree.

\end{rem}

With these modifications, it turns out that each of these steps becomes functorial. More precisely, let $C$ be any chain complex of singular Soergel bimodules with a distinguished $n$-labeled edge in the domain and codomain. Then we can apply Steps 1 and 2 \textit{locally} at these $n$-labeled edges in each chain bimodule; the result is a new chain complex of singular Soergel bimodules in which the distinguished $n$-labeled edges in the domain and codomain have been replaced with $m$ edges labeled $\lambda_1, \dots, \lambda_m$. We show in Section \ref{sec: fray_functors} that these first two steps assemble to give a well-defined functor for each $\blam \vdash n$; we call this functor $\fray{n}{\blam}$.

Step 3 is a bit more subtle, but also functorial. Equation \eqref{cond: interpolation} guarantees that for any chain complex $C$ of singular Soergel bimodules, the result of applying Steps 1-3 to $C$ is a \textit{curved} complex\textcolor{revisions}{\footnote{\textcolor{revisions}{Roughly speaking, a curved complex is a generalization of a chain complex in which the differential squares not to $0$, but to some other natural endomorphism called the \textit{curvature}. See Section \ref{sec: curved_complexes} for more details.}}} with total curvature

\[
-F_u^{(n)} := \sum_{i = 1}^n (e_i(\mathbb{X}') - e_i(\mathbb{X})) \otimes u_i.
\]

We call this curved complex $\ufray{n}{\blam}(C)$, and we prove in Section \ref{sec: fray_functors} that $\ufray{n}{\blam}$ gives a well-defined functor from chain complexes of singular Soergel bimodules to curved complexes of singular Soergel bimodules for each \textcolor{revisions}{composition} $\blam \vdash n$. In fact, this functor extends to \textcolor{revisions}{certain} curved complexes \textcolor{revisions}{as well. Step 3 in our construction is an example of an operation on chain complexes called \textit{twisting}, which accepts a chain complex $C$ and produces a curved complex whose differential involves some formal parameters $u_1, \dots, u_n$ with some homological degree. We explain this construction in detail in Section \ref{sec: twists}. For now, we point out that twisting is additive, in the sense that a curved complex $\overline{C}$ which has been constructed by twisting a chain complex $C$ can be twisted again. In this way,} given a curved twist $\overline{C} := \mathrm{tw}(C \otimes \mathbb{Z}[u_1, \dots, u_n])$ of $C$ with total curvature $F$, we may consider $\ufray{n}{\blam}(\overline{C})$ as a curved complex with total curvature $F - F_u^{(n)}$ by identifying the corresponding $u$ variables. When $F = F_u^{(n)}$ to begin with, then $\ufray{n}{\blam}(\overline{C})$ is a \textit{bona fide} chain complex.

\subsubsection{Fray Functors: Compatibility with Rickard Complexes}

\textcolor{revisions}{In Section \ref{sec: fray_functors}, we show that both $\fray{n}{\blam}$ and $\ufray{n}{\blam}$ interact particularly well with the complexes involved in computing colored HOMFLYPT homology. To explain these results requires a brief digression detailing the constructions of intrinsically- and projector-colored HOMFLYPT homologies and their deformations; the reader familiar with these constructions can safely skip to Theorem \ref{thm: functorial_cables_intro} below.}

\textcolor{revisions}{In defining the intrinsically-colored homology of a colored link $\mathcal{L}$, one begins by choosing a presentation of $\mathcal{L}$ as the closure\footnote{\textcolor{revisions}{By this we mean the link $\mathcal{L}$ is obtained from the braid $\beta$ by gluing the top of each strand to the bottom of that strand. A classical theorem of Alexander \cite{Al23} guarantees that this is always possible.}} of a colored braid $\beta$. One can associate to $\beta$ a bounded chain complex $C(\beta)$ of singular Soergel bimodules, called the \textit{Rickard complex} of $\beta$, in such a way that the colored braid relations are satisfied up to canonical homotopy equivalence. Applying Hochschild homology termwise to the complex $C(\beta)$ results in a chain complex of doubly-graded vector spaces, the homology of which is exactly the intrinsically-colored HOMFLYPT homology of $\mathcal{L}$. We discuss Rickard complexes in more detail in Section \ref{sec: col_braids} and their Hochschild homology in Section \ref{sec: int_col_hom}.}

\textcolor{revisions}{In \cite{HRW21}, Hogancamp--Rose--Wedrich show that the Rickard complexes $C(\beta)$ admit canonical curved analogues $C^u(\beta)$ which also satisfy the colored braid relations up to homotopy equivalence. The curved complexes $C^u(\beta)$ have curvature given by a  sum of terms $F_u^{(n)}$ as defined above associated to each strand\footnote{\textcolor{revisions}{For the reader familiar with \cite{HRW21}, we emphasize that we will frequently work with complexes $C^u(\beta)$ which are curved along \textit{only a single strand}, in contrast with their fully-curved complexes.}}; as such, we refer to these complexes as \textit{$\Delta e$-deformed Rickard complexes}. There is again a natural notion of Hochschild homology for these curved complexes, and Hogancamp--Rose--Wedrich prove that applying Hochschild homology termwise to the curved complexes $C^u(\beta)$ results in a well-defined link invariant, which we call \textit{deformed intrinsically-colored HOMFLYPT homology}. We discuss $\Delta e$-deformed Rickard complexes in more detail in Section \ref{sec: curv_rick} and their Hochschild homology in Section \ref{sec: def_int_col_hom}.}

\textcolor{revisions}{To obtain projector-colored homology requires a procedure called \textit{cabling and insertion}. Roughly speaking, cabling and insertion proceeds by replacing the distinguished $n$-colored strand of $\beta$ with a ``$\blam$-colored cable" of parallel strands to obtain a new braid $\beta^{\blam}$ (cabling), then tensoring the Rickard complex $C(\beta^{\blam})$ with an auxiliary complex (insertion). This procedure results in a well-defined link invariant whenever the auxiliary complex satisfies a technical condition called ``sliding past crossings". We discuss cabling and insertion and the resulting link invariants in more detail in Section \ref{sec: proj_link_hom}.}

\textcolor{revisions}{If the curved braid $\beta$ has an $n$-colored strand, then one can apply $\fray{n}{\blam}$ to the Rickard complex $C(\beta)$ for any \textcolor{revisions}{composition} $\blam \vdash n$. Similarly, one can apply $\ufray{n}{\blam}$ to the $\Delta e$-deformed Rickard complex $C^u(\beta)$ with curvature along the distinguished strand; the result is a \textit{bona fide} chain complex by the discussion above. Our second main theorem gives an alternate characterization of these complexes $\fray{n}{\blam}(C(\beta))$ and $\ufray{n}{\blam}(C^u(\beta))$ in terms of cabling and inserting frayed projectors:}

\begin{thm} \label{thm: functorial_cables_intro}
	\textcolor{revisions}{\textcolor{revisions2}{Suppose $\beta$ is a colored braid} with a distinguished $n$-colored strand, and let $\blam \vdash n$ be an arbitrary \textcolor{revisions}{composition}. Then $\fray{n}{\blam}(C(\beta))$ is homotopy equivalent to the complex obtained by cabling and inserting a finite frayed projector $\finproj{\blam}$ into $C(\beta)$. Similarly, $\ufray{n}{\blam}(C^u(\beta))$ is homotopy equivalent to the complex obtained by cabling and inserting an infinite frayed projector $\infproj{\blam}$ into $C(\beta)$.}
\end{thm}

In other words, up to homotopy equivalence, $\fray{n}{\blam}$ and $\ufray{n}{\blam}$ are \textcolor{revisions}{\textit{functorial} incarnations} of cabling and inserting finite/infinite projectors. When $\blam = (1^n) \vdash n$, Theorem \ref{thm: functorial_cables_intro} allows us to prove that $\infproj{\blam}$ is a unital idempotent \textcolor{revisions}{in the sense of \cite{Hog17}} as a purely formal consequence of a computation involving the $(n - 1, 1)$-colored full twist braid on two strands, significantly streamlining the main technical computations of \cite{Con23}. Uniqueness of these idempotents then guarantees that $\infproj{\blam} \simeq P_{(1^n)}^{\vee}$ up to an overall shift in $q$-degree.

\begin{rem} \label{rem: def_intro_thm}
This whole story admits \textcolor{revisions}{another natural deformation. Both $\fray{n}{\blam}$ and $\ufray{n}{\blam}$ admit further $\Delta e$-like deformations along each $\lambda_j$-colored strand, called $\yfray{n}{\blam}$ and $\yufray{n}{\blam}$, which mimic the passage from the Abel--Hogancamp projectors to their deformations.} Theorem \ref{thm: functorial_cables_intro} also lifts to a deformed version, in which the frayed projectors and cabled Rickard complexes are replaced with their deformations. \textcolor{revisions}{We explain these constructions in more detail and make these statements precise in Section \ref{sec: fray_functors}.}
\end{rem}

\subsubsection{Frayed Projector-Colored Homologies}

As a consequence of Theorem \ref{thm: functorial_cables_intro} and its deformed version, both the frayed projectors $\finproj{\blam}$, $\infproj{\blam}$ and their deformations $\yfinproj{\blam}$, $\yinfproj{\blam}$ slide past crossings and therefore give rise to well-defined link invariants by cabling and insertion. In fact, by varying the \textcolor{revisions}{composition}s $\blam \vdash n$ for each strand color, we obtain four \textit{families} of link invariants\textcolor{revisions}{, which we refer to collectively as \textit{frayed projector-colored homologies}}. We collect these statements as our third main theorem:

\begin{thm} \label{thm: hom_family_intro}

Let $\mathcal{L}$ be an arbitrary $r$-component link with components colored by positive integers $k_1, \dots, k_r$. Then upon taking Hochschild homology, each of the following procedures gives rise to a family of well-defined link invariants parametrized by \textcolor{revisions}{composition}s $\blam_i \vdash k_i$:

\begin{itemize}
	\item Replace each $k_i$-colored strand of $\mathcal{L}$ with a $\blam_i$-colored cable, and insert a $\blam_i$-colored \textit{finite} frayed projector into the Rickard complex of the resulting cable.
	
	\item Replace each $k_i$-colored strand of $\mathcal{L}$ with a $\blam_i$-colored cable, and insert a $\blam_i$-colored \textit{deformed finite} frayed projector into the \textit{deformed} Rickard complex of the resulting cable.
	
	\item Replace each $k_i$-colored strand of $\mathcal{L}$ with a $\blam_i$-colored cable, and insert a $\blam_i$-colored \textit{infinite} frayed projector into the Rickard complex of the resulting cable.
	
	\item Replace each $k_i$-colored strand of $\mathcal{L}$ with a $\blam_i$-colored cable, and insert a $\blam_i$-colored \textit{deformed infinite} frayed projector into the \textit{deformed} Rickard complex of the resulting cable.
\end{itemize}
\end{thm}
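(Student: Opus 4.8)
The plan is to deduce the statement from Theorem~\ref{thm: functorial_cables_intro} and its deformed version, together with the functoriality and fork-sliding properties of the fray functors established in Section~\ref{sec: fray_functors} and the trace property of Hochschild homology. Fix a colored braid $\beta$ whose closure is $\mathcal{L}$. Since each fray functor acts locally at a single distinguished strand, the fray functors attached to the $r$ distinct colored components of $\mathcal{L}$ commute, and by Theorem~\ref{thm: functorial_cables_intro} and its deformed version each of the four bulleted constructions, applied to $\beta$, equals $\mathrm{HH}(F(C_\beta))$, where $F$ is the composite over the colored components of the corresponding fray functor — namely $\fray{k_i}{\blam_i}$, $\yfray{k_i}{\blam_i}$, $\ufray{k_i}{\blam_i}$, or $\yufray{k_i}{\blam_i}$ for the first, second, third, and fourth bullet respectively — and $C_\beta$ is the corresponding (possibly $\Delta e$-deformed) Rickard complex of $\beta$. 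It therefore suffices to prove that $\mathrm{HH}(F(C_\beta))$ depends neither on the braid presentation of $\mathcal{L}$ nor on the position at which each frayed projector is inserted along its cable.

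Two of these independences are immediate. First, $C_\beta$ depends on $\beta$ only up to the (colored) braid relations, because the (deformed) Rickard complex is a braid invariant valued in the homotopy category (for the deformed case, see \cite{HRW21}); since each fray functor preserves homotopy equivalences, $F(C_\beta)$, and hence $\mathrm{HH}(F(C_\beta))$, inherits this invariance. Second, the point at which each frayed projector is inserted along its cable is immaterial: this is precisely the fork-sliding property proved for the fray functors in Section~\ref{sec: fray_functors}, which transports each projector freely past crossings along its component. Invariance under the first Markov move $\beta \leftrightarrow \gamma\beta\gamma^{-1}$ then follows by combining fork-sliding with the cyclicity of Hochschild homology and the invertibility of the cabled Rickard complex: sliding every frayed projector clear of the $\gamma^{\pm 1}$ portions of the cabled braid exhibits $F(\gamma\beta\gamma^{-1})$ as a conjugate of $F(\beta)$ by the cabled Rickard complex of $\gamma$, whence $\mathrm{HH}(F(\gamma\beta\gamma^{-1})) \cong \mathrm{HH}(F(\beta))$.

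The main obstacle is invariance under the second Markov move (stabilization). I would handle this by using fork-sliding to push every frayed projector out of the region in which the stabilizing strand and its crossing appear, so that the computation localizes to a single cabled crossing adjacent to the relevant frayed projector; it then reduces to the claim that $\finproj{\blam}$ — and, in the remaining three cases, $\infproj{\blam}$ together with the deformed projectors — absorbs this crossing, after passing to Hochschild homology, up to the expected shift in $(a,q,t)$-degree. For $\blam = (1^n)$ this absorption is precisely the stabilization input underlying the Abel--Hogancamp projectors and their deformations (cf.\ \cite{AH17}, \cite{Con23}). For a general decomposition $\blam \vdash n$ I expect to obtain it either by a direct computation from the Koszul presentation of $\finproj{\blam}$ in the second step of the construction, or, more cleanly, by first establishing a composition identity for frayed projectors relating $\finproj{\blam}$, up to homotopy and an overall grading shift, to the fully frayed projector $\finproj{(1^n)}$, and thereby reducing the general statement to the $(1^n)$ case. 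Granting this computation, the Markov theorem for framed oriented colored links assembles the preceding points into the assertion that $\mathrm{HH}(F(C_\beta))$ depends only on $\mathcal{L}$, and letting the decompositions $\blam_i \vdash k_i$ vary yields the four claimed families of invariants.
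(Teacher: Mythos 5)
Your first two paragraphs capture the paper's argument: the frayed projectors slide past (possibly deformed) crossings (Corollaries \ref{cor: fin_proj_crossing_slide}, \ref{cor: finproj_ycrossing_slide}, \ref{cor: inf_proj_crossing_slide}, \ref{cor: yinf_proj_crossing_slide}) and Hochschild homology is a trace (Proposition \ref{prop: hh_is_trace}); together these handle braid isotopy, conjugation, and independence of the insertion point. This is exactly the one-line justification the paper gives in the paragraph preceding Theorem \ref{thm: frayed_proj_homologies}, which is the precise form of the statement at hand. (Your route passes through Theorem \ref{thm: functorial_cables_intro} rather than reasoning directly at the level of cabled braids; that reframing has a subtlety for impure braids, addressed separately in Section \ref{sec: comp_hom}, but it is not where the trouble lies.)

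The problem is your third paragraph. The claim --- made explicit in Theorem \ref{thm: frayed_proj_homologies} and in the parallel statements Theorems \ref{thm: HKR_link_inv} and \ref{thm: def_kr_inv} --- is an invariant of \emph{framed}, oriented, colored links. For framed link invariants obtained from braid closures, the (framing-changing) second Markov move is not a move the invariant must respect; the framing shift under stabilization is a separate covariance statement (cf.\ the final sentence of Theorem \ref{thm: HKR_link_inv}). You identify Markov 2 as ``the main obstacle,'' propose to resolve it via an absorption property of $\finproj{\blam}$ under a cabled curl, and then grant that computation. No such absorption is established in the paper, and it is unlikely to hold for the finite projectors: $\finproj{\blam}$ is a finite Koszul complex, not a categorical idempotent, so the usual idempotent-absorption arguments (valid for, e.g., the row projector $P_{(n)}$) do not apply. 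As written your proposal therefore has an unproved step --- but it is a step the theorem, correctly read as a framed-link statement, does not require.
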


When $\blam_i = (1^{k_i}) \vdash k_i$ for each $i$, these invariants recover the (deformed/undeformed) (finite/infinite) projector-colored invariants described above. In Section \ref{sec: comp_hom}, we show that when $\blam_i = (k_i) \vdash k_i$, the second invariant described above recovers the intrinsically-colored homology of $\mathcal{L}$, and the fourth recovers the deformed intrinsically-colored homology of $\mathcal{L}$. As a consequence, we view the families of Theorem \ref{thm: hom_family_intro} as interpolating between the intrinsically-colored homologies and projector-colored homologies described in Section \ref{sec: col_homs}.

\begin{rem} \label{rem: yify_is_good}
\textcolor{revisions}{Notice that it is the \textit{deformed} frayed projector-colored homologies which recover both undeformed and deformed intrinsically-colored homology. We view this phenomenon as further evidence for the utility of deformed HOMFLYPT homology in both structure and computations, as has been observed in e.g. \cite{GH22}, \cite{HRW21}, \cite{Con23}, and \cite{GHM21}.}
\end{rem}

In light of Theorem \ref{thm: functorial_cables_intro}, the functors $\fray{n}{\blam}$, $\yfray{n}{\blam}$, $\ufray{n}{\blam}$, and $\yufray{n}{\blam}$ exactly capture the cabling and insertion procedures described in Theorem \ref{thm: hom_family_intro}. This reduces the computation of projector-colored homology to understanding the interactions between Hochschild homology and each of these functors. We show in Section \ref{sec: comp_hom} that the functors $\fray{n}{\blam}$, $\yfray{n}{\blam}$, and $\yufray{n}{\blam}$ interact especially well with Hochschild homology, finally resulting in Theorem \ref{thm: intrinsic_vs_cabled_intro}.

\begin{rem}
\textcolor{revisions}{Projector-colored link homologies for different choices $C$ of complexes to be inserted have been studied at length. The case in which $C$ is a categorified Young symmetrizer of highest weight $\nu$ was investigated in detail by Cautis in \cite{Cau17} in the setting of singular Soergel bimodules; here one takes $\blam = \nu$. Many authors have studied the corresponding case in the setting of (nonsingular) Soergel bimodules, in which $\blam = (1^k) \vdash k$ regardless of the weight $\nu$; we refer to these invariants as \textit{thin projector-colored triply-graded homologies}. When this weight is $\nu = (k)$, the corresponding complex $C$ is the (unital) \textit{row}-colored categorified Young symmetrizer of \cite{Hog18}, and the corresponding invariant is computed for all positive torus knots in \cite{HM19}. When $\nu = (1^k)$, the corresponding complex $C$ is the \textit{column}-colored Young symmetrizer of \cite{AH17} discussed above. It is mentioned in \cite{AH17} that this complex gives rise to a link invariant, though this invariant is not computed in that work except when $\mathcal{L}$ is a colored unknot.}

\textcolor{revisions}{The deformed versions of each of these thin projector-colored invariants for all positive torus knots are computed in \cite{Con23}, and their graded dimensions for a fixed knot are shown to agree up to a change of variables $q \leftrightarrow tq^{-1}$. In \cite{EH17b}, Elias and Hogancamp construct a family of finite and infinite categorified Young symmetrizers $K_{\lambda}$, $P_{\lambda}$ for arbitrary partitions $\lambda$ which slide past crossings, though they do not study the resulting invariant. Finally, in the context of Khovanov homology, the case in which $C$ is a highest-weight categorified Jones--Wenzl projector is investigated by Cooper-Krushkal in \cite{CK12}, the lowest-weight categorified Jones--Wenzl projector is investigated by Rozansky in \cite{Roz10b}, and the case in which $C$ is a categorified Jones--Wenzl of arbitrary weight is studied by Cooper-Hogancamp in \cite{CH15}.}
\end{rem}

\subsection{Outlook}

In this section, we briefly describe potential extensions of this work.

\subsubsection{Categorical Idempotence}

In Section \ref{sec: top_rec}, we show that when $\blam = (1^n) \vdash n$, the infinite frayed projector $\infproj{\blam}$ and its deformation $\yinfproj{\blam}$ are unital idempotents in the sense of \cite{Hog17}.
Theorem \ref{thm: functorial_cables_intro} allows us to reduce this proof to a straightforward computation involving the $(n - 1, 1)$-colored full twist braid in a purely formal way. If we could carry out similar computations for the $(a, b)$-colored full twist for \textit{arbitrary} colors $a, b$, a similar formal procedure would show that $\infproj{\blam}$ and $\yinfproj{\blam}$ are unital idempotents for \textit{arbitrary} \textcolor{revisions}{composition}s $\blam \vdash n$. These computations are currently in progress, and we plan to present this procedure in future work.

\subsubsection{Fray Functors are 2-Functors}

The statement that $\infproj{\blam}$ is idempotent can be rephrased as $\ufray{n}{\blam}(\strand{(n)}) \star \ufray{n}{\blam}(\strand{(n)}) \simeq \ufray{n}{\blam}(\strand{(n)} \star \strand{(n)})$. We conjecture that this compatibility with horizontal composition $\star$ extends to general complexes:

\begin{conj}
Let $A, B$ be curved complexes of singular Soergel bimodules with distinguished $n$-labeled edges in their domains and codomains, and suppose $A \star B$ is well-defined. Then $\ufray{n}{\blam}(A) \star \ufray{n}{\blam}(B) \simeq \ufray{n}{\blam}(A \star B)$ for each \textcolor{revisions}{composition} $\blam \vdash n$.
\end{conj}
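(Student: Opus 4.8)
The plan is to reduce the conjecture to the absorption property $\infproj{\blam}\star\infproj{\blam}\simeq\infproj{\blam}$ of the infinite frayed projector, and then to deduce that property from the analysis of colored full twist braids sketched in the Categorical Idempotence discussion above. First I would record the general-complex version of Theorem~\ref{thm: functorial_cables_intro}: for any curved complex $C$ of singular Soergel bimodules with a distinguished $n$-labeled strand, $\ufray{n}{\blam}(C)$ is homotopy equivalent to the complex $\mathrm{cab}_{\blam}(C)$ obtained by replacing that strand with a $\blam$-colored cable, with a single copy of $\infproj{\blam}$ spliced in at a marked point of the cable; here $\mathrm{cab}_{\blam}(C)$ and $\infproj{\blam}$ both carry the standard $u$-curvature $-F_u^{(n)}$, matching the total curvature $F_C - F_u^{(n)}$ of $\ufray{n}{\blam}(C)$. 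This is essentially the definition of $\ufray{n}{\blam}$ together with the fact that applying Steps 1--3 to the identity $\strand{(n)}$ yields $\infproj{\blam}$: one cuts the $n$-strand of $C$, inserts $\strand{(n)}$ without changing anything, and then applies $\ufray{n}{\blam}$ locally, which cables $C$ and turns the inserted $\strand{(n)}$ into $\infproj{\blam}$. Since frayed projectors slide past crossings (a consequence of Theorem~\ref{thm: functorial_cables_intro}), the marked point may be chosen freely.

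Next I would use that cabling commutes with the gluing $\star$ on the nose, $\mathrm{cab}_{\blam}(A)\star\mathrm{cab}_{\blam}(B)=\mathrm{cab}_{\blam}(A\star B)$, since $\mathrm{cab}_{\blam}$ is induced by a diagrammatic substitution. Applying the previous paragraph to $A$, $B$, and $A\star B$ and positioning all marked points near the gluing locus identifies $\ufray{n}{\blam}(A)\star\ufray{n}{\blam}(B)$, up to homotopy, with $\mathrm{cab}_{\blam}(A\star B)$ carrying two adjacent copies $\infproj{\blam}\star\infproj{\blam}$ on the internal $\blam$-cable, whereas $\ufray{n}{\blam}(A\star B)$ carries a single copy. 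Because $\star$ of complexes is a (curved) tensor product --- the total differential on a gluing is $d\otimes 1 \pm 1\otimes d$ --- the pair $\infproj{\blam}\star\infproj{\blam}$ sits as an honest tensor factor, so any homotopy equivalence $\infproj{\blam}\star\infproj{\blam}\simeq\infproj{\blam}$ compatible with the curvature $-F_u^{(n)}$ propagates into the ambient complex: composing with the fixed singular Soergel bimodules making up the rest of $\mathrm{cab}_{\blam}(A\star B)$ preserves homotopy equivalence of curved complexes. This reduces the conjecture precisely to $\infproj{\blam}\star\infproj{\blam}\simeq\infproj{\blam}$.

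The main obstacle is therefore the idempotence of $\infproj{\blam}$ for an arbitrary decomposition $\blam\vdash n$. For $\blam=(1^n)$ this is carried out in Section~\ref{sec: top_rec}, where it is reduced via Theorem~\ref{thm: functorial_cables_intro} to a computation with the $(n-1,1)$-colored full twist. For general $\blam$ I expect the same strategy to apply: $\infproj{\blam}\star\infproj{\blam}$ is $\ufray{n}{\blam}$ applied to $\strand{(n)}\star\strand{(n)}=\strand{(n)}$, and unwinding the definition, its idempotence amounts to a local homotopy equivalence on the middle $W_{\blam}\star W_{\blam}$ --- that the merge-split decomposition of $W_{\blam}\star W_{\blam}$ into shifts of $W_{\blam}$, together with the two interlocking families of Koszul and $u$-twist differentials governed by \eqref{cond: interpolation}, collapses onto a single $W_{\blam}$-Koszul-twist. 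This requires the fork-sliding computations for the $(a,b)$-colored full twist braid with arbitrary colors $a,b$ that are currently in progress; the remaining curvature bookkeeping --- aligning the $u$-variables and the relations \eqref{cond: interpolation} under $\star$ --- should then be routine. A direct attack mirroring the proof of Theorem~\ref{thm: functorial_cables_intro}, without passing through idempotence, would meet the same core computation.
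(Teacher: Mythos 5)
The statement you are addressing is labeled a \textbf{Conjecture} in the paper (Section 1.3.2) and the paper offers no proof; it is stated precisely because the extension of the fork-sliding machinery to arbitrary curved complexes is not yet established. So there is no internal proof to compare against, but your proposal can be evaluated on its merits, and it contains a gap at its very first step, before one even reaches the open question of idempotence for general $\blam$.

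Your first paragraph asserts, ``essentially by definition,'' that for an arbitrary curved complex $C$ one has $\ufray{n}{\blam}(C) \simeq \mathrm{cab}_{\blam}(C) \star \infproj{\blam}$ (cable plus projector). For $C$ a deformed Rickard complex of a braid, this is exactly Corollary \ref{cor: braid_inf_slide}, and its proof uses the crossing-by-crossing fork-sliding equivalences of Propositions \ref{prop: fork_slide}, \ref{prop: cabled_fork_slides}, and \ref{prop: ufray_fork_slide}, all of which depend essentially on the braid structure of $C$. For a general complex $C \in \CS(\SSBim_{\aa}^{\bb})$ there is no diagram built from crossings, so ``replacing the $n$-strand by a $\blam$-cable'' has no meaning, and a fortiori the asserted homotopy equivalence is not definitional. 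The algebraic definition only gives $\ufray{n}{\blam}(C)$ as a twist of $(_{\bb^{\blam}}S_{\bb}) \star C \star (_{\aa}M_{\aa^{\blam}}) \otimes_R R[\Theta_{\blam}, \mathbb{U}_n]$; both the Koszul differential $\alpha = \sum (e_k(\mathbb{X}_j) - e_k(\mathbb{X}'_j))\otimes\theta_{jk}$ and the $u$-twist $-\gamma$ involve \emph{both} the codomain alphabet $\mathbb{X}$ (acting through the split) \emph{and} the domain alphabet $\mathbb{X}'$ (acting through the merge), so they do not localize to a single tensor factor from which an $\infproj{\blam}$ could be peeled off. Reading your own justification carefully --- ``cut the $n$-strand, insert $\strand{(n)}$, apply $\ufray{n}{\blam}$ locally'' --- the phrase ``apply $\ufray{n}{\blam}$ locally'' already presupposes the monoidality you are trying to prove, applied to the pair $(C, \strand{(n)})$, so the reduction is circular.

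After that, your argument that a homotopy equivalence $\infproj{\blam}\star\infproj{\blam}\simeq\infproj{\blam}$ propagates through a horizontal composition is fine (since $\star$ is a dg bifunctor preserving homotopy equivalences), and you correctly identify idempotence of $\infproj{\blam}$ for general $\blam\vdash n$ as a second outstanding obstacle, citing the needed $(a,b)$-colored full twist computation. But that is the \emph{second} gap: even granting idempotence, the plan does not close unless one first establishes that $\ufray{n}{\blam}(A)\star\ufray{n}{\blam}(B)$ is homotopy equivalent (with matching curvature, and compatibly with the $\mathbb{U}$-variable identifications needed to make the curvatures $F_A - F_u^{(n)}$ and $F_B - F_u^{(n)}$ add to $F_{A\star B} - F_u^{(n)}$) to something with an explicit $\infproj{\blam}\star\infproj{\blam}$ factor in the middle. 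Expanding by definitions, the internal factor one actually sees is the digon $(_{\aa}M_{\aa^{\blam}})\star(_{\aa^{\blam}}S_{\aa}) \cong f_{n,\blam}(q)\,\strand{\aa}$ carrying two independent copies of the Koszul and bulk twists in interface, top, and bottom alphabets; collapsing this down to a single copy is close in spirit to idempotence, but requires an $R[\mathbb{X},\mathbb{X}'']$-equivariant version of it (the kind of equivariance the paper carefully tracks in Section \ref{sec: fray_functors}), which Theorem \ref{thm: inf_proj_idem} does not supply on its own. A correct proof would likely need to mimic Section \ref{sec: fray_functors}'s strategy and manipulate the twists directly, rather than pass through a cabling picture that is only available for braids.
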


We suspect similar functoriality statements hold on the nose for $\yufray{n}{\blam}$ and up to a polynomial in $q, t$ for $\fray{n}{\blam}$ and $\yfray{n}{\blam}$.

\subsubsection{Categorical Diagonalization of Colored Full Twists}

Recall that when $\blam = (1^n)$, our infinite frayed projector $\infproj{\blam}$ is homotopy equivalent to the infinite Abel--Hogancamp projector $P_{(1^n)}^{\vee}$. In \cite{EH17b}, Elias--Hogancamp realize $P_{(1^n)}^{\vee}$ as one of a family of categorical idempotents, indexed by Young tableaux on $n$ boxes, which diagonalize the $n$-strand, $1$-colored full twist. We believe our $\infproj{\blam}$ should also be one of a family of categorical idempotents which diagonalize the $m$-strand, $\lambda_1, \dots, \lambda_m$-colored full twist for each \textcolor{revisions}{composition} $\blam = (\lambda_1, \dots, \lambda_m) \vdash n$.

In fact, another candidate idempotent in this family has already been studied. Cautis in \cite{Cau17} proves that a certain stable limit of infinite full twist braids gives rise to a categorical idempotent, which he calls a HOMFLY clasp. Moreover, this HOMFLY clasp for the $n$-strand, $1$-colored full twist recovers the row projector $P_{(n)}$ of \cite{Hog18}, which lives opposite $P_{(1^n)}^{\vee}$ in Elias--Hogancamp's diagonalization of the full twist. We conjecture that our $\infproj{\blam}$ and Cautis's HOMFLY clasp should similarly live at the opposite ends of a diagonalization of an arbitrary full twist.

Further, Cautis's HOMFLY clasp for a $1$-strand full twist with label $n$ is exactly the identity bimodule $\strand{(n)}$. Applying $\ufray{n}{\blam}$ to this clasp recovers our $\infproj{\blam}$. We conjecture that this behavior persists: given a \textcolor{revisions}{composition} $\bnu = (\nu_1, \dots, \nu_m) \vdash n$ and a \textit{refinement} $\blam = \bnu^{\aa}$ for some $\aa \vdash \nu_i$ (in the sense of Section \ref{sec: symm_poly}), applying $\ufray{\nu_i}{\aa}$ to a version of Cautis's HOMFLY clasp for the $\bnu$-colored full twist should recover an intermediate term in the diagonalization of the $\blam$-colored full twist.

\subsubsection{Algebra Action}

In \cite{GHM21}, Gorsky--Hogancamp--Mellit construct a dg algebra $\mathcal{A}$ and show that $\mathcal{A}$ has a well-defined action on the Rouquier complex of an arbitrary braid $\beta$. Our $\yufray{n}{\blam}(C)$ seems to mimic this action for a similar algebra, with the generators $\xi_i$ acting via our backwards Koszul differentials and the generators $u_i$ acting via differentiation with respect to our bulk deformation variables $u_i$. It would be interesting to investigate this relationship further, and especially to investigate whether there is a $\mathfrak{sl}_2$-action related to our functors.

\subsection{Structure}

In Section \ref{sec: hom_alg}, we recall the necessary homological algebra required to precisely formulate and prove our main theorems. Our treatment of extension of scalars and deformations in this section is rather non-standard but well-adapted to the proofs of our main theorems; to our knowledge, the deformation lifting material in Section \ref{subsec: gauss_elim} is new. In Section \ref{sec: ssbim_main}, we recall the theory of singular Soergel bimodules and deformed Rickard complexes. In Section \ref{sec: fray_functors}, we introduce the family of fray functors described in the Introduction and establish some naturality results, including Theorem \ref{thm: functorial_cables_intro}. In Section \ref{sec: cat_idem}, we use the results of Section \ref{sec: fray_functors} to prove that our $\infproj{\blam}$ agrees with the Abel--Hogancamp infinite projector when $\blam = (1^n)$. Finally, in Section \ref{sec: link_hom}, we discuss applications to link homology and prove Theorems \ref{thm: intrinsic_vs_cabled_intro} and \ref{thm: hom_family_intro}.

\subsection{Acknowledgements}

We would like to thank Sabin Cautis, Ben Elias, and Nicolle Gonz\'alez for helpful discussions at WARTHOG 2023 as this work was taking shape. We would also like to thank Matt Hogancamp and Josh Wang for many helpful discussions throughout the preparation of this work and the math department at MIT for hosting these discussions. We would especially like to thank David Rose for his countless hours of discussion and suggesting that the functors of Section \ref{sec: fray_functors} might exist \textcolor{revisions}{and an anonymous referee, whose careful attention and detailed feedback regarding an earlier draft have dramatically improved the quality of this work.}

During the preparation of this work, the author was partially supported by Simons Collaboration Grant 523992: “Research on knot invariants, representation theory, and categorification,” NSF CAREER Grant DMS-2144463, and the Graduate Summer Research Fellowship at UNC-Chapel Hill. This project was conceived of and partially carried out during the AIM Research Community on Link Homology.

\section{Homological Algebra Background} \label{sec: hom_alg}

\subsection{DG Categories} \label{sec: dg_categories}

Unless otherwise noted, we fix throughout a field $R$ with $\mathrm{char}(R) \neq 2$, an abelian group $\Gamma$, and an $R$-linear category $\AS$. We denote by $\AS[\Gamma]$ the category whose objects are $\Gamma$-indexed collections $X = \{X_{\gamma}\}_{\gamma \in \Gamma}$ of objects of $\AS$ and whose morphism spaces are $\Gamma$-graded $R$-modules

\[
\Hom_{\AS[\Gamma]}(X, Y) = \bigoplus_{\gamma \in \Gamma} \Hom^{\gamma}_{\AS[\Gamma]}(X, Y)
\]
with components

\[
\Hom^{\gamma}_{\AS[\Gamma]}(X, Y) = \prod_{\gamma' \in \Gamma} \Hom_{\AS}(X_{\gamma'}, Y_{\gamma' + \gamma}).
\]

We say a morphism $f \colon X \to Y$ in $\AS[\Gamma]$ is \textit{homogeneous of degree $\gamma$} and write $\mathrm{deg}(f) = \gamma$ if $f \in \Hom^{\gamma}_{\AS[\Gamma]}(X, Y)$.

Suppose that $\Gamma$ is equipped with a symmetric bilinear form $\langle -, - \rangle \colon \Gamma \times \Gamma \to \mathbb{Z}/2\mathbb{Z}$. In this setting, we define the \textit{commutator} of two homogeneous morphisms $f, g \in \End_{\AS[\Gamma]}(X)$ to be

\[
[f, g] := f \circ g - (-1)^{\langle \mathrm{deg}(f), \mathrm{deg}(g) \rangle} g \circ f.
\]

If $\AS$ is additive and monoidal, then given any pair \textcolor{revisions}{$X, Y \in \AS[\Gamma]$, there is a natural tensor product $X \otimes_{\AS[\Gamma]} Y$ given by}

\[
\textcolor{revisions}{(X \otimes_{\AS[\Gamma]} Y)_{\gamma} := \bigoplus_{\gamma_1 + \gamma_2 = \gamma} X_{\gamma_1} \otimes_{\AS} Y_{\gamma_2}}
\]
\textcolor{revisions}{whenever the right-hand side is a well-defined object of $\AS$ for each $\gamma \in \Gamma$. We can similarly define a tensor product of (homogeneous) morphisms $f, g$ by}

\[
(f \otimes_{\AS[\Gamma]} g)(x \otimes_{\AS[\Gamma]} y) := (-1)^{\langle \mathrm{deg}(f), \mathrm{deg}(g) \rangle} f(x) \otimes_{\AS[\Gamma]} g(y).
\]
As a consequence, we have the \textit{middle interchange law}:

\[
(f \otimes_{\AS[\Gamma]} g) \circ (f' \otimes_{\AS[\Gamma]} g') = (-1)^{\langle \mathrm{deg}(f'), \mathrm{deg}(g) \rangle} (f \circ f') \otimes_{\AS[\Gamma]} (g \circ g'). 
\]%
\textcolor{revisions}{If $X \otimes_{\AS[\Gamma]} Y$ is well-defined for every such pair, then these tensor products lift to a monoidal structure on $\AS[\Gamma]$.}

Suppose additionally that $\Gamma$ is equipped with a distinguished element ${\bf{d}} \in \Gamma$ satisfying $\langle {\bf{d}}, {\bf{d}} \rangle = 1 \in \mathbb{Z}/2\mathbb{Z}$. Then we may consider the category $\AS[\Gamma]_{dg}$ of \textit{$\Gamma$-graded complexes in $\AS$}. Objects of $\AS[\Gamma]_{dg}$ are pairs $(X, d_X)$ where $X \in \AS[\Gamma]$ and $d_X \in \End^{{\bf{d}}}_{\AS[\Gamma]}(X)$ satisfies $d_X^2 = 0$. We refer to this pair as a \textit{$\Gamma$-graded complex} and to $d_X$ as the \textit{differential} on $X$.

Morphism spaces in $\AS[\Gamma]_{dg}$ consist of $\Gamma$-graded complexes in $R-\mathrm{Mod}$. The underlying sets are inherited from those of $\AS[\Gamma]$:

\[
\Hom_{\AS[\Gamma]_{dg}}\Big( (X, d_X), (Y, d_Y) \Big) = \Hom_{\AS[\Gamma]}(X, Y).
\]

We denote the differential on morphism spaces by $d_{\AS[\Gamma]}$; it is defined on homogeneous morphisms via the formula

\[
d_{\AS[\Gamma]} \colon f \mapsto [d, f] := d_Y \circ f - (-1)^{\langle {\bf{d}}, \mathrm{deg}(f) \rangle} f \circ d_X
\]
and extended $R$-linearly. \textcolor{revisions}{Again, if $\AS$ is additive and monoidal, then given a pair of complexes $(X, d_X), (Y, d_Y) \in \AS[\Gamma]_{dg}$, 
such that $X \otimes_{\AS[\Gamma]} Y$ is well-defined}, we can define a tensor product complex as follows:

\[
(X, d_X) \otimes_{\AS[\Gamma]_{dg}} (Y, d_y) = \big( X \otimes_{\AS[\Gamma]} Y, d_X \otimes_{\AS[\Gamma]} \mathrm{id}_Y + \mathrm{id}_X \otimes_{\AS[\Gamma]} d_Y \big).
\]

\begin{defn} \label{def: diff_gamm_grad_mod}
    In the special case when $\AS = R-\mathrm{Mod}$, we call objects in $\AS[\Gamma]_{dg}$ \textit{differential $\Gamma$-graded $R$-modules}. Given a differential $\Gamma$-graded $R$-module $(X, d_X)$, we call $X$ a \textit{differential $\Gamma$-graded $R$-algebra} \textcolor{revisions}{(or just a \textit{dg $R$-algebra} if $\Gamma$ is clear from context)} if $X$ is equipped with an $R$-bilinear, $\Gamma$-graded multiplication satisfying the \textit{graded Leibniz rule}:

    \[
    d_X(ab) = \left( d_X(a) \right) b + (-1)^{\langle {\bf{d}}, \mathrm{deg}(a) \rangle} a \left( d_X(b) \right).
    \]

\end{defn}

\begin{example}
    Let $\AS$ be an arbitrary $R$-linear category. For each $X \in \AS[\Gamma]_{dg}$, the endomorphism space $\End_{\AS[\Gamma]_{dg}}(X)$ is equipped with an $R$-linear, $\Gamma$-graded multiplication given by composition. The graded Leibniz rule in this setting takes the form

    \[
    d_{\AS[\Gamma]}(f \circ g) = d_{\AS[\Gamma]}(f) \circ g + (-1)^{\langle {\bf{d}}, \mathrm{deg}(f) \rangle} f \circ d_{\AS[\Gamma]}(g).
    \]

    This identity is easily verified, so each such space is a differential $\Gamma$-graded $R$-algebra.
\end{example}

We call a morphism $f \in \Hom_{\AS[\Gamma]_{dg}}(X, Y)$ \textit{closed} if $d_{\AS[\Gamma]}(f) = 0$ and \textit{exact} (or \textit{nullhomotopic}) if $f = d_{\AS[\Gamma]}(g)$ for some $g \in \Hom_{\AS[\Gamma]_{dg}}(X, Y)$. We call a closed map of degree $0$ a \textit{chain map}. If $f$ and $g$ are homogeneous morphisms with $\mathrm{deg}(f) = \mathrm{deg}(g)$, we say $f$ and $g$ are \textit{homotopic} if $f - g$ is exact; in this case, we write $f \sim g$.

\begin{defn} \label{def: dg_cat}
    We call $\AS$ a \textit{$\Gamma$-graded category over $R$} if $\AS$ is enriched over $R-\mathrm{Mod}[\Gamma]$. We call $\AS$ a \textit{differential $\Gamma$-graded category over $R$} if $\AS$ is enriched over $R-\mathrm{Mod}[\Gamma]_{dg}$ and for each $X, Y, Z \in \AS$, composition defines a chain map

    \[
    \Hom_{\AS}(X, Y) \otimes_{R-\mathrm{Mod}[\Gamma]_{dg}} \Hom_{\AS}(Y, Z) \to \Hom_{\AS}(X, Z).
    \]

    \vspace{1em}
\end{defn}

We specialize for the remainder of this work to the situation in which $\Gamma$ is a finitely-generated free abelian group as in the following examples.

\begin{example}
    Let $\Gamma = \mathbb{Z}$ with $\langle j, j' \rangle = 0$. We denote the generator of $\Gamma$ by $q$ and indicate this by writing $\Gamma = \mathbb{Z}_q$; we also often write elements of $\mathbb{Z}_q$ multiplicatively, denoting e.g. $-3 \in \mathbb{Z}_q$ as $q^{-3}$. We refer to $\mathbb{Z}_q$ throughout as the \textit{quantum} grading.
    
    Since there can be no element ${\bf{d}} \in \mathbb{Z}_q$ satisfying $\langle {\bf{d}}, {\bf{d}} \rangle = 0$, we may only consider the category $\AS[\mathbb{Z}_q]$. When $\AS = R-\mathrm{Mod}$, this is the usual category of graded $R$-modules in which we do not restrict ourselves to degree $0$ morphisms. We will also denote by $q^{\pm 1}$ the \textit{quantum grading shift} functors on $\mathcal{A}[\mathbb{Z}_q]$, defined on objects by $(q^jX)_i = X_{i - j}$ and as the identity on morphisms.
\end{example}

\begin{example} \label{ex: homological_gradings}
    Let $\Gamma = \mathbb{Z}$ with $\langle k, k' \rangle = kk'$ and ${\bf{d}} = 1$. Then $\AS[\Gamma]_{dg}$ is the usual category of (co)chain complexes over $\AS$ and morphisms of all degree. We denote the generator of $\Gamma$ by $t$ and write elements of $\Gamma = \mathbb{Z}_t$ multiplicatively as in the previous example. We refer to $\mathbb{Z}_t$ as the \textit{homological} grading.
    
    Since this case is so common, we employ the shortened notation $\CS{\AS} := \AS[\Gamma]_{dg}$. We also often replace the phrase ``differential $\Gamma$-graded" with the letters ``dg", giving rise to the notion of e.g. a dg $R$-module, a dg $R$-algebra, a dg category over $R$, etc.
    
    We denote by $t^{\pm 1}$ the homological grading shift functors on $\AS[\mathbb{Z}_t]$ and $\AS[\mathbb{Z}_t]_{dg}$. The choice of bilinear form associated to $\Gamma$ requires certain sign conventions in defining these functors. More precisely, $t$ is defined on objects of $\AS[\mathbb{Z}_t]_{dg}$ by

    \[
    t(X, d_X) = (tX, -d_X); \quad (tX)_i = X_{i - 1}
    \]
    and on morphisms as the identity on the nose. This sign convention ensures that the obvious degree $t$ morphism $X \to tX$ given by the identity on each chain object of $X$ is closed.
\end{example}

\begin{example}
    Combining the previous two examples, let $\Gamma = \mathbb{Z}_q \times \mathbb{Z}_t$ with $\langle (j, k), (j', k') \rangle = kk'$ and ${\bf{d}} = (0, 1)$. When $\AS = R-\mathrm{Mod}$, objects of $\AS[\Gamma]$ are simply $\mathbb{Z}_q \times \mathbb{Z}_t$-graded $R$-modules, while objects of $\AS[\Gamma]_{dg}$ are chain complexes of $\mathbb{Z}_q$-graded $R$-modules. The differentials on such complexes are quantum degree-preserving $R$-module homomorphisms of homological degree $1$; by the multiplicative conventions above, we often notate this fact as $\mathrm{deg}(d) = q^0t^1 = t$.
\end{example}

More generally, given any grading group $\Gamma$ equipped with a (potentially trivial) symmetric bilinear form $\langle - , - \rangle_{\Gamma}$ and $R$-linear category $\AS$, we can form the category $\AS[\Gamma \times \mathbb{Z}_t]_{dg}$ as follows. We define a bilinear form $\langle - , - \rangle_{\Gamma \times \mathbb{Z}_t}$ by

\[
\langle (\gamma, k) , (\gamma', k') \rangle_{\Gamma \times \mathbb{Z}_t} := \langle \gamma, \gamma' \rangle_{\Gamma} + kk'
\]
and let ${\bf{d}} := (0, 1)$. The resulting category $\AS[\Gamma \times \mathbb{Z}_t]_{dg}$ is the (dg) category of chain complexes of $\Gamma$-graded objects of $\AS$. Since such constructions are so common, we will employ the notation

\[
\CS(\AS[\Gamma]) := \AS[\Gamma \times \mathbb{Z}_t]_{dg}.
\]

The category $\CS(\AS[\Gamma])$ comes equipped with grading shift endofunctors $(-)[(\gamma, n)]$ for each $(\gamma, n) \in \Gamma \times \mathbb{Z}_t$. These functors act on objects by

\[
(X, d_X)[(\gamma, n)] = \left( X[(\gamma, n)], (-1)^n d_X \right); \quad (X[(\gamma, n)])_{(\gamma', m)} = X_{(\gamma - \gamma', m - n)}
\]
and as the identity on morphisms. As in Example \ref{ex: homological_gradings}, this sign convention ensures that the obvious degree $(\gamma, n)$ morphism $X \to X[(\gamma, n)]$ given by the identity on each chain object of $X$ is closed.

When considering morphisms in $\CS(\AS[\Gamma])$, we will often only be interested in their \textit{homological} degree. As such, given two complexes $X, Y \in \CS(\AS[\Gamma])$, we also reserve the shorthand notation

\[
\Hom^k_{\CS(\AS[\Gamma])}(X, Y) := \bigoplus_{\gamma \in \Gamma} \Hom^{(\gamma, k)}_{\CS(\AS[\Gamma])}(X, Y).
\]

Whenever morphism spaces are written with a single superscript as above, that index should be interpreted as specifying the homological degree of the relevant morphism, even when the underlying grading group $\Gamma$ contains additional factors.

\begin{rem} \label{rem: implicit_t_grading}
We will often implicitly consider $\AS[\Gamma]$ as a $\Gamma \times \mathbb{Z}_t$-graded category over $R$ with trivial $\mathbb{Z}_t$-grading. From this perspective, there is a natural full inclusion $\AS[\Gamma] \hookrightarrow \CS(\AS[\Gamma])$ of $\Gamma \times \mathbb{Z}_t$-graded categories over $R$ sending an object $X = \{X_{\gamma}\}_{\gamma \in \Gamma}$ to the complex $(X, 0)$ satisfying $X_{\gamma, k} = X_{\gamma}$ for $k = 0$ and $X_{\gamma, k} = 0$ for $k \neq 0$.
\end{rem}

\begin{example} \label{ex: tri_grading}
    Let $\Gamma = \mathbb{Z}_a \times \mathbb{Z}_q \times \mathbb{Z}_t$ with $\langle (i, j, k), (i', j', k') \rangle = ii' + kk'$ and ${\bf{d}} = (0, 0, 1)$. This choice of $\Gamma$ will naturally arise when applying derived functors (specifically Hochschild homology) to each chain object in categories of the form $\CS(\AS[\mathbb{Z}_q])$; as such, we refer to the factor $\mathbb{Z}_a$ as the \textit{Hochschild} or \textit{derived} grading. Notice that the Hochschild grading participates in homological sign conventions, but degrees of differentials on chain complexes in $\CS(\AS[\mathbb{Z}_a \times \mathbb{Z}_q])$ still have degree $t$. As above, we write elements of $\mathbb{Z}_a$ multiplicatively and denote Hochschild grading shift functors by $a^{\pm 1}$.
\end{example}

For any $R$-linear category $\AS$, we denote by $\mathrm{Ch}(\AS)$ the category with the same objects as $\CS(\AS)$ but with morphisms restricted to chain maps. We restrict the use of the word \textit{isomorphism} in $\CS(\AS)$ to describe invertible maps in $\mathrm{Ch}(\AS)$ and write $X \cong Y$ in case $X, Y \in \CS(\AS)$ are isomorphic in this sense.

With $\AS$ as above, we denote by $K(\AS)$ the category with the same objects and morphisms as $\CS(\AS)$, but we declare two morphisms $f, g$ in $K(\AS)$ to be equal if $f \sim g$. We will often refer to $K(\AS)$ as the \textit{homotopy category} of $\CS(\AS)$. We call two complexes $X, Y \in K(\AS)$ \textit{homotopy equivalent} and write $X \simeq Y$ if $X$ and $Y$ are isomorphic in $K(\AS)$; we call $X$ \textit{contractible} if $X \simeq 0$. The full data of a homotopy equivalence is a collection of morphisms

\begin{equation} \label{eq: hom_eq}
\begin{tikzcd}
X \arrow[rr, "f", shift left] \arrow["h"', loop, distance=2em, in=215, out=145] &  & Y \arrow[ll, "g", shift left] \arrow["k"', loop, distance=2em, in=35, out=325]
\end{tikzcd}
\end{equation}

\vspace{1em}

\noindent in $\CS(\AS)$ with $\mathrm{deg}(f) = \mathrm{deg}(g) = 0$, $\mathrm{deg}(h) = \mathrm{deg}(k) = -1$, $d_{\CS(\AS)}(f) = d_{\CS(\AS)}(g) = 0$, $d_{\CS(\AS)}(h) = \mathrm{id}_X - g \circ f$, and $d_{\CS(\AS)}(k) = \mathrm{id}_Y - f \circ g$. We will often abuse terminology by referring to just $f$ or $g$ as a homotopy equivalence.

\begin{defn} \label{def: sdr}
In the special case when $k = 0$ in \eqref{eq: hom_eq}, we call the data $\{f, g, h\}$ a \textit{strong deformation retraction} from $X$ onto $Y$. We say this strong deformation retraction satisfies the \textit{side conditions} if $h^2 = 0$, $fh = 0$, and $hg = 0$.
\end{defn}

In the special case that $\AS$ is abelian, then a morphism $f \in \Hom_{\mathrm{Ch}(\AS)}(X, Y)$ induces a morphism $f^* \colon H^{\bullet}(X) \to H^{\bullet}(Y)$ in cohomology. We call $f$ a \textit{quasi-isomorphism} if $f^*$ is an isomorphism.

We will often use superscripts to denote homological bounds in the various categories considered above. For example, $K^b(\AS)$ (resp. $K^+(\AS)$, $K^-(\AS)$) will denote the homotopy category of bounded (resp. bounded below, bounded above) chain complexes over $\AS$. $\AS^b[\mathbb{Z}_t]$, $\CS^b(\AS)$, $\mathrm{Ch}^b(\AS)$, etc. are defined similarly.

\subsection{Curved Complexes} \label{sec: curved_complexes}

In the previous section, given an $R$-linear category $\AS$, we formed the category $\CS(\AS)$ of complexes over $\AS$ by considering $\mathbb{Z}_t$-graded sequences $X = \bigoplus_{k \in \mathbb{Z}} X_k$ of objects of $\AS$ together with a degree $t$ differential $d_X$ satisfying $d_X^2 = 0$. In this section, we relax the requirement $d_X^2 = 0$, instead recovering the category of \textit{curved} complexes over $\AS$.

\begin{defn} \label{def: cat_center}
    Let $\BS$ be a $\Gamma \times \mathbb{Z}_t$-graded category over $R$. The \textit{center} of $\BS$ is the $\Gamma \times \mathbb{Z}_t$-graded algebra $Z(\BS)$ of natural transformations of the identity functor $\mathrm{id}_{\BS}$.
\end{defn}

Given an $R$-linear category $\AS$ and an element $F \in Z(\AS[\mathbb{Z}_t])$, we would like to replace the defining equation $d_X^2 = 0$ in $\CS(\AS)$ with the condition $d_X^2 = F$. This runs into a technical difficulty, as a well-known argument shows that $Z(\AS[\mathbb{Z}_t]) \cong Z(\AS)$ as $\mathbb{Z}_t$-graded algebras (see e.g. Lemma 3.9 of \cite{HRW21}). In particular, $Z(\AS[\mathbb{Z}_t])$ has no elements of degree $t^2$. A standard remedy is to formally extend scalars along some $\Gamma \times \mathbb{Z}_t$-graded $R$-module $S$.

\subsubsection{Extension of Scalars}

\begin{defn} \label{def: extending_scalars_obj_general}
Fix an abelian group $\Gamma$ equipped with a symmetric bilinear form as above, and let $\BS$ be an additive, $\Gamma \times \mathbb{Z}_t$-graded category over $R$. Suppose $S$ is a free $\Gamma \times \mathbb{Z}_t$-graded $R$-module with basis $\mathcal{S} = \{s_i\}_{i \in \mathcal{I}}$. Then for each $X \in \BS$, we denote by $X \otimes_R S$ the formal direct sum

\[
X \otimes_R S := \bigoplus_{i \in \mathcal{I}} X[\mathrm{deg}(s_i)] \in \BS
\]
whenever this direct sum exists. In this situation, given a basis element $s_i \in \mathcal{S}$, we often denote the summand $X[\mathrm{deg}(s_i)]$ of $X \otimes_R S$ by $X \otimes s_i$ or simply $s_i X$.
\end{defn}

\begin{defn}
Given a free $\Gamma \times \mathbb{Z}_t$-graded $R$-module $S$, we denote by $\End_R(S)$ the $\Gamma \times \mathbb{Z}_t$-graded $R$-algebra $\End_{R-\mathrm{Mod}[\Gamma \times \mathbb{Z}_t]}(S)$.
\end{defn}

Since $\BS$ is $R$-linear, given any $X, Y \in \BS$, we can consider the $\Gamma \times \mathbb{Z}_t$-graded $R$-bimodule $\Hom_{\BS}(X, Y) \otimes_R \End_R(S)$, where here $\otimes_R$ denotes the usual tensor product of $\Gamma \times \mathbb{Z}_t$-graded $R$-modules. Whenever $X\otimes_RS, Y \otimes_R S$ are well-defined, we can construct a graded $R$-module inclusion 

\begin{equation} \label{eq: extend_inclusion}
\Hom_{\BS}(X, Y) \otimes_R \End_R(S) \hookrightarrow \Hom_{\BS}(X \otimes_R S, Y \otimes_R S)
\end{equation}
as follows. Let $f \in \Hom_{\BS}(X, Y)$ and $\varphi \in \End_R(S)$ be given. For each basis element $s_i \in \mathcal{S}$, expand $\varphi(s_i)$ along the basis $\mathcal{S}$ as $\varphi(s_i) = \sum_{j \in \mathcal{I}} r_{ij} s_j$; then $r_{ij} = 0$ for all but finitely many $j$. It follows that the direct sum

\[
(f \otimes \varphi)_i := \oplus_{j \in I} r_{ij} f \colon X[s_i] \to Y \otimes_R S
\]
is a well-defined morphism in $\BS$ for each $i \in \mathcal{I}$. Then the inclusion \eqref{eq: extend_inclusion} is given on pure tensors $f \otimes \varphi \in \Hom_{\BS}(X, Y) \otimes_R \End_R(S)$ by

\[
f \otimes \varphi \mapsto \bigoplus_{i \in \mathcal{I}} (f \otimes \varphi)_i
\]
and extended $R$-linearly. We will always suppress this inclusion in the sequel.

\begin{rem}
A straightforward verification shows that \eqref{eq: extend_inclusion} preserves the middle interchange law

\[
(f \otimes s) \circ (f' \otimes s') = (-1)^{\langle \mathrm{deg}(f'), \mathrm{deg}(s) \rangle} (f \circ f') \otimes (s's).
\]
\end{rem}

\vspace{1em}

There is a natural graded $R$-module inclusion of $\Hom_{\BS}(X, Y)$ into $\Hom_{\BS}(X \otimes_R S, Y \otimes_R S)$ sending a morphism $f \colon X \to Y$ to $f \otimes \mathrm{id}_S$. Additionally, post-composition with morphisms of the form $\mathrm{id}_Y \otimes \varphi$ defines a right action of the $\Gamma \times \mathbb{Z}_t$-graded $R$-algebra $\End_R(S)$ on $\Hom_{\BS}(X \otimes_R S, Y \otimes_R S)$.

\begin{defn} \label{def: extending_scalars_cat_general}
    Let $\Gamma, R, S, \BS$ be as in Definition \ref{def: extending_scalars_obj_general}, and suppose $X \otimes_R S$ is well-defined for each $X \in \BS$. We define a subcategory $\BS \otimes_R S$, called the \textit{full object-driven extension of scalars of $\BS$ by $S$}, as follows. The objects of $\BS \otimes_R S$ are those of the form $X \otimes_R S$ for some $X \in \BS$. Given two objects $X, Y \in \BS$, we define the morphism space $\Hom_{\BS \otimes_R S}(X \otimes_R S, Y \otimes_R S)$ to be the $\End_R(S)$-submodule of $\Hom_{\BS}(X \otimes_R S, Y \otimes_R S)$ consisting of (potentially infinite) $\End_R(S)$-linear combinations of morphisms in $\Hom_{\BS}(X, Y)$.
\end{defn}

When extending scalars, it is often useful to restrict the allowable endomorphisms of $S$. We make this precise below.

\begin{defn} \label{def: extending_scalars_cat_restricted}
Let $\Gamma, R, S, \BS$ be as in Definition \ref{def: extending_scalars_cat_general}. Let $\Phi$ be a $\Gamma \times \mathbb{Z}_t$-graded sub $R$-algebra of $\End_R(S)$. Then the \textit{$\Phi$-restricted object-driven extension of scalars of $\BS$ by $S$}, denoted $(\BS \otimes_R S)_{\Phi}$, is the subcategory of $\BS \otimes_R S$ with the same objects as $\BS \otimes_R S$ and morphisms $\Hom_{(\BS \otimes_R S)_{\Phi}}(X, Y)$ potentially infinite $\Phi$-linear combinations of morphisms in $\Hom_{\BS}(X, Y)$.
\end{defn}

\begin{rem} \label{rem: extension_obj_mor}
	There are two common notions of categorical extension of scalars. The first is the \textit{object-driven} extension of Definition \ref{def: extending_scalars_cat_restricted}; this is also the approach taken in \cite{GH22, Con23}. The second is the \textit{morphism-driven} extension for $S$ a graded $R$-algebra; the resulting category $(\BS \otimes_R S)_{mor}$ has objects the same as those of $\BS$ and morphisms $\Hom_{(\BS \otimes_R S)_{mor}}(X, Y) := \Hom_{\BS}(X, Y) \otimes_R S$. This is the approach taken in \cite{HRW21}; see Definition 3.10 of that work.

    Each approach has its advantages. The morphism-driven extension is succinct to define and automatically restricts morphisms in $\BS \otimes_R S$ to be finite $S$-linear combinations of morphisms in $\BS$. On the other hand, the object-driven extension in principle allows one to consider potentially infinite $\End_R(S)$-linear combinations of morphisms in $\BS$, but one must typically restrict the allowable morphisms by hand to get a well-behaved category. We prefer the object-driven extension precisely because we wish to explicitly allow infinite $\End_R(S)$-linear combinations of morphisms between curved complexes. Insisting that $\BS \otimes_R S$ be a subcategory of $\BS$ ensures that these infinite sums are well-defined morphisms in $\BS$.
    
When $\BS = \AS^b[\mathbb{Z}_t]$ consists of \textit{bounded} complexes, $S$ is a graded-commutative $R$-algebra whose generators all have positive $\mathbb{Z}_t$-degree, and $\Phi$ is exactly the class of endomorphisms of $S$ given by mutliplication by some element $s \in S$, then $(\BS \otimes_R S)_{\Phi}$ is canonically isomorphic to the morphism-driven extension $(\BS \otimes_R S)_{mor}$. We almost exclusively work in that situation in the present work, and the reader who prefers the morphism-driven extension is welcome to imagine in such instances that we take that approach; we will always indicate when the distinction is essential.
\end{rem}

\subsubsection{Curved Complexes}

Fix an additive $\Gamma$-graded category $\AS$ over $R$ and a well-defined extension of scalars $\DS := (\AS[\mathbb{Z}_t] \otimes_R S)_{\Phi}$ as in Definition \ref{def: extending_scalars_cat_restricted}.

\begin{prop} \label{prop: ext_scal_nat}
The map $\eqref{eq: extend_inclusion}$ restricts to a natural inclusion $Z(\AS) \otimes_R Z(\Phi) \hookrightarrow Z(\DS)$ of graded $R$-algebras.
\end{prop}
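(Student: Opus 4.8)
The plan is to verify that the inclusion \eqref{eq: extend_inclusion}, when restricted to pure tensors $\eta \otimes \varphi$ with $\eta \in Z(\AS)$ and $\varphi \in Z(\Phi)$, lands in $Z(\DS)$, i.e.\ that the image defines a natural transformation of $\mathrm{id}_{\DS}$. Recall that an element of $Z(\AS)$ is a family $\eta = \{\eta_X \colon X \to X\}_{X \in \AS}$ commuting with every morphism of $\AS$, and since $Z(\AS[\mathbb{Z}_t]) \cong Z(\AS)$ such an $\eta$ extends canonically to $\AS[\mathbb{Z}_t]$ (acting on each homological degree by $\eta$, with the appropriate sign bookkeeping coming from $\mathrm{deg}(\eta)$). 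Likewise $Z(\Phi) \subseteq \End_R(S)$ consists of those $\varphi$ commuting with all of $\Phi$. So first I would spell out explicitly the candidate natural transformation: for an object $X \otimes_R S \in \DS$, set $(\eta \otimes \varphi)_{X \otimes_R S}$ to be the image of $\eta_X \otimes \varphi$ under \eqref{eq: extend_inclusion}; concretely, on the summand $X[\mathrm{deg}(s_i)]$ it is $\bigoplus_{j} r_{ij}\, \eta_X$ where $\varphi(s_i) = \sum_j r_{ij} s_j$.

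The core step is the naturality check: given a morphism $\psi \colon X \otimes_R S \to Y \otimes_R S$ in $\DS$, one must show $(\eta \otimes \varphi)_{Y \otimes_R S} \circ \psi = \psi \circ (\eta \otimes \varphi)_{X \otimes_R S}$ (up to the Koszul sign dictated by $\langle \mathrm{deg}(\psi), \mathrm{deg}(\eta \otimes \varphi)\rangle$). By definition of $\DS = (\AS[\mathbb{Z}_t] \otimes_R S)_{\Phi}$, every such $\psi$ is a (possibly infinite) $\Phi$-linear combination of morphisms of the form $f \otimes \mathrm{id}_S$ with $f \in \Hom_{\AS[\mathbb{Z}_t]}(X, Y)$; explicitly $\psi = \sum_k (f_k \otimes \mathrm{id}_S)\circ(\mathrm{id}_Y \otimes \phi_k)$ or, reorganized via the middle interchange law, $\psi = \sum_k (\mathrm{id}_Y \otimes \phi_k')\circ(f_k \otimes \mathrm{id}_S)$ with $\phi_k \in \Phi$. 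So by $R$-linearity of both sides it suffices to treat $\psi = f \otimes \phi$ with $f$ a morphism of $\AS[\mathbb{Z}_t]$ and $\phi \in \Phi$. Now $\eta \otimes \varphi = (\eta \otimes \mathrm{id}_S)\circ(\mathrm{id}\otimes \varphi)$, and one commutes the two factors past $f \otimes \phi$ separately: $\eta_X$ (extended to $\AS[\mathbb{Z}_t]$) commutes with $f$ because $\eta \in Z(\AS) = Z(\AS[\mathbb{Z}_t])$, and $\varphi$ commutes with $\phi$ because $\varphi \in Z(\Phi)$. The middle interchange law \[(f \otimes s) \circ (f' \otimes s') = (-1)^{\langle \mathrm{deg}(f'), \mathrm{deg}(s) \rangle} (f \circ f') \otimes (s's)\] is exactly what converts the two commutations into the single required Koszul sign; this bilinearity of the sign in the grading group is why the identity assembles correctly. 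One then re-expands along the basis $\mathcal{S}$ to confirm this matches the formula for the image of \eqref{eq: extend_inclusion}, using that the $r_{ij}$ are column-finite so all sums in sight are legitimate morphisms of $\AS$ (which is where the hypothesis that the extension of scalars is \emph{well-defined} enters).

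It remains to record that this assignment is $R$-algebra-linear: additivity and scalar-linearity in $\eta \otimes \varphi$ are immediate from $R$-linearity of \eqref{eq: extend_inclusion}, and multiplicativity $(\eta \otimes \varphi)(\eta' \otimes \varphi') = (\eta\eta') \otimes (\varphi\varphi')$ follows once more from the middle interchange law together with the fact that $\eta, \eta'$ and $\varphi, \varphi'$ are themselves central (so no sign is introduced in the product, matching the product in $Z(\AS)\otimes_R Z(\Phi)$). Injectivity is inherited from injectivity of \eqref{eq: extend_inclusion}, and naturality of the inclusion in $\AS$ and $S$ is a formal diagram chase. The main obstacle — really the only place requiring care — is the sign bookkeeping in the naturality check: one must confirm that commuting $\eta$ past $f$ and commuting $\varphi$ past $\phi$ produce precisely the sign $(-1)^{\langle \mathrm{deg}(\psi), \mathrm{deg}(\eta\otimes\varphi)\rangle}$ demanded by the definition of the commutator in $\AS[\Gamma]$, and that the (potentially infinite) $\Phi$-linear reorganization of an arbitrary morphism of $\DS$ is compatible with all of this; both are routine given the middle interchange law but are the substance of the argument.
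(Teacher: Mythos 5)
Your argument is correct and follows essentially the same route as the paper's own proof: reduce to pure-tensor morphisms $f \otimes \phi$, commute $\eta$ past $f$ and $\varphi$ past $\phi$ using centrality, and assemble the Koszul signs via the middle interchange law. You are more explicit than the paper about the handling of infinite $\Phi$-linear combinations, the $R$-algebra structure, and injectivity, but these are the routine parts; the core sign computation is the same.
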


\begin{proof}
Suppose $z \in Z(\AS)$, $\psi \in Z(\Phi)$. Any morphism in $\DS$ is a sum of morphisms of the form $f \otimes \varphi$ for some morphism $f$ in $\AS$ and $\varphi \in \Phi$. We check the required commutativity directly using the middle interchange law:

\begin{align*}
(z \otimes \psi) \circ (f \otimes \varphi) & = (-1)^{\langle \mathrm{deg}(\psi), \mathrm{deg}(f) \rangle} zf \otimes \psi \varphi \\
& = (-1)^{\langle \mathrm{deg}(\psi), \mathrm{deg}(f) \rangle + \langle \mathrm{deg}(z), \mathrm{deg}(f) \rangle + \langle \mathrm{deg}(\psi), \mathrm{deg}(\varphi) \rangle} fz \otimes \varphi \psi \\
& = (-1)^{\langle \mathrm{deg}(z) + \mathrm{deg}(\psi), \mathrm{deg}(f) + \mathrm{deg}(\varphi) \rangle - \langle \mathrm{deg}(z), \mathrm{deg}(\varphi) \rangle} fz \otimes \varphi \psi \\
& = (-1)^{\langle \mathrm{deg}(z \otimes \psi), \mathrm{deg}(f \otimes \varphi) \rangle} (f \otimes \varphi) \circ (z \otimes \psi).
\end{align*}
\end{proof}

\begin{defn} \label{def: curved_complexes}
    Fix a degree $t^2$ element $F \in Z(\DS)$. The category of \textit{$F$-curved complexes over $\AS$}, denoted $\YS_F(\AS; S_{\Phi})$, is the dg category whose objects are pairs $(X \otimes_R S, \delta_X)$ with $X \otimes_R S \in \DS$ and $\delta_X \in \End^1_{\DS}(X \otimes_R S)$ satisfying $\delta_X^2 = F|_{X \otimes_R S}$. We say $(X \otimes_R S, \delta_X)$ has \textit{connection} $\delta_X$ and \textit{curvature }$F$.

    Given two $F$-curved complexes $(X \otimes_R S, \delta_X)$, $(Y \otimes_R S, \delta_Y) \in \YS_F(\AS; S_{\Phi})$, their morphism space is the (uncurved) chain complex
    \[
    \Hom_{\YS_F(\AS; S_{\Phi})}(X \otimes_R S, Y \otimes_R S) = \Hom_{\DS}(X \otimes_R S, Y \otimes_R S); \quad d \colon f \mapsto [\delta, f] = \delta_Y \circ f - (-1)^{\mathrm{deg}(f)} f \circ \delta_X.
    \]
    
We denote by $K\YS_F(\AS; S_{\Phi})$ the \textit{homotopy category of $F$-curved complexes over $\AS$}; this category has the same objects as $\YS_F(\AS; S_{\Phi})$ and morphism sets consisting of homotopy classes of degree $0$ closed morphisms of $\YS_{F}(\AS; S_{\Phi})$.
\end{defn}

\begin{rem}
Implicit in the above definition is the claim that $\YS_F(\AS; S_{\Phi})$ is indeed a dg category over $R$. This is easily verified.
\end{rem}

\begin{defn} \label{def: curv_tensor}
If $\AS$ is monoidal, then in favorable circumstances, there is a notion of tensor product of curved complexes over $\AS$. This arises from an underlying tensor product on $\DS$, defined by

\[
(X \otimes_R S) \otimes_{\DS} (Y \otimes_R S) := (X \otimes_{\AS[\mathbb{Z}_t]} Y) \otimes_R S.
\]
Now let $F_1, F_2 \in Z(\DS)$ be two curvature elements, and suppose $F_1 \otimes 1 + 1 \otimes F_2 \in Z(\DS)$. Then given curved complexes $(X \otimes_R S, \delta_X) \in \YS_{F_1}(\AS; S_{\Phi})$ and $(Y \otimes_R S, \delta_Y) \in \YS_{F_2}(\AS; S_{\Phi})$, their tensor product is the pair

\[
((X \otimes_R S) \otimes_{\DS} (Y \otimes_R S), \delta_X \otimes 1 + 1 \otimes \delta_Y) \in \YS_{F_1 \otimes 1 + 1 \otimes F_2}(\AS; S_{\Phi}).
\]
That the connection above squares to $F_1 \otimes 1 + 1 \otimes F_2$ is an immediate application of the middle interchange law.
\end{defn}

\begin{conv} \label{con: implicit_curv}
    There is a faithful functor from $\CS(\AS)$ to $\YS_0(\AS; S_{\Phi})$ given by taking complexes $(X, d_X) \in \CS(\AS)$ to $0$-curved complexes $(X \otimes_R S, d_X \otimes 1)$ and morphisms $f \colon X \to Y$ to morphisms $f \otimes \mathrm{id}_S \colon X \otimes_R S \to Y \otimes_R S$. This allows us to regard chain complexes over $\AS$ as $0$-curved complexes over $\AS$, and we do so implicitly throughout without further comment. When $S = R$ with trivial $\mathbb{Z}_t$-grading and $\Phi = \End_R(R) \cong R$, this functor gives an equivalence of categories $\CS(\AS) \cong \YS_0(\AS; S_{\Phi})$.
\end{conv}

\begin{rem} \label{rem: unrolling}
    In the special case when $F = 0$, we may include $\YS_F(\AS; S)$ as a (non-full) subcategory of $\CS(\AS)$, as the defining equation $\delta^2 = 0$ is exactly the defining equation of $\CS(\AS)$. Explicitly, this inclusion sends a $0$-curved complex $(X \otimes_R S, \delta_X) \in \YS_0(\AS; S)$ to the chain complex $(X \otimes_R S, \delta_X) \in \CS(\AS)$. We refer to this as the \textit{unrolling} functor.
\end{rem}

\subsection{Twists and Convolutions} \label{sec: twists}
We will frequently wish to modify the connection on a curved complex without changing the underlying chain objects. To set the stage, fix an additive $\Gamma$-graded category $\AS$ over $R$, and let $\DS := (\AS \otimes_R S)_{\Phi}$ be a well-defined extension of scalars. Suppose $F_1$, $F_2 \in Z(\DS)$ are two elements of degree $t^2$. Let $(X \otimes_R S, \delta_X) \in \YS_{F_1}(\AS; S_{\Phi})$ be given, and suppose $X \otimes_R S$ is equipped with an endomorphism $\alpha \in \End^1_{\DS}(X \otimes_R S)$ satisfying the \textit{Maurer--Cartan equation}

\begin{equation} \label{eq: maurer_cartan}
[\delta_X, \alpha] + \alpha^2 = F_2.
\end{equation}

\textcolor{revisions}{Recalling that $[-, -]$ is a graded commutator, a} straightforward calculation gives that $(\delta_X + \alpha)^2 = F_1 + F_2$; in fact, this is equivalent to \eqref{eq: maurer_cartan}.

\begin{defn}
    We call the curved complex

    \[
    \mathrm{tw}_{\alpha}(X \otimes_R S) := (X \otimes_R S, \delta_X + \alpha) \in \YS_{F_1 + F_2}(\AS; S_{\Phi})
    \]
    the \textcolor{revisions}{\textit{curved twist of} $(X \otimes_R S, \delta_X)$ \textit{by} $\alpha$} or simply a \textcolor{revisions}{\textit{curved twist}} of $X \otimes_R S$. \textcolor{revisions}{Similarly, w}e call $\alpha$ a \textit{Maurer--Cartan element} for $F_2$ or just a \textit{twist}.
\end{defn}

\begin{example} \label{ex: cones}
One familiar class of twisted complexes is the \textit{mapping cone} of a closed degree $0$ morphism $f \colon (X, \delta_X) \to (Y, \delta_Y)$ between $F$-curved complexes over $\AS$. To see this, observe that $f$ can be naturally considered as a degree $t$ endomorphism of the direct sum complex $(\textcolor{revisions}{t^{-1}(X)} \oplus Y, -\delta_X + \delta_Y)$. From this perspective $f^2 = 0$, and Equation \eqref{eq: maurer_cartan} reads

\[
[-\delta_X + \delta_Y, f] + f^2 = \delta_Y \circ f + f \circ (- \delta_X) = 0.
\]

As a consequence, we can consider the twist of $\textcolor{revisions}{t^{-1}(X)} \oplus Y$

\[
\mathrm{Cone}(f) := \mathrm{tw}_f \left( \textcolor{revisions}{t^{-1}(X)} \oplus Y \right) \in \YS_F(\AS; S_{\Phi})
\]
\end{example}

\begin{rem} \label{rem: cone_morphism_commute}
\textcolor{revisions}{Observe that there is an honest equality $\mathrm{Cone}(\Phi(f)) = \Phi(\mathrm{Cone}(f))$ for any functor $\Phi$ and morphism $f$. We will use this fact repeatedly beginning in Section \ref{sec: cat_idem}.}
\end{rem}

More generally, we will often wish to consider complexes built iteratively out of mapping cones in this fashion. The language of convolutions gives us a convenient way to formulate this notion.

\begin{defn}
    Let $(I, \leq)$ be an indexing poset, and for each $i \in I$, let $(X_i \otimes_R S, \delta_i) \in \YS_{F_1}(\AS; S_{\Phi})$ be an $F_1$-curved complex over $\AS$. Set\footnote{Here we should also insist that $\DS$ admits direct sums over the set underlying $I$.} $(X, \delta_X) := \bigoplus_{i \in I} (X_i \otimes_R S, \delta_i)$. Suppose $\alpha \in \End^1_{\DS}(X)$ is a Maurer--Cartan element for $F_2$ such that the components $\alpha_{ij} \colon X_j \to X_i$ of $\alpha$ satisfy $\alpha_{ij} = 0$ whenever $i \leq j$.

    In this case, we call $\alpha$ a \textit{one-sided twist} of $X$ and the curved complex $\mathrm{tw}_{\alpha}(X, \delta_X) \in \YS_{F_1 + F_2}(\AS; S_{\Phi})$ a (one-sided) \textit{convolution} of the curved complexes $\{(X_i, \delta_i)\}$ indexed by $I$.
\end{defn}

We will typically denote convolutions explicitly by drawing the twist as an arrow between the complexes $X_i$. For instance, we depict the mapping cone of Example \ref{ex: cones} as

\begin{center}
\begin{tikzcd}
\mathrm{Cone}(f) = \mathrm{tw}_f \left(\textcolor{revisions}{t^{-1}(X)} \oplus Y \right) := t^{-1}X \arrow[r, "f"] & Y
\end{tikzcd}
\end{center}

\vspace{1em}

We can also view (potentially curved) complexes $(X, \delta_X)$ as convolutions of the form 

\[
X = \mathrm{tw}_{\delta_X} \left( \bigoplus_{k \in \mathbb{Z}} t^k X_k, 0 \right)
\]
Here we implicitly consider objects in $\AS$ as chain complexes concentrated in homological degree $0$. In diagrammatic form, this becomes

\begin{center}
\begin{tikzcd}
    (X, \delta_X) = \dots \arrow[r, "\delta_X"] & t^{-1} X_{-1} \arrow[r, "\delta_X"] & X_0 \arrow[r, "\delta_X"] & t X_1 \arrow[r, "\delta_X"] & t^2 X_2 \arrow[r, "\delta_X"] & \dots
\end{tikzcd}
\end{center}
Notice in particular the explicit shifts $t^k$ indicating homological degree; we adopt this convention throughout rather than the more typical underlining of homological degree $0$.

\textcolor{revisions}{We will often wish to lift a homotopy equivalence of curved complexes $X \cong Y$ to a homotopy equivalence between one-sided convolutions $\mathrm{tw}_{\alpha}(X) \cong \mathrm{tw}_{\beta}(Y)$ for some one-sided twists $\alpha \in \End(X)$, $\beta \in \End(Y)$. The study of when this lift can be achieved is called \textit{homological perturbation theory}; see e.g. \cite{Mar01, Hog20}. We will require only the following two special cases of this general theory.}

\begin{prop} \label{prop: hpt_curv_iso}
	Let $\tilde{X} = (X \otimes_R S, \delta_X)$, $\tilde{Y} = (Y \otimes_R S, \delta_Y) \in \YS_{F_1}(\AS; S_{\Phi})$ be given, and suppose $f \in \Hom(\tilde{X}, \tilde{Y})$, $g \in \Hom(\tilde{Y}, \tilde{X})$ constitute an isomorphism in $\YS_{F_1}(\AS; S_{\Phi})$. Let $\alpha \in \End^1_{\DS}(\tilde{X})$ be a Maurer--Cartan element for some $F_2 \in Z(\DS)$. Then $f, g$ lift without modification to an isomorphism of curved complexes $\mathrm{tw}_{\alpha}(\tilde{X}) \cong \mathrm{tw}_{f \alpha g}(\tilde{Y})$ in $\YS_{F_1 + F_2}(\AS; S_{\Phi})$.
\end{prop}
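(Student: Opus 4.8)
The plan is to show that the maps $f$ and $g$, taken verbatim, already assemble into an isomorphism of the twisted complexes; no genuine perturbation (i.e.\ no correction terms on $f$, $g$) is needed, so the argument is purely bookkeeping. First I would record the three consequences of the hypothesis that $(f,g)$ is an isomorphism in $\YS_{F_1}(\AS; S_{\Phi})$: the maps are closed of degree $0$, hence $\delta_Y \circ f = f \circ \delta_X$ and $\delta_X \circ g = g \circ \delta_Y$; and $g \circ f = \mathrm{id}_{\tilde X}$, $f \circ g = \mathrm{id}_{\tilde Y}$.

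The first step proper is to check that $f \alpha g \in \End^1_{\DS}(\tilde Y)$ is a Maurer--Cartan element for $F_2$ with respect to $\delta_Y$, so that $\mathrm{tw}_{f\alpha g}(\tilde Y) = (Y \otimes_R S,\, \delta_Y + f\alpha g)$ is a well-defined object of $\YS_{F_1 + F_2}(\AS; S_{\Phi})$. Using $\delta_Y f = f\delta_X$ and $g\delta_Y = \delta_X g$ to pull $\delta_Y$ through $f$ and $g$, and $gf = \mathrm{id}$ to collapse $(f\alpha g)^2 = f\alpha^2 g$, one obtains
\[
[\delta_Y,\, f\alpha g] + (f\alpha g)^2 \;=\; f\bigl([\delta_X,\alpha] + \alpha^2\bigr)g \;=\; f \circ F_2 \circ g,
\]
where the last equality is the Maurer--Cartan equation \eqref{eq: maurer_cartan} for $\alpha$ on $\tilde X$. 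Since $F_2 \in Z(\DS)$ is a natural transformation of degree $t^2$, and $t^2$ pairs trivially with $\mathrm{deg}(f) = 0$ under the bilinear form on $\Gamma \times \mathbb{Z}_t$, naturality gives $F_2 \circ f = f \circ F_2$ with no sign, whence $f \circ F_2 \circ g = F_2 \circ f \circ g = F_2$. Thus $f\alpha g$ satisfies \eqref{eq: maurer_cartan} and $(\delta_Y + f\alpha g)^2 = F_1 + F_2$ as required.

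It then remains to verify that $f$ and $g$ intertwine the perturbed connections, i.e.\ $f \circ (\delta_X + \alpha) = (\delta_Y + f\alpha g)\circ f$ and $g \circ (\delta_Y + f\alpha g) = (\delta_X + \alpha) \circ g$. Both are one-line expansions: the right-hand side of the first equals $\delta_Y f + f\alpha g f = f\delta_X + f\alpha$ by $\delta_Y f = f\delta_X$ and $gf = \mathrm{id}$, matching the left-hand side, and the second is symmetric (using $fg = \mathrm{id}$). Since $f$ and $g$ were already inverse to one another as morphisms of the underlying objects of $\DS$ and we have altered neither, they remain a mutually inverse pair, now of closed degree-$0$ morphisms in $\YS_{F_1 + F_2}(\AS; S_{\Phi})$; this is exactly the asserted isomorphism $\mathrm{tw}_\alpha(\tilde X) \cong \mathrm{tw}_{f\alpha g}(\tilde Y)$.

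The argument is formal throughout, so I do not expect a substantial obstacle; the only point that needs care is the Maurer--Cartan verification, and within it the appeal to centrality/naturality of $F_2$ to conclude $f \circ F_2 \circ g = F_2$ — without that one could not conclude that $f\alpha g$ defines a curved complex with curvature $F_1 + F_2$ rather than some other element. (If one wished to track signs in full generality the same computation would work for $\alpha$ of arbitrary odd degree, but in our setting $\mathrm{deg}(\alpha) = 1$, so $[\delta_X,\alpha] = \delta_X\alpha + \alpha\delta_X$ and no further signs intervene.)
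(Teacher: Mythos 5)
Your proof is correct. The paper does not actually write out an argument here — it simply cites Proposition 4.1 of \cite{HRW21}, which is a general homological-perturbation statement about lifting strong deformation retractions/homotopy equivalences, specialized to the case where the map is an isomorphism — so your direct verification is welcome and fills in exactly the content the paper defers. The three computations you carry out (the Maurer--Cartan check $[\delta_Y, f\alpha g] + (f\alpha g)^2 = f F_2 g = F_2$, using centrality of $F_2$ and the trivial pairing of $t^2$ with $\deg f = 0$; the intertwining identities; and the persistence of $gf = \mathrm{id}$, $fg = \mathrm{id}$) are precisely what the isomorphism case of the HPT lemma reduces to, and your appeal to naturality of $F_2$ to conclude $f F_2 g = F_2$ is the one non-formal step and you handle it correctly. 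One trade-off worth noting: the paper's citation to the full HPT statement also covers the case where $(f,g)$ is only a homotopy equivalence, in which case the twist on $Y$ acquires correction terms (a geometric series in $\alpha h$ for a homotopy $h$); your argument shows that none of that machinery is needed when $(f,g)$ is an honest isomorphism, which makes the special case transparent and self-contained.
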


\begin{proof}
This is Proposition 4.1 in \cite{HRW21}, restricted to the special case in which $f$ and $g$ are isomorphisms \textcolor{revisions}{(so that the relevant homotopies vanish)}.
\end{proof}

\begin{prop} \label{prop: cone_invariance}
\textcolor{revisions}{Let $\alpha \colon (X, \delta_X) \to (Y, \delta_Y)$ be a closed degree $0$ morphism, and suppose there is a homotopy equivalence of the form}

\begin{equation*}
\begin{tikzcd}
(Y, \delta_Y) \arrow[rr, "f", shift left] \arrow["h"', loop, distance=2em, in=215, out=145] &  & (Y', \delta_Y') \arrow[ll, "g", shift left] \arrow["k"', loop, distance=2em, in=35, out=325]
\end{tikzcd}
\end{equation*}

\textcolor{revisions}{Then $\mathrm{Cone}(\alpha) \simeq \mathrm{Cone}(f \circ \alpha)$.}

\end{prop}

\begin{proof}
\textcolor{revisions}{Consider the morphism $\Psi \colon \mathrm{Cone}(\alpha) \to \mathrm{Cone}(f \circ \alpha)$ depicted in \textcolor{blue}{blue} below:}

\begin{equation*}
\begin{tikzcd}
t^{-1}X \arrow[rr, "\alpha"] \arrow[dd, "1", color=blue] & & Y \arrow[dd, "f", shift left, harpoon, color=blue]\\
\\
t^{-1}X \arrow[rr, "f \alpha"] & & Y'
\end{tikzcd}
\end{equation*}

\textcolor{revisions}{It is easily verified that $\Psi$ is closed of degree $0$. It is well-known that $\Psi$ is a homotopy equivalence if and only if $\mathrm{Cone}(\Psi) \simeq 0$, so it sufffices to prove the latter. Gaussian elimination along the identity component of the differential of $\mathrm{Cone}(\Psi)$ from $X$ to itself leaves $\mathrm{Cone}(\Psi) \simeq \mathrm{Cone}(f)$. The latter is contractible because $f$ is a homotopy equivalence by assumption.}
\end{proof}

\subsection{Deformations} \label{subsec: def}

Throughout this Section, we fix a $\Gamma$-graded category $\AS$ over $R$ as usual and assume $\langle -, - \rangle_{\Gamma} = 0$ for simplicity.

\begin{defn} \label{def: poly_alg}
Let $\mathbb{U} = \{u_i\}_{i = 1}^n$ be a collection of formal $\Gamma \times \mathbb{Z}_t$-graded variables with homological degree $|u_i| := \mathrm{deg}_{\mathbb{Z}_t}(u_i)$. We denote by $R[\mathbb{U}]$ the graded-commutative polynomial algebra in the variables $\mathbb{U}$ with coefficients in $R$. Formally speaking, this is the collection of formal $R$-linear combinations of words in the alphabet $\mathbb{U}$ subject to the relations $u_i u_j = (-1)^{|u_i| |u_j|} u_ju_i$.
\end{defn}

Given an alphabet $\mathbb{U}$ as in Definition \ref{def: poly_alg}, we may decompose $\mathbb{U} = \mathbb{U}_{even} \sqcup \mathbb{U}_{odd}$, where $u_i \in \mathbb{U}_{even}$ (respectively $\mathbb{U}_{odd}$) if $|u_i|$ is even (respectively odd). We refer to elements of $\mathbb{U}_{even}$ (respectively $\mathbb{U}_{odd}$) as \textit{even} variables (respectively \textit{odd} variables). Since $R$ is a field not of characteristic $2$, each monomial in $R[\mathbb{U}]$ is degree $1$ or $0$ in each odd variable $u_i \in \mathbb{U}_{odd}$. In particular, for each odd variable $u_i$, there is an endomorphism $u_i^{\vee} \in \End_R(R[\mathbb{U}])$ of degree $\mathrm{deg}(u_i^{\vee}) = \mathrm{deg}(u_i)^{-1}$ given by \textit{contraction} against $u_i$. Formally, $u_i^{\vee}$ sends monomials $m$ which can be written as $m = u_i m'$ to $m'$ and all other monomoials to $0$. 

\begin{defn} \label{def: poly_alg_good_mors}
Let $\mathbb{U}$ be as in Definition \ref{def: poly_alg} with decomposition $\mathbb{U} = \mathbb{U}_{even} \sqcup \mathbb{U}_{odd}$ into even and odd variables. We denote by $\Phi(\mathbb{U})$ the graded $R$-subalgebra of $\End_R(R[\mathbb{U}])$ generated by multiplication by all variables in $\mathbb{U}$ and contraction against odd variables.
\end{defn}

The endomorphisms $u_i \in \Phi(\mathbb{U})$ for $u_i$ even and $u_j, u_j^{\vee} \in \Phi(\mathbb{U})$ satisfy a number of relations; we quote a few here without proof.

\begin{prop} \label{prop: poly_props}
Let $\Phi(\mathbb{U})$ be as in Definition \ref{def: poly_alg_good_mors}. Then:

\begin{itemize}
\item $u_i^2 = (u_i^{\vee})^2 = 0 \in \Phi(\mathbb{U})$ for each $u_i \in \mathbb{U}_{odd}$.

\item $u_i^{\vee} u_i + u_i u_i^{\vee} = \mathrm{id}_{R[\mathbb{U}]}$ for each $u_i \in \mathbb{U}_{odd}$.

\item The subalgebra $R[\mathbb{U}_{even}, \mathbb{U}^{\vee}_{odd}] \subset \Phi(\mathbb{U})$ generated by multiplication by even variables and contraction against odd variables is graded commutative.

\item $Z(\Phi(\mathbb{U})) = R[\mathbb{U}_{even}]$.
\end{itemize}
\end{prop}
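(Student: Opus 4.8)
The plan is to reduce all four statements to direct computations on a monomial basis of $R[\mathbb{U}]$. Since $\mathrm{char}(R) \neq 2$ forces $u_i^2 = 0$ for each $u_i \in \mathbb{U}_{odd}$, after fixing a total order on $\mathbb{U}$ the \emph{ordered monomials} $\prod_i u_i^{a_i}$ — with $a_i \geq 0$ when $u_i$ is even and $a_i \in \{0,1\}$ when $u_i$ is odd — form an $R$-basis, and any such monomial $m$ is either divisible by a given odd variable $u_j$, in which case graded commutativity lets us write $m = \pm u_j m'$ for an ordered monomial $m'$ not involving $u_j$, or it is not. The first bullet is then immediate: multiplication by $u_i$ (for $u_i$ odd) is multiplication by the element $u_i^2 = 0$, while $u_i^{\vee}$ sends $u_i m'$ to the monomial $m'$, which no longer involves $u_i$, so a second application of $u_i^{\vee}$ annihilates it. For the second bullet one evaluates $u_i^{\vee} u_i + u_i u_i^{\vee}$ on a basis monomial $m$ and checks that exactly one of the two terms returns $m$ and the other vanishes, according to whether $u_i \mid m$.

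For the third bullet it is enough to verify that the generators of $R[\mathbb{U}_{even}, \mathbb{U}^{\vee}_{odd}]$ satisfy the graded-commutativity relations. Multiplications by even variables commute with one another because $R[\mathbb{U}]$ is graded commutative and these operators have even degree; a multiplication by an even variable commutes with a contraction $u_j^{\vee}$ by a basis check (split on whether $u_j$ divides the monomial, and note that multiplying by an even variable does not change the parity of the sign incurred in moving $u_j$ to the front); and two contractions $u_i^{\vee}, u_j^{\vee}$ anticommute, the only nontrivial case being a monomial divisible by both $u_i$ and $u_j$, where the two orders of contraction differ precisely by the transposition sign of $u_i$ and $u_j$. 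Conceptually, under $R[\mathbb{U}] \cong R[\mathbb{U}_{even}] \otimes_R \Lambda_R(\mathbb{U}_{odd})$ the even multiplications act on the first factor while the $u_j^{\vee}$ act as interior products on the exterior algebra; interior products square to zero and pairwise anticommute, so $R[\mathbb{U}_{even}, \mathbb{U}^{\vee}_{odd}]$ is a tensor product of two graded-commutative algebras.

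For the last bullet, the inclusion $R[\mathbb{U}_{even}] \subseteq Z(\Phi(\mathbb{U}))$ follows from the relations above — each even multiplication graded-commutes with every generator of $\Phi(\mathbb{U})$ — once one notes that left multiplication is a faithful unital representation $R[\mathbb{U}] \hookrightarrow \End_R(R[\mathbb{U}])$, so the even multiplications really do span a copy of $R[\mathbb{U}_{even}]$. For the reverse inclusion, let $\varphi \in Z(\Phi(\mathbb{U}))$ be homogeneous and put $p := \varphi(1)$. The one step that is not pure bookkeeping is the observation that $\varphi$ must be left multiplication by $p$: every basis monomial $m$ is obtained from $1$ by successively multiplying by variables, and $\varphi$ commutes with each such multiplication up to the Koszul sign, so iterating gives $\varphi(m) = (-1)^{\langle \mathrm{deg}(p), \mathrm{deg}(m) \rangle} m p$, which is exactly the value of left multiplication by $p$ on $m$. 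Now $\varphi$ also commutes up to sign with each $u_j^{\vee}$ ($u_j$ odd); applying this identity to the element $1$ and using $u_j^{\vee}(1) = 0$ gives $u_j^{\vee}(p) = 0$ for every odd $u_j$. Writing $p = u_j p_1 + p_0$ with $p_1, p_0$ free of $u_j$ (possible since $u_j^2 = 0$) forces $p_1 = u_j^{\vee}(p) = 0$, so $p$ involves no odd variable and hence $p \in R[\mathbb{U}_{even}]$; therefore $\varphi \in R[\mathbb{U}_{even}]$. The main thing to be careful about throughout is the sign bookkeeping — keeping the Koszul signs and the normal-form conventions for the chosen monomial basis consistent — but no step requires more than a finite case analysis.
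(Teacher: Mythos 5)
The paper does not prove this proposition---immediately before the statement it says ``we quote a few here without proof,'' treating these facts as standard. Your elementary verification on a monomial basis is correct and is essentially the only reasonable argument. The places where something could go wrong---the Koszul-sign bookkeeping in bullets two and three, and the step in bullet four where you show a homogeneous central $\varphi$ must be left multiplication by $p = \varphi(1)$---are all handled correctly: iterating the super-commutation of $\varphi$ with each $L_{u_i}$ gives $\varphi(m) = (-1)^{\langle \deg p, \deg m\rangle}mp = L_p(m)$, and the identity $\varphi \circ u_j^{\vee} = \pm u_j^{\vee}\circ\varphi$ applied to $1$ kills every odd variable from $p$. The only thing I would add for completeness is a one-line remark that it suffices to treat homogeneous central elements because $\Phi(\mathbb{U})$ is a graded algebra and a (Koszul-signed) central element is central iff each of its homogeneous components is, but you already say ``let $\varphi \in Z(\Phi(\mathbb{U}))$ be homogeneous,'' so this is implicit.
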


\begin{defn} \label{def: poly_alg_ext}
We denote by $\AS[\mathbb{U}] := (\AS[\mathbb{Z}_t] \otimes_R R[\mathbb{U}])_{\Phi(U)}$ the $\Phi(\mathbb{U})$-restricted object-driven extension of scalars of $\AS$ by $R[\mathbb{U}]$.
\end{defn}

\begin{prop} \label{prop: def_curv_central}
Let $\varphi_1, \dots, \varphi_n \in Z(\AS[\mathbb{Z}_t])$ be given with $\mathrm{deg}(\varphi_i) := d_i \in \Gamma \times \mathbb{Z}_t$. For each $1 \leq i \leq n$, let $u_i$ be a formal variable of degree $\mathrm{deg}(u_i) = d_i^{-1}t^2$, and let $\mathbb{U} := \{u_1, \dots, u_n\}$ be the collection of all such variables. Set

\begin{equation} \label{eq: def_curv}
F := \sum_{i = 1}^n \varphi_i \otimes u_i
\end{equation}

Then $F \in Z(\AS[\mathbb{U}])$.
\end{prop}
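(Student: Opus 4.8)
The plan is to reduce the statement to Proposition \ref{prop: ext_scal_nat}. Taking $S = R[\mathbb{U}]$ and $\Phi = \Phi(\mathbb{U})$, so that the category $\DS$ of that proposition becomes $\AS[\mathbb{U}]$ (Definition \ref{def: poly_alg_ext}), we obtain a natural inclusion of graded $R$-algebras $Z(\AS) \otimes_R Z(\Phi(\mathbb{U})) \hookrightarrow Z(\AS[\mathbb{U}])$ given by \eqref{eq: extend_inclusion}. Hence it is enough to show that each $\varphi_i$ is identified with an element of $Z(\AS)$ and that multiplication by each $u_i$ is an element of $Z(\Phi(\mathbb{U}))$; for then $F = \sum_{i=1}^n \varphi_i \otimes u_i$ lies in $Z(\AS) \otimes_R Z(\Phi(\mathbb{U}))$, and hence, under this inclusion, in $Z(\AS[\mathbb{U}])$.

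Both of these points rest on the fact that central elements of $\AS[\mathbb{Z}_t]$ have trivial homological degree. Indeed, the isomorphism $Z(\AS[\mathbb{Z}_t]) \cong Z(\AS)$ (Lemma 3.9 of \cite{HRW21}), combined with the fact that $Z(\AS)$ is concentrated in $\mathbb{Z}_t$-degree $0$ because $\AS$ carries the trivial $\mathbb{Z}_t$-grading (Remark \ref{rem: implicit_t_grading}), identifies each $\varphi_i \in Z(\AS[\mathbb{Z}_t])$ with an element of $Z(\AS)$ whose degree $d_i$ has $\mathbb{Z}_t$-component $0$. It follows that $u_i$, of degree $d_i^{-1} t^2$, has $\mathbb{Z}_t$-degree $2$, so $u_i \in \mathbb{U}_{even}$, and Proposition \ref{prop: poly_props} then places multiplication by $u_i$ in $Z(\Phi(\mathbb{U})) = R[\mathbb{U}_{even}]$. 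I expect this parity bookkeeping to be the only real subtlety: were some $\varphi_i$ allowed to have odd $\mathbb{Z}_t$-degree, the corresponding $u_i$ would be odd, multiplication by $u_i$ would fail to be central in $\Phi(\mathbb{U})$ on account of the relation $u_i^{\vee} u_i + u_i u_i^{\vee} = \mathrm{id}$, and $F$ would genuinely fail to lie in $Z(\AS[\mathbb{U}])$ — so the hypothesis $\varphi_i \in Z(\AS[\mathbb{Z}_t])$ is doing real work here.

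Granting the two points, $F \in Z(\AS) \otimes_R Z(\Phi(\mathbb{U})) \hookrightarrow Z(\AS[\mathbb{U}])$, which is the desired conclusion; and since \eqref{eq: extend_inclusion} is additive on degrees, each summand $\varphi_i \otimes u_i$ has degree $\mathrm{deg}(\varphi_i) + \mathrm{deg}(u_i) = d_i \cdot d_i^{-1} t^2 = t^2$, so $\mathrm{deg}(F) = t^2$. As an alternative to citing Proposition \ref{prop: ext_scal_nat}, one could verify centrality of $F$ by hand: because $\mathrm{deg}(F)$ is even, $[F, -]$ is an ordinary derivation on $\End(\mathrm{id}_{\AS[\mathbb{U}]})$-level morphisms, so it suffices to check $[\varphi_i \otimes u_i, f \otimes \mathrm{id}] = 0$ and $[\varphi_i \otimes u_i, \mathrm{id} \otimes \psi] = 0$ for $f$ a homogeneous morphism of $\AS[\mathbb{Z}_t]$ and $\psi$ a generator of $\Phi(\mathbb{U})$, and both vanish upon expanding via the middle interchange law and invoking the centrality of $\varphi_i$ in $\AS[\mathbb{Z}_t]$, that of $u_i$ in $\Phi(\mathbb{U})$, and $\langle -, - \rangle_{\Gamma} = 0$, which conspire to cancel all signs.
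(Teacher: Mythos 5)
Your proof is correct and takes essentially the same route as the paper: both reduce to Proposition \ref{prop: ext_scal_nat} and Proposition \ref{prop: poly_props}, and both deduce that each $u_i$ is even by first observing (via $Z(\AS[\mathbb{Z}_t]) \cong Z(\AS)$) that each $\varphi_i$ is concentrated in trivial $\mathbb{Z}_t$-degree. The only point worth sharpening: if $d_i$ were specified with a nonzero $\mathbb{Z}_t$-component, the conclusion is not that $d_i$ \emph{cannot} have such a component, but rather that $\varphi_i = 0$, so $\varphi_i \otimes u_i = 0$ is central vacuously; the paper phrases it this way ("$\varphi_i = 0$ unless $d_i = (\gamma_i, 0)$"), whereas your wording quietly identifies the degree of the nonzero $\varphi_i$ rather than letting the degenerate terms drop out. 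This is a cosmetic difference and does not affect the argument.
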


\begin{proof}
Recall from the discussion following Definition \ref{def: cat_center} that $Z(\AS[\mathbb{Z}_t]) \cong Z(\AS)$ as $\mathbb{Z}_t$-graded $R$-algebras. In particular, every element of $Z(\AS[\mathbb{Z}_t])$ with nonzero homological degree must vanish, and so $\varphi_i = 0$ unless $d_i = (\gamma_i, 0)$ for some $\gamma_i \in \Gamma$. But then $\mathrm{deg}(u_i) = (\gamma^{-1}, t^2)$; in particular, $u_i$ is an even variable, so $\varphi_i \otimes u_i \in Z(\AS[\mathbb{U}])$ by Propositions \ref{prop: poly_props} and \ref{prop: ext_scal_nat}.
\end{proof}

\begin{defn} \label{def: deformations}
Let $\varphi_1, \dots, \varphi_n \in Z(\AS[\mathbb{Z}_t])$, $\mathbb{U}$, and $F$ be as in Proposition \ref{prop: def_curv_central}. We call $\YS_F(\AS; R[\mathbb{U}]_{\Phi(\mathbb{U})})$ the category of \textit{$F$-deformations} over $\AS$ and $\mathbb{U}$ the alphabet of \textit{deformation parameters}. We often suppress the subscript $\Phi(\mathbb{U})$, writing just $\YS_F(\AS; R[\mathbb{U}])$.

By convention, when $n = 0$, we set $\mathbb{U} = \emptyset$, $F = 0$, and $\YS_F(\AS; R[\mathbb{U}]) = \CS(\AS)$.
\end{defn}

\begin{rem} \label{rem: odd_def_is_uncurved}
Notice that any $F$-deformation with an alphabet $\mathbb{U}$ of all odd deformation parameters must satisfy $F = 0$. We implicitly consider such deformations as chain complexes by unrolling as in Remark \ref{rem: unrolling}.
\end{rem}

We can decompose morphisms in $\YS_F(\AS; R[\mathbb{U}])$ according to their $\mathbb{U}$-degree in much the same way as we build objects. To do this, we decompose $\mathbb{U}$ into an alphabet of even variables $\mathbb{U}_{even} = \{u_1, \dots, u_m\}$ and odd variables $\mathbb{U}_{odd} = \{u_{m + 1}, \dots, u_n\}$ as usual. Then for each multi-index ${\bf{v}} = (v_1, \dots, v_n) \in \mathbb{Z}_{\geq 0}^m \times \{-1, 0, 1\}^{n - m}$, set \textcolor{revisions}{$u^{\bf{v}} := u_1^{v_1} u_2^{v_2} \dots u_n^{v_n}$.} Given any homogeneous morphism $f \in \Hom_{\YS_F(\AS; R[\mathbb{U}])}(X \otimes_R R[\mathbb{U}], Y \otimes_R R[\mathbb{U}])$, we may separate $f$ into $\mathbb{U}$-degrees

\begin{equation} \label{eq: morphism_components}
f = \sum_{{\bf{v}} \in \mathbb{Z}_{\geq 0}^m \times \{-1, 0, 1\}^{n - m}} f_{a^{\bf{v}}} \otimes a^{\bf{v}}
\end{equation}
where $f_{a^{\bf{v}}} \in \Hom_{\AS[\Gamma \times \mathbb{Z}_t]}(X, Y)$ is a homogeneous morphism of degree $\mathrm{deg}(f_{a^{\bf{v}}}) = \mathrm{deg}(f) - \mathrm{deg}(a^{\bf{v}})$ for each multi-index ${\bf{v}}$.

\textcolor{revisions}{In particular, given a deformation $(X \otimes_R R[\mathbb{U}], \delta_X) \in \YS_F(\AS; R[\mathbb{U}])$, we may consider $\delta_X$ as an endomorphism of $X \otimes_R R[\mathbb{U}]$ and study its $\mathbb{U}$-degree $0$ component $d_X$. Then $d_X$ is an endomorphism of the $\mathbb{Z}_t$-graded object $X \in \AS[\mathbb{Z}_t]$. In particular, if $d_X^2 = 0$, then $(X, d_X)$ is a chain complex.}

\begin{defn} \label{def: curved_lift}
	\textcolor{revisions}{We call $(X \otimes_R R[\mathbb{U}], \delta_X) \in \YS_F(\AS; R[\mathbb{U}])$ an \textit{$F$-deformation} of the chain complex $(X, d_X)$ if $(\delta_X)_{\vec{0}} = d_X$ and $d^2_X = 0$}. Similarly, given two $F$-deformations of complexes $X$ and $Y$ and a map $\tilde{f}$ between these deformations, we call $\tilde{f}$ a \textit{curved lift} of $f$ \textcolor{revisions}{if the $\mathbb{U}$-degree $0$ component $f$ of $\tilde{f}$ is a chain map from $X$ to $Y$.}
\end{defn}

We often \textcolor{revisions}{emphasize that $(X \otimes_R R[\mathbb{U}], \delta_X)$ is an $F$-deformation} by writing
	
	\[
    (X \otimes_R R[\mathbb{U}], \delta_X) = \mathrm{tw}_{\Delta_X}(X \otimes_R R[\mathbb{U}])
    \]
    where $\Delta_X = \delta_X - d_X \otimes 1$. 	We point out that $(X \otimes_R R[\mathbb{U}], \delta_X)$ is an $F$-deformation of $(X, d_X)$ if and only if $\delta_X$ is a curved lift of $d_X$.
    
    The identity $d_X^2 = 0$ is typically enforced by considering the $\mathbb{U}$-degree $0$ component of the equation $\delta_X^2 = 0$ whenever $\mathbb{U}$ consists entirely of even variables of positive homological degree. \textcolor{revisions}{In this case, all curved complexes $(X \otimes_R R[\mathbb{U}], \delta_X)$ in $\YS_F(\AS; R[\mathbb{U}])$ are $F$-deformations of some underlying chain complex $(X, d_X) \in \CS(\AS)$.} Passing to $\mathbb{U}$-degree $0$ in all relevant data constitutes a forgetful functor from $\YS_F(\AS; R[\mathbb{U}])$ to $\CS(\AS)$ taking each curved complex to its underlying chain complex and each morphism $\tilde{f}$ to its $\mathbb{U}$-degree $0$ component $f$. We emphasize that this forgetful functor is \textbf{\textit{not}} the unrolling functor of Remark \ref{rem: unrolling}. Unrolling preserves the underlying object $X \otimes_R R[\mathbb{U}] \in \AS[\mathbb{Z}_t]$, while this forgetful functor does not.

A given complex $(X, d_X)$ may \textit{a priori} have many distinct deformations, even for a fixed curvature $F$. In their original work on deformed link homology \cite{GH22}, Gorsky--Hogancamp establish some obstruction-theoretic tools for comparing these deformations. \textcolor{revisions}{We develop some modest extensions of those tools below.}

\begin{rem}
\textcolor{revisions}{In Proposition \ref{prop: lifting_maps} and Corrolary \ref{cor: inv_uniq_yify}, we assume the relevant alphabet of deformation parameters $\mathbb{U}$ consists only of even variables with positive homological degree. We do this mostly for convenient access to degree arguments as in Definition \ref{def: curved_lift} and invite the interested reader to consider the extent to which the proofs we cite here extend to more general alphabets of deformation parameters.}
\end{rem}

\begin{prop} \label{prop: lifting_maps}
	\textcolor{revisions}{Let $X, Y \in \CS^+(\AS)$ be given, and suppose $H^{\bullet}(\Hom_{\CS(\AS)}(X, Y))$ is concentrated in non-negative degrees. Let $f \colon X \to Y$ be a chain map, and suppose $\tilde{X} = \mathrm{tw}_{\Delta_X}(X \otimes_R R[\mathbb{U}])$ and $\tilde{Y} = \mathrm{tw}_{\Delta_Y}(Y \otimes_R R[\mathbb{U}])$ are $F$-deformations of $X$ and $Y$, respectively. Then there is a chain map $f^u \colon \tilde{X} \to \tilde{Y}$ which is a curved lift of $f$. Further, if $f$ is a homotopy equivalence, then so is $f^u$.}
\end{prop}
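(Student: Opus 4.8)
The plan is to construct $f^u$ by a standard obstruction-theoretic induction on $\mathbb{U}$-degree, using the hypothesis on $H^{\bullet}(\Hom_{\CS(\AS)}(X, Y))$ to kill the obstructions at each stage. Write $\delta_X = d_X \otimes 1 + \Delta_X$ and $\delta_Y = d_Y \otimes 1 + \Delta_Y$, and expand a candidate lift as $f^u = \sum_{\mathbf{v}} f_{u^{\mathbf{v}}} \otimes u^{\mathbf{v}}$ as in \eqref{eq: morphism_components}, with $f_0 = f$. Since all variables in $\mathbb{U}$ are even of positive homological degree, there is a well-founded partial order on monomials $u^{\mathbf{v}}$ by total degree, and the equation $\delta_Y \circ f^u = f^u \circ \delta_X$ (closedness of $f^u$, i.e. $f^u$ a chain map in $\YS_F(\AS; R[\mathbb{U}])$) decomposes into one equation for each monomial $u^{\mathbf{v}}$. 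The $u^{\mathbf{0}}$-component is exactly $d_Y \circ f = f \circ d_X$, which holds by hypothesis. For $\mathbf{v} \neq \mathbf{0}$, the $u^{\mathbf{v}}$-component has the shape $d_Y \circ f_{u^{\mathbf{v}}} - (-1)^{?} f_{u^{\mathbf{v}}} \circ d_X = (\text{terms involving } f_{u^{\mathbf{w}}} \text{ for } \mathbf{w} < \mathbf{v} \text{ and the components of } \Delta_X, \Delta_Y)$, i.e. $d_{\CS(\AS)}(f_{u^{\mathbf{v}}}) = E_{\mathbf{v}}$ where $E_{\mathbf{v}}$ is built from strictly lower-degree data already constructed.

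The two things to check are that each $E_{\mathbf{v}}$ is a cycle in $\Hom_{\CS(\AS)}(X, Y)$ (so that it represents a class in $H^{\bullet}$) and that this class lies in a degree where cohomology vanishes, so that $E_{\mathbf{v}}$ is a boundary and $f_{u^{\mathbf{v}}}$ can be chosen. That $E_{\mathbf{v}}$ is closed follows from the inductive hypothesis together with $\delta_X^2 = F$, $\delta_Y^2 = F$, and the Leibniz rule for $d_{\CS(\AS)}$ — a routine manipulation of the partially assembled Maurer--Cartan/chain-map identities, exactly analogous to the computation in the cited Lemma 4.15 of \cite{HRW21}. For the degree bookkeeping: $\mathrm{deg}(f_{u^{\mathbf{v}}}) = \mathrm{deg}(f) - \mathrm{deg}(u^{\mathbf{v}})$, so since $\mathrm{deg}(f) = 0$ and each $u_i$ has positive homological degree, $f_{u^{\mathbf{v}}}$ has strictly negative homological degree for $\mathbf{v} \neq \mathbf{0}$; correspondingly $E_{\mathbf{v}}$ sits in the homological degree of $d_{\CS(\AS)}(f_{u^{\mathbf{v}}})$, which is still negative (or at worst $0$). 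The hypothesis that $H^{\bullet}(\Hom_{\CS(\AS)}(X, Y))$ is concentrated in non-negative degrees then forces $[E_{\mathbf{v}}] = 0$ in the relevant (negative) degree, giving the desired $f_{u^{\mathbf{v}}}$. (Here the hypothesis $X, Y \in \CS^-(\AS)$ guarantees the relevant $\Hom$ complex and these sums behave well; boundedness-above is what makes the object-driven extension of scalars and the infinite sums over $\mathbf{v}$ legitimate.) Iterating over all $\mathbf{v}$ by induction on total $\mathbb{U}$-degree produces $f^u$, which is a curved lift of $f$ by construction.

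For the final clause, suppose $f$ is a homotopy equivalence with homotopy inverse $g$ and homotopies $h, k$ as in \eqref{eq: hom_eq}. Running the same obstruction argument produces a curved lift $g^u$ of $g$; it remains to see $f^u$ and $g^u$ are mutually inverse up to homotopy in $\YS_F(\AS; R[\mathbb{U}])$. For this I would apply the preceding machinery to the complex $\Hom_{\CS(\AS)}(X, X)$ (and $\Hom_{\CS(\AS)}(Y,Y)$): the chain map $g^u \circ f^u - \mathrm{id}$ is a curved lift of $g \circ f - \mathrm{id}_X = d_{\CS(\AS)}(h)$, hence nullhomotopic in $\mathbb{U}$-degree $0$; then lift the nullhomotopy $h$ degree-by-degree in $\mathbb{U}$, the obstructions again living in $H^{\bullet}(\Hom_{\CS(\AS)}(X,X))$ in negative degrees (where it vanishes, since the bound on $\Hom(X,Y)$ together with $f$ being an equivalence gives the analogous bound on $\Hom(X,X)$ up to homotopy equivalence — indeed $\Hom_{\CS(\AS)}(X,X) \simeq \Hom_{\CS(\AS)}(X,Y)$ via $f$). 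This upgrades $h$ to a curved homotopy $h^u$, and symmetrically for $k$, so $f^u$ is a homotopy equivalence. Alternatively, and perhaps more cleanly, one can cite Proposition \ref{prop: hpt_curv_iso}-style perturbation lemmas: on $\mathbb{U}$-degree $0$ we have an honest homotopy equivalence, and the curved differentials $\delta_X, \delta_Y$ are perturbations of $d_X \otimes 1, d_Y \otimes 1$ that are ``small'' with respect to the $\mathbb{U}$-adic filtration, so homological perturbation theory transports the equivalence. I expect the main obstacle to be purely organizational: setting up the induction and sign conventions so that the obstruction cochains $E_{\mathbf{v}}$ are genuinely cycles and genuinely land in negative homological degree — once that is pinned down, each vanishing step is immediate from the hypothesis. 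This is precisely the content extracted from \cite{HRW21}, Proposition 4.17 (or the corresponding statement), to which I would ultimately defer for the detailed verification.
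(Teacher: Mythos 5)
The paper does not give an in-line proof of this Proposition; it simply cites Corollaries 4.18 and 4.19 of \cite{Con23}. Your obstruction-theoretic argument is exactly the sort of argument one expects that reference to contain, and the outline is correct: expand $f^u$ by $\mathbb{U}$-degree, show the obstruction cochain $E_{\mathbf{v}}$ at each step is a cocycle (using the partially-built Maurer--Cartan and chain-map identities together with $\delta_X^2 = F = \delta_Y^2$), and use the hypothesis on $H^{\bullet}(\Hom_{\CS(\AS)}(X,Y))$ to kill it.

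One small imprecision is worth flagging. You write that $E_{\mathbf{v}}$ lands in a homological degree that is ``negative (or at worst $0$)''; if it could actually land in degree $0$, the hypothesis (cohomology concentrated in \emph{non-negative} degrees) would not suffice to kill the obstruction. But the Remark immediately preceding the Proposition restricts to alphabets $\mathbb{U}$ of \emph{even} variables of positive homological degree, so in fact $|u^{\mathbf{v}}| \geq 2$ for $\mathbf{v} \neq \mathbf{0}$; hence $f_{u^{\mathbf{v}}}$ has homological degree $\leq -2$ and the obstruction $E_{\mathbf{v}} = d_{\CS(\AS)}(f_{u^{\mathbf{v}}})$ has degree $\leq -1$, which is \emph{strictly} negative. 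That is what makes the argument close cleanly, and you should state it that way rather than allowing the borderline case. Your treatment of the homotopy-equivalence clause (lifting $g$, $h$, $k$ by the same induction, using $\Hom(X,X) \simeq \Hom(X,Y)$ via $f$ to transport the non-negativity hypothesis) is the right idea; I would avoid leaning on Proposition \ref{prop: hpt_curv_iso} as an alternative since that result is specifically about isomorphisms rather than homotopy equivalences, so it does not directly apply here.
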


\begin{proof}
    \textcolor{revisions}{This is Corollary 4.18 of \cite{Con23}, extended from the bounded category $\CS^b(\AS)$ to the bounded below category $\CS^+(\AS)$. The same proof using Proposition 4.17 of \cite{Con23} applies in this more general setting.}
\end{proof}

\begin{defn} \label{def: quasi-inv}
\textcolor{revisions}{Let $\AS$ be a monoidal category, and let $(X, d_X) \in \CS(\AS)$ be a bounded complex. We call $X$ \textit{right quasi-invertible} if there exists some complex $X^{\vee} \in \CS(\AS)$ such that $X^{\vee}$ is invertible up to homotopy equivalence (that is, there exists some $Y \in \CS(\AS)$ such that $X^{\vee} \otimes Y$ and $Y \otimes X^{\vee}$ are homotopy equivalent to the identity in their home chain categories) and $X \otimes X^{\vee}$ is homotopy equivalent to a complex concentrated in homological degree $0$. In this case, we call $X^{\vee}$ a \textit{right quasi-inverse} of $X$. We define a \textit{left quasi-invertible} complex and its \textit{left quasi-inverse} similarly.}
\end{defn}

\begin{lem} \label{lem: quasi-inv_ends}
\textcolor{revisions}{Suppose $X \in \CS(\AS)$ is left or right quasi-invertible. Then $H^{\bullet}(\End_{\CS(\AS)}(X))$ is concentrated in degree $0$.}
\end{lem}

\begin{proof}
\textcolor{revisions}{Let $X^{\vee}$ be a right quasi-inverse of $X$, and let $C$ be a complex concentrated in homological degree $0$ satisfying $X \otimes X^{\vee} \simeq C$. Since $X^{\vee}$ is invertible up to homotopy equivalence, the functor $- \otimes X^{\vee}$ on $\CS(\AS)$ induces the first in a sequence of homotopy equivalences of complexes $\mathrm{End}(X) \simeq \mathrm{End}(X \otimes X^{\vee}) \simeq \mathrm{End}(C)$, which in turn induce isomorphisms on homology. Since $C$ is concentrated in homological degree $0$, so is $\mathrm{End}(C)$, and therefore so is its homology. The case of left quasi-invertible complexes is exactly analogous.}
\end{proof}

\begin{cor} \label{cor: inv_uniq_yify}
\textcolor{revisions}{Suppose $X \in \CS(\AS)$ is left or right quasi-invertible. Then any two $F$-deformations of $X$ are homotopy equivalent as curved complexes.}
\end{cor}

\begin{proof}
\textcolor{revisions}{Let $Y = (X \otimes_R R[\mathbb{U}], \delta_Y)$ and \textcolor{revisions2}{$Y' = (X \otimes_R R[\mathbb{U}], \delta_Y')$} be two $F$-deformations of $X$. By Proposition \ref{prop: lifting_maps} and Lemma \ref{lem: quasi-inv_ends}, the identity map on $X$ has a curved lift to a map $\tilde{id} \colon Y \to Y'$. Again by Proposition \ref{prop: lifting_maps}, since $id_X$ is a homotopy equivalence, so is $\tilde{id}$.}
\end{proof}

\subsection{Strict Deformations} \label{subsec: strict_def}

Throughout this section, we fix $\varphi_1, \dots, \varphi_n \in Z(\AS[\mathbb{Z}_t])$ with $\mathrm{deg}(\varphi_i) = d_i$, a deformation alphabet $\mathbb{U} = \{u_1, \dots, u_n\}$, and curvature $F = \sum_{i = 1}^n \varphi_i \otimes u_i$ as in Proposition \ref{prop: def_curv_central} unless otherwise noted. Most of the deformations we consider will be of the following form.

\begin{defn}
Let $\tilde{X} := \mathrm{tw}_{\Delta_X}(X \otimes_R R[\mathbb{U}]) \in \YS_F(\AS; R[\mathbb{U}])$ be an $F$-deformation of a chain complex $(X, d_X) \in \CS(\AS)$. We call $\tilde{X}$ a \textit{strict} $F$-deformation if $\Delta_X$ is linear in the alphabet $\mathbb{U}$\textcolor{revisions}{; that is, $(\Delta_X)_{\vec{v}} = 0$ unless $\vec{v} \in \mathbb{Z}^n_{\geq 0}$ contains a single entry $1$ and all other entries $0$.} 
\end{defn}

Given a strict deformation $\tilde{X}$ of $X$, the decomposition of $\Delta_X$ into components as in \eqref{eq: morphism_components} simplifies dramatically, giving

\begin{equation} \label{eq: strict_def}
\Delta_X = \sum_{i = 1}^n \xi_i u_i
\end{equation}
for some family of endomorphisms $\xi_i \in \End_{\AS[\mathbb{Z}_t]}(X)$\textcolor{revisions}{. Notice that, since $\Delta_X$ must be homogeneous of degree $t$, $\xi_i$ must be homogeneous of degree $\mathrm{deg}(\xi_i) = t^{-1} d_i$.} We are naturally led to consider what conditions on the family $\{\xi_i\}$ are necessary and sufficient to ensure that $\mathrm{tw}_{\Delta_X}(X \otimes_R R[\mathbb{U}])$ is an $F$-deformation of $X$ for $\Delta_X$ as defined in \eqref{eq: strict_def}.

\begin{prop} \label{prop: strict_def}
For each $1 \leq i \leq n$, let $\xi_i \in \End_{\AS[\mathbb{Z}_t]}(X)$ be given with $\mathrm{deg}(\xi_i) = t^{-1}d_i$. Define $\Delta_X$ as in \eqref{eq: strict_def}. Then $\mathrm{tw}_{\Delta_X}(X \otimes_R R[\mathbb{U}])$ is an $F$-deformation of $X$ if and only if the following two conditions hold:

\begin{enumerate}
\item $[d_X, \xi_i] = \varphi_i$ for each $i$;
\item The family $\{\xi_i\}$ pairwise graded-commute.
\end{enumerate}
\end{prop}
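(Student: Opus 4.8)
The plan is to analyze the equation $(\delta_X)^2 = F$ where $\delta_X = d_X \otimes 1 + \Delta_X$ and $\Delta_X = \sum_i \xi_i u_i$, separating everything according to $\mathbb{U}$-degree. Expanding $(\delta_X)^2 = (d_X \otimes 1)^2 + [d_X \otimes 1, \Delta_X] + \Delta_X^2$, the three summands live in $\mathbb{U}$-degrees $0$, $1$, and $2$ respectively. Since $F = \sum_i \varphi_i \otimes u_i$ is purely of $\mathbb{U}$-degree $1$ (each $u_i$ being an even variable of $\mathbb{U}$-degree $1$, per the proof of Proposition \ref{prop: def_curv_central}), matching $\mathbb{U}$-degrees shows that $(\delta_X)^2 = F$ is equivalent to the three equations $(d_X)^2 = 0$ (degree $0$), $[d_X, \Delta_X] = F$ (degree $1$), and $\Delta_X^2 = 0$ (degree $2$). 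The first of these is exactly the condition that $\tilde X$ be an $F$-\emph{deformation of} $(X, d_X)$ as in Definition \ref{def: curved_lift}, so it is part of what we are given/asserting; the remaining two translate into conditions (1) and (2).

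First I would make precise the $\mathbb{U}$-degree bookkeeping. Using the middle interchange law and the fact that the $u_i$ are central even variables in $\Phi(\mathbb{U})$ (Proposition \ref{prop: poly_props}), one computes $[d_X \otimes 1, \Delta_X] = \sum_i [d_X, \xi_i] \otimes u_i$, where on the right $[d_X, \xi_i] = d_X \circ \xi_i - (-1)^{\langle \mathbf{d}, \mathrm{deg}(\xi_i)\rangle} \xi_i \circ d_X$ is the commutator in $\End_{\AS[\mathbb{Z}_t]}(X)$; here one must be slightly careful that the sign picked up is consistent, which follows because $u_i$ has even homological degree so moving it past $\Delta_X$ or $d_X$ introduces no sign. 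Matching the $\mathbb{U}$-degree $1$ part of $(\delta_X)^2 = F$ against $\sum_i \varphi_i \otimes u_i$ and using that $\{u_i\}$ is a basis of the degree-$1$ part of $R[\mathbb{U}]$ over the degree-$0$ part, we read off $[d_X, \xi_i] = \varphi_i$ for each $i$, which is condition (1).

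Next I would handle the $\mathbb{U}$-degree $2$ part. We have $\Delta_X^2 = \big(\sum_i \xi_i u_i\big)\big(\sum_j \xi_j u_j\big) = \sum_{i,j} (\xi_i \circ \xi_j) \otimes (u_i u_j)$, again using that the $u_i$ are central. Grouping the $(i,j)$ and $(j,i)$ terms and using $u_i u_j = (-1)^{|u_i||u_j|} u_j u_i = u_j u_i$ (the $u_i$ being even), this equals $\sum_{i<j}\big(\xi_i \circ \xi_j + (-1)^{?}\xi_j \circ \xi_i\big)\otimes u_i u_j + \sum_i (\xi_i^2)\otimes u_i^2$; one must check that the sign appearing is precisely $-(-1)^{\langle \mathrm{deg}(\xi_i),\mathrm{deg}(\xi_j)\rangle}$ so that the bracketed expression is $[\xi_i,\xi_j]$ up to an overall sign (and that $\xi_i^2 = \tfrac12[\xi_i,\xi_i]$ when $\mathrm{deg}(\xi_i)$ is odd, or is unconstrained-by-graded-commutativity otherwise — here $\mathrm{char}(R)\neq 2$ is used). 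Since $F$ has no $\mathbb{U}$-degree $2$ part and $\{u_i u_j\}_{i\le j}$ is linearly independent over the $\mathbb{U}$-degree $0$ part, $\Delta_X^2 = 0$ is equivalent to $\xi_i \circ \xi_j = (-1)^{\langle \mathrm{deg}(\xi_i), \mathrm{deg}(\xi_j)\rangle}\xi_j\circ\xi_i$ for all $i,j$, i.e.\ condition (2). Conversely, assuming (1) and (2) and that $(X,d_X)$ is a complex, running the computation backwards gives $(\delta_X)^2 = F$, so $\tilde X$ is an $F$-deformation of $X$.

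The main obstacle is purely a sign-tracking one: verifying that the Koszul-type signs from the symmetric bilinear form $\langle-,-\rangle_{\Gamma\times\mathbb{Z}_t}$, the homological shift conventions, and the graded-commutativity relations $u_iu_j = (-1)^{|u_i||u_j|}u_ju_i$ all conspire so that the $\mathbb{U}$-degree $2$ component of $\Delta_X^2$ is exactly (a nonzero scalar times) $\sum_{i\le j}[\xi_i,\xi_j]\otimes u_iu_j$ with the commutator taken with respect to the induced grading on $\End_{\AS[\mathbb{Z}_t]}(X)$. Everything else is a routine consequence of separating an equation in a free graded module into homogeneous components; since $\langle-,-\rangle_\Gamma = 0$ is assumed in this subsection, the only surviving sign contributions come from the $\mathbb{Z}_t$-factor, which keeps the bookkeeping manageable.
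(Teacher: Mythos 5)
Your overall plan---expand $(\delta_X)^2 = F$ with $\delta_X = d_X\otimes 1 + \Delta_X$, decompose by polynomial degree in $\mathbb{U}$, and match components---is exactly the paper's strategy, and the identification of the three $\mathbb{U}$-degree components with the three conditions ($d_X^2 = 0$, (1), (2)) is also the paper's. But there is a concrete gap in the sign-bookkeeping. You assert, citing the proof of Proposition~\ref{prop: def_curv_central}, that each $u_i$ is an even variable, and you invoke this repeatedly to discard signs: when commuting $u_i$ past $d_X$ in the linear part, when applying the middle interchange law to compute $\Delta_X^2$, and when claiming $u_iu_j = u_ju_i$. That premise is false in the generality of this proposition. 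Proposition~\ref{prop: def_curv_central} only shows the dichotomy ``$\varphi_i = 0$ \emph{or} $u_i$ is even''; when $\varphi_i = 0$, the $u_i$ are free to be odd, and in the Koszul case of Example~\ref{ex: kosz_comp}---a principal application of exactly this proposition---the $\xi_i$ are taken even, so the deformation parameters $\theta_i$ are odd.

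As a result your intermediate formula ``$\Delta_X^2 = \sum_{i,j}(\xi_i\circ\xi_j)\otimes(u_iu_j)$'' is not correct: the middle interchange law genuinely produces the sign $(-1)^{|\xi_j||u_i|}$, which is nontrivial precisely when $u_i$ and $\xi_j$ are both odd. The paper's proof makes no parity assumption and instead tracks signs through the single identity $|\xi_i|+|u_i| = 1$: in the linear part this gives $d_X\xi_i + (-1)^{|u_i|}\xi_id_X = d_X\xi_i - (-1)^{|\xi_i|}\xi_id_X = [d_X,\xi_i]$ in all cases, and the quadratic part becomes $\sum_{i<j}(-1)^{|\xi_j||u_i|}[\xi_i,\xi_j]\otimes u_iu_j$. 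You do hedge that the sign in the quadratic part needs checking, but that check \emph{is} the proof, and your proposed shortcut (evenness of $u_i$) does not hold. Relatedly, the diagonal terms $\xi_i^2\otimes u_i^2$ need a separate observation: when $\xi_i$ is even, condition (2) places no constraint on $\xi_i^2$, and the term vanishes only because $u_i$ is then forced to be odd so that $u_i^2 = 0$. Your proposal notices $\xi_i^2$ is ``unconstrained by graded-commutativity'' in this case but stops short of saying why that is harmless; this is the point where the (false) evenness assumption would actually break the argument.
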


\begin{proof}
	Recall that $\mathrm{tw}_{\Delta_X}(X \otimes_R R[\mathbb{U}])$ is an $F$-deformation of $X$ if and only if $\Delta_X$ satisfies the Maurer--Cartan Equation \eqref{eq: maurer_cartan}; in our setting, this reads
	
	\begin{equation} \label{eq: Kosz_mc}
	[d_X \otimes 1, \Delta_X] + \Delta_X^2 = \sum_{i = 1}^n \varphi_i \otimes u_i.
	\end{equation}
	
	We expand the left-hand side of \eqref{eq: Kosz_mc}. Beginning with the first term, we obtain
	
	\begin{align*}
	[d_X \otimes 1, \Delta_X] & = (d_X \otimes 1) \left( \sum_{i = 1}^n \xi_i \otimes u_i \right) + \left( \sum_{i = 1}^n \xi_i \otimes u_i \right) (d_X \otimes 1) \\
	& = \sum_{i = 1}^n \left( d_X \xi_i + (-1)^{|u_i|}  \xi_i d_X \right) \otimes u_i
	\end{align*}
	Here the \textcolor{revisions}{first line arises by definition of the graded commutator $[-, -]$, and the} second line arises from the middle interchange law. Since $\mathrm{deg}(\xi_i) = t^{-1}d_i$ and $\mathrm{deg}(u_i) = t^2 d_i^{-1}$, the homological degrees $|\xi_i|$ and $|u_i|$ satisfy $|\xi_i| = 1 - |u_i|$. In particular, we have
	
	\[
	d_X \xi_i + (-1)^{|u_i|} \xi_i d_X = d_X \xi_i - (-1)^{|\xi_i|} \xi_i d_X = [d_X, \xi_i].
	\]
	Making this substitution above gives
	
	\[
	[d_X \otimes 1, \Delta_X] = \sum_{i = 1}^n [d_X, \xi_i] \otimes u_i.
	\]
	Meanwhile, the second term in \eqref{eq: Kosz_mc} gives
	
	\[
	\Delta_X^2 = \left( \sum_{i = 1}^n \xi_i \otimes u_i \right)^2 = \left( \sum_{i = 1}^n \xi_i \otimes u_i \right) \left( \sum_{j = 1}^n \xi_j \otimes u_j \right) = \sum_{i = 1}^n \sum_{j = 1}^n (-1)^{|\xi_j| |u_i|} \xi_i \xi_j \otimes u_i u_j.
	\]
	
	When $i = j$, \textcolor{revisions}{recall from above that $|\xi_i| = 1 - |u_i|$. In particular, either $|\xi_i|$ or $|u_i|$ is odd, and therefore either $\xi_i^2 = 0$ or $u_i^2 = 0$ by graded commutativity.} We can rewrite the remainder of the sum using the middle interchange law as
	
	\begin{align*}
	\Delta_X^2 & = \sum_{i < j} \left( (-1)^{|\xi_j| |u_i|} \xi_i \xi_j \otimes u_i u_j + (-1)^{|\textcolor{revisions}{\xi_i}| |u_j|} \xi_j \xi_i \otimes u_j u_i \right) \\
	& = \sum_{i < j} (-1)^{|\xi_j||u_i|} \left(\xi_i \xi_j + (-1)^{(|\xi_i| + |u_i|)(|\xi_j| + |u_j|) - |\xi_i| |\xi_j|} \xi_j \xi_i \right) \otimes u_i u_j.
	\end{align*}
	Here the second equality follows from graded commutativity of $R[\mathbb{U}]$. \textcolor{revisions}{Because} $|\xi_i| + |u_i| = 1$ for each $i$, this sum simplifies as
	
	\begin{align*}
	\Delta_X^2 & = \sum_{i < j} (-1)^{|\xi_j||u_i|} \left(\xi_i \xi_j + (-1)^{|\xi_i| |\xi_j|} \xi_j \xi_i \right) \otimes u_i u_j \\
	& = \sum_{i < j} (-1)^{|\xi_j||u_i|} [\xi_i, \xi_j] \otimes u_i u_j.
	\end{align*}
	
	Plugging back into \eqref{eq: Kosz_mc}, we obtain
	
	\begin{equation} \label{eq: Kosz_mc_final}
	[\textcolor{revisions}{d_X \otimes 1}, \Delta_X] + \Delta_X^2 = \sum_{i = 1}^n [d_X, f_i] \otimes u_i + \sum_{i < j} (-1)^{|\xi_j||u_i|} [\xi_i, \xi_j] \otimes u_i u_j = \sum_{i = 1}^n \varphi_i \otimes u_i.
	\end{equation}
	
	Equation \eqref{eq: Kosz_mc_final} holds if and only if it holds in each $\mathbb{U}$-component; the linear component holds if and only if (1) is satisfied, and the quadratic component holds if and only if (2) is satisfied.
\end{proof}

\begin{defn} \label{def: strict_def_family}
Let $\{\xi_i\} \subset \End(X)$ be a family of endomorphisms satisfying the conditions of Proposition \ref{prop: strict_def}. Then we call $\{\xi_i\}$ an \textit{$F$-deforming family} on $X$ and $\mathrm{tw}_{\Delta_X}(X \otimes_R R[\mathbb{U}])$ the \textit{strict deformation of $X$ arising from $\{\xi_i\}$.}
\end{defn}

\begin{example}[Koszul Complex] \label{ex: kosz_comp}
Let $\{\xi_1, \dots, \xi_n\}$ be any pairwise-commuting family of closed endomorphisms of $X$ of \textit{even} homological degree. Let $\Theta := \{\theta_1, \dots, \theta_n\}$ denote the corresponding alphabet of deformation parameters \textcolor{revisions}{with degrees $d_i = \mathrm{deg}(\theta_i) = t * \mathrm{deg}(\xi_i)$; notice that this forces the homological degree of each $\theta_i$ to be \textit{odd}.} For each $i$, let $\varphi_i := 0 \in \End_{\CS(\AS)}(X)$, considered as living in degree $d_i$, and set $F = \sum_{i = 1}^n \varphi_i \otimes \theta_i = 0$ as usual. Then $\{\xi_1, \dots, \xi_n\}$ is an $F$-deforming family on $X$.

In this case, we call the strict deformation of $X$ arising from $\{\xi_i\}$ the \textit{Koszul complex} on $X$ for the family $\{\xi_i\}$, which we denote $K(X; \xi_1, \dots, \xi_n)$. Explicitly, we have

\[
K(X; \xi_1, \dots, \xi_n) = \mathrm{tw}_{\Delta_X}(\textcolor{revisions}{X \otimes_R R[\Theta]}) \in \YS_0(\AS; R[\Theta]); \quad \Delta_X := \sum_{i = 1}^n \xi_i \otimes \theta_i.
\]

We often consider $K(X; \xi_1, \dots, \xi_n)$ as a complex in $\CS(\AS)$ by unrolling as in Remark \ref{rem: unrolling}.
\end{example}

The following Proposition is standard and is the setting in which Koszul complexes are most frequently considered; see e.g. \cite{Wei94} for a proof.

\begin{prop} \label{prop: kosz_reg_seq}
Suppose $X$ is a commutative dg $R$-algebra \textcolor{revisions}{(see Definition \ref{def: diff_gamm_grad_mod})} with trivial differential, and let $f_1, \dots, f_n$ be a regular sequence\footnote{That is, $f_i$ is not a zero divisor in $X/(f_1, \dots, f_{i - 1})$ for each $i$.} of elements of $X$ of even homological degree. Set $\gamma := \prod_{i = 1}^n t^{-1} \mathrm{deg}(f_i) \in \Gamma \times \mathbb{Z}_t$. Then $\gamma K(X; f_1, \dots, f_n)$ is quasi-isomorphic to $X/(f_1, \dots, f_n)$ as dg $R$-modules.
\end{prop}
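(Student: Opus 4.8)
Since the curvature here is $F = 0$, unrolling (Remark \ref{rem: unrolling}) lets us work entirely in the ordinary category $\CS^{}(R\text{-}\mathrm{Mod}[\Gamma])$ of $\Gamma$-graded complexes of $R$-modules, where $K(X; f_1, \dots, f_n)$ is the $\Gamma\times\mathbb{Z}_t$-graded complex with underlying object $X \otimes_R \Lambda[\theta_1,\dots,\theta_n]$ (note each $\theta_i$ is odd, since $\deg(\theta_i) = \deg(f_i)^{-1}t$ has odd homological degree) and differential $\Delta_X = \sum_{i=1}^n f_i \otimes \theta_i$, i.e.\ left multiplication by $\sum_i f_i\theta_i$ inside the exterior algebra. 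The plan is to prove the statement by induction on $n$, extracting it from the classical fact (see \cite{Wei94}) that the Koszul complex on a regular sequence is acyclic away from one extremal degree, where it computes the quotient, and keeping careful track of the shift $\gamma = \prod_{i=1}^n t^{-1}\mathrm{deg}(f_i)$.

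\textbf{Induction.} The case $n = 0$ is trivial. For $n = 1$, $K(X; f_1)$ unrolls to the two-term complex $X \xrightarrow{f_1} X\otimes\theta_1$ placed in homological degrees $0$ and $1$; since $f_1$ is a non-zero-divisor in $X$ its cohomology is $0$ in degree $0$ and $(X/f_1X)\otimes\theta_1$ in degree $1$. One checks $\gamma = t^{-1}\mathrm{deg}(f_1) = \mathrm{deg}(\theta_1)^{-1}$, so $\gamma K(X; f_1)$ has cohomology concentrated in homological degree $0$ with trivial $\Gamma$-shift, giving the claimed quasi-isomorphism. For the inductive step, split $\Delta_X = \big(\sum_{i=1}^{n-1} f_i\otimes\theta_i\big) + f_n\otimes\theta_n$ and use $\Lambda[\theta_1,\dots,\theta_n] = \Lambda[\theta_1,\dots,\theta_{n-1}] \oplus \Lambda[\theta_1,\dots,\theta_{n-1}]\theta_n$: the first summand is $K' := K(X; f_1,\dots,f_{n-1})$, the second a shifted copy of $K'$, and the $f_n\otimes\theta_n$ term realizes $K(X; f_1,\dots,f_n)$ as the mapping cone $\mathrm{Cone}\big(K' \xrightarrow{f_n} K'\big)$ (up to the shift recorded by $\theta_n$). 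Mapping cones preserve quasi-isomorphisms, so by the inductive hypothesis $\gamma_{n-1}K' \xrightarrow{\sim} Q' := X/(f_1,\dots,f_{n-1})$ this cone is quasi-isomorphic to $\mathrm{Cone}\big(Q' \xrightarrow{f_n} Q'\big)$; since $f_1,\dots,f_n$ is regular, $f_n$ acts injectively on $Q'$, so this last cone has cohomology concentrated in a single degree, equal to $Q'/f_nQ' = X/(f_1,\dots,f_n)$. Finally the identity $\gamma_n = \gamma_{n-1}\cdot(t^{-1}\mathrm{deg}(f_n))$ matches the total accumulated shift with $\gamma$, landing the surviving cohomology in homological degree $0$ with trivial $\Gamma$-shift.

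\textbf{Main obstacle.} The homological-algebra content is completely classical; essentially all the work is bookkeeping. Concretely, the two things to verify with care are: (i) that the shift $\gamma$ in the statement is exactly the inverse of $\mathrm{deg}(\theta_1\cdots\theta_n) = \prod_i\mathrm{deg}(f_i)^{-1}\,t^n$, so that it cancels all the $\theta$-shifts introduced through the convolution; and (ii) that, in the signed framework of Section \ref{sec: dg_categories} (with $t(X,d_X) = (tX,-d_X)$ and the analogous $(-1)^n$ in the shift functors $(-)[(\gamma,n)]$), the decomposition of $K(X; f_1,\dots,f_n)$ as $\mathrm{Cone}(f_n\colon K'\to K')$ is an honest isomorphism in $\mathrm{Ch}$, not merely of graded objects — equivalently, that $\Delta_X^2 = 0$ with the prescribed signs, which is exactly condition (2) of Proposition \ref{prop: strict_def} applied to the commuting family $\{f_i\}$. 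Neither point is deep, but they are the part that demands attention in this graded-sign setting.
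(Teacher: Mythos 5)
The paper offers no proof of its own for this proposition; it simply defers to \cite{Wei94} for what it calls a standard fact, with the only non-classical ingredient being the grading shift $\gamma$ explained informally in Remark~\ref{rem: kosz_grading_shift}. Your argument is a correct reconstruction of the classical inductive proof: decompose $K(X;f_1,\dots,f_n)$ as the mapping cone of multiplication by $f_n$ on $K(X;f_1,\dots,f_{n-1})$, apply the inductive hypothesis (the quasi-isomorphism onto the quotient is $X$-linear, so it intertwines $f_n$ and the cones are compared by the five lemma), and use regularity to see the remaining cone has cohomology concentrated in a single degree. The extra content beyond Weibel is exactly the bookkeeping you flag: because the paper's Koszul differential \emph{multiplies} by the odd $\theta_i$ rather than contracting, the surviving cohomology sits in top $\Theta$-degree, and $\gamma = \prod_i t^{-1}\deg(f_i) = \deg(\theta_1\cdots\theta_n)^{-1}$ is precisely the correction that renormalizes it to homological degree $0$ with no $\Gamma$-shift; your identity $\gamma_n = \gamma_{n-1}\cdot t^{-1}\deg(f_n)$ tracks this across the induction. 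You also correctly note that $\Delta_X^2 = 0$ follows from Proposition~\ref{prop: strict_def}(2) with $d_X = 0$ and $\varphi_i = 0$, which is what makes the cone decomposition an honest isomorphism of complexes in the paper's sign conventions. No gaps.
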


\begin{rem} \label{rem: kosz_grading_shift}
There is a filtration by $\Theta$-degree on $K(X; f_1, \dots, f_n)$ with each subquotient isomorphic to $X$. The grading shift $\gamma$ in Proposition \ref{prop: kosz_reg_seq} ensures that the \textcolor{revisions}{homology $H^{\bullet}(K(X; f_1, \dots, f_n))$ of the Koszul complex} is concentrated in the smallest nontrivial component of this filtration (i.e. highest $\Theta$-degree) whenever $f_1, \dots, f_n$ form a regular sequence.

An equivalent (and more common) way to enforce this is to define the differential on the Koszul complex as $\sum_{i = 1}^n f_i \otimes \theta_i^{\vee}$, contracting against the variables $\theta_i$ rather than multiplying. After unrolling as in Remark \ref{rem: unrolling}, the resulting complexes are isomorphic up to regrading, as is easily checked. We use the conventions of Example \ref{ex: kosz_comp} here for closer agreement with our usual notion of strict deformation.
\end{rem}

Under certain conditions, we may replace an endomorphism in a deforming family with a homotopic morphism without affecting the resulting strict deformation.

\begin{prop} \label{prop: kosz_base_change}
	Let $(X, d_X) \in \CS(\AS)$ be a chain complex and $\{\xi_i\}_{i = 1}^n$ an $F$-deforming family on $X$. Suppose $\{\xi_1, \dots, \xi_{n - 1}, \xi_n'\}$ is also an $F$-deforming family on  $X$ for some $\xi_n' \in \End_{\AS[\mathbb{Z}_t]}(X)$, and suppose there exists some $h \in \End_{\AS[\mathbb{Z}_t]}(X)$ \textcolor{revisions}{of degree $\mathrm{deg}(h) = t^{-1} \mathrm{deg}(\xi_n)$} satisfying the following conditions:
	
	\begin{enumerate}
	\item $[d_X, h] = \xi_n - \xi_n'$;
	\item $h^N = 0$ for sufficiently large $N$;
	\item $R[\xi_1, \dots, \xi_n, \xi'_n, h] \subset \End_{\AS[\mathbb{Z}_t]}(X)$ is graded-commutative.
	\end{enumerate}
	
	Then the strict deformations $\tilde{X}$, $\tilde{X}'$ of $X$ arising from $\{\xi_i\}_{i = 1}^n$ and $\{\xi_1, \dots, \xi_{n - 1}, \xi_n'\}$ are isomorphic as $F$-curved complexes via the mutually inverse morphisms
	
	\[
	\Psi := \sum_{k = 0}^{\infty} \frac{1}{k!} h^k \otimes u_n^k \colon \tilde{X} \to \tilde{X}'; \quad \Psi^{-1} := \sum_{k = 0}^{\infty} \frac{1}{k!} (-h)^k \otimes u_n^k \colon \tilde{X}' \to \tilde{X}
	\]
\end{prop}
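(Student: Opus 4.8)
The plan is to check by direct computation that $\Psi$ and $\Psi^{-1}$ are mutually inverse degree-$0$ endomorphisms of $X \otimes_R R[\mathbb{U}]$ in $\AS[\mathbb{U}]$ and that $\Psi$ intertwines the connections $\delta_X = d_X \otimes 1 + \sum_{i=1}^n \xi_i \otimes u_i$ of $\tilde{X}$ and $\delta_{X'} = d_X \otimes 1 + \sum_{i=1}^{n-1}\xi_i \otimes u_i + \xi_n' \otimes u_n$ of $\tilde{X}'$. Write $A := h \otimes u_n$. First I would pin down degrees: hypothesis (1) together with $\mathrm{deg}(\xi_n) = t^{-1}d_n$ (where $d_n = \mathrm{deg}(\varphi_n)$) forces $\mathrm{deg}(h) = t^{-2} d_n$, so $\mathrm{deg}(A) = (t^{-2} d_n)(d_n^{-1} t^2)$ is trivial; in particular $|h| + |u_n| = 0$, so $|h|$ and $|u_n|$ have the same parity. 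The Koszul sign $(-1)^{|h||u_n|}$ picked up when two copies of $A$ are composed is $+1$ if this parity is even, whereas if it is odd then hypothesis (3) and $\mathrm{char}(R) \neq 2$ give $h^2 = 0$ and $u_n^2 = 0$, so $A^2 = 0$. In either case $A$ is nilpotent (invoking hypothesis (2) when the parity is even), $A^k = h^k \otimes u_n^k$, and the sums $\Psi = \sum_{k \geq 0}\frac{1}{k!}A^k$, $\Psi^{-1} = \sum_{k \geq 0}\frac{1}{k!}(-A)^k$ are finite $\Phi(\mathbb{U})$-linear combinations of the $X$-endomorphisms $h^k$, hence bona fide degree-$0$ morphisms in $\AS[\mathbb{U}]$.

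Second, I would verify $\Psi \circ \Psi^{-1} = \Psi^{-1} \circ \Psi = \mathrm{id}$: since $A^k A^l = A^{k+l}$, collecting $\Psi \circ \Psi^{-1}$ by total power of $A$ reduces the claim to $\sum_{k+l=m}\frac{(-1)^l}{k!\,l!} = \frac{(1-1)^m}{m!}$, which is $1$ for $m = 0$ and $0$ for $m \geq 1$.

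The heart of the argument is that $\Psi$ is a chain map $\tilde{X} \to \tilde{X}'$, i.e.\ $\delta_{X'} \circ \Psi = \Psi \circ \delta_X$, equivalently $\Psi\, \delta_X\, \Psi^{-1} = \delta_{X'}$. As $A$ is nilpotent of trivial degree, conjugation by $\Psi = \exp(A)$ is computed by the adjoint exponential $\Psi\, \delta_X\, \Psi^{-1} = \sum_{k \geq 0}\frac{1}{k!}(\mathrm{ad}_A)^k(\delta_X)$, where $\mathrm{ad}_A = [A, -]$ reduces (since $\mathrm{deg}(A)$ is trivial) to $B \mapsto AB - BA$. I would then expand $\mathrm{ad}_A(\delta_X)$ termwise. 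Using the middle interchange law and the parity identity $|h| \equiv |u_n| \pmod 2$, the bracket $[A, d_X \otimes 1]$ evaluates to $-[d_X, h] \otimes u_n = (\xi_n' - \xi_n) \otimes u_n$ — the one place hypothesis (1) enters. For each $1 \leq i \leq n$, the bracket $[A, \xi_i \otimes u_i]$ vanishes: expanding both of its terms by the middle interchange law, the net Koszul sign relating them is $(-1)^{(|\xi_i| + |u_i|)(|h| + |u_n|)} = 1$ (using $|\xi_i| + |u_i| = 1$ and $|h| + |u_n| = 0$), while $h$ graded-commutes with $\xi_i$ by hypothesis (3) and $u_n$ graded-commutes with $u_i$ in $R[\mathbb{U}]$; hence the two terms coincide and cancel. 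Thus $\mathrm{ad}_A(\delta_X) = (\xi_n' - \xi_n) \otimes u_n$, and a further application of $\mathrm{ad}_A$ annihilates this by the same cancellation ($\xi_n' - \xi_n$ graded-commutes with $h$), so $(\mathrm{ad}_A)^k(\delta_X) = 0$ for $k \geq 2$ and $\Psi\, \delta_X\, \Psi^{-1} = \delta_X + (\xi_n' - \xi_n) \otimes u_n = \delta_{X'}$. Running the identical computation with $h$ replaced by $-h$ and $\xi_n, \xi_n'$ interchanged — legitimate since $\{\xi_1, \dots, \xi_{n-1}, \xi_n'\}$ is also an $F$-deforming family and $[d_X, -h] = \xi_n' - \xi_n$ — shows $\Psi^{-1}$ is a chain map $\tilde{X}' \to \tilde{X}$, so $\Psi$ is the desired isomorphism in $\YS_F(\AS; R[\mathbb{U}])$.

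I expect the only real obstacle to be the sign bookkeeping in the termwise expansion of $\mathrm{ad}_A(\delta_X)$ — confirming that the two Koszul-sign-carrying summands of each graded commutator $[A, \xi_i \otimes u_i]$, and of $[A, d_X \otimes 1]$, genuinely agree — which amounts to combining hypothesis (3) with the degree relations $|\xi_i| + |u_i| = 1$, $|h| + |u_n| = 0$, and $\mathrm{deg}(h) = t^{-2}d_n$. Everything else is formal; the essential inputs are hypothesis (1) (to produce the surviving term $(\xi_n' - \xi_n) \otimes u_n$), hypothesis (3) (to kill all other brackets and all higher adjoints, and to force $h^2 = 0$ when $|h|$ is odd), and hypothesis (2) (to make $\Psi$ a finite sum when $|h|$ is even).
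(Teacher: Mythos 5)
Your proof is correct and takes essentially the same approach as the paper's: both verify directly that $\Psi$ and $\Psi^{-1}$ are mutually inverse (via the binomial identity) and that $\Psi$ intertwines the connections, with hypotheses (1), (2), (3) entering in exactly the places you identify. The one cosmetic difference is that you organize the chain-map check via the nilpotent adjoint exponential $\Psi\,\delta_X\,\Psi^{-1} = e^{\mathrm{ad}_A}(\delta_X)$ with $A = h \otimes u_n$, which gives a cleaner account of why all brackets beyond $\mathrm{ad}_A(d_X \otimes 1) = (\xi_n' - \xi_n) \otimes u_n$ vanish; the paper instead expands $[d, \Psi]$ directly and omits the sign-tracking that you spell out.
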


\begin{proof}
	First, observe that the degrees of $h$ and $u_n$ exactly cancel, so $\Psi$ is degree $0$. Additionally, (2) ensures that the infinite sum defining $\Psi$ is actually finite\textcolor{revisions}{, so} $\Psi$ is well-defined. It remains to check that $\Psi$, $\Psi^{-1}$ are mutually inverse closed morphisms.
	
	Set $\Delta_X := \sum_{i = 1}^n \xi_i \otimes u_i$, $\Delta_X' := \left( \sum_{i = 1}^{n - 1} \xi_i \otimes u_i \right) + \xi'_n \otimes u_n$. We write $\tilde{X}$, $\tilde{X}'$ explicitly as
	
	\[
	\tilde{X} = \mathrm{tw}_{\Delta_X}(X \otimes_R R[\mathbb{U}]); \quad \tilde{X}' = \mathrm{tw}_{\Delta'_X}(X \otimes_R R[\mathbb{U}]).
	\]
	
	One can verify directly that $\Psi$ is closed by computing $[d, \Psi]$; we omit the details, as they are very similar to the proof of \ref{prop: strict_def}. What needs to be checked is that the expression

    \begin{align*}
    [d, \Psi] & = (d_X \otimes 1 + \Delta_X') \circ \Psi - \Psi \circ (d_X \otimes 1 + \Delta_X) \\
    & = \left( d_X \otimes 1 + \sum_{i = 1}^{n - 1} \xi_i \otimes u_i + \xi'_n \otimes u_n \right) \left( \sum_{k = 0}^{\infty} \frac{1}{k!} h^k \otimes u_n^k \right) - \left( \sum_{k = 0}^{\infty} \frac{1}{k!} h^k \otimes u_n^k \right) \left(d_X \otimes 1 + \sum_{i = 1}^n \xi_i \otimes u_i \right)
    \end{align*}
	vanishes identically; this can be achieved by repeated application of (1), (3), and graded commutativity of $R[\mathbb{U}]$.
    
    Swapping the roles of $\xi_n$ and $\xi_n'$, we see immediately that $\Psi^{-1}$ is closed as well. We claim that $\Psi^{-1} \Psi = \mathrm{id}_{\tilde{X}}$ and $\Psi \Psi^{-1} = \mathrm{id}_{\tilde{X}'}$. By again swapping the roles of $\xi_n$ and $\xi'_n$, it suffices to show the latter. We compute directly:

    \begin{equation} \label{eq: junk_1}
        \Psi \Psi^{-1} = \left( \sum_{j = 0}^{\infty} \frac{1}{j!} h^j \otimes u_n^j \right) \left( \sum_{k = 0}^{\infty} \frac{1}{k!} (-h)^k \otimes u_n^k \right) = \sum_{j, k = 0}^{\infty} \cfrac{(-1)^{|u_n^j| |h^k| + k}}{(j!)(k!)} h^{j + k} \otimes u_n^{j + k}.
    \end{equation}
    
    Upon setting $\ell := j + k$, we can rewrite \eqref{eq: junk_1} as
    
    \begin{equation} \label{eq: junk_2}
        \Psi \Psi^{-1} = \sum_{\ell = 0}^{\infty} \frac{1}{\ell!} \left( \sum_{k = 0}^{\ell} (-1)^{(\ell - k)k|a_r| |h| + k} \binom{\ell}{k} \right) h^{\ell} \otimes u_n^{\ell}.
    \end{equation}
	We claim that \textcolor{revisions}{the coefficient of $h^{\ell} \otimes u_n^{\ell}$ in \eqref{eq: junk_2} vanishes} except when $\ell = k = 0$. Indeed, when $\ell$ is odd, $(\ell - k)k$ is even for all values of $k$. It follows that we may rewrite these summands as

    \begin{equation} \label{eq: junk_3}
    \sum_{k = 0}^{\ell} (-1)^{(\ell - k) k |a_r| |h| + k} \binom{\ell}{k} = \sum_{k = 0}^{\ell} (-1)^{k} \binom{\ell}{k} = \big( 1 + (-1) \big)^{\ell} = 0.
    \end{equation}
    
	On the other hand, suppose $\ell \geq 2$ is even. If $|h|$ is odd, then $h^{\ell} = 0$ by (3). Otherwise, if $|h|$ is even, we can make the same simplification as in \eqref{eq: junk_3}. Then the only surviving term in \eqref{eq: junk_2}, obtained from setting $\ell = k = 0$, is
	
	\[
	\Psi \Psi^{-1} = h^0 \otimes u_n^0 = \mathrm{id}_{\tilde{X}'}.
	\]
\end{proof}

\begin{cor} \label{cor: kosz_base_change_rep}
Let $(X, d_X) \in \CS(\AS)$ be a chain complex, and let $\{\xi_i\}_{i = 1}^n$ and $\{\xi'_i\}_{i = 1}^n$ be two $F$-deforming families on $X$. Suppose that for each $1 \leq i \leq n$, there exists some $h_i \in \End_{\AS[\mathbb{Z}_t]}(X)$ such that $[d_X, h_i] = \xi_i - \xi_i'$. Suppose further that each such $h_i$ is nilpotent and the subalgebra of $\End_{\AS[\mathbb{Z}_t]}$ generated by $\xi_i, \xi'_i$, and $h_i$ for each $i$ is graded-commutative.
	
	Then the strict deformations $\tilde{X}$, $\tilde{X}'$ of $X$ arising from $\{\xi_i\}_{i = 1}^n$ and $\{\xi_i'\}_{i = 1}^n$ are isomorphic as $F$-curved complexes via the mutually inverse morphisms
	
	\[
	\Psi := \prod_{i = 1}^n \sum_{k = 0}^{\infty} \frac{1}{k!} h_i^k \otimes u_i^k \colon \tilde{X} \to \tilde{X}'; \quad \Psi^{-1} := \prod_{i = 1}^n \sum_{k = 0}^{\infty} \frac{1}{k!} (-h_i)^k \otimes u_i^k \colon \tilde{X}' \to \tilde{X}
	\]
\end{cor}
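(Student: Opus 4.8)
The plan is to deduce Corollary~\ref{cor: kosz_base_change_rep} from Proposition~\ref{prop: kosz_base_change} by modifying the deforming family one index at a time. For $0 \le k \le n$, consider the telescope family
\[
\mathcal{E}^{(k)} := \{\xi_1', \dots, \xi_k', \xi_{k+1}, \dots, \xi_n\},
\]
so that $\mathcal{E}^{(0)} = \{\xi_i\}_{i=1}^n$ and $\mathcal{E}^{(n)} = \{\xi_i'\}_{i=1}^n$. The first point to establish — and, I expect, the conceptual heart of the argument — is that each $\mathcal{E}^{(k)}$ is again an $F$-deforming family on $X$, so that its strict deformation $\widetilde{X}^{(k)}$ (in the sense of Definition~\ref{def: strict_def_family}) is defined. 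Condition~(1) of Proposition~\ref{prop: strict_def} holds for any mixture of primed and unprimed generators because $[d_X, \xi_i - \xi_i'] = [d_X, [d_X, h_i]] = 0$ (as $d_X^2 = 0$), whence $[d_X, \xi_i'] = [d_X, \xi_i] = \varphi_i$ for every $i$; condition~(2) holds because all the $\xi_i$ and all the $\xi_i'$ lie in the graded-commutative subalgebra generated by $\{\xi_i, \xi_i', h_i\}_{i=1}^n$.

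Next I would apply Proposition~\ref{prop: kosz_base_change} to pass from $\widetilde{X}^{(k)}$ to $\widetilde{X}^{(k+1)}$ for each $k$. These two strict deformations differ only in the $(k+1)$-st slot, where $\xi_{k+1}$ is replaced by $\xi_{k+1}'$; after relabeling indices so that this slot is the last one — the statement of Proposition~\ref{prop: kosz_base_change} and the notion of $F$-deforming family are insensitive to the order of the family, since $\Delta_X = \sum_j \xi_j \otimes u_j$ is symmetric under simultaneously permuting the $\xi_j$ and the $u_j$ — the hypotheses are exactly met: $[d_X, h_{k+1}] = \xi_{k+1} - \xi_{k+1}'$, the map $h_{k+1}$ is nilpotent, and $R[\xi_1', \dots, \xi_k', \xi_{k+1}, \dots, \xi_n, \xi_{k+1}', h_{k+1}]$ is graded-commutative as a subalgebra of the graded-commutative algebra generated by all the $\xi_i, \xi_i', h_i$. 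We thus obtain mutually inverse isomorphisms of $F$-curved complexes
\[
\Psi_{k+1} := \sum_{\ell=0}^{\infty} \frac{1}{\ell!}\, h_{k+1}^{\ell} \otimes u_{k+1}^{\ell} \colon \widetilde{X}^{(k)} \to \widetilde{X}^{(k+1)}, \qquad \Psi_{k+1}^{-1} := \sum_{\ell=0}^{\infty} \frac{1}{\ell!}\, (-h_{k+1})^{\ell} \otimes u_{k+1}^{\ell}.
\]

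Composing these yields an isomorphism $\Psi_n \circ \cdots \circ \Psi_1 \colon \widetilde{X} \to \widetilde{X}'$ of $F$-curved complexes, and it remains to identify it with the product in the statement. The key observation is that for $i \neq j$ the degree-$0$ operators $h_i \otimes u_i$ and $h_j \otimes u_j$ commute: expanding the two orders via the middle interchange law together with the graded-commutativity of $\{\xi_i, \xi_i', h_i\}$ and of $R[\mathbb{U}]$, the accumulated sign vanishes because the degrees of $h_i$ and $u_i$ cancel (so that $h_i \otimes u_i$ is indeed degree $0$), exactly the parity bookkeeping already carried out in the proof of Proposition~\ref{prop: kosz_base_change}. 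Hence the exponential factors $\sum_{\ell} \tfrac{1}{\ell!} h_i^{\ell} \otimes u_i^{\ell}$ for distinct $i$ mutually commute, so $\Psi_n \circ \cdots \circ \Psi_1 = \prod_{i=1}^{n} \sum_{k=0}^{\infty} \tfrac{1}{k!} h_i^{k} \otimes u_i^{k} =: \Psi$ independently of the order of the product, and likewise $\Psi_1^{-1} \circ \cdots \circ \Psi_n^{-1} = \prod_{i=1}^{n} \sum_{k=0}^{\infty} \tfrac{1}{k!} (-h_i)^{k} \otimes u_i^{k} = \Psi^{-1}$; that these are mutually inverse closed morphisms is inherited termwise from the one-variable case. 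The only genuine obstacle is the stability of the telescope families $\mathcal{E}^{(k)}$ under Proposition~\ref{prop: strict_def}, which is what licenses the iterated application of Proposition~\ref{prop: kosz_base_change}; everything else is either a direct citation or a sign count identical in spirit to the computations already appearing in this section.
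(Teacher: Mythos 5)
Your proof is correct and matches what the paper intends: the paper gives no explicit proof of this corollary, but the telescoping argument through $\mathcal{E}^{(k)}$ with iterated appeal to Proposition~\ref{prop: kosz_base_change} is precisely the reading suggested by Remark~\ref{rem: kosz_base_order_ind}, which records the order-independence you establish in the last step. The sign bookkeeping in your commutativity check is sound (the key point being $\deg(h_i \otimes u_i) = 0$, so the exponential factors for distinct indices commute), and your observation that condition~(1) of Proposition~\ref{prop: strict_def} for the mixed families is automatic from the hypotheses is correct, though worth noting that it is already contained in the assumption that both $\{\xi_i\}$ and $\{\xi_i'\}$ are $F$-deforming families.
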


\begin{rem} \label{rem: kosz_base_order_ind}
Notice that the graded commutativity of the family $\{h_i\}$ in Corollary \ref{cor: kosz_base_change_rep} allows the products defining $\Psi$ and \textcolor{revisions}{$\Psi^{-1}$} to be applied in any order.
\end{rem}

\subsection{Deformation Lifting and Gaussian Elimination} \label{subsec: gauss_elim}

We have already discussed several technical tools for comparing deformations of isomorphic complexes in Proposition \ref{prop: hpt_curv_iso}, quasi-invertible complexes in \textcolor{revisions}{Corollary} \ref{cor: inv_uniq_yify}, complexes whose morphism complex has homology concentrated in non-negative homological degrees in Proposition \ref{prop: lifting_maps}, and distinct strict deformations of the same underlying complex in Proposition \ref{prop: kosz_base_change}. Our final tool, Proposition \ref{prop: sdr_deformation_lifting}, allows us to lift a strong deformation retraction satisfying the side conditions to a homotopy equivalence between strict deformations.

\begin{lem} \label{lem: sdr_lifting}
Let $A, B \in \CS(\AS)$ be given, and let $\{f, g, h\}$ be a strong deformation retraction from $A$ to $B$ satisfying the side conditions. Suppose $\{\xi_i\} \subset \End^{-1}_{\CS(\AS)}(B)$ is an $F$ deforming family on $B$. For each $i$, set

\[
\xi_i' := g \xi_i f + \varphi_i h \in \End^{-1}_{\CS(\AS)}(A).
\]
Then $\{\xi'_i\}$ is an $F$-deforming family on $A$ satisfying the following conditions:

\begin{enumerate}
\item $h \xi'_i = \xi'_ih = 0$ for each $i$;
\item $f \xi'_i g = \xi_i$ for each $i$.
\end{enumerate}
\end{lem}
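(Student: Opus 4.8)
The plan is to verify all four required facts --- conditions (1) and (2) of Proposition~\ref{prop: strict_def} for $\{\xi_i'\}$, together with conditions (1) and (2) of the present lemma --- by direct computation. The only inputs are the graded Leibniz rule for the composition differential $d_{\CS(\AS)}$ (from the Example following Definition~\ref{def: diff_gamm_grad_mod}), the defining data of the strong deformation retraction (namely $d_{\CS(\AS)}(f) = d_{\CS(\AS)}(g) = 0$, $d_{\CS(\AS)}(h) = \mathrm{id}_A - gf$, $fg = \mathrm{id}_B$, together with the side conditions $h^2 = 0$, $fh = 0$, $hg = 0$), the hypothesis $[d_B, \xi_i] = \varphi_i$ that $\{\xi_i\}$ is an $F$-deforming family on $B$, and the fact that each $\varphi_i$, being a natural transformation of the identity, is closed, has homological degree $0$, and --- by naturality --- commutes with $f$, $g$, and $h$. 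As a preliminary bookkeeping remark, $\mathrm{deg}(g\xi_i f) = \mathrm{deg}(\xi_i)$ and $\mathrm{deg}(\varphi_i h) = \mathrm{deg}(\varphi_i) + \mathrm{deg}(h) = \mathrm{deg}(\xi_i)$, so $\xi_i'$ is homogeneous of the correct degree.

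For condition (1) of Proposition~\ref{prop: strict_def}, apply the graded Leibniz rule termwise. Since $f$ and $g$ are closed and $|\varphi_i| = 0$, one gets $d_{\CS(\AS)}(g\xi_i f) = g\, d_{\CS(\AS)}(\xi_i)\, f = g\varphi_i f = \varphi_i gf$ and $d_{\CS(\AS)}(\varphi_i h) = \varphi_i\, d_{\CS(\AS)}(h) = \varphi_i(\mathrm{id}_A - gf) = \varphi_i - \varphi_i gf$, whose sum is $\varphi_i$. Conditions (1) and (2) of the lemma then follow immediately from the side conditions and centrality of $\varphi_i$: one computes $h\xi_i' = (hg)\xi_i f + \varphi_i h^2 = 0$, $\xi_i' h = g\xi_i(fh) + \varphi_i h^2 = 0$, and $f\xi_i' g = (fg)\xi_i(fg) + \varphi_i(fh)g = \xi_i$.

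The only step with genuine content is condition (2) of Proposition~\ref{prop: strict_def}, the pairwise graded commutativity of $\{\xi_i'\}$. Expanding $\xi_i'\xi_j'$ into its four summands, the two mixed terms $g\xi_i f\varphi_j h = \varphi_j g\xi_i(fh)$ and $\varphi_i(hg)\xi_j f$ vanish by $fh = 0$ and $hg = 0$, the term $\varphi_i h\varphi_j h = \varphi_i\varphi_j h^2$ vanishes, and the surviving term collapses via $fg = \mathrm{id}_B$ to $g\xi_i\xi_j f$; hence $\xi_i'\xi_j' = g\xi_i\xi_j f$. The analogous computation with the indices swapped gives $\xi_j'\xi_i' = g\xi_j\xi_i f$, and since $|\xi_i'| = |\xi_i|$ one concludes $[\xi_i', \xi_j'] = g\,[\xi_i, \xi_j]\,f = 0$, using that $\{\xi_i\}$ already pairwise graded-commutes on $B$. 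I do not expect a real obstacle anywhere; the only point requiring care is the signs in the Leibniz rule, and these all collapse because $\varphi_i$, $f$, and $g$ are even.
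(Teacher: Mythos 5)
Your proof is correct and follows the same direct-computation approach as the paper, which only verifies the identity $[d_A,\xi_i'] = \varphi_i$ explicitly and asserts the other checks are similar. You helpfully fill in those remaining details, and your reduction $\xi_i'\xi_j' = g\xi_i\xi_j f$ via the side conditions is the right observation for the graded-commutativity claim.
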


\begin{proof}
We need to verify that $\{\xi'_i\}$ form a graded-commutative family satisfying $[d_A, \xi'_i] = \varphi_i$ and conditions (1) and (2). We check the condition $[d_A, \xi'_i] = \varphi_i$ below by an explicit computation; the other checks are similar, and we leave the details to the interested reader.

By additivity of $d_A$ and the graded Leibniz rule, we have
\begin{align*}
[d_A, \xi'_i] = [d_A, g \xi_i f] + [d_A, \varphi_i h] = [d, g] \xi_i f + g [d_B, \xi_i] f + g \xi_i [d, f] + [d_A, \varphi_i] h + \varphi_i [d_A, h].
\end{align*}
Recall that by assumption, $g, f$, and $\varphi_i$ are closed, $[d_B, \xi_i] = \varphi_i$, and $[d_A, h] = \mathrm{id}_A - gf$. Making these substitutions, we obtain

\[
[d_A, \xi'_i] = g \varphi_i f + \varphi_i (\mathrm{id}_A - gf).
\]
Since $\varphi_i \in Z(\AS[\mathbb{Z}_t])$, it commutes with everything in sight, leaving

\[
[d_A, \xi'_i] = \varphi_i (gf + \mathrm{id}_A - gf) = \varphi_i.
\]
\end{proof}

\begin{defn}
We say the family $\{\xi'_i\}$ of Lemma \ref{lem: sdr_lifting} is obtained by \textit{lifting} the family $\{\xi_i\}$ along the strong deformation retraction $\{f, g, h\}$.
\end{defn}

\begin{prop} \label{prop: sdr_deformation_lifting}
Retain notation as in Lemma \ref{lem: sdr_lifting}, and let $\tilde{A} := \mathrm{tw}_{\alpha}(A \otimes_R S)$, $\tilde{B} := \mathrm{tw}_{\beta}(B \otimes_R S)$ be the strict deformations of $A$ and $B$ arising from the $F$-deforming families $\{\xi'_i\}$ and $\{\xi_i\}$. Then the morphisms $\{f \otimes 1, g \otimes 1, h \otimes 1\}$ constitute a strong deformation retraction from $\tilde{A}$ to $\tilde{B}$.
\end{prop}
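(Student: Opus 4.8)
The plan is to verify directly that $\{f \otimes 1, g \otimes 1, h \otimes 1\}$ satisfies the defining relations of a strong deformation retraction in $\YS_F(\AS; S_{\Phi})$, using only the strong deformation retraction relations for $\{f,g,h\}$ in $\CS(\AS)$, the side conditions $h^2 = 0$, $fh = 0$, $hg = 0$, the centrality of the $\varphi_i$, and the explicit formula $\xi_i' = g\xi_i f + \varphi_i h$. First I would record that by Lemma \ref{lem: sdr_lifting} the family $\{\xi_i'\}$ is an $F$-deforming family on $A$, so $\tilde{A} = \mathrm{tw}_{\alpha}(A \otimes_R S)$ with $\alpha = \sum_i \xi_i' \otimes u_i$ is a genuine object of $\YS_F(\AS; S_{\Phi})$ with connection $\delta_{\tilde{A}} = d_A \otimes 1 + \alpha$, and likewise $\delta_{\tilde{B}} = d_B \otimes 1 + \beta$ with $\beta = \sum_i \xi_i \otimes u_i$. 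All three maps have the correct degrees, since $(-) \otimes \mathrm{id}_S$ is degree-preserving: $f \otimes 1$ and $g \otimes 1$ are degree $0$ and $h \otimes 1$ is degree $-1$.

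The main computation is that $f \otimes 1$ and $g \otimes 1$ are closed in $\YS_F(\AS; S_{\Phi})$. Because $f$ and $g$ are already chain maps, the $\mathbb{U}$-degree $0$ part of $[\delta_{\tilde B}, f \otimes 1]$ vanishes, and applying the middle interchange law to the remaining $\mathbb{U}$-linear terms reduces closedness of $f \otimes 1$ to the identity $\xi_i f = f \xi_i'$ for each $i$, and closedness of $g \otimes 1$ to $\xi_i' g = g \xi_i$. Both follow at once from $\xi_i' = g\xi_i f + \varphi_i h$: expanding $f\xi_i' = (fg)\xi_i f + \varphi_i(fh)$ and using $fg = \mathrm{id}_B$ together with the side condition $fh = 0$ gives $f\xi_i' = \xi_i f$; symmetrically $\xi_i' g = g\xi_i(fg) + \varphi_i(hg) = g\xi_i$ using $fg = \mathrm{id}_B$ and $hg = 0$ (these are conditions (1), (2) of Lemma \ref{lem: sdr_lifting} in disguise). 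The retraction identity $(f \otimes 1)(g \otimes 1) = fg \otimes 1 = \mathrm{id}_{\tilde B}$ is then immediate, and in particular it forces the second homotopy $k$ on $\tilde{B}$ to vanish, so we obtain a strong deformation retraction rather than merely a homotopy equivalence.

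Finally, for the homotopy relation I would compute $[\delta_{\tilde A}, h \otimes 1] = [d_A, h] \otimes 1 + [\alpha, h \otimes 1]$. The first summand is $(\mathrm{id}_A - gf) \otimes 1 = \mathrm{id}_{\tilde A} - (g \otimes 1)(f \otimes 1)$ by the strong deformation retraction relation for $\{f,g,h\}$. The second summand vanishes termwise: expanding $\xi_i' h = g\xi_i(fh) + \varphi_i h^2 = 0$ and $h\xi_i' = (hg)\xi_i f + \varphi_i h^2 = 0$ using $fh = hg = h^2 = 0$ (equivalently, condition (1) of Lemma \ref{lem: sdr_lifting}), so every term of $[\alpha, h \otimes 1]$ has underlying morphism zero regardless of the middle-interchange sign. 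Hence $[\delta_{\tilde A}, h \otimes 1] = \mathrm{id}_{\tilde A} - (g \otimes 1)(f \otimes 1)$, completing the verification; the side conditions $(h \otimes 1)^2 = h^2 \otimes 1 = 0$, $(f \otimes 1)(h \otimes 1) = fh \otimes 1 = 0$, $(h \otimes 1)(g \otimes 1) = hg \otimes 1 = 0$ lift verbatim. The only delicate point is the sign bookkeeping from the middle interchange law and the homological conventions on $\otimes$, but in every relevant term the underlying map in $\AS$ is either forced to coincide (closedness) or to vanish (the $\alpha$-part of the homotopy), so I expect the signs never to obstruct the argument.
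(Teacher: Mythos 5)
Your proposal is correct and follows the same route as the paper: direct verification of each defining relation of a strong deformation retraction, reducing everything to the side conditions and the identity $fg = \mathrm{id}_B$. The paper writes out only the closedness of $f \otimes 1$ and notes the remaining checks are similar, whereas you spell out all of them (including the vanishing of $[\alpha, h\otimes 1]$ termwise), but the underlying argument is the same.
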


\begin{proof}
We need to verify that $f \otimes 1$, $g \otimes 1$ are closed degree $0$ morphisms between $\tilde{A}$ and $\tilde{B}$ satisfying $(f \otimes 1)(g \otimes 1) = \mathrm{id}_{\tilde{B}}$ and $[d_{\tilde{A}}, (h \otimes 1)] = (g \otimes 1)(f \otimes 1)]$. We check that $f \otimes 1$ is closed by an explicit computation below; the other checks are similar, and we leave the details to the interested reader.

\begin{align*}
[d, f \otimes 1] & = \left( \sum_i \xi_i \otimes u_i + d_B \otimes 1 \right) \circ (f \otimes 1) - (f \otimes 1) \circ \left( \sum_i (g \xi_i f + \varphi_i h) \otimes u_i + d_A \otimes 1 \right) \\
& = \left(\sum_i \xi_i f \otimes u_i \right) + d_B f \otimes 1 - \left( \sum_i (fg \xi_i f + f \varphi_i h) \otimes u_i \right) - f d_A \otimes 1 \\
& = \left( \sum_i \varphi_i f h \otimes u_i \right) + (d_B f - f d_A) \otimes 1 \\
& = 0.
\end{align*}
\end{proof}

Proposition \ref{prop: sdr_deformation_lifting} is not of much use without a good method for constructing strong deformation retractions satisfying the side conditions. Our primary means of doing so will be \textit{Gaussian elimination}:

\begin{prop}[Gaussian Elimination] \label{prop: gauss_elim}
Let $A, B_1, B_2, D, E, F$ be (potentially curved) chain complexes over $\AS$, and let $\varphi: B_1 \rightarrow B_2$ be an isomorphism. Consider the diagram below.

\vspace{1.5em}

\begin{equation} \label{eq: gauss_elim}
\begin{tikzcd}[ampersand replacement=\&,column sep=huge]
    A \arrow[rr, "\begin{pmatrix} \alpha \\ \beta \end{pmatrix}"] \arrow[dd, "\mathrm{id}_A", harpoon, shift left]
        \& \& B_1 \oplus D \arrow[dd, "{\setlength\arraycolsep{2pt} \begin{pmatrix} \mathrm{id}_{B_1} & \varphi^{-1} \kappa \\ 0 & \mathrm{id}_D \end{pmatrix}}", harpoon, shift left] \arrow[rr, "{\begin{pmatrix} \varphi & \kappa \\ \gamma & \epsilon \end{pmatrix}}"]
            \& \& B_2 \oplus E \arrow[dd, "{\setlength\arraycolsep{1pt} \begin{pmatrix} \mathrm{id}_{B_2} & 0 \\ -\gamma \varphi^{-1} & \mathrm{id}_E \end{pmatrix}}", harpoon, shift left] \arrow[rr, "{\begin{pmatrix} \rho & \psi \end{pmatrix}}"]
                \& \& F \arrow[dd, "\text{id}_F", harpoon, shift left]\\
    \\
    A \arrow[rr, "\begin{pmatrix} 0 \\ \beta \end{pmatrix}"'] \arrow[uu, "\text{id}_A", harpoon, shift left]
        \& \& B_1 \oplus D \arrow[rr, "{\setlength\arraycolsep{2pt} \begin{pmatrix} \varphi & 0 \\ 0 & \epsilon - \gamma \varphi^{-1} \kappa \end{pmatrix}}"'] \arrow[uu, "{\setlength\arraycolsep{2pt} \begin{pmatrix} \mathrm{id}_{B_1} & -\varphi^{-1} \kappa \\ 0 & \mathrm{id}_D \end{pmatrix}}", harpoon, shift left]
            \& \& B_2 \oplus E \arrow[uu, "{\setlength\arraycolsep{1pt} \begin{pmatrix} \mathrm{id}_{B_2} & 0 \\ \gamma \varphi^{-1} & \mathrm{id}_E \end{pmatrix}}", harpoon, shift left] \arrow[rr, "{\begin{pmatrix} 0 & \psi \end{pmatrix}}"']
                \& \& F \arrow[uu, "\text{id}_F", harpoon, shift left]
\end{tikzcd}
\end{equation}

\vspace{1.5em}

Suppose the top row of \eqref{eq: gauss_elim} is a segment of a curved complex $\tilde{C} \in \YS_G(\AS; S)$ for some curvature $G \in Z(\AS[\mathbb{Z}_t] \otimes_R S)$. Extend \eqref{eq: gauss_elim} to all of $\tilde{C}$ by including vertical identity arrows on the rest of $\tilde{C}$. Let $K$ denote the curved complex

\begin{center}
\begin{tikzcd}
    K = B_1 \arrow[r, "\varphi"] & B_2
\end{tikzcd}
\end{center}
and $C$ denote the complementary summand to $K$ in the bottom row of the extended version of \eqref{eq: gauss_elim}. Then $C \in \YS_G(\AS; S)$, and the morphisms specified by the vertical arrows of the extended version of \eqref{eq: gauss_elim} constitute an isomorphism of curved complexes $\tilde{C} \cong K \oplus C$.
\end{prop}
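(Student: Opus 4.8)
The plan is to prove this by the standard Gaussian elimination argument, carried out directly in the curved setting, where the curvature $G$ contributes nothing new beyond being central. First I would verify that the vertical arrows of \eqref{eq: gauss_elim} are mutually inverse isomorphisms of the underlying $\Gamma \times \mathbb{Z}_t$-graded objects: the two middle downward maps are block triangular with identity diagonal blocks, hence invertible, and a one-line matrix check using $\varphi^{-1}\varphi = \mathrm{id}_{B_1}$ and $\varphi\varphi^{-1} = \mathrm{id}_{B_2}$ shows their inverses are exactly the upward arrows drawn (obtained by negating the off-diagonal block); the outer vertical maps are identities. Let $P \colon \tilde{C} \to \tilde{C}'$ denote the resulting degree-$0$ isomorphism, where $\tilde{C}'$ is the bottom graded object equipped \emph{a priori} with the transported connection $\delta' := P \delta_{\tilde{C}} P^{-1}$.

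Next I would observe that $\tilde{C}'$ is automatically a $G$-curved complex and that $P$ is an isomorphism in $\YS_G(\AS; S)$: since $G$ is central it commutes with $P$, so $(\delta')^2 = P \delta_{\tilde{C}}^2 P^{-1} = P G P^{-1} = G$, and $P$ is closed and degree $0$ by construction. It then remains to identify $\delta'$ with the connection displayed in the bottom row of \eqref{eq: gauss_elim}. This is a bookkeeping computation with $2 \times 2$ block matrices: conjugating each component of $\delta_{\tilde{C}}$ by the triangular matrices defining $P$ and $P^{-1}$, simplifying with $\varphi^{-1}\varphi = \mathrm{id}$, and using the identities $\varphi\alpha + \kappa\beta = 0$ and $\rho\varphi + \psi\gamma = 0$ yields exactly the bottom-row maps $\left(\begin{smallmatrix} 0 \\ \beta \end{smallmatrix}\right)$, $\left(\begin{smallmatrix} \varphi & 0 \\ 0 & \epsilon - \gamma\varphi^{-1}\kappa \end{smallmatrix}\right)$, $\left(\begin{smallmatrix} 0 & \psi \end{smallmatrix}\right)$, together with the unchanged differentials on the rest of $\tilde{C}$, where the vertical maps are identities and the squares commute trivially. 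The two identities just invoked are precisely the entries of the relation $\delta_{\tilde{C}}^2 = G$ recording the $A \to B_2$ and $B_1 \to F$ components of $\delta_{\tilde{C}}^2$, entries to which the central curvature $G$ does not contribute.

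Finally, the bottom row of \eqref{eq: gauss_elim} is manifestly a direct sum of the two-term complex $K = (B_1 \xrightarrow{\varphi} B_2)$ and a complementary summand $C$: the middle differential block is diagonal, the incoming map has no $B_1$-component, the outgoing map has no $B_2$-component, and the rest of $\tilde{C}$ is untouched. Since $G$ commutes with the degree-$0$ idempotent projecting $\tilde{C}'$ onto $C$, the curvature restricts to each summand, so $C \in \YS_G(\AS; S)$; composing with $P$ gives the asserted isomorphism $\tilde{C} \cong K \oplus C$ in $\YS_G(\AS; S)$. The step I expect to require genuine care is confirming that $G$ contributes nothing to the matrix entries of $\delta_{\tilde{C}}^2$ used in the middle paragraph — this is where the homological- and deformation-degree conventions of Section \ref{sec: dg_categories} must be invoked precisely — together with the attendant Koszul-sign bookkeeping; the remainder is the classical argument run verbatim.
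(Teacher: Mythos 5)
Your proof is correct and takes the same route the paper does: the paper's proof simply cites Lemma 3.2 of \cite{BN07} with a footnote observing that the curved-case computations are identical, and you have spelled out exactly those computations, including the one genuine subtlety — that $G$, being a natural transformation of the identity with $\mathbb{Z}_t$-degree concentrated in the $S$-factor, contributes only diagonally to $\delta^2$ and hence the off-diagonal identities $\varphi\alpha + \kappa\beta = 0$ and $\rho\varphi + \psi\gamma = 0$ still hold verbatim. No gap.
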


\begin{proof}
    This is a standard computation; see e.g. Lemma 3.2 of \cite{BN07}\footnote{The proof in \cite{BN07} deals only with uncurved chain complexes, but the computations in the curved case are identical.}.
\end{proof}

\begin{cor} \label{cor: gauss_elim_removal}
In the situation of Proposition \ref{prop: gauss_elim}, there is a strong deformation retraction from $\tilde{C}$ onto $C$ satisfying the side conditions.
\end{cor}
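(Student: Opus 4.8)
The plan is to factor the desired strong deformation retraction through the direct sum decomposition already supplied by Proposition~\ref{prop: gauss_elim}. That proposition produces an explicit isomorphism of curved complexes $\Theta \colon \tilde{C} \xrightarrow{\sim} K \oplus C$, where $K = (B_1 \xrightarrow{\varphi} B_2)$ with $\varphi$ an isomorphism. An isomorphism is in particular a strong deformation retraction with zero homotopy, and such data trivially satisfies the side conditions of Definition~\ref{def: sdr}. So it suffices to produce a strong deformation retraction from $K \oplus C$ onto $C$ satisfying the side conditions and then compose.

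First I would record the (standard) fact that $K$ is contractible via a strong deformation retraction onto the zero complex satisfying the side conditions. Since $\varphi$ is an isomorphism of complexes, the endomorphism $h_K \in \End^{-1}(K)$ acting as $\varphi^{-1} \colon B_2 \to B_1$ on the $B_2$-summand and by $0$ on the $B_1$-summand satisfies $[d_K, h_K] = \mathrm{id}_K$, using that $\varphi^{-1}$ is again a chain map so that the internal differentials of $B_1, B_2$ contribute nothing. Because $h_K$ kills $B_1$ we get $h_K^2 = 0$, and the maps to and from the zero complex vanish, so the side conditions hold. Taking the direct sum of $\{0, 0, h_K\}$ with the identity retraction $\{\mathrm{id}_C, \mathrm{id}_C, 0\}$ on $C$ yields a strong deformation retraction $\{f, g, h\}$ from $K \oplus C$ onto $C$ with $f$ the projection, $g$ the inclusion, and $h = h_K \oplus 0$; the side conditions pass to the direct sum termwise.

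Finally I would compose the two retractions. For strong deformation retractions $\{f_1, g_1, h_1\}$ from $X$ to $Y$ and $\{f_2, g_2, h_2\}$ from $Y$ to $Z$ satisfying the side conditions, the triple $\{f_2 f_1,\ g_1 g_2,\ h_1 + g_1 h_2 f_1\}$ is a strong deformation retraction from $X$ to $Z$: the homotopy identity follows from $[d, h_1] = \mathrm{id}_X - g_1 f_1$, $[d, h_2] = \mathrm{id}_Y - g_2 f_2$, and $f_1 g_1 = \mathrm{id}_Y$, while the side conditions $(h_1 + g_1 h_2 f_1)^2 = 0$, $f_2 f_1 (h_1 + g_1 h_2 f_1) = 0$, and $(h_1 + g_1 h_2 f_1) g_1 g_2 = 0$ follow by expanding and repeatedly invoking $h_i^2 = 0$, $f_i h_i = 0$, $h_i g_i = 0$, and $f_1 g_1 = \mathrm{id}_Y$. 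Applying this with $\{f_1, g_1, h_1\} = \{\Theta, \Theta^{-1}, 0\}$ and $\{f_2, g_2, h_2\}$ the retraction of the previous paragraph gives the claimed strong deformation retraction from $\tilde{C}$ onto $C$ satisfying the side conditions.

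I do not expect a serious obstacle. The only points requiring care are the sign bookkeeping in the dg-category conventions of Section~\ref{sec: dg_categories} when verifying $[d_K, h_K] = \mathrm{id}_K$ in the curved setting (where one should also note that the curvature $G$ restricts to $0$ on the split summand $K$, which is immediate from the block-diagonal shape of the differential on the bottom row of \eqref{eq: gauss_elim}), together with the purely mechanical checks that the side conditions are stable under direct sums and under the composition formula above.
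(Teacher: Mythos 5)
Your proposal is correct and takes essentially the same route as the paper: conjugate a contraction of $K$ (via the homotopy $\varphi^{-1}$ on $B_2$) by the Gaussian-elimination isomorphism, so the resulting $f = \pi_C\Theta$, $g = \Theta^{-1}\iota_C$, and $h = \Theta^{-1}\iota_K h_K \pi_K \Theta$ coincide with the paper's maps. You simply make explicit, via the composition lemma for strong deformation retractions satisfying the side conditions, the verification that the paper dismisses as a straightforward computation.
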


\begin{proof}
Let $\Phi \colon \tilde{C} \to C \oplus K$ denote the isomorphism of Proposition \ref{prop: gauss_elim}; $\pi_C, \pi_K$ be the projections from $C \oplus K$ onto $C$ and $K$, respectively; and $\iota_C, \iota_K$ the corresponding inclusions. Let $h \in \End^{-1}(K)$ be the endomorphism whose only nonzero component is $\varphi^{-1} \colon B_2 \to B_1$, and observe that $[d_K, h] = \mathrm{id}_K$. Then the morphisms constituting the strong deformation retraction from $\tilde{C}$ onto $C$ are as follows:

\begin{center}
\begin{tikzcd}
\tilde{C} \arrow[rr, "\pi_C \Phi", shift left] \arrow["\Phi^{-1} \iota_K h \pi_K \Phi"', loop, distance=2em, in=215, out=145] &  & C \arrow[ll, "\Phi^{-1} \iota_C", shift left]
\end{tikzcd}
\end{center}

Verifying that these morphisms constitute a strong deformation retraction satisfying the side conditions is a straightforward computation.
\end{proof}

\section{Singular Soergel Bimodules} \label{sec: ssbim_main}

\subsection{Symmetric Polynomials} \label{sec: symm_poly}

Fix $N > 0$, and let $\mathbb{X} = \{x_1, \dots, x_N\}$ be an alphabet of formal variables. As usual, we denote by $R[\mathbb{X}]$ the polynomial ring in the alphabet $\mathbb{X}$. There is an action of the symmetric group $\SG_N$ on $R[\mathbb{X}]$ given by permuting the indices of the variables; we denote by $\mathrm{Sym}_R(\mathbb{X}) := R[\mathbb{X}]^{\SG_N} \subset R[\mathbb{X}]$ the ring of invariants under this action. We will often suppress the coefficient ring $R$ when it is clear from context, writing just $\mathrm{Sym}(\mathbb{X})$. We call elements of $\mathrm{Sym}(\mathbb{X})$ \textit{symmetric polynomials}.

We will make use of two distinguished generating sets of $\mathrm{Sym}(\mathbb{X})$.

\begin{defn} \label{def: sym_poly_bases}
    The \textit{elementary symmetric polynomials} $e_i(\mathbb{X})$ and \textit{power sum polynomials} $p_i(\mathbb{X})$ are defined via generating functions as follows:

    \begin{align*}
        E(\mathbb{X}, t) & := \prod_{x \in \mathbb{X}} (1 + xt) =: \sum_{i \geq 0} e_i(\mathbb{X})t^i; \\
        P(\mathbb{X}, t) & := \sum_{x \in \mathbb{X}} \frac{xt}{1 - xt} =: \sum_{i \geq 0} p_i(\mathbb{X})t^i
    \end{align*}
\end{defn}
 
By the fundamental theorem of symmetric polynomials, we have ring isomorphisms $\mathrm{Sym}(\mathbb{X}) \cong R[e_1(\mathbb{X}), \dots, e_N(\mathbb{X})] \cong R[p_1(\mathbb{X}), \dots, p_N(\mathbb{X})]$.

\begin{defn} \label{def: decomp}
    Fix $N > 0$, and let $\bb = (b_1, \dots, b_m)$ be a sequence of positive integers satisfying $\sum_{i = 1}^m b_i = N$. Then we call $\bb$ a \textit{\textcolor{revisions}{composition}} of $N$ and write $\bb \vdash N$.

    Given a \textcolor{revisions}{composition} $\bb \vdash N$ and a positive integer $n$, we denote by $(\bb, n)$ the \textcolor{revisions}{composition} of $N + n$ given by concatenating the \textcolor{revisions}{composition}s $\bb \vdash N$ and $(n) \vdash n$. Similarly, given two \textcolor{revisions}{composition}s $\bb \vdash N$ and $\blam \vdash N'$, we denote by $(\bb, \blam)$ the \textcolor{revisions}{composition} of $N + N'$ given by concatenating $\bb$ and $\blam$. Finally, if $\blam \vdash b_i$ for some $1 \leq i \leq m$, we denote by $\bb^{\blam}$ the \textcolor{revisions}{composition} of $N$ obtained from $\bb$ by replacing $b_i$ with $\blam$; we refer to the passage from $\bb$ to $\bb^{\blam}$ as \textit{refining} a \textcolor{revisions}{composition}.
\end{defn}

\begin{rem} \label{rem: fray_ambig}
    If $b_i = b_j$ for some $i \neq j$, given a \textcolor{revisions}{composition} $\blam \vdash b_i$, the notation $\bb^{\blam}$ is in principle ambiguous. In practice, we will always specify which element of $\bb$ is to be replaced.
\end{rem}

Given a \textcolor{revisions}{composition} $\bb \vdash N$, we may consider the parabolic subgroup $\SG_{\bb} := \SG_{b_1} \times \dots \times \SG_{b_m} \subset \SG_N$. Then $\SG_{\bb}$ also acts on $R[\mathbb{X}]$ by permuting variables; we denote the ring of invariants under this action by $\mathrm{Sym}^{\bb}(\mathbb{X})$. Note that $\mathrm{Sym}^{\aa}(\mathbb{X}) \subset \mathrm{Sym}^{\bb}(\mathbb{X})$ if and only if $\SG_{\aa} \supset \SG_{\bb}$ if and only if $\bb$ can be obtained by repeatedly refining $\aa$.

\begin{example}
    When $\aa = (N) \vdash N$, we have $\mathrm{Sym}^{\aa}(\mathbb{X}) = \mathrm{Sym}(\mathbb{X})$. On the other extreme, when $\bb = (1^N) \vdash N$, we have $\mathrm{Sym}^{\bb}(\mathbb{X}) = R[\mathbb{X}]$.
\end{example}

\begin{example}
    Let $N = 4$, $\aa = (3, 1) \vdash N$, $\bb = (1, 2, 1) \vdash N$. Then $\bb = \aa^{\blam}$, where $\blam$ is the \textcolor{revisions}{composition} $(1, 2) \vdash 3$. We also have $\mathrm{Sym}^{\aa}(\mathbb{X}) \subset \mathrm{Sym}^{\bb}(\mathbb{X})$ and $\SG_{\aa} \supset \SG_{\bb}$.
\end{example}

Equivalently, for each $1 \leq i \leq m$, let $\mathbb{X}_i$ be an alphabet of size $b_i$. We will identify $\mathbb{X}$ with the disjoint union $\mathbb{X}_1 \sqcup \dots \sqcup \mathbb{X}_m$. Under this identification, $R^{\bb}$ is exactly the ring of polynomials partially symmetric in each of the alphabets $\mathbb{X}_i$ separately, and we have a ring isomorphism

\[
\mathrm{Sym}^{\bb}(\mathbb{X}) = \mathrm{Sym}(\mathbb{X}_1 | \dots | \mathbb{X}_m) \cong \mathrm{Sym}(\mathbb{X}_1) \otimes_R \dots \otimes_R \mathrm{Sym}(\mathbb{X}_m)
\]

\vspace{1em}

As a consequence of the fundamental theorem of symmetric polynomials, we obtain a ring isomorphism

\[
\mathrm{Sym}^{\bb}(\mathbb{X}) \cong R[e_1(\mathbb{X}_1), \dots, e_{b_1}(\mathbb{X}_1)] \otimes_R \dots \otimes_R R[e_1(\mathbb{X}_m), \dots, e_{b_m}(\mathbb{X}_m)].
\]

\vspace{1em}

We will often be interested in polynomials which are simultaneously symmetric in two disjoint alphabets $\mathbb{X}$, $\mathbb{X}'$, each of size $N$. In this case, given a \textcolor{revisions}{composition} $\bb = (b_1, \dots, b_m) \vdash N$, we abbreviate

\[
\mathrm{Sym}^{\bb}(\mathbb{X}, \mathbb{X}') := \mathrm{Sym}^{\bb}(\mathbb{X}) \otimes_R \mathrm{Sym}^{\bb}(\mathbb{X}') \cong \mathrm{Sym}^{(\bb, \bb)}(\mathbb{X} | \mathbb{X}').
\]

\vspace{1em}

More generally, given a permutation $\sigma \in \SG_m$, we may \textit{twist} the symmetry requirement on $\mathbb{X}$ from $\bb$ to $\sigma(\bb) = (b_{\sigma^{-1}(1)}, \dots, b_{\sigma^{-1}(m)})$ (or equivalently, permute the sequence of alphabets $(\mathbb{X}_1, \dots, \mathbb{X}_m)$ by $\sigma^{-1}$ before enforcing $\bb$-symmetry). In this case, we write

\[
\mathrm{Sym}^{\bb}_{\sigma}(\mathbb{X}, \mathbb{X}') := \mathrm{Sym}^{\sigma(\bb)}(\mathbb{X}) \otimes_R \mathrm{Sym}^{\bb}(\mathbb{X}') \cong \mathrm{Sym}^{(\sigma(\bb), \bb)}(\mathbb{X} | \mathbb{X}').
\]

\vspace{1em}

We will make heavy use of the following fact.

\begin{lem} \label{lem: a_ijk_exist}
    Fix a positive integer $N > 0$ and a \textcolor{revisions}{composition} $\bb = (b_1, \dots, b_m) \vdash N$, and let $\mathbb{X}, \mathbb{X}'$ be disjoint alphabets each of size $N$. Then there exist a family of polynomials $a_{ijk}^{\bb}(\mathbb{X}, \mathbb{X}') \in \mathrm{Sym}^{\bb}(\mathbb{X}, \mathbb{X}')$ indexed by triples of positive integers $1 \leq i \leq N$, $1 \leq j \leq m$, $1 \leq k \leq b_j$ satisfying

    \[
    \sum_{j = 1}^m \sum_{k = 1}^{b_j} a_{ijk}^{\bb}(\mathbb{X}, \mathbb{X}')
    \big( e_k(\mathbb{X}_j) - e_k(\mathbb{X}'_j) \big) = e_i(\mathbb{X}) - e_i(\mathbb{X}')
    \]
\end{lem}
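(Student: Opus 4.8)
The plan is to reduce the statement to an elementary fact about polynomial rings. Writing $\mathbb{X} = \mathbb{X}_1 \sqcup \cdots \sqcup \mathbb{X}_m$ with $|\mathbb{X}_j| = b_j$ and $\mathbb{X}' = \mathbb{X}'_1 \sqcup \cdots \sqcup \mathbb{X}'_m$ similarly, the fundamental theorem of symmetric polynomials identifies
\[
\mathrm{Sym}^{\bb}(\mathbb{X}, \mathbb{X}') = \mathrm{Sym}^{\bb}(\mathbb{X}) \otimes_R \mathrm{Sym}^{\bb}(\mathbb{X}') \cong R\big[ y_{jk}, y'_{jk} : 1 \le j \le m,\ 1 \le k \le b_j \big]
\]
with a genuine polynomial ring in $2N$ variables, via $y_{jk} \mapsto e_k(\mathbb{X}_j)$ and $y'_{jk} \mapsto e_k(\mathbb{X}'_j)$; note that the index set $\{(j,k)\}$ has exactly $N = \sum_j b_j$ elements, matching the number of polynomials $a_{ijk}^{\bb}$ to be produced for each fixed $i$. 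Under this identification the claimed identity is precisely the assertion that $e_i(\mathbb{X}) - e_i(\mathbb{X}')$ lies in the ideal $I \subseteq R[y_{jk}, y'_{jk}]$ generated by the differences $y_{jk} - y'_{jk}$.

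First I would record that $e_i(\mathbb{X})$ can be written as a fixed polynomial in the partial elementary symmetric polynomials $e_k(\mathbb{X}_j)$: expanding the defining generating function of Definition \ref{def: sym_poly_bases} as $\prod_{x \in \mathbb{X}}(1 + xt) = \prod_{j=1}^m\big(\sum_{k=0}^{b_j} e_k(\mathbb{X}_j) t^k\big)$ gives $e_i(\mathbb{X}) = \sum_{k_1 + \cdots + k_m = i} e_{k_1}(\mathbb{X}_1)\cdots e_{k_m}(\mathbb{X}_m)$, so $e_i(\mathbb{X}) = P_i\big((e_k(\mathbb{X}_j))\big)$ for an explicit integer polynomial $P_i$, and likewise $e_i(\mathbb{X}') = P_i\big((e_k(\mathbb{X}'_j))\big)$ for the same $P_i$. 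Next I would observe that the quotient $R[y_{jk}, y'_{jk}]/I$ is again a polynomial ring in which $y_{jk}$ and $y'_{jk}$ become equal, so $P_i\big((y_{jk})\big)$ and $P_i\big((y'_{jk})\big)$ have the same image there; hence $P_i\big((y_{jk})\big) - P_i\big((y'_{jk})\big) \in I$, which is exactly what is needed. To make the coefficients $a_{ijk}$ explicit one uses the standard telescoping $P_i(\mathbf y) - P_i(\mathbf y') = \sum_{(j,k)}\big(P_i(\dots, y_{jk}, \dots) - P_i(\dots, y'_{jk}, \dots)\big)$ together with the divisibility of $g(y_{jk}) - g(y'_{jk})$ by $y_{jk} - y'_{jk}$ for any one-variable polynomial $g$; transporting the resulting $a_{ijk}$ back along the isomorphism above produces the desired elements $a_{ijk}^{\bb}(\mathbb{X}, \mathbb{X}') \in \mathrm{Sym}^{\bb}(\mathbb{X}, \mathbb{X}')$.

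I do not expect a genuine obstacle here: the only points that need care are the verification that $e_i(\mathbb{X})$ really does lie in the subalgebra generated by the $e_k(\mathbb{X}_j)$ — which is precisely the content of the generating-function factorization above — and the routine index bookkeeping matching the triples $(i,j,k)$ to the relations $e_k(\mathbb{X}_j) - e_k(\mathbb{X}'_j)$ that will later define the frayed projectors. One could alternatively phrase the whole argument without coordinates: modulo the ideal generated by $\{e_k(\mathbb{X}_j) - e_k(\mathbb{X}'_j)\}$ inside $\mathrm{Sym}^{\bb}(\mathbb{X},\mathbb{X}')$, each generator of $\mathrm{Sym}^{\bb}(\mathbb{X})$ is congruent to the corresponding generator of $\mathrm{Sym}^{\bb}(\mathbb{X}')$, hence so is $e_i(\mathbb{X}) \equiv e_i(\mathbb{X}')$, and the asserted formula exhibits an explicit witness to this congruence.
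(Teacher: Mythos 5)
Your proposal is correct and shares the same high-level skeleton as the paper's proof: both identify $\mathrm{Sym}^{\bb}(\mathbb{X},\mathbb{X}')$ with a genuine polynomial ring in the partial elementary symmetric polynomials via the fundamental theorem of symmetric polynomials, both observe that $e_i(\mathbb{X}) = P(\{e_k(\mathbb{X}_j)\})$ for a single integer polynomial $P$ (with the same $P$ working for the primed alphabet), and both conclude by showing that $P(\{e_k(\mathbb{X}_j)\}) - P(\{e_k(\mathbb{X}'_j)\})$ lies in the ideal generated by the differences $e_k(\mathbb{X}_j) - e_k(\mathbb{X}'_j)$.

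Where you genuinely diverge is in the final step, and your route is the cleaner one. The paper argues that $e_i(\mathbb{X}) - e_i(\mathbb{X}')$ vanishes on the affine variety $V(J)$, invokes the Nullstellensatz to get membership in $\sqrt{J}$, and then shows $J$ is radical because the quotient is an integral domain. This is overkill, and in fact the Nullstellensatz as stated requires an algebraically closed base field, whereas the paper only assumes $R$ is a field of characteristic $\neq 2$ — so strictly the paper's argument needs a small repair (base-change to $\overline{R}$ and descend, or observe the generators are defined over $\mathbb{Z}$). Your argument sidesteps all of this: in a free polynomial ring $R[\mathbf{y},\mathbf{y}']$, the difference $P(\mathbf{y}) - P(\mathbf{y}')$ always lies in $\langle y_{jk} - y'_{jk}\rangle$, either by the telescoping decomposition you give, or simply because the quotient by that ideal is again a polynomial ring in which $y_{jk}$ and $y'_{jk}$ become equal. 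This is elementary, constructive, and works over an arbitrary commutative ring, so nothing needs to be assumed about $R$. Your explicit description of $P_i$ via $E(\mathbb{X},t) = \prod_j E(\mathbb{X}_j, t)$ is also a nice concrete touch the paper leaves implicit.
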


\begin{proof}
    By the fundamental theorem of symmetric polynomials, we may consider $\mathrm{Sym}^{\bb}(\mathbb{X})$, \textcolor{revisions}{$\mathrm{Sym}^{\bb}(\mathbb{X}')$}, and $\mathrm{Sym}^{\bb}(\mathbb{X}, \mathbb{X}')$ explicitly as polynomial rings over (subsets of) the $2N$ elementary symmetric polynomials $e_k(\mathbb{X}_j), e_k(\mathbb{X}'_j)$. Since $e_i(\mathbb{X})$ (respectively $e_i(\mathbb{X}')$) is simultaneously symmetric in the alphabets
    $\mathbb{X}_1, \dots, \mathbb{X}_m$ (respectively the corresponding primed alphabets), there exists some $N$-variable integer polynomial $P$ such that $P(\{e_k(\mathbb{X}_j)\}) =
    e_i(\mathbb{X})$ and $P(\{e_k(\mathbb{X}'_j)\}) = e_i(\mathbb{X}')$.
    
    Now, consider the ideal
    
    \[
    J = \langle e_k(\mathbb{X}_j) - e_k(\mathbb{X}'_j)
    \rangle_{\substack{1 \leq j \leq m \\ 1 \leq k \leq \lambda_j}} \subset \mathrm{Sym}^{\bb}(\mathbb{X}, \mathbb{X}')
    \]
    
    Then $e_i(\mathbb{X}) - e_i(\mathbb{X}') = P(\{e_k(\mathbb{X}_j)\}) - P(\{e_k(\mathbb{X}'_j)\})$ vanishes on the affine variety $V(J) \subset \mathrm{Spec}(\mathrm{Sym}^{\bb}(\mathbb{X}, \mathbb{X}'))$, so $e_i(\mathbb{X}) - e_i(\mathbb{X}') \in \sqrt{J}$ by the Nullstellensatz. Since $\mathrm{Sym}^{\bb}(\mathbb{X}, \mathbb{X}')/J \cong \mathrm{Sym}^{\bb}(\mathbb{X})$ has no zero divisors, $J$ is a radical ideal, and in fact $e_i(\mathbb{X}) - e_i(\mathbb{X}') \in J$.
\end{proof}

Note that Lemma \ref{lem: a_ijk_exist} is merely an existence statement; there may be some freedom in the specific choice of $a_{ijk}^{\bb}(\mathbb{X}, \mathbb{X}')$ satisfying the desired identity. The next Lemma asserts that, for the family of \textcolor{revisions}{composition}s $\bb_n = (1^n) \vdash n$, we can make a coherent choice of these polynomials as $n$ varies.

\begin{lem} \label{lem: thin_a_compare}
        For each $n \geq 1$, set $\bb_n := (1^n) \vdash n$, and let $\mathbb{X_n}$ be an alphabet of size $n$. Suppose $a_{ij1}^{\textcolor{revisions}{\bb_n}}(\mathbb{X}_n, \mathbb{X}'_n)$ satisfy the conditions of Lemma \ref{lem: a_ijk_exist} for $\bb = \bb_n \vdash n$
        for each $1 \leq i, j \leq n$. Then the family

        \[
        a^{\bb_{n + 1}}_{ij1}(\mathbb{X}_{n + 1}, \mathbb{X}'_{n + 1}) = 
        \begin{cases}
            e_{i - 1}(\mathbb{X}_n) & \text{for} \ j = n + 1; \\
            x'_{n + 1} a^{\bb_n}_{i - 1, j, 1}(\mathbb{X}_n, \mathbb{X}'_n)
            + a^{\bb_n}_{ij1}(\mathbb{X}_n, \mathbb{X}'_n) & \text{for} \ 1 \leq j \leq n
        \end{cases}
        \]

        \vspace{1em}
        
        \noindent satify the conditions of Lemma \ref{lem: a_ijk_exist} for $\bb = \bb_{n + 1} \vdash n + 1$.
    \end{lem}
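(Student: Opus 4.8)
The plan is to verify directly that the proposed family satisfies the single defining identity of Lemma~\ref{lem: a_ijk_exist} for $\bb = \bb_{n+1}$. Since $\bb_{n+1} = (1^{n+1})$, we have $\mathrm{Sym}^{\bb_{n+1}}(\mathbb{X}_{n+1}, \mathbb{X}'_{n+1}) = R[\mathbb{X}_{n+1}, \mathbb{X}'_{n+1}]$, the full polynomial ring, so the membership condition of Lemma~\ref{lem: a_ijk_exist} is automatic and nothing beyond the summation identity needs checking. The one input beyond the case-$n$ hypothesis is the elementary-symmetric recursion obtained by adjoining a single variable: writing $\mathbb{X}_{n+1} = \mathbb{X}_n \sqcup \{x_{n+1}\}$ and $\mathbb{X}'_{n+1} = \mathbb{X}'_n \sqcup \{x'_{n+1}\}$, one has $e_i(\mathbb{X}_{n+1}) = e_i(\mathbb{X}_n) + x_{n+1} e_{i-1}(\mathbb{X}_n)$ and likewise for the primed alphabets, so that
\[
e_i(\mathbb{X}_{n+1}) - e_i(\mathbb{X}'_{n+1}) = \big( e_i(\mathbb{X}_n) - e_i(\mathbb{X}'_n) \big) + \big( x_{n+1} e_{i-1}(\mathbb{X}_n) - x'_{n+1} e_{i-1}(\mathbb{X}'_n) \big).
\]

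Before computing, I would adopt the harmless convention $a^{\bb_n}_{ij1} := 0$ whenever $i \leq 0$ or $i > n$, together with $e_0 = 1$ and $e_i(\mathbb{X}_n) = e_i(\mathbb{X}'_n) = 0$ for $i > n$. With these conventions the case-$n$ identity $\sum_{j=1}^n a^{\bb_n}_{ij1}(x_j - x'_j) = e_i(\mathbb{X}_n) - e_i(\mathbb{X}'_n)$ holds for every integer $i$, which lets the inductive step be stated uniformly and in particular absorbs the boundary indices $i = 1$ and $i = n+1$ that appear when one unwinds the formula for $a^{\bb_{n+1}}_{ij1}$.

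The core of the argument is then a one-line substitution. Plugging the proposed formula into $\sum_{j=1}^{n+1} a^{\bb_{n+1}}_{ij1}(x_j - x'_j)$, I would peel off the $j = n+1$ term, which contributes $e_{i-1}(\mathbb{X}_n)(x_{n+1} - x'_{n+1})$, and split the remaining sum over $1 \leq j \leq n$ as
\[
x'_{n+1} \sum_{j=1}^n a^{\bb_n}_{i-1,j,1}(x_j - x'_j) + \sum_{j=1}^n a^{\bb_n}_{ij1}(x_j - x'_j).
\]
Applying the case-$n$ identity at indices $i-1$ and $i$ respectively turns this into $x'_{n+1}\big(e_{i-1}(\mathbb{X}_n) - e_{i-1}(\mathbb{X}'_n)\big) + \big(e_i(\mathbb{X}_n) - e_i(\mathbb{X}'_n)\big)$. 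Adding back the $j = n+1$ term, the two occurrences of $x'_{n+1} e_{i-1}(\mathbb{X}_n)$ cancel, and regrouping the survivors as $\big(e_i(\mathbb{X}_n) + x_{n+1} e_{i-1}(\mathbb{X}_n)\big) - \big(e_i(\mathbb{X}'_n) + x'_{n+1} e_{i-1}(\mathbb{X}'_n)\big)$ yields exactly $e_i(\mathbb{X}_{n+1}) - e_i(\mathbb{X}'_{n+1})$ by the recursion above, as desired.

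There is no real obstacle here: the entire proof is this direct computation. The only point requiring any care is the bookkeeping of boundary indices — making sure the proposed formula still makes sense, and the identity still holds, when $i - 1 = 0$ or $i = n+1$ — which is precisely what the convention in the second paragraph is designed to handle.
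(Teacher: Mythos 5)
Your proof is correct and is essentially the same as the paper's: both peel off the $j = n+1$ term, apply the case-$n$ identity at degrees $i-1$ and $i$, cancel the two $x'_{n+1}e_{i-1}(\mathbb{X}_n)$ terms, and invoke the one-variable recursion $e_i(\mathbb{X}_{n+1}) = e_i(\mathbb{X}_n) + x_{n+1}e_{i-1}(\mathbb{X}_n)$. Your explicit boundary conventions ($a^{\bb_n}_{ij1} = 0$ for $i \leq 0$ or $i > n$) and the observation that the membership condition is vacuous for $\bb_{n+1} = (1^{n+1})$ are useful clarifications that the paper leaves implicit.
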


    \begin{proof}
        By assumption, we have $\sum_{j = 1}^n a_{ij1}^{\bb_n}(\mathbb{X}_n, \mathbb{X}'_n)
        (x_j - x_j') = e_i(\mathbb{X}_n) - e_i(\mathbb{X}'_n)$. Using this assumption,
        we verify explicitly that the choices of $a^{\bb_{n + 1}}_{ij1}$ above
        satisfy the necessary identity:

        \begin{align*}
            \sum_{j = 1}^{n + 1} a_{ij1}^{\bb_{n + 1}} (x_j - x_j')
            & = \left( \sum_{j = 1}^n \left( x'_{n + 1}
            a^{\bb_n}_{i - 1, j, 1}(\mathbb{X}_n, \mathbb{X}'_n)
            + a^{\bb_n}_{ij1}(\mathbb{X}_n, \mathbb{X}'_n) \right)
            (x_j - x_j') \right) + e_{i - 1}(\mathbb{X}_n) (x_{n + 1} - x'_{n + 1}) \\
            & = x'_{n + 1} (e_{i - 1}(\mathbb{X}_n) - e_{i - 1}(\mathbb{X}'_n))
            + (e_i(\mathbb{X}_n) - e_i(\mathbb{X}'_n))
            + e_{i - 1}(\mathbb{X}_n) (x_{n + 1} - x'_{n + 1}) \\
            & = e_i(\mathbb{X}_n) + x_{n + 1}e_{i - 1}(\mathbb{X}_n)
            - (e_i(\mathbb{X}'_n) + x'_{n + 1}e_{i - 1}(\mathbb{X}'_n)) \\
            & = e_i(\mathbb{X}_{n + 1}) - e_i(\mathbb{X}'_{n + 1}).
        \end{align*}

        Here the final equality follows from the easily verified identity $e_i(\mathbb{X}_{n + 1}) = e_i(\mathbb{X}_n) + x_{n + 1} e_{i - 1}(\mathbb{X}_n)$.
    \end{proof}
    
\begin{example}
\textcolor{revisions}{When $\bb = (1, 1, 1) \vdash 3 = n$, we may choose}

\begin{gather*}
\textcolor{revisions}{a_{111}^{\bb} = a_{121}^{\bb} = a_{131}^{\bb} = 1;} \\
\textcolor{revisions}{a_{211}^{\bb} = x_2' + x_3', \ a_{221}^{\bb} = x_1 + x_3', \ a_{231}^{\bb} = x_1 + x_2;} \\
\textcolor{revisions}{a_{311}^{\bb} = x_2' x_3', \ a_{321}^{\bb} = x_1 x_3', \ a_{331}^{\bb} = x_1x_2.}
\end{gather*}

\textcolor{revisions}{The relevant identities in each degree then read}

\begin{gather*}
\textcolor{revisions}{ 1(x_1 - x_1') + 1(x_2 - x_2') + 1(x_3 - x_3') = (x_1 + x_2 + x_3) - (x_1' + x_2' + x_3');} \\
\textcolor{revisions}{ (x_2' + x_3')(x_1 - x_1') + (x_1 + x_3')(x_2 - x_2') + (x_1 + x_2)(x_3 - x_3') = (x_1x_2 + x_1x_3 + x_2x_3) - (x_1'x_2' + x_1'x_3' + x_2'x_3');} \\
\textcolor{revisions}{ x_2'x_3'(x_1 - x_1') + x_1x_3'(x_2 - x_2') + x_1x_2(x_3 - x_3') = x_1x_2x_3 - x_1'x_2'x_3'.}
\end{gather*}
\end{example}

\subsection{Singular Soergel Bimodules} \label{sec: SSBim}

Fix $N > 0$ and an alphabet $\mathbb{X} = \{x_1, \dots, x_N\}$ as in Section \ref{sec: symm_poly}. We now consider $R[\mathbb{X}]$ as $\mathbb{Z}_q$-graded by declaring $\mathrm{deg}(x_i) = q^2$ for each $i$. Let $\mathrm{Bim}_N$ denote the $R$-linear $\mathbb{Z}_q$-graded 2-category defined as follows:
\begin{itemize}
    \item Objects of $\mathrm{Bim}_N$ are \textcolor{revisions}{composition}s $\aa \vdash N$.

    \item $1$-morphisms $\aa \to \bb$ are $\mathbb{Z}_q$-graded $\left( \mathrm{Sym}^{\bb}(\mathbb{X}), \mathrm{Sym}^{\aa}(\mathbb{X}) \right)$-bimodules.

    \item $2$-morphisms are graded bimodule homomorphisms.
\end{itemize}

Horizontal composition along objects $\aa \to \bb \to \cc$ is given by tensor product over $\mathrm{Sym}^{\bb}(\mathbb{X})$ and denoted by $\star$, while vertical compositon of $2$-morphisms is the usual composition of bimodule homomorphisms and denoted by $\circ$. We will often write the identity $1$-morphism for an object $\aa \vdash N$ as $\strand{\aa} := \mathrm{Sym}^{\aa}(\mathbb{X})$. The collection $\mathrm{Bim} := \sqcup_{N \geq 0} \mathrm{Bim}_N$ is itself a monoidal $2$-category under the external tensor product

\[
\boxtimes \colon \mathrm{Bim}_{N_1} \times \mathrm{Bim}_{N_2} \to \mathrm{Bim}_{N_1 + N_2}
\]
given on objects by concatenation $\aa \boxtimes \bb = (\aa, \bb)$ and on $1$- and $2$-morphisms by $\otimes_R$.

\begin{rem} \label{rem: bim_implicit_inc}
    For each $N \geq 0$, there is a canonical inclusion $\mathrm{Bim}_N \hookrightarrow \mathrm{Bim}_{N + 1}$ given on objects by $\aa \mapsto \aa \boxtimes (1) = (\aa, 1)$, on $1$-morphisms by $B \mapsto B \boxtimes R[x_{n + 1}]$, and on $2$-morphisms by $f \mapsto f \boxtimes \mathrm{id}_{R[x_{n + 1}]}$. We will implicitly regard data from $\Bim_N$ as living in $\Bim_{N + 1}$ under this inclusion without further comment.
\end{rem}

Recall that when $\SG_{\aa} \subset \SG_{\bb}$, we have a corresponding reverse inclusion $\mathrm{Sym}^{\bb}(\mathbb{X}) \subset \mathrm{Sym}^{\aa}(\mathbb{X})$ of invariant subrings. We regard $\mathrm{Sym}^{\aa}(\mathbb{X})$ as a $(\mathrm{Sym}^{\bb}(\mathbb{X}), \mathrm{Sym}^{\aa}(\mathbb{X}))$-bimodule with left action given by restriction of scalars along this inclusion. In this case, we have distinguished \textit{merge} and \textit{split} bimodules

\[
_{\bb}M_{\aa} := q^{\ell(\bb) - \ell(\aa)} \mathrm{Sym}^{\bb}(\mathbb{X}) \otimes_{\mathrm{Sym}^{\bb}(\mathbb{X})} \mathrm{Sym}^{\aa}(\mathbb{X}); \quad _{\aa}S_{\bb} := \mathrm{Sym}^{\aa}(\mathbb{X}) \otimes_{\mathrm{Sym}^{\bb}(\mathbb{X})} \mathrm{Sym}^{\bb}(\mathbb{X}).
\]
Here $q$ is the usual quantum shift functor and $\ell(\aa)$ (resp. $\ell(\bb)$) is the length of the longest element in $\SG_{\aa}$ (resp. $\SG_{\bb}$).

\begin{defn} \label{def: SSBim}
    The ($R$-linear, $\mathbb{Z}_q$-graded) $2$-category $\SSBim_N$ of \textit{singular Soergel bimodules} is the full $2$-subcategory of $\Bim_N$ generated by $1$-morphisms of the form $\strand{\aa}$, $_{\bb} M _{\aa}$, and $_{\aa} S_{\bb}$ under quantum shifts, horizontal and vertical composition, direct sums, and direct summands. The disjoint union $\sqcup_{N \geq 0} \SSBim_N$ is closed under the external tensor product $\boxtimes$ in the monoidal $2$-category $\mathrm{Bim}$, and so this collection is itself a full monoidal $2$-subcategory of $\mathrm{Bim}$ which we denote by $\SSBim$. We will denote the category of $1$-morphisms from $\aa$ to $\bb$ in $\SSBim$ by $\SSBim_{\aa}^{\bb}$.
\end{defn}

\begin{rem} \label{rem: sbim}
    In the special case when $\aa = \bb = (1^N) \vdash N$, we denote the category of morphisms from $\aa$ to $\bb$ also by $\SBim_N := \SSBim_{(1^n)}^{(1^n)}$ and refer to such morphisms as \textit{Soergel bimodules}. In this case, we have a canonical inclusion functor $\SBim_N \hookrightarrow \SBim_{N + 1}$ defined as in Remark \ref{rem: bim_implicit_inc}.
\end{rem}

There is a useful graphical calculus \textcolor{revisions2}{for $1$-morphisms in $\SSBim$} \textcolor{revisions}{analogous to the Type A webs developed by Cautis--Kamnitzer--Morrison in \cite{CKM14}}. In the special case $\aa = (a, b)$, $\bb = (a + b) \vdash N$, we denote the identity, merge, and split bimodules as follows:

\begin{gather*}
\strand{\bb} := 
\begin{tikzpicture}[anchorbase,scale=.75,tinynodes]
	\draw[webs] (1,0) node[below]{$a + b$}
	 to[out=90,in=270] (1,1) node[above,yshift=-2pt]{$a + b$};
\end{tikzpicture}
; \quad\quad
_{\bb} M_{\aa} := 
\begin{tikzpicture}[anchorbase,scale=.75,tinynodes]
	\draw[webs] (0,0) node[below]{$a$} to[out=90,in=180] (.5,.5);
	\draw[webs] (1,0) node[below]{$b$} to[out=90,in=0] (.5,.5);
	\draw[webs] (.5,.5) to[out=90,in=270] (.5,1) node[above,yshift=-2pt]{$a + b$};
\end{tikzpicture}
; \quad\quad
_{\aa} S_{\bb} :=
\begin{tikzpicture}[anchorbase,scale=.75,tinynodes]
	\draw[webs] (.5,.5) to[out=180,in=270] (0,1) node[above,yshift=-2pt]{$a$};
	\draw[webs] (.5,.5) to[out=0,in=270] (1,1) node[above,yshift=-2pt]{$b$};
	\draw[webs] (.5,0) node[below]{$a + b$} to[out=90,in=270] (.5,.5);
\end{tikzpicture}
\end{gather*}

We depict external tensor product $\boxtimes$ in \textcolor{revisions}{$\SSBim$} by horizontal concatenation of such diagrams and horizontal composition $\star$ of $1$-morphisms by vertical concatenation. More complicated diagrams are built from the above by these operations. For example, for $\aa = (1, 2, 4, 2)$, $\bb = (1, 6, 2)$, $\cc = (1, 6, 1, 1) \vdash 9$, the $1$-morphism $(_{\cc} S_{\bb}) \star (_{\bb} M_{\aa})$ in $SSBim_9$ is depicted as follows:

\begin{gather*}
(_{\cc} S_{\bb}) \star (_{\bb} M_{\aa}) :=  
\begin{tikzpicture}[anchorbase,scale=.75,tinynodes]
	\draw[webs] (1,0) node[below]{$1$}
	 to[out=90,in=270] (1,1) node[above,yshift=-2pt]{$1$};
	 \draw[webs] (2,0) node[below]{$2$} to[out=90,in=180] (2.5,.5);
	\draw[webs] (3,0) node[below]{$4$} to[out=90,in=0] (2.5,.5);
	\draw[webs] (2.5,.5) to[out=90,in=270] (2.5,1) node[above,yshift=-2pt]{$6$};
	\draw[webs] (4.5,0.5) to[out=180,in=270] (4,1) node[above,yshift=-2pt]{$1$};
	\draw[webs] (4.5,0.5) to[out=0,in=270] (5,1) node[above,yshift=-2pt]{$1$};
	\draw[webs] (4.5,0) node[below]{$2$} to[out=90,in=270] (4.5,0.5);
\end{tikzpicture}
\end{gather*}

We interpret more complex web diagrams as follows. Edges labeled with the value $i$ correspond to symmetric polynomial rings in $i$ variables; we refer to these labels as \textit{colors} and the top and bottom such labels in a given diagram as \textit{external colors}, and we refer to edges as \textit{strands}. Every trivalent graph built from these edges represents a $1$-morphism from the object given by reading colors from left to right along the bottom of the diagram to the object given by reading colors from left to right along the top of the diagram. Trivalent vertices indicate a tensor product over the symmetric polynomial ring corresponding to the highest color entering that vertex. We also demand that, for each vertex, the sum of colors entering the bottom and exiting the top of that vertex are equal.

\begin{rem}
    We often suppress colors on strands when there is no ambiguity (for example, when all colors can be determined from those provided by balancing colors above and below each trivalent vertex).
\end{rem}

\textcolor{revisions}{The graphical calculus for \textcolor{revisions2}{$1$-morphisms} is generated by  identity, merge, and split bimodules under the dictionary of operations described above, subject to some relations\footnote{\textcolor{revisions}{One can recover a complete list of relations encoding isomorphisms satisifed by these diagrammatic generators from \cite{CKM14} by considering all webs as upwards oriented, ignoring all tags, and taking $n \to \infty$. We warn the experienced reader that \cite{CKM14} work both with $\mathfrak{sl}_n$ webs (rather than the $\mathfrak{gl}_n$ webs employed here) and at the decategorified level of $\mathfrak{sl}_n$-intertwiners. These differences do not pose any difficulties to the reader only interested in referring to \cite{CKM14} for a list of graphical relations.}}.} We cite here two relations that will be particularly useful to us. To formulate these relations, we will make use of the \textit{quantum integers}:

\[
[j] := q^{j -1} + q^{j - 3} + \dots + q^{-j + 3} + q^{-j + 1}; \quad [-j] := -[j]
\]
as well as the \textit{quantum binomial coefficients}:

\begin{align*}
\begin{bmatrix} j \\ k \end{bmatrix} = \frac{[j]!}{[k]![j - k]!}; \quad \quad [j]! := [j][j - 1] \dots [2][1]
\end{align*}

Given a Laurent polynomial $f(q) \in \mathbb{N}[q, q^{-1}]$ and a singular Soergel bimodule $B$, we denote by $f(q)B$ a direct sum of quantum shifts of $B$ indicated by the corresponding coefficients of $f$; for example, we write

\[
(2q^2 + 1 + q^{-5})B := q^2B \oplus q^2 B \oplus B \oplus q^{-5}B.
\]

\vspace{1em}

Unless otherwise stated, the isomorphisms below hold for all valid colorings of their underlying trivalent graphs.

\begin{prop} \label{prop: web_rels}
There are degree-preserving isomorphisms of $1$-morphisms in $SSBim$ of the following form:

\begin{gather}
\begin{tikzpicture}[anchorbase,scale=.5,tinynodes]  \label{tikzpic: digons}
    \draw[webs] (0.5,0) to (0.5,1);
    \draw[webs] (0.5,1) to[out=180,in=270] (0,1.75) node[left]{$j$};
    \draw[webs] (0.5,1) to[out=0,in=270] (1,1.75) node[right]{$k$};
    \draw[webs] (.5,2.5) to[out=180,in=90] (0,1.75);
    \draw[webs] (.5,2.5) to[out=0,in=90] (1,1.75);
    \draw[webs] (.5,2.5) to (.5,3.5);
\end{tikzpicture}
\cong
\begin{bmatrix} j + k \\ k \end{bmatrix}
\begin{tikzpicture}[anchorbase,scale=.5,tinynodes]
    \draw[webs] (0,0) node[below]{$j + k$} to (0,2) node[above,yshift=-2pt]{$j + k$};
\end{tikzpicture}
; \\
\begin{tikzpicture}[anchorbase,scale=.5,tinynodes] \label{tikzpic: assoc}
    \draw[webs] (0,0) node[below]{$i$} to (0,1);
    \draw[webs] (.5,0) node[below]{$j$} to[out=90,in=180] (1,.5);
    \draw[webs] (1.5,0) node[below]{$k$} to[out=90,in=0] (1,.5);
    \draw[webs] (1,.5) to (1,1);
    \draw[webs] (0,1) to[out=90,in=180] (.5,1.5);
    \draw[webs] (1,1) to[out=90,in=0] (.5,1.5);
    \draw[webs] (.5,1.5) to (.5,2) node[above]{$i + j + k$};
\end{tikzpicture}
\cong
\begin{tikzpicture}[anchorbase,scale=.5,tinynodes]
    \draw[webs] (0,0) node[below]{$i$} to[out=90,in=180] (.5,.5);
    \draw[webs] (1,0) node[below]{$j$} to[out=90,in=0] (.5,.5);
    \draw[webs] (1.5,0) node[below]{$k$} to (1.5,1);
    \draw[webs] (.5,.5) to (.5,1);
    \draw[webs] (.5,1) to[out=90,in=180] (1,1.5);
    \draw[webs] (1.5,1) to[out=90,in=0] (1,1.5);
    \draw[webs] (1,1.5) to (1,2) node[above]{$i + j + k$};
\end{tikzpicture}
; \quad \quad
\begin{tikzpicture}[anchorbase,scale=.5,tinynodes]
    \draw[webs] (0,2) node[above,yshift=-2pt]{$i$} to (0,1);
    \draw[webs] (.5,2) node[above,yshift=-2pt]{$j$} to[out=270,in=180] (1,1.5);
    \draw[webs] (1.5,2) node[above,yshift=-2pt]{$k$} to[out=270,in=0] (1,1.5);
    \draw[webs] (1,1.5) to (1,1);
    \draw[webs] (0,1) to[out=270,in=180] (.5,.5);
    \draw[webs] (1,1) to[out=270,in=0] (.5,.5);
    \draw[webs] (.5,.5) to (.5,0) node[below]{$i + j + k$};
\end{tikzpicture}
\cong
\begin{tikzpicture}[anchorbase,scale=.5,tinynodes]
    \draw[webs] (0,2) node[above,yshift=-2pt]{$i$} to[out=270,in=180] (.5,1.5);
    \draw[webs] (1,2) node[above,yshift=-2pt]{$j$} to[out=270,in=0] (.5,1.5);
    \draw[webs] (1.5,2) node[above,yshift=-2pt]{$k$} to (1.5,1);
    \draw[webs] (.5,1.5) to (.5,1);
    \draw[webs] (.5,1) to[out=270,in=180] (1,.5);
    \draw[webs] (1.5,1) to[out=270,in=0] (1,.5);
    \draw[webs] (1,.5) to (1,0) node[below]{$i + j + k$};
\end{tikzpicture}
\end{gather}

\end{prop}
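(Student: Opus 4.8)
The plan is to prove each isomorphism by unwinding the bimodule definitions of the merge and split $1$-morphisms and comparing both sides directly; each relation is in any case a special case of the web relations of \cite{CKM14}, so citing that reference is an acceptable alternative, but the direct check is short.

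For the (co)associativity isomorphisms I would argue as follows. Unwinding, the left-hand composite in the associativity relation, $\bigl({}_{(i+j+k)}M_{(i,j+k)}\bigr)\star\bigl(\strand{(i)}\boxtimes{}_{(j+k)}M_{(j,k)}\bigr)$, is — after using that tensoring a bimodule of the form $\mathrm{Sym}^{\bb}(\mathbb{X})$ over $\mathrm{Sym}^{\bb}(\mathbb{X})$ against $\mathrm{Sym}^{\bb}(\mathbb{X})$ itself is a no-op — canonically identified with $q^{\ell((i+j+k))-\ell((i,j,k))}\,\mathrm{Sym}^{(i,j,k)}(\mathbb{X})$ as a $\bigl(\mathrm{Sym}^{(i+j+k)}(\mathbb{X}),\mathrm{Sym}^{(i,j,k)}(\mathbb{X})\bigr)$-bimodule with the left action restricted along the inclusion of invariant subrings; the accumulated quantum shift factors as $q^{\ell((i+j+k))-\ell((i,j+k))}\cdot q^{\ell((i,j+k))-\ell((i,j,k))}$ and telescopes. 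The right-hand composite is identified with the same bimodule and the same total shift, now telescoping through the intermediate object $(i+j,k)$; since the shift exponent $\ell((i+j+k))-\ell((i,j,k))$ does not depend on the order of merging, the two composites are literally equal, so the isomorphism is the (canonical associator) identity map. The coassociativity relation is the formal mirror: both composites are identified with $\mathrm{Sym}^{(i,j,k)}(\mathbb{X})$ as a $\bigl(\mathrm{Sym}^{(i,j,k)}(\mathbb{X}),\mathrm{Sym}^{(i+j+k)}(\mathbb{X})\bigr)$-bimodule with trivial shift, and again the isomorphism is the identity.

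For the digon relation, unwinding the composite (split $(j+k)\to(j,k)$ followed by merge $(j,k)\to(j+k)$) gives, up to the overall quantum shift carried by the merge bimodule, the bimodule $\mathrm{Sym}^{(j,k)}(\mathbb{X})$ regarded as a $\bigl(\mathrm{Sym}^{(j+k)}(\mathbb{X}),\mathrm{Sym}^{(j+k)}(\mathbb{X})\bigr)$-bimodule with both actions restricted, whereas the right-hand side is $\mathrm{Sym}^{(j+k)}(\mathbb{X})$ with its standard bimodule structure tensored with the Laurent polynomial $\begin{bmatrix} j+k \\ k \end{bmatrix}$. Writing $\mathbb{X}=\mathbb{X}_1\sqcup\mathbb{X}_2$ with $|\mathbb{X}_1|=j$, $|\mathbb{X}_2|=k$, the essential input is the classical fact that $\mathrm{Sym}(\mathbb{X}_1 | \mathbb{X}_2)$ is free as a $\mathrm{Sym}(\mathbb{X})$-module, with basis the Schur polynomials $s_{\lambda}(\mathbb{X}_1)$ indexed by partitions $\lambda$ fitting inside a $j\times k$ rectangle; since $\deg(x_i)=q^2$ and $\deg\bigl(s_{\lambda}(\mathbb{X}_1)\bigr)=q^{2|\lambda|}$, the graded rank of this free module is $\sum_{\lambda\subseteq j\times k} q^{2|\lambda|}$, which equals $\begin{bmatrix} j+k \\ k \end{bmatrix}$ once one accounts for the paper's balanced convention for quantum integers together with the shift built into the merge bimodule. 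The isomorphism then sends $1\otimes 1$ to (the tuple of) these basis polynomials, and one checks it is a bimodule map of the correct degree.

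The \emph{main obstacle} will be the quantum-degree bookkeeping in the digon relation: one must reconcile the shift carried by ${}_{\bb}M_{\aa}$, the balanced-versus-unbalanced normalization of the Gaussian binomial coefficient $\begin{bmatrix} j+k \\ k \end{bmatrix}$, and the top degree $q^{2jk}$ of the Schur-polynomial basis, so that the composite lands in exactly the stated degree with no leftover shift; this is the one place where a sign slip in the shift conventions would propagate. Everything else — the telescoping of nested merges and splits, the freeness of $\mathrm{Sym}(\mathbb{X}_1 | \mathbb{X}_2)$ over $\mathrm{Sym}(\mathbb{X})$, and the order-independence of iterated merging — is routine and can be done by inspection.
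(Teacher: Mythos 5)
Your proposal takes a genuinely different route from the paper: the paper does not prove Proposition~\ref{prop: web_rels} at all, simply noting that the relations are established in \cite{CKM14} and recording the two it needs. Your direct verification by unwinding the bimodule definitions is a valid and instructive alternative, and its substance is correct. The (co)associativity argument is exactly right: each merge bimodule is (a quantum shift of) the partially-invariant ring in the domain decomposition, so tensoring over the intermediate ring is a no-op, the shift exponent $\ell(\bb)-\ell(\aa)$ is additive along refinement chains, and both nested composites collapse to the same bimodule with the same telescoped exponent $\ell((i+j+k))-\ell((i,j,k))$. For the digon, your key input --- that $\mathrm{Sym}(\mathbb{X}_1\,|\,\mathbb{X}_2)$ is free over $\mathrm{Sym}(\mathbb{X})$ with basis the Schur polynomials $s_\lambda(\mathbb{X}_1)$, $\lambda\subseteq j\times k$ --- is the right one, and the composite split-then-merge is indeed a shift of $\mathrm{Sym}^{(j,k)}(\mathbb{X})$ with both $\mathrm{Sym}^{(j+k)}(\mathbb{X})$-actions given by restriction.

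You flag the quantum-degree bookkeeping in the digon as the main obstacle, and it really is the crux; let me point out what you will find when you chase it. The Schur basis has graded rank $\sum_{\lambda\subseteq j\times k} q^{2|\lambda|}$, which equals $q^{jk}\begin{bmatrix} j+k \\ k \end{bmatrix}$ (top term $q^{2jk}$, balanced binomial symmetric about $q^0$). For the composite to come out as $\begin{bmatrix} j+k \\ k \end{bmatrix}\strand{(j+k)}$, the shift built into the merge must therefore \emph{cancel} this $q^{jk}$, i.e.\ it must be $q^{-jk}=q^{\ell(\aa)-\ell(\bb)}$ with $\aa$ the finer and $\bb$ the coarser decomposition. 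If you take the formula $_{\bb}M_{\aa}:=q^{\ell(\bb)-\ell(\aa)}\mathrm{Sym}^{\bb}(\mathbb{X})\otimes_{\mathrm{Sym}^{\bb}(\mathbb{X})}\mathrm{Sym}^{\aa}(\mathbb{X})$ literally together with the paper's convention $(q^jX)_i=X_{i-j}$ (under which $q^j$ raises degree and multiplies Hilbert series by $q^j$), you will get an extra $q^{2jk}$ and the digon relation will \emph{fail}. So part of doing the bookkeeping honestly is resolving this: either the exponent in the definition of $_{\bb}M_{\aa}$ should be read as $\ell(\aa)-\ell(\bb)$, or the shift functor is intended to lower degree. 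This does not affect the structure of your argument --- only the sign of one exponent --- but it is exactly the kind of slip you predicted would propagate, and your writeup should commit to the sign that makes the digon hold.
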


\begin{proof}
\textcolor{revisions}{\eqref{tikzpic: digons} follows immediately from the fact that $\mathrm{Sym}^{(j, k)}(\mathbb{X})$ is a free graded $\mathrm{Sym}^{(j + k)}(\mathbb{X})$-module of graded rank $\begin{bmatrix} j + k \\ k \end{bmatrix}$ (with degrees shifted to be symmetric about $0$), while \eqref{tikzpic: assoc} follows from the associativity of extension of scalars up to canonical isomorphism.}
\end{proof}

We refer to \eqref{tikzpic: digons} and \eqref{tikzpic: assoc} as the \textit{digon removal} and \textit{associativity} relations, respectively. The associativity relations allow for unambiguous representation \textcolor{revisions}{(up to isomorphism)} of merges and splits along an arbitrary refinement $\aa = \bb^{\blam}$ using vertices of higher valence; for example, given \textcolor{revisions}{composition}s $\aa = (a, b, c, d)$, $\bb = (a + b + c, d) \vdash a + b + c + d$ and $\blam = (a, b, c) \vdash a + b + c$, we have the graphical representation

\begin{gather*}
_{\bb} M_{\aa} =
\begin{tikzpicture}[anchorbase,scale=.5,tinynodes]
    \draw[webs] (-.2,0) node[below]{$a$} to[out=90,in=180] (.5,1);
    \draw[webs] (.5,0) node[below]{$b$} to (.5,1);
    \draw[webs] (1.2,0) node[below]{$c$} to[out=90,in=0] (.5,1);
    \draw[webs] (.5,1) to (.5,2) node[above,yshift=-2pt]{$a + b + c$};
    \draw[webs] (2.2, 0) node[below]{$d$} to (2.2,2) node[above,yshift=-2pt]{$d$};
\end{tikzpicture}
\cong
\begin{tikzpicture}[anchorbase,scale=.5,tinynodes]
    \draw[webs] (0,0) node[below]{$a$} to (0,1);
    \draw[webs] (.5,0) node[below]{$b$} to[out=90,in=180] (1,.5);
    \draw[webs] (1.5,0) node[below]{$c$} to[out=90,in=0] (1,.5);
    \draw[webs] (1,.5) to (1,1);
    \draw[webs] (0,1) to[out=90,in=180] (.5,1.5);
    \draw[webs] (1,1) to[out=90,in=0] (.5,1.5);
    \draw[webs] (.5,1.5) to (.5,2) node[above,yshift=-2pt]{$a + b + c$};
    \draw[webs] (2.2, 0) node[below]{$d$} to (2.2,2) node[above,yshift=-2pt]{$d$};
\end{tikzpicture}
\cong
\begin{tikzpicture}[anchorbase,scale=.5,tinynodes]
    \draw[webs] (0,0) node[below]{$a$} to[out=90,in=180] (.5,.5);
    \draw[webs] (1,0) node[below]{$b$} to[out=90,in=0] (.5,.5);
    \draw[webs] (.5,.5) to (.5,1);
    \draw[webs] (1.5,0) node[below]{$c$} to (1.5,1);
    \draw[webs] (.5,1) to[out=90,in=180] (1,1.5);
    \draw[webs] (1.5,1) to[out=90,in=0] (1,1.5);
    \draw[webs] (1,1.5) to (1,2) node[above,yshift=-2pt]{$a + b + c$};
    \draw[webs] (2.5, 0) node[below]{$d$} to (2.5,2) node[above,yshift=-2pt]{$d$};
\end{tikzpicture}
.
\end{gather*}

\begin{defn} \label{def: full_merge_split}
    Given \textcolor{revisions}{composition}s $\aa, \bb \vdash N$, we denote by $W_{\aa}^{\bb}$ the singular Soergel bimodule $(_{\bb} S_{(N)}) \star (_{(N)} M_{\aa}) \in \SSBim^{\bb}_{\aa}$. When $\aa = \bb$, we write $W_{\aa} := W_{\aa}^{\aa}$.
\end{defn}

\begin{example}
    When $\aa = (3, 1, 1) \vdash 5$, $\bb = (1, 2, 1, 1) \vdash 5$, we have
    \begin{gather*}
    W_{\aa}^{\bb} := 
    \begin{tikzpicture}[anchorbase,scale=.75,tinynodes]
    \draw[webs] (0,0) node[below]{$3$} to[out=90, in=180] (.5,.5);
    \draw[webs] (.5,0) node[below]{$1$} to (.5,.5);
    \draw[webs] (1,0) node[below]{$1$} to[out=90,in=0] (.5,.5);
    \draw[webs] (.5,.5) to (.5,1);
    \draw[webs] (.5,1) to[out=180,in=270] (-.25,1.5) node[above]{$1$};
    \draw[webs] (.5,1) to[out=180,in=270] (.25,1.5) node[above]{$2$};
    \draw[webs] (.5,1) to[out=0,in=270] (.75,1.5) node[above]{$1$};
    \draw[webs] (.5,1) to[out=0,in=270] (1.25,1.5) node[above]{$1$};
    \end{tikzpicture}
    \end{gather*}
\end{example}

\begin{example}
    When $\aa = (N) \vdash N$, then $W_{\aa} = \strand{(N)}$ is the identity bimodule. When $\aa = (1, 1) \vdash 2$, then $W_{\aa} \in \SSBim_{\aa}^{\aa}$ is the usual Bott--Samelson bimodule $B_1$.
\end{example}

The following is a useful generalization of the digon removal relation.

\begin{prop} \label{prop: blamgon_removal}
    Let $\blam = (\lambda_1, \dots, \lambda_n) \vdash N$. Then $(_{(N)} M_{\blam}) \star (_{\blam} S_{(N)}) \cong \cfrac{[N]!}{\prod_{i = 1}^n [\lambda_i]!} \strand{(N)}$.
\end{prop}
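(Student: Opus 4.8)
I would prove this by induction on the number $n$ of parts of the decomposition $\blam$, using only the digon removal relation \eqref{tikzpic: digons} and the associativity relations \eqref{tikzpic: assoc}. The base cases are immediate: when $n = 1$ we have $\blam = (N)$, both $_{(N)}M_{(N)}$ and $_{(N)}S_{(N)}$ are the identity bimodule $\strand{(N)}$, and the claimed scalar is $[N]!/[N]! = 1$; when $n = 2$ the statement is exactly the digon removal relation \eqref{tikzpic: digons} with $j = \lambda_1$, $k = \lambda_2$ (so $j + k = N$), together with the identity $\begin{bmatrix} N \\ \lambda_2 \end{bmatrix} = [N]!/([\lambda_1]!\,[\lambda_2]!)$.

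For the inductive step I would assume the result for all decompositions with fewer than $n$ parts, fix $\blam = (\lambda_1, \dots, \lambda_n) \vdash N$, and set $\bnu := (\lambda_1 + \lambda_2, \lambda_3, \dots, \lambda_n) \vdash N$, a decomposition with $n - 1$ parts of which $\blam$ is the refinement of the first part. The associativity relations allow the full split and merge to be factored through $\bnu$: writing $_{\blam}S_{\bnu}$ (resp. $_{\bnu}M_{\blam}$) for the $1$-morphism that splits (resp. merges) the first strand and is the identity on all other strands, one has $_{\blam}S_{(N)} \cong {}_{\blam}S_{\bnu} \star {}_{\bnu}S_{(N)}$ and $_{(N)}M_{\blam} \cong {}_{(N)}M_{\bnu} \star {}_{\bnu}M_{\blam}$ (these are instances of \eqref{tikzpic: assoc}, and may alternatively be checked directly from the bimodule definitions, using the additivity $\ell((N)) - \ell(\blam) = (\ell((N)) - \ell(\bnu)) + (\ell(\bnu) - \ell(\blam))$ of the merge grading shifts). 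Substituting these gives
\[
(_{(N)}M_{\blam}) \star (_{\blam}S_{(N)}) \;\cong\; (_{(N)}M_{\bnu}) \star \big( (_{\bnu}M_{\blam}) \star (_{\blam}S_{\bnu}) \big) \star (_{\bnu}S_{(N)}).
\]
The middle factor $(_{\bnu}M_{\blam}) \star (_{\blam}S_{\bnu})$ is, on the first ($\lambda_1 + \lambda_2$)-colored strand, precisely a digon splitting $\lambda_1 + \lambda_2$ into $(\lambda_1, \lambda_2)$ and merging back, tensored with identities on the remaining strands; by \eqref{tikzpic: digons} (and the fact that this isomorphism is local, i.e.\ compatible with $\boxtimes$ and $\star$ with identity $1$-morphisms) it is isomorphic to $\begin{bmatrix} \lambda_1 + \lambda_2 \\ \lambda_2 \end{bmatrix} \strand{\bnu}$. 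Hence
\[
(_{(N)}M_{\blam}) \star (_{\blam}S_{(N)}) \;\cong\; \begin{bmatrix} \lambda_1 + \lambda_2 \\ \lambda_2 \end{bmatrix} \, (_{(N)}M_{\bnu}) \star (_{\bnu}S_{(N)}),
\]
and the inductive hypothesis applied to $\bnu$ yields $(_{(N)}M_{\bnu}) \star (_{\bnu}S_{(N)}) \cong \frac{[N]!}{[\lambda_1 + \lambda_2]!\,[\lambda_3]! \cdots [\lambda_n]!} \strand{(N)}$. Using $\begin{bmatrix} \lambda_1 + \lambda_2 \\ \lambda_2 \end{bmatrix} = \frac{[\lambda_1 + \lambda_2]!}{[\lambda_1]!\,[\lambda_2]!}$, the factors of $[\lambda_1 + \lambda_2]!$ cancel and what remains is $\frac{[N]!}{\prod_{i=1}^n [\lambda_i]!}\strand{(N)}$.

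I do not expect a genuine obstacle here: all the mathematical content is already packaged into digon removal and associativity, and the argument is pure bookkeeping once those are invoked. The only points that need a moment's attention are (i) confirming that the factorizations of $_{\blam}S_{(N)}$ and $_{(N)}M_{\blam}$ through $\bnu$ really are instances of the associativity relations, and (ii) tracking the quantum-binomial and quantum-factorial scalars (together with the implicit grading shifts they encode) through the composition so that the final isomorphism is degree-preserving — which it is, since every relation used above is degree-preserving.
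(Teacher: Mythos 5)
Your proof is correct. The paper itself supplies no proof of Proposition~\ref{prop: blamgon_removal}; it is recorded immediately after the web relations of Proposition~\ref{prop: web_rels} and treated as a standard consequence of the calculus of \cite{CKM14}, so there is no "paper's argument" to compare against. Your inductive derivation—merging the first two parts of $\blam$ into a coarser decomposition $\bnu$, factoring $_{\blam}S_{(N)} \cong {}_{\blam}S_{\bnu} \star {}_{\bnu}S_{(N)}$ and $_{(N)}M_{\blam} \cong {}_{(N)}M_{\bnu} \star {}_{\bnu}M_{\blam}$, applying digon removal locally on the affected strand to extract the scalar $\begin{bmatrix} \lambda_1 + \lambda_2 \\ \lambda_2 \end{bmatrix}$, and cancelling $[\lambda_1 + \lambda_2]!$ against the inductive hypothesis for $\bnu$—is exactly the expected bookkeeping and closes the gap the paper leaves to the reader. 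Two small remarks. First, the factorizations through $\bnu$ are perhaps most cleanly seen directly from the bimodule definitions (transitivity of extension/restriction of scalars together with the additivity $\ell((N)) - \ell(\blam) = (\ell((N)) - \ell(\bnu)) + (\ell(\bnu) - \ell(\blam))$), which you note as an alternative; the diagrammatic associativity relation \eqref{tikzpic: assoc} as drawn only covers three-fold merges and would need to be iterated. Second, your $n = 2$ base case is subsumed by the inductive step (which reduces $n = 2$ to $n = 1$); including it separately does no harm but is not needed.
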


\begin{cor}
    Let $\bb = (b_1, \dots, b_n) \vdash N$. Then $W_{\bb}^{\cc} \star W_{\aa}^{\bb} \cong \cfrac{[N]!}{\prod_{i = 1}^n [b_i]!} W_{\aa}^{\cc}$.
\end{cor}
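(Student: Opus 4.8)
The statement is essentially a formal consequence of Definition \ref{def: full_merge_split} together with Proposition \ref{prop: blamgon_removal}. First I would unfold both full merge-split bimodules using their definitions: writing $W_{\aa}^{\bb} = (_{\bb} S_{(N)}) \star (_{(N)} M_{\aa})$ and $W_{\bb}^{\cc} = (_{\cc} S_{(N)}) \star (_{(N)} M_{\bb})$, associativity of horizontal composition $\star$ gives
\[
W_{\bb}^{\cc} \star W_{\aa}^{\bb} \;=\; (_{\cc} S_{(N)}) \star (_{(N)} M_{\bb}) \star (_{\bb} S_{(N)}) \star (_{(N)} M_{\aa}).
\]
The middle factor $(_{(N)} M_{\bb}) \star (_{\bb} S_{(N)})$ is exactly the $1$-morphism appearing in Proposition \ref{prop: blamgon_removal} (with $\blam = \bb$), so it is isomorphic to $\frac{[N]!}{\prod_{i=1}^n [b_i]!}\,\strand{(N)}$.

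Next I would substitute this isomorphism into the expression above. Since horizontal composition $\star$ is additive and commutes with quantum grading shifts (it is given by a tensor product of bimodules over a fixed symmetric polynomial ring, and $f(q)B$ just denotes a direct sum of shifted copies), we may pull the Laurent polynomial $\frac{[N]!}{\prod_{i=1}^n [b_i]!}$ out of the composite, obtaining
\[
W_{\bb}^{\cc} \star W_{\aa}^{\bb} \;\cong\; \frac{[N]!}{\prod_{i=1}^n [b_i]!}\,(_{\cc} S_{(N)}) \star \strand{(N)} \star (_{(N)} M_{\aa}).
\]
Finally, $\strand{(N)} = \mathrm{Sym}^{(N)}(\mathbb{X})$ is the identity $1$-morphism on the object $(N) \vdash N$, so it may be deleted from the composite, leaving $(_{\cc} S_{(N)}) \star (_{(N)} M_{\aa}) = W_{\aa}^{\cc}$ by Definition \ref{def: full_merge_split}. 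This yields the claimed isomorphism.

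There is essentially no obstacle here: the only points requiring (routine) care are that Proposition \ref{prop: blamgon_removal} can be applied \emph{locally} inside the four-fold composite — which is immediate since $\star$ is a bifunctor, so isomorphisms of $1$-morphisms may be composed horizontally with identities — and that the grading-shift bookkeeping is consistent, which it is because all the constituent isomorphisms in Proposition \ref{prop: web_rels} and Proposition \ref{prop: blamgon_removal} are degree-preserving. One could alternatively argue entirely in the graphical calculus: stacking the web for $W_{\bb}^{\cc}$ on top of that for $W_{\aa}^{\bb}$ produces a diagram with an internal $\bb$-colored digon (an $N$-labeled edge splitting into colors $b_1, \dots, b_n$ and immediately re-merging), and applying the generalized digon-removal isomorphism of Proposition \ref{prop: blamgon_removal} to that digon collapses the picture to the web for $\frac{[N]!}{\prod_{i=1}^n[b_i]!} W_{\aa}^{\cc}$. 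Either presentation is short; I would include the algebraic version for precision and perhaps remark on the diagrammatic interpretation.
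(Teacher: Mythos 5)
Your proof is correct and is the natural argument: the paper leaves the corollary unproved precisely because this derivation — unfold both $W$'s via Definition \ref{def: full_merge_split}, apply Proposition \ref{prop: blamgon_removal} to the inner $(_{(N)} M_{\bb}) \star (_{\bb} S_{(N)})$ factor, pull out the Laurent-polynomial coefficient, and reassemble — is the intended one-line computation.
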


We will also be interested in complexes of singular Soergel bimodules. As $SSBim$ is a $2$-category, the standard constructions of dg categories above do not apply verbatim. We remedy this by applying these constructions to $1$-morphisms as follows.

\begin{defn}
    We denote by $\CS(\SSBim)$ the dg $2$-category of complexes of singular Soergel bimodules whose objects agree with those of $\SSBim$ and with $1$-morphism categories $\Hom_{\CS(\SSBim)}(\aa, \bb) = \CS(\SSBim_{\aa}^{\bb})$. Given a $\mathbb{Z}_q \times \mathbb{Z}_t$-graded polynomial algebra $S$ with coefficients in $R$ and a homological degree $t^2$ element $F \in Z \left( \SSBim_{\aa}^{\bb}[\mathbb{Z}_t] \otimes_R S \right)$, we similarly define the dg $2$-category $\CS_F(\SSBim; S)$ of $F$-curved complexes of singular Soergel bimodules.
\end{defn}

More precisely, given \textcolor{revisions}{composition}s $\aa, \bb \vdash N$, a $1$-morphism $C \in \Hom_{\CS(\SSBim_N)}(\aa, \bb)$ is a chain complex of singular Soergel $\left( \mathrm{Sym}^{\bb}(\mathbb{X}), \mathrm{Sym}^{\aa}(\mathbb{X}) \right)$-bimodules, and a $2$-morphism between two such complexes $C, C'$ is a homomorphism of $\mathbb{Z}_q \times \mathbb{Z}_t$-graded bimodules. Horizontal composition and external tensor product of complexes are defined using the usual conventions for tensor products of chain complexes.

\begin{rem}
\textcolor{revisions}{As usual, we will typically restrict our attention to $F$-deformations with $F$ of the form $F := \sum \varphi_i \otimes u_i$ and $S := R[\mathbb{U}]$ a polynomial algebra over an alphabet of $\mathbb{Z}_q \times \mathbb{Z}_t$-graded deformation parameters; see Definition \ref{def: delta_e_curv} for a precise statement.}
\end{rem}

\subsection{Colored Braids and Rickard Complexes} \label{sec: col_braids}

Let $\BG_n$ denote the $n$-strand braid group with Artin generators $\sigma_1, \dots, \sigma_{n - 1}$. Forgetting the data of over/under crossings gives a homomorphism $\BG_n \to \SG_n$ sending each braid to the corresponding permutation of its strands (read from bottom to top), and we implicitly consider braids as permutations in this way in what follows. 

\begin{defn} \label{def: colored_braids}
    The \textit{colored braid groupoid} on $n$ strands is the category $\mathfrak{Br}_n$ consisting of the following data:

    \begin{itemize}
        \item Objects are sequences $\aa = (a_1, \dots, a_n) \in \mathbb{Z}_{\geq 1}^n$.

        \item Given two such sequences $\aa = (a_1, \dots, a_n)$, $\bb = (b_1, \dots, b_n)$, their morphism space $\BGpd{\aa}{\bb}$ is the collection of braids $\beta \in Br_n$ such that $\beta$ takes $\aa$ to $\bb$ when considered as a permutation. More precisely:

        \[
        \BGpd{\aa}{\bb} := \Hom_{\mathfrak{Br}_n}(\aa, \bb) = \{\beta \in \BG_n \ | \ a_i = b_{\beta(i)} \ \text{for each} \ 1 \leq i \leq n\}
        \]
    \end{itemize}
\end{defn}

We refer to objects of $\mathfrak{Br}_n$ as \textit{sequences of colors} and morphisms as \textit{colored braids} on $n$ strands. We call two sequences of colors $\aa$, $\bb$ compatible if $\BGpd{\aa}{\bb}$ is non-empty. A colored braid on $n$ strands is equivalent to a choice of braid $\beta \in \BG_n$ and an assignment of a positive integer ``color" to each strand of $\beta$. As such, we often denote elements of $\BGpd{\aa}{\bb}$ by $_{\bb} \beta_{\aa}$, or suppressing the codomain or domain sequence, as $_{\bb} \beta$ or $\beta _{\aa}$ (since either sequence can be recovered from the other by applying the permutation corresponding to $\beta$ or its inverse). Finally, we call a sequence $(\underline{\beta})_{\aa}$ of colored Artin generators and their inverses a \textit{colored braid word} representing the corresponding colored braid $\beta_{\aa}$.

We employ the usual graphical depiction of colored braids according to their description as braids with labeled strands. For example, when $\aa = (3, 5, 2)$, $\bb = (5, 3, 2)$, $\cc = (3, 2, 5)$, we have Artin generators

\begin{gather*}
    \BGpd{\aa}{\bb} \ni (\sigma_1)_{\aa} :=
    \begin{tikzpicture}[anchorbase,tinynodes]
    \draw[webs] (0,1) node[above]{$5$} to[out=270,in=90] (.75,0) node[below]{$5$};
    \draw[line width=5pt, color=white] (.75,1) node[above]{$3$} to[out=270,in=90] (0,0);
    \draw[webs] (.75,1) node[above]{$3$} to[out=270,in=90] (0,0) node[below]{$3$};
    \draw[webs] (1.5,0) node[below]{$2$} to (1.5,1) node[above]{$2$};
    \end{tikzpicture}
    ; \quad
    \BGpd{\aa}{\cc} \ni (\sigma_2)_{\aa} := 
    \begin{tikzpicture}[anchorbase,tinynodes]
    \draw[webs] (0,0) node[below]{$3$} to (0,1) node[above]{$3$};
    \draw[webs] (1.5,0) node[below]{$2$} to[out=90,in=270] (.75,1) node[above]{$2$};
    \draw[line width=5pt, color=white] (.75,0) to[out=90,in=270] (1.5,1);
    \draw[webs] (.75,0) node[below]{$5$} to[out=90,in=270] (1.5,1) node[above]{$5$};
    \end{tikzpicture}
\end{gather*}

We will often wish to consider cables of colored braids according to refinements of sequences as in Definition \ref{def: decomp}.

\begin{defn} \label{def: braid_cable}
    Let $\aa = (a_1, \dots, a_n)$, $\bb = (b_1, \dots, b_n)$ be compatible sequences of colors, and suppose $\blam = (\lambda_1, \dots, \lambda_m) \vdash a_k$ is a \textcolor{revisions}{composition} for some distinguished $1 \leq k \leq n$. Then for each colored braid $\beta_{\aa} \in \BGpd{\aa}{\bb}$, we denote by $\beta_{\aa}^{\blam} \in \BGpd{\aa^{\blam}}{\bb^{\blam}}$ the colored braid obtained from $\beta_{\aa}$ by replacing the distinguished $a_k$-labeled strand of $\beta$ with a (blackboard framed) cable of $m$ parallel strands colored by the tuple $(\lambda_1, \dots, \lambda_m)$. We will often refer to this collection of parallel strands as a \textit{$\blam$-colored cable}.
\end{defn}

\begin{example}
    Let $\aa = (3, 5, 2)$, $\bb = (5, 3, 2)$, $\cc = (3, 2, 5)$, and consider the \textcolor{revisions}{composition} $\blam = (3, 1, 1) \vdash 5$. Then the cabled Artin generators $(\sigma_i)_{\aa}^{\blam}$ are as follows, with cabled strands drawn in blue:

    \begin{gather*}
    \BGpd{\aa^{\blam}}{\bb^{\blam}} \ni (\sigma_1)_{\aa}^{\blam} := 
    \begin{tikzpicture}[anchorbase, tinynodes]
    \draw[webs,color=blue] (0,1) node[above]{$3$} to[out=270,in=90] (.5,0) node[below]{$3$};
    \draw[webs,color=blue] (.5,1) node[above]{$1$} to[out=270,in=90] (1,0) node[below]{$1$};
    \draw[webs,color=blue] (1,1) node[above]{$1$} to[out=270,in=90] (1.5,0) node[below]{$1$};
    \draw[line width=5pt, color=white] (0,0) to[out=90,in=270] (1.5,1);
    \draw[webs] (0,0) node[below]{$3$} to[out=90,in=270] (1.5,1) node[above]{$3$};
    \draw[webs] (2,0) node[below]{$2$} to (2,1) node[above]{$2$};
    \end{tikzpicture}
    ; \quad
    \BGpd{\cc^{\blam}}{\aa^{\blam}} \ni (\sigma_2)_{\aa}^{\blam} :=
    \begin{tikzpicture}[anchorbase,tinynodes]
    \draw[webs] (0,0) node[below]{$3$} to (0,1) node[above]{$3$};
    \draw[webs] (.5,1) node[above]{$2$} to[out=270,in=90] (2,0) node[below]{$2$};
    \draw[line width=5pt,color=white] (1,1) to[out=270,in=90] (.5,0);
    \draw[line width=5pt,color=white] (1.5,1) to[out=270,in=90] (1,0);
    \draw[line width=5pt,color=white] (2,1) to[out=270,in=90] (1.5,0);
    \draw[webs,color=blue] (1,1) node[above]{$3$} to[out=270,in=90] (.5,0) node[below]{$3$};
    \draw[webs,color=blue] (1.5,1) node[above]{$1$} to[out=270,in=90] (1,0) node[below]{$1$};
    \draw[webs,color=blue] (2,1) node[above]{$1$} to[out=270,in=90] (1.5,0) node[below]{$1$};
    \end{tikzpicture}
    \end{gather*}
\end{example}

Next, we recall the complexes of singular Soergel bimodules associated to colored braids used in defining triply-graded link homology. We will not be interested in the specific form of the differential; see \cite{HRW21} for details in this respect.

\begin{defn} \label{def: Rickard_complex_gens}
    Fix $a, b \geq 0$. Then the \textit{2-strand Rickard complex} $C_{a, b}$ is the bounded complex of singular Soergel bimodules

    \begin{gather*}
    C_{a, b} := \dots \longrightarrow q^{-k}t^k
    \begin{tikzpicture}[anchorbase,scale=.5,tinynodes]
    \draw[webs] (0,0) node[below]{$a$} to[out=90,in=225] (.5,1.25);
    \draw[webs] (1.5,0) node[below]{$b$} to (1.5,.75);
    \draw[webs] (1.5,.75) to (.5,1.25);
    \draw[webs] (.5,1.25) to (.5,2);
    \draw[webs] (.5,2) to[out=135,in=270] (0,3.25) node[above]{$b$};
    \draw[webs] (.5,2) to (1.5,2.5);
    \draw[webs] (1.5,2.5) to (1.5,3.25) node[above]{$a$};
    \draw[webs] (1.5,.75) to[out=45,in=315] node[right]{$k$} (1.5,2.5);
    \end{tikzpicture}
    \longrightarrow q^{-k - 1}t^{k + 1}
    \begin{tikzpicture}[anchorbase,scale=.5,tinynodes]
    \draw[webs] (0,0) node[below]{$a$} to[out=90,in=225] (.5,1.25);
    \draw[webs] (1.5,0) node[below]{$b$} to (1.5,.75);
    \draw[webs] (1.5,.75) to (.5,1.25);
    \draw[webs] (.5,1.25) to (.5,2);
    \draw[webs] (.5,2) to[out=135,in=270] (0,3.25) node[above]{$b$};
    \draw[webs] (.5,2) to (1.5,2.5);
    \draw[webs] (1.5,2.5) to (1.5,3.25) node[above]{$a$};
    \draw[webs] (1.5,.75) to[out=45,in=315] node[right]{$k + 1$} (1.5,2.5);
    \end{tikzpicture}
    \longrightarrow \dots
    \end{gather*}

    Similarly, we define

    \begin{gather*}
    C_{a, b}^{\vee} := \dots \longrightarrow q^{k + 1}t^{-k - 1}
    \begin{tikzpicture}[anchorbase,scale=.5,tinynodes]
    \draw[webs] (0,0) node[below]{$a$} to[out=90,in=225] (.5,1.25);
    \draw[webs] (1.5,0) node[below]{$b$} to (1.5,.75);
    \draw[webs] (1.5,.75) to (.5,1.25);
    \draw[webs] (.5,1.25) to (.5,2);
    \draw[webs] (.5,2) to[out=135,in=270] (0,3.25) node[above]{$b$};
    \draw[webs] (.5,2) to (1.5,2.5);
    \draw[webs] (1.5,2.5) to (1.5,3.25) node[above]{$a$};
    \draw[webs] (1.5,.75) to[out=45,in=315] node[right]{$k + 1$} (1.5,2.5);
    \end{tikzpicture}
    \longrightarrow q^{k}t^{-k}
    \begin{tikzpicture}[anchorbase,scale=.5,tinynodes]
    \draw[webs] (0,0) node[below]{$a$} to[out=90,in=225] (.5,1.25);
    \draw[webs] (1.5,0) node[below]{$b$} to (1.5,.75);
    \draw[webs] (1.5,.75) to (.5,1.25);
    \draw[webs] (.5,1.25) to (.5,2);
    \draw[webs] (.5,2) to[out=135,in=270] (0,3.25) node[above]{$b$};
    \draw[webs] (.5,2) to (1.5,2.5);
    \draw[webs] (1.5,2.5) to (1.5,3.25) node[above]{$a$};
    \draw[webs] (1.5,.75) to[out=45,in=315] node[right]{$k$} (1.5,2.5);
    \end{tikzpicture}
    \longrightarrow \dots
    \end{gather*}

    In each case, $k$ ranges from $0$ to $\min(a, b)$.
\end{defn}

\begin{defn} \label{def: rickard_complex_words}
    For a sequence $\aa = (a_1, \dots, a_n)$ and an Artin generator $(\sigma_i)_{\aa}$ of the colored braid groupoid, we set
    
    \begin{align*}
    C(\sigma_i)_{\aa} & := \strand{(a_1, \dots, a_{i - 1})} \boxtimes C_{a_i, a_{i + 1}} \boxtimes \strand{(a_{i + 2}, \dots, a_n)} \\
    C(\sigma_i^{-1})_{\aa} & := \strand{(a_1, \dots, a_{i - 1})} \boxtimes C_{a_i, a_{i + 1}}^{\vee} \boxtimes \strand{(a_{i + 2}, \dots, a_n)}
    \end{align*}

    We extend $C$ to arbitrary colored braid words by horizontal composition. Given such a word $(\underline{\beta})_{\aa}$ representing a colored braid $\beta_{\aa}$, we call $C((\underline{\beta})_{\aa})$ the \textit{Rickard complex} associated to $\beta_{\aa}$.
\end{defn}

It is well-known (see e.g. Proposition 3.25 in \cite{HRW21}) that Rickard complexes satisfy the braid relations up to canonical homotopy equivalence. As such, we will often suppress the dependence of Definition \ref{def: rickard_complex_words} on a specific choice of braid word $(\underline{\beta})_{\aa}$ representing $\beta_{\aa}$, writing just $C(\beta_{\aa})$.

In fact we go even further: because we are only interested in the Rickard complexes of colored braids and not the braids themselves, we will often implicitly identify the two throughout. This means we treat colored braids\footnote{And colored braid words.} $\beta_{\aa}$ directly as chain complexes in $K^b(\SSBim)$ and extend the graphical calculus for $\SSBim$ described in Section \ref{sec: SSBim} to braided webs. As an example of this, we have the following well-known \textit{fork-sliding} homotopy equivalence \textcolor{revisions}{(see e.g. \cite{RT21}, Example 3.12)}:

\begin{prop} \label{prop: fork_slide}
    For each $a, b, c \geq 0$, consider the \textcolor{revisions}{composition} $\blam = (a, b) \vdash a + b$. Then there are homotopy equivalences

    \begin{align*}
    (_{(c, a + b)} M_{(c, \blam)}) \star (\sigma_1)_{(a + b, c)}^{\blam} & \simeq (\sigma_1)_{(a + b, c)} \star (_{(a + b, c)} M_{(\blam, c)}); \\
    (_{(a + b, c)}M_{(\blam, c)}) \star (\sigma_1)_{(c, a + b)}^{\blam} & \simeq (\sigma_1)_{(a + b, c)} \star (_{(c, a + b)} M_{(c, \blam)})
    \end{align*}
as well as reflections of these, arising from Gaussian elimination. Graphically, these take the form

    \begin{gather*}
    \begin{tikzpicture}[anchorbase,scale=.5,tinynodes]
    \draw[webs] (0,2) to[out=270,in=90] (2,0) node[below]{$c$};
    \draw[line width=5pt, color=white] (0,0) to[out=90,in=270] (1,2);
    \draw[webs] (0,0) node[below]{$a$} to[out=90,in=270] (1,2);
    \draw[line width=5pt, color=white] (1,0) to[out=90,in=270] (2,2);
    \draw[webs] (1,0) node[below]{$b$} to[out=90,in=270] (2,2);
    \draw[webs] (1,2) to[out=90,in=180] (1.5,2.5);
    \draw[webs] (2,2) to[out=90,in=0] (1.5,2.5);
    \draw[webs] (1.5,2.5) to (1.5,3) node[above]{$a + b$};
    \draw[webs] (0,2) to (0,3) node[above]{$c$};
    \end{tikzpicture}
    \ \ \simeq \ \ 
    \begin{tikzpicture}[anchorbase,scale=.5,tinynodes]
    \draw[webs] (0,0) node[below]{$a$} to[out=90,in=180] (.5,.5);
    \draw[webs] (1,0) node[below]{$b$} to[out=90,in=0] (.5,.5);
    \draw[webs] (2,0) node[below]{$c$} to (2,.5);
    \draw[webs] (0,2) to[out=270,in=90] (2,.5);
    \draw[line width=5pt, color=white] (.5,.5) to[out=90,in=270] (2,2);
    \draw[webs] (.5,.5) to[out=90,in=270] (2,2);
    \draw[webs] (0,2) to (0,3) node[above]{$c$};
    \draw[webs] (2,2) to (2,3) node[above]{$a + b$};
    \end{tikzpicture};
    \quad \quad
    \begin{tikzpicture}[anchorbase,scale=.5,tinynodes]
    \draw[webs] (.5,3) node[above]{$a + b$} to (.5,2.5);
    \draw[webs] (.5,2.5) to[out=180,in=90] (0,2);
    \draw[webs] (.5,2.5) to[out=0,in=90] (1,2);
    \draw[webs] (0,2) to[out=270,in=90] (1,0) node[below]{$a$};
    \draw[webs] (1,2) to[out=270,in=90] (2,0) node[below]{$b$};
    \draw[webs] (2,3) node[above]{$c$} to (2,2);
    \draw[line width=5pt, color=white] (2,2) to[out=270,in=90] (0,0);
    \draw[webs] (2,2) to[out=270,in=90] (0,0) node[below]{$c$};
    \end{tikzpicture}
    \ \ \simeq \ \ 
    \begin{tikzpicture}[anchorbase,scale=.5,tinynodes]
    \draw[webs] (0,3) node[above]{$a + b$} to (0,2);
    \draw[webs] (0,2) to[out=270,in=90] (1.5,.5);
    \draw[webs] (1.5,.5) to[out=180,in=90] (1,0) node[below]{$a$};
    \draw[webs] (1.5,.5) to[out=0,in=90] (2,0) node[below]{$b$};
    \draw[webs] (2,3) node[above]{$c$} to (2,2);
    \draw[line width=5pt, color=white] (2,2) to[out=270,in=90] (0,0.5);
    \draw[webs] (2,2) to[out=270,in=90] (0,0.5);
    \draw[webs] (0,0.5) to (0,0) node[below]{$c$};
    \end{tikzpicture}
    \end{gather*}
\end{prop}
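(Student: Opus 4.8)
\emph{Proof strategy.} The plan is to expand both sides into explicit complexes of singular Soergel bimodules, simplify the individual chain bimodules using the web relations of Proposition~\ref{prop: web_rels}, and then strip away the surplus terms by repeated Gaussian elimination (Corollary~\ref{cor: gauss_elim_removal}). I will describe the argument for the first homotopy equivalence; the second equivalence, and the ``reflections'' referred to in the statement, are obtained by the same method after reflecting all diagrams through a horizontal axis (equivalently, applying the contravariant duality on $\SSBim$ that interchanges merges and splits).

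First I would unwind the definitions. Writing $\blam = (a,b)$, Definition~\ref{def: braid_cable} together with the braid relations identifies the cabled crossing $(\sigma_1)^{\blam}_{(a+b,c)}$ with the Rickard complex of the colored braid word that first moves the $b$-colored strand past $c$ and then moves the $a$-colored strand past it; concretely this is $(C_{a,c} \boxtimes \strand{(b)}) \star (\strand{(a)} \boxtimes C_{b,c})$, the total complex of a bicomplex whose chain bimodules are indexed by pairs $(i,j)$ with $0 \le i \le \min(a,c)$ and $0 \le j \le \min(b,c)$. Tensoring on the appropriate side by ${}_{(c,a+b)}M_{(c,\blam)}$ then presents the left-hand side as the total complex of a bicomplex whose $(i,j)$-term is the web obtained by stacking an $i$-rung $a/c$ crossing on a $j$-rung $b/c$ crossing and merging the two resulting parallel strands. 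The right-hand side, by contrast, is the two-strand Rickard complex $C_{a+b,c}$ of Definition~\ref{def: Rickard_complex_gens} with the merge ${}_{(a+b,c)}M_{(\blam,c)}$ tensored onto the bottom, so its $k$-th chain bimodule is a single $k$-rung $(a+b)/c$ crossing web precomposed with that merge, for $0 \le k \le \min(a+b,c)$.

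The core of the argument is a term-by-term comparison. Applying the associativity and digon-removal isomorphisms of Proposition~\ref{prop: web_rels}, together with Proposition~\ref{prop: blamgon_removal}, to each $(i,j)$-term of the left-hand bicomplex collapses the internal ladder and rewrites it as a direct sum of quantum-shifted copies of right-hand chain bimodules with $k = i+j$, plus ``surplus'' summands not present on the right. One then checks, using the explicit description of the Rickard differential recorded in \cite{HRW21}, that these surplus summands are matched in isomorphism pairs by components of the differential, so that Proposition~\ref{prop: gauss_elim} and Corollary~\ref{cor: gauss_elim_removal} may be applied iteratively to cancel them all; what remains after every such cancellation is precisely $C_{a+b,c} \star {}_{(a+b,c)}M_{(\blam,c)}$ with the induced differential. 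The full homotopy equivalence data is assembled from the strong deformation retractions produced by Corollary~\ref{cor: gauss_elim_removal}.

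The step I expect to be the main obstacle is organizing this last cancellation: one must carry the $\mathbb{Z}_q \times \mathbb{Z}_t$-bidegrees through the web simplifications to verify that the surplus summands genuinely pair up, and one must check that the relevant components of the Rickard differential are isomorphisms and remain so after the successive Gaussian eliminations (equivalently, that the one-sided-twist structure required to apply Proposition~\ref{prop: gauss_elim} is never destroyed). Both points reduce to the structure of $C_{a,b}$ established in \cite{HRW21}; once these are in hand the cancellation is forced and no further input is needed.
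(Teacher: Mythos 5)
The paper gives no proof of this Proposition — it is presented as a well-known result, and the statement itself asserts the equivalences arise from Gaussian elimination, which is exactly the strategy you describe. Your setup is the right one and is the one the paper points to: identifying the cabled crossing as $(C_{a,c}\boxtimes\strand{(b)})\star(\strand{(a)}\boxtimes C_{b,c})$, simplifying the individual chain bimodules via web isomorphisms, stripping surplus summands via iterated Gaussian elimination, obtaining the reflections by duality, and assembling the full strong-deformation-retraction data (with side conditions) from Corollary~\ref{cor: gauss_elim_removal} so that the result can be fed into Proposition~\ref{prop: cabled_fork_slides} and Proposition~\ref{prop: rfray_fork_slide}.

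Two points of execution would need tightening, though you already flag the relevant step as the main obstacle. First, the web simplifications you invoke are unlikely to follow from the digon-removal and associativity relations of Proposition~\ref{prop: web_rels} alone; the full relation set from \cite{CKM14} — in particular the square-switch/rung-swap relations — is available but not restated in this paper, and you will need it to collapse the $(i,j)$-webs. Second, the description of the $(i,j)$-term as decomposing into quantum shifts of the $k=i+j$ right-hand chain bimodule ``plus surplus'' needs refinement near the boundary of the index region: already for $a=b=c=1$ the $(1,1)$-term sits in homological degree $2$ while the complex $C_{a+b,c}$ vanishes there, so that entire term must be surplus, and more generally which summands survive depends on how $i,j$ sit relative to $\min(a,c)$, $\min(b,c)$, and $\min(a+b,c)$. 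Carrying the $\mathbb{Z}_q\times\mathbb{Z}_t$-bidegrees through the simplifications and checking that the one-sided-twist condition for Gaussian elimination survives each cancellation is the substance of the argument; the gap is one of bookkeeping, not of idea.
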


\begin{rem}
\textcolor{revisions}{Recall from Corollary \ref{cor: gauss_elim_removal} that homotopy equivalences arising from Gaussian elimination are in particular strong deformation retractions satisfying the side conditions. We will use this property of fork-sliding repeatedly throughout the rest of this work.}
\end{rem}

\begin{rem}
    In the special case when $\aa = \bb = (1^n) \vdash n$, we will implicitly identify the endomorphism space $\BGpd{\aa}{\bb}$ with the usual (type A) braid group $\BG_n$. We also drop the subscript $\aa$ in our notation for braids, writing e.g. $\beta_{(1^n)} = \beta \in \BG_n$, and suppress the color labels on braid diagrams.
\end{rem}

\subsection{Deformed Rickard Complexes} \label{sec: curv_rick}

In this section we recall the curved complexes associated to braids in defining the deformed, colored, triply-graded link homology of \cite{HRW21}. Those authors give several models for this homology depending on different choices of generators for symmetric polynomials; we find it most convenient to work with the curvature modeled on a difference of \textit{elementary} symmetric polynomials. Throughout, we fix a \textcolor{revisions}{composition} $\aa = (a_1, \dots, a_n) \vdash N$ and a permutation $\sigma \in \SG_n$.

Given a complex $X \in \CS(\SSBim_{\aa}^{\sigma(\aa)})$ of graded bimodules, we may consider $\mathrm{Sym}^{\aa}_{\sigma}(\mathbb{X}, \mathbb{X}')$ as acting on $X$ on the left by $\mathrm{Sym}^{\sigma(\aa)}(\mathbb{X})$ and on the right by $\mathrm{Sym}^{\aa}(\mathbb{X}')$. Any morphism of bimodules must commute with both of these actions, so there is a natural inclusion 
\[
\mathrm{Sym}^{\aa}_{\sigma}(\mathbb{X}, \mathbb{X}') \hookrightarrow Z(\SSBim_{\aa}^{\sigma(\aa)}[\mathbb{Z}_t]).
\]

\begin{defn} \label{def: delta_e_curv}
    Recall the decomposition $\mathbb{X} = \mathbb{X}_1 \sqcup \dots \sqcup \mathbb{X}_n$ of an alphabet $\mathbb{X}$ of size $N$ as in Section \ref{sec: symm_poly}. For each pair of positive integers $1 \leq j \leq n$ and $1 \leq k \leq a_j$, let $u_{jk}$ be a formal deformation parameter of degree $\mathrm{deg}(u_{jk}) = q^{-2k}t^2$. Let $\mathbb{U}_{\aa}$ denote the alphabet of all such deformation parameters. Then the \textit{$\Delta$e-curvature} associated to the pair $(\aa, \sigma)$ is the polynomial

    \[
    F_{\sigma, u}^{\aa}(\mathbb{X}, \mathbb{X}') := \sum_{j = 1}^n \sum_{k = 1}^{a_j} \left(e_k(\mathbb{X}_{\sigma(j)}) - e_k(\mathbb{X}'_j) \right) \otimes u_{jk} \in Z(\SSBim_{\aa}^{\sigma(\aa)}[\mathbb{U}_{\aa}]).
    \]

    We reserve the notation $F_u^{\aa}(\mathbb{X}, \mathbb{X}')$ for the special case $\sigma = e$. We also suppress the alphabets $\mathbb{X}, \mathbb{X}'$, writing just $F_{\sigma, u}^{\aa}$, when they are clear from context.
\end{defn}

\begin{defn} \label{def: y-ification}
    We call a curved complex $\mathrm{tw}_{\Delta_X}(X \otimes_R R[\mathbb{U}_{\aa}]) \in \YS_{F_{\sigma, u}^{\aa}}(\Bim_N; R[\mathbb{U}_{\aa}])$ \textcolor{revisions}{for some Maurer-Cartan element $\Delta_X$} a \textit{$\Delta e$-deformation} of the underlying complex $X$.
\end{defn}

\begin{rem} \label{rem: y_ify_tensors}
Recall from Section \ref{sec: curved_complexes} that given curved complexes $(X \otimes_R S, \delta_X) \in \YS_{F_1}(\AS; S)$ and $(Y \otimes_R S, \delta_Y) \in \YS_{F_2}(\AS; S)$ over a monoidal category $\AS$, if $F_1 \otimes_{\AS} 1 + 1 \otimes_{\AS} F_2 \in Z(\AS)$, we can consider the tensor product $(X \otimes_R S) \otimes_{\AS} (Y \otimes_R S) \in \YS_{F_1 + F_2}(\AS; S)$. This observation applies to $\Delta e$-deformations. Indeed, suppose $\mathrm{tw}_{\Delta_X} (X \otimes_R R[\mathbb{U}_{\aa}]) \in \YS_{F^{\aa}_{\sigma, u}}(\mathrm{Bim}_N; R[\mathbb{U}_{\aa}])$, $\mathrm{tw}_{\Delta_Y} (Y \otimes_R R[\mathbb{U}_{\bb}]) \in \YS_{F^{\bb}_{\rho, u}}(\mathrm{Bim}_N; R[\mathbb{U}_{\bb}])$ are two $\Delta e$-deformations for $\bb = \sigma \cdot \aa$. Then we can express the relevant curvatures as

\[
F_{\sigma, u}^{\aa}(\mathbb{X}', \mathbb{X}'') = \sum_{j = 1}^n \sum_{k = 1}^{a_j} \left( e_k(\mathbb{X}'_{\sigma(j)}) - e_k(\mathbb{X}''_j) \right) \otimes u_{jk} \in \mathrm{Sym}^{\aa}_{\sigma}(\mathbb{X}', \mathbb{X}'') \otimes_R R[\mathbb{U}_{\aa}];
\]

\[
F_{\rho, u}^{\bb}(\mathbb{X}, \mathbb{X}') = \sum_{j = 1}^n \sum_{k = 1}^{a_{\sigma(j)}} \left( e_k(\mathbb{X}_{\rho \sigma(j)}) - e_k(\mathbb{X}'_{\sigma(j)}) \right) \otimes u_{\sigma(j)k} \in \mathrm{Sym}^{\bb}_{\rho}(\mathbb{X}, \mathbb{X}') \otimes_R R[\mathbb{U}_{\bb}].
\]

Recall that the horizontal composition $\star$ on $\Bim_N$ agrees with the usual tensor product $\otimes_{R[\mathbb{X}']}$. Upon also identifying the alphabets $\mathbb{U}_{\aa}$ and $\mathbb{U}_{\bb}$, one can easily verify that the two curvatures above add to

\[
F^{\aa}_{\rho \sigma, u}(\mathbb{X}, \mathbb{X}'') = \sum_{j = 1}^n \sum_{k = 1}^{a_j} \left( e_k(\mathbb{X}_{\rho \sigma(j)}) - e_k(\mathbb{X}''_j) \right) \otimes u_{jk} \in \mathrm{Sym}^{\aa}_{\rho \sigma}(\mathbb{X}, \mathbb{X}'') \otimes_R R[\mathbb{U}_{\aa}]
\]

As a consequence, we can form the horizontal composition

\[
\mathrm{tw}_{\Delta_X} (X \otimes_R R[\mathbb{U}_{\aa}]) \star \mathrm{tw}_{\Delta_Y} (Y \otimes_R R[\mathbb{U}_{\bb}]) :=  \mathrm{tw}_{\Delta_Y \star \mathrm{id}_X + \mathrm{id}_Y \star \Delta_X}((Y \star X) \otimes_R R[\mathbb{U}_{\aa}]) \in \YS_{F^{\aa}_{\rho \sigma, u}}(\mathrm{Bim}_N; R[\mathbb{U}_{\aa}]).
\]

\end{rem}

The following is Lemma 4.20 in \cite{HRW21}:

\begin{prop} \label{prop: dot_slide}
    Fix a sequence $\aa = (a_1, \dots, a_n) \vdash N$ and a colored braid $\beta_{\aa} \in \BGpd{\aa}{\bb}$. Then for each $1 \leq j \leq n$ and each $1 \leq k \leq a_j$, there is a degree $q^{2k}t^{-1}$ endomorphism $\xi_{jk} \in \End^{-1}_{\CS(\SSBim)}(C(\beta_{\aa}))$ satisfying $[d, \xi_{jk}] = e_k(\mathbb{X}_{\beta(j)}) - e_k(\mathbb{X}'_j)$. Moreover, the family of all such morphisms $\xi_{jk}$ can be chosen to pairwise anti-commute and square to $0$.
\end{prop}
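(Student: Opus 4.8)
The plan is to reduce to the case of a single colored crossing, where the needed endomorphism is the colored analogue of the classical ``dot--sliding'' homotopy, then to build the global $\xi_{jk}$ as telescoping sums of these local homotopies, and finally to extract anti-commutativity and square--zero--ness from the middle interchange law together with a small strictification argument. For the reduction, write $C(\beta_{\aa})$ as a horizontal composite $C(\tau_\ell)\star\cdots\star C(\tau_1)$ of Rickard complexes of colored Artin generators $\tau_r$ and their inverses, and fix $j$, $k$ with $1\le k\le a_j$. As strand $j$ runs from bottom to top it passes through $\tau_1,\dots,\tau_\ell$, occupying some position above $\tau_r$ whose attached size--$a_j$ alphabet we call $\mathbb{Y}^{(r)}$; thus $\mathbb{Y}^{(0)}=\mathbb{X}'_j$ and $\mathbb{Y}^{(\ell)}=\mathbb{X}_{\beta(j)}$. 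Granting, for each $r$, a degree $q^{2k}t^{-1}$ endomorphism $h_r\in\End^{-1}(C(\tau_r))$ with $[d,h_r]=e_k(\mathbb{Y}^{(r)})-e_k(\mathbb{Y}^{(r-1)})$ inside $C(\tau_r)$ (and $h_r=0$ when strand $j$ is not one of the two strands of $\tau_r$), we tensor each $h_r$ with identities on the remaining factors. Morphisms supported on distinct tensor factors graded--commute by the middle interchange law, so $[d,h_r]$ computed in $C(\beta_{\aa})$ still equals $e_k(\mathbb{Y}^{(r)})-e_k(\mathbb{Y}^{(r-1)})$; moreover $e_k(\mathbb{Y}^{(r)})$ is literally the same operator whether read off the top of $C(\tau_r)$ or the bottom of $C(\tau_{r+1})$, as those factors are tensored over the corresponding internal symmetric--polynomial ring. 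Hence $\xi_{jk}:=\sum_{r=1}^\ell h_r$ has degree $q^{2k}t^{-1}$ and $[d,\xi_{jk}]=e_k(\mathbb{X}_{\beta(j)})-e_k(\mathbb{X}'_j)$ telescopes, and independence of the chosen braid word follows from invariance of Rickard complexes under the braid relations up to canonical homotopy equivalence.

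The single--crossing input is as follows. For an Artin generator or inverse, $C(\tau_r)=\strand{(\ldots)}\boxtimes C_{a,b}\boxtimes\strand{(\ldots)}$; on the identity factors the two bimodule structures of $\strand{(a_j)}$ coincide, so $e_k$ of strand $j$'s top and bottom alphabets agree and we take $h_r=0$. For the two strands of the crossing one must show that, in $C_{a,b}$, the endomorphism ``multiplication by $e_k$ of the alphabet tracking a given strand at the top'' minus the same at the bottom is nullhomotopic, for $1\le k$ up to that strand's color. This is proved from the explicit description of $C_{a,b}$ in Definition \ref{def: Rickard_complex_gens}: its chain objects are quantum shifts of ladder webs indexed by the rung colour $0\le\ell\le\min(a,b)$, and the web relations of Proposition \ref{prop: web_rels} let one push $e_k$ of a strand's alphabet across a single rung at the cost of terms of strictly lower filtration, the accumulated corrections assembling into the homotopy. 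This explicit, colored version of Rouquier's dot--sliding homotopy is the computational heart of the statement.

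For anti-commutativity and square--zero--ness, put $\varphi_{jk}:=e_k(\mathbb{X}_{\beta(j)})-e_k(\mathbb{X}'_j)$; it lies in a symmetric--polynomial ring, hence in $Z(\SSBim[\mathbb{Z}_t])$. Since the $\xi_{jk}$ have odd homological degree, the graded Leibniz rule gives $[d,\xi_{jk}\xi_{j'k'}+\xi_{j'k'}\xi_{jk}]=\varphi_{jk}\xi_{j'k'}-\xi_{jk}\varphi_{j'k'}+\varphi_{j'k'}\xi_{jk}-\xi_{j'k'}\varphi_{jk}=0$ and $[d,\xi_{jk}^2]=\varphi_{jk}\xi_{jk}-\xi_{jk}\varphi_{jk}=0$, so all anti-commutators and squares are \emph{closed} endomorphisms of homological degree $-2$. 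To upgrade ``closed'' to ``zero'': when $j\ne j'$ the local homotopies $h_r$ are supported on disjoint tensor factors and are both odd, so the middle interchange law makes them anti-commute on the nose and their squares vanish; when $j=j'$, $k\ne k'$ (and for $\xi_{jk}^2$) one first arranges the single--crossing homotopies within each $C_{a,b}$ to pairwise anti-commute and square to zero --- a finite adjustment available because the endomorphism dg algebra of the bounded complex $C_{a,b}$ has cohomology concentrated in non-negative homological degrees, so any closed negative--degree endomorphism is exact and may be absorbed recursively --- and then propagates this to the $\xi_{jk}$ through the interchange law as before.

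I expect the strictification of the last step, together with the explicit homotopy of the second step, to be the main obstacle: the former needs the positivity input (cohomological concentration of endomorphism complexes of Rickard complexes) and care that the recursive corrections can be performed compatibly for the whole family $\{\xi_{jk}\}$. Modulo these points the argument is telescoping and bookkeeping with the middle interchange law, and the resulting statement coincides with Lemma 4.20 of \cite{HRW21}.
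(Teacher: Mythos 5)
The paper does not prove this proposition; it states it and cites Lemma 4.20 of \cite{HRW21} (which you correctly flag at the end). So there is no paper proof to compare against, only your blind reconstruction of how such a proof should go.

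Your outline — decompose $C(\beta_{\aa})$ into a horizontal composite of elementary crossing complexes, build local dot-sliding homotopies on each $C_{a,b}$, telescope them to a global $\xi_{jk}$, and then use the graded Leibniz rule to see that anti-commutators and squares are closed before upgrading them to zero — is the right shape, and the identification of the positivity input ($\End(C_{a,b})$ has cohomology concentrated in non-negative degree because Rickard complexes are invertible) is exactly the lever one needs to kill closed negative-degree endomorphisms.

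However, there is a genuine gap in the case split you give at the end. You claim that when $j\neq j'$, the local homotopies $h_r$ for strand $j$ and strand $j'$ are supported on \emph{disjoint} tensor factors, so the middle interchange law makes them anti-commute on the nose. That is false precisely at any crossing $\tau_r$ where strands $j$ and $j'$ cross each other: at such an $r$, both $h_r^j$ and $h_r^{j'}$ are endomorphisms of the \emph{same} tensor factor $C_{a_j,a_{j'}}$, and the interchange law gives you nothing about $h_r^j h_r^{j'} + h_r^{j'} h_r^j$. In fact, since any single crossing involves exactly two (distinct) strands, the $j\neq j'$ case is where the shared-factor terms \emph{always} show up; the cases you do flag as hard ($j=j'$, $k\neq k'$, and $\xi_{jk}^2$) arise only when the same strand participates in the same crossing twice through two different generators $e_k$, which is a separate collision. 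The fix is not conceptually difficult: the strictification you describe for the $j=j'$ case (recursively absorbing closed negative-degree endomorphisms into the homotopies, using cohomological concentration of $\End(C_{a,b})$) must be applied uniformly to \emph{all} pairs of local homotopies living on the same crossing, whether they come from the same strand with different $k$ or from the two different strands of that crossing. Once the single-crossing family is strictified, the interchange law does handle the cross-terms with $r\neq r'$, and the telescoping assembles everything. You should also be a little careful in asserting that $h_r=0$ when strand $j$ is not involved in $\tau_r$: this requires that the two bimodule actions of $\mathrm{Sym}(\mathbb{X}_j)$ agree literally, not just up to homotopy, on the identity tensor factors, which is true but worth stating since the whole telescoping argument rests on it.
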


We refer to the endomorphisms $\xi_{jk}$ of Proposition \ref{prop: dot_slide} as \textit{dot-sliding homotopies}. Proposition \ref{prop: dot_slide} guarantees that they form an $F_{\sigma, u}^{\aa}$-deforming family on $C(\beta_{\aa})$ as in Definition \ref{def: strict_def_family} and therefore give rise to a strict deformation of $C(\beta_{\aa})$.

\begin{cor} \label{cor: curved_braids}
    Given a colored braid $\beta_{\aa} \in \mathfrak{Br}_n$, set 
    
    \[
    \Delta_{\beta_{\aa}} := \sum_{j = 1}^n \sum_{k = 1}^{a_j} \xi_{jk} \otimes u_{jk} \in \End_{\SSBim[\mathbb{U}_{\aa}]} \left( C(\beta_{\aa}) \otimes_R R[\mathbb{U}_{\aa}] \right).
    \]
    Then $C^u(\beta_{\aa}) := \mathrm{tw}_{\Delta_{\beta_{\aa}}}\left( C(\beta_{\aa}) \otimes_R R[\mathbb{U}_{\aa}] \right)$ is a strict deformation of $C(\beta_{\aa})$.
\end{cor}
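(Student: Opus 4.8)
The plan is to recognize Corollary~\ref{cor: curved_braids} as an immediate consequence of the strict-deformation formalism of Section~\ref{subsec: strict_def} applied to the dot-sliding homotopies of Proposition~\ref{prop: dot_slide}. Let $\sigma \in \SG_n$ be the permutation underlying $\beta_{\aa}$, so that $\bb = \sigma(\aa)$, and write $\varphi_{jk} := e_k(\mathbb{X}_{\beta(j)}) - e_k(\mathbb{X}'_j)$, so that the relevant $\Delta e$-curvature of Definition~\ref{def: delta_e_curv} is $F_{\sigma,u}^{\aa} = \sum_{j=1}^{n}\sum_{k=1}^{a_j} \varphi_{jk} \otimes u_{jk}$, a central element of degree $t^2$. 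The claim to be proved is then exactly that the family $\{\xi_{jk}\}$ is an $F_{\sigma,u}^{\aa}$-deforming family on $C(\beta_{\aa})$ in the sense of Definition~\ref{def: strict_def_family}: once this is established, $\mathrm{tw}_{\Delta_{\beta_{\aa}}}(C(\beta_{\aa}) \otimes_R R[\mathbb{U}_{\aa}])$ is by definition the strict deformation arising from $\{\xi_{jk}\}$, since $\Delta_{\beta_{\aa}} = \sum_{j,k} \xi_{jk} \otimes u_{jk}$ is manifestly linear in the alphabet $\mathbb{U}_{\aa}$.

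First I would line up the degree conventions so that Proposition~\ref{prop: strict_def} applies: $\varphi_{jk}$ has degree $d_{jk} = q^{2k}$ (homological degree $0$), and one checks $\mathrm{deg}(\xi_{jk}) = q^{2k}t^{-1} = t^{-1}d_{jk}$ and $\mathrm{deg}(u_{jk}) = q^{-2k}t^2 = d_{jk}^{-1}t^2$, exactly as required. Then I would verify the two hypotheses of Proposition~\ref{prop: strict_def}. Hypothesis (1), namely $[d, \xi_{jk}] = \varphi_{jk}$, is precisely the defining property of the dot-sliding homotopies recalled in Proposition~\ref{prop: dot_slide}. For hypothesis (2), I would observe that each $\xi_{jk}$ has odd homological degree $-1$, so graded commutation of the family amounts to the off-diagonal relations $\xi_{jk}\xi_{j'k'} = -\xi_{j'k'}\xi_{jk}$ for $(j,k) \neq (j',k')$ (anti-commutation) together with the diagonal relations $[\xi_{jk},\xi_{jk}] = 2\xi_{jk}^2 = 0$, which hold since $\xi_{jk}^2 = 0$ and $\mathrm{char}(R) \neq 2$. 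Both of these are furnished by the final sentence of Proposition~\ref{prop: dot_slide}. Proposition~\ref{prop: strict_def} then produces the desired $F_{\sigma,u}^{\aa}$-deformation, and its strictness is the linearity of $\Delta_{\beta_{\aa}}$ in $\mathbb{U}_{\aa}$ already noted.

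I do not expect a genuine obstacle here: the entire substantive content is pushed into Proposition~\ref{prop: dot_slide} (itself a reformulation of Lemma~4.20 of \cite{HRW21}), and the work of this Corollary is purely organizational. The one point I would be careful to state is that a \emph{fixed} choice of the homotopies $\xi_{jk}$ satisfying the anti-commutativity and squaring-to-zero properties must be made once and for all, so that $C^u(\beta_{\aa})$ and its connection $\Delta_{\beta_{\aa}}$ are well-defined; different admissible choices produce curved complexes that are isomorphic (for instance via Corollary~\ref{cor: kosz_base_change_rep}) but not literally equal.
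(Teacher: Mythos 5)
Your argument is correct and is precisely what the paper intends: the corollary is stated without a separate proof because the preceding paragraph already records that Proposition \ref{prop: dot_slide} exhibits $\{\xi_{jk}\}$ as an $F_{\sigma,u}^{\aa}$-deforming family in the sense of Definition \ref{def: strict_def_family}, so Proposition \ref{prop: strict_def} applies directly. Your degree bookkeeping, the reduction of graded commutativity to pairwise anti-commutation plus $\xi_{jk}^2=0$ (using $\mathrm{char}(R)\neq 2$), and the remark about fixing a choice of homotopies once and for all are all the correct unpacking of that one-line appeal.
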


We refer to the curved complex $C^u(\beta_{\aa})$ of Corollary \ref{cor: curved_braids} as the \textit{$\Delta e$-deformed Rickard complex} for the colored braid $\beta_{\aa}$. Since Rickard complexes are bounded and invertible up to homotopy equivalence (with inverse $C(\beta_{\aa})^{\vee} = C(\beta^{-1}_{\beta(\aa)})$), \textcolor{revisions}{Corollary} \ref{cor: inv_uniq_yify} guarantees that $C^u(\beta_{\aa})$ is the unique $\Delta$e-deformation of $C(\beta_{\aa})$ up to homotopy equivalence of curved complexes.

\begin{rem} \label{rem: many_rings}
    In practice, all deformations we consider will be curved complexes of singular Soergel bimodules. However, given a complex $X \in \CS(\SSBim_{\aa}^{\sigma(\aa)})$, there are potentially plenty of rings of symmetric polynomials over which $X$ is a complex of bimodules. We apply Definition \ref{def: y-ification} throughout without restricting our attention only to the curvature $F_{\sigma, u}^{\aa}$.
    
    As a concrete example, suppose $\aa = \bb = (2, 3, 5, 6)$. Notice that $\aa = \bnu^{\blam}$ for the \textcolor{revisions}{composition}s $\bnu = (2, 8, 6) \vdash 16$, $\blam = (3, 5) \vdash 8$, so there is an inclusion $\mathrm{Sym}^{\bnu}(\mathbb{X}) \subset \mathrm{Sym}^{\aa}(\mathbb{X})$. Then $X$ is a perfectly good complex of $\mathrm{Sym}^{\bnu}(\mathbb{X})$-bimodules, and we are free to consider $\Delta$e-deformations of $X$ with curvature $F_u^{\bnu}$.

    Similarly, let $\aa$ be as above, and consider a colored braid $\beta_{\aa}$. We are free to forget the action of symmetric polynomials on all but the $2$-colored strand and consider $C(\beta_{\aa})$ as a $ \left( \mathrm{Sym}(\mathbb{X}_{\beta(1)}), \mathrm{Sym}(\mathbb{X}'_1) \right)$-bimodule. We could then consider a deformation of $C(\beta_{\aa})$ with $\Delta$e-curvature along \textit{only that strand}; this curvature has the form

    \[
    F_u^{(2)} = \left( e_1(\mathbb{X}_{\beta(1)}) - e_1(\mathbb{X}'_1) \right) \otimes u_{1, 1} + \left( e_2(\mathbb{X}_{\beta(1)}) - e_2(\mathbb{X}'_1) \right) \otimes u_{1, 2}.
    \]

    We find this freedom very useful throughout and will turn curvature on various strands on and off at will. When there is potential for confusion, we will indicate the particular curvature we consider in a given instance by adding a subscript indicating the ``active" alphabet of deformation parameters.
\end{rem}

\begin{example}
    Let $\beta = \sigma_1^2 \in Br_2$, and suppose $\aa = \bb = (a_{\ell}, a_r)$. Suppose we attach alphabets of deformation parameters $\mathbb{U}_{\ell}$ and $\mathbb{U}_{r}$, of size $a_{\ell}$ and $a_r$ to the colored braid $\beta_{\aa}$ as indicated below.

    \begin{center}
    \begin{tikzpicture}[anchorbase,scale=.75]
        \draw[webs] (0,1) to[out=270,in=90] (1,0) node[below]{$a_2$};
        \draw[line width=5pt, color=white] (0,0) to[out=90,in=270] (1,1);
        \draw[webs] (0,0) node[below]{$a_1$} to[out=90,in=270] (1,1);
        \draw[webs] (1,1) to[out=90,in=270] (0,2) node[above]{$\mathbb{U}_{\ell}$};
        \draw[line width=5pt, color=white] (0,1) to[out=90,in=270] (1,2);
        \draw[webs] (0,1)  to[out=90,in=270] (1,2) node[above]{$\mathbb{U}_r$};
    \end{tikzpicture}
    \end{center}

    There are (at least) three deformations of $\beta_{\aa}$ we may consider:

    \begin{itemize}
        \item $\beta_{\aa}^{u_{\ell}}$ is a curved complex with $\Delta e$-curvature on the $a_{\ell}$-labeled strand of $\beta_{\aa}$. This curvature is

        \[
        F_{u_{\ell}}^{(a_{\ell})} = \sum_{i = 1}^{a_{\ell}} (e_i(\mathbb{X}_1) - e_i(\mathbb{X}'_1)) \otimes u_{\ell, i}.
        \]

        \item $\beta_{\aa}^{u_r}$ is a curved complex with $\Delta e$-curvature on the $a_r$-labeled strand of $\beta_{\aa}$. This curvature is

        \[
        F_{u_r}^{(a_r)} = \sum_{i = 1}^{a_r} (e_i(\mathbb{X}_2) - e_i(\mathbb{X}'_2)) \otimes u_{r, i}.
        \]

        \item $C^u(\beta_{\aa}) = \beta_{\aa}^{u_{\ell}, u_r}$ is a curved complex with $\Delta e$-curvature on both strands of $\beta_{\aa}$. This curvature is

        \[
        F_u^{\aa} = \sum_{i = 1}^{a_{\ell}} (e_i(\mathbb{X}_1) - e_i(\mathbb{X}'_1)) \otimes u_{\ell, i} + \sum_{i = 1}^{a_r} (e_i(\mathbb{X}_2) - e_i(\mathbb{X}'_2)) \otimes u_{r, i}.
        \]
    \end{itemize}
\end{example}

\subsection{Fork-Slides and Deformations} \label{sec: fork_slyde}

In Section \ref{sec: fray_functors}, we will often make use of a cabled version of the fork-sliding homotopy equivalence of Proposition \ref{prop: fork_slide}. We pause to establish some notation for the complexes involved.

\begin{defn} \label{def: cabled_forks}
	For each pair of positive integers $c, n \geq 0$ and each \textcolor{revisions}{composition} $\blam = (\lambda_1, \dots, \lambda_m) \vdash n$, set 
	
	\[
	MCC_{\blam, c}^n := \left( _{(c, n)}M_{(c, \blam)} \right) \star (\sigma_1)_{(n, c)}^{\blam}; \quad CM_{\blam, c}^n := (\sigma_1)_{(n, c)} \star \left( _{(n, c)}M_{(\blam, c)} \right).
	\]
	Graphically, we have

    \begin{gather*}
    MCC_{\blam, c}^n := 
    \begin{tikzpicture}[anchorbase,scale=.5,tinynodes]
    \draw[webs] (0,2) to[out=270,in=90] (2,0) node[below]{$c$};
    \draw[line width=5pt, color=white] (0,0) to[out=90,in=270] (1,2);
    \draw[webs] (0,0) to[out=90,in=270] (1,2);
    \draw[line width=5pt, color=white] (1,0) to[out=90,in=270] (2,2);
    \draw[webs] (1,0) to[out=90,in=270] (2,2);
    \node at (.5,.2) {$\dots$};
    \node at (1.5,1.8) {$\dots$};
    \draw [decorate,decoration = {brace,mirror}] (0,-.2) -- (1,-.2)
    node[pos=.5,below,yshift=-2pt]{$\blam$};
    \draw[webs] (1,2) to[out=90,in=180] (1.5,2.5);
    \draw[webs] (2,2) to[out=90,in=0] (1.5,2.5);
    \draw[webs] (1.5,2.5) to (1.5,3) node[above,yshift=-2pt]{$n$};
    \draw[webs] (0,2) to (0,3);
    \end{tikzpicture}
    ; \quad CM_{\blam, c}^n :=
    \begin{tikzpicture}[anchorbase,scale=.5,tinynodes]
    \draw[webs] (0,0) to[out=90,in=180] (.5,.5);
    \draw[webs] (1,0) to[out=90,in=0] (.5,.5);
    \node at (.5,.2) {$\dots$};
    \draw [decorate,decoration = {brace,mirror}] (0,-.2) -- (1,-.2)
    node[pos=.5,below,yshift=-2pt]{$\blam$};
    \draw[webs] (2,0) node[below]{$c$} to (2,.5);
    \draw[webs] (0,2) to[out=270,in=90] (2,.5);
    \draw[line width=5pt, color=white] (.5,.5) to[out=90,in=270] (2,2);
    \draw[webs] (.5,.5) to[out=90,in=270] (2,2);
    \draw[webs] (0,2) to (0,3);
    \draw[webs] (2,2) to (2,3) node[above,yshift=-2pt]{$n$};
    \end{tikzpicture}
    \end{gather*}
\end{defn}

\begin{prop} \label{prop: cabled_fork_slides}
Let $c, n, \blam$ be as in Definition \ref{def: cabled_forks}. Then there is a strong deformation retraction from $MCC_{\blam, c}^n$ to $CM_{\blam, c}^n$ of the form
\begin{gather*}
    \begin{tikzpicture}[anchorbase,scale=.5,tinynodes]
    \draw[webs] (0,2) to[out=270,in=90] (2,0) node[below]{$c$};
    \draw[line width=5pt, color=white] (0,0) to[out=90,in=270] (1,2);
    \draw[webs] (0,0) to[out=90,in=270] (1,2);
    \draw[line width=5pt, color=white] (1,0) to[out=90,in=270] (2,2);
    \draw[webs] (1,0) to[out=90,in=270] (2,2);
    \node at (.5,.2) {$\dots$};
    \node at (1.5,1.8) {$\dots$};
    \draw [decorate,decoration = {brace,mirror}] (0,-.2) -- (1,-.2)
    node[pos=.5,below,yshift=-2pt]{$\blam$};
    \draw[webs] (1,2) to[out=90,in=180] (1.5,2.5);
    \draw[webs] (2,2) to[out=90,in=0] (1.5,2.5);
    \draw[webs] (1.5,2.5) to (1.5,3) node[above,yshift=-2pt]{$n$};
    \draw[webs] (0,2) to (0,3);
    \draw (2.5,1.7) to (4,1.7) node[above,yshift=-2pt]{$\nu$};
    \draw[->] (4,1.7) to (5.5,1.7);
    \draw (5.5,1.3) to (4,1.3) node[below]{$\mu$};
    \draw[->] (4,1.3) to (2.5,1.3);
    \draw (-.75,2.25) to[out=180,in=90] (-1.5,1.5) node[left]{$h$};
    \draw[->] (-1.5,1.5) to[out=270,in=180] (-.75,.75);
    \end{tikzpicture}
    \begin{tikzpicture}[anchorbase,scale=.5,tinynodes]
    \draw[webs] (0,0) to[out=90,in=180] (.5,.5);
    \draw[webs] (1,0) to[out=90,in=0] (.5,.5);
    \node at (.5,.2) {$\dots$};
    \draw [decorate,decoration = {brace,mirror}] (0,-.2) -- (1,-.2)
    node[pos=.5,below,yshift=-2pt]{$\blam$};
    \draw[webs] (2,0) node[below]{$c$} to (2,.5);
    \draw[webs] (0,2) to[out=270,in=90] (2,.5);
    \draw[line width=5pt, color=white] (.5,.5) to[out=90,in=270] (2,2);
    \draw[webs] (.5,.5) to[out=90,in=270] (2,2);
    \draw[webs] (0,2) to (0,3);
    \draw[webs] (2,2) to (2,3) node[above,yshift=-2pt]{$n$};
    \end{tikzpicture}
\end{gather*}
as well as reflections of this, satisfying the side conditions.
\end{prop}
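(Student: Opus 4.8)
The plan is to induct on the number $m$ of parts of $\blam$, bootstrapping from the two-strand case (Proposition~\ref{prop: fork_slide}) and promoting everything to strong deformation retractions satisfying the side conditions via Corollary~\ref{cor: gauss_elim_removal}. Two bookkeeping facts will be used throughout: a composite of strong deformation retractions satisfying the side conditions is again one such (with composite homotopy $h_1 + g_1 h_2 f_1$, as one checks directly using $f_1 g_1 = \mathrm{id}$), and such data is preserved by $\boxtimes$ and by $\star$ with a fixed complex, since these operations are functorial and $R$-linear. The base case $m = 1$ is immediate: $_{(n)}M_{(n)}$ is the identity bimodule, so $MCC_{(n),c}^n$ and $CM_{(n),c}^n$ coincide and $\{\mathrm{id},\mathrm{id},0\}$ is the required retraction.

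For $m \geq 2$ I would split the fork-slide into two stages, peeling off the first strand. Write $\blam = (\lambda_1, \blam'')$ with $\blam'' = (\lambda_2, \dots, \lambda_m) \vdash q$, so $n = \lambda_1 + q$. Factoring the $m$-ary merge of $\blam$ into $n$ through the intermediate sequence $(\lambda_1, q)$ via the associativity relation~\eqref{tikzpic: assoc}, together with the evident $\boxtimes$-decomposition of the cabled crossing $(\sigma_1)_{(n,c)}^{\blam}$ into ``the bottom $m-1$ crossings, involving only the $\blam''$-strands and the $c$-strand'' followed by ``the single crossing of the $\lambda_1$-strand over the $c$-strand'', I expect to obtain an isomorphism of complexes
\[
MCC_{\blam, c}^n \;\cong\; A \star \bigl( \strand{(\lambda_1)} \boxtimes MCC_{\blam'', c}^q \bigr),
\]
where $A$ is a fixed complex assembling the $\lambda_1$-over-$c$ crossing and the final $(\lambda_1, q) \to n$ merge; the point is that in the bottom portion the $\lambda_1$-strand factors out honestly as $\boxtimes\,\strand{(\lambda_1)}$, leaving exactly $MCC_{\blam'', c}^q$ on the remaining strands. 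The inductive hypothesis then supplies a strong deformation retraction from $MCC_{\blam'', c}^q$ onto $CM_{\blam'', c}^q$ satisfying the side conditions, and applying $A \star (\strand{(\lambda_1)} \boxtimes -)$ transports it to a retraction from $MCC_{\blam, c}^n$ onto $A \star (\strand{(\lambda_1)} \boxtimes CM_{\blam'', c}^q)$.

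For the second stage I would reassemble the target of that retraction. Unwinding $CM_{\blam'', c}^q$ and regrouping --- again via~\eqref{tikzpic: assoc}, this time sliding the $\blam'' \to q$ merge down to the bottom and combining the $\lambda_1$-over-$c$ and $q$-over-$c$ crossings into the cabled crossing $(\sigma_1)_{(n,c)}^{(\lambda_1, q)}$ --- I expect isomorphisms of complexes
\[
A \star \bigl( \strand{(\lambda_1)} \boxtimes CM_{\blam'', c}^q \bigr) \;\cong\; MCC_{(\lambda_1, q), c}^n \star B, \qquad CM_{\blam, c}^n \;\cong\; CM_{(\lambda_1, q), c}^n \star B,
\]
where $B$ is the merge of $\blam''$ into $q$ sitting at the bottom of both. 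Now $MCC_{(\lambda_1, q), c}^n$ and $CM_{(\lambda_1, q), c}^n$ are the two-part case: by Proposition~\ref{prop: fork_slide} the fork-slide between them is realized by a finite sequence of Gaussian eliminations, each of which gives a strong deformation retraction satisfying the side conditions by Corollary~\ref{cor: gauss_elim_removal}; composing these and then applying $(-) \star B$ yields the second-stage retraction. Composing the two stages (and absorbing the intervening isomorphisms, which are themselves strong deformation retractions with side conditions) produces the desired retraction from $MCC_{\blam, c}^n$ onto $CM_{\blam, c}^n$ satisfying the side conditions. The reflected statements follow by the same argument, replacing merge bimodules by split bimodules (or $\sigma_1$ by $\sigma_1^{-1}$) throughout and invoking the reflected forms of Proposition~\ref{prop: fork_slide} and Corollary~\ref{cor: gauss_elim_removal}.

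The main obstacle I anticipate is purely organizational: verifying the two displayed pairs of isomorphisms, i.e.\ checking carefully --- using the associativity relation and planar/braid isotopies of braided webs --- that the ambient strands genuinely factor out as $\boxtimes$-summands and that the regrouped crossings are precisely the claimed cabled crossings, with the order of the individual $\lambda_i$-over-$c$ crossings tracked correctly. Once those identifications are in place, the homological-algebra content (Gaussian elimination, and stability of strong deformation retractions with side conditions under $\boxtimes$, $\star$, and composition) is routine.
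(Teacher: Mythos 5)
Your proposal is a correct and carefully worked-out version of the paper's one-line proof (``repeated applications of Proposition~\ref{prop: fork_slide} along with the associativity relations of Proposition~\ref{prop: web_rels}''), organizing the ``repeated applications'' as an explicit induction on the number of parts $m$ of $\blam$. The auxiliary facts you invoke --- that a composite of strong deformation retractions satisfying the side conditions is again one such (with homotopy $h_1 + g_1 h_2 f_1$), and that such data is transported by $\boxtimes$ and by $\star$ with a fixed complex --- do check out and are exactly the glue needed to assemble the Gaussian-elimination SDRs supplied by Corollary~\ref{cor: gauss_elim_removal} into the claimed retraction.
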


\begin{proof}
Repeated applications of Proposition \ref{prop: fork_slide} along with the associativity relations of Proposition \ref{prop: web_rels}.
\end{proof}

We wish to lift the statement of Proposition \ref{prop: cabled_fork_slides} to a $\Delta e$-deformed version. In keeping with the philosophy of Remark \ref{rem: many_rings}, there are many possible lifts we may consider, and we will find it useful to establish some notation for the various alphabets and dot-sliding homotopies involved before proceeding.

\begin{conv} \label{conv: cabled_fork_alphs}
Fix a choice of $c, n, \blam$ as in Definition \ref{def: cabled_forks} for the remainder of this Section. We label alphabets acting in various locations on $MCC_{\blam, c}^n$ and $CM_{\blam, c}^n$ as follows:

\begin{gather} \label{fig: cabled_fork_alphs}
MCC_{\blam, c}^n =
\begin{tikzpicture}[scale=.5,anchorbase,tinynodes]
    \draw[webs] (-1,2) to[out=270,in=90] (2,0) node[below,yshift=-2pt]{\large $\mathbb{X}'_{\ell}$};
    \draw[line width=5pt, color=white] (-1,0) to[out=90,in=270] (.75,1.75);
    \draw[webs] (-1,0) node[below,yshift=-2pt]{\large $\mathbb{X}'_1$}
    to[out=90,in=270] (.75,1.75) node[left]{\large $\mathbb{X}_1$};
    \draw[line width=5pt, color=white] (.5,0) to[out=90,in=270] (2.25,1.75);
    \draw[webs] (.5,0) node[below,yshift=-2pt]{\large $\mathbb{X}'_m$}
    to[out=90,in=270] (2.25,1.75) node[right]{\large $\mathbb{X}_m$};
    \node at (-.25,.3) {$\dots$};
    \node at (1.5,1.8) {$\dots$};
    \draw[webs] (.75,1.75) to[out=90,in=180] (1.5,2.5);
    \draw[webs] (2.25,1.75) to[out=90,in=0] (1.5,2.5);
    \draw[webs] (1.5,2.5) to (1.5,3) node[above]{\large $\mathbb{X}_n$};
    \draw[webs] (-1,2) to (-1,3) node[above]{\large $\mathbb{X}_{\ell}$};
\end{tikzpicture}
; \quad CM_{\blam, c}^n = 
\begin{tikzpicture}[scale=.5,anchorbase,tinynodes]
    \draw[webs] (-1,4) node[above]{\large $\mathbb{X}_{\ell}$} to[out=270,in=90] (2,1.5);
    \draw[webs] (2,1.5) to (2,0) node[below,yshift=-2pt]{\large $\mathbb{X}'_{\ell}$};
    \draw[line width=5pt, color=white] (1.5,4) to[out=270,in=90] (0,2);
    \draw[webs] (1.5,4) node[above]{\large $\mathbb{X}_n$} to[out=270,in=90] (0,2);
    \draw[webs] (0,2) to (0,1.5) node[left,yshift=5pt]{\large $\mathbb{X}'_n$};
    \draw[webs] (0,1.5) to (0,1);
    \draw[webs] (0,1) to[out=180,in=90] (-.75,.25);
    \draw[webs] (0,1) to[out=0,in=90] (.75,.25);
    \draw[webs] (-.75,.25) to (-.75,0) node[below,yshift=-2pt]{\large $\mathbb{X}'_1$};
    \draw[webs] (.75,.25) to (.75,0) node[below,yshift=-2pt]{\large $\mathbb{X}'_m$};
    \node at (0,.3) {$\dots$};
\end{tikzpicture}
\end{gather}
\end{conv}

\begin{conv} \label{conv: cabled_fork_dot_slides}
We give the following names to the dot-sliding homotopies on $MCC_{\blam, c}^n$:
\begin{itemize}
	\item $\xi_{jk} \in \End^{-1}_{\CS(\SSBim)}(MCC_{\blam, c}^n)$ satisfies $[d, \xi_{jk}] = e_k(\mathbb{X}_j) - e_k(\mathbb{X}'_j)$.
	
	\item $\xi_{\ell k} \in \End^{-1}_{\CS(\SSBim)}(MCC_{\blam, c}^n)$ satisfies $[d, \xi_{\ell k}] = e_k(\mathbb{X}_{\ell}) - e_k(\mathbb{X}'_{\ell})$.
\end{itemize}

Similarly, we give the following names to the dot-sliding homotopies on $CM_{\blam, c}^n$:
\begin{itemize}
	\item $\tilde{\zeta}_i \in \End^{-1}_{\CS(\SSBim)}(CM_{\blam, c}^n)$ satisfies $[d, \tilde{\zeta}_i] = e_i(\mathbb{X}_n) - e_i(\mathbb{X}'_n)$.
	
	\item $\tilde{\xi}_{\ell k} \in \End^{-1}_{\CS(\SSBim)}(CM_{\blam, c}^n)$ satisfies $[d, \tilde{\xi}_{\ell k}] = e_k(\mathbb{X}_{\ell}) - e_k(\mathbb{X}'_{\ell})$.
\end{itemize}

\end{conv}

The families $\{\tilde{\zeta}_i\}_{i = 1}^n$ and $\{\tilde{\xi}_{\ell k}\}_{k = 1}^c$ square to $0$ and pairwise anticommute, and hence each of these families and their union are deforming families for $CM_{\blam, c}^n$ as in Definition \ref{def: strict_def_family}. We denote the deformations arising from these families as follows.

\begin{defn} \label{def: cm_deformed}

We denote by 

\[
\left( CM_{\blam, c}^n \right)^{u_{\ell}} \in \YS_{F_{u_{\ell}}^{(c)}} \left( \SSBim_{(\blam, c)}^{(c, n)}; R[\mathbb{U}_{\ell}] \right); \quad \left( CM_{\blam, c}^n \right)^{u_n} \in \YS_{F_{u_n}^{(n)}} \left( \SSBim_{(\blam, c)}^{(c, n)}; R[\mathbb{U}_n] \right)
\]
the strict deformations of $CM_{\blam, c}^n$ arising from the families $\{\tilde{\xi}_{\ell k}\}$ and $\{\tilde{\zeta}_i\}$, respectively. We denote the corresponding alphabets of deformation parameters by $\mathbb{U}_{\ell}$ and $\mathbb{U}_n$ and the connections by $\tilde{\Delta}_{\ell}$ and $\tilde{\Delta}_n$. Similarly, we denote by 

\[
\left( CM_{\blam, c}^n \right)^{u_{\ell}, u_n} \in \YS_{F_{u_{\ell}}^{(c)} + F_{u_n}^{(n)}} \left( \SSBim_{(\blam, c)}^{(c, n)}; R[\mathbb{U}_{\ell}, \mathbb{U}_n] \right)
\]
the strict deformation of $CM_{\blam, c}^n$ arising from the union of the families $\{\tilde{\xi}_{\ell k}\}$ and $\{\tilde{\zeta}_i\}$, with connection $\tilde{\Delta}_{\ell} + \tilde{\Delta}_n$.
\end{defn}

There are corresponding deformations on $MCC_{\blam, c}^n$. The deformation with $F_{u_{\ell}}^{(c)}$ curvature arises from the deforming family $\{\xi_{\ell k}\}$; we obtain a deformation with $F_{u_n}^{(n)}$ curvature via the following family.

\begin{lem} \label{lem: bulk_curv_MCC}
Let $a_{ijk}^{\blam}(\mathbb{X}_n, \mathbb{X}'_n) \in \mathrm{Sym}^{\blam}(\mathbb{X}_n, \mathbb{X}'_n)$ be as in Lemma \ref{lem: a_ijk_exist}, and for each $1 \leq i \leq n$, set

\[
\zeta_i := \sum_{j = 1}^m \sum_{k = 1}^{\lambda_j} a_{ijk}^{\blam}(\mathbb{X}_n, \mathbb{X}'_n) \xi_{jk} \in \End^1_{\CS(\SSBim)} \left( MCC_{\blam, c}^n \right).
\]
Then $\{\zeta_i\}$ is an $F_{u_n}^{(n)}$-deforming family on $MCC_{\blam, c}^n$.
\end{lem}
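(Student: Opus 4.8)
The plan is to verify directly that $\{\zeta_i\}$ satisfies the two conditions of Proposition \ref{prop: strict_def} for the curvature $F_{u_n}^{(n)}$, following the same template as the proof of that Proposition and of Corollary \ref{cor: curved_braids}. Write $\varphi_i := e_i(\mathbb{X}_n) - e_i(\mathbb{X}'_n)$, where $\mathbb{X}'_n = \mathbb{X}'_1 \sqcup \dots \sqcup \mathbb{X}'_m$ is the source $\blam$-cable alphabet at the bottom of $MCC_{\blam, c}^n$ and $\mathbb{X}_n$ is the merged target alphabet at the top (these are the central elements pairing with the $u_{n,i}$ to give $F_{u_n}^{(n)}$, per Definition \ref{def: delta_e_curv}). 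First I would address homogeneity: after replacing each $a_{ijk}^{\blam}(\mathbb{X}_n, \mathbb{X}'_n)$ by its homogeneous component of degree $q^{2(i-k)}$ — which changes nothing, since the defining relation of Lemma \ref{lem: a_ijk_exist} is homogeneous of degree $q^{2i}$ and each $e_k(\mathbb{X}_j) - e_k(\mathbb{X}'_j)$ is already homogeneous — the summand $a_{ijk}^{\blam}\xi_{jk}$ has degree $q^{2(i-k)} \cdot q^{2k}t^{-1} = q^{2i}t^{-1}$, so $\zeta_i$ is homogeneous of degree $q^{2i}t^{-1} = t^{-1}\mathrm{deg}(\varphi_i)$, as required of a member of an $F_{u_n}^{(n)}$-deforming family.

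The observation that drives everything is that each $a_{ijk}^{\blam}(\mathbb{X}_n, \mathbb{X}'_n)$ lies in $\mathrm{Sym}^{\blam}(\mathbb{X}_n) \otimes_R \mathrm{Sym}^{\blam}(\mathbb{X}'_n)$, and both factors act on $MCC_{\blam, c}^n = \big({}_{(c,n)}M_{(c,\blam)}\big) \star (\sigma_1)_{(n,c)}^{\blam}$ by multiplication over rings over which the dot-sliding homotopies $\xi_{jk}$ are linear: the $\mathbb{X}'_n$-factor acts by right multiplication, and the $\mathbb{X}_n$-factor — being $\blam$-symmetric — acts on the intermediate alphabet of the braid factor, passing freely through the merge $M$ (which identifies the top and intermediate copies of that alphabet). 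Since each $\xi_{jk}$ is $\mathrm{id}_M$ tensored with a dot-sliding homotopy on the Rickard complex $(\sigma_1)_{(n,c)}^{\blam}$, and that homotopy is a bimodule map, it commutes with left and right multiplication by symmetric polynomials; hence $a_{ijk}^{\blam}(\mathbb{X}_n, \mathbb{X}'_n)$ is closed (commutes with $d$) and commutes with every $\xi_{jk}$. Condition (1) of Proposition \ref{prop: strict_def} is then a one-line computation via the graded Leibniz rule, using that $a_{ijk}^{\blam}$ is closed of homological degree $0$ and $[d,\xi_{jk}] = e_k(\mathbb{X}_j) - e_k(\mathbb{X}'_j)$:
\[
[d,\zeta_i] = \sum_{j=1}^m\sum_{k=1}^{\lambda_j} a_{ijk}^{\blam}(\mathbb{X}_n,\mathbb{X}'_n)\,[d,\xi_{jk}] = \sum_{j=1}^m\sum_{k=1}^{\lambda_j} a_{ijk}^{\blam}(\mathbb{X}_n,\mathbb{X}'_n)\big(e_k(\mathbb{X}_j) - e_k(\mathbb{X}'_j)\big) = e_i(\mathbb{X}_n) - e_i(\mathbb{X}'_n),
\]
the last equality being exactly Lemma \ref{lem: a_ijk_exist} applied with $\bb = \blam$ to the alphabets $\mathbb{X}_n, \mathbb{X}'_n$.

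For condition (2), recall from Proposition \ref{prop: dot_slide} that the family $\{\xi_{jk}\}$ may be chosen so that its members pairwise anti-commute and square to $0$. Since the $a_{ijk}^{\blam}$ commute with one another (they lie in a commutative polynomial ring) and with every $\xi_{jk}$, we get for all $i,i'$
\[
\zeta_i\zeta_{i'} + \zeta_{i'}\zeta_i = \sum_{j,k}\sum_{j',k'} a_{ijk}^{\blam}\,a_{i'j'k'}^{\blam}\,\big(\xi_{jk}\xi_{j'k'} + \xi_{j'k'}\xi_{jk}\big) = 0,
\]
so the $\zeta_i$ pairwise graded-commute (the diagonal case $i = i'$ gives $\zeta_i^2 = 0$ since $\mathrm{char}(R)\neq 2$). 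Proposition \ref{prop: strict_def} together with Definition \ref{def: strict_def_family} then yields that $\{\zeta_i\}$ is an $F_{u_n}^{(n)}$-deforming family on $MCC_{\blam, c}^n$. The only point requiring genuine care is the commutativity claim of the second paragraph — that multiplication by a $\blam$-symmetric function of the merged alphabet $\mathbb{X}_n$, although not central in $\SSBim_{(\blam,c)}^{(c,n)}$, does commute with the $\xi_{jk}$ because it acts through the tensor decomposition $M \star (\sigma_1)^{\blam}_{(n,c)}$ as multiplication on the intermediate braid alphabet; everything else is the same bookkeeping as in the proof of Proposition \ref{prop: strict_def}.
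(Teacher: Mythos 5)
Your proof is correct and follows the same route as the paper's, which is simply the degree-zero Leibniz computation of $[d,\zeta_i]$ invoking Lemma \ref{lem: a_ijk_exist}, plus the remark that anticommutativity of the $\zeta_i$ is inherited from that of the $\xi_{jk}$. You supply additional justification that the paper leaves implicit — the degree homogeneity of $a_{ijk}^{\blam}$ and, more substantively, the fact that multiplication by $a_{ijk}^{\blam}(\mathbb{X}_n,\mathbb{X}'_n)$ (viewed through the merge as acting on the intermediate and bottom cable alphabets) is closed and commutes with the dot-sliding homotopies — which is precisely what makes the one-line Leibniz step and the anticommutation step go through, so this is a welcome expansion rather than a different method.
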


\begin{proof}
That $\zeta_i$ pairwise anti-commute follows from the same property of $\xi_{jk}$. To see that $[d, \zeta_i] = e_i(\mathbb{X}_n) - e_i(\mathbb{X}_n')$ is a straightforward computation:

\[
[d, \zeta_i] = \sum_{j = 1}^m \sum_{k = 1}^{\lambda_j} a_{ijk}^{\blam} [d, \xi_{jk}] = \sum_{j = 1}^m \sum_{k = 1}^{\lambda_j} a_{ijk}^{\blam} (e_k(\mathbb{X}_j) - e_k(\mathbb{X}'_j)) = e_i(\mathbb{X}_n) - e_i(\mathbb{X}_n').
\]
\end{proof}

\begin{defn}

We denote by 

\[
\left( MCC_{\blam, c}^n \right)^{u_{\ell}} \in \YS_{F_{u_{\ell}}^{(c)}} \left( \SSBim_{(\blam, c)}^{(c, n)}; R[\mathbb{U}_{\ell}] \right); \quad \left( MCC_{\blam, c}^n \right)^{u_n} \in \YS_{F_{u_n}^{(n)}} \left( \SSBim_{(\blam, c)}^{(c, n)}; R[\mathbb{U}_n] \right)
\]
the strict deformations of $MCC_{\blam, c}^n$ arising from the families $\{\xi_{\ell k}\}$ and $\{\zeta_i\}$, respectively. We denote the corresponding alphabets of deformation parameters again by $\mathbb{U}_{\ell}$ and $\mathbb{U}_n$ and the connections by $\Delta_{\ell}$ and $\Delta_n$. Similarly, we denote by

\[
\left( MCC_{\blam, c}^n \right)^{u_{\ell}, u_n} \in \YS_{F_{u_{\ell}}^{(c)} + F_{u_n}^{(n)}} \left( \SSBim_{(\blam, c)}^{(c, n)}; R[\mathbb{U}_{\ell}, \mathbb{U}_n] \right)
\]
the strict deformation of $MCC_{\blam, c}^n$ arising from the union of the families $\{\xi_{\ell k}\}$ and $\{\zeta_i\}$, with connection $\Delta_{\ell} + \Delta_r$.  
\end{defn}

At this point, \textcolor{revisions}{Proposition \ref{prop: lifting_maps} and Corollary \ref{cor: inv_uniq_yify}} suffice to show $\left( MCC_{\blam, c}^n \right)^{u_{\ell}, u_n} \simeq \left( CM_{\blam, c}^n \right)^{u_{\ell}, u_n}$ for \textit{some} homotopy equivalence, but it would be nice to use exactly the morphisms of Proposition \ref{prop: cabled_fork_slides}. To establish this stronger result, we first modify the connection on $\left( MCC_{\blam, c}^n \right)^{u_{\ell}, u_n}$ using Proposition \ref{prop: sdr_deformation_lifting}.

\begin{prop} \label{prop: conn_change_mcc}
Let $\{\xi'_{\ell k}\}$, $\{\zeta'_i\}$ be the endomorphisms of $MCC_{\blam, c}^n$ obtained by lifting the families $\{ \tilde{\xi}_{\ell k}\}$, $\{\tilde{\zeta}_i\}$ along the strong deformation retraction of Proposition \ref{prop: cabled_fork_slides}. Denote by $\left( MCC_{\blam, c}^n \right)^{u_{\ell}'}$, $\left( MCC_{\blam, c}^n \right)^{u_n'}$, and $\left( MCC_{\blam, c}^n \right)^{u_{\ell}', u_n'}$ the strict deformations of $MCC_{\blam, c}^n$ arising from $\{\xi'_{\ell k}\}$, $\{\zeta'_i\}$, and their union, respectively.

Then there are homotopy equivalences of curved complexes 

\[
\left( MCC_{\blam, c}^n \right)^{u_{\ell}', u_n'} \simeq \left( MCC_{\blam, c}^n \right)^{u_{\ell}, u_n}; \quad \left( MCC_{\blam, c}^n \right)^{u_{\ell}} \simeq \left( MCC_{\blam, c}^n \right)^{u_{\ell}'}; \quad \left( MCC_{\blam, c}^n \right)^{u_n} \simeq \left( MCC_{\blam, c}^n \right)^{u_n'}.
\]
\end{prop}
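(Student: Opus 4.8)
The plan is to compare, within each of the three pairs, two strict deformations of the \emph{same} underlying complex $MCC_{\blam, c}^n$ carrying the \emph{same} curvature, and then to invoke the deformation-comparison tool of Proposition \ref{prop: lifting_maps} applied to the chain map $\mathrm{id}_{MCC_{\blam, c}^n}$. First I would record that all six complexes in sight really are $F$-deformations of $MCC_{\blam, c}^n$ for the appropriate $F$: the unprimed deformations arise from the deforming families $\{\xi_{\ell k}\}$, $\{\zeta_i\}$ — the latter being a deforming family by Lemma \ref{lem: bulk_curv_MCC} — and their union, while the primed deformations arise from $\{\xi'_{\ell k}\}$, $\{\zeta'_i\}$ and their union, which are $F$-deforming families by Lemma \ref{lem: sdr_lifting}. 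Within each pair the two deformations share the same curvature ($F_{u_{\ell}}^{(c)}$, then $F_{u_n}^{(n)}$, then $F_{u_{\ell}}^{(c)} + F_{u_n}^{(n)}$) over the same alphabet of deformation parameters ($\mathbb{U}_{\ell}$, then $\mathbb{U}_n$, then $\mathbb{U}_{\ell} \sqcup \mathbb{U}_n$).

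Next I would check the hypotheses of Proposition \ref{prop: lifting_maps}. The complex $MCC_{\blam, c}^n$ is bounded, hence bounded above; the deformation parameters $u_{jk}$ all have degree $q^{-2k}t^2$, so they are even and of positive homological degree, which is the standing assumption under which Proposition \ref{prop: lifting_maps} operates. The one substantive hypothesis is that $H^{\bullet}(\Hom_{\CS(\SSBim)}(MCC_{\blam, c}^n, MCC_{\blam, c}^n))$ is concentrated in non-negative homological degrees. Granting this, Proposition \ref{prop: lifting_maps} applied to $\mathrm{id}_{MCC_{\blam, c}^n}$, which is a homotopy equivalence, produces a curved lift that is again a homotopy equivalence; running this once for each of the three pairs yields exactly the three claimed equivalences. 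I note that Proposition \ref{prop: inv_uniq_yify} is \emph{not} available as a shortcut: $MCC_{\blam, c}^n$ contains a merge bimodule and is not invertible up to homotopy, so the morphism-lifting statement is genuinely needed; one could also attempt Corollary \ref{cor: kosz_base_change_rep}, but that would require constructing nilpotent homotopies relating the two deforming families together with a graded-commutativity condition, which is more delicate than necessary.

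The main obstacle is therefore the positivity statement for the endomorphism complex of $MCC_{\blam, c}^n$. My plan here is first to replace $MCC_{\blam, c}^n$ by $CM_{\blam, c}^n$ via the fork-slide equivalence of Proposition \ref{prop: cabled_fork_slides}, so that $\Hom_{\CS(\SSBim)}(MCC_{\blam, c}^n, MCC_{\blam, c}^n)$ and $\Hom_{\CS(\SSBim)}(CM_{\blam, c}^n, CM_{\blam, c}^n)$ have the same homology. Then, using the biadjunction between the merge $_{(n, c)}M_{(\blam, c)}$ and the corresponding split, I would rewrite this endomorphism complex, up to grading shift, as a $\Hom$ complex between Rickard complexes of colored braids possibly tensored with fixed singular Soergel bimodules concentrated in homological degree $0$; such complexes have homology in non-negative homological degrees by the positivity properties of Rickard complexes already used in \cite{HRW21, Con23} (equivalently, $MCC_{\blam, c}^n$ is homotopy equivalent to a minimal complex of singular Soergel bimodules). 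This reduction is the step that will demand the most care; with it in hand, the three homotopy equivalences follow immediately.
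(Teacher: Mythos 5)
Your proposal is correct but takes a different route from the paper, which simply asserts "invertibility of $MCC_{\blam, c}^n$ up to a polynomial in $q$" and then invokes Proposition \ref{prop: inv_uniq_yify}. Your observation that this invertibility fails in the two-sided sense required there is accurate: with $X^{\vee}$ the split followed by the inverse cabled crossing, digon removal gives $X \star X^{\vee} \simeq f_{n,\blam}(q)\,\strand{(c,n)}$, but $X^{\vee} \star X$ is (by fork-sliding) homotopy equivalent to $W_{\blam} \boxtimes \strand{(c)}$, which is not a polynomial in $q$ times $\strand{(\blam, c)}$. So you have found a genuine imprecision in the paper's own proof rather than a matter of taste.

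Your substitute via Proposition \ref{prop: lifting_maps} applied to $\mathrm{id}_{MCC_{\blam, c}^n}$ is the honest way to phrase the argument, and the whole burden then sits exactly where you place it: showing $H^{\bullet}(\End_{\CS(\SSBim)}(MCC_{\blam, c}^n))$ is concentrated in non-negative homological degrees. Your sketch for this is the right one. Up to an overall $q$-shift, biadjunction of $_{(c,n)}M_{(c,\blam)}$ with $_{(c,\blam)}S_{(c,n)}$ gives
\[
\End_{\CS}\left(MCC_{\blam,c}^n\right) \cong \Hom_{\CS}\left((\sigma_1)^{\blam},\ (\strand{(c)}\boxtimes W_{\blam})\star(\sigma_1)^{\blam}\right),
\]
and invertibility of the Rickard complex $(\sigma_1)^{\blam}$ together with fork-sliding reduces the right-hand side to a $\Hom$ space between bimodules concentrated in homological degree $0$. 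That is the vanishing the obstruction theory requires. It also rehabilitates the paper's cited Proposition \ref{prop: inv_uniq_yify}: the two-sided hypothesis in that statement is only ever consumed to control $H^{<0}(\End(X))$, which one-sided quasi-invertibility plus adjunction already provides, so the two routes are ultimately the same argument with different bookkeeping. Your decision to avoid Corollary \ref{cor: kosz_base_change_rep} is also sound, since it would require producing explicit nilpotent homotopies with graded-commutativity constraints. The one thing left to nail down is the $\End$-complex reduction just described, which you correctly flag as the delicate step; the paper glosses it, but it is where the proof actually lives.
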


\begin{proof}
That $\left( MCC_{\blam, c}^n \right)^{u_{\ell}', u_n'}$, $\left( MCC_{\blam, c}^n \right)^{u_{\ell}'}$, and $\left( MCC_{\blam, c}^n \right)^{u_n'}$ are curved complexes with the appropriate curvature follows immediately from Lemma \ref{lem: sdr_lifting}. \textcolor{revisions}{The homotopy equivalences then follow from right quasi-invertibility of $MCC_{\blam, c}^n$ (with right quasi-inverse given by a cabled crossing) and Corollary \ref{cor: inv_uniq_yify}.}
\end{proof}

As an immediate application of Proposition \ref{prop: sdr_deformation_lifting}, we obtain the following:

\begin{prop} \label{prop: deformed_forkslide}
The strong deformation retraction of Proposition \ref{prop: cabled_fork_slides} lifts without modification to strong deformation retractions from $\left( MCC_{\blam, c}^n \right)^{u_{\ell}', u_n'}$ to $\left( CM_{\blam, c}^n \right)^{u_{\ell}, u_n}$, from $\left( MCC_{\blam, c}^n \right)^{u_{\ell}'}$ to $\left( CM_{\blam, c}^n \right)^{u_{\ell}}$, and from $\left( MCC_{\blam, c}^n \right)^{u_n'}$ to $\left( MCC_{\blam, c}^n \right)^{u_n}$.
\end{prop}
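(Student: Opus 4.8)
The plan is to deduce Proposition~\ref{prop: deformed_forkslide} directly from Proposition~\ref{prop: sdr_deformation_lifting}, taking $A = MCC_{\blam, c}^n$, $B = CM_{\blam, c}^n$, and $\{f, g, h\}$ the strong deformation retraction of Proposition~\ref{prop: cabled_fork_slides}, which satisfies the side conditions. The only point to make is that the deformed complexes appearing in the statement are, by construction, exactly the strict deformations produced by the lifting procedure of Lemma~\ref{lem: sdr_lifting} applied to this retraction, so that nothing beyond unwinding definitions is needed.

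Concretely, I would treat the three claims in parallel. In the first, the relevant $F$-deforming family on $B = CM_{\blam, c}^n$ is the union $\{\tilde{\xi}_{\ell k}\}_{k = 1}^c \cup \{\tilde{\zeta}_i\}_{i = 1}^n$, a deforming family for $F = F_{u_{\ell}}^{(c)} + F_{u_n}^{(n)}$; in the second it is $\{\tilde{\xi}_{\ell k}\}$ with curvature $F_{u_{\ell}}^{(c)}$; in the third it is $\{\tilde{\zeta}_i\}$ with curvature $F_{u_n}^{(n)}$ (all recorded in the paragraphs preceding Definition~\ref{def: cm_deformed}). By definition, lifting these families along the strong deformation retraction of Proposition~\ref{prop: cabled_fork_slides} produces exactly the families $\{\xi'_{\ell k}\} \cup \{\zeta'_i\}$, $\{\xi'_{\ell k}\}$, and $\{\zeta'_i\}$ of Proposition~\ref{prop: conn_change_mcc}, whose associated strict deformations of $A = MCC_{\blam, c}^n$ are $\left( MCC_{\blam, c}^n \right)^{u_{\ell}', u_n'}$, $\left( MCC_{\blam, c}^n \right)^{u_{\ell}'}$, and $\left( MCC_{\blam, c}^n \right)^{u_n'}$ respectively. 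Throughout one identifies the alphabets $\mathbb{U}_{\ell}$, $\mathbb{U}_n$ on $MCC_{\blam, c}^n$ with those on $CM_{\blam, c}^n$, so that literally the same curvature element appears on both sides of each retraction; this is precisely what the naming conventions of Definition~\ref{def: cm_deformed} and Proposition~\ref{prop: conn_change_mcc} are arranged to guarantee.

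With these identifications in place, Proposition~\ref{prop: sdr_deformation_lifting} applies verbatim and says exactly that $\{f \otimes 1, g \otimes 1, h \otimes 1\}$ --- that is, the morphisms of Proposition~\ref{prop: cabled_fork_slides} with no modification --- form a strong deformation retraction from the deformed $MCC_{\blam, c}^n$ onto the deformed $CM_{\blam, c}^n$ in each of the three cases; the reflected strong deformation retractions of Proposition~\ref{prop: cabled_fork_slides} lift in the same way. I do not expect any genuine obstacle: all of the real content lives in Lemma~\ref{lem: sdr_lifting}, Proposition~\ref{prop: sdr_deformation_lifting}, and the explicit construction of $\{\xi'_{\ell k}\}$, $\{\zeta'_i\}$ in Proposition~\ref{prop: conn_change_mcc}, and the only thing requiring care is the bookkeeping of which alphabet of deformation parameters is ``active'' on each complex, so that the two sides of each retraction carry the same curvature.
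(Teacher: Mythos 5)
Your proof is correct and takes exactly the paper's route: the paper introduces this proposition with the phrase ``As an immediate application of Proposition~\ref{prop: sdr_deformation_lifting}, we obtain the following'' and gives no further argument, and your proposal simply unwinds why the application is immediate, using the definitions of $\left( MCC_{\blam, c}^n \right)^{u_{\ell}'}$, $\left( MCC_{\blam, c}^n \right)^{u_n'}$, and $\left( MCC_{\blam, c}^n \right)^{u_{\ell}', u_n'}$ from Proposition~\ref{prop: conn_change_mcc} as the strict deformations arising from the lifted families. (You also implicitly correct the typo in the statement's third clause, whose target should read $\left( CM_{\blam, c}^n \right)^{u_n}$ rather than $\left( MCC_{\blam, c}^n \right)^{u_n}$.)
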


\section{Fray Functors} \label{sec: fray_functors}

In this section, we investigate the behavior of complexes of singular Soergel bimodules under extension of scalars to rings of partially symmetric polynomials, as introduced in Section \ref{sec: symm_poly}, as well as several deformations of these functors. Each of these functors is designed to mimic a step in constructing the finite and infinite Abel--Hogancamp projectors studied in \cite{AH17} as well as their deformations studied in \cite{Con23}. The full implications of this relationship are explored at length in Section \ref{sec: cat_idem} (see in particular Theorems \ref{thm: proj_agreement} and \ref{thm: yproj_agreement}); see also Section \ref{sec: proof_strat} in the Introduction for a further discussion of this point.

\begin{conv} \label{conv: fray_vals}
	Unless otherwise noted, we fix positive integers
	$N \geq n \geq 1$ and a \textcolor{revisions}{composition} $\blam = (\lambda_1, \dots, \lambda_m) \vdash n$ throughout this section.
	We also fix a domain object $\aa \vdash N$ and codomain object $\bb \vdash N$
	of $\SSBim_N$ satisfying $a_r = b_s = n$. In particular, when considering the refinement $\aa^{\blam}$ (resp. $\bb^{\blam}$), we always replace the element $a_r$ (resp. $b_s$); see Remark \ref{rem: fray_ambig}.
\end{conv}

\subsection{Reduced Fray Functor}

\begin{defn} \label{def: rfray}
    Let $\rfray{n}{\blam}$ denote the functor

    \[
    \rfray{n}{\blam} = \left( _{\bb^{\blam}}S_{\bb} \right) \star - \star \left( _{\aa}M_{\aa^{\blam}} \right)
    \colon \SSBim_{\aa}^{\bb} \to \SSBim_{\aa^{\blam}}^{\bb^{\blam}}.
    \]

Pictorially, given a bimodule $M \in \SSBim_{\aa}^{\bb}$, $\rfray{n}{\blam}(M)$
is the same bimodule with the designated $n$-labeled strands $a_r, b_s$ ``frayed"
to a $\blam$-labeled (blackboard framed) cable as indicated below:

\begin{gather} \label{fig: red_fray}
    \rfray{n}{\blam} \colon \  
    \begin{tikzpicture}[scale=.5,anchorbase]
    \draw (0,1) rectangle++(4,2);
    \node at (2,2) {$M$};
    \draw[webs] (2,1) to (2,0) node[below]{$n$};
    \draw[webs] (2,3) to (2,4) node[above]{$n$};
    \node at (1, .5) {$\dots$};
    \node at (3, .5) {$\dots$};
    \node at (1, 3.5) {$\dots$};
    \node at (3, 3.5) {$\dots$};
    \node at (5.5,2) {\huge $\longmapsto$};
    \draw (7,1) rectangle++(4,2);
    \node at (9,2) {$M$};
    \draw[webs] (9,1) to (9,0);
    \draw[webs] (9,0) to[out=180,in=90] (7,-1) node[below]{$\lambda_1$};
    \draw[webs] (9,0) to[out=180,in=90] (8,-1) node[below]{$\lambda_2$};
    \node at (9,-.5) {$\dots$};
    \draw[webs] (9,0) to[out=0,in=90] (10,-1) node[below,xshift=-2pt]{$\lambda_{m - 1}$};
    \draw[webs] (9,0) to[out=0,in=90] (11,-1) node[below,xshift=2pt]{$\lambda_m$};
    \draw[webs] (9,3) to (9,4);
    \node at (8,.5) {$\dots$};
    \node at (10,.5) {$\dots$};
    \draw[webs] (9,4) to[out=180,in=270] (7,5) node[above]{$\lambda_1$};
    \draw[webs] (9,4) to[out=180,in=270] (8,5) node[above]{$\lambda_2$};
    \draw[webs] (9,4) to[out=0,in=270] (10,5) node[above,xshift=-2pt]{$\lambda_{m - 1}$};
    \draw[webs] (9,4) to[out=0,in=270] (11,5) node[above,xshift=2pt]{$\lambda_m$};
    \node at (9,4.5) {$\dots$};
    \node at (8,3.5) {$\dots$};
    \node at (10,3.5) {$\dots$};
    \end{tikzpicture}
\end{gather}

We refer to the $n$-labeled strands corresponding to $a_r, b_s$ (in either the domain or codomain object) in Figure \eqref{fig: red_fray} as \textit{bulk} strands and the $\lambda_j$-labeled strands in the $\blam$-labeled cables as \textit{separated} strands. We refer to all other strands corresponding to elements $a_i \in \aa$, $i \neq r$ or $b_j \in \bb$, $j \neq s$ as \textit{passive} strands; we suppress these strands in Figure \eqref{fig: red_fray} for visual clarity.

Since $\rfray{n}{\blam}$ is an additive functor, it immediately extends to functors on the category of chain complexes $\CS \left( \SSBim_{\aa}^{\bb} \right)$ and the homotopy category $K \left( \SSBim_{\aa}^{\bb} \right)$. We denote each of these functors also by $\rfray{n}{\blam}$.
\end{defn}

\begin{rem}
    In the special case that $\blam = (n)$, $\rfray{n}{\blam}$ is just the identity functor.
    Despite this, we still formally distinguish between $C$ and $\rfray{n}{\blam}(C)$,
    referring to the distinguished $n$-labeled strand in $C$ as a bulk strand and the distinguished
    $n$-labeled strands in $\rfray{n}{\blam}(C)$ as both bulk and separated strands.
\end{rem}

\begin{example} \label{ex: rfray_merge_split}
Suppose $N = n$, $\aa = \bb = (n)$, and $\blam = (1^n) := (1, \dots, 1) \vdash n$. 
Then $\rfray{n}{\blam}(\strand{(n)}) = W_{(1^n)}$ is the full merge-split bimodule in $\SBim_n$.
This bimodule is often denoted $B_{w_0}$ in the literature,
as its class in the Grothendieck group $K^0(\SBim_n)$ maps
to the Kazhdan-Lusztig basis element $b_{w_0} \in H_n$ corresponding to the longest word $w_0 \in \SG_n$ under the Soergel isomorphism.
\end{example}

\begin{example}
    Given any $\blam \vdash n$, we have $\rfray{n}{\blam}(\strand{(n)}) = W_{\blam} \in \SSBim_{\blam}^{\blam}$. More generally, for any $\aa, \bb \vdash N$ satisfying $a_r = b_s = n$ for some $r, s$ and any $\blam \vdash n$, we have $\rfray{n}{\blam} \left(W^{\bb}_{\aa} \right) = W^{\bb^{\blam}}_{\aa^{\blam}}$.
\end{example}

\begin{prop} \label{prop: rfray_exact}
    $\rfray{n}{\blam}$ is an exact functor.
\end{prop}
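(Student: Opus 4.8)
\emph{Proof proposal.} The plan is to realize $\rfray{n}{\blam}$ as a composition of two one-sided tensor functors and to check that each is exact because the relevant bimodule is free — hence flat — as a one-sided module over the ring over which the corresponding tensor product is formed. Here ``exact'' should be read in the abelian category of graded bimodules: although $\SSBim_{\aa}^{\bb}$ is not abelian, it is a full additive subcategory of the category of graded $\big(\mathrm{Sym}^{\bb}(\mathbb{X}),\mathrm{Sym}^{\aa}(\mathbb{X})\big)$-bimodules, and the claim is that $\rfray{n}{\blam}$ carries short exact sequences of bimodules to short exact sequences (equivalently, that after passage to complexes it preserves acyclicity, which is what is needed for the induced functors on $\CS(\SSBim_{\aa}^{\bb})$ and $K(\SSBim_{\aa}^{\bb})$ to behave well). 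Since a composition of exact functors is exact, it suffices to treat $-\star\left(_{\aa}M_{\aa^{\blam}}\right)$ and $\left(_{\bb^{\blam}}S_{\bb}\right)\star -$ separately.

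First I would unpack the two bimodules. Because $\aa^{\blam}$ refines $\aa$ we have $\SG_{\aa^{\blam}}\subseteq\SG_{\aa}$ and hence $\mathrm{Sym}^{\aa}(\mathbb{X})\subseteq\mathrm{Sym}^{\aa^{\blam}}(\mathbb{X})$; from the defining formula for the merge bimodule one identifies $_{\aa}M_{\aa^{\blam}}$, as a \emph{left} $\mathrm{Sym}^{\aa}(\mathbb{X})$-module, with $q^{\ell(\aa^{\blam})-\ell(\aa)}\,\mathrm{Sym}^{\aa^{\blam}}(\mathbb{X})$ equipped with the restriction action, and similarly $_{\bb^{\blam}}S_{\bb}$, as a \emph{right} $\mathrm{Sym}^{\bb}(\mathbb{X})$-module, with $\mathrm{Sym}^{\bb^{\blam}}(\mathbb{X})$ with the restriction action. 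The key input is then the classical fact that a ring of partially symmetric polynomials is free as a module over any coarser ring of partially symmetric polynomials: $\mathrm{Sym}^{\cc'}(\mathbb{X})$ is a free $\mathrm{Sym}^{\cc}(\mathbb{X})$-module whenever $\SG_{\cc'}\subseteq\SG_{\cc}$ (this is standard; it reduces via the tensor decomposition $\mathrm{Sym}^{\cc}(\mathbb{X})\cong\bigotimes_i\mathrm{Sym}(\mathbb{X}_i)$ to the freeness of polynomials over symmetric polynomials, and is also subsumed by the general fact that every singular Soergel bimodule is free as a left and as a right module over the appropriate invariant subrings). Applying this with $\cc=\aa,\ \cc'=\aa^{\blam}$ and with $\cc=\bb,\ \cc'=\bb^{\blam}$ shows that $_{\aa}M_{\aa^{\blam}}$ is a free left $\mathrm{Sym}^{\aa}(\mathbb{X})$-module and $_{\bb^{\blam}}S_{\bb}$ is a free right $\mathrm{Sym}^{\bb}(\mathbb{X})$-module.

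Finally I would assemble: for a complex (or bimodule) $C\in\SSBim_{\aa}^{\bb}$, the functor $-\star\left(_{\aa}M_{\aa^{\blam}}\right)$ is tensoring on the right over $\mathrm{Sym}^{\aa}(\mathbb{X})$ against a bimodule that is flat as a left $\mathrm{Sym}^{\aa}(\mathbb{X})$-module, hence exact; dually $\left(_{\bb^{\blam}}S_{\bb}\right)\star -$ is tensoring on the left over $\mathrm{Sym}^{\bb}(\mathbb{X})$ against a right-flat bimodule, hence exact. Their composite $\rfray{n}{\blam}$ is therefore exact, and the extensions of $\rfray{n}{\blam}$ to $\CS(\SSBim_{\aa}^{\bb})$ and $K(\SSBim_{\aa}^{\bb})$ inherit this. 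There is no substantial obstacle in this argument; the only point requiring care is the invocation of freeness of partially symmetric polynomial rings over coarser ones (equivalently, one-sided freeness of the merge and split bimodules), which is where all of the content lies.
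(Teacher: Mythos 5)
Your proposal is correct and takes essentially the same approach as the paper: both arguments reduce exactness of $\rfray{n}{\blam}$ to the freeness (hence flatness) of $\mathrm{Sym}^{\aa^{\blam}}(\mathbb{X})$ over $\mathrm{Sym}^{\aa}(\mathbb{X})$ and of $\mathrm{Sym}^{\bb^{\blam}}(\mathbb{X})$ over $\mathrm{Sym}^{\bb}(\mathbb{X})$, with the paper phrasing $\rfray{n}{\blam}$ as ``extension of scalars up to a quantum shift'' where you unpack it as the composition of two one-sided tensor functors. (One trivial slip: the grading shift on $_{\aa}M_{\aa^{\blam}}$ should be $q^{\ell(\aa)-\ell(\aa^{\blam})}$, not $q^{\ell(\aa^{\blam})-\ell(\aa)}$, but this has no bearing on exactness.)
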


\begin{proof}
    It is well known that $\mathrm{Sym}^{\bb^{\blam}}(\mathbb{X})$ is free, and therefore flat, as a left or right $\mathrm{Sym}^{\bb}(\mathbb{X})$-module via the inclusion of the latter into the former (ditto for $\aa$ and $\aa^{\blam}$); see e.g. \cite{ESW14}. Up to a quantum shift, $\rfray{n}{\blam}$ is \textcolor{revisions}{a composition of two extensions of scalars along inclusions of this form (one each for the left and right actions), each of which is exact by the above flatness, and compositions of exact functors are exact.}
\end{proof}

In Section \ref{sec: link_hom}, we will use the various fray functors considered in this section to relate the Rickard complexes assigned to colored braids to complexes obtained from these braids by cabling along the \textcolor{revisions}{composition} $\blam \vdash n$. Rickard complexes are \textit{local} in the sense that they are assigned to individual crossings, while cabling is a \textit{global} operation performed on an entire strand of a braid. To establish \textcolor{revisions}{the desired relationships requires} that we investigate the interaction between these functors and horizontal composition with cabled braids. We establish some useful notation for this investigation below.

\begin{conv}
When considering complexes of the form $\rfray{n}{\blam}(C) \star (_{\aa} \beta)^{\blam}$ or $\rfray{n}{\blam}(C \star _{\aa} \beta)$ for some colored braid $_{\aa} \beta$, we will be interested in the actions of (symmetric polynomials in) a variety of alphabets. We will always label the top-most alphabets with no apostrophes, the alphabets over which we take a tensor product in defining $\star$ with a single apostrophe, and the bottom-most alphabets with two apostrophes.

In labeling bulk strands, separated strands, and passive strands involved in crossings below $C$, we adhere to the subscript conventions established in Convention \ref{conv: cabled_fork_alphs}. Finally, we denote the total alphabet acting on the codomain by $\mathbb{X}$ and on the domain by $\mathbb{X}''$.

We summarize these conventions in the diagram below:

\begin{gather} \label{fig: x_alphs}
\rfray{n}{\blam}(C) \star (_{\aa} \sigma_r)^{\blam} =
\begin{tikzpicture}[scale=.5,anchorbase,tinynodes]
    \draw[webs] (-1,2) node[left]{\large $\mathbb{X}'_{\ell}$} to[out=270,in=90] (2,0) node[below,yshift=-2pt]{\large $\mathbb{X}''_{\ell}$};
    \draw[line width=5pt, color=white] (-1,0) to[out=90,in=270] (.75,1.75);
    \draw[webs] (-1,0) node[below,yshift=-2pt]{\large $\mathbb{X}''_1$}
    to[out=90,in=270] (.75,1.75) node[left]{\large $\mathbb{X}'_1$};
    \draw[line width=5pt, color=white] (.5,0) to[out=90,in=270] (2.25,1.75);
    \draw[webs] (.5,0) node[below,yshift=-2pt]{\large $\mathbb{X}''_m$}
    to[out=90,in=270] (2.25,1.75) node[right]{\large $\mathbb{X}'_m$};
    \node at (-.25,.3) {$\dots$};
    \node at (1.5,1.8) {$\dots$};
    \draw[webs] (.75,1.75) to[out=90,in=180] (1.5,2.5);
    \draw[webs] (2.25,1.75) to[out=90,in=0] (1.5,2.5);
    \draw[webs] (1.5,2.5) to (1.5,3.25) node[right]{\large $\mathbb{X}'_n$};
    \draw[webs] (1.5,3.25) to (1.5,4);
    \draw[webs] (-1,2) to (-1,4);
    \draw (-1.2,4) rectangle++(3.4,1.5);
    \node at (.5,4.65) {\large $C$};
    \draw[webs] (-1,5.5) to (-1,7.75);
    \draw[webs] (2,5.5) to (2,7.75);
    \draw[webs] (.5,5.5) to (.5,6.25) node[right]{\large $\mathbb{X}_n$};
    \draw[webs] (.5,6.25) to (.5,7);
    \draw[webs] (.5,7) to[out=180,in=270] (-.25,7.75) node[above]{\large $\mathbb{X}_1$};
    \draw[webs] (.5,7) to[out=0,in=270] (1.25,7.75) node[above]{\large $\mathbb{X}_m$};
    \node at (.5,7.5) {$\dots$};
\end{tikzpicture}
; \quad \quad \rfray{n}{\blam}(C \star (_{\aa} \sigma_r)) = 
\begin{tikzpicture}[scale=.5,anchorbase,tinynodes]
    \draw[webs] (-1,4) node[left,xshift=3pt,yshift=-10pt]{\large $\mathbb{X}'_{\ell}$} to[out=270,in=90] (2,1.5);
    \draw[webs] (2,1.5) to (2,0) node[below,yshift=-2pt]{\large $\mathbb{X}''_{\ell}$};
    \draw[line width=5pt, color=white] (1.5,4) to[out=270,in=90] (0,2);
    \draw[webs] (1.5,4) node[right,yshift=-10pt]{\large $\mathbb{X}'_n$} to[out=270,in=90] (0,2);
    \draw[webs] (0,2) to (0,1.5) node[left,yshift=5pt]{\large $\mathbb{X}''_n$};
    \draw[webs] (0,1.5) to (0,1);
    \draw[webs] (0,1) to[out=180,in=90] (-.75,.25);
    \draw[webs] (0,1) to[out=0,in=90] (.75,.25);
    \draw[webs] (-.75,.25) to (-.75,0) node[below,yshift=-2pt]{\large $\mathbb{X}''_1$};
    \draw[webs] (.75,.25) to (.75,0) node[below,yshift=-2pt]{\large $\mathbb{X}''_m$};
    \node at (0,.3) {$\dots$};
    \draw (-1.2,4) rectangle++(3.4,1.5);
    \node at (.5,4.65) {\large $C$};
    \draw[webs] (-1,5.5) to (-1,7.75);
    \draw[webs] (2,5.5) to (2,7.75);
    \draw[webs] (.5,5.5) to (.5,6.25) node[right]{\large $\mathbb{X}_n$};
    \draw[webs] (.5,6.25) to (.5,7);
    \draw[webs] (.5,7) to[out=180,in=270] (-.25,7.75) node[above]{\large $\mathbb{X}_1$};
    \draw[webs] (.5,7) to[out=0,in=270] (1.25,7.75) node[above]{\large $\mathbb{X}_m$};
    \node at (.5,7.5) {$\dots$};
\end{tikzpicture}
\end{gather}
\end{conv}

The following proposition will be our main tool in this investigation.

\begin{prop} \label{prop: rfray_fork_slide}
The strong deformation retraction of Proposition \ref{prop: cabled_fork_slides} induces an $R[\mathbb{X}, \mathbb{X}'']$-equivariant natural equivalence of functors

\[
\rfray{n}{\blam}(-) \star (_{\aa}\sigma_{r - 1})^{\blam} \sim \rfray{n}{\blam}(- \star (_{\aa}\sigma_{r - 1})) \colon K \left( \SSBim_{\aa}^{\bb} \right) \to K \left( \SSBim_{(s_{r - 1} \cdot \aa)^{\blam}}^{\bb^{\blam}} \right).
\]
which acts via a strong deformation retraction satisfying the side conditions. The analogous maps involving $_{\aa} \sigma_r$ induce an analogous equivalence, as do
the analogous maps involving negative crossings and crossings above $C$.
\end{prop}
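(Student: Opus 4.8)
The plan is to reduce the statement to the cabled fork-slide of Proposition~\ref{prop: cabled_fork_slides} by recognizing both functors, applied to a complex $C$, as horizontal composites that coincide everywhere except in a single local tensor factor, and then transporting the strong deformation retraction of that Proposition along $\star$ and $\boxtimes$. First I would unwind the definitions. Writing $c$ for the color of the strand crossed by the $n$-colored strand (so $c=a_{r-1}$ in the case of ${}_{\aa}\sigma_{r-1}$), associativity of $\star$ and the definition of $\rfray{n}{\blam}$ give
\[
\rfray{n}{\blam}(C)\star({}_{\aa}\sigma_{r-1})^{\blam} \;=\; \bigl[\, ({}_{\bb^{\blam}}S_{\bb})\star C \,\bigr] \;\star\; \bigl[\, ({}_{\aa}M_{\aa^{\blam}})\star({}_{\aa}\sigma_{r-1})^{\blam} \,\bigr],
\]
\[
\rfray{n}{\blam}\bigl(C\star({}_{\aa}\sigma_{r-1})\bigr) \;=\; \bigl[\, ({}_{\bb^{\blam}}S_{\bb})\star C \,\bigr] \;\star\; \bigl[\, ({}_{\aa}\sigma_{r-1})\star({}_{s_{r-1}\cdot\aa}M_{(s_{r-1}\cdot\aa)^{\blam}}) \,\bigr].
\]
The leading bracket $({}_{\bb^{\blam}}S_{\bb})\star C$ is literally the same complex on both lines. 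Since a merge bimodule and a cabled Artin generator act as the identity on every passive strand, the two trailing brackets are, respectively, $\strand{(a_1,\dots,a_{r-2})}\boxtimes MCC_{\blam,c}^{n}\boxtimes\strand{(a_{r+1},\dots,a_N)}$ and $\strand{(a_1,\dots,a_{r-2})}\boxtimes CM_{\blam,c}^{n}\boxtimes\strand{(a_{r+1},\dots,a_N)}$; here one reads off from the positions of the $n$- and $c$-colored strands that the ${}_{\aa}\sigma_{r-1}$ case lands on $MCC_{\blam,c}^{n}$ and $CM_{\blam,c}^{n}$ on the nose, the ${}_{\aa}\sigma_{r}$ case on their left--right reflections, and the negative-crossing and above-$C$ cases on the further reflections and vertical flips recorded in Propositions~\ref{prop: cabled_fork_slides} and~\ref{prop: fork_slide}.

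Next I would transport the strong deformation retraction $\{\mu,\nu,h\}$ from $MCC_{\blam,c}^{n}$ onto $CM_{\blam,c}^{n}$ of Proposition~\ref{prop: cabled_fork_slides}, which satisfies the side conditions, through the two operations appearing above: horizontal precomposition with $\mathrm{id}_{({}_{\bb^{\blam}}S_{\bb})\star C}$ and external tensoring with the identities of the passive strands. Both operations send a strong deformation retraction satisfying the side conditions to another such; this is formal, since $\star$ and $\boxtimes$ respect the dg structure (middle interchange law), identity morphisms have homological degree $0$, and the side-condition relations $h^2=0$, $\mu h=0$, $h\nu=0$ are preserved by composing with identities. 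The resulting triple is the claimed strong deformation retraction from $\rfray{n}{\blam}(C)\star({}_{\aa}\sigma_{r-1})^{\blam}$ onto $\rfray{n}{\blam}(C\star({}_{\aa}\sigma_{r-1}))$, and in particular a homotopy equivalence.

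Finally I would verify naturality and equivariance, which are essentially automatic. Because $\{\mu,\nu,h\}$ does not involve $C$, for any chain map $\phi\colon C\to C'$ the relevant squares commute on the nose by the middle interchange law (the transported fork-slide maps act in the $MCC/CM$ tensor factor while $\phi$ acts in the $C$ factor), so these homotopy equivalences assemble into a natural transformation, which is an isomorphism in $K(\SSBim)$ since a strong deformation retraction is a homotopy equivalence; the $R[\mathbb{X},\mathbb{X}'']$-equivariance holds because every morphism in sight is a bimodule map, with $\mathbb{X}$ acting through the unchanged factor $({}_{\bb^{\blam}}S_{\bb})$ and $\mathbb{X}''$ through the $MCC/CM$ factor, where Proposition~\ref{prop: cabled_fork_slides} already provides equivariance. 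The only place any real work hides is the bookkeeping of the first step --- matching up the bulk, separated, and passive strands (and the over/under data of the crossing) precisely enough to identify the trailing brackets with the correct, possibly reflected, copies of $MCC_{\blam,c}^{n}$ and $CM_{\blam,c}^{n}$, and confirming the leading bracket is genuinely identical on the two sides; once the diagrams are lined up, everything else is a formal transport of homotopy data along monoidal functors.
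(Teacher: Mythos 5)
Your proposal is correct and follows essentially the same route as the paper: both factor the two functors through the shared factor $({}_{\bb^{\blam}}S_{\bb})\star C$, identify the remaining tensor factors with (passive-strand extensions of) $MCC^n_{\blam,a_{r-1}}$ and $CM^n_{\blam,a_{r-1}}$, transport the strong deformation retraction of Proposition~\ref{prop: cabled_fork_slides} through $\star$ and $\boxtimes$, and deduce naturality and $R[\mathbb{X},\mathbb{X}'']$-equivariance from the middle interchange law and the fact that the retraction maps are bimodule morphisms. Your write-up is a bit more explicit about the passive-strand bookkeeping and which reflection of $MCC/CM$ appears in each case, but the argument is the same.
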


\begin{proof}

Expressed diagramatically, Proposition \ref{prop: rfray_fork_slide} asserts that the morphisms of Proposition \ref{prop: cabled_fork_slides} induce a natural homotopy equivalence

\begin{gather}
\begin{tikzpicture}[scale=.5,anchorbase,tinynodes] \label{fig: rfray_fork_slide}
    \draw[webs] (0,2) to[out=270,in=90] (2,0) node[below]{$a_{r - 1}$};
    \draw[line width=5pt, color=white] (0,0) to[out=90,in=270] (1,2);
    \draw[webs] (0,0) to[out=90,in=270] (1,2);
    \draw[line width=5pt, color=white] (1,0) to[out=90,in=270] (2,2);
    \draw[webs] (1,0) to[out=90,in=270] (2,2);
    \node at (.5,.2) {$\dots$};
    \node at (1.5,1.8) {$\dots$};
    \draw [decorate,decoration = {brace,mirror}] (0,-.2) -- (1,-.2)
    node[pos=.5,below,yshift=-2pt]{$\blam$};
    \draw[webs] (1,2) to[out=90,in=180] (1.5,2.5);
    \draw[webs] (2,2) to[out=90,in=0] (1.5,2.5);
    \draw[webs] (1.5,2.5) to (1.5,3);
    \draw[webs] (0,2) to (0,3);
    \draw (-.7,3) rectangle++(2.9,1.5);
    \node at (.75,3.65) {\large $C$};
    \draw[webs] (-.3,4.5) to (-.3,5.5);
    \draw[webs] (1.8,4.5) to (1.8,5.5);
    \draw [decorate,decoration = {brace}] (-.3,5.9) -- (1.8,5.9) node[pos=.5,above]{$\bb^{\blam}$};
    \draw[webs] (.75,4.5) to (.75,5);
    \draw[webs] (.75,5) to[out=180,in=270] (.15,5.5);
    \draw[webs] (.75,5) to[out=0,in=270] (1.35,5.5);
    \node at (.75,5.4) {$\dots$};
    \node at (3.5,3) {\LARGE $\simeq$};
\end{tikzpicture}
\quad
\begin{tikzpicture}[scale=.5,anchorbase,tinynodes]
    \draw[webs] (0,0) to[out=90,in=180] (.5,.5);
    \draw[webs] (1,0) to[out=90,in=0] (.5,.5);
    \node at (.5,.2) {$\dots$};
    \draw [decorate,decoration = {brace,mirror}] (0,-.2) -- (1,-.2)
    node[pos=.5,below,yshift=-2pt]{$\blam$};
    \draw[webs] (2,0) node[below]{$a_{r - 1}$} to (2,.5);
    \draw[webs] (0,2) to[out=270,in=90] (2,.5);
    \draw[line width=5pt, color=white] (.5,.5) to[out=90,in=270] (1.5,2);
    \draw[webs] (.5,.5) to[out=90,in=270] (1.5,2);
    \draw[webs] (0,2) to (0,3);
    \draw[webs] (1.5,2) to (1.5,3);
    \draw (-.7,3) rectangle++(2.9,1.5);
    \node at (.75,3.65) {\large $C$};
    \draw[webs] (-.3,4.5) to (-.3,5.5);
    \draw[webs] (1.8,4.5) to (1.8,5.5);
    \draw[webs] (.75,4.5) to (.75,5);
    \draw[webs] (.75,5) to[out=180,in=270] (.15,5.5);
    \draw[webs] (.75,5) to[out=0,in=270] (1.35,5.5);
    \node at (.75,5.4) {$\dots$};
    \draw [decorate,decoration = {brace}] (-.3,5.9) -- (1.8,5.9) node[pos=.5,above]{$\bb^{\blam}$};
\end{tikzpicture}
\end{gather}
For visual clarity, we have suppressed all passive strands distant from the crossing $\sigma_d$ in Figure \ref{fig: rfray_fork_slide}.

Notice that the left-hand side of Figure \ref{fig: rfray_fork_slide} is exactly the complex

\[
\rfray{n}{\blam}(C) \star (_{\aa} \sigma_{r - 1})^{\blam} = \left( _{\bb^{\blam}}S_{\bb} \right) \star C \star MCC_{\blam, a_{r - 1}}^n;
\]
while the right-hand side is exactly the complex

\[
\rfray{n}{\blam}(C \star (_{\aa} \sigma_{r - 1})) = \left( _{\bb^{\blam}}S_{\bb} \right) \star C \star CM_{\blam, a_{r - 1}}^n.
\]
In particular, the morphisms $1 \star 1 \star \nu$, $1 \star 1 \star \mu$, $1 \star 1 \star h$ constitute a strong deformation retraction from the left-hand side to the right-hand side satisfying the side conditions by Proposition \ref{prop: cabled_fork_slides}. Since these are morphisms of bimodules, they are certainly $R[\mathbb{X}, \mathbb{X}'']$-equivariant. Finally, naturality comes from the middle interchange law, as any morphism $f \colon C \to D$ evaluates to $1 \star f \star 1$ when passed through either of these functors.
\end{proof}

In the special case that $C$ is a Rickard complex, repeated application of
Proposition \ref{prop: rfray_fork_slide} gives the following.

\begin{cor} \label{cor: rfray_cable}
    Let $\beta_{\aa} \in \BGpd{\aa}{\bb}$ be given. Then $\rfray{n}{\blam}(\beta)
    \simeq W_{\blam} \star \beta_{\aa}^{\blam} \simeq \beta_{\aa}^{\blam} \star W_{\blam}$.
\end{cor}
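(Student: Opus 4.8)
The plan is to write an arbitrary colored braid $\beta_{\aa}$ as a composition of Artin generators and their inverses and then peel off one crossing at a time, each time invoking Proposition \ref{prop: rfray_fork_slide} to move a crossing from "inside" the fray functor to "outside" (or vice versa). More precisely, pick a colored braid word $(\underline{\beta})_{\aa} = (\sigma_{i_1})^{\pm 1} \cdots (\sigma_{i_\ell})^{\pm 1}$ representing $\beta_{\aa}$; by Definition \ref{def: rickard_complex_words}, $C(\beta_{\aa})$ is the horizontal composition of the corresponding Rickard complexes of generators. Recall from Example \ref{ex: rfray_merge_split} and the surrounding discussion that $\rfray{n}{\blam}(\strand{(n)}) = W_{\blam}$, so applying $\rfray{n}{\blam}$ to the trivial braid $\strand{\aa}$ gives $W_{\blam}$ appropriately placed along the designated $n$-colored strand. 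The strategy is then an induction on $\ell$, the length of the braid word.

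First I would set up the base case: for $\ell = 0$, $\rfray{n}{\blam}(\strand{\aa}) = W_{\blam} \star \strand{\aa}^{\blam} = \strand{\aa}^{\blam} \star W_{\blam}$ is immediate since $\strand{\aa}^{\blam}$ is the identity on the cabled object and $W_{\blam}$ is idempotent up to the appropriate absorption (here it is just the identity statement $W_{\blam} = W_{\blam}$). For the inductive step, suppose $\beta_{\aa} = \gamma_{\aa'} \star (_{\aa}\sigma)^{\pm 1}$ where $\sigma$ is a generator acting on strands adjacent to the designated bulk strand. Applying Proposition \ref{prop: rfray_fork_slide} to the complex $C = \rfray{n}{\blam}(C(\gamma_{\aa'}))$ (more precisely, to $C(\gamma_{\aa'})$ with the crossing $\sigma$ sliding past it) yields
\[
\rfray{n}{\blam}(C(\gamma_{\aa'})) \star (_{\aa}\sigma)^{\blam} \simeq \rfray{n}{\blam}\!\left( C(\gamma_{\aa'}) \star (_{\aa}\sigma) \right) = \rfray{n}{\blam}(C(\beta_{\aa})).
\]
Feeding in the inductive hypothesis $\rfray{n}{\blam}(C(\gamma_{\aa'})) \simeq \beta_{\aa'}^{\blam} \star W_{\blam}$ and noting that $W_{\blam}$ commutes past the cabled crossing (it lives on the designated strand at the bottom, disjoint from the crossing, or can be slid through since $\rfray{n}{\blam}$ is monoidal in the passive strands), we obtain $\rfray{n}{\blam}(C(\beta_{\aa})) \simeq \beta_{\aa}^{\blam} \star W_{\blam}$. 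The statement $W_{\blam} \star \beta_{\aa}^{\blam} \simeq \beta_{\aa}^{\blam} \star W_{\blam}$ follows symmetrically by sliding $W_{\blam}$ through from the other side, using the version of Proposition \ref{prop: rfray_fork_slide} for crossings above $C$.

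The main obstacle is bookkeeping rather than conceptual: one must be careful that Proposition \ref{prop: rfray_fork_slide} as stated only moves a crossing past the designated $n$-colored strand when that crossing involves a strand \emph{adjacent} to the bulk strand, whereas a general braid word may involve crossings far from the designated strand. For crossings disjoint from the bulk strand, however, $\rfray{n}{\blam}$ acts as the identity (it only modifies the designated strand), so such crossings pass through trivially by the middle interchange law; only crossings touching the bulk strand require the genuine fork-slide, and the braid relations (valid up to canonical homotopy equivalence, as recalled after Definition \ref{def: rickard_complex_words}) let us assume the word is arranged so that these are handled one at a time. A secondary point to verify is that the homotopy equivalences compose coherently — since each application of Proposition \ref{prop: rfray_fork_slide} produces a strong deformation retraction satisfying the side conditions, the composite is again a homotopy equivalence, and no obstruction to iterating arises. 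This last point is routine given the homological algebra of Section \ref{sec: hom_alg}.
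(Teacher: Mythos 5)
Your proof is correct and matches the paper's approach: the paper simply cites ``repeated application of Proposition~\ref{prop: rfray_fork_slide},'' and you are filling in the routine induction on braid-word length. Two small points worth tightening. First, there is a typo in the inductive step: the inductive hypothesis should read $\rfray{n}{\blam}(C(\gamma_{\aa'})) \simeq \gamma_{\aa'}^{\blam} \star W_{\blam}$, not $\beta_{\aa'}^{\blam} \star W_{\blam}$. Second, you chose to plug in the form of the hypothesis with $W_{\blam}$ on the \emph{right} when peeling a crossing off the \emph{bottom}, which forces you to commute $W_{\blam}$ past a cabled crossing; the justification you offer (``disjoint from the crossing'' or ``monoidal in the passive strands'') does not actually apply when the crossing involves the bulk strand. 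The claim is true, but the honest reason is that $W_{\blam} = (_{\blam}S_{(n)}) \star (_{(n)}M_{\blam})$ slides past the cabled crossing by the ordinary fork-slides of Proposition~\ref{prop: fork_slide}. A cleaner alternative avoids this auxiliary fact entirely: when peeling crossings off the bottom, consistently use the $W_{\blam} \star \gamma_{\aa'}^{\blam}$ form of the inductive hypothesis, giving $\rfray{n}{\blam}(C(\beta_{\aa})) \simeq W_{\blam} \star \gamma_{\aa'}^{\blam} \star (_{\aa}\sigma)^{\blam} = W_{\blam} \star \beta_{\aa}^{\blam}$ directly; for the other form, decompose $\beta$ from the top and apply the ``crossings above'' version of Proposition~\ref{prop: rfray_fork_slide}, keeping $W_{\blam}$ on the right throughout.
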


We refer to the equivalence of Proposition \ref{prop: rfray_fork_slide} as the \textit{fork-sliding equivalence for reduced fray functors}. We will make use of this fork-sliding equivalence repeatedly in what follows to establish similar equivalences for more complex fray functors. The following Proposition justifies this use:

\begin{prop} \label{prop: rfray_to_def_forks}
Let $\mathbb{U}$ be an alphabet of deformation parameters, and suppose $\Delta \in R[\mathbb{X}, \mathbb{X}''] \otimes_R R[\mathbb{U}]$ satisfies $\Delta^2 = F$ for some $F \in Z(\SSBim[\mathbb{U}])$. Then applying the fork-sliding equivalence for reduced fray functors \eqref{fig: rfray_fork_slide} in each $\mathbb{U}$-degree induces an $R[\mathbb{X}, \mathbb{X}'']$-equivariant natural equivalence of functors

\[
\mathrm{tw}_{\Delta} \left( \left( \rfray{n}{\blam} (-) \star \left( _{\aa} \sigma_{r - 1} \right)^{\blam} \right) \otimes_R R[\mathbb{U}] \right)
\sim
\mathrm{tw}_{\Delta} \left( \left( \rfray{n}{\blam} \left( - \star \left(_{\aa} \sigma_{r - 1} \right) \right) \right) \otimes_R R[\mathbb{U}] \right)
\]
from $K \left( \SSBim_{\aa}^{\bb} \right)$ to $K \YS_F \left( \SSBim_{(s_{r - 1} \cdot \aa)^{\blam}}^{\bb^{\blam}} ; R[\mathbb{U}] \right)$.
\end{prop}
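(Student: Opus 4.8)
The plan is to bootstrap from the undeformed fork-sliding equivalence of Proposition \ref{prop: rfray_fork_slide} using the deformation-lifting machinery of Section \ref{subsec: gauss_elim}. Write $L(-) := \rfray{n}{\blam}(-) \star (_{\aa}\sigma_{r-1})^{\blam}$ and $R(-) := \rfray{n}{\blam}(- \star (_{\aa}\sigma_{r-1}))$ for the two functors in question. By Proposition \ref{prop: rfray_fork_slide}, for each complex $C \in K(\SSBim_{\aa}^{\bb})$ there is a strong deformation retraction $\{f_C, g_C, h_C\}$ from $L(C)$ onto $R(C)$ satisfying the side conditions, and these are natural and $R[\mathbb{X},\mathbb{X}'']$-equivariant; in fact, as observed in the proof of that proposition, $f_C = 1 \star 1 \star \nu$, $g_C = 1 \star 1 \star \mu$, $h_C = 1 \star 1 \star h$ are built by applying a fixed strong deformation retraction (namely $\{\nu, \mu, h\}$ from $MCC_{\blam, a_{r-1}}^n$ onto $CM_{\blam, a_{r-1}}^n$, Proposition \ref{prop: cabled_fork_slides}) locally, and they are $R[\mathbb{X},\mathbb{X}'']$-linear maps of bimodules.

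First I would tensor everything by $R[\mathbb{U}]$. The maps $f_C \otimes 1$, $g_C \otimes 1$, $h_C \otimes 1$ still constitute a strong deformation retraction (with the side conditions) from $L(C) \otimes_R R[\mathbb{U}]$ onto $R(C) \otimes_R R[\mathbb{U}]$, now regarded as objects of $\CS(\SSBim[\mathbb{U}])$ with zero twist. Next, $\Delta$ acts on both $L(C) \otimes_R R[\mathbb{U}]$ and $R(C) \otimes_R R[\mathbb{U}]$ by left (or right) multiplication by the polynomial $\Delta \in R[\mathbb{X},\mathbb{X}''] \otimes_R R[\mathbb{U}]$; since $f_C, g_C, h_C$ are $R[\mathbb{X},\mathbb{X}'']$-equivariant, $\Delta$ commutes with $f_C \otimes 1$ and $g_C \otimes 1$ and $h_C \otimes 1$ on the nose. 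In particular $\Delta|_{R(C)\otimes R[\mathbb{U}]}$ is obtained from $\Delta|_{L(C)\otimes R[\mathbb{U}]}$ by the lifting recipe of Lemma \ref{lem: sdr_lifting} applied to the $\mathbb{U}$-degree components (here all the ``$\varphi_i$'' are the central elements $e_k(\mathbb{X}_j) - e_k(\mathbb{X}'_j)$ etc., which also commute with the equivalence data), and the hypothesis $\Delta^2 = F$ guarantees both sides are genuine $F$-curved complexes. Then Proposition \ref{prop: sdr_deformation_lifting} — applied $\mathbb{U}$-degree by $\mathbb{U}$-degree, exactly as in Proposition \ref{prop: rfray_to_def_forks}'s statement — says that $\{f_C \otimes 1, g_C \otimes 1, h_C \otimes 1\}$ lifts to a strong deformation retraction from $\mathrm{tw}_\Delta(L(C) \otimes_R R[\mathbb{U}])$ onto $\mathrm{tw}_\Delta(R(C) \otimes_R R[\mathbb{U}])$ in $K\YS_F(\SSBim_{(s_{r-1}\cdot\aa)^{\blam}}^{\bb^{\blam}}; R[\mathbb{U}])$. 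Finally, naturality is inherited: for a chain map $\phi \colon C \to D$ the square commutes because, before twisting, $\phi$ passes through both functors as $1 \star \rfray{n}{\blam}(\phi) \star 1$ (or its evident variant) and these commute with the locally-defined retraction data by the middle interchange law; tensoring by $R[\mathbb{U}]$ and twisting by the $R[\mathbb{X},\mathbb{X}'']$-central element $\Delta$ preserves this. The $R[\mathbb{X},\mathbb{X}'']$-equivariance of the resulting equivalence is immediate since all maps involved are bimodule morphisms.

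The routine but slightly delicate point — the ``main obstacle'' such as it is — is bookkeeping: one must check that the hypotheses of Lemma \ref{lem: sdr_lifting} and Proposition \ref{prop: sdr_deformation_lifting} are genuinely met here, i.e. that in each $\mathbb{U}$-degree the relevant endomorphisms of $R(C)$ really are the lifts along $\{f_C, g_C, h_C\}$ of the corresponding endomorphisms of $L(C)$, and that the side conditions on $h_C$ (from Proposition \ref{prop: cabled_fork_slides}) are what make these lifts well-behaved and compatible across $\mathbb{U}$-degrees simultaneously (as opposed to lifting one deformation parameter at a time, cf. Corollary \ref{cor: kosz_base_change_rep} and Remark \ref{rem: kosz_base_order_ind}). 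Because $\Delta$ is a single polynomial in $R[\mathbb{X},\mathbb{X}''] \otimes_R R[\mathbb{U}]$ acting centrally, and the equivalence data is $R[\mathbb{X},\mathbb{X}'']$-equivariant, these checks reduce to the formal computations already carried out in the proofs of Lemma \ref{lem: sdr_lifting} and Proposition \ref{prop: sdr_deformation_lifting}, applied verbatim in each graded piece; I would state this and refer back rather than reproduce the algebra.
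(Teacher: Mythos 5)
Your instinct — that the whole proposition reduces to $R[\mathbb{X},\mathbb{X}'']$-equivariance of the fork-slide data — is correct, and matches the paper, whose entire proof is the one-line observation that this ``follows immediately from $R[\mathbb{X},\mathbb{X}'']$-equivariance and naturality of \eqref{fig: rfray_fork_slide}.'' But the route you take through Lemma \ref{lem: sdr_lifting} and Proposition \ref{prop: sdr_deformation_lifting} does not actually go through, for two reasons.

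First, those results are only stated for \emph{strict} deformations, i.e.\ connections linear in $\mathbb{U}$; the hypothesis here is merely $\Delta \in R[\mathbb{X},\mathbb{X}''] \otimes_R R[\mathbb{U}]$ with $\Delta^2 = F$, with no linearity assumption. Second and more seriously, even granting linearity, the lift recipe of Lemma \ref{lem: sdr_lifting} does \emph{not} return the same $\Delta$ on the larger complex. Writing $A = L(C)$, $B = R(C)$, and using that $f,g,h$ commute with the action of $R[\mathbb{X},\mathbb{X}'']$, the lift of a multiplication operator $\xi = p$ on $B$ is
\[
g\,\xi\,f + \varphi\,h \;=\; p\,gf + \varphi\,h \;=\; p\bigl(\mathrm{id}_A - [d_A,h]\bigr) + \varphi\,h,
\]
which differs from $p\cdot\mathrm{id}_A$ by the error term $-p[d_A,h] + \varphi h$. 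This term is not zero in general (try, e.g., $\Delta = p\otimes\theta$ with $\theta$ odd, so $\varphi = 0$ but $p[d_A,h] \neq 0$). So the hypothesis of Proposition \ref{prop: sdr_deformation_lifting} — that the family on $A$ is \emph{the lift} of the family on $B$ — is not satisfied when both twists are taken to be the same $\Delta$, and you cannot apply that proposition verbatim. (Your parenthetical remark that $\Delta|_{R(C)}$ is obtained from $\Delta|_{L(C)}$ is only the trivial observation $f\Delta|_A g = \Delta|_B$, which is the \emph{inverse} direction and is not what Proposition \ref{prop: sdr_deformation_lifting} requires.)

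The correct argument is simpler and needs no lifting formula at all. Since $f$ and $g$ have homological degree $0$ and commute with $\Delta$ on the nose (they are $R[\mathbb{X},\mathbb{X}'']$-linear bimodule maps, and the $R[\mathbb{U}]$-factors act on a disjoint tensor component), one checks directly that $(d_R + \Delta)(f\otimes 1) - (f\otimes 1)(d_L + \Delta) = 0$ and similarly for $g$. For $h$, of degree $-1$, the middle interchange law gives $\Delta\circ(h\otimes 1) = -(h\otimes 1)\circ\Delta$ (every $\mathbb{U}$-monomial appearing in $\Delta$ must have odd homological degree, since $\Delta$ has degree $1$ and the $R[\mathbb{X},\mathbb{X}'']$-coefficients have degree $0$), so $[\Delta, h\otimes 1] = 0$ and $[d_L + \Delta, h\otimes 1] = [d_L, h]\otimes 1 = \mathrm{id} - (g\otimes 1)(f\otimes 1)$. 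Thus the \emph{unmodified} maps $\{f\otimes 1, g\otimes 1, h\otimes 1\}$ already constitute an SDR between the twisted complexes. Naturality and equivariance then follow as you say. You should replace the detour through Lemma \ref{lem: sdr_lifting} with this direct check.
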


\begin{proof}
This follows immediately from $R[\mathbb{X}, \mathbb{X}'']$-equivariance and naturality of \eqref{fig: rfray_fork_slide}.
\end{proof}

In practice, Proposition \ref{prop: rfray_fork_slide} can often be used to dramatically
simplify complexes. Indeed, for a colored braid $\beta_{\aa}$ with $c$ crossings involving
the distinguished $n$-colored strand, passing from $\rfray{n}{\blam}(C) \star (_{\aa} \beta)^{\blam}$
to $\rfray{n}{\blam}(C \star _{\aa} \beta)$ reduces the number of crossings by $(m - 1)c$.
On the other hand, applying this equivalence in the other direction allows us to decrease
the strand colors in $_{\aa} \beta$; in the extreme case $\blam = (1^n)$, this corresponds to
replacing Rickard complexes of \textit{singular} Soergel bimodules with the
better-understood Rouquier complexes of Soergel bimodules. Our primary motivation in investigating the increasingly complex fray functors
in the following sections is to leverage these crossing-reduction and color-reduction
techniques in increasingly complex situations. 

\subsection{Fray Functor}

For each complex $C \in \CS(\SSBim_{\aa}^{\bb})$, \textcolor{revisions}{because the differential $d_C$ consists of bimodule homomorphisms,} for each $1 \leq j \leq m$, and each $1 \leq k \leq \lambda_j$, there is a closed endomorphism 

\[
e_k(\mathbb{X}_j) - e_k(\mathbb{X}'_j) \in \End_{\CS \left( \SSBim_{\aa^{\blam}}^{\bb^{\blam}} \right) } \left( \rfray{n}{\blam}(C) \right)
\]
of degree $\mathrm{deg}(e_k(\mathbb{X}_j) - e_k(\mathbb{X}'_j)) = q^{2k}$. These endomorphisms pairwise commute, and so we may consider their Koszul complex as in Example \ref{ex: kosz_comp}. We label the corresponding alphabet of deformation parameters by $\Theta_{\blam} := \{\theta_{jk}\}_{\substack{1 \leq j \leq m \\ 1 \leq k \leq \lambda_j}}$, where $\mathrm{deg}(\theta_{jk}) = q^{-2k}t$.

\begin{defn} \label{def: fray}
    Let $\fray{n}{\blam}$ denote the functor taking a bimodule
    $M \in \SSBim_{\aa}^{\bb}$ to the $0$-curved complex

    \begin{align*}
    \fray{n}{\blam}(M) & := \mathrm{tw}_{\alpha}
    \left( \rfray{n}{\blam}(M) \otimes_R R[\Theta_{\blam}] \right)
    \in \YS_0 \left( \SSBim^{\bb^{\blam}}_{\aa^{\blam}}; R[\Theta_{\blam}] \right); \\
    \alpha & = \sum \limits_{j = 1}^m \sum \limits_{k = 1}^{\lambda_j}
    \left(e_k(\mathbb{X}_j) - e_k(\mathbb{X}_j') \right) \otimes \theta_{jk}
    \end{align*}
    
\textcolor{revisions}{and a bimodule morphism $f \colon M \to N$ to $\rfray{n}{\blam}(f) \otimes 1_{R[\Theta_{\blam}]}$.} We refer to the twist $\alpha$ as the \textit{Koszul} differential on $\fray{n}{\blam}(M)$. \textcolor{revisions}{As was the case for $\rfray{n}{\blam}$ above, since $\fray{n}{\blam}$ is an additive functor, it immediately} extends to functors on $\CS \left( \SSBim_{\aa}^{\bb} \right)$ and $K \left( \SSBim_{\aa}^{\bb} \right)$. We denote these functors also by $\fray{n}{\blam}$.
\end{defn}

\begin{example} \label{ex: fray_fin_proj}
    Consider the special case $N = n$, $\aa = \bb = (n)$. In this case,
    if $\blam = (1^n)$, then $\fray{n}{\blam}(\strand{n})$ is a Koszul complex on
    $W_{\blam}$ for (the action of) the elements $x_1 - x_1', \dots, x_n - x_n'$. More generally,
    for an arbitrary $\blam \vdash n$, $\fray{n}{\blam}(\strand{n})$ is a Koszul complex
    on the full merge-split bimodule $W_{\blam} \in \SSBim_{\blam}^{\blam}$
    for (the action of) differences of elementary
    symmetric polynomials in the alphabets associated to corresponding top and bottom strands.
\end{example}

\begin{defn} \label{def: fray_fin_proj}
    Let $n = N$, $\aa = \bb = (n) \vdash N$ as in Example \ref{ex: fray_fin_proj} \textcolor{revisions}{and $\blam \vdash n$ be an arbitrary \textcolor{revisions}{composition}}.
    In this case, we denote $\fray{n}{\blam}(\strand{n})$ by $\finproj{\blam}$ and call $\finproj{\blam}$ a \textit{finite frayed projector}.
\end{defn}

\begin{rem} \label{rem: ah_fin_proj}
    We point out to the reader familiar with \cite{AH17, EH17b, Con23} that even in the case $\blam = (1^n)$, our $\finproj{\blam}$ differs slightly from the
    finite column projectors $K_{1^n}$ considered in those works. Precisely,
    $\finproj{\blam}$ is a Koszul complex on $K_{1^n}$ for (the action of) the
    difference $x_1 - x_1'$. These two complexes are \textbf{not} homotopy equivalent;
    indeed, since $x_1 - x_1'$ acts nullhomotopically on $K_{1^n}$, a straightforward application
    of Proposition \ref{prop: kosz_base_change} gives $\finproj{\blam}
    \simeq (1 + q^{-2}t)K_{1^n}$.

    For our purposes, there are two major advantages in working with $\finproj{\blam}$.
    First, this more ``balanced" treatment of the difference $x_1 - x_1'$ generalizes
    immediately to arbitrary \textcolor{revisions}{composition}s $\blam \vdash n$ and to arbitrary complexes $C$.
    One can imagine adapting the construction of $K_{1^n}$ to this setting via
    an ``unbalanced" variant $\mathrm{Fray}_{unb}$ of our functor
    $\fray{n}{\blam}$ in which the component of the Koszul differential involving (differences
    of symmetric polynomials in) $\mathbb{X}_1$ and $\mathbb{X}_1'$ is omitted. Even when
    $\blam = (1^n)$, if the action of $e_1(\mathbb{X}_n) - e_1(\mathbb{X}'_n)$ on $C$ is not
    nullhomotopic to begin with, then the action of $x_1 - x_1'$ on $\mathrm{Fray}_{unb}(C)$
    is not nullhomotopic. (This issue does not arise for $K_{1^n} =
    \mathrm{Fray}_{unb}(\strand{n})$, since $e_i(\mathbb{X}_n) = e_i(\mathbb{X}'_n)
    \in \End(\strand{n})$ for all $1 \leq i \leq n$.)
    
    Second, the balanced functors in this work enjoy almost tautological good behavior
    after taking Hochschild homology. This allows for much easier contact with
    colored link homology results in Section \ref{sec: link_hom} below. The reader familiar
    with \cite{EH19} should compare the
    distinction between the balanced and unbalanced constructions with the passage to
    ``reduced" finite (row) projectors $\tilde{K}_n$ used in
    their recursion.
\end{rem}

We again wish to investigate the interaction between $\fray{n}{\blam}$ and horizontal composition with cabled braids. We cannot immediately apply Proposition \ref{prop: rfray_to_def_forks}, as the Koszul differential $\alpha$ involves the alphabet $\mathbb{X}'$, which does \textit{not} intertwine the fork-sliding equivalence for reduced fray functors. We remedy this situation using the dot-sliding homotopies of Proposition \ref{prop: dot_slide}.

\begin{prop} \label{prop: fray_dot_slide}
Set $\tilde{\alpha} := \sum \limits_{j = 1}^m \sum \limits_{k = 1}^{\lambda_j}
    \left(e_k(\mathbb{X}_j) - e_k(\mathbb{X}_j'') \right) \otimes \theta_{jk}$. Then there is an $R[\mathbb{X}, \mathbb{X}'']$-equivariant natural equivalence of functors

\begin{align*}
\fray{n}{\blam}(-) \star (_{\aa} \sigma_{r - 1})^{\blam} \sim \mathrm{tw}_{\tilde{\alpha}} \left( \left( \rfray{n}{\blam}(-) \star (_{\aa} \sigma_{r - 1})^{\blam} \right) \otimes_R R[\Theta_{\blam}] \right)
\end{align*}
from $\CS \left( \SSBim_{\aa}^{\bb} \right)$ to $\YS_0 \left( \SSBim_{(s_{r - 1} \cdot \aa)^{\blam}}^{\bb^{\blam}}; R[\Theta_{\blam}] \right)$.
\end{prop}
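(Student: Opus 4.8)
The plan is to recognize both sides of the claimed equivalence as Koszul-type strict deformations of the \emph{same} underlying complex, differing only in the choice of deforming family, and then to deduce the isomorphism from Corollary \ref{cor: kosz_base_change_rep}.

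First I would unwind the definitions. Since the cabled crossing $(_{\aa}\sigma_{r-1})^{\blam}$ involves none of the variables $\Theta_{\blam}$, horizontal composition commutes with the Koszul twist defining $\fray{n}{\blam}$, so
\[
\fray{n}{\blam}(C) \star (_{\aa}\sigma_{r-1})^{\blam} = \mathrm{tw}_{\alpha}\big( (\rfray{n}{\blam}(C) \star (_{\aa}\sigma_{r-1})^{\blam}) \otimes_R R[\Theta_{\blam}] \big),
\]
where $\alpha = \sum_{j,k}(e_k(\mathbb{X}_j) - e_k(\mathbb{X}'_j)) \otimes \theta_{jk}$ and, following \eqref{fig: x_alphs}, the polynomials $e_k(\mathbb{X}'_j)$ act through the separated strands sitting between $C$ and the cabled crossing. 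Both $\{e_k(\mathbb{X}_j) - e_k(\mathbb{X}'_j)\}$ and $\{e_k(\mathbb{X}_j) - e_k(\mathbb{X}''_j)\}$ consist of closed, homological-degree-$0$, pairwise-commuting endomorphisms of $\rfray{n}{\blam}(C) \star (_{\aa}\sigma_{r-1})^{\blam}$ --- the $e_k(\mathbb{X}_j)$ and $e_k(\mathbb{X}''_j)$ because they act centrally, and $e_k(\mathbb{X}'_j)$ because it commutes with the differentials of both tensor factors --- so both are $0$-deforming families in the sense of Example \ref{ex: kosz_comp}, with associated Koszul complexes $\mathrm{tw}_{\alpha}(\cdots)$ and $\mathrm{tw}_{\tilde{\alpha}}(\cdots)$ respectively.

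Next I would produce the change-of-family data. For each separated strand $j$ and each $1 \le k \le \lambda_j$, Proposition \ref{prop: dot_slide} gives a dot-sliding homotopy $\xi_{jk}$ on the cabled crossing $(_{\aa}\sigma_{r-1})^{\blam}$ with $[d,\xi_{jk}] = e_k(\mathbb{X}'_j) - e_k(\mathbb{X}''_j)$, the collection $\{\xi_{jk}\}$ pairwise anti-commuting and squaring to $0$. Extending each $\xi_{jk}$ by the identity on $\rfray{n}{\blam}(C)$ and setting $h_{jk} := -\xi_{jk}$, one has $[d,h_{jk}] = (e_k(\mathbb{X}_j) - e_k(\mathbb{X}'_j)) - (e_k(\mathbb{X}_j) - e_k(\mathbb{X}''_j))$, the $h_{jk}$ are nilpotent ($h_{jk}^2 = 0$), and the subalgebra generated by the two deforming families together with $\{h_{jk}\}$ is graded-commutative: the members of the deforming families are multiplications by central polynomials of homological degree $0$ and commute with everything, while the $h_{jk}$ lie in odd homological degree $-1$, anti-commute pairwise, and square to zero. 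Corollary \ref{cor: kosz_base_change_rep} (with $F = 0$) then yields a canonical isomorphism of $0$-curved complexes
\[
\mathrm{tw}_{\alpha}\big( (\rfray{n}{\blam}(C) \star (_{\aa}\sigma_{r-1})^{\blam}) \otimes_R R[\Theta_{\blam}] \big) \;\cong\; \mathrm{tw}_{\tilde{\alpha}}\big( (\rfray{n}{\blam}(C) \star (_{\aa}\sigma_{r-1})^{\blam}) \otimes_R R[\Theta_{\blam}] \big)
\]
via $\Psi = \prod_{j,k}(1 - \xi_{jk} \otimes \theta_{jk})$ and $\Psi^{-1} = \prod_{j,k}(1 + \xi_{jk} \otimes \theta_{jk})$, the products being order-independent by Remark \ref{rem: kosz_base_order_ind}. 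Composing with the identification of the first step gives the asserted equivalence; $R[\mathbb{X},\mathbb{X}'']$-equivariance is immediate since $\Psi$ is built from bimodule morphisms, and naturality in $C$ follows from the middle interchange law exactly as in the proof of Proposition \ref{prop: rfray_fork_slide}, since $\Psi$ is supported on the cabled-crossing tensor factor while any $f \colon C \to D$ passes through both functors as $\mathrm{id} \star f \star \mathrm{id}$.

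The only genuine work lies in the second and third steps: tracking which of $\mathbb{X}_j$, $\mathbb{X}'_j$, $\mathbb{X}''_j$ acts where on the composite, confirming that the dot-sliding homotopies of $(_{\aa}\sigma_{r-1})^{\blam}$ extend by the identity without disturbing the Koszul differential, and checking the graded-commutativity hypothesis of Corollary \ref{cor: kosz_base_change_rep}; all of this is routine once Proposition \ref{prop: dot_slide} is in hand, so I do not anticipate a serious obstacle. The cases of $(_{\aa}\sigma_r)^{\blam}$, of negative crossings, and of crossings above $C$ are handled identically, using in the last case the dot-sliding homotopies of the upper cabled crossing to move $e_k(\mathbb{X}_j)$ up to the new outermost alphabet rather than moving $e_k(\mathbb{X}'_j)$ down.
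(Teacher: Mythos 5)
Your proof is correct and follows essentially the same route as the paper: you identify both sides as strict deformations of $\rfray{n}{\blam}(-) \star (_{\aa}\sigma_{r-1})^{\blam}$, use the dot-sliding homotopies $\xi_{jk}$ supported on the cabled crossing as the change-of-family data, and invoke Corollary \ref{cor: kosz_base_change_rep} to produce the same explicit $\Psi = \prod_{j,k}(1 - \xi_{jk} \otimes \theta_{jk})$. The only cosmetic difference is that you cite Proposition \ref{prop: dot_slide} directly, whereas the paper routes through the labeled dot-slide conventions of Section \ref{sec: fork_slyde} on $MCC^n_{\blam, a_{r-1}}$, which are the same endomorphisms.
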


\begin{proof}
By definition, $\fray{n}{\blam}(C) \star (_{\aa} \sigma_{r - 1})^{\blam}$ is the strict deformation of
$\rfray{n}{\blam}(C) \star (_{\aa} \sigma_{r - 1})^{\blam}$ arising from the $0$-deforming family $\{e_k(\mathbb{X}_j) - e_k(\mathbb{X}'_j)\}$.
Recall the dot-slide endomorphisms $\xi_{jk} \in \End \left( MCC^n_{\blam, a_{r - 1}} \right)$ of Convention \ref{conv: cabled_fork_dot_slides}.
Since $\rfray{n}{\blam}(C) \star (_{\aa} \sigma_{r - 1})^{\blam} = \left( _{\bb^{\blam}}S_{\bb} \right) \star C \star MCC_{\blam, a_{r - 1}}^n$,
we may consider the family $\{\xi_{jk}\}$ as acting on $\rfray{n}{\blam}(C) \star (_{\aa}\sigma_{r - 1})^{\blam}$ as well by the endomorphisms $1 \star 1 \star \xi_{jk}$. These endomorphisms pairwise anti-commute, square to $0$, and satisfy

\[
[d, -\xi_{jk}] = \left( e_k(\mathbb{X}_j) - e_k(\mathbb{X}'_j) \right) - \left(e_k(\mathbb{X}_j) - e_k(\mathbb{X}''_j) \right).
\]

This is exactly the situation of Corollary \ref{cor: kosz_base_change_rep}. Explicitly, the desired isomorphisms are

\begin{equation} \label{eq: fray_dot_slides}
\Psi = \prod_{j = 1}^m \prod_{k = 1}^{\lambda_j} (1 \otimes 1 - \xi_{jk} \otimes \theta_{jk}); \quad \Psi^{-1} = \prod_{j = 1}^m \prod_{k = 1}^{\lambda_j} (1 \otimes 1 + \xi_{jk} \otimes \theta_{jk}).
\end{equation}
Since $\xi_{jk}$ occurs on a distant tensor factor from $C$, \textcolor{revisions}{$\Psi$ and $\Psi^{-1}$ give rise to a natural equivalence of functors}; finally, $\Psi$ and $\Psi^{-1}$ must be $R[\mathbb{X}, \mathbb{X}'']$-equivariant because they are bimodule morphisms.
\end{proof}

We refer to the equivalence of Proposition \ref{prop: fray_dot_slide} as the \textit{dot-sliding equivalence for fray functors}. Again, we will wish to make use of this dot-sliding equivalence in situations involving more complex deformations; we do so using the following Proposition.

\begin{prop} \label{prop: fray_to_def_dots}
Let $\mathbb{U}$ be an alphabet of deformation parameters, and suppose $\Delta$ is an endonatural transformation of the functor $\left( \fray{n}{\blam}(-) \star \left(_{\aa} \sigma_{r - 1} \right)^{\blam} \right) \otimes_R R[\mathbb{U}]$ satisfying $\Delta^2 = F$ for some $F \in Z(\SSBim[\mathbb{U}])$. Then the dot-sliding equivalence for fray functors \eqref{eq: fray_dot_slides} induces an $R[\mathbb{X}, \mathbb{X}'']$-equivariant natural equivalence of functors

\[
\mathrm{tw}_{\Delta} \left( \left( \fray{n}{\blam} (-) \star \left( _{\aa} \sigma_{r - 1} \right)^{\blam} \right) \otimes_R R[\mathbb{U}] \right)
\sim
\mathrm{tw}_{\Psi \Delta \Psi^{-1} + \tilde{\alpha}} \left( \left( \rfray{n}{\blam} (-) \star \left( _{\aa} \sigma_{r - 1} \right)^{\blam} \right) \otimes_R R[\Theta_{\blam}, \mathbb{U}] \right)
\]
from $\CS \left( \SSBim_{\aa}^{\bb} \right)$ to $\YS_F \left( \SSBim_{(s_{r - 1} \cdot \aa)^{\blam}}^{\bb^{\blam}} ; R[\Theta_{\blam}, \mathbb{U}] \right)$.
\end{prop}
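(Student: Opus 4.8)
The plan is to deduce the statement directly from Proposition \ref{prop: hpt_curv_iso}, applied to the isomorphism produced in Proposition \ref{prop: fray_dot_slide}, with naturality and equivariance coming along for free. Recall that for each $C \in \CS(\SSBim_{\aa}^{\bb})$ the morphisms $\Psi$, $\Psi^{-1}$ of \eqref{eq: fray_dot_slides} are mutually inverse, degree-$0$, closed isomorphisms of $0$-curved complexes exhibiting
\[
\fray{n}{\blam}(C) \star (_{\aa}\sigma_{r - 1})^{\blam} \;\cong\; \mathrm{tw}_{\tilde{\alpha}}\left( \left( \rfray{n}{\blam}(C) \star (_{\aa}\sigma_{r - 1})^{\blam} \right) \otimes_R R[\Theta_{\blam}] \right),
\]
and that Proposition \ref{prop: fray_dot_slide} records this isomorphism as natural in $C$ and $R[\mathbb{X}, \mathbb{X}'']$-equivariant. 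Since $\Psi$ has $\mathbb{U}$-degree $0$, extending scalars along $R[\mathbb{U}]$ produces mutually inverse isomorphisms, still denoted $\Psi$, $\Psi^{-1}$, of $0$-curved complexes over $R[\Theta_{\blam}, \mathbb{U}]$.

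First I would feed this isomorphism into Proposition \ref{prop: hpt_curv_iso}, taking $F_1 = 0$, $F_2 = F$, the isomorphism $f = \Psi$, $g = \Psi^{-1}$, and the Maurer--Cartan element $\alpha = \Delta$ (the hypothesis $\Delta^2 = F$, together with the requirement that $\mathrm{tw}_{\Delta}$ land in $\YS_F$, is exactly the assertion that $\Delta$ is a Maurer--Cartan element for $F$). The Proposition then produces, without modifying $\Psi$ or $\Psi^{-1}$, an isomorphism of $F$-curved complexes
\[
\mathrm{tw}_{\Delta}\left( \left( \fray{n}{\blam}(C) \star (_{\aa}\sigma_{r - 1})^{\blam} \right) \otimes_R R[\mathbb{U}] \right) \;\cong\; \mathrm{tw}_{\Psi \Delta \Psi^{-1}}\left( \mathrm{tw}_{\tilde{\alpha}}\left( \left( \rfray{n}{\blam}(C) \star (_{\aa}\sigma_{r - 1})^{\blam} \right) \otimes_R R[\Theta_{\blam}, \mathbb{U}] \right) \right),
\]
and the two nested twists coalesce into the single connection $\Psi \Delta \Psi^{-1} + \tilde{\alpha}$ appearing in the statement. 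At this step one should confirm that the curvature of the target is genuinely $F$ rather than some conjugate of it; this holds because $F$ is central, hence fixed under conjugation by the isomorphism $\Psi$, and because $\tilde{\alpha}^2 = 0$ as $\tilde{\alpha}$ is a Koszul differential, so that the Maurer--Cartan equation for $\Psi\Delta\Psi^{-1} + \tilde{\alpha}$ is inherited from the one for $\Delta$.

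It then remains to promote this objectwise family of isomorphisms to a natural equivalence of functors with the stated properties. Naturality in $C$ is inherited from the naturality of $\Psi$: any $f \colon C \to D$ is sent by each functor in sight to $1 \star f \star 1$ on a tensor factor distant from the dot-slide homotopies, which commutes with $\Psi$ and with the twist data, and $\Psi \Delta \Psi^{-1}$ is again a natural endotransformation because $\Delta$ is natural by hypothesis and $\Psi$ is a natural isomorphism. The $R[\mathbb{X}, \mathbb{X}'']$-equivariance is likewise inherited, as the isomorphism furnished by Proposition \ref{prop: hpt_curv_iso} is literally $\Psi$, a map of bimodules. The only genuine difficulty is bookkeeping: keeping the several curvatures and alphabets ($\Theta_{\blam}$, $\mathbb{U}$, $\mathbb{X}$, $\mathbb{X}''$) straight and checking that $\Delta$ may be regarded as a Maurer--Cartan element for $F$ over the relevant extension of scalars \emph{uniformly in $C$}, so that Proposition \ref{prop: hpt_curv_iso} applies simultaneously to all objects and the resulting isomorphisms assemble into a single natural transformation of functors $\CS(\SSBim_{\aa}^{\bb}) \to \YS_F(\SSBim_{(s_{r - 1} \cdot \aa)^{\blam}}^{\bb^{\blam}}; R[\Theta_{\blam}, \mathbb{U}])$.
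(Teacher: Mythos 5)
Your proposal is correct and takes the same route as the paper: both appeal directly to Proposition \ref{prop: hpt_curv_iso}, applied to the natural isomorphism $\Psi, \Psi^{-1}$ of Proposition \ref{prop: fray_dot_slide}, and then check well-definedness of the two functors from naturality of $\Delta$ and the identity $\Delta^2 = F$. Your additional remarks — that the curvature of the target is automatically $F$ (already guaranteed by the $\YS_{F_1 + F_2}$ conclusion of Proposition \ref{prop: hpt_curv_iso}, with the centrality of $F$ serving as a useful sanity check) and that the nested twists merge additively — are sound though not strictly necessary, as the paper's terse proof leaves them implicit.
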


\begin{proof}
This is a direct application of Proposition \ref{prop: hpt_curv_iso}. All that needs to be checked is that both functors are well-defined and produce legitimate $F$-deformations, but this follows immediately from naturality of $\Delta$ and the fact that $\Delta^2 = F$.
\end{proof}

Now, notice that the modified Koszul differential $\tilde{\alpha}$ on the right-hand side of Proposition \ref{prop: fray_dot_slide} lives in $R[\mathbb{X}, \mathbb{X}''] \otimes_R R[\Theta_{\blam}]$. This is exactly the situation of Proposition \ref{prop: rfray_to_def_forks}. It follows that composing the dot-sliding equivalence of Proposition \ref{prop: fray_dot_slide} with the fork-sliding equivalence of Proposition \ref{prop: rfray_fork_slide} furnishes the following.

\begin{prop} \label{prop: fray_fork_slide}
There is an $R[\mathbb{X}, \mathbb{X}'']$-equivariant natural equivalence of functors

\[
\fray{n}{\blam}(-) \star (_{\aa} \sigma_{r - 1})^{\blam} \sim \fray{n}{\blam}(- \star (_{\aa} \sigma_{r - 1})) \colon K \left( \SSBim_{\aa}^{\bb} \right) \to K \left( \SSBim_{(s_{r - 1} \cdot \aa)^{\blam}}^{\bb^{\blam}}[\Theta_{\blam}] \right)
\]
which acts via a strong deformation retraction satisfying the side conditions. The analogous maps involving $_{\aa} \sigma_r$ induce an analogous equivalence, as do
the analogous maps involving negative crossings and crossings above $C$.
\end{prop}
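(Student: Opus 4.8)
The plan is to compose the two equivalences just established. The key observation is that Proposition \ref{prop: fray_dot_slide} and Proposition \ref{prop: rfray_fork_slide} chain together: the first replaces the genuine Koszul differential $\alpha$ (which involves the ``interior'' alphabet $\mathbb{X}'$, hostile to fork-sliding) with the modified differential $\tilde{\alpha}$ built from $\mathbb{X}''$; the second then slides the frayed cable of the $(r-1)$-st crossing past $C$ while leaving $\tilde{\alpha}$ untouched, precisely because $\tilde{\alpha} \in R[\mathbb{X}, \mathbb{X}''] \otimes_R R[\Theta_{\blam}]$ and that ring intertwines the fork-sliding maps. First I would invoke Proposition \ref{prop: fray_dot_slide} to identify $\fray{n}{\blam}(-) \star (_{\aa} \sigma_{r-1})^{\blam}$ with $\mathrm{tw}_{\tilde{\alpha}}\big( (\rfray{n}{\blam}(-) \star (_{\aa}\sigma_{r-1})^{\blam}) \otimes_R R[\Theta_{\blam}] \big)$. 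Then I would apply Proposition \ref{prop: rfray_to_def_forks} with $\mathbb{U} = \Theta_{\blam}$, $F = 0$, and $\Delta = \tilde{\alpha}$ (noting $\tilde{\alpha}^2 = 0$ since the summands $e_k(\mathbb{X}_j) - e_k(\mathbb{X}''_j)$ are closed, pairwise-commuting, and the $\theta_{jk}$ are odd), obtaining an equivalence with $\mathrm{tw}_{\tilde{\alpha}}\big( (\rfray{n}{\blam}(- \star (_{\aa}\sigma_{r-1}))) \otimes_R R[\Theta_{\blam}] \big)$. Finally, a second application of Proposition \ref{prop: fray_dot_slide} (now on the codomain side, with the crossing absorbed into $C$) rewrites this last complex as $\fray{n}{\blam}(- \star (_{\aa}\sigma_{r-1}))$, completing the chain.

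The main points requiring care are bookkeeping rather than new ideas. I need to check that the composite of the three equivalences is again realized by a strong deformation retraction satisfying the side conditions: the outer two equivalences of Proposition \ref{prop: fray_dot_slide} are honest isomorphisms (so trivially strong deformation retractions with zero homotopy, satisfying the side conditions), and the middle one inherits the strong-deformation-retraction-with-side-conditions property from Proposition \ref{prop: rfray_fork_slide} via Proposition \ref{prop: rfray_to_def_forks}; since a composite of a strong deformation retraction satisfying the side conditions with isomorphisms on either end is again such, the conclusion follows. I would also need to confirm that after the second dot-slide the modified differential $\tilde{\alpha}$ returns to the genuine Koszul differential $\alpha$ for the codomain configuration — but this is exactly the content of Proposition \ref{prop: fray_dot_slide} read in reverse, applied to the complex $C \star (_{\aa}\sigma_{r-1})$ with its own distinguished alphabets. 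The $R[\mathbb{X}, \mathbb{X}'']$-equivariance is preserved at each stage because all three intermediate equivalences are $R[\mathbb{X},\mathbb{X}'']$-equivariant, and naturality is preserved because each is a natural transformation and naturality is closed under composition.

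The remaining cases — the crossing $_{\aa}\sigma_r$ rather than $_{\aa}\sigma_{r-1}$, negative crossings, and crossings above $C$ rather than below — follow by the identical argument: Proposition \ref{prop: fray_dot_slide}, Proposition \ref{prop: rfray_fork_slide}, and Proposition \ref{prop: rfray_to_def_forks} are each stated with these variants built in (the dot-sliding homotopies $\xi_{jk}$ and the fork-sliding retractions exist on either side of the distinguished strand and for both signs, by Proposition \ref{prop: dot_slide} and Proposition \ref{prop: cabled_fork_slides}), so one simply substitutes the appropriate crossing throughout. The only genuinely nontrivial input is the flatness/locality that makes $\tilde{\alpha}$ survive the fork-slide, and that was already packaged into Proposition \ref{prop: rfray_to_def_forks}; the present proof is purely the assembly step. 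I expect the main obstacle, such as it is, to be verifying carefully that the side conditions are not destroyed when pre- and post-composing the fork-slide retraction with the dot-slide isomorphisms — in particular that the composite homotopy $h$ (unchanged from Proposition \ref{prop: rfray_fork_slide}, up to conjugation by the $\Psi$'s) still squares to zero and is annihilated on both sides by the composite retraction maps. This is a short computation using $\Psi^{-1}\Psi = \mathrm{id}$ and the side conditions for the middle retraction, which I would carry out but not belabor.
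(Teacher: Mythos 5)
Your proof is correct and follows the same route the paper takes: trade $\alpha$ for $\tilde{\alpha}$ via Proposition \ref{prop: fray_dot_slide}, then slide the cabled crossing past $C$ via Proposition \ref{prop: rfray_to_def_forks}, noting that $\tilde{\alpha}$ lies in $R[\mathbb{X}, \mathbb{X}'']\otimes_R R[\Theta_{\blam}]$ and hence survives. The only wrinkle is your third step: there is no ``second dot-slide'' to perform. After the fork-slide, the bottom exterior alphabet of $\rfray{n}{\blam}(C \star (_{\aa}\sigma_{r-1}))$ is precisely $\mathbb{X}''$, so the genuine Koszul differential of $\fray{n}{\blam}(C \star (_{\aa}\sigma_{r-1}))$ prescribed by Definition \ref{def: fray} is already, on the nose, $\tilde{\alpha} = \sum_{j,k}(e_k(\mathbb{X}_j) - e_k(\mathbb{X}''_j)) \otimes \theta_{jk}$; the identification is notational, not homotopical, and invoking Proposition \ref{prop: fray_dot_slide} ``in reverse'' there would not typecheck because that proposition concerns a crossing sitting below the frayed complex, not the complex on its own. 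Since your extra step turns out to be the identity, the composite retraction is unchanged and your check that the side conditions survive conjugation by $\Psi,\Psi^{-1}$ is exactly the right bookkeeping, so the conclusion stands.
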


\begin{cor} \label{cor: braid_fin_slide}
    Let $\beta_{\aa} \in \BGpd{\aa}{\bb}$ be given. Then $\fray{n}{\blam}(\beta_{\aa})
    \simeq \finproj{\blam} \star \beta_{\aa}^{\blam} \simeq \beta_{\aa}^{\blam} \star \finproj{\blam}$.
\end{cor}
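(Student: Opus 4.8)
The plan is to deduce Corollary \ref{cor: braid_fin_slide} from Proposition \ref{prop: fray_fork_slide} by an induction on the number of crossings in a braid word representing $\beta_{\aa}$. First I would fix a colored braid word $(\underline{\beta})_{\aa} = (\sigma_{i_1}^{\epsilon_1})_{\aa_1} \cdots (\sigma_{i_p}^{\epsilon_p})_{\aa_p}$ representing $\beta_{\aa}$, with $\aa_1 = \aa$ and the sequences $\aa_2, \dots, \aa_p, \bb$ obtained by successively applying the relevant transpositions. Recall from Definition \ref{def: rickard_complex_words} that the Rickard complex $C(\beta_{\aa})$ is the horizontal composite of the one-crossing complexes $C((\sigma_{i_j}^{\epsilon_j})_{\aa_j})$, and the cabled Rickard complex $\beta_{\aa}^{\blam}$ is the analogous composite of the cabled one-crossing complexes. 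The base case $p = 0$ is immediate: $\fray{n}{\blam}(\strand{\bb}) = \finproj{\blam} \star \strand{\bb^{\blam}}$ by Definition \ref{def: fray_fin_proj}, using that $\rfray{n}{\blam}(\strand{\bb}) = W_{\blam}^{\bb^{\blam}}$ glued onto the identity and that the Koszul differentials match.

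Next I would run the inductive step. Write $\beta_{\aa} = \gamma_{\aa'} \star (\sigma_{i}^{\epsilon})_{\aa}$ where $\gamma$ has $p-1$ crossings and $\aa'$ is the intermediate sequence (one can peel off either the bottom-most or top-most crossing; I would peel off the one adjacent to the distinguished $n$-colored strand when it is involved in a crossing, and otherwise note the crossing is among passive strands and slides through trivially by the middle interchange law). When the peeled crossing involves the distinguished strand, Proposition \ref{prop: fray_fork_slide} gives a homotopy equivalence $\fray{n}{\blam}(C(\gamma_{\aa'})) \star (\sigma_i^\epsilon)_{\aa}^{\blam} \simeq \fray{n}{\blam}(C(\gamma_{\aa'}) \star (\sigma_i^\epsilon)_{\aa}) = \fray{n}{\blam}(C(\beta_{\aa}))$, valid whether the crossing is positive or negative and whether it sits above or below the bulk (the Proposition explicitly covers all these cases). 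By the inductive hypothesis $\fray{n}{\blam}(C(\gamma_{\aa'})) \simeq \finproj{\blam} \star \gamma_{\aa'}^{\blam}$; horizontally composing this equivalence with $(\sigma_i^\epsilon)_{\aa}^{\blam}$ and using functoriality of $\star$ on the homotopy category gives $\fray{n}{\blam}(C(\beta_{\aa})) \simeq \finproj{\blam} \star \gamma_{\aa'}^{\blam} \star (\sigma_i^\epsilon)_{\aa}^{\blam} = \finproj{\blam} \star \beta_{\aa}^{\blam}$, where the last equality is just the definition of the cabled Rickard complex as a horizontal composite. The equivalence $\finproj{\blam} \star \beta_{\aa}^{\blam} \simeq \beta_{\aa}^{\blam} \star \finproj{\blam}$ follows symmetrically by peeling crossings off the top instead of the bottom, or equivalently by applying the ``crossings above $C$'' version of Proposition \ref{prop: fray_fork_slide} with $C = \strand{}$ placed below.

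One bookkeeping point deserves care: the target category in Proposition \ref{prop: fray_fork_slide} is $K(\SSBim_{(s_{r-1} \cdot \aa)^{\blam}}^{\bb^{\blam}}[\Theta_{\blam}])$, and across the $p$ steps of the induction the domain object changes by the successive transpositions while the $\Theta_{\blam}$-alphabet stays fixed (the separated strands, and hence their elementary-symmetric-polynomial alphabets $\mathbb{X}_j$, are carried along the cable coherently). I would make explicit that the $\Theta_{\blam}$-variables introduced at each stage are canonically identified, so that the composite equivalence lands in a single category $\YS_0(\SSBim_{\aa^{\blam}}^{\bb^{\blam}}; R[\Theta_{\blam}])$ and $\finproj{\blam} = \fray{n}{\blam}(\strand{n})$ contributes the same Koszul differential throughout — this is exactly the compatibility recorded in Lemma \ref{lem: thin_a_compare}-style coherence, though here it is simpler since only the reduced/Koszul pieces are involved and not the bulk-curvature polynomials.

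The main obstacle is not conceptual but notational: ensuring that the homotopy equivalences being composed genuinely compose, i.e.\ that the strong deformation retractions of Proposition \ref{prop: fray_fork_slide} at successive crossings are compatible and that horizontal composition $\star$ descends to $K(\SSBim)$ so that $\simeq$ is preserved under $- \star (\sigma_i^\epsilon)_{\aa}^{\blam}$. The latter is standard (homotopy equivalences of complexes of bimodules are stable under tensoring), and the former follows because each retraction is supported on the single tensor factor $MCC^n_{\blam, \bullet}$ being slid and acts as the identity elsewhere, so distant retractions commute. I would state these compatibility facts once and then present the induction cleanly without re-deriving them at each step.
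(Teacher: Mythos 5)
Your induction, peeling one crossing at a time and invoking Proposition \ref{prop: fray_fork_slide} (or the middle interchange law for passive crossings), is precisely the ``repeated application'' argument the paper leaves implicit here, exactly as it did for Corollary \ref{cor: rfray_cable}. The bookkeeping you flag is sound, though the allusion to Lemma \ref{lem: thin_a_compare} is unnecessary: that lemma concerns the bulk-curvature polynomials $a_{ijk}$ used for the infinite projector, which play no role in the Koszul differential defining $\finproj{\blam}$.
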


In Section \ref{sec: link_hom}, we will be especially interested in the result of Corollary \ref{cor: braid_fin_slide} when $\aa = (n, c)$, $\bb = (c, n)$, $\beta = (\sigma_1)_{\aa}$. Graphically, the resulting homotopy equivalence takes the form

\begin{gather*}
\begin{tikzpicture}[scale=.5,anchorbase,tinynodes]
    \draw(0,0) rectangle++(3,1.5);
    \node at (1.5,.75) {\large $\finproj{\blam}$};
    \draw[webs] (.2,1.5) to (.2,2.5);
    \node at (1.5,2) {$\dots$};
    \draw[webs] (2.8,1.5) to (2.8,2.5);
    \draw[webs] (-1,2.5) node[above]{$c$} to (-1,0);
    \draw[webs] (-1,0) to[out=270,in=90] (2.8,-3) node[below]{$c$};
    \draw[line width=5pt, color=white] (.2,-.1) to[out=270,in=90] (-1,-3);
    \draw[webs] (.2,0) to[out=270,in=90] (-1,-3);
    \draw[line width=5pt, color=white] (2.8,-.1) to[out=270,in=90] (1.6,-3);
    \draw[webs] (2.8,0) to[out=270,in=90] (1.6,-3);
    \node at (1.4,-.5) {$\dots$};
    \node at (.3,-2.5) {$\dots$};
    \draw [decorate,decoration = {brace}] (.2,3) -- (2.8,3)
    node[pos=.5,above]{$\blam$};
    \draw [decorate,decoration = {brace,mirror}] (-1,-3.5) -- (1.6,-3.5)
    node[pos=.5,below]{$\blam$};
    \node at (4,-.25) {\LARGE $\simeq$};
\end{tikzpicture}
\quad 
\begin{tikzpicture}[scale=.5,anchorbase,tinynodes]
    \draw[webs] (-1,2.5) node[above]{$c$} to[out=270,in=90] (2.8,-.5);
    \draw(-1,-2) rectangle++(3,1.5);
    \draw[webs] (-.8,-2) to (-.8,-3);
    \draw[webs] (1.8,-2) to (1.8,-3);
    \node at (.5,-1.25) {\large $\finproj{\blam}$};
    \draw[webs] (2.8,-.5) to (2.8,-3) node[below]{$c$};
    \node at (.5,-2.5) {$\dots$};
    \draw[line width=5pt,color=white] (.2,2.5) to[out=270,in=90] (-.8,-.4);
    \draw[webs] (.2,2.5) to[out=270,in=90] (-.8,-.5);
    \draw[line width=5pt,color=white] (2.8,2.5) to[out=270,in=90] (1.6,-.4);
    \draw[webs] (2.8,2.5) to[out=270,in=90] (1.6,-.5);
    \draw [decorate,decoration = {brace}] (.2,3) -- (2.8,3)
    node[pos=.5,above]{$\blam$};
    \draw [decorate,decoration = {brace,mirror}] (-.8,-3.5) -- (1.8,-3.5)
    node[pos=.5,below]{$\blam$};
    \node at (1.5, 2.2) {$\dots$};
    \node at (.5,0) {$\dots$};
\end{tikzpicture}
.
\end{gather*}
Whenever a complex $C$ satisfies the homotopy equivalence enjoyed by $\finproj{\blam}$ depicted above, we say that $C$ \textit{slides past crossings}.

\begin{cor} \label{cor: fin_proj_crossing_slide}
    $\finproj{\blam}$ slides past crossings.
\end{cor}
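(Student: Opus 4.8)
The plan is to deduce this from Corollary \ref{cor: braid_fin_slide}, since the property ``slides past crossings'' was defined precisely as the homotopy equivalence produced by the $C = \finproj{\blam}$ case of that corollary. First I would unpack the definition: to say that $\finproj{\blam}$ slides past crossings is to say that, for each $c \geq 0$, there is a homotopy equivalence
\[
\finproj{\blam} \star (\sigma_1)_{(n, c)}^{\blam} \;\simeq\; (\sigma_1)_{(n, c)}^{\blam} \star \finproj{\blam}
\]
in $K\!\left( \SSBim_{(\blam, c)}^{(c, \blam)}[\Theta_{\blam}] \right)$ (together with the analogous equivalences for negative crossings, and for the mirror-image crossing in which the $c$-colored strand sits on the other side of the $\blam$-cable).

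Next I would simply apply Corollary \ref{cor: braid_fin_slide} with $N = n + c$, $\aa = (n, c)$, $\bb = (c, n)$ --- so that $a_1 = b_2 = n$, matching Convention \ref{conv: fray_vals} with $r = 1$, $s = 2$ --- and with $\beta_{\aa} = (\sigma_1)_{(n, c)} \in \BGpd{(n,c)}{(c,n)}$. The corollary then furnishes the two homotopy equivalences $\finproj{\blam} \star (\sigma_1)_{(n, c)}^{\blam} \simeq \fray{n}{\blam}\!\big((\sigma_1)_{(n, c)}\big)$ and $\fray{n}{\blam}\!\big((\sigma_1)_{(n, c)}\big) \simeq (\sigma_1)_{(n, c)}^{\blam} \star \finproj{\blam}$, and composing them gives exactly the displayed equivalence, which is the top half and bottom half of the figure preceding the statement. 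The three remaining crossing types are handled identically by applying Corollary \ref{cor: braid_fin_slide} to $(\sigma_1^{-1})_{(n, c)}$, $(\sigma_1)_{(c, n)}$, and $(\sigma_1^{-1})_{(c, n)}$ respectively; in each case the relevant braid word involves a single $n$-colored strand, so Convention \ref{conv: fray_vals} applies with $r, s \in \{1, 2\}$ chosen accordingly.

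I expect no substantive obstacle here: all the content sits in Corollary \ref{cor: braid_fin_slide}, whose proof in turn rests on the fork-sliding equivalence of Proposition \ref{prop: fray_fork_slide}. The only point requiring a sentence of care is bookkeeping --- verifying that the graphical conventions of Section \ref{sec: col_braids} (which strand of $(\sigma_1)_{(n,c)}$ is the over-strand, which entry of $\aa$, $\bb$ carries the distinguished color $n$, and the left/right placement of the cable versus the $c$-strand in the figure) line up with the figure preceding the statement, so that $\finproj{\blam} \star (\sigma_1)_{(n,c)}^{\blam}$ and $(\sigma_1)_{(n,c)}^{\blam} \star \finproj{\blam}$ are literally the two complexes drawn there. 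This is routine and can be dispatched in a line by invoking the implicit inclusion of Remark \ref{rem: bim_implicit_inc} to view $\finproj{\blam}$ inside the larger diagram containing the $c$-colored strand.
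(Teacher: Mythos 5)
Your proof is correct and matches the paper's (implicit) argument exactly: the corollary is the specialization of Corollary~\ref{cor: braid_fin_slide} to $\aa = (n,c)$, $\bb = (c,n)$, $\beta = (\sigma_1)_{(n,c)}$, which is precisely what the paragraph preceding the definition of ``slides past crossings'' indicates. The remaining crossing types follow by the same specialization, and your bookkeeping remark about Remark~\ref{rem: bim_implicit_inc} correctly addresses how $\finproj{\blam} \in \CS(\SSBim_n)$ is regarded as a complex with a passive $c$-colored strand.
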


\begin{inter}
In the following sections, we turn our attention to various deformed versions of the functor $\fray{n}{\blam}$ with the goal of establishing deformed versions of Propositions \ref{prop: rfray_fork_slide} and \ref{prop: fray_fork_slide}. There are three distinct deformations that will be relevant to us: the \textit{separated}, \textit{passive}, and \textit{bulk} deformations associated to separated, passive, and bulk strands, respectively. We have already encountered most of the tools needed to treat each of these deformations; the main challenges are notational, with each deformation being entitled to its own connection and its own alphabet of deformation parameters.

We call the alphabet of deformation parameters associated to bulk strands $\mathbb{U}_n$ and the alphabet associated to passive strands $\mathbb{U}_{\ell}$, in keeping with the notation of Section \ref{sec: fork_slyde}. This leaves the question of what to call the alphabet associated to separated strands. When $\aa = (1^N) \vdash N$, Definition \ref{def: y-ification} recovers the notion of a $y$-ification as introduced by Gorsky--Hogancamp in \cite{GH22} after renaming the variables $u_{j, 1}$ as $y_j$.

With this precedent in mind, we call the alphabet of deformation parameters associated to separated strands $\mathbb{Y}_{\blam}$, since they are closer in spirit to the usual deformation parameters of $y$-ified homology in e.g. \cite{GH22, GHM21, Con23}. We wish to emphasize that this should \textit{not} imply to the reader that we work in the fully refined setting $\aa^{\blam} = (1^N)$; rather, we view the labels of $\aa^{\blam}$ as ``thinner" than those of $\aa$ and accordingly use variables that have typically been associated with thinner strands.
\end{inter}

\subsection{Deformation on Separated Strands} \label{sec: yfray}
In this section, we consider deformations of complexes $\fray{n}{\blam}(C)$ with $\Delta e$-curvature associated to separated strands. The relevant curvature in this setting is

\[
F_y^{\blam}(\mathbb{X}, \mathbb{X}') = \sum \limits_{j = 1}^m \sum \limits_{k = 1}^{\lambda_j}
(e_k(\mathbb{X}_j) - e_k(\mathbb{X}_j')) \otimes y_{jk} \in \mathrm{Sym}^{\blam}(\mathbb{X}, \mathbb{X}') \otimes_R R[\mathbb{Y}_{\blam}].
\]

We will also use the alphabet $\mathbb{Y}_{\blam}$ to label deformation parameters associated to cabled strands in cabled braids, writing for example $(_{\aa} \sigma_{r - 1})^{\blam, y}$.

\begin{prop}
For each $C \in \CS(\SSBim_{\aa}^{\bb})$, the collection of endomorphisms $\{1 \otimes \theta_{jk}^{\vee}\}$ is an $F_y^{\blam}(\mathbb{X}, \mathbb{X}')$-deforming family on $\fray{n}{\blam}(C)$.
\end{prop}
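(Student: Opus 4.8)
The plan is to check directly the two defining conditions of an $F$-deforming family from Proposition \ref{prop: strict_def}, applied to the family $\xi_{jk} := 1 \otimes \theta_{jk}^{\vee} \in \End^{-1}_{\CS(\SSBim)}(\fray{n}{\blam}(C))$, where $\theta_{jk}^{\vee}$ denotes contraction against the odd variable $\theta_{jk}$. Before doing so I would dispatch the degree bookkeeping: since $\mathrm{deg}(\theta_{jk}) = q^{-2k}t$ we have $\mathrm{deg}(\theta_{jk}^{\vee}) = q^{2k}t^{-1} = t^{-1}\,\mathrm{deg}(e_k(\mathbb{X}_j) - e_k(\mathbb{X}'_j))$, which is exactly the degree demanded of a Maurer--Cartan term paired with the even variable $y_{jk}$ of degree $q^{-2k}t^2$. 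Together with Proposition \ref{prop: def_curv_central} this also records that $F_y^{\blam}(\mathbb{X},\mathbb{X}')$ is a genuine central curvature on $\fray{n}{\blam}(C)$: the polynomials $e_k(\mathbb{X}_j)$ and $e_k(\mathbb{X}'_j)$ act centrally precisely because the distinguished $n$-labeled strand has been frayed into the $\blam$-cable, creating the external alphabets $\mathbb{X}_j, \mathbb{X}'_j$.

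First I would verify that the family pairwise graded-commutes, which is condition (2) of Proposition \ref{prop: strict_def}. All $\theta_{jk}^{\vee}$ have odd homological degree, and by Proposition \ref{prop: poly_props} (which records $(\theta_{jk}^{\vee})^2 = 0$ and graded-commutativity of the subalgebra generated by contractions against odd variables) we get $(\xi_{jk})^2 = 0$ and $\xi_{jk}\xi_{j'k'} + \xi_{j'k'}\xi_{jk} = 0$ for $(j,k) \neq (j',k')$; since both operators are odd this is exactly the vanishing of the commutator $[\xi_{jk}, \xi_{j'k'}]$.

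Next I would verify condition (1), namely $[\delta, \xi_{jk}] = e_k(\mathbb{X}_j) - e_k(\mathbb{X}'_j)$, where $\delta$ is the differential on $\fray{n}{\blam}(C) = \mathrm{tw}_{\alpha}(\rfray{n}{\blam}(C) \otimes_R R[\Theta_{\blam}])$, that is $\delta = (d_{\rfray{n}{\blam}(C)} \otimes 1) + \alpha$ with $\alpha = \sum_{j',k'} (e_{k'}(\mathbb{X}_{j'}) - e_{k'}(\mathbb{X}'_{j'})) \otimes \theta_{j'k'}$. I would split the bracket accordingly. The term $[d_{\rfray{n}{\blam}(C)} \otimes 1, \, 1 \otimes \theta_{jk}^{\vee}]$ vanishes: the two operators act on distant tensor factors and both have odd homological degree, so the middle interchange law produces the two summands of the commutator with opposite signs. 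The term $[\alpha, \, 1 \otimes \theta_{jk}^{\vee}]$ simplifies using that each $e_{k'}(\mathbb{X}_{j'}) - e_{k'}(\mathbb{X}'_{j'})$ is central of even homological degree, together with the exterior-algebra relations that multiplication by $\theta_{j'k'}$ and contraction against $\theta_{jk}$ anticommute when $(j',k')\neq(j,k)$ while $\theta_{jk}\theta_{jk}^{\vee} + \theta_{jk}^{\vee}\theta_{jk} = \mathrm{id}$ (Proposition \ref{prop: poly_props}); only the summand $(j',k')=(j,k)$ survives, giving $[\alpha, 1 \otimes \theta_{jk}^{\vee}] = e_k(\mathbb{X}_j) - e_k(\mathbb{X}'_j)$. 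Hence $[\delta, \xi_{jk}] = e_k(\mathbb{X}_j) - e_k(\mathbb{X}'_j)$, which is precisely $\varphi_{jk}$, the coefficient of $y_{jk}$ in $F_y^{\blam}(\mathbb{X},\mathbb{X}')$, and Proposition \ref{prop: strict_def} concludes.

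The only point requiring real care is the sign arithmetic in the two brackets above — ensuring the $\rfray$-differential term drops out and that no stray sign contaminates the Koszul-differential term; this is most easily double-checked against the textbook identity $[d, \theta^{\vee}] = \varphi$ for an ordinary Koszul differential $d = \sum \varphi_i \theta_i$, from which the present computation differs only by the harmless presence of the (odd-degree, $\theta$-independent, anticommuting) chain differential $d_{\rfray{n}{\blam}(C)}$.
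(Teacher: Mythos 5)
Your proof is correct and follows the same route as the paper's: check the two conditions of Proposition \ref{prop: strict_def} directly. You fill in the details that the paper leaves as ``straightforward computation'' — the vanishing of $[\,d_{\rfray{n}{\blam}(C)}\otimes 1,\ 1\otimes\theta_{jk}^{\vee}\,]$ via the middle interchange law, and the isolation of the $(j',k')=(j,k)$ term in $[\alpha,1\otimes\theta_{jk}^{\vee}]$ via $\theta_{jk}\theta_{jk}^{\vee}+\theta_{jk}^{\vee}\theta_{jk}=\mathrm{id}$ and the anticommutation of multiplication by $\theta_{j'k'}$ with contraction against $\theta_{jk}$ for $(j',k')\neq(j,k)$ — and your justification of pairwise graded-commutativity via Proposition \ref{prop: poly_props} is, if anything, slightly more precise than the paper's appeal to ``graded commutativity of $R[\Theta_{\blam}]$'' (the operators live in $\Phi(\Theta_{\blam})$, not $R[\Theta_{\blam}]$, so the right reference is to the subalgebra generated by contractions).
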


\begin{proof}
What needs to be checked is that 
\[
\left[ \rfray{n}{\blam}(d_C) \otimes 1 + \sum_{i = 1}^m \sum_{\ell = 1}^{\lambda_i} \left( e_{\ell}(\mathbb{X}_i) - e_{\ell}(\mathbb{X}_i') \right) \otimes \theta_{i \ell}, 1 \otimes \theta_{jk}^{\vee} \right] = \left( e_k(\mathbb{X}_j) - e_k(\mathbb{X}'_j) \right) \otimes 1
\]
for each $j, k$ and that the family $\{1 \otimes \theta_{jk}^{\vee}\}$ is graded commutative. The former property is a straightforward computation, and the latter follows from graded commutativity of $R[\Theta_{\blam}]$.
\end{proof}

\begin{defn} \label{def: y_fray}
    Let $\yfray{n}{\blam}$ denote the functor taking a complex
    $C \in \CS(\SSBim_{\aa}^{\bb})$ to the $F_y^{\blam}$-curved complex

    \begin{align*}
    \yfray{n}{\blam}(C) & := \mathrm{tw}_{\delta} \left(\fray{n}{\blam}(C)
    \otimes_R R[\mathbb{Y}_{\blam}] \right) \in \YS_{F_y^{\blam}} \left( \SSBim_{\aa^{\blam}}^{\bb^{\blam}}; R[\Theta_{\blam}, \mathbb{Y}_{\blam}] \right); \\
    \delta & = \sum_{j = 1}^m \sum_{k = 1}^{\lambda_j} 1 \otimes \theta_{jk}^{\vee} y_{jk}
    \end{align*}
    
    As in Definitions \ref{def: rfray} and \ref{def: fray}, $\yfray{n}{\blam}$ automatically extends to a functor on $K(\SSBim_{\aa}^{\bb})$, which we also denote by $\yfray{n}{\blam}$.
\end{defn}

\begin{defn} \label{def: yfray_fin_proj}
    Let $N, n, \aa, \bb$ be as in Definition \ref{def: fray_fin_proj}.
    In this case, we denote $\yfray{n}{\blam}(\strand{n})$ by $\yfinproj{\blam}$ and call $\yfinproj{\blam}$ a \textit{deformed finite frayed projector}.
\end{defn}

\begin{rem}
    In \cite{Elb22, Con23}, it was observed that there is a natural $y$-ification
    of the Abel--Hogancamp finite column projector $K_{1^n}$. Our
    $\yfinproj{\blam}$ is a straightforward generalization of that $y$-ification from the case
    $\blam = (1^n)$ to an arbitrary \textcolor{revisions}{composition}, and the twist $\delta$ above is a straightforward
    generalization to a more general complex $C$ and \textcolor{revisions}{composition} $\blam$.
\end{rem}

\begin{prop} \label{prop: yfray_dot_slide}
Let $\tilde{\alpha}$ and $\delta$ be as in Proposition \ref{prop: fray_dot_slide} and Definition \ref{def: y_fray}. Then the natural equivalence of Proposition \ref{prop: fray_dot_slide} induces an $R[\mathbb{X}, \mathbb{X}'']$-equivariant natural equivalence of functors

\begin{align*}
\yfray{n}{\blam}(-) \star (_{\aa} \sigma_{r - 1})^{\blam, y} \sim \mathrm{tw}_{\delta + \tilde{\alpha}} \left( \left( \rfray{n}{\blam}(-) \star (_{\aa} \sigma_{r - 1})^{\blam} \right) \otimes_R R[\Theta_{\blam}, \mathbb{Y}_{\blam}] \right)
\end{align*}
from $\CS \left( \SSBim_{\aa}^{\bb} \right)$ to $\YS_{F_y^{\blam}} \left( \SSBim_{(s_{r - 1} \cdot \aa)^{\blam}}^{\bb^{\blam}}; R[\Theta_{\blam}, \mathbb{Y}_{\blam}] \right)$.
\end{prop}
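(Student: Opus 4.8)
The plan is to realize both functors as twists of the single underlying functor $\left( \rfray{n}{\blam}(-) \star MCC_{\blam, a_{r-1}}^n \right) \otimes_R R[\Theta_{\blam}, \mathbb{Y}_{\blam}]$ and to transport between the two connections using the dot-sliding isomorphism $\Psi = \prod_{j,k}(1 \otimes 1 - \xi_{jk} \otimes \theta_{jk})$ of \eqref{eq: fray_dot_slides}. First I would unwind the definitions: by Definition \ref{def: y_fray} and the compatibility of $\Delta e$-deformations with horizontal composition in Remark \ref{rem: y_ify_tensors}, the left-hand side $\yfray{n}{\blam}(C) \star (_{\aa}\sigma_{r-1})^{\blam, y}$ is the twist of $\left( \fray{n}{\blam}(C) \star (_{\aa}\sigma_{r-1})^{\blam} \right) \otimes_R R[\mathbb{Y}_{\blam}]$ by $\delta + \Xi$, where $\delta = \sum_{j,k} 1 \otimes \theta_{jk}^{\vee} y_{jk}$ is the backwards Koszul differential of Definition \ref{def: y_fray} and $\Xi := \sum_{j,k} (1 \star 1 \star \xi_{jk}) \otimes y_{jk}$ is assembled from the dot-sliding homotopies on the braid factor; one checks as in the proof of Proposition \ref{prop: strict_def} that $\delta + \Xi$ is a Maurer--Cartan element for $F_y^{\blam}(\mathbb{X}, \mathbb{X}'')$, with $(\delta + \Xi)^2 = 0$. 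The proof of Proposition \ref{prop: fray_dot_slide} exhibits $\Psi, \Psi^{-1}$ as a mutually inverse pair realizing an isomorphism $\fray{n}{\blam}(C) \star (_{\aa}\sigma_{r-1})^{\blam} \cong \mathrm{tw}_{\tilde{\alpha}}\left( \left( \rfray{n}{\blam}(C) \star (_{\aa}\sigma_{r-1})^{\blam} \right) \otimes_R R[\Theta_{\blam}] \right)$ of $0$-curved complexes; since $\Psi$ does not involve the alphabet $\mathbb{Y}_{\blam}$, Proposition \ref{prop: hpt_curv_iso} lifts it without modification to an isomorphism of $F_y^{\blam}$-curved complexes from the left-hand side onto $\mathrm{tw}_{\Psi(\delta + \Xi)\Psi^{-1}}\left( \mathrm{tw}_{\tilde{\alpha}}(\cdots) \otimes_R R[\mathbb{Y}_{\blam}] \right)$.

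The content of the argument is then the identity $\Psi(\delta + \Xi)\Psi^{-1} = \delta$, which I would establish in two pieces computed $\mathbb{Y}_{\blam}$-degree by $\mathbb{Y}_{\blam}$-degree. First, each $1 \star 1 \star \xi_{jk}$ commutes with every factor $1 \otimes 1 - \xi_{ab} \otimes \theta_{ab}$ of $\Psi$: for $(ab) \neq (jk)$ the anticommutativity of the dot-sliding homotopies cancels the Koszul sign, and for $(ab) = (jk)$ both products vanish since $\xi_{jk}^2 = 0$; hence $\Psi \Xi \Psi^{-1} = \Xi$. Second, a short middle-interchange computation together with the contraction relation $\theta_{jk}^{\vee}\theta_{jk} + \theta_{jk}\theta_{jk}^{\vee} = \mathrm{id}_{R[\Theta_{\blam}]}$ of Proposition \ref{prop: poly_props} gives $(1 \otimes \theta_{jk}^{\vee})\,\Psi = \Psi\,(1 \otimes \theta_{jk}^{\vee}) + (1 \star 1 \star \xi_{jk}) \otimes 1$, whence $\Psi(1 \otimes \theta_{jk}^{\vee})\Psi^{-1} = 1 \otimes \theta_{jk}^{\vee} - (1 \star 1 \star \xi_{jk}) \otimes 1$; summing against $y_{jk}$ yields $\Psi \delta \Psi^{-1} = \delta - \Xi$. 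Adding the two pieces gives $\Psi(\delta + \Xi)\Psi^{-1} = \delta$, exactly as needed. (Intuitively, conjugation by $\Psi$ trades the braid-side dot-sliding contribution $\Xi$ for the missing half of the backwards Koszul differential, so that the composite deformation on the frayed side is carried to the pure $\Delta e$-deformation on the reduced side.)

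Finally I would observe, by twisting in stages, that $\mathrm{tw}_{\delta}\left( \mathrm{tw}_{\tilde{\alpha}}\left( \left( \rfray{n}{\blam}(C) \star (_{\aa}\sigma_{r-1})^{\blam} \right) \otimes_R R[\Theta_{\blam}] \right) \otimes_R R[\mathbb{Y}_{\blam}] \right) = \mathrm{tw}_{\delta + \tilde{\alpha}}\left( \left( \rfray{n}{\blam}(C) \star (_{\aa}\sigma_{r-1})^{\blam} \right) \otimes_R R[\Theta_{\blam}, \mathbb{Y}_{\blam}] \right)$, which is precisely the right-hand side of the Proposition. Naturality in $C$ and $R[\mathbb{X}, \mathbb{X}'']$-equivariance are inherited verbatim from Proposition \ref{prop: fray_dot_slide}: the maps $\Psi, \Psi^{-1}$ are built from bimodule morphisms supported on a tensor factor disjoint from $C$, so any $f \colon C \to D$ passes through both functors as $1 \star f \star 1$ and commutes with $\Psi$. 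The only real obstacle I anticipate is bookkeeping — keeping the $\mathbb{Z}/2$-graded signs consistent through the commutator computations and tracking the three alphabets $\mathbb{X}$, $\mathbb{X}'$, $\mathbb{X}''$ (top, intermediate, bottom) correctly; there is no new homological input beyond Propositions \ref{prop: hpt_curv_iso}, \ref{prop: poly_props}, and \ref{prop: fray_dot_slide}.
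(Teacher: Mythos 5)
Your proof is correct and essentially the same as the paper's: both reduce via Proposition~\ref{prop: hpt_curv_iso} (the paper accesses this through Proposition~\ref{prop: fray_to_def_dots}) to the conjugation identity $\Psi \Delta \Psi^{-1} = \delta$ with $\Delta = \delta + \Xi$, and both verify it using the contraction relation $\theta_{jk}^{\vee}\theta_{jk} + \theta_{jk}\theta_{jk}^{\vee} = 1$ together with graded commutativity and nilpotency of the dot-sliding homotopies; splitting $\Delta$ as $\delta + \Xi$ and conjugating the two pieces separately, rather than conjugating each summand $(1 \otimes \theta_{jk}^{\vee} + \xi_{jk}\otimes 1)y_{jk}$ as a unit by the matching factors $\Psi_{jk}^{\pm 1}$ as the paper does, is a cosmetic reorganization of the same computation. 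One small caveat worth flagging: the intermediate identity $(1 \otimes \theta_{jk}^{\vee})\Psi = \Psi(1\otimes\theta_{jk}^{\vee}) + (\xi_{jk}\otimes 1)$ as written holds only for the single factor $1 - \xi_{jk}\otimes\theta_{jk}$, not for the full product $\Psi$, where the correction term is $(\xi_{jk}\otimes 1)$ times the product of the remaining factors; however, since $\xi_{jk}\otimes 1$ commutes with those factors and $(\xi_{jk}\otimes 1)(1+\xi_{jk}\otimes\theta_{jk}) = \xi_{jk}\otimes 1$, the conclusion $\Psi(1\otimes\theta_{jk}^{\vee})\Psi^{-1} = 1\otimes\theta_{jk}^{\vee} - \xi_{jk}\otimes 1$ you state is nonetheless correct.
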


\begin{proof}
Given a complex $C \in \CS(\SSBim_{\aa}^{\bb})$, we may express the functor on the left-hand side applied to $C$ explicitly as

\begin{align*}
\yfray{n}{\blam}(C) \star (_{\aa} \sigma_{r - 1})^{\blam, y} & = \mathrm{tw}_{\Delta} \left( \left( \fray{n}{\blam}(C) \star (_{\aa} \sigma_{r - 1})^{\blam} \right) \otimes_R R[\mathbb{Y}_{\blam}] \right); \\
\Delta & = \sum_{j = 1}^m \sum_{k = 1}^{\lambda_j} (1 \otimes \theta_{jk}^{\vee} + \xi_{jk} \otimes 1) y_{jk}
\end{align*}
Here $\xi_{jk}$ are considered as endomorphisms of $\rfray{n}{\blam}(C) \star (_{\aa} \sigma_r)^{\blam}$ as in the proof of Proposition \ref{prop: fray_dot_slide}. By Proposition \ref{prop: fray_to_def_dots}, it suffices to check that $\Psi \Delta \Psi^{-1} = \delta$.

This identity is checked by direct computation; we sketch this computation below, omitting some details which are are exactly analogous to those of previous computations (for example, those in the proof of Proposition \ref{prop: sdr_deformation_lifting}). We conjugate each summand of $\Delta$ by $\Psi$ separately. By Remark \ref{rem: kosz_base_order_ind}, we may choose any ordering of factors in the product defining $\Psi$ and $\Psi^{-1}$ to carry out this computation. To conjugate the summand with subscripts $j,k$, we begin by using the factors of $\Psi$ and $\Psi^{-1}$ with the same subscripts. Repeated application of the middle interchange law and the identity $\theta_{jk} \theta_{jk}^{\vee} + \theta_{jk}^{\vee} \theta_{jk} = 1$ gives

\[
(1 \otimes 1 - \xi_{jk} \otimes \theta_{jk}) \left( (1 \otimes \theta_{jk}^{\vee} + \xi_{jk} \otimes 1) y_{jk} \right) (1 \otimes 1 + \xi_{jk} \otimes \theta_{jk}) = 1 \otimes \theta_{jk}^{\vee} y_{jk}.
\]

Any other factor of $\Psi$ involves terms of the form $\xi_{i, \ell} \otimes \theta_{i, \ell}$ for $(i, \ell) \neq (j, k)$. In particular, conjugating $1 \otimes \theta_{jk}^{\vee} y_{jk}$ by these terms has no effect, as everything in sight is graded-commutative. In total, we obtain

\[
\Psi \left( (1 \otimes \theta_{jk}^{\vee} + \xi_{jk} \otimes 1) y_{jk} \right) \Psi^{-1} = 1 \otimes \theta_{jk}^{\vee} y_{jk}
\]
for each $j, k$. Adding these terms together gives exactly $\delta$, as desired.
\end{proof}

Again, the twist $\delta + \tilde{\alpha}$ of the right-hand side of Proposition \ref{prop: yfray_dot_slide} involves only the alphabets $\mathbb{X}, \mathbb{X}'', \Theta_{\blam}$, and $\mathbb{Y}_{\blam}$. By Proposition \ref{prop: rfray_to_def_forks}, applying the fork-sliding equivalence \eqref{fig: rfray_fork_slide} in each $\Theta_{\blam}$ and each $\mathbb{Y}_{\blam}$ degree and composing with the equivalence of Proposition \ref{prop: yfray_dot_slide} immediately gives the following.

\begin{prop} \label{prop: yfray_fork_slide}
There is an $R[\mathbb{X}, \mathbb{X}'']$-equivariant natural equivalence of functors

\[
\yfray{n}{\blam}(-) \star (_{\aa} \sigma_{r - 1})^{\blam, y} \sim \yfray{n}{\blam}(- \star (_{\aa} \sigma_{r - 1})) \colon K \left( \SSBim_{\aa}^{\bb} \right) \to K\YS_{F_y^{\blam}} \left( \SSBim_{(s_{r - 1} \cdot \aa)^{\blam}}^{\bb^{\blam}}; R[\Theta_{\blam}, \mathbb{Y}_{\blam}] \right)
\]
which acts via a strong deformation retraction satisfying the side conditions. The analogous maps involving $_{\aa} \sigma_r$ induce an analogous equivalence, as do
the analogous maps involving negative crossings and crossings above $C$.
\end{prop}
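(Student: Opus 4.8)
The plan is to realize this equivalence as the composite of the dot-sliding equivalence of Proposition \ref{prop: yfray_dot_slide} with a degree-wise application of the reduced fork-slide \eqref{fig: rfray_fork_slide}, exactly mirroring the proof of Proposition \ref{prop: fray_fork_slide} but carrying the separated-strand parameters $\mathbb{Y}_{\blam}$ along. First I would invoke Proposition \ref{prop: yfray_dot_slide} to obtain an $R[\mathbb{X}, \mathbb{X}'']$-equivariant natural isomorphism
\[
\yfray{n}{\blam}(-) \star (_{\aa}\sigma_{r - 1})^{\blam, y} \;\cong\; \mathrm{tw}_{\delta + \tilde{\alpha}} \left( \left( \rfray{n}{\blam}(-) \star (_{\aa}\sigma_{r - 1})^{\blam} \right) \otimes_R R[\Theta_{\blam}, \mathbb{Y}_{\blam}] \right),
\]
implemented by conjugation by the maps $\Psi^{\pm 1}$ of \eqref{eq: fray_dot_slides}. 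The point of this step is that the connection $\delta + \tilde{\alpha}$ appearing on the right involves only the external alphabets $\mathbb{X}$, $\mathbb{X}''$ together with the scalar operators $\theta_{jk}^{\vee} y_{jk}$ and $\theta_{jk}$; in particular it does \emph{not} involve the intermediate alphabet $\mathbb{X}'$ over which $\star$ is taken, so it is intertwined by the reduced fork-slide.

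Next I would apply Proposition \ref{prop: rfray_to_def_forks} with $\mathbb{U} := \Theta_{\blam} \sqcup \mathbb{Y}_{\blam}$, $\Delta := \delta + \tilde{\alpha}$, and $F := F_y^{\blam}$. The hypotheses hold: $\Delta$ is $R[\mathbb{X}, \mathbb{X}'']$-equivariant by the previous paragraph, and $\Delta^2 = F \in Z(\SSBim[\mathbb{U}])$ is exactly the $F_y^{\blam}$-deforming family property recorded just before Definition \ref{def: y_fray}, together with $\tilde{\alpha}^2 = 0$ and the fact that $\tilde{\alpha}$ and $\delta$ graded-commute. Applying \eqref{fig: rfray_fork_slide} in each $(\Theta_{\blam}, \mathbb{Y}_{\blam})$-degree then gives an $R[\mathbb{X}, \mathbb{X}'']$-equivariant natural equivalence, acting via a strong deformation retraction satisfying the side conditions, whose target is $\mathrm{tw}_{\delta + \tilde{\alpha}}$ of $\rfray{n}{\blam}(- \star (_{\aa}\sigma_{r-1})) \otimes_R R[\Theta_{\blam}, \mathbb{Y}_{\blam}]$. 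Reading off the alphabet conventions of \eqref{fig: x_alphs}, this target is by definition $\yfray{n}{\blam}(- \star (_{\aa}\sigma_{r-1}))$: the twist $\tilde{\alpha}$ is the Koszul differential and $\delta$ is the $y$-twist, now taken relative to the bottom alphabet $\mathbb{X}''$ of the composite. Composing the two equivalences proves the statement; the cases of $_{\aa}\sigma_r$, of negative crossings, and of crossings above $C$ are identical.

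The only bit of care needed is to confirm that the composite still acts via a strong deformation retraction satisfying the side conditions. For this I would use the elementary observation that precomposing a strong deformation retraction $\{f, g, h\}$ with the side conditions by an isomorphism $\psi$ yields the strong deformation retraction $\{f\psi, \psi^{-1}g, \psi^{-1}h\psi\}$, which again satisfies $h^2 = 0$, $fh = 0$, $hg = 0$ — a one-line check using invertibility of $\psi$ (here $\psi = \Psi$). I do not expect a genuine obstacle: the substantive computations have already been carried out in Propositions \ref{prop: yfray_dot_slide} and \ref{prop: rfray_to_def_forks}, and the remaining friction is purely bookkeeping of the three alphabets $\mathbb{X}, \mathbb{X}', \mathbb{X}''$ and of $\Theta_{\blam}$ versus $\mathbb{Y}_{\blam}$ — in particular verifying via Remark \ref{rem: y_ify_tensors} that the curvatures of $\yfray{n}{\blam}(C)$ and of $(_{\aa}\sigma_{r-1})^{\blam, y}$ add to precisely $F_y^{\blam}$ so that the intermediate $\mathbb{X}'$ cancels.
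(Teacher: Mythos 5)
Your argument is correct and follows the paper's proof exactly: compose the dot-sliding isomorphism of Proposition~\ref{prop: yfray_dot_slide} with a degree-wise application of the reduced fork-slide via Proposition~\ref{prop: rfray_to_def_forks}, using that $\delta + \tilde{\alpha}$ is supported on $R[\mathbb{X}, \mathbb{X}'']\otimes_R R[\Theta_{\blam},\mathbb{Y}_{\blam}]$. The extra checks you include (that $(\delta+\tilde{\alpha})^2 = F_y^{\blam}$ and that conjugating a strong deformation retraction satisfying the side conditions by the isomorphism $\Psi$ preserves those properties) are not spelled out in the paper but are correct and a sensible sanity check.
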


\begin{cor} \label{cor: braid_yfin_slide}
    Let $\beta_{\aa} \in \BGpd{\aa}{\bb}$ be given. Then $\yfray{n}{\blam}(\beta_{\aa})
    \simeq \yfinproj{\blam} \star \beta^{\blam, y}_{\aa} \simeq
    \beta^{\blam, y}_{\aa} \star \yfinproj{\blam}$.
\end{cor}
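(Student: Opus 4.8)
The plan is to deduce the statement from iterated applications of the fork-sliding equivalence for $\yfray{n}{\blam}$ established in Proposition \ref{prop: yfray_fork_slide}, exactly as Corollary \ref{cor: rfray_cable} is deduced from Proposition \ref{prop: rfray_fork_slide} and Corollary \ref{cor: braid_fin_slide} from Proposition \ref{prop: fray_fork_slide}. First I would fix a colored braid word representing $\beta_{\aa}$, so that $C(\beta_{\aa})$ is the horizontal composite of the Rickard complexes of its colored Artin generators (any two such words give canonically homotopy equivalent complexes, so this is harmless at the level of $K\YS_{F_y^{\blam}}$). Since $\yfray{n}{\blam}$ is a functor on $K(\SSBim_{\aa}^{\bb})$, I would then induct on the number $c$ of crossings in the chosen word that involve the distinguished $n$-colored strand. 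A crossing not involving that strand is supported on passive strands, is untouched by $\yfray{n}{\blam}$ (which acts locally at the distinguished strand), and so may be carried along unchanged; in particular the base case $c = 0$ is immediate from the definition of $\yfray{n}{\blam}$ together with $\yfray{n}{\blam}(\strand{(n)}) = \yfinproj{\blam}$, the $\blam$-cable of an untouched $n$-strand being a trivial bundle of parallel strands.

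For the inductive step, write $C(\beta_{\aa}) \simeq C' \star C(\sigma_i^{\pm 1})_{\aa}$, where $\sigma_i^{\pm 1}$ is the bottom-most colored generator of the word involving the distinguished $n$-strand. Because Artin generators are adjacent transpositions, this crossing is $\sigma_{r-1}$ or $\sigma_r$ at the current position $r$ of the distinguished strand, so Proposition \ref{prop: yfray_fork_slide} (or its stated variants for $\sigma_r$ and for negative crossings) supplies an $R[\mathbb{X}, \mathbb{X}'']$-equivariant natural strong deformation retraction
\[
\yfray{n}{\blam}\bigl(C' \star (_{\aa}\sigma_i^{\pm 1})\bigr) \ \simeq \ \yfray{n}{\blam}(C') \star (_{\aa}\sigma_i^{\pm 1})^{\blam, y}
\]
in $K\YS_{F_y^{\blam}}\bigl(\SSBim^{\bb^{\blam}}_{(s_i \cdot \aa)^{\blam}}; R[\Theta_{\blam}, \mathbb{Y}_{\blam}]\bigr)$. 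Applying the inductive hypothesis to $\yfray{n}{\blam}(C')$ and reassembling the passive crossings yields $\yfray{n}{\blam}(\beta_{\aa}) \simeq \yfinproj{\blam} \star \beta_{\aa}^{\blam, y}$. The second equivalence, $\yfray{n}{\blam}(\beta_{\aa}) \simeq \beta_{\aa}^{\blam, y} \star \yfinproj{\blam}$, is obtained identically by peeling off the top-most relevant crossing at each stage and invoking the ``crossings above $C$'' version of Proposition \ref{prop: yfray_fork_slide}; comparing the two also gives $\yfinproj{\blam} \star \beta_{\aa}^{\blam, y} \simeq \beta_{\aa}^{\blam, y} \star \yfinproj{\blam}$.

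Once Proposition \ref{prop: yfray_fork_slide} is in hand the argument is essentially bookkeeping, so I do not anticipate a serious obstacle. The only points requiring care are that the iterated equivalences compose to a genuine homotopy equivalence of curved complexes — guaranteed since each stage is a strong deformation retraction satisfying the side conditions and the curvature $F_y^{\blam}$ is preserved throughout by the equivariance and naturality statements — and that the alphabets of deformation parameters $\Theta_{\blam}$ and $\mathbb{Y}_{\blam}$ are matched consistently from step to step, which is automatic because each application of Proposition \ref{prop: yfray_fork_slide} uses the same fixed alphabets. One should also record, for the statement to typecheck, that $\yfinproj{\blam}$ carries curvature $F_y^{\blam}(\mathbb{X}, \mathbb{X}')$ and $\beta_{\aa}^{\blam, y}$ carries curvature $F_y^{\blam}(\mathbb{X}', \mathbb{X}'')$, so their horizontal composite has curvature $F_y^{\blam}(\mathbb{X}, \mathbb{X}'')$, matching that of $\yfray{n}{\blam}(\beta_{\aa})$.
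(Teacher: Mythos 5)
Your proof is correct and takes the same approach the paper has in mind: the paper gives no explicit proof, treating this corollary (like Corollaries \ref{cor: rfray_cable} and \ref{cor: braid_fin_slide}) as an immediate consequence of the preceding fork-sliding proposition, applied once per crossing of the distinguished strand. Your attention to the passive crossings, the composition of strong deformation retractions, and the matching of curvatures under horizontal composition are exactly the bookkeeping points that make "repeated application of Proposition \ref{prop: yfray_fork_slide}" rigorous.
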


\begin{cor} \label{cor: yfin_proj_crossing_slide}
    $\yfinproj{\blam}$ slides past crossings with $\Delta e$-deformation on the strands passing through $\yfinproj{\blam}$.
\end{cor}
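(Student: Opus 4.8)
The plan is to derive this as an immediate specialization of Corollary \ref{cor: braid_yfin_slide}, in exactly the way that Corollary \ref{cor: fin_proj_crossing_slide} is read off from Corollary \ref{cor: braid_fin_slide}. First I would take $N = n + c$, the domain object $\aa = (n, c)$, the codomain object $\bb = (c, n)$, and the single-crossing colored braid $\beta = (\sigma_1)_{\aa} \in \BGpd{\aa}{\bb}$ (together with $\beta = (\sigma_1^{-1})_{\aa}$ for the negative-crossing case), so that the distinguished $n$-colored strand is the one being frayed. Corollary \ref{cor: braid_yfin_slide} then furnishes a chain of homotopy equivalences of $F_y^{\blam}$-curved complexes
\[
\yfinproj{\blam} \star (\sigma_1)^{\blam, y}_{\aa} \simeq \yfray{n}{\blam}\big((\sigma_1)_{\aa}\big) \simeq (\sigma_1)^{\blam, y}_{\aa} \star \yfinproj{\blam},
\]
and comparing the two outer terms gives precisely the graphical sliding relation displayed just before Corollary \ref{cor: fin_proj_crossing_slide}, now with the $\Delta e$-deformation carried along the $\blam$-colored cable — i.e. on the strands that pass through $\yfinproj{\blam}$. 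The ``analogous maps'' clauses of Proposition \ref{prop: yfray_fork_slide} take care of the reflected configurations (the passive $c$-colored strand crossing under, rather than over, the cable) and of crossings above $\yfinproj{\blam}$ rather than below, so all four pictures follow simultaneously.

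The one point that genuinely needs care is the bookkeeping of deformation parameters. The phrase ``$\Delta e$-deformation on the strands passing through $\yfinproj{\blam}$'' must be read as the curved complex $(\sigma_1)^{\blam, y}_{\aa}$ whose deformation alphabet $\mathbb{Y}_{\blam}$ is identified with the internal deformation alphabet of $\yfinproj{\blam} = \yfray{n}{\blam}(\strand{n})$. That this identification is legitimate — and, crucially, that the horizontal composite $\yfinproj{\blam} \star (\sigma_1)^{\blam, y}_{\aa}$ remains an $F_y^{\blam}$-curved complex rather than acquiring doubled curvature — is exactly the content of Remark \ref{rem: y_ify_tensors} on tensor products of $\Delta e$-deformations, applied in the case where the relevant permutation composite is trivial so that the two $F_y^{\blam}$-curvatures add up as required. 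Once this is recorded, the homotopy equivalence is literally the one produced by Corollary \ref{cor: braid_yfin_slide}, and no further computation is needed.

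I do not expect a serious obstacle here: the statement is formal given Corollary \ref{cor: braid_yfin_slide}, just as Corollary \ref{cor: fin_proj_crossing_slide} was formal given Corollary \ref{cor: braid_fin_slide}. The only mild subtlety is notational — making the specialization $\aa = (n,c)$, $\bb = (c,n)$ explicit so the reader can extract the picture, and confirming that ``slides past crossings'' in the deformed sense is the intended generalization of the definition given before Corollary \ref{cor: fin_proj_crossing_slide}, namely with every crossing in that definition replaced by its $\Delta e$-deformation on the frayed cable.
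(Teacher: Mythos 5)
Your proposal is correct and takes essentially the same approach the paper intends: the paper states this corollary as immediate from Corollary~\ref{cor: braid_yfin_slide} in exactly the way Corollary~\ref{cor: fin_proj_crossing_slide} is immediate from Corollary~\ref{cor: braid_fin_slide}, and your specialization to $\aa = (n,c)$, $\bb = (c,n)$, $\beta = (\sigma_1)_{\aa}$ together with the observation about identifying the $\mathbb{Y}_{\blam}$ alphabets via Remark~\ref{rem: y_ify_tensors} is precisely the implicit argument.
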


\subsection{Deformation on Passive Strands} \label{subsec: unfray_curv}

We turn our attention next to establishing a version of Proposition \ref{prop: yfray_fork_slide} with $\Delta e$-curvature on the passive strand which crosses under the separated strands. Recalling the alphabet conventions of \eqref{fig: x_alphs}, the relevant curvature in this setting is

\[
F_{u_{\ell}}^{(a_{r - 1})}(\mathbb{X}'_{\ell}, \mathbb{X}''_{\ell}) = \sum_{k = 1}^{a_{r - 1}} (e_k(\mathbb{X}'_{\ell}) - e_k(\mathbb{X}''_{\ell})) \otimes u_{\ell, k}.
\]

In keeping with the notation of Section \ref{sec: fork_slyde}, we denote the passive strand $\Delta e$-deformations of $_{\aa} \sigma_{r - 1}$ and $(_{\aa} \sigma_{r - 1})^{\blam}$ as $(_{\aa} \sigma_{r - 1})^{u_{\ell}}$ and $(_{\aa} \sigma_{r - 1})^{\blam, u_{\ell}}$, respectively.

\begin{rem}
Strictly speaking, $F_{u_{\ell}}^{(a_{r - 1})}(\mathbb{X}'_{\ell}, \mathbb{X}''_{\ell})$ is not a valid curvature, since the presence of the alphabet $\mathbb{X}'_{\ell}$ prevents this morphism from acting on complexes not of the form $\rfray{n}{\blam}(C) \star (_{\aa} \sigma_{r - 1})^{\blam}$ or $\rfray{n}{\blam}(C \star (_{\aa} \sigma_{r - 1}))$.
This issue can be solved by evaluating the functors of Proposition \ref{prop: unfray_fork_slide} on an $F$-curved complex, where $F \in Z(\SSBim_{\aa}^{\bb}[\mathbb{U}_{\ell}])$
satisfies $\rfray{n}{\blam}(F) \star 1 + 1 \star F_{u_{\ell}}^{(a_{r - 1})} \in Z(\SSBim_{(s_{r - 1} \cdot \aa)^{\blam}}^{\bb^{\blam}}[\mathbb{U}_{\ell}])$.
We deliberately ignore this issue in formulating Proposition \ref{prop: unfray_fork_slide} to avoid a notational nightmare. In all applications of this Proposition (for example, in Corollary \ref{cor: finproj_ycrossing_slide}) this is a non-issue, as we always take $C$ to be a deformed Rickard complex.
\end{rem}

\begin{prop} \label{prop: unfray_fork_slide}
There is an $R[\mathbb{X}, \mathbb{X}'']$-equivariant natural equivalence of functors

\begin{align*}
\yfray{n}{\blam}(-) \star (_{\aa} \sigma_{r - 1})^{\blam, y, u_{\ell}} & \sim \yfray{n}{\blam}(- \star (_{\aa} \sigma_{r - 1})^{u_{\ell}}) \colon \\
 K \left( \SSBim_{\aa}^{\bb} \right) & \to K\YS_{F_y^{\blam} + F_{u_{\ell}}^{(a_{r - 1})}} \left( \SSBim_{(s_{r - 1} \cdot \aa)^{\blam}}^{\bb^{\blam}}; R[\Theta_{\blam}, \mathbb{Y}_{\blam}, \mathbb{U}_{\ell}] \right)
\end{align*}
The analogous equivalence involving $_{\aa} \sigma_r$ also holds, as do
the analogous equivalences involving negative crossings and crossings above $C$.
\end{prop}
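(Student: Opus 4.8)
The plan is to mirror the proof of Proposition \ref{prop: yfray_fork_slide} verbatim, simply carrying along an extra alphabet $\mathbb{U}_{\ell}$ of passive-strand deformation parameters throughout. The key observation is that the passive strand $\mathbb{X}_{\ell}$ (with alphabets $\mathbb{X}'_{\ell}$ above and $\mathbb{X}''_{\ell}$ below the crossing) is disjoint from the bulk and separated strands on which $\yfray{n}{\blam}$ acts, so the curvature $F_{u_{\ell}}^{(a_{r-1})}$ and its connection live on a tensor factor entirely distant from the data modified by the fray construction. Concretely, I would first invoke the dot-sliding equivalence for fray functors (Proposition \ref{prop: fray_dot_slide}), extended to the present deformed setting via Proposition \ref{prop: fray_to_def_dots}, using the isomorphisms $\Psi, \Psi^{-1}$ of \eqref{eq: fray_dot_slides}; as in the proof of Proposition \ref{prop: yfray_dot_slide} one checks $\Psi \Delta \Psi^{-1} = \delta + (\text{passive part})$ by a direct computation, where the passive part involves only $\mathbb{U}_{\ell}$ and the dot-sliding homotopies $\tilde{\xi}_{\ell k}$, $\xi_{\ell k}$ of Convention \ref{conv: cabled_fork_dot_slides}. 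Since $\xi_{\ell k}$ and $\theta_{jk}$ occur on tensor factors disjoint from one another and from $C$, the conjugation leaves the passive-strand connection untouched.

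The second step is to observe that after this conjugation, the total connection $\delta + \tilde{\alpha} + (\text{passive } \Delta e\text{-connection})$ involves only the alphabets $\mathbb{X}, \mathbb{X}'', \Theta_{\blam}, \mathbb{Y}_{\blam}$, and $\mathbb{U}_{\ell}$ — crucially it has no $\mathbb{X}'$-dependence in the fray piece, and the $\mathbb{X}'_{\ell}$-dependence of $F_{u_{\ell}}^{(a_{r-1})}(\mathbb{X}'_{\ell}, \mathbb{X}''_{\ell})$ is precisely the sort the fork-sliding equivalence for reduced fray functors \eqref{fig: rfray_fork_slide} is designed to handle, as it is $R[\mathbb{X}, \mathbb{X}'']$-equivariant and the fork-slide takes place on exactly the $MCC_{\blam, a_{r-1}}^n$ block where $\mathbb{X}'_{\ell}$ lives. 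Here the results of Section \ref{sec: fork_slyde} are exactly on point: Proposition \ref{prop: deformed_forkslide} provides the deformed fork-slide from $(MCC_{\blam, a_{r-1}}^n)^{u_{\ell}', u_n'}$ to $(CM_{\blam, a_{r-1}}^n)^{u_{\ell}, u_n}$, and Proposition \ref{prop: conn_change_mcc} reconciles the lifted connection with the one arising naturally from dot-sliding. I would then apply Proposition \ref{prop: rfray_to_def_forks} (or rather its evident refinement allowing the connection $\Delta$ to depend on several alphabets including $\mathbb{U}_{\ell}$) in each multi-degree to slide the crossings past, obtaining the desired natural equivalence of functors landing in $K\YS_{F_y^{\blam} + F_{u_{\ell}}^{(a_{r-1})}}$.

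The main obstacle, as the remark preceding the proposition flags, is purely bookkeeping rather than conceptual: one must be careful that $F_{u_{\ell}}^{(a_{r-1})}(\mathbb{X}'_{\ell}, \mathbb{X}''_{\ell})$ is not literally a central element acting on arbitrary complexes, so the statement only makes sense when the fork-slide is applied inside a composite of the form $\rfray{n}{\blam}(C) \star (_{\aa}\sigma_{r-1})^{\blam}$ where the alphabet $\mathbb{X}'_{\ell}$ genuinely appears; the cleanest way to finesse this is to phrase everything at the level of the composite functors as stated and to note that every application of the proposition in the sequel (e.g.\ Corollary \ref{cor: finproj_ycrossing_slide}) takes $C$ to be a deformed Rickard complex, for which all curvatures in sight are honestly central. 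I would therefore present the proof as: ``This is proven exactly as Proposition \ref{prop: yfray_fork_slide}, carrying the alphabet $\mathbb{U}_{\ell}$ along. The dot-sliding equivalence \eqref{eq: fray_dot_slides} commutes with the passive-strand connection since $\xi_{jk}$, $\theta_{jk}$ act on tensor factors disjoint from the $a_{r-1}$-colored strand, and the resulting connection depends only on $\mathbb{X}, \mathbb{X}'', \Theta_{\blam}, \mathbb{Y}_{\blam}, \mathbb{U}_{\ell}$; applying Propositions \ref{prop: deformed_forkslide}, \ref{prop: conn_change_mcc}, and \ref{prop: rfray_to_def_forks} in each multi-degree then yields the claimed equivalence.'' The remaining checks — that the composite of strong deformation retractions is again one, that side conditions persist, and that $R[\mathbb{X}, \mathbb{X}'']$-equivariance and naturality follow from the middle interchange law — are routine and identical to the undeformed case, so I would leave them to the reader as the paper does elsewhere.
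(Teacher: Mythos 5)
Your proposal is correct and follows essentially the same route as the paper: conjugate by $\Psi$ via Proposition~\ref{prop: fray_to_def_dots} and observe $\Psi\Delta_{\ell}\Psi^{-1} = \Delta_{\ell}$ by graded-commutativity of distant tensor factors; reassociate so the passive connection is absorbed into $(MCC_{\blam,a_{r-1}}^n)^{u_\ell}$; replace this with $(MCC_{\blam,a_{r-1}}^n)^{u_\ell'}$ using Proposition~\ref{prop: conn_change_mcc}; then apply the fork-slide of Propositions~\ref{prop: rfray_fork_slide} and~\ref{prop: deformed_forkslide} in every $\Theta_{\blam}$- and $\mathbb{Y}_{\blam}$-degree. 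One small imprecision: you invoke the $(MCC_{\blam,a_{r-1}}^n)^{u_\ell',u_n'} \to (CM_{\blam,a_{r-1}}^n)^{u_\ell,u_n}$ piece of Proposition~\ref{prop: deformed_forkslide}, but only the $u_\ell$ curvature is in play here — the relevant piece is $(MCC_{\blam,a_{r-1}}^n)^{u_\ell'} \to (CM_{\blam,a_{r-1}}^n)^{u_\ell}$; bulk curvature is handled separately in Proposition~\ref{prop: ufray_fork_slide}.
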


\begin{proof}
As in the proof of Proposition \ref{prop: fray_dot_slide}, we consider the dot-sliding endomorphisms $\xi_{\ell k}$, $\xi'_{\ell k} \in \End(MCC^n_{\blam, a_{r - 1}})$ of Section \ref{sec: fork_slyde}, respectively $\tilde{\xi}_{\ell k} \in \End(CM^n_{\blam, a_{r - 1}})$, as acting on $\rfray{n}{\blam}(C) \star (\sigma_{r - 1})_{\aa}^{\blam}$, respectively $\rfray{n}{\blam}(C \star (\sigma_{r - 1})_{\aa})$.

Given a complex $C \in \CS(\SSBim_{\aa}^{\bb})$, we may express the functor on the left-hand side applied to $C$ explicitly as

\begin{align*}
\yfray{n}{\blam}(C) \star (_{\aa} \sigma_{r - 1})^{\blam, y, u_{\ell}} & = \mathrm{tw}_{\Delta_{\ell}} \left( \left( \yfray{n}{\blam}(C) \star (_{\aa} \sigma_{r - 1})^{\blam, y} \right) \otimes \mathbb{R}[\mathbb{U}_{\ell}] \right); \\
\Delta_{\ell} & = \sum_{k = 1}^{a_{r - 1}} \xi_{\ell k} \otimes u_{\ell k}.
\end{align*}
By Proposition \ref{prop: fray_to_def_dots}, the natural equivalence $\Psi, \Psi^{-1}$ of \eqref{eq: fray_dot_slides} induces an isomorphism

\begin{multline} \label{eq: unfray_dot_slide}
\mathrm{tw}_{\Delta_{\ell}} \left( \left( \yfray{n}{\blam}(C) \star (_{\aa} \sigma_{r - 1})^{\blam, y} \right) \otimes_R R[\mathbb{U}_{\ell}] \right) \\
\cong \mathrm{tw}_{\Psi \Delta_{\ell} \Psi^{-1} + \delta + \tilde{\alpha}} \left( \left( \rfray{n}{\blam}(-) \star (_{\aa} \sigma_{r - 1})^{\blam} \right) \otimes_R R[\Theta_{\blam}, \mathbb{Y}_{\blam}, \mathbb{U}_{\ell}] \right).
\end{multline}

Now, $\Psi$ is constructed from dot-slide endomorphisms $\xi_{jk}$ and (multiplication by) deformation parameters $\theta_{jk}$, while $\Delta_{\ell}$ is constructed from dot-slide endomorphisms $\xi_{\ell k}$ and deformation parameters $u_{\ell k}$. All of these morphisms form a graded-commutative family, so $\Psi$ and $\Delta_{\ell}$ commute, and $\Psi \Delta_{\ell} \Psi^{-1} = \Delta_{\ell}$. Making this substitution in the right-hand side of \eqref{eq: unfray_dot_slide} and reassociating, we obtain

\begin{multline} \label{eq: unfray_dot_slide2}
\mathrm{tw}_{\Delta_{\ell}} \left( \left( \yfray{n}{\blam}(C) \star (_{\aa} \sigma_{r - 1})^{\blam, y} \right) \otimes_R R[\mathbb{U}_{\ell}] \right) \\
\cong \mathrm{tw}_{\delta + \tilde{\alpha}} \left( \left( \left( ((_{\bb^{\blam}} S_{\bb}) \star C) \otimes_R R[\mathbb{U}_{\ell}] \right) \star (MCC_{\blam, a_{r - 1}}^n)^{u_{\ell}} \right) \otimes_R R[\Theta_{\blam}, \mathbb{Y}_{\blam}] \right).
\end{multline}

By Proposition \ref{prop: conn_change_mcc}, we may replace $(MCC_{\blam, a_{r - 1}}^n)^{u_{\ell}}$ with $(MCC_{\blam, a_{r - 1}}^n)^{u_{\ell}'}$ up to homotopy equivalence. Finally, we apply the equivalence of Proposition \ref{prop: rfray_fork_slide} in every $\Theta_{\blam}$ and $\mathbb{Y}_{\blam}$-degree. In each such degree, the maps involved are exactly those of the homotopy equivalence of Proposition \ref{prop: deformed_forkslide}, resulting in a homotopy equivalence of curved complexes

\[
\left( ((_{\bb^{\blam}} S_{\bb}) \star C) \otimes_R R[\mathbb{U}_{\ell}] \right) \star (MCC_{\blam, a_{r - 1}}^n)^{u_{\ell}'} \simeq \left( ((_{\bb^{\blam}} S_{\bb}) \star C) \otimes_R R[\mathbb{U}_{\ell}] \right) \star (CM_{\blam, a_{r - 1}}^n)^{u_{\ell}}.
\]

Now, upon passing to a tensor product with $R[\Theta_{\blam}, \mathbb{Y}_{\blam}]$ and twisting by $\delta + \tilde{\alpha}$, the right-hand side of this equivalence becomes exactly $\yfray{n}{\blam}(C \star (\sigma_{r - 1})_{\aa}^{u_{\ell}})$, as can be seen by comparing each summand in the connections of these two expressions.
\end{proof}

\begin{cor} \label{cor: finproj_ycrossing_slide}
    $\yfinproj{\blam}$ slides past $\Delta e$-deformed crossings.
\end{cor}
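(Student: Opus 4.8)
The plan is to deduce Corollary~\ref{cor: finproj_ycrossing_slide} from Proposition~\ref{prop: unfray_fork_slide} in exactly the way Corollary~\ref{cor: fin_proj_crossing_slide} was deduced from Proposition~\ref{prop: fray_fork_slide}: specialize the ambient complex to an identity $1$-morphism and apply the fork-slide both from below and from above. First I would fix the local model $\aa = (c, n)$, $\bb = (n, c)$, so that the distinguished bulk strand is $a_r = n$ (with $r = 2$), the passive strand is $a_{r-1} = c$, and $(\sigma_1)_{\aa} \in \BGpd{\aa}{\bb}$ is the crossing of the $n$-colored strand over the $c$-colored strand. A general $\Delta e$-deformed crossing between the $\blam$-cable emerging from $\yfinproj{\blam}$ and a passive strand is obtained from this model by tensoring on passive identity strands and, where needed, by passing to negative crossings; both operations are already covered by the ``analogous maps'' clauses of Proposition~\ref{prop: unfray_fork_slide}, so it suffices to treat this model.

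Next I would apply Proposition~\ref{prop: unfray_fork_slide} to the undeformed identity complex $C = \strand{\bb}$ with the crossing placed below $C$. Since $\rfray{n}{\blam}(\strand{\bb}) = W_{\blam} \boxtimes \strand{(c)}$, we have $\yfray{n}{\blam}(\strand{\bb}) = \yfinproj{\blam}$ (with a passive $c$-strand running alongside, suppressed from the notation), and the proposition furnishes a homotopy equivalence of $(F_y^{\blam} + F_{u_{\ell}}^{(c)})$-curved complexes
\[
\yfinproj{\blam} \star (_{\aa}\sigma_1)^{\blam, y, u_{\ell}} \ \simeq \ \yfray{n}{\blam}\bigl(\strand{\bb} \star (_{\aa}\sigma_1)^{u_{\ell}}\bigr) \ = \ \yfray{n}{\blam}\bigl((\sigma_1)_{\aa}^{u_{\ell}}\bigr).
\]
Applying the same proposition to $C = \strand{\aa}$ with the crossing placed above $C$ gives instead
\[
(_{\aa}\sigma_1)^{\blam, y, u_{\ell}} \star \yfinproj{\blam} \ \simeq \ \yfray{n}{\blam}\bigl((_{\aa}\sigma_1)^{u_{\ell}} \star \strand{\aa}\bigr) \ = \ \yfray{n}{\blam}\bigl((\sigma_1)_{\aa}^{u_{\ell}}\bigr).
\]
Concatenating the two equivalences yields $\yfinproj{\blam} \star (_{\aa}\sigma_1)^{\blam, y, u_{\ell}} \simeq (_{\aa}\sigma_1)^{\blam, y, u_{\ell}} \star \yfinproj{\blam}$, which is precisely the statement that $\yfinproj{\blam}$ slides past a $\Delta e$-deformed crossing --- compare the graphical equivalence preceding Corollary~\ref{cor: fin_proj_crossing_slide}, now decorated with $y$-deformations on the separated strands and a $u_{\ell}$-deformation on the passive strand.

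The hard part has already been carried out inside Proposition~\ref{prop: unfray_fork_slide}; what remains here is only bookkeeping of curvatures. Specifically, $\yfinproj{\blam}$ carries curvature $F_y^{\blam}$ on its separated strands and none on the passive strand, while $(_{\aa}\sigma_1)^{\blam, y, u_{\ell}}$ carries $F_y^{\blam}$ on the $\blam$-cable together with $F_{u_{\ell}}^{(c)}$ on the passive strand; since the passive strand runs straight through $\yfinproj{\blam}$, the horizontal composite is a well-defined curved complex with curvature $F_y^{\blam} + F_{u_{\ell}}^{(c)}$ by the tensor-product-of-deformations discussion of Remark~\ref{rem: y_ify_tensors}. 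The caveat in the remark following Proposition~\ref{prop: unfray_fork_slide} --- that $F_{u_{\ell}}^{(a_{r-1})}(\mathbb{X}'_{\ell}, \mathbb{X}''_{\ell})$ is not \emph{a priori} a legitimate curvature --- does not intervene, because the complex fed into the fork-slide is the trivial, undeformed Rickard complex of an identity braid. I do not anticipate any genuine obstacle: the only real work is the routine diagram chase identifying the two expressions $\yfray{n}{\blam}((\sigma_1)_{\aa}^{u_{\ell}})$ produced by the two applications of Proposition~\ref{prop: unfray_fork_slide}.
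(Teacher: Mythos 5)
Your proposal is correct and matches the argument the paper intends: specialize Proposition~\ref{prop: unfray_fork_slide} to an identity $1$-morphism, apply the fork-slide once with the crossing below and once above, and chain the two resulting equivalences through the common target $\yfray{n}{\blam}$ of the deformed single crossing, with the curvature caveat neutralized exactly as you say because the identity bimodule identifies the internal passive alphabet $\mathbb{X}'_{\ell}$ with an external one. (One small bookkeeping fix for a clean write-up: since the proposition requires $C \in \SSBim_{\aa'}^{\bb'}$ with $\aa' = \bb'$ for $C$ to be an identity $1$-morphism, the below-crossing application should use $C = \strand{(c,n)}$ so that $(_{(c,n)}\sigma_1) = (\sigma_1)_{(n,c)}$ is the $n$-over-$c$ positive crossing that composes below $\strand{(c)}\boxtimes\yfinproj{\blam}$, while the above-crossing application should use $C = \strand{(n,c)}$ together with the $\sigma_r$ (rather than $\sigma_{r-1}$) variant of the proposition; as written your composite $\strand{\bb}\star(_{\aa}\sigma_1)^{u_{\ell}}$ does not typecheck, since the codomain of $(_{\aa}\sigma_1)$ is $\aa$, not $\bb$.)
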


\subsection{Deformation on Bulk Strands} \label{sec: ufray}

Finally, we turn our attention to deformations with $\Delta e$-curvature associated to bulk strands. The relevant curvature here is

\[
F_u^{(n)}(\mathbb{X}_n, \mathbb{X}'_n) = \sum_{i = 1}^n (e_i(\mathbb{X}_n) - e_i(\mathbb{X}'_n)) \otimes u_i \in \mathrm{Sym}^{(n)}(\mathbb{X}_n, \mathbb{X}'_n) \otimes_R R[\mathbb{U}_n].
\]
We also denote the curved complex associated to the crossing $_{\aa} \sigma_{r - 1}$ with $\Delta e$-curvature along only the bulk $n$-labeled strand by $(_{\aa} \sigma_{r - 1})^{u_n}$. Explicitly, $(_{\aa} \sigma_{r - 1})^{u_n}$ has curvature

\[
F_u^{(n)}(\mathbb{X}'_n, \mathbb{X}''_n) = \sum_{i = 1}^n (e_i(\mathbb{X}'_n) - e_i(\mathbb{X}''_n)) \otimes u_i \in \mathrm{Sym}^{(n)}(\mathbb{X}'_n, \mathbb{X}''_n) \otimes_R R[\mathbb{U}_n].
\]

\begin{prop} \label{prop: thick_curv_homotopies}
    Let $C \in \CS(\SSBim_{\bb}^{\aa})$ be given. Recall the polynomials $a_{ijk}^{\blam}(\mathbb{X}_n, \mathbb{X}'_n) \in \mathrm{Sym}^{\blam}(\mathbb{X}_n, \mathbb{X}'_n)$ of Lemma \ref{lem: a_ijk_exist}. For each $1 \leq i \leq n$, set

    \[
    \eta_i := \sum_{j = 1}^m \sum_{k = 1}^{\lambda_j} a_{ijk}^{\blam}(\mathbb{X}_n, \mathbb{X}'_n)
    \otimes \theta_{jk}^{\vee} \in \END^{-1}(\fray{n}{\blam}(C)).
    \]

    Then $\{\eta_i\}_{i = 1}^n$ is an $F_u^{(n)}$-deforming family on $\fray{n}{\blam}(C)$.
\end{prop}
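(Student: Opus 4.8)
The plan is to invoke Proposition~\ref{prop: strict_def}: it suffices to check that the family $\{\eta_i\}_{i=1}^n$ graded-commutes pairwise and that $[d,\eta_i] = e_i(\mathbb{X}_n) - e_i(\mathbb{X}'_n)$ for each $i$, where $d$ is the differential on $\fray{n}{\blam}(C)$; by Definition~\ref{def: strict_def_family} this is precisely the assertion that $\{\eta_i\}$ is an $F_u^{(n)}$-deforming family, the curvature being the one introduced above with $\varphi_i = e_i(\mathbb{X}_n) - e_i(\mathbb{X}'_n)$ and deformation parameters $u_i$ of degree $q^{-2i}t^2$.

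First I would record from Definition~\ref{def: fray} that the differential on $\fray{n}{\blam}(C)$ is $d = \rfray{n}{\blam}(d_C) \otimes 1 + \alpha$, where $\alpha = \sum_{j,k}(e_k(\mathbb{X}_j) - e_k(\mathbb{X}'_j)) \otimes \theta_{jk}$ is the Koszul twist, and split $[d,\eta_i] = [\rfray{n}{\blam}(d_C)\otimes 1,\eta_i] + [\alpha,\eta_i]$. The first bracket vanishes: each coefficient $a_{ijk}^{\blam}(\mathbb{X}_n,\mathbb{X}'_n)$ is a symmetric polynomial, hence a central endomorphism commuting with $d_C$, and $\theta_{jk}^{\vee}$ lives on the $R[\Theta_{\blam}]$ tensor factor, so a short middle interchange law computation gives $[\rfray{n}{\blam}(d_C)\otimes 1,\eta_i] = 0$. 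For the second bracket I would expand termwise: the diagonal terms (matching $\theta$-indices) produce $a_{ijk}^{\blam}(e_k(\mathbb{X}_j) - e_k(\mathbb{X}'_j)) \otimes \mathrm{id}$ via the relation $\theta_{jk}\theta_{jk}^{\vee} + \theta_{jk}^{\vee}\theta_{jk} = \mathrm{id}$ of Proposition~\ref{prop: poly_props}, while the off-diagonal terms cancel in pairs because distinct $\theta$-multiplications and $\theta^{\vee}$-contractions anti-commute and the polynomial coefficients are central. Summing over $j$ and $k$ and applying Lemma~\ref{lem: a_ijk_exist} (with $N = n$, $\bb = \blam$, and the bulk alphabet $\mathbb{X}_n = \mathbb{X}_1 \sqcup \cdots \sqcup \mathbb{X}_m$) leaves exactly $e_i(\mathbb{X}_n) - e_i(\mathbb{X}'_n)$, as required.

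For the pairwise graded-commutativity, the same expansion shows $[\eta_i,\eta_{i'}] = \sum a_{ijk}^{\blam} a_{i'j'k'}^{\blam} \otimes (\theta_{jk}^{\vee}\theta_{j'k'}^{\vee} + \theta_{j'k'}^{\vee}\theta_{jk}^{\vee})$, which vanishes since the contractions against the odd variables $\theta_{jk}$ satisfy $(\theta_{jk}^{\vee})^2 = 0$ and anti-commute for distinct indices (Proposition~\ref{prop: poly_props}). The argument thus reduces entirely to sign bookkeeping governed by the middle interchange law; the only nontrivial input is Lemma~\ref{lem: a_ijk_exist}, which was engineered precisely so that the $a_{ijk}^{\blam}$ reassemble $e_i(\mathbb{X}_n) - e_i(\mathbb{X}'_n)$. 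The main point to get right is that $\theta_{jk}$ and $\theta_{jk}^{\vee}$ are odd, so $\eta_i$ has homological degree $-1$ and the relevant graded commutators are anti-commutators, whereas the polynomial coefficients have homological degree $0$ and introduce no signs; I expect the write-up to parallel the proof of Proposition~\ref{prop: strict_def} closely.
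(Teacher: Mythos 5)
Your proof is correct and follows essentially the same route as the paper, which simply defers to the computation in Lemma~\ref{lem: bulk_curv_MCC}: once one knows that $[d, 1 \otimes \theta_{jk}^{\vee}] = (e_k(\mathbb{X}_j) - e_k(\mathbb{X}'_j)) \otimes 1$ and that the $1\otimes\theta_{jk}^{\vee}$ pairwise graded-commute (both established in the un-numbered proposition opening Section~\ref{sec: yfray}), multiplying by the central degree-$0$ coefficients $a_{ijk}^{\blam}$ and summing via Lemma~\ref{lem: a_ijk_exist} gives exactly the claim. You re-derive that key identity from the Koszul twist $\alpha$ rather than citing it, which is a valid, slightly more explicit version of the same argument.
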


\begin{proof}
    This is a direct computation; the details are identical to those in the proof of Lemma \ref{lem: bulk_curv_MCC}.
\end{proof}

As in the previous Subsection, we may consider the strict deformation of $\fray{n}{\blam}(C)$ arising from the family of Proposition \ref{prop: thick_curv_homotopies}. In fact we find it more convenient to negate this curvature, instead considering the deformation arising from the family $\{-\eta_i\}$; see Remark \ref{rem: bulk_neg} below for an explanation of this point. We will also find it convenient to first consider this deformation independent of the deformation on separated strands considered in Subsection \ref{sec: yfray}; we consider the two deformations in conjunction at the end of this Subsection.

\begin{defn} \label{def: ufray}
    Let $\ufray{n}{\blam}$ denote the functor taking a complex $C \in K(\SSBim_{\aa}^{\bb})$
    to the complex

    \[
    \ufray{n}{\blam}(C) := \mathrm{tw}_{-\gamma} \left( \fray{n}{\blam}(M) \otimes_R
    R[\mathbb{U}_n] \right) \in \YS_{-F_u^{(n)}} \left( \SSBim_{\aa^{\blam}}^{\bb^{\blam}}; R[\mathbb{U}_n] \right);
    \quad \gamma = \sum_{i = 1}^n \eta_i \otimes u_i
    \]
As usual, $\ufray{n}{\blam}$ immediately descends to a functor on the homotopy category $K(\SSBim_{\aa}^{\bb})$, which we also denote by $\ufray{n}{\blam}$.
\end{defn}

\begin{defn} \label{def: fray_inf_proj}
    Let $N, n, \aa, \bb$ be as in Definition \ref{def: fray_fin_proj}.
    In this case, we denote $\ufray{n}{\blam}(\strand{n})$ by $\infproj{\blam}$ and call $\infproj{\blam}$ an \textit{infinite frayed projector}. Notice that the curvature $F_u^{(n)}(\mathbb{X}, \mathbb{X}')$ vanishes on $\infproj{\blam}$; as a consequence, we may consider $\infproj{\blam}$ as a genuine chain complex by unrolling as in Remark \ref{rem: unrolling}.
\end{defn}

\begin{rem} \label{rem: ah_inf_proj}
    To the reader familiar with \cite{AH17, Con23},
    in the case $\blam = (1^n)$ our $\infproj{\blam}$ will look reminiscent of a larger
    (i.e. one additional variable in each of the alphabets $\Theta_{\blam}$ and $\mathbb{U}_n$) version of the unital column-colored idempotent
    $P_{(1^n)}^{\vee}$ described in that work.
    In contrast with Remark \ref{rem: ah_fin_proj}, these two complexes
    \textit{are} homotopy equivalent up to an overall quantum degree shift; this is the content of Theorem \ref{thm: proj_agreement} below.
\end{rem}

There is a straightforward extension of $\ufray{n}{\blam}(C)$ to \textit{curved} complexes of the form $\mathrm{tw}_{\Delta_C}(C \otimes_R R[\mathbb{U}_n])$.\footnote{In principle one could fomulate an analogous extension of the functor $\yfray{n}{\blam}$ to curved complexes of the form $\mathrm{tw}_{\Delta_C}(C \otimes_R R[\mathbb{Y}_{\blam}])$; however, since the alphabet of deformation parameters $\mathbb{Y}_{\blam}$ in this case depends on the \textcolor{revisions}{composition} $\blam \vdash N$, this extension is less natural. This reflects the fact that the deformation parameters $\mathbb{U}_n$ are associated to a \textit{bulk} $n$-labeled strand, which already exists as an exterior edge of $C$ before applying any fray functors.}

\begin{defn}
    Fix $X, Y \in \CS(\SSBim_{\aa}^{\bb})$. Given any $f \in \mathrm{Hom}_{\SSBim_{\aa}^{\bb} \otimes_R R[\mathbb{U}_n]}(X \otimes_R R[\mathbb{U}_n], Y \otimes_R R[\mathbb{U}_n])$, we may decompose $f$ as

    \[
    f = \sum_{{\bf{v}} \in \mathbb{Z}_{\geq 0}^n} f_{u^{\bf{v}}} \otimes u^{\bf{v}}.
    \]
    Then we define $\fray{n}{\blam}(f) \colon \fray{n}{\blam}(X) \otimes_R R[\mathbb{U}_n] \to \fray{n}{\blam}(Y) \otimes_R R[\mathbb{U}_n]$ to be the morphism

    \[
    \fray{n}{\blam}(f) = \sum_{{\bf{v}} \in \mathbb{Z}_{\geq 0}^n} \fray{n}{\blam}(f_{u^{\bf{v}}}) \otimes u^{\bf{v}}.
    \]
\end{defn}

\begin{defn} \label{def: ufray_curved}
    For each central element $Z_C \in Z(\SSBim_{\aa}^{\bb} \otimes_R R[\mathbb{U}_n])$, let $\ufray{n}{\blam}$ denote the functor taking a curved complex $\overline{C} := \mathrm{tw}_{\Delta_C}(C \otimes_R R[\mathbb{U}_n]) \in \YS_{Z_C}(\SSBim_{\aa}^{\bb}; R[\mathbb{U}_n])$ to the curved complex

    \[
    \ufray{n}{\blam}(\overline{C}) := \mathrm{tw}_{\fray{n}{\blam}(\Delta_C) - \gamma} \left( \fray{n}{\blam}(C) \otimes_R R[\mathbb{U}_n] \right) \in \YS_{\fray{n}{\blam}(Z_C) - F_u^{(n)}} \left( \SSBim_{\aa^{\blam}}^{\bb^{\blam}}; R[\Theta_{\blam}, \mathbb{U}_n] \right)
    \]
\end{defn}

\begin{rem}
    Implicit in Definition \ref{def: ufray_curved} is the claim that $\ufray{n}{\blam}(\overline{C})$
    \textit{is} in fact a curved complex with curvature $\fray{n}{\blam}(Z_C) - F_u^{(n)}$. This follows from
    the fact that $\gamma$ and $\fray{n}{\blam}(\Delta_C)$ commute, as they act on distant tensor factors.
\end{rem}

\begin{rem} \label{rem: bulk_neg}
    In defining $\ufray{n}{\blam}$ we do \textbf{not} curve by $\gamma$,
    but rather by its negation $-\gamma$, contributing a summand of
    $-F_u^{(n)} = \sum_{i = 1}^n (e_i(\mathbb{X}'_n) - e_i(\mathbb{X}_n)) \otimes u_i$
    to the total curvature rather than $F_u^{(n)}$. We prefer this sign convention for two reasons.
    
    First, this choice accounts for the form of Proposition \ref{prop: ufray_fork_slide} below.
    One could formulate an analog of Proposition \ref{prop: ufray_fork_slide}
    in which the opposite sign convention is used; to match curvatures,
    the adapted equivalence would then contain a deformed crossing on the left-hand side
    and an undeformed crossing on the right-hand side. This would require a discussion
    of ``bulk" $\Delta e$-curvature on the cabled crossing $(\sigma_r)_{\aa}^{\blam}$,
    which we wish to avoid in the interest of clarity of presentation (though the details
    are not difficult to sort through and are treated implicitly in Subsection \ref{sec: fork_slyde}).
    Alternatively, one could replace the $\Delta e$-curvature in $(\sigma_r)_{\aa}^u$ with
    \textit{negative} $\Delta e$-curvature, but this is undesirable
    because it runs counter to well-established conventions in $y$-ified homology literature.

    Second, Proposition \ref{prop: tr_yufray} and Theorem \ref{thm: infproj_fray_agree}
    depend on the cancellation of the curvature obtained from a deformed braid $\beta^u$
    and the application of $\ufray{n}{\blam}$. One could again adapt these statements at
    the cost of negating the connection on $\beta^u$, but again, we prefer not to
    run counter to the existing literature.
\end{rem}

For the remainder of this Subsection, we turn our attention to establishing an analog of Proposition \ref{prop: yfray_fork_slide} in the presence of bulk curvature, following largely the same outline as in Subsection \ref{sec: yfray}.

\begin{prop} \label{prop: ufray_dot_slide}
Let $C \in \CS(\SSBim_{\aa}^{\bb})$ be given and $\overline{C} = \mathrm{tw}_{\Delta_C}(C \otimes_R R[\mathbb{U}_n]) \in \YS_{F_C}(\SSBim_{\aa}^{\bb}; R[\mathbb{U}_n])$ an arbitrary $F_C$-deformation of $C$ for some $F_C \in Z(\SSBim_{\aa}^{\bb}[\mathbb{U}_n])$. Then the natural isomorphism of Proposition \ref{prop: fray_dot_slide} induces a natural isomorphism

\begin{equation}
\ufray{n}{\blam}(\overline{C}) \star \left( (_{\aa} \sigma_{r - 1})^{\blam} \otimes_R R[\mathbb{U}_n] \right) \cong \mathrm{tw}_{\fray{n}{\blam}(\Delta_C) + \tilde{\alpha} - \tilde{\gamma}} \left( \left( \rfray{n}{\blam}(C) \star (_{\aa} \sigma_{r - 1})^{\blam} \right) \otimes_R R[\Theta_{\blam}, \mathbb{U}_n] \right)
\end{equation}
where here $\tilde{\alpha}$ is as in Proposition \ref{prop: fray_dot_slide} and

\[
\tilde{\gamma} := \sum_{i = 1}^n \sum_{j = 1}^m \sum_{k = 1}^{\lambda_j} a_{ijk}^{\blam}(\mathbb{X}_n, \mathbb{X}'_n) (1 \otimes \theta_{jk}^{\vee} - \xi_{jk} \otimes 1) u_i.
\]
\end{prop}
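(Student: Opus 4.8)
The statement to prove is Proposition \ref{prop: ufray_dot_slide}, which is the bulk-curvature analog of the dot-sliding equivalence Proposition \ref{prop: fray_dot_slide}. Let me think about how the pieces fit together.

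The plan is to argue exactly as in the proof of Proposition \ref{prop: yfray_dot_slide}, replacing the deformation-parameter--contraction variables $y_{jk}$ with the bulk variables $u_i$ and keeping careful track of the polynomial coefficients $a_{ijk}^{\blam}$. First I would write out $\ufray{n}{\blam}(\overline{C}) \star \bigl((_{\aa}\sigma_{r-1})^{\blam} \otimes_R R[\mathbb{U}_n]\bigr)$ explicitly as a twist of $\bigl(\rfray{n}{\blam}(C) \star (_{\aa}\sigma_{r-1})^{\blam}\bigr) \otimes_R R[\Theta_{\blam}, \mathbb{U}_n]$: by Definitions \ref{def: fray}, \ref{def: ufray_curved}, and the tensor-product convention of Remark \ref{rem: y_ify_tensors}, the connection is $\rfray{n}{\blam}(d_C)\otimes 1 + \alpha + \fray{n}{\blam}(\Delta_C) - \gamma + \Delta_{\sigma}$, where $\alpha = \sum_{j,k}(e_k(\mathbb{X}_j) - e_k(\mathbb{X}_j'))\otimes\theta_{jk}$ is the Koszul differential, $\gamma = \sum_i \eta_i\otimes u_i$ with $\eta_i = \sum_{j,k}a_{ijk}^{\blam}(\mathbb{X}_n,\mathbb{X}_n')\otimes\theta_{jk}^{\vee}$, and $\Delta_{\sigma}$ is the dot-slide connection from composing with the undeformed cabled crossing (which carries no extra parameters here). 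Here the alphabet $\mathbb{X}_j'$ labels the separated strands between $C$ and the crossing, while $\mathbb{X}_j''$ labels the bottom separated strands.

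Next I would invoke the natural isomorphism $\Psi, \Psi^{-1}$ of \eqref{eq: fray_dot_slides}, which is built from the dot-slide homotopies $\xi_{jk} \in \End(MCC^n_{\blam, a_{r-1}})$ and the variables $\theta_{jk}$, exactly as in the proof of Proposition \ref{prop: fray_dot_slide} and of Proposition \ref{prop: yfray_dot_slide}. By Proposition \ref{prop: fray_to_def_dots}, conjugating the connection by $\Psi$ produces the isomorphic twist with connection $\Psi(\alpha + \fray{n}{\blam}(\Delta_C) - \gamma + \Delta_{\sigma})\Psi^{-1}$, and the claim is that this equals $\fray{n}{\blam}(\Delta_C) + \tilde{\alpha} - \tilde{\gamma}$ (plus the undeformed differential and $\Delta_{\sigma}$, which are absorbed as before). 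The conjugation $\Psi\alpha\Psi^{-1} = \tilde{\alpha}$ is exactly the computation already carried out in Proposition \ref{prop: fray_dot_slide}: it converts the Koszul differential in $\mathbb{X}_j'$ into one in $\mathbb{X}_j''$. The term $\fray{n}{\blam}(\Delta_C)$ is untouched since $\Delta_C$ acts on a tensor factor distant from the $\xi_{jk}$'s and everything commutes. And $\Delta_\sigma$ is also untouched for the same reason. So the only real work is conjugating $-\gamma = -\sum_i \eta_i\otimes u_i$.

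The heart of the proof — and the step I expect to be the main obstacle — is showing $\Psi\gamma\Psi^{-1} = \tilde{\gamma}$, i.e. that conjugating $\sum_i \bigl(\sum_{j,k} a_{ijk}^{\blam}(\mathbb{X}_n,\mathbb{X}_n')\otimes\theta_{jk}^{\vee}\bigr)u_i$ by $\Psi = \prod_{j,k}(1\otimes 1 - \xi_{jk}\otimes\theta_{jk})$ yields $\sum_i\sum_{j,k} a_{ijk}^{\blam}(\mathbb{X}_n,\mathbb{X}_n')(1\otimes\theta_{jk}^{\vee} - \xi_{jk}\otimes 1)u_i$. The subtlety is that the coefficients $a_{ijk}^{\blam}(\mathbb{X}_n,\mathbb{X}_n')$ are polynomials in the \emph{intermediate} alphabet $\mathbb{X}_n'$, which the dot-slide $\xi_{jk}$ does \emph{not} commute past on the nose — rather $[d, \xi_{jk}] = e_k(\mathbb{X}_j) - e_k(\mathbb{X}_j')$ means $\xi_{jk}$ mediates the replacement of $\mathbb{X}_n'$ by $\mathbb{X}_n''$ in the differential, but since $a_{ijk}^{\blam}$ is a honest central element (a polynomial), it commutes with $\xi_{jk}$ as an endomorphism. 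Thus I would argue factor-by-factor: for the factor of $\Psi$ with indices $(j,k)$, using the middle interchange law and the Clifford-type relation $\theta_{jk}\theta_{jk}^{\vee} + \theta_{jk}^{\vee}\theta_{jk} = 1$ one computes
\[
(1\otimes 1 - \xi_{jk}\otimes\theta_{jk})\bigl(a_{ijk}^{\blam}\otimes\theta_{jk}^{\vee}\,u_i\bigr)(1\otimes 1 + \xi_{jk}\otimes\theta_{jk}) = a_{ijk}^{\blam}(1\otimes\theta_{jk}^{\vee} - \xi_{jk}\otimes 1)u_i,
\]
while all the other factors of $\Psi$ (those with indices $(i',\ell)\neq(j,k)$) commute with this expression because $\xi_{i'\ell}$, $\theta_{i'\ell}$, $\theta_{jk}^{\vee}$, $\xi_{jk}$, and $a_{ijk}^{\blam}$ form a graded-commutative family. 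Summing over $i,j,k$ gives $\tilde{\gamma}$ with the minus sign preserved, completing the identification. Note that, unlike in Proposition \ref{prop: yfray_dot_slide}, the conjugation here produces a $\xi_{jk}\otimes 1$ term with a \emph{minus} sign (because we conjugate $\theta_{jk}^{\vee}$-type rather than $\theta_{jk}^{\vee} + \xi_{jk}$-type data), which is exactly why $\tilde{\gamma}$ is defined with the coefficient $(1\otimes\theta_{jk}^{\vee} - \xi_{jk}\otimes 1)$. Finally, naturality and $R[\mathbb{X}, \mathbb{X}'']$-equivariance of the resulting isomorphism follow as before: $\Psi$ and $\Psi^{-1}$ are bimodule morphisms acting on tensor factors distant from $C$, so they are $R[\mathbb{X},\mathbb{X}'']$-linear and commute with any $1\star f\star 1$.
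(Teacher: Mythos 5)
Your proposal is correct and follows the same approach as the paper's proof: express the left-hand side explicitly as a twist, invoke Proposition \ref{prop: fray_to_def_dots} with the isomorphism $\Psi, \Psi^{-1}$ of \eqref{eq: fray_dot_slides}, and compute $\Psi(\fray{n}{\blam}(\Delta_C) - \gamma)\Psi^{-1}$. Where the paper simply asserts that the computation $\Psi\gamma\Psi^{-1} = \tilde{\gamma}$ is "exactly analogous to the proof of Proposition \ref{prop: yfray_dot_slide}," you carry out the factor-by-factor conjugation
\[
(1\otimes 1 - \xi_{jk}\otimes\theta_{jk})\bigl(a_{ijk}^{\blam}\otimes\theta_{jk}^{\vee}\,u_i\bigr)(1\otimes 1 + \xi_{jk}\otimes\theta_{jk}) = a_{ijk}^{\blam}(1\otimes\theta_{jk}^{\vee} - \xi_{jk}\otimes 1)u_i
\]
in full and correctly identify the sign that accounts for the $-\xi_{jk}\otimes 1$ term in $\tilde{\gamma}$; this is a welcome supplement to the paper's citation-by-analogy, not a deviation.
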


\begin{proof}
We mimic the proof of Proposition \ref{prop: yfray_dot_slide}. We may express the left-hand side of the desired isomorphism explicitly as

\begin{align*}
\ufray{n}{\blam}(\overline{C}) \star \left( (_{\aa} \sigma_{r - 1})^{\blam} \otimes_R R[\mathbb{U}_n] \right) & = \mathrm{tw}_{\fray{n}{\blam}(\Delta_C) - \gamma} \left( \left( \fray{n}{\blam}(C) \star (_{\aa} \sigma_{r - 1})^{\blam} \right) \otimes_R R[\mathbb{U}_n] \right).
\end{align*}
By Proposition \ref{prop: fray_to_def_dots}, the natural equivalence $\Psi, \Psi^{-1}$ of \eqref{eq: fray_dot_slides} induces an isomorphism

\begin{multline*}
\mathrm{tw}_{\fray{n}{\blam}(\Delta_C) - \gamma} \left( \left( \fray{n}{\blam}(C) \star (_{\aa} \sigma_{r - 1})^{\blam} \right) \otimes_R R[\mathbb{U}_n] \right) \\
\cong \mathrm{tw}_{\Psi (\fray{n}{\blam}(\Delta_C) - \gamma) \Psi^{-1} + \tilde{\alpha}} \left( \left( \rfray{n}{\blam}(C) \star (_{\aa} \sigma_{r - 1})^{\blam} \right) \otimes_R R[\Theta_{\blam}, \mathbb{U}_n] \right).
\end{multline*}

It remains to compute $\Psi (\fray{n}{\blam}(\Delta_C) - \gamma) \Psi^{-1}$. Since $\fray{n}{\blam}(\Delta_C)$ is concentrated in a different tensor factor than $\Psi$ and $\Psi^{-1}$, a quick verification using the middle interchange law gives that $\Psi \fray{n}{\blam}(\Delta_C) \Psi^{-1} = \fray{n}{\blam}(\Delta_C)$. A direct computation exactly analogous to the proof of Proposition \ref{prop: yfray_dot_slide} gives $\Psi \gamma \Psi^{-1} = \tilde{\gamma}$, as desired.
\end{proof}

Here we run into a difficulty in mimicking the program of Subsection \ref{sec: yfray}. Since $\tilde{\gamma}$ contains terms from the alphabet $\mathbb{X}'_n$, we cannot immediately apply Proposition \ref{prop: rfray_to_def_forks}. Instead, we proceed by first modifying the connection on the right-hand side of the equivalence of Proposition \ref{prop: ufray_dot_slide}.

\begin{lem} \label{lem: ufray_conn_change}
Let $\overline{C}$, $\tilde{\alpha}$, $\tilde{\gamma}$ be as in Proposition \ref{prop: ufray_dot_slide}. For each $1 \leq i \leq n$, let $\tilde{\gamma}_i$ denote the $u_i$-component of $\tilde{\gamma}$. Let $\zeta_i$ be as in Lemma \ref{lem: bulk_curv_MCC}, and consider $\zeta_i$ as an endomorphism of $\rfray{n}{C} \star (_{\aa} \sigma_{r - 1})^{\blam}$ as usual. Set $\rho_i := \left( \sum_{j = 1}^m \sum_{k = 1}^{\lambda_j} a_{ijk}^{\blam}(\mathbb{X}_n, \mathbb{X}_n'') \otimes \theta_{jk}^{\vee} \right) - \zeta_i \otimes 1$.

Then for each $1 \leq i \leq n$, there exist endomorphisms 

\[
h_i \in \End^{-2}_{\SSBim[\Theta_{\blam}]} \left( \mathrm{tw}_{\tilde{\alpha}} \left( \rfray{n}{C} \star (_{\aa} \sigma_{r - 1})^{\blam} \right) \otimes_R R[\Theta_{\blam}] \right)
\]
satisfying

\[
[d, h_i] = \rho_i - \tilde{\gamma}_i
\]
such that the $R$-subalgebra of $\End_{\SSBim[\Theta_{\blam}]}(\fray{n}{C} \star (_{\aa} \sigma_{r - 1})^{\blam})$
generated by $\fray{n}{\blam}(\Delta_C)$, the family $\{\tilde{\gamma}_i\}$,
and the family $\{\rho_i\}$ is graded-commutative.
\end{lem}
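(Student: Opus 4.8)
The plan is to construct the nilpotent homotopies $h_i$ directly from the dot-sliding homotopies $\xi_{jk}$ already available on $MCC^n_{\blam, a_{r-1}}$ (via Convention \ref{conv: cabled_fork_dot_slides}), since the discrepancy $\rho_i - \tilde\gamma_i$ is manufactured precisely by replacing the alphabet $\mathbb{X}'_n$ appearing in $\tilde\gamma_i$ with $\mathbb{X}''_n$, together with a sign flip on the $\xi_{jk}$-terms. First I would write out both endomorphisms explicitly in their $\Theta_{\blam}$-components:
\[
\tilde\gamma_i = \sum_{j,k} a_{ijk}^{\blam}(\mathbb{X}_n, \mathbb{X}'_n)\,(1 \otimes \theta_{jk}^{\vee} - \xi_{jk} \otimes 1), \qquad \rho_i = \sum_{j,k} a_{ijk}^{\blam}(\mathbb{X}_n, \mathbb{X}''_n)\,\theta_{jk}^{\vee} - \zeta_i \otimes 1,
\]
and recall from Lemma \ref{lem: bulk_curv_MCC} that $\zeta_i = \sum_{j,k} a_{ijk}^{\blam}(\mathbb{X}_n, \mathbb{X}'_n)\,\xi_{jk}$. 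The key observation is that $a_{ijk}^{\blam}(\mathbb{X}_n, \mathbb{X}_n'') - a_{ijk}^{\blam}(\mathbb{X}_n, \mathbb{X}_n')$ is divisible in $\mathrm{Sym}^{\blam}(\mathbb{X}_n, \mathbb{X}_n', \mathbb{X}_n'')$ by the differences $e_\ell(\mathbb{X}'_n) - e_\ell(\mathbb{X}''_n)$ for appropriate $\ell$ — but on the complex $MCC^n_{\blam, a_{r-1}}$ these differences, localized to the separated strands, are exactly the boundaries $[d, \xi_{j\ell}]$ (after telescoping $e_\ell(\mathbb{X}'_n) - e_\ell(\mathbb{X}''_n)$ through the intermediate elementary symmetric functions of the alphabets $\mathbb{X}_j'$). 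So I can write $\rho_i - \tilde\gamma_i$ as a $\mathrm{Sym}$-linear combination of terms of the form $[d, \xi] \cdot (\text{stuff})$ with $\xi$ a dot-slide homotopy, and then pull $[d, -]$ out using the graded Leibniz rule together with the fact that the coefficients and the $\theta_{jk}^{\vee}$, $\xi_{jk}$ are all closed and graded-commute. This produces $h_i = [d, -]$-preimage as an explicit polynomial in the $\xi_{jk}$ and $\theta_{jk}^{\vee}$ with $\mathrm{Sym}$-coefficients.

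Next I would verify the degree bookkeeping: each $\xi_{jk}$ has homological degree $-1$ and each $\theta_{jk}^{\vee}$ has homological degree $-1$, so a product of two such generators has degree $-2$, matching the required $h_i \in \End^{-2}$; the quantum degrees cancel against the coefficient $a^{\blam}_{ijk}$ and the parameters $u_i, \theta_{jk}$ as in the analogous computations in Definition \ref{def: curved_lift} and Proposition \ref{prop: strict_def}. Nilpotence of each $h_i$ is immediate: $h_i$ lies in the (graded-commutative) subalgebra generated by the finitely many $\xi_{jk}$ and $\theta_{jk}^{\vee}$, each of which squares to zero (the $\xi_{jk}$ by Convention \ref{conv: cabled_fork_dot_slides}/Proposition \ref{prop: dot_slide}, the $\theta_{jk}^{\vee}$ by Proposition \ref{prop: poly_props}), so any product of more than $2\sum_j \lambda_j$ of them vanishes. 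For the graded-commutativity assertion, I would note that $\fray{n}{\blam}(\Delta_C)$ acts on the $C$-tensor factor, which is distant from the factor on which all of $\xi_{jk}$, $\theta_{jk}^{\vee}$, $\zeta_i$ act, so it graded-commutes with everything by the middle interchange law; and the $\tilde\gamma_i$, $\rho_i$, $h_i$ all live in the same graded-commutative subalgebra generated by $\{\xi_{jk}\}$, $\{\theta_{jk}^{\vee}\}$, and $\mathrm{Sym}$-coefficients (which are central), so they pairwise graded-commute.

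The main obstacle I anticipate is the explicit telescoping step: producing a clean formula exhibiting $a^{\blam}_{ijk}(\mathbb{X}_n, \mathbb{X}_n'') - a^{\blam}_{ijk}(\mathbb{X}_n, \mathbb{X}_n')$ as a combination of the $e_\ell(\mathbb{X}'_j) - e_\ell(\mathbb{X}''_j)$ over the separated-strand alphabets, and then checking that these differences really are killed by the dot-slide boundaries $[d, \xi_{j\ell}]$ on $MCC^n_{\blam, a_{r-1}}$ — this requires carefully tracking which primed/doubly-primed alphabets sit where in the diagram \eqref{fig: cabled_fork_alphs} and which $\xi_{j\ell}$ slides which symmetric function past which part of the crossing. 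Once that dictionary is set up, the rest is a routine application of the Leibniz rule. An alternative, cleaner route — which I would pursue if the direct telescoping gets unwieldy — is to obtain the existence of $h_i$ abstractly: both $\mathrm{tw}_{\tilde\alpha}(\cdots) \otimes R[\Theta_{\blam}]$ equipped with $\{\rho_i\}$ and with $\{\tilde\gamma_i\}$ are strict $F_u^{(n)}$-deformations of the same underlying complex (one checks $[d, \rho_i] = [d, \tilde\gamma_i] = e_i(\mathbb{X}_n) - e_i(\mathbb{X}'_n)$ restricted appropriately), the underlying complex $\rfray{n}{\blam}(C) \star (_{\aa}\sigma_{r-1})^{\blam}$ with its $\tilde\alpha$-twist is (up to polynomial multiplicity) invertible, and the morphism complex between the relevant terms has homology concentrated in non-negative degrees — so Corollary \ref{cor: kosz_base_change_rep} (after producing the individual $h_i$) or the obstruction-theoretic Proposition \ref{prop: lifting_maps} guarantees the needed homotopies, with nilpotence forced by the finite-dimensionality of the exterior algebra on the $\theta_{jk}$. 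I would present the direct construction as the primary argument and invoke the abstract route only as a fallback remark.
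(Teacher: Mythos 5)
Your starting point is right — writing out both connections in $\Theta_{\blam}$-components and observing that all the $\xi_{jk}\otimes 1$-terms cancel, leaving $\rho_i - \tilde\gamma_i = \sum_{j,k}\bigl(a^{\blam}_{ijk}(\mathbb{X}_n,\mathbb{X}''_n) - a^{\blam}_{ijk}(\mathbb{X}_n,\mathbb{X}'_n)\bigr)\otimes\theta^{\vee}_{jk}$, is exactly the correct first step. But the plan to produce $h_i$ by telescoping the coefficient difference through $[d,\xi_{j\ell}]$ and then ``pulling $[d,-]$ out via Leibniz'' will not give a one-step formula, because the $\theta^{\vee}_{jk}$ are \emph{not} closed: against the differential on $\mathrm{tw}_{\tilde\alpha}(\rfray{n}{\blam}(C)\star(_{\aa}\sigma_{r-1})^{\blam})\otimes_R R[\Theta_{\blam}]$ one has $[d,\theta^{\vee}_{jk}] = e_k(\mathbb{X}_j)-e_k(\mathbb{X}''_j)$. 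So the naive preimage $\sum b\,\xi_{j'\ell}\otimes\theta^{\vee}_{jk}$ introduces error terms proportional to $\xi_{j'\ell}(e_k(\mathbb{X}_j)-e_k(\mathbb{X}''_j))$, which would require an iterative correction; you can make that iteration terminate by nilpotence, but it is a bootstrap, not the clean Leibniz manipulation you sketch. Your fallback via Proposition \ref{prop: lifting_maps} or Corollary \ref{cor: kosz_base_change_rep} is also not quite the right tool: the former only produces \emph{some} curved lift of the identity with no control on where it lives (so you cannot conclude nilpotence or graded-commutativity with $\tilde\gamma_i$, $\rho_i$, and $\fray{n}{\blam}(\Delta_C)$), and the latter \emph{assumes} the $h_i$ rather than producing them.

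The ingredient you are missing is the observation that the subalgebra $\mathcal{A}_{\blam}\subset\End_{\SSBim[\Theta_{\blam}]}$ generated by $R[\mathbb{X}_n,\mathbb{X}'_n,\mathbb{X}''_n]$, the $\xi_{jk}\otimes 1$, and the $1\otimes\theta^{\vee}_{jk}$ is a graded-commutative \emph{sub-dg-algebra} which is itself a Koszul complex over $R[\mathbb{X}_n,\mathbb{X}'_n,\mathbb{X}''_n]$ for the regular sequence $\{e_k(\mathbb{X}'_j)-e_k(\mathbb{X}''_j),\, e_k(\mathbb{X}_j)-e_k(\mathbb{X}''_j)\}$. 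By Proposition \ref{prop: kosz_reg_seq} its homology is concentrated in degree $0$, so once you check (as you and the paper both do) that $d_{\mathcal{A}}(\rho_i) = d_{\mathcal{A}}(\tilde\gamma_i) = e_i(\mathbb{X}_n)-e_i(\mathbb{X}'_n)$, the closed degree-$(-1)$ element $\rho_i-\tilde\gamma_i$ is automatically exact \emph{inside $\mathcal{A}_{\blam}$}. This gives $h_i\in\mathcal{A}_{\blam}$ for free, which simultaneously delivers the degree bound, the nilpotence (each of the finitely many odd generators of $\mathcal{A}_{\blam}$ squares to zero, as you correctly note), and the graded-commutativity with $\tilde\gamma_i$, $\rho_i$, and — since it acts on a distant tensor factor — with $\fray{n}{\blam}(\Delta_C)$. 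This is the cleaner abstract route you were looking for, but it is not the obstruction-theoretic one you proposed; it is the Koszul-homology vanishing of Proposition \ref{prop: kosz_reg_seq} applied to the specific subalgebra $\mathcal{A}_{\blam}$.
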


\begin{proof}
Let $\mathcal{A}_{\blam}$ denote the $R$-subalgebra of $\End_{\SSBim[\Theta_{\blam}]}(\fray{n}{C} \star (_{\aa} \sigma_{r - 1})^{\blam})$ generated by $R[\mathbb{X}_n, \mathbb{X}'_n, \mathbb{X}''_n]$ and the families $\{\xi_{jk} \otimes 1\}$ and $\{1 \otimes \theta_{jk}^{\vee}\}$. All of these generators pairwise graded-commute, so $\mathcal{A}_{\blam}$ is itself graded-commutative. Furthermore, $\mathcal{A}_{\blam}$ is in fact a sub dg-algebra with differential $d_{\mathcal{A}}$ vanishing on $R[\mathbb{X}_n, \mathbb{X}'_n, \mathbb{X}''_n]$ and satisfying

\[
d_{\mathcal{A}}(\xi_{jk} \otimes 1) = \left( e_k(\mathbb{X}'_j) - e_k(\mathbb{X}''_j) \right) \otimes 1; \quad d_{\mathcal{A}}(1 \otimes \theta_{jk}^{\vee}) = \left( e_k(\mathbb{X}_j) - e_k(\mathbb{X}''_j) \right) \otimes 1.
\]
This is exactly the Koszul complex\footnote{Appropriately renormalized as in Remark \ref{rem: kosz_grading_shift}.} on the polynomial algebra $R[\mathbb{X}_n, \mathbb{X}_n', \mathbb{X}_n'']$ for the family of symmetric differences $e_k(\mathbb{X}'_j) - e_k(\mathbb{X}''_j)$ and $e_k(\mathbb{X}_j) - e_k(\mathbb{X}''_j)$. This family forms a regular sequence in $\mathcal{A}_{\blam}$, so by Proposition \ref{prop: kosz_reg_seq}, we have

\[
H^{\bullet}(\mathcal{A}_{\blam}) \cong R[\mathbb{X}_n].
\]
In particular, $H^{\bullet}(\mathcal{A}_{\blam})$ is concentrated in homological degree $0$.

Observe that $\tilde{\gamma}_i$ and $\rho_i$ are both homological degree $-1$ elements of $\mathcal{A}_{\blam}$ for each $i$. We can compute their differentials explicitly:

\begin{align*}
d_{\mathcal{A}}(\tilde{\gamma}_i) & = \sum_{j = 1}^m \sum_{k = 1}^{\lambda_j} a_{ijk}^{\blam}(\mathbb{X}_n, \mathbb{X}'_n) \left( ((e_k(\mathbb{X}_j) - e_k(\mathbb{X}_j'')) \otimes 1 ) - ((e_k(\mathbb{X}_j') - e_k(\mathbb{X}_j'')) \otimes 1) \right) \\
& = \sum_{j = 1}^m \sum_{k = 1}^{\lambda_j} a_{ijk}^{\blam}(\mathbb{X}_n, \mathbb{X}'_n) \left( (e_k(\mathbb{X}_j) - e_k(\mathbb{X}_j') \right) \otimes 1 \\
& = e_i(\mathbb{X}_n) - e_i(\mathbb{X}'_n); \\
d_{\mathcal{A}}(\rho_i) & = \left( \sum_{j = 1}^m \sum_{k = 1}^{\lambda_j} a_{ijk}^{\blam}(\mathbb{X}_n, \mathbb{X}''_n) \left(e_k(\mathbb{X}_j) - e_k(\mathbb{X}''_j) \right) \otimes 1 \right) - [d, \zeta_i] \otimes 1  \\
& =  \left(e_i(\mathbb{X}_n) - e_i(\mathbb{X}''_n) \right) - \left(e_i(\mathbb{X}'_n) - e_i(\mathbb{X}''_n) \right)\\
& = e_i(\mathbb{X}_n) - e_i(\mathbb{X}'_n)
\end{align*}
Since $d_{\mathcal{A}}(\tilde{\gamma}_i) = d_{\mathcal{A}}(\rho_i)$ and $\mathcal{A}_{\blam}$ has no nontrivial homology in degree $-1$, there must be some $h_i \in \mathcal{A}_{\blam}$ such that $d_{\mathcal{A}}(h_i) = \rho_i - \tilde{\gamma}_i$. Finally, everything in sight is graded-commutative, and $\fray{n}{\blam}(\Delta_C)$ occurs on a distinct tensor factor from any generator of $\mathcal{A}_{\blam}$, so it must commute with all of $\mathcal{A}_{\blam}$ as well.
\end{proof}

\begin{cor} \label{cor: ufray_internal_change}
Suppose $\overline{C} = \mathrm{tw}_{\Delta_C}(C \otimes_R R[\mathbb{U}_n])$ is a \textit{strict} deformation of $C$, and set $\rho := \sum_{i = 1}^n \rho_i \otimes u_i$. Then there is a natural isomorphism

\begin{multline*}
\mathrm{tw}_{\fray{n}{\blam}(\Delta_C) + \tilde{\alpha} - \tilde{\gamma}} \left( \left( \rfray{n}{\blam}(C) \star (_{\aa} \sigma_{r - 1})^{\blam} \right) \otimes_R R[\Theta_{\blam}, \mathbb{U}_n] \right) \\
\cong \mathrm{tw}_{\fray{n}{\blam}(\Delta_C) + \tilde{\alpha} - \rho}
\left( \left( \rfray{n}{\blam}(C) \star (_{\aa} \sigma_{r - 1})^{\blam} \right) \otimes_R R[\Theta_{\blam}, \mathbb{U}_n] \right).
\end{multline*}
\end{cor}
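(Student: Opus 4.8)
The plan is to recognise both curved complexes in the statement as strict $\mathbb{U}_n$-deformations of one and the same underlying chain complex, and then to invoke the base-change result Corollary \ref{cor: kosz_base_change_rep} with the homotopies furnished by Lemma \ref{lem: ufray_conn_change}. Concretely, writing the strict connection as $\Delta_C = \sum_{i=1}^n \chi_i \otimes u_i$, I would first set
\[
A := \mathrm{tw}_{\tilde{\alpha}}\left( \left( \rfray{n}{\blam}(C) \star (_{\aa}\sigma_{r-1})^{\blam} \right) \otimes_R R[\Theta_{\blam}] \right)
\]
and observe that $\tilde{\alpha}^2 = 0$ (it is a Koszul-type differential in the odd variables $\Theta_{\blam}$), so that $A$ is an honest chain complex. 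Since $\tilde{\alpha}$ does not involve $\mathbb{U}_n$ while $\fray{n}{\blam}(\Delta_C) = \sum_i \fray{n}{\blam}(\chi_i)\otimes u_i$, $\tilde{\gamma} = \sum_i \tilde{\gamma}_i \otimes u_i$ and $\rho = \sum_i \rho_i \otimes u_i$ are all linear in $\mathbb{U}_n$, the two complexes in the statement are exactly the strict deformations of $A$ attached to the families $\{\fray{n}{\blam}(\chi_i) - \tilde{\gamma}_i\}$ and $\{\fray{n}{\blam}(\chi_i) - \rho_i\}$, respectively.

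Next I would verify the hypotheses of Corollary \ref{cor: kosz_base_change_rep} for these two families on $A$. The homotopies $h_i$ of Lemma \ref{lem: ufray_conn_change} satisfy $[d_A, h_i] = \rho_i - \tilde{\gamma}_i$, which is precisely the difference of the two families; each $h_i$ has strictly negative homological degree in the finitely generated exterior-type algebra $\mathcal{A}_{\blam}$ and is therefore nilpotent; and Lemma \ref{lem: ufray_conn_change} already records the graded-commutativity of the algebra generated by $\fray{n}{\blam}(\Delta_C)$, the $\tilde{\gamma}_i$ and the $\rho_i$, with $\fray{n}{\blam}(\chi_i)$ lying in the centraliser of $\mathcal{A}_{\blam}$ since it acts on a disjoint tensor factor. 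The one remaining point is that both families are deforming families for the \emph{same} curvature: the family $\{\fray{n}{\blam}(\chi_i) - \tilde{\gamma}_i\}$ is one because Proposition \ref{prop: ufray_dot_slide} identifies $\mathrm{tw}_{\fray{n}{\blam}(\Delta_C) + \tilde{\alpha} - \tilde{\gamma}}(\cdots)$ with the manifestly well-defined curved complex $\ufray{n}{\blam}(\overline{C}) \star ((_{\aa}\sigma_{r-1})^{\blam} \otimes_R R[\mathbb{U}_n])$, and since the computation inside Lemma \ref{lem: ufray_conn_change} shows $d_A(\rho_i) = e_i(\mathbb{X}_n) - e_i(\mathbb{X}_n') = d_A(\tilde{\gamma}_i)$, the linear Maurer--Cartan condition transfers verbatim to $\{\fray{n}{\blam}(\chi_i) - \rho_i\}$, whose pairwise graded-commutativity follows from $\rho_i \in \mathcal{A}_{\blam}$. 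Corollary \ref{cor: kosz_base_change_rep} then supplies the mutually inverse isomorphisms $\Psi = \prod_i \sum_{k\geq 0}\tfrac1{k!}h_i^k\otimes u_i^k$ and $\Psi^{-1} = \prod_i \sum_{k\geq 0}\tfrac1{k!}(-h_i)^k\otimes u_i^k$ (order-independent by Remark \ref{rem: kosz_base_order_ind}), carrying the $-\tilde{\gamma}$ twist to the $-\rho$ twist.

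Finally, for naturality I would note that every $h_i$ lies in $\mathcal{A}_{\blam}$, which is assembled purely from the alphabets and the Koszul and dot-sliding endomorphisms attached to the bulk and separated strands; hence $\Psi$ and $\Psi^{-1}$ are of the form $1 \star (-) \star 1$ with the middle factor acting on a tensor factor disjoint from $C$, and so they commute with $\fray{n}{\blam}(f)$ for every morphism $f$ of $C$ by the middle interchange law, giving naturality in $\overline{C}$. I expect the main obstacle --- though it is more of a bookkeeping hazard than a genuine difficulty --- to be this curvature-matching step: ensuring that $\{\fray{n}{\blam}(\chi_i) - \rho_i\}$ really is a deforming family for the curvature of $\ufray{n}{\blam}(\overline{C}) \star ((_{\aa}\sigma_{r-1})^{\blam} \otimes_R R[\mathbb{U}_n])$ and not for one shifted by the $\mathbb{U}_n$-quadratic defect, which is exactly what the equality $d_A(\rho_i) = d_A(\tilde{\gamma}_i)$ from Lemma \ref{lem: ufray_conn_change} is there to guarantee.
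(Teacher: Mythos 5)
Your proof is correct and takes exactly the same route as the paper: the paper's own proof is the single sentence ``Apply Corollary \ref{cor: kosz_base_change_rep} to the strict deformations of the complex $\mathrm{tw}_{\tilde{\alpha}}\left( \left( \rfray{n}{\blam}(C) \star (_{\aa} \sigma_{r - 1})^{\blam} \right) \otimes_R R[\Theta_{\blam}] \right)$ arising from the families $\{(\Delta_C)_{u_i} - \tilde{\gamma}_i\}$ and $\{(\Delta_C)_{u_i} - \rho_i\}$,'' and you have fleshed out precisely the hypothesis-checking (nilpotence of the $h_i$ in $\mathcal{A}_{\blam}$, graded-commutativity, the curvature-matching via $d_{\mathcal{A}}(\rho_i) = d_{\mathcal{A}}(\tilde{\gamma}_i)$, and naturality from the middle interchange law) that the paper leaves implicit.
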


\begin{proof}
Apply Corollary \ref{cor: kosz_base_change_rep} to the strict deformations of the complex $\mathrm{tw}_{\tilde{\alpha}} \left( \left( \rfray{n}{\blam}(C) \star (_{\aa} \sigma_{r - 1})^{\blam} \right) \otimes_R R[\Theta_{\blam}] \right)$ arising from the families $\{(\Delta_C)_{u_i} - \tilde{\gamma}_i\}$ and $\{(\Delta_C)_{u_i} - \rho_i\}$.
\end{proof}

Finally, we arrive at the main result of this Subsection:

\begin{prop} \label{prop: ufray_fork_slide}
	Let $C \in \CS(\SSBim_{\aa}^{\bb})$ be given, and suppose $\overline{C} = \mathrm{tw}_{\Delta_C}(C \otimes_R R[\mathbb{U}_n]) \in \YS_{F_C}(\SSBim_{\aa}^{\bb}; R[\mathbb{U}_n])$ is a strict $F_C$-deformation of $C$ for some $F_C \in Z(\SSBim_{\aa}^{\bb}[\mathbb{U}_n])$. Then there is a natural homotopy equivalence
	
	\[
	\ufray{n}{\blam} \left(\overline{C} \right) \star \left( (_{\aa} \sigma_{r - 1})^{\blam} \otimes_R R[\mathbb{U}_n] \right) \simeq \ufray{n}{\blam} \left( \overline{C} \star (_{\aa} \sigma_{r - 1})^{u_n} \right).
	\]
The analogous equivalence involving $_{\aa} \sigma_r$ also holds, as do
the analogous equivalences involving negative crossings and crossings above $C$.
\end{prop}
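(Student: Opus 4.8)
The proof of Proposition \ref{prop: ufray_fork_slide} will follow the same three-step outline established for the separated-strand case in Proposition \ref{prop: yfray_fork_slide}, using the results accumulated in Subsection \ref{sec: ufray} to handle the extra bookkeeping introduced by the bulk curvature. The plan is as follows. First I would rewrite the left-hand side $\ufray{n}{\blam}(\overline{C}) \star ( (_{\aa} \sigma_{r - 1})^{\blam} \otimes_R R[\mathbb{U}_n])$ explicitly as a twist of $\fray{n}{\blam}(C) \star (_{\aa} \sigma_{r - 1})^{\blam}$ tensored with $R[\mathbb{U}_n]$, with connection $\fray{n}{\blam}(\Delta_C) - \gamma$ plus the Koszul differential. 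Then I would apply the dot-sliding isomorphism $\Psi, \Psi^{-1}$ of \eqref{eq: fray_dot_slides} via Proposition \ref{prop: fray_to_def_dots}; this is exactly Proposition \ref{prop: ufray_dot_slide}, and it replaces the connection with $\fray{n}{\blam}(\Delta_C) + \tilde{\alpha} - \tilde{\gamma}$, at the cost of introducing the alphabet $\mathbb{X}'_n$ into the twist through $\tilde{\gamma}$.

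The crucial middle step is to eliminate the dependence on $\mathbb{X}'_n$ so that the fork-sliding equivalence for reduced fray functors (Proposition \ref{prop: rfray_fork_slide}, packaged as Proposition \ref{prop: rfray_to_def_forks}) can be applied in each $\Theta_{\blam}$- and $\mathbb{U}_n$-degree. This is handled by Lemma \ref{lem: ufray_conn_change} and Corollary \ref{cor: ufray_internal_change}: since $\overline{C}$ is assumed \emph{strict}, the connection $\fray{n}{\blam}(\Delta_C) + \tilde{\alpha} - \tilde{\gamma}$ may be swapped for $\fray{n}{\blam}(\Delta_C) + \tilde{\alpha} - \rho$, where $\rho = \sum_i \rho_i \otimes u_i$ now involves the alphabets $\mathbb{X}_n$, $\mathbb{X}''_n$, and the dot-slide endomorphisms $\zeta_i$ on $MCC_{\blam, a_{r-1}}^n$, but crucially \emph{not} $\mathbb{X}'_n$. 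At this point $\rfray{n}{\blam}(C) \star (_{\aa} \sigma_{r-1})^{\blam} = (_{\bb^{\blam}} S_{\bb}) \star C \star MCC_{\blam, a_{r-1}}^n$, and the $\rho$-summand is precisely the connection realizing the $F_u^{(n)}$-deformation $(MCC_{\blam, a_{r-1}}^n)^{u_n}$ of Subsection \ref{sec: fork_slyde} (up to the sign convention of Remark \ref{rem: bulk_neg}). Applying Proposition \ref{prop: conn_change_mcc} to pass to $(MCC_{\blam, a_{r-1}}^n)^{u_n'}$, then the deformed fork-slide of Proposition \ref{prop: deformed_forkslide} in each $\Theta_{\blam}$- and $\mathbb{U}_n$-degree, produces a homotopy equivalence onto $(_{\bb^{\blam}} S_{\bb}) \star C \star (CM_{\blam, a_{r-1}}^n)^{u_n}$ twisted by $\fray{n}{\blam}(\Delta_C) + \tilde{\alpha}$, which one identifies summand-by-summand in the connection with $\ufray{n}{\blam}(\overline{C} \star (_{\aa} \sigma_{r-1})^{u_n})$. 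Naturality throughout follows, as in the earlier propositions, from the middle interchange law, since any morphism $f \colon C \to D$ evaluates to $1 \star f \star 1$ under all functors in sight.

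The main obstacle, and the reason the bulk case genuinely requires the new machinery of Subsection \ref{sec: ufray} rather than a verbatim copy of Subsection \ref{sec: yfray}, is the $\mathbb{X}'_n$-contamination in $\tilde{\gamma}$: after dot-sliding, the curvature-producing term is expressed in terms of the \emph{middle} alphabet, which is destroyed by the fork slide. Reconciling this is not formal; it relies on the specific structure of the intermediate algebra $\mathcal{A}_{\blam}$ being a Koszul complex with homology concentrated in degree $0$ (so that the required nullhomotopies $h_i$ in Lemma \ref{lem: ufray_conn_change} exist) and on $\overline{C}$ being strict (so that Corollary \ref{cor: kosz_base_change_rep} applies to the whole connection at once, not just its linear part). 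The remaining subtlety is purely organizational: one must verify that after all the connection changes, the $\mathbb{U}_n$-linear summands of the final connection match those defining $\ufray{n}{\blam}$ applied to $\overline{C} \star (_{\aa} \sigma_{r-1})^{u_n}$, using that $\{\zeta_i\}$ and $\{\tilde\zeta_i\}$ are the bulk-deforming families on $MCC_{\blam, a_{r-1}}^n$ and $CM_{\blam, a_{r-1}}^n$ respectively (Lemma \ref{lem: bulk_curv_MCC} and Definition \ref{def: cm_deformed}), and that the fork-slide maps intertwine them up to the controlled modification of Proposition \ref{prop: conn_change_mcc}. The cases of $_{\aa}\sigma_r$, negative crossings, and crossings above $C$ are handled by the obvious reflections and the analogous statements cited in Propositions \ref{prop: cabled_fork_slides}, \ref{prop: conn_change_mcc}, and \ref{prop: deformed_forkslide}.
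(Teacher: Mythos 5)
Your proposal is correct and follows essentially the same route as the paper's proof: apply Proposition \ref{prop: ufray_dot_slide}, swap $\tilde{\gamma}$ for $\rho$ via Corollary \ref{cor: ufray_internal_change}, reassociate to expose $(MCC_{\blam, a_{r-1}}^n)^{u_n}$, then invoke Propositions \ref{prop: conn_change_mcc} and \ref{prop: deformed_forkslide} degreewise before matching connections. Your commentary on the $\mathbb{X}'_n$-contamination and on why strictness of $\overline{C}$ is essential accurately identifies the genuine obstacles that make this case harder than Proposition \ref{prop: yfray_fork_slide}.
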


\begin{proof}
Composing the equivalences of Proposition \ref{prop: ufray_dot_slide} and Corollary \ref{cor: ufray_internal_change} gives a natural isomorphism

\begin{equation} \label{eq: ufray_int_change}
\ufray{n}{\blam} \left(\overline{C} \right) \star \left( (_{\aa} \sigma_{r - 1})^{\blam} \otimes_R R[\mathbb{U}_n] \right) \cong \mathrm{tw}_{\fray{n}{\blam}(\Delta_C) + \tilde{\alpha} - \rho}
\left( \left( \rfray{n}{\blam}(C) \star (_{\aa} \sigma_{r - 1})^{\blam} \right) \otimes_R R[\Theta_{\blam}, \mathbb{U}_n] \right).
\end{equation}
Set $\overline{\rho} := \sum_{i = 1}^n \sum_{j = 1}^m \sum_{k = 1}^{\lambda_j} a_{ijk}(\mathbb{X}, \mathbb{X}'') \otimes \theta_{jk}^{\vee} u_i$. By reassociating, we can express the right-hand side of Equation \eqref{eq: ufray_int_change} as

\begin{equation} \label{eq: ufray_reassoc}
\mathrm{tw}_{\tilde{\alpha} + \fray{n}{\blam}(\Delta_C) - \overline{\rho}}
\left( \left( \left( \left( (_{\bb^{\blam}} S _{\bb}) \star C \right) \otimes_R R[\mathbb{U}_n] \right) \star \left( MCC^n_{\blam, a_{r - 1}} \right)^{u_n} \right) \otimes_R R[\Theta_{\blam}] \right).
\end{equation}

By Proposition \ref{prop: conn_change_mcc}, we may replace $\left( MCC^n_{\blam, a_{r - 1}} \right)^{u_n}$ by $\left( MCC^n_{\blam, a_{r - 1}} \right)^{u_n'}$ up to a homotopy equivalence (which acts distantly from $\tilde{\alpha}$, $\fray{n}{\blam}(\Delta_C)$, and $\overline{\rho}$, and hence commutes with each of these twists).

Finally, we apply the equivalence of Proposition \ref{prop: rfray_fork_slide} in every $\Theta_{\blam}$ and $\mathbb{U}_n$-degree. In each such degree, the maps involved are exactly those of the homotopy equivalence of Proposition \ref{prop: deformed_forkslide}, which take $(MCC_{\blam, a_{r - 1}}^n)^{u_n'}$ to $(CM_{\blam, a_{r - 1}}^n)^{u_n}$. These maps also commute with both $\fray{n}{\blam}(\Delta_C)$, which is concentrated on a distant tensor factor, and, because they are $R[\mathbb{X}, \mathbb{X}'']$-equivariant, $\overline{\rho}$. In total, we see a homotopy equivalence of curved complexes

\begin{multline*}
\mathrm{tw}_{\fray{n}{\blam}(\Delta_C) - \overline{\rho}} \left( \left( ((_{\bb^{\blam}} S_{\bb}) \star C) \otimes_R R[\mathbb{U}_n] \right) \star (MCC_{\blam, a_{r - 1}}^n)^{u_n'} \right) \\
\simeq \mathrm{tw}_{\fray{n}{\blam}(\Delta_C) - \overline{\rho}} \left( \left( ((_{\bb^{\blam}} S_{\bb}) \star C) \otimes_R R[\mathbb{U}_n] \right) \star (CM_{\blam, a_{r - 1}}^n)^{u_n} \right).
\end{multline*}

Now, upon passing to a tensor product with $R[\Theta_{\blam}]$ and twisting by $\tilde{\alpha}$, the right-hand side of this equivalence becomes exactly $\ufray{n}{\blam} \left( \overline{C} \star (\sigma_{r - 1})_{\aa}^{u_n} \right)$, as can be seen by comparing each summand in the connections of these two expressions.
\end{proof}

\begin{rem} \label{rem: uncurving_ufray}
    Each side of the homotopy equivalence of Proposition \ref{prop: ufray_fork_slide}
    is a curved complex with total curvature
    $\fray{n}{\blam}(Z_C) - F_u(\mathbb{X}, \mathbb{X}')$
    (\textbf{not} total curvature $\fray{n}{\blam}(Z_C) - F_u(\mathbb{X}, \mathbb{X}'')$). In particular, if $\fray{n}{\blam}(Z_C) = F_u(\mathbb{X}, \mathbb{X}')$; for example, if $\overline{C}$ is a $\Delta e$-deformed Rickard complex; then Proposition \ref{prop: ufray_fork_slide} furnishes a homotopy equivalence of $0$-curved complexes. By unrolling as in Remark \ref{rem: unrolling}, we may consider this as a homotopy equivalence of chain complexes, and we do so implicitly in what follows.
\end{rem}

\begin{cor} \label{cor: braid_inf_slide}
    Let $\beta_{\aa} \in \BGpd{\aa}{\bb}$ be given. Then $\ufray{n}{\blam}(\beta_{\aa}^{u_n})
    \simeq \infproj{\blam} \star \beta_{\aa}^{\blam} \simeq \beta_{\aa}^{\blam} \star \infproj{\blam}$.
\end{cor}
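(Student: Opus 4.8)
The plan is to deduce Corollary~\ref{cor: braid_inf_slide} from Proposition~\ref{prop: ufray_fork_slide} in exactly the same way that Corollary~\ref{cor: rfray_cable} follows from Proposition~\ref{prop: rfray_fork_slide} and Corollary~\ref{cor: braid_fin_slide} follows from Proposition~\ref{prop: fray_fork_slide}, with $\overline{C} = C^u(\beta_{\aa})$ the $\Delta e$-deformed Rickard complex of Corollary~\ref{cor: curved_braids}. First I would fix a colored braid word $(\underline{\beta})_{\aa}$ for $\beta_{\aa}$ and induct on the number $c$ of crossings in $(\underline{\beta})_{\aa}$ that involve the distinguished $n$-colored strand (the other crossings are passive and the fray functors pass through them trivially, since $\rfray{n}{\blam}$, and hence $\fray{n}{\blam}$ and $\ufray{n}{\blam}$, act only on the designated $n$-labeled strand and commute with horizontal composition with identity bimodules on the remaining strands). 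The base case $c = 0$ is immediate: $\ufray{n}{\blam}(\strand{(n)}^{u_n}) = \ufray{n}{\blam}(\strand{(n)}) = \infproj{\blam}$ by Definition~\ref{def: fray_inf_proj} (note the curvature $F_u^{(n)}(\mathbb{X},\mathbb{X}')$ vanishes on $\strand{(n)}$, so there is no issue), while the cabled braid is trivial.

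For the inductive step, write $\beta_{\aa} = (_{\aa'}\sigma_{r - 1}^{\pm 1})_{\aa} \star \beta'_{\aa}$ (or with $\sigma_r$, or with the distinguished crossing on top of $\beta'$), where $\beta'$ has one fewer active crossing. The $\Delta e$-deformed Rickard complex factors as $C^u(\beta_{\aa}) \simeq C^u(\beta'_{\aa}) \star (_{\aa'}\sigma_{r-1}^{\pm 1})_{\aa}^{u_n, \dots}$ via Remark~\ref{rem: y_ify_tensors} (horizontal composition of $\Delta e$-deformations adds curvatures), and in particular $C^u(\beta'_{\aa})$ is a strict deformation of $C(\beta'_{\aa})$ by Corollary~\ref{cor: curved_braids}. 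Applying Proposition~\ref{prop: ufray_fork_slide} with $\overline{C} = C^u(\beta'_{\aa})$ slides the single crossing across the fray functor:
\[
\ufray{n}{\blam}\!\left(C^u(\beta'_{\aa})\right) \star \left((_{\aa'}\sigma_{r-1})^{\blam} \otimes_R R[\mathbb{U}_n]\right) \simeq \ufray{n}{\blam}\!\left(C^u(\beta'_{\aa}) \star (_{\aa'}\sigma_{r-1})^{u_n}\right) \simeq \ufray{n}{\blam}(C^u(\beta_{\aa})).
\]
By the inductive hypothesis, $\ufray{n}{\blam}(C^u(\beta'_{\aa})) \simeq \infproj{\blam} \star (\beta'_{\aa})^{\blam} \simeq (\beta'_{\aa})^{\blam} \star \infproj{\blam}$; horizontally composing with $(_{\aa'}\sigma_{r-1})^{\blam}$ and reassembling the cabled crossings into $\beta_{\aa}^{\blam}$ gives $\ufray{n}{\blam}(C^u(\beta_{\aa})) \simeq \infproj{\blam} \star \beta_{\aa}^{\blam}$. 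To obtain the equivalence $\simeq \beta_{\aa}^{\blam} \star \infproj{\blam}$ as well, one runs the same argument using the ``crossings above $C$'' version of Proposition~\ref{prop: ufray_fork_slide}, sliding $\infproj{\blam}$ past $\beta_{\aa}^{\blam}$ from the other side. Finally, I would invoke Remark~\ref{rem: uncurving_ufray}: because $\fray{n}{\blam}(Z_{C^u(\beta_{\aa})}) = F_u^{\aa}(\mathbb{X}, \mathbb{X}')$ for a $\Delta e$-deformed Rickard complex, the curvature $\fray{n}{\blam}(Z_C) - F_u^{(n)}(\mathbb{X}, \mathbb{X}')$ appearing in Proposition~\ref{prop: ufray_fork_slide} vanishes (after identifying the $\mathbb{U}_n$ variables with the corresponding bulk variables of $\mathbb{U}_{\aa}$), so both sides are genuine $0$-curved complexes and the homotopy equivalence is one of honest chain complexes after unrolling.

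The main obstacle I anticipate is bookkeeping rather than conceptual: carefully matching the alphabet of deformation parameters $\mathbb{U}_n$ with the ``active'' bulk sub-alphabet of $\mathbb{U}_{\aa}$ on the distinguished strand (in the sense of Remark~\ref{rem: many_rings}), and checking that the curvature contributed by the deformed crossing $(_{\aa'}\sigma_{r-1})^{u_n}$ in the statement of Proposition~\ref{prop: ufray_fork_slide} is precisely the curvature that the factorization of $C^u(\beta_{\aa})$ from Remark~\ref{rem: y_ify_tensors} produces — i.e.\ that $F_u^{(n)}(\mathbb{X}'_n,\mathbb{X}''_n)$ is exactly the piece of $F_u^{\aa}$ living on the distinguished strand of that crossing — so that the telescoping of curvatures across all $c$ crossings really does cancel against $\fray{n}{\blam}(Z_C)$ and leave $0$. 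Once the conventions of Conventions~\ref{conv: cabled_fork_alphs} and the discussion in Remark~\ref{rem: many_rings} are in force this is routine, but it is the place where sign conventions (cf.\ Remark~\ref{rem: bulk_neg}) and alphabet identifications must be handled with care.
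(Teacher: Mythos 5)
Your overall strategy — inducting on the number of crossings involving the distinguished $n$-colored strand and iterating Proposition \ref{prop: ufray_fork_slide}, just as the parallel Corollaries follow from their fork-sliding propositions — is the intended proof, and your base case, your observation that passive crossings cause no trouble, and your appeal to the ``crossings above'' variant for the other-sided equivalence are all fine. The issue is your choice of input: you set $\overline{C} = C^u(\beta_{\aa})$, the \emph{fully} $\Delta e$-deformed Rickard complex over $R[\mathbb{U}_{\aa}]$ from Corollary \ref{cor: curved_braids}, but the statement of the Corollary concerns $\beta_{\aa}^{u_n}$, the deformation over $R[\mathbb{U}_n]$ with curvature only on the distinguished bulk $n$-labeled strand (see the notation introduced at the start of Section \ref{sec: ufray} and Remark \ref{rem: many_rings}). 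Proposition \ref{prop: ufray_fork_slide} explicitly hypothesizes a strict deformation over $R[\mathbb{U}_n]$, not $R[\mathbb{U}_{\aa}]$; with $C^u(\beta_{\aa})$ as input you would carry unmatched $\Delta e$-curvature on all passive strands, which is the territory of Propositions \ref{prop: unfray_fork_slide} and \ref{prop: yufray_fork_slide}, not this Corollary.

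The fix is simpler than the telescoping worry in your last paragraph: work with $(\cdot)^{u_n}$ consistently. Factor $\beta_{\aa}^{u_n} \simeq (\beta'_{\aa})^{u_n} \star (_{\aa}\sigma^{\pm 1})^{u_n}$ by Remark \ref{rem: y_ify_tensors} restricted to the bulk alphabet as in Remark \ref{rem: many_rings}, apply Proposition \ref{prop: ufray_fork_slide} with $\overline{C} = (\beta'_{\aa})^{u_n}$ (which is a strict $F_u^{(n)}$-deformation by Corollary \ref{cor: curved_braids} with the passive curvature switched off), and then the curvature $\fray{n}{\blam}(Z_C) - F_u^{(n)}(\mathbb{X},\mathbb{X}')$ of Remark \ref{rem: uncurving_ufray} vanishes at every stage of the induction — there is nothing to telescope and no $F_u^{\aa}$ to decompose. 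Once this substitution is made, your proof is exactly the paper's (implicit) argument.
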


\begin{cor} \label{cor: inf_proj_crossing_slide}
    $\infproj{\blam}$ slides past crossings.
\end{cor}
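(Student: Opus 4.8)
The plan is to obtain this as an immediate specialization of Corollary \ref{cor: braid_inf_slide}, which (via Proposition \ref{prop: ufray_fork_slide}) has already carried out the real work. Recall from Definition \ref{def: fray_inf_proj} that $\infproj{\blam} = \ufray{n}{\blam}(\strand{(n)})$ and that the bulk curvature $F_u^{(n)}(\mathbb{X},\mathbb{X}')$ vanishes on $\infproj{\blam}$, so by Remark \ref{rem: unrolling} we may and do regard $\infproj{\blam}$ as a genuine chain complex. As in the discussion preceding Corollary \ref{cor: fin_proj_crossing_slide}, the statement ``$\infproj{\blam}$ slides past crossings'' unpacks to the assertion that $\infproj{\blam} \star (\sigma_1)_{(n,c)}^{\blam} \simeq (\sigma_1)_{(n,c)}^{\blam} \star \infproj{\blam}$ for every $c \geq 0$, together with the analogous statements for the inverse crossing and for crossings on the other side of $\infproj{\blam}$.

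First I would specialize Corollary \ref{cor: braid_inf_slide} to $\aa = (n,c)$, $\bb = (c,n)$, and $\beta_{\aa} = (\sigma_1)_{(n,c)}$, which yields the chain of homotopy equivalences
\[
\ufray{n}{\blam}\left( (\sigma_1)_{(n,c)}^{u_n} \right) \simeq \infproj{\blam} \star (\sigma_1)_{(n,c)}^{\blam} \simeq (\sigma_1)_{(n,c)}^{\blam} \star \infproj{\blam}.
\]
Composing the two outer equivalences gives $\infproj{\blam} \star (\sigma_1)_{(n,c)}^{\blam} \simeq (\sigma_1)_{(n,c)}^{\blam} \star \infproj{\blam}$, which is exactly the sliding identity for a cabled positive crossing. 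Running the same argument with $(\sigma_1^{-1})_{(n,c)}$ in place of $(\sigma_1)_{(n,c)}$, and with the reflected fork-slide maps, covers negative crossings and crossings lying above $\infproj{\blam}$; this is precisely the ``analogous equivalence'' content of Proposition \ref{prop: ufray_fork_slide} and Corollary \ref{cor: braid_inf_slide}.

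The only point that warrants care — and it is bookkeeping, not a genuine obstacle — is the mismatch between the bulk-deformed crossing $(\sigma_1)_{(n,c)}^{u_n}$ appearing under $\ufray{n}{\blam}$ on the left and the undeformed cabled crossing $(\sigma_1)_{(n,c)}^{\blam}$ on the right. By Remark \ref{rem: bulk_neg}, $\ufray{n}{\blam}$ contributes the curvature summand $-F_u^{(n)}$, which cancels against the $F_u^{(n)}$-curvature of the $\Delta e$-deformed Rickard complex $(\sigma_1)_{(n,c)}^{u_n}$; as noted in Remark \ref{rem: uncurving_ufray}, the total curvature is then $0$, so after unrolling all of the equivalences above are equivalences of honest chain complexes. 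Thus the ``infinite'' sliding property is a formal consequence of the cabling identity of Corollary \ref{cor: braid_inf_slide}, in exact parallel with how Corollary \ref{cor: fin_proj_crossing_slide} follows from Corollary \ref{cor: braid_fin_slide}.
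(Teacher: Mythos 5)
Your proposal is correct and takes exactly the approach the paper intends: the corollary is an immediate specialization of Corollary \ref{cor: braid_inf_slide} to the single cabled crossing $\beta_{\aa} = (\sigma_1)_{(n,c)}$, with the curvature cancellation of Remark \ref{rem: uncurving_ufray} ensuring the result is an equivalence of honest chain complexes after unrolling. The paper offers no separate proof precisely because this is the direct analog of how Corollary \ref{cor: fin_proj_crossing_slide} follows from Corollary \ref{cor: braid_fin_slide}, and your parenthetical remarks about bookkeeping the curvature are accurate but not strictly needed beyond what Remark \ref{rem: uncurving_ufray} already supplies.
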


\subsection{Bulk and Separated Curvature}

In Subsections \ref{sec: yfray} and \ref{sec: ufray}, we considered deformations of the functor $\fray{n}{\blam}$ with curvature associated to separated and bulk strands, respectively. We will also wish to study both of these types of curvature simultaneously, and we give a name to the corresponding functor.

\begin{defn} \label{def: yufray}
    Retain notation as in Definitions \ref{def: y_fray} and \ref{def: ufray_curved}. Then we set

    \[
    \yufray{n}{\blam}(\overline{C}) := \mathrm{tw}_{\fray{n}{\blam}(\Delta_C) + \delta - \gamma}
    \left(\fray{n}{\blam}(C) \otimes_R R[\mathbb{Y}_{\blam}, \mathbb{U}_n] \right)
    \in \YS_{\fray{n}{\blam}(Z_C) + F_y^{\blam} - F_u^{(n)}}(\SSBim_{\aa^{\blam}}^{\bb^{\blam}}; R[\mathbb{Y}_{\blam}, \mathbb{U}_n]).
    \]
\end{defn}

\begin{defn} \label{def: fray_yinf_proj}
    Let $N, n, \aa, \bb$ be as in Definition \ref{def: fray_fin_proj}.
    In this case, we denote $\yufray{n}{\blam}(\strand{n})$ by $\yinfproj{\blam}$ and refer to $\yinfproj{\blam}$ as a \textit{deformed infinite frayed projector}.
\end{defn}

\begin{rem}
    As in Remark \ref{rem: ah_inf_proj}, when $\blam = (1^n)$, our $\yinfproj{\blam}$ is homotopy
    equivalent to the $y$-ified, unital, column-colored, infinite idempotent
    $P_{(1^n)}^{\vee, y}$ of \cite{Con23} up to an overall quantum degree shift; this is the content of Theorem \ref{thm: yproj_agreement} below.
\end{rem}

The functor $\yufray{n}{\blam}$ enjoys similar interactions with deformed, cabled crossings as do the functors $\yfray{n}{\blam}$ and $\ufray{n}{\blam}$.

\begin{prop} \label{prop: yufray_fork_slide}
    Let $C \in \CS(\SSBim_{\aa}^{\bb})$ be given, and suppose $\overline{C} = \mathrm{tw}_{\Delta_C}(C \otimes_R R[\mathbb{U}_n]) \in \YS_{F_C}(\SSBim_{\aa}^{\bb}; R[\mathbb{U}_n])$ is a strict $F_C$-deformation of $C$ for some $F_C \in Z(\SSBim_{\aa}^{\bb}[\mathbb{U}_n])$. Then there is a natural homotopy equivalence
	
	\[
	\yufray{n}{\blam} \left(\overline{C} \right) \star \left( (_{\aa} \sigma_{r - 1})^{\blam, y, u_{\ell}} \otimes_R R[\mathbb{U}_n] \right) \simeq \yufray{n}{\blam} \left( \overline{C} \star (_{\aa} \sigma_{r - 1})^{u_{\ell}, u_n} \right).
	\]
The analogous equivalence involving $_{\aa} \sigma_r$ also holds, as do
the analogous equivalences involving negative crossings and crossings above $C$.
\end{prop}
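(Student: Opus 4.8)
The plan is to superimpose the arguments proving Propositions~\ref{prop: ufray_fork_slide} and~\ref{prop: unfray_fork_slide}, carrying the separated-strand deformation along exactly as in the passage from Proposition~\ref{prop: fray_fork_slide} to Proposition~\ref{prop: yfray_fork_slide}. First I would write the left-hand side applied to a complex $C$ explicitly: it is a twist of $\left( \rfray{n}{\blam}(C) \star (_{\aa}\sigma_{r-1})^{\blam} \right) \otimes_R R[\Theta_{\blam}, \mathbb{Y}_{\blam}, \mathbb{U}_{\ell}, \mathbb{U}_n]$ whose connection is the Rickard differential, plus the Koszul twist $\alpha$, plus the separated-strand twist $\delta$ together with the cabled-crossing $y$-homotopies $\sum_{j,k} \xi_{jk} \otimes y_{jk}$, plus the passive-strand twist $\Delta_{\ell} = \sum_k \xi_{\ell k} \otimes u_{\ell k}$, plus the bulk twist $-\gamma$, plus $\fray{n}{\blam}(\Delta_C)$. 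Then I would apply the dot-sliding isomorphism $\Psi, \Psi^{-1}$ of~\eqref{eq: fray_dot_slides} through Proposition~\ref{prop: fray_to_def_dots}. As in the computations behind Propositions~\ref{prop: yfray_dot_slide}, \ref{prop: ufray_dot_slide}, and~\ref{prop: unfray_fork_slide}, conjugation by $\Psi$ fixes $\fray{n}{\blam}(\Delta_C)$ and $\Delta_{\ell}$ (a distant, respectively graded-commuting, tensor factor), turns $\alpha$ into $\tilde{\alpha}$, absorbs the crossing $y$-homotopies so that the separated-strand part becomes the purely-$\theta^{\vee}$ twist $\delta$, and turns $-\gamma$ into $-\tilde{\gamma}$, leaving a twist built on $\rfray{n}{\blam}(C) \star (_{\aa}\sigma_{r-1})^{\blam}$.

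The obstruction to finishing immediately with the reduced fork-sliding equivalence of Proposition~\ref{prop: rfray_fork_slide} (in every deformation degree, via Proposition~\ref{prop: rfray_to_def_forks}) is that $\tilde{\gamma}$ still involves the alphabet $\mathbb{X}'_n$. I would clear this exactly as in Corollary~\ref{cor: ufray_internal_change}: Lemma~\ref{lem: ufray_conn_change} produces nilpotent, mutually commuting homotopies $h_i$ with $[d, h_i] = \rho_i - \tilde{\gamma}_i$, and Corollary~\ref{cor: kosz_base_change_rep} then replaces $-\tilde{\gamma}$ by $-\rho$, where $\rho$ is assembled from the $\mathbb{X}''_n$-valued polynomials $a^{\blam}_{ijk}(\mathbb{X}_n, \mathbb{X}''_n)$ and the $\zeta_i$ of Lemma~\ref{lem: bulk_curv_MCC}. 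One checks this base change passes through $\delta$, $\Delta_{\ell}$, and $\fray{n}{\blam}(\Delta_C)$ unchanged, since all of these graded-commute with the $h_i$ or act on distant tensor factors. After reassociating, the complex is a twist of $\left( \left( (_{\bb^{\blam}}S_{\bb}) \star C \right) \otimes_R R[\mathbb{U}_{\ell}, \mathbb{U}_n] \right) \star \left( MCC^n_{\blam, a_{r-1}} \right)^{u_{\ell}, u_n}$, tensored further by $R[\Theta_{\blam}, \mathbb{Y}_{\blam}]$, by $\tilde{\alpha}$, $\delta$, $\fray{n}{\blam}(\Delta_C)$, and the distant term $\overline{\rho}$ (the $\mathbb{X}, \mathbb{X}''$-version of $\rho$). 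Both the bulk ($u_n$) and passive ($u_{\ell}$) curvatures are now localized on the $MCC$ factor.

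At this point I would first invoke Proposition~\ref{prop: conn_change_mcc} to replace $\left( MCC^n_{\blam, a_{r-1}} \right)^{u_{\ell}, u_n}$ by $\left( MCC^n_{\blam, a_{r-1}} \right)^{u_{\ell}', u_n'}$ up to a homotopy equivalence that commutes with $\tilde{\alpha}$, $\delta$, $\fray{n}{\blam}(\Delta_C)$, and $\overline{\rho}$ (all distant); then apply the equivalence of Proposition~\ref{prop: rfray_fork_slide} in every $\Theta_{\blam}$-, $\mathbb{Y}_{\blam}$-, and $\mathbb{U}_n$-degree. In each such degree the relevant maps are those of Proposition~\ref{prop: deformed_forkslide}, which carry $\left( MCC^n_{\blam, a_{r-1}} \right)^{u_{\ell}', u_n'}$ to $\left( CM^n_{\blam, a_{r-1}} \right)^{u_{\ell}, u_n}$ and which commute with $\fray{n}{\blam}(\Delta_C)$ (distant) and $\overline{\rho}$ (being $R[\mathbb{X}, \mathbb{X}'']$-equivariant). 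Re-tensoring with $R[\Theta_{\blam}, \mathbb{Y}_{\blam}]$ and twisting by $\tilde{\alpha}$ and $\delta$ yields precisely $\yufray{n}{\blam}\left( \overline{C} \star (_{\aa}\sigma_{r-1})^{u_{\ell}, u_n} \right)$, as one checks by comparing connections summand by summand; the cases of $_{\aa}\sigma_r$, negative crossings, and crossings above $C$ go through identically with the appropriate variant of Proposition~\ref{prop: rfray_fork_slide}. The step I expect to be the main obstacle is not any single equivalence but the combined bookkeeping: keeping the three deformation alphabets $\mathbb{Y}_{\blam}$, $\mathbb{U}_{\ell}$, $\mathbb{U}_n$ and the auxiliary $\Theta_{\blam}$ straight while verifying at each move that the spectator twists $\fray{n}{\blam}(\Delta_C)$, $\delta$, $\Delta_{\ell}$ --- and the curvature they contribute --- survive unchanged, so that the total curvature of both sides matches $\fray{n}{\blam}(F_C) + F_y^{\blam} + F_{u_{\ell}}^{(a_{r-1})} - F_u^{(n)}$ up to the usual alphabet relabelings.
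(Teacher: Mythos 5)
Your proof is correct and follows essentially the same approach the paper takes, which is simply to merge the proofs of Propositions \ref{prop: unfray_fork_slide} and \ref{prop: ufray_fork_slide}. The paper's own proof is a two-sentence summary noting that the only point of divergence between those two proofs is the application of Proposition \ref{prop: conn_change_mcc} (which readily handles $\mathbb{U}_{\ell}$ and $\mathbb{U}_n$ simultaneously, yielding $(MCC^n_{\blam, a_{r-1}})^{u_{\ell}', u_n'}$); your write-up is a faithful and careful expansion of that plan, and in fact gives a more accurate account of which alphabets are being merged at the $MCC$ stage than the paper's own remark.
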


\begin{proof}
We have essentially already proved this in Propositions \ref{prop: unfray_fork_slide} and \ref{prop: ufray_fork_slide}; the only challenge in merging those proofs is parsing notation while carrying around four distinct alphabets $\Theta_{\blam}$, $\mathbb{Y}_{\blam}$, $\mathbb{U}_{\ell}$, and $\mathbb{U}_n$ of deformation parameters.. More precisely, the only place where the equivalences of Propositions \ref{prop: unfray_fork_slide} and \ref{prop: ufray_fork_slide} diverge is in applying Proposition \ref{prop: conn_change_mcc} to modify the various connections on $MCC_{\blam, a_{r - 1}}^n$. But that Proposition is equally adapted to handle the alphabets $\mathbb{Y}_{\blam}$ and $\mathbb{U}_{\ell}$ at the same time. Using the homotopy equivalence treating both of these alphabets at that stage in the proof then gives the desired equivalence.
\end{proof}

\begin{cor} \label{cor: braid_yinf_slide}
    Let $\beta_{\aa} \in \BGpd{\aa}{\bb}$ be given. Then $\yufray{n}{\blam}(\beta_{\aa}^{u_n})
    \simeq \yinfproj{\blam} \star \beta_{\aa}^{\blam, y} \simeq \beta_{\aa}^{\blam, y}
    \star \yinfproj{\blam}$.
\end{cor}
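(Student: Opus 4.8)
The plan is to deduce this by exactly the mechanism that produces Corollary~\ref{cor: braid_inf_slide} from Proposition~\ref{prop: ufray_fork_slide}, and Corollary~\ref{cor: braid_yfin_slide} from Proposition~\ref{prop: yfray_fork_slide}: starting from $\yinfproj{\blam} = \yufray{n}{\blam}(\strand{(n)})$, slide the cabled, $\mathbb{Y}_{\blam}$-deformed crossings of $\beta_{\aa}^{\blam, y}$ through the projector one at a time using Proposition~\ref{prop: yufray_fork_slide}, each application converting a cabled $\mathbb{Y}_{\blam}$-deformed crossing sitting outside $\yufray{n}{\blam}$ into an uncabled $\mathbb{U}_n$-deformed crossing absorbed into its argument. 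The only genuinely new feature relative to those earlier corollaries is the simultaneous bookkeeping of the two alphabets $\mathbb{Y}_{\blam}$ and $\mathbb{U}_n$ of deformation parameters (alongside the internal Koszul alphabet $\Theta_{\blam}$).

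To carry this out I would first fix a colored braid word $(\sigma_{i_1}^{\epsilon_1})_{\aa}\cdots(\sigma_{i_N}^{\epsilon_N})_{\aa}$ for $\beta_{\aa}$; by compatibility of $\Delta e$-deformations with horizontal composition (Remark~\ref{rem: y_ify_tensors}), the complex $\beta_{\aa}^{u_n}$ is the horizontal composite of the $\mathbb{U}_n$-deformed Rickard complexes of these generators, and $\beta_{\aa}^{\blam, y}$ is the horizontal composite of their $\blam$-cabled, $\mathbb{Y}_{\blam}$-deformed Rickard complexes. Regarding $\strand{(n)}$ as the trivially $\Delta e$-deformed Rickard complex of the identity braid, I would then process the factors of $\beta_{\aa}^{\blam, y}$ from one end: each generator that involves the distinguished (now $\blam$-cabled) strand is absorbed using Proposition~\ref{prop: yufray_fork_slide} --- taken with no curvature on the passive strands, i.e. $\mathbb{U}_{\ell} = \emptyset$, and in whichever of its forms ($\sigma_{r-1}$ versus $\sigma_r$, crossing below versus above $C$, positive versus negative) fits --- while each generator not involving the distinguished strand is absorbed for free, since $\yufray{n}{\blam}$ is the identity on the tensor factor supporting it. Iterating over all $N$ crossings yields $\yinfproj{\blam}\star\beta_{\aa}^{\blam,y}\simeq\yufray{n}{\blam}(\beta_{\aa}^{u_n})$, and processing from the opposite end gives $\beta_{\aa}^{\blam,y}\star\yinfproj{\blam}\simeq\yufray{n}{\blam}(\beta_{\aa}^{u_n})$. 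I would close by matching curvatures: by Definition~\ref{def: yufray} the right-hand side carries total curvature $\fray{n}{\blam}(F_u^{(n)})+F_y^{\blam}-F_u^{(n)}=F_y^{\blam}$, the cancellation being the whole point of the sign convention in Remark~\ref{rem: bulk_neg}, which agrees with the curvature $F_y^{\blam}$ of the left-hand side; after unrolling as in Remark~\ref{rem: uncurving_ufray} where convenient, both sides live in the same homotopy category of $F_y^{\blam}$-curved complexes.

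The hard part is not any one computation but the bookkeeping: one must check at each stage that Proposition~\ref{prop: yufray_fork_slide} actually applies, which in practice means verifying that what remains after peeling crossings off one end of the braid word is still a strict $\Delta e$-deformation of a Rickard complex (this follows from Proposition~\ref{prop: dot_slide} and Corollary~\ref{cor: curved_braids} by restricting the dot-sliding homotopies to the relevant sub-braid), and one must keep the four alphabets $\Theta_{\blam},\mathbb{Y}_{\blam},\mathbb{U}_{\ell},\mathbb{U}_n$ and their associated curvatures consistently identified along the telescope of homotopy equivalences produced. No homological input beyond Proposition~\ref{prop: yufray_fork_slide} is required.
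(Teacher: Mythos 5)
Your proposal matches the paper's implicit argument: the corollary is stated without a separate proof because it follows by iterating Proposition~\ref{prop: yufray_fork_slide} (in the variant with $\mathbb{U}_{\ell}=\emptyset$, i.e.\ with no passive-strand deformation) over a braid word for $\beta_{\aa}$, exactly as Corollaries~\ref{cor: braid_fin_slide}, \ref{cor: braid_yfin_slide}, and \ref{cor: braid_inf_slide} follow from their respective fork-sliding equivalences. Your curvature check $\fray{n}{\blam}(F_u^{(n)}) + F_y^{\blam} - F_u^{(n)} = F_y^{\blam}$ and the observation that crossings disjoint from the distinguished $n$-labeled strand are absorbed trivially (since the fray functors act identically away from the distinguished strand) are the right bookkeeping, and the hypothesis of strictness at each intermediate stage does hold by Corollary~\ref{cor: curved_braids} applied to the relevant sub-braid.
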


\begin{cor} \label{cor: yinf_proj_crossing_slide}
    $\yinfproj{\blam}$ slides past fully $\Delta e$-deformed crossings.
\end{cor}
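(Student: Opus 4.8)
The plan is to obtain Corollary~\ref{cor: yinf_proj_crossing_slide} as the specialization of Proposition~\ref{prop: yufray_fork_slide} to the trivial complex, in exact parallel with how Corollary~\ref{cor: inf_proj_crossing_slide} follows from Proposition~\ref{prop: ufray_fork_slide} and Corollary~\ref{cor: finproj_ycrossing_slide} from Proposition~\ref{prop: unfray_fork_slide}. Note that Corollary~\ref{cor: braid_yinf_slide} already yields the sliding property when the passive strand crossing $\yinfproj{\blam}$ is undeformed; the only new content is allowing that strand to carry $\Delta e$-curvature, which is precisely what the $\mathbb{U}_{\ell}$-deformation in Proposition~\ref{prop: yufray_fork_slide} provides. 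Concretely, I would take $N=n$, $\aa=(c,n)$, $\bb=(n,c)$, $r=2$, $C=\strand{(n)}$, and let $\overline{C}$ be the trivial strict $\Delta e$-deformation $\strand{(n)}\otimes_R R[\mathbb{U}_n]$ with zero connection; this is legitimate because the relevant $\Delta e$-curvatures of $\strand{(n)}$ all act as zero, so no compensating passive-strand curvature on $C$ is needed. By Definitions~\ref{def: yufray} and~\ref{def: fray_yinf_proj} one then has $\yufray{n}{\blam}(\overline{C})=\yinfproj{\blam}$.

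With these choices, Proposition~\ref{prop: yufray_fork_slide}, applied with the crossing $_{\aa}\sigma_{r-1}=\,_{\aa}\sigma_1$ (the single crossing of the bulk $n$-strand with the passive $c$-strand), together with its mirror for crossings attached on the other side of $C$ and its analogues for negative crossings, yields natural homotopy equivalences
\[
\yinfproj{\blam}\star\big((_{\aa}\sigma_{1})^{\blam,y,u_{\ell}}\otimes_R R[\mathbb{U}_n]\big)\;\simeq\;\yufray{n}{\blam}\big(\overline{C}\star(_{\aa}\sigma_{1})^{u_{\ell},u_n}\big)\;\simeq\;(_{\aa}\sigma_{1})^{\blam,y,u_{\ell}}\star\yinfproj{\blam}.
\]
For the outer two terms I would use that $\strand{(n)}\star(_{\aa}\sigma_1)$ is canonically the Rickard complex of $_{\aa}\sigma_1$, and that $\overline{C}\star(_{\aa}\sigma_1)^{u_{\ell},u_n}$ is, by uniqueness of $\Delta e$-deformations of invertible complexes (Proposition~\ref{prop: inv_uniq_yify}) together with the tensor-compatibility of deformations (Remark~\ref{rem: y_ify_tensors}), homotopy equivalent as a curved complex to the fully $\Delta e$-deformed Rickard complex of that crossing, independently of the side of attachment. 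Chaining these gives $\yinfproj{\blam}\star(_{\aa}\sigma_{1})^{\blam,y,u_{\ell}}\simeq(_{\aa}\sigma_{1})^{\blam,y,u_{\ell}}\star\yinfproj{\blam}$, and the same argument for $(_{\aa}\sigma_1)^{-1}$ and for an arbitrary position index completes the proof.

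I do not expect a genuine obstacle: once Proposition~\ref{prop: yufray_fork_slide} is in hand the corollary is a formal specialization. The one point requiring a moment's care is the curvature bookkeeping across the four alphabets $\Theta_{\blam}$, $\mathbb{Y}_{\blam}$, $\mathbb{U}_{\ell}$, $\mathbb{U}_n$ — specifically, confirming that the $-F_u^{(n)}$ term built into $\yufray{n}{\blam}$ (Definition~\ref{def: ufray}, Remark~\ref{rem: bulk_neg}) cancels the bulk $\Delta e$-curvature $F_u^{(n)}$ of the deformed crossing, so that both sides of the final equivalence are $\Delta e$-deformations carrying the same residual curvature $F_y^{\blam}$ (together with the passive-strand curvature $F_{u_{\ell}}^{(c)}$), exactly as recorded in Remark~\ref{rem: uncurving_ufray}. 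This is the identical cancellation already used in the undeformed case, now merely performed in the presence of the extra $\mathbb{Y}_{\blam}$ and $\mathbb{U}_{\ell}$ directions, which the proof of Proposition~\ref{prop: yufray_fork_slide} has already arranged to be compatible with.
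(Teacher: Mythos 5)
Your approach is correct and is the same as the paper's implicit one: Corollary~\ref{cor: yinf_proj_crossing_slide} is a specialization of Proposition~\ref{prop: yufray_fork_slide} (with its reflected version for crossings on the other side) to $C=\strand{\aa}$ with trivial deformation, chained through the common middle term $\yufray{n}{\blam}$ of the fully deformed crossing. You correctly identify that Corollary~\ref{cor: braid_yinf_slide} alone does not suffice here (it carries no $\mathbb{U}_\ell$-curvature on the passive strand), and that the $\mathbb{U}_\ell$-deformation built into Proposition~\ref{prop: yufray_fork_slide} is precisely what makes ``fully $\Delta e$-deformed'' attainable.

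A couple of notational slips worth fixing. You write $N=n$ while also taking $\aa=(c,n)$; with $\aa=(c,n)\vdash N$ one has $N=n+c$, not $n$ (the case $N=n$, $\aa=\bb=(n)$ is the one used to \emph{define} $\yinfproj{\blam}$ in Definition~\ref{def: fray_yinf_proj}, but to see a crossing you need the extra $c$-colored passive entry). Relatedly, $C$ should be $\strand{\aa}=\strand{(c,n)}$, not $\strand{(n)}$, and the equality $\yufray{n}{\blam}(\overline{C})=\yinfproj{\blam}$ is really $\yufray{n}{\blam}(\strand{\aa}\otimes_R R[\mathbb{U}_n]) = \yinfproj{\blam}\boxtimes\strand{(c)}$, with the identification to $\yinfproj{\blam}$ being the implicit inclusion of Remark~\ref{rem: bim_implicit_inc}. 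Finally, the appeal to Proposition~\ref{prop: inv_uniq_yify} and Remark~\ref{rem: y_ify_tensors} to equate the ``below'' and ``above'' composites is unnecessary: since $\overline{C}$ is the identity with zero connection, $\overline{C}\star(_{\aa}\sigma_1)^{u_\ell,u_n}$ and $(_{\bb}\sigma_1)^{u_\ell,u_n}\star\overline{C}$ are literally the same fully deformed Rickard complex of the crossing, so the chaining is immediate. None of these affect the substance of the argument.
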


\section{Frayed Projectors} \label{sec: cat_idem}

In this section we undertake a deeper investigation of the frayed projectors $\finproj{\blam}$, $\yfinproj{\blam}$, $\infproj{\blam}$, and $\yinfproj{\blam}$ for $\blam 
= (1^N) \vdash N$. We will see that the functors of Section \ref{sec: fray_functors} allow us to recover analogs of the topological recursions studied in \cite{EH19, EH17b, Con23} in a purely formal way. In the presence of $\Delta e$-deformation on bulk strands, we also prove that $\infproj{\blam}$ and $\yinfproj{\blam}$ are (unital) idempotents in $K^+(\SBim_N)$ and $\YS_{F^{\blam}_y}(\SBim_N; R[\mathbb{Y}_{\blam}])$, respectively. Uniqueness of such idempotents then guarantees that $\infproj{\blam}$ (resp.  $\yinfproj{\blam}$) is homotopy equivalent to the complex $P_{(1^n)}^{\vee}$ (resp. $(P_{(1^n)}^{\vee})^y$) considered in \cite{Con23}.

\begin{conv}
    Throughout this Section, we fix $n \geq 1$, $N = n + 1$, $\aa = \bb = (n, 1) \vdash N$, and $\blam = (1^n) \vdash n$. As in Section \ref{sec: SSBim}, we denote by $W_{\aa}$ and $\strand{\aa}$ the full merge-split and identity bimodules in $\SSBim_{\aa}^{\bb}$:

    \begin{gather*}
    W_{\aa} := 
    \begin{tikzpicture}[scale=.5,anchorbase,tinynodes]
    \draw[webs] (0,0) node[below]{$n$} to[out=90,in=180] (.5,.5);
    \draw[webs] (1,0) node[below]{$1$} to[out=90,in=0] (.5,.5);
    \draw[webs] (.5,.5) to (.5,1.5);
    \draw[webs] (.5,1.5) to[out=180,in=270] (0,2) node[above,yshift=-2pt]{$n$};
    \draw[webs] (.5,1.5) to[out=0,in=270] (1,2) node[above,yshift=-2pt]{$1$};
    \end{tikzpicture}
    ; \quad \quad \strand{\aa} :=
    \begin{tikzpicture}[scale=.5,anchorbase,tinynodes]
    \draw[webs] (0,0) node[below]{$n$} to (0,2);
    \draw[webs] (1,0) node[below]{$1$} to (1,2);
    \end{tikzpicture}
    \end{gather*}

    We will write the alphabets involved in the usual action of $\mathrm{Sym}^{\aa}(\mathbb{X}, \mathbb{X}')$ on $\SSBim_{\aa}^{\bb}$ as follows. We label the total alphabet in $n + 1$ letters associated to the top (resp. bottom) of each complex as $\mathbb{X}_{n + 1}$ (resp. $\mathbb{X}'_{n + 1}$). These alphabets decompose along $\aa = (n, 1) \vdash N$ into alphabets $\mathbb{X}_{n + 1} := \mathbb{X}_n \sqcup \{x_{n + 1}\}$, resp. $\mathbb{X}'_{n + 1} := \mathbb{X}'_n \sqcup \{x'_{n + 1}\}$.

    We also let $FT_{\aa} \in Br_{\aa}^{\bb}$ denote the $(n, 1)$-colored full twist braid
    on two strands and $J_{n + 1} = \sigma_n \sigma_{n - 1} \dots \sigma _2 \sigma_1 \sigma_1
    \sigma_2 \dots \sigma_{n - 1} \sigma_n \in Br_{n + 1}$ denote the
    Jucys--Murphy braid on $n + 1$ strands:

    \begin{gather} \label{fig: braids}
    FT_{\aa} :=
    \begin{tikzpicture}[scale=.5,anchorbase,tinynodes]
    \draw[webs] (1,0) node[below]{$1$} to[out=90,in=270] (0,1);
    \draw[line width=5pt, color=white] (0,0) to[out=90,in=270] (1,1);
    \draw[webs] (0,0) node[below]{$n$} to[out=90,in=270] (1,1);
    \draw[webs] (1,1) to[out=90,in=270] (0,2) node[above,yshift=-2pt]{$n$};
    \draw[line width=5pt, color=white] (0,1) to[out=90,in=270] (1,2);
    \draw[webs] (0,1) to[out=90,in=270] (1,2) node[above,yshift=-2pt]{$1$};
    \end{tikzpicture}
    ; \quad \quad J_{n + 1} :=
    \begin{tikzpicture}[scale=.5,anchorbase,tinynodes]
    \draw[webs] (3,0) to[out=90,in=0] (1.5,.75);
    \draw[webs] (1.5,.75) to[out=180,in=270] (0,1.5);
    \draw[line width=5pt, color=white] (.8,0) to (.8,1.5);
    \draw[webs] (.8,0) to (.8,1.5);
    \draw[line width=5pt, color=white] (2.2,0) to (2.2,1.5);
    \draw[webs] (2.2,0) to (2.2,1.5);
    \node at (1.5,.2){$\dots$};
    \draw [decorate,decoration = {brace,mirror}] (.8,-.2) -- (2.2,-.2)
    node[pos=.5,below,yshift=-5pt]{$n$};
    \draw[webs] (.8,1.5) to (.8,3);
    \draw[webs] (2.2,1.5) to (2.2,3);
    \node at (1.5,2.8){$\dots$};
    \draw[line width=5pt, color=white] (0,1.5) to[out=90,in=180] (1.5,2.25);
    \draw[line width=5pt, color=white] (1.5,2.25) to[out=0,in=270] (3,3);
    \draw[webs] (0,1.5) to[out=90,in=180] (1.5,2.25);
    \draw[webs] (1.5,2.25) to[out=0,in=270] (3,3);
    \end{tikzpicture}
    \end{gather}
\end{conv}

Finally, we will often make use of the minimal complex for $FT_{\aa}$ introduced below.

\begin{defn} \label{def: c_n}
Let $C_n$ denote the three-term complex

\begin{center}
\begin{tikzcd}[sep=huge]
C_n := q^nW_{\aa} \arrow[r, "x_{n + 1} - x_{n + 1}'"]
& tq^{n - 2} W_{\aa} \arrow[r, "unzip"] & t^2q^{-2} \strand{\aa}.
\end{tikzcd}
\end{center}
\end{defn}

\begin{rem} \label{rem: hrw_min_model}
By a special case of Theorem 3.24 in \cite{HRW21b}\footnote{\textcolor{revisions}{In the notation of \cite{HRW21b}, the first term of the differential in $C_n$ is $\delta^v$ and the second is $\delta^h$. The SDR property follows from the fact that all homotopy equivalences involved in the proof of their Theorem 3.24 arise from Gaussian elimination.}}, there is a strong deformation retraction from $FT_{\aa}$ onto $C_n$; we denote the maps involved as follows:
\begin{center}
\begin{tikzcd}[sep=huge]
FT_{\aa} \arrow[r, "p", harpoon, shift left] & C_n
\arrow[l, "s", harpoon, shift left].
\end{tikzcd}
\end{center}
\end{rem}

\subsection{Topological Recursions} \label{sec: top_rec}

We consider each of the frayed projectors $\finproj{\blam}$, $\yfinproj{\blam}$, $\infproj{\blam}$, and $\yinfproj{\blam}$ in turn. In the finite, undeformed case, our goal will be to construct natural maps
$\kappa \colon \left( K_{(1^n)}^{(1^n)} \boxtimes \strand{(1)} \right) \to \left( K_{(1^n)}^{(1^n)} \boxtimes \strand{(1)} \right) \star J_{n + 1}$
satisfying $\mathrm{Cone}(\kappa) \simeq K_{(1^{n + 1})}^{(1^{n + 1})}$. Our goals in the infinite and deformed cases are similar; deforming will come along more or less for free, while passing to the infinite frayed projectors will require us to add in some periodic behavior by hand.

We constructed similar maps by hand in \cite{Con23} involving the Abel--Hogancamp projectors $K_n$ using explicit models of these complexes. This construction was challenging, especially in the infinite case, requiring several pages of delicate obstruction theory and ultimately giving a non-constructive proof that such maps exist. Even worse, that proof requires appealing to the parity of the triply-graded homology of the colored Hopf link established in \cite{HRW21}, which is quite a challenging computation in its own right.

In contrast, our construction here will only involve the very well-understood, finite complex $FT_{\aa}$ and its deformations. The economy of the present construction lies in the fact that $\fray{n}{\blam}$, $\yfray{n}{\blam}$, $\ufray{n}{\blam}$, and $\yufray{n}{\blam}$ are \textit{functors}. We view this simplification in establishing skein relations as the most significant computational advantage of working with fray functors.

\subsubsection{Finite Frayed Projector} \label{subsec: fin_proj}

Recall the three term complex $C_n$ of Definition \ref{def: c_n}. There is an obvious chain map $\iota \colon t^2q^{-2} \strand{\aa} \hookrightarrow C_n$ given
by inclusion of the final term.

\begin{lem} \label{lem: iota_gauss_elim}
There is a homotopy equivalence

\begin{center}
    \begin{tikzcd}[sep=huge]
        \mathrm{Cone}(\iota) \simeq q^n W_{\aa} \arrow[r, "x_{n + 1} - x_{n + 1}'"] & tq^{n - 2} W_{\aa}
    \end{tikzcd}
    \end{center}
\end{lem}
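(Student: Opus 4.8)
The plan is to recognize $\mathrm{Cone}(\iota)$ as a mapping cone on the final term of $C_n$ and then apply Gaussian elimination (Proposition~\ref{prop: gauss_elim}, or its packaged form Corollary~\ref{cor: gauss_elim_removal}) to cancel the acyclic two-term summand $t^2q^{-2}\strand{\aa} \xrightarrow{\mathrm{id}} t^2q^{-2}\strand{\aa}$ created by coning off the inclusion. First I would write $\mathrm{Cone}(\iota)$ explicitly as the four-term complex
\[
q^n W_{\aa} \xrightarrow{\ x_{n+1} - x_{n+1}'\ } tq^{n-2}W_{\aa} \xrightarrow{\ unzip\ } t^2q^{-2}\strand{\aa} \xrightarrow{\ \mathrm{id}\ } t^2q^{-2}\strand{\aa},
\]
where the last copy of $\strand{\aa}$ is the one coming from the cone construction (shifted appropriately so that $\iota$, a degree $0$ closed map, becomes a degree $t$ component). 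The differential out of the $unzip$ term into the new top copy is $\mathrm{id}$, which is an isomorphism, so it is precisely the kind of ``$\varphi$'' appearing in the hypotheses of Proposition~\ref{prop: gauss_elim}.

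Next I would identify the data: take $B_1 = B_2 = t^2q^{-2}\strand{\aa}$ with $\varphi = \mathrm{id}$, let $A$ play the role of the part of the complex strictly before $tq^{n-2}W_{\aa}$ (namely $q^n W_{\aa}$ in homological degree $0$), let $D = tq^{n-2}W_{\aa}$, and note there is no $E$ or $F$ term (equivalently they are zero), so the maps $\kappa, \gamma, \epsilon, \rho, \psi$ are either the relevant differential components or zero. The only subtlety is bookkeeping the homological shifts and signs in the cone, but these are exactly accounted for by Example~\ref{ex: cones} and the sign conventions fixed in Example~\ref{ex: homological_gradings}. Applying Proposition~\ref{prop: gauss_elim} (with $S = R$, trivial curvature $G = 0$) then exhibits $\mathrm{Cone}(\iota) \cong K \oplus C$ where $K = \bigl(t^2q^{-2}\strand{\aa} \xrightarrow{\mathrm{id}} t^2q^{-2}\strand{\aa}\bigr) \simeq 0$ is contractible and the complementary summand $C$ is, after the elimination, exactly
\[
q^n W_{\aa} \xrightarrow{\ x_{n+1}-x_{n+1}'\ } tq^{n-2}W_{\aa};
\]
here I would check that the corrected differential $\epsilon - \gamma\varphi^{-1}\kappa$ on the surviving $W_{\aa}$ terms is unchanged, since the correction term $\gamma\varphi^{-1}\kappa$ involves the composite through the cancelled pair, and by degree reasons (or because $\gamma = 0$ in our setup) it vanishes. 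Thus $\mathrm{Cone}(\iota) \simeq C$, which is the claimed two-term complex.

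I do not expect a genuine obstacle here — this is a routine Gaussian elimination. The one place to be careful is the homological grading: coning off $\iota \colon t^2q^{-2}\strand{\aa} \to C_n$ produces a copy of $\strand{\aa}$ in homological degree $t^3$ (one higher than the term it maps from, after the degree-$t$ shift inherent in the cone), and one must confirm that the pair being cancelled is genuinely of the form $B_1 \xrightarrow{\mathrm{id}} B_2$ with $B_1, B_2$ in consecutive homological degrees and that no stray differential components connect the cancelled pair back to $q^n W_{\aa}$. Since the original $C_n$ has no differential from its last term and $\iota$ hits only that last term, there is no such component, and the elimination goes through cleanly. Alternatively, one could phrase the whole argument via Corollary~\ref{cor: gauss_elim_removal} to directly obtain a strong deformation retraction from $\mathrm{Cone}(\iota)$ onto the two-term complex, which is the form most convenient for later use.
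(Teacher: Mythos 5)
Correct, and this is exactly the paper's proof, which also cites Proposition~\ref{prop: gauss_elim} and declares the result immediate. One small slip in the intermediate description: with the paper's convention $\mathrm{Cone}(f) = \mathrm{tw}_f(t^{-1}X \oplus Y)$ from Example~\ref{ex: cones}, the new copy of $\strand{\aa}$ appears as $tq^{-2}\strand{\aa}$ in homological degree $1$ (one \emph{below} the top of $C_n$), not in degree $3$, so the identity being cancelled runs from degree $1$ to degree $2$; this does not affect the elimination, which still yields the stated two-term complex.
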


\begin{proof}
This is an immediate application of Gaussian elimination \eqref{prop: gauss_elim}.
\end{proof}

\begin{prop} \label{prop: fin_proj_recurs_thin}
There is a homotopy equivalence 
\[
\mathrm{Cone}(\fray{n}{\blam}(\iota)) \simeq q^n K_{(1^{n + 1})}^{(1^{n + 1})}.
\]
\end{prop}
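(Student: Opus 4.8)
The plan is to leverage functoriality of $\fray{n}{\blam}$ to reduce the claim to an elementary statement about Koszul complexes of bimodules. First I would record that $\fray{n}{\blam} = \mathrm{tw}_{\alpha}\!\left(\rfray{n}{\blam}(-) \otimes_R R[\Theta_{\blam}]\right)$ is an additive functor --- $\rfray{n}{\blam}$ is exact by Proposition \ref{prop: rfray_exact} and the Koszul twist $\alpha$ is central --- so it commutes with the formation of mapping cones at the chain level, and since it descends to $K\!\left(\SSBim_{\aa}^{\bb}\right)$ it commutes with cones there as well. Hence $\mathrm{Cone}(\fray{n}{\blam}(\iota)) \cong \fray{n}{\blam}(\mathrm{Cone}(\iota))$. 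Lemma \ref{lem: iota_gauss_elim} gives $\mathrm{Cone}(\iota) \simeq D$, where $D$ is the two-term complex $q^n W_{\aa} \xrightarrow{\,x_{n + 1} - x_{n + 1}'\,} t q^{n - 2} W_{\aa}$, and since $\fray{n}{\blam}$ preserves homotopy equivalences the claim becomes $\fray{n}{\blam}(D) \simeq q^n K_{(1^{n + 1})}^{(1^{n + 1})}$.

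Next I would identify $D$ with the one-variable Koszul complex $K(q^n W_{\aa}; x_{n + 1} - x_{n + 1}')$ in the sense of Example \ref{ex: kosz_comp}: for the degree-$q^2$, homological-degree-$0$ endomorphism $x_{n + 1} - x_{n + 1}'$ of the bimodule $W_{\aa}$, this Koszul complex has precisely the two chain objects $q^n W_{\aa}$ and $t q^{n - 2} W_{\aa}$ with the displayed differential. The variables $x_{n + 1}, x_{n + 1}'$ act on the passive $(1)$-labeled strand, which is untouched by the fraying operation (we always fray the $n$-labeled strand, per Convention \ref{conv: fray_vals}), so $\fray{n}{\blam}$ carries multiplication by $x_{n + 1} - x_{n + 1}'$ on $W_{\aa}$ to multiplication by $x_{n + 1} - x_{n + 1}'$ on $\fray{n}{\blam}(W_{\aa})$. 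Combining this with another application of ``commutes with cones'', $\fray{n}{\blam}(D) = K\!\left(q^n \fray{n}{\blam}(W_{\aa}); x_{n + 1} - x_{n + 1}'\right)$. By Definition \ref{def: fray} together with $\rfray{n}{\blam}(W_{\aa}) = W_{(1^{n + 1})}$, the complex $\fray{n}{\blam}(W_{\aa})$ is exactly $W_{(1^{n + 1})}$ equipped with the Koszul differential on the separated-strand differences, i.e. $\fray{n}{\blam}(W_{\aa}) = K\!\left(W_{(1^{n + 1})}; x_1 - x_1', \dots, x_n - x_n'\right)$.

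Finally I would use the standard fact that a Koszul complex built one pairwise-commuting endomorphism at a time is the total Koszul complex. Since $x_{n + 1} - x_{n + 1}'$ commutes with $x_1 - x_1', \dots, x_n - x_n'$ (all act by central polynomials) and the associated odd Koszul variables commute, iterating yields
\begin{align*}
K\!\left(q^n K\!\left(W_{(1^{n + 1})}; x_1 - x_1', \dots, x_n - x_n'\right); x_{n + 1} - x_{n + 1}'\right)
&= q^n\, K\!\left(W_{(1^{n + 1})}; x_1 - x_1', \dots, x_{n + 1} - x_{n + 1}'\right),
\end{align*}
and the right-hand side is $q^n \fray{n + 1}{(1^{n + 1})}(\strand{(n + 1)}) = q^n K_{(1^{n + 1})}^{(1^{n + 1})}$ by Definitions \ref{def: fray} and \ref{def: fray_fin_proj} (taking $N = n + 1$, $\aa = \bb = (n + 1)$, $\blam = (1^{n + 1})$).

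The main obstacle I anticipate is purely bookkeeping rather than conceptual: one must check that passing through the Koszul-complex conventions of Example \ref{ex: kosz_comp} produces exactly the overall shift $q^n$ with no stray $t$- or $q$-factors, that the passive strand's alphabet is correctly matched with $\{x_{n + 1}\}$ so that adjoining its difference supplies the $(n + 1)$-st Koszul generator of $K_{(1^{n + 1})}^{(1^{n + 1})}$ (and not, say, $x_1 - x_1'$), and that the iterated-Koszul identity is invoked with the correct homological placement of the two summands. Everything else is a formal consequence of functoriality of $\fray{n}{\blam}$ and Lemma \ref{lem: iota_gauss_elim}.
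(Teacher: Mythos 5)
Your proposal is correct and follows the same route as the paper's own proof: swap the cone past $\fray{n}{\blam}$ by functoriality, reduce to the two-term complex of Lemma \ref{lem: iota_gauss_elim}, and observe that applying $\fray{n}{\blam}$ to that one-variable Koszul complex on $W_{\aa}$ produces the full $(n+1)$-variable Koszul complex on $W_{(1^{n+1})}$. The only difference is that you make the iterated-Koszul step explicit where the paper leaves it implicit; your bookkeeping checks (the shift $q^n$, the passive strand supplying the $(n+1)$-st generator) are exactly the right things to verify and come out correctly.
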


\begin{proof}
By \textcolor{revisions}{Remark \ref{rem: cone_morphism_commute}}, we can rewrite the left-hand side as $\fray{n}{\blam}(\mathrm{Cone}(\iota))$. The right-hand side of the equivalence of Lemma \ref{lem: iota_gauss_elim} is exactly a shift of the Koszul complex \textcolor{revisions}{(Example \ref{ex: kosz_comp})} on $W_{\aa}$ for the action of $x_{n + 1} - x_{n + 1}'$. By definition, applying $\fray{n}{\blam}$ to this Koszul complex results in exactly the appropriate shift of the finite frayed projector $K_{(1^{n + 1})}^{(1^{n + 1})}$.
\end{proof}

We wish to repackage the results of Proposition \ref{prop: fin_proj_recurs_thin} to emphasize the role of the finite frayed projector $K_{(1^n)}^{(1^n)}$. Recall the map $s \colon C_n \to FT_{\aa}$ of Remark \ref{rem: hrw_min_model}. Since $s$ is a homotopy equivalence, it induces a homotopy equivalence $\mathrm{Cone}(s \circ \iota) \simeq \mathrm{Cone}(\iota)$ \textcolor{revisions}{by Proposition \ref{prop: cone_invariance}}. Applying the functor $\fray{n}{\blam}$, by Proposition \ref{prop: fin_proj_recurs_thin}, we obtain a homotopy equivalence

\[
\mathrm{Cone}(\fray{n}{\blam}(s \circ \iota)) \simeq q^n K_{(1^{n + 1})}^{(1^{n + 1})}.
\]

Now, $\fray{n}{\blam}(s \circ \iota)$ is by construction a chain map from $\fray{n}{\blam}(t^2q^{-2} \strand{\aa})$ to $\fray{n}{\blam}(FT_{\aa})$. The former is exactly $t^2q^{-2} \left( K_{(1^n)}^{(1^n)} \boxtimes \strand{(1)} \right)$; by Corollary \ref{cor: braid_fin_slide}, the latter is homotopy equivalent to $\left( K_{(1^n)}^{(1^n)} \boxtimes \strand{(1)} \right) \star J_{n + 1}$, and post-composing with this homotopy equivalence \textcolor{revisions}{again induces a homotopy equivalence of mapping cones by Proposition \ref{prop: cone_invariance}}. In total, we see the following (after shifting in $q$-degree to eliminate the factor of $q^n$ in Proposition \ref{prop: fin_proj_recurs_thin}):

\begin{cor} \label{cor: fin_proj_skein}
    There is a chain map
    
    \[
    \kappa \colon t^2q^{-2 - n} \left( \finproj{\blam} \boxtimes \strand{(1)} \right)
    \to q^{-n} \left( \finproj{\blam} \boxtimes \strand{1} \right) \star J_{n + 1}
    \]
    satisfying $K_{(1^{n + 1})}^{(1^{n + 1})} \simeq \mathrm{Cone}(\kappa)$.

\end{cor}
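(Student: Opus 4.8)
The plan is to extract the statement directly from Proposition~\ref{prop: fin_proj_recurs_thin} by composing $\iota$ with the minimal-model equivalence of Remark~\ref{rem: hrw_min_model} and then identifying the source and target through the functoriality of $\fray{n}{\blam}$. Concretely, recall that $s \colon C_n \to FT_{\aa}$ is a homotopy equivalence; since post-composing the map defining a mapping cone with a homotopy equivalence produces a homotopy-equivalent cone, the composite $s \circ \iota \colon t^2 q^{-2}\strand{\aa} \to FT_{\aa}$ satisfies $\mathrm{Cone}(s\circ\iota) \simeq \mathrm{Cone}(\iota)$. Applying the additive functor $\fray{n}{\blam}$ --- which commutes with mapping cones and preserves homotopy equivalences --- and invoking Proposition~\ref{prop: fin_proj_recurs_thin} gives
\[
\mathrm{Cone}\bigl(\fray{n}{\blam}(s\circ\iota)\bigr) \;\simeq\; q^n K_{(1^{n+1})}^{(1^{n+1})}.
\]

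Next I would identify the two complexes joined by $\fray{n}{\blam}(s\circ\iota)$. Since $\fray{n}{\blam}$ frays only the distinguished $n$-labeled strand and leaves the passive $1$-labeled strand untouched, Definitions~\ref{def: fray} and~\ref{def: fray_fin_proj} give $\fray{n}{\blam}(t^2 q^{-2}\strand{\aa}) = t^2 q^{-2}\bigl(\finproj{\blam} \boxtimes \strand{(1)}\bigr)$. For the target, the $(1^n)$-cable of the $(n,1)$-colored two-strand full twist is, after the associativity and digon relations of Proposition~\ref{prop: web_rels}, the Rickard complex of the Jucys--Murphy braid $J_{n+1}$; hence Corollary~\ref{cor: braid_fin_slide} supplies a homotopy equivalence $\fray{n}{\blam}(FT_{\aa}) \simeq \bigl(\finproj{\blam}\boxtimes\strand{(1)}\bigr)\star J_{n+1}$, realized by an honest chain map since it arises from a strong deformation retraction. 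Post-composing $\fray{n}{\blam}(s\circ\iota)$ with this chain map and applying the grading shift $q^{-n}$ defines
\[
\kappa \colon t^2 q^{-2-n}\bigl(\finproj{\blam}\boxtimes\strand{(1)}\bigr) \longrightarrow q^{-n}\bigl(\finproj{\blam}\boxtimes\strand{(1)}\bigr)\star J_{n+1},
\]
and the displayed equivalence above, shifted by $q^{-n}$, becomes $\mathrm{Cone}(\kappa) \simeq K_{(1^{n+1})}^{(1^{n+1})}$.

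I do not expect a serious obstacle here: the argument is a bookkeeping assembly of results already in hand. The two points requiring care are (i) checking that the quantum and homological shifts produced by Lemma~\ref{lem: iota_gauss_elim}, Proposition~\ref{prop: fin_proj_recurs_thin}, and the shift functors line up to give exactly the degrees appearing in the statement of $\kappa$, and (ii) confirming that the equivalence of Corollary~\ref{cor: braid_fin_slide} is represented by a genuine chain map, so that $\kappa$ itself is a chain map rather than merely a morphism in $K^b(\SSBim)$. The only genuinely geometric input --- recognizing $FT_{\aa}^{(1^n)}$ as $J_{n+1}$ --- is routine from the graphical calculus.
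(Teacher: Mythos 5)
Your argument is essentially identical to the paper's: both pre-compose $\iota$ with the minimal-model equivalence $s$, apply $\fray{n}{\blam}$ and invoke Proposition~\ref{prop: fin_proj_recurs_thin} to identify the cone, then use Corollary~\ref{cor: braid_fin_slide} to identify $\fray{n}{\blam}(FT_{\aa})$ with $\bigl(\finproj{\blam}\boxtimes\strand{(1)}\bigr)\star J_{n+1}$ and shift by $q^{-n}$. The one small imprecision is attributing the identification $FT_{\aa}^{(1^n)} = J_{n+1}$ to the web relations of Proposition~\ref{prop: web_rels}; this is an equality of colored braids (a direct consequence of blackboard-framed cabling, not a Soergel-bimodule calculation), but it does not affect the argument.
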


\subsubsection{Deformed Finite Frayed Projector} \label{subsec: passive_rec}

The entire story of Section \ref{subsec: fin_proj} admits a deformed analog. Let $FT_{\aa}^y$ denote the deformed full twist on 2 strands with $\Delta e$-curvature
on the $1$-labeled strand; we label the deformation parameter for this complex by $y_{n + 1}$. Explicitly, the curvature associated to this complex is of the form

\[
F_y^{(1)}(x_{n + 1}, x'_{n + 1}) = (x_{n + 1} - x'_{n + 1}) \otimes y_{n + 1} \in \mathrm{Sym}^{\aa}(\mathbb{X}_{n + 1}, \mathbb{X}_{n + 1}') \otimes_R R[y_{n + 1}].
\]

The three-term chain complex $C_n$ of Definition \ref{def: c_n} also admits a deformation with this curvature.

\begin{lem} \label{lem: c_ny}
Let $C_n^y$ be the strict deformation of $C_n$ via the twist indicated in blue below:

\begin{center}
\begin{tikzcd}[sep=huge]
C_n^y := q^nW_{\aa} \otimes_R R[y_{n + 1}] \arrow[r, harpoon, shift left, "x_{n + 1} - x'_{n + 1}"]
& tq^{n - 2}W_{\aa} \otimes_R R[y_{n + 1}] \arrow[l, blue, harpoon, shift left, "y_{n + 1}"] \arrow[r, "unzip"]
& t^2q^{-2} \strand{\aa} \otimes_R R[y_{n + 1}].
\end{tikzcd}
\end{center}
Then the homotopy equivalence of Remark \ref{rem: hrw_min_model} admits a curved lift to a homotopy equivalence

\begin{center}
\begin{tikzcd}[sep=huge]
FT_{\aa}^y \arrow[r, "p^y", harpoon, shift left] & C_n^y \arrow[l, "s^y", harpoon, shift left]
\end{tikzcd}
\end{center}
of $F_y^{(1)}$-curved complexes.
\end{lem}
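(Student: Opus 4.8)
The plan is to invoke the deformation-lifting machinery of Section \ref{subsec: gauss_elim}, specifically Proposition \ref{prop: sdr_deformation_lifting}, applied to the strong deformation retraction $\{p, s, h\}$ from $FT_{\aa}$ onto $C_n$ of Remark \ref{rem: hrw_min_model}. The key observation is that both $FT_{\aa}^y$ and $C_n^y$ are \emph{strict} $F_y^{(1)}$-deformations of their underlying complexes, so the only obstruction to applying that proposition is that the deforming families agree under lifting.

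First I would recall that the strong deformation retraction from $FT_{\aa}$ onto $C_n$ is not \emph{a priori} known to satisfy the side conditions ($h^2 = 0$, $ph = 0$, $hs = 0$); however, Proposition 4.11 of homological perturbation theory (or a direct argument, e.g. replacing $h$ by $h d_{FT} h$ as in standard clean-up arguments) shows that any strong deformation retraction can be modified so that the side conditions hold without changing $p$ and $s$. Alternatively — and this is cleaner — since $C_n$ is the minimal complex of $FT_{\aa}$ and $FT_{\aa}$ is bounded, we may simply invoke that the retraction of \cite{HRW21b} can be taken to satisfy the side conditions. Granting this, the deforming family $\{y_{n+1}\text{-action}\}$ on $C_n$ giving $C_n^y$ consists of the single endomorphism $\zeta$ of degree $q^{-2}t^{-1}$ whose only nonzero component is the blue arrow $y_{n+1} \colon tq^{n-2}W_{\aa} \to q^n W_{\aa}$, which satisfies $[d_{C_n}, \zeta] = x_{n+1} - x'_{n+1} = \varphi$ (here $\varphi$ is the central element $x_{n+1} - x'_{n+1}$ of degree $q^2 t$).

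Next I would check that lifting $\zeta$ along $\{p, s, h\}$ via the formula $\zeta' := s \zeta p + \varphi h$ of Lemma \ref{lem: sdr_lifting} recovers precisely the dot-sliding homotopy $\xi$ on $FT_{\aa}$ defining $FT_{\aa}^y$ — or, more precisely, that the strict deformation of $FT_{\aa}$ arising from $\zeta'$ agrees up to homotopy equivalence of curved complexes with $FT_{\aa}^y$. This follows from Proposition \ref{prop: inv_uniq_yify}: $FT_{\aa}$ is bounded and invertible up to homotopy (with inverse the reverse full twist), so any two $F_y^{(1)}$-deformations of $FT_{\aa}$ are homotopy equivalent as curved complexes; in particular the deformation from $\zeta'$ is equivalent to $FT_{\aa}^y$. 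Then Proposition \ref{prop: sdr_deformation_lifting} applied to $\{p, s, h\}$ and the deforming family $\{\zeta\}$ produces a strong deformation retraction $\{p \otimes 1, s \otimes 1, h \otimes 1\}$ from the strict deformation arising from $\zeta'$ onto $C_n^y$, and composing with the homotopy equivalence of curved complexes just discussed yields the desired homotopy equivalence $FT_{\aa}^y \simeq C_n^y$, with the maps $p^y, s^y$ being curved lifts of $p, s$.

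The main obstacle I anticipate is the bookkeeping around whether the retraction of \cite{HRW21b} satisfies the side conditions and, more substantively, identifying the lifted endomorphism $\zeta' = s\zeta p + \varphi h$ with a genuine dot-sliding homotopy on $FT_{\aa}$: \emph{a priori} $\zeta'$ is just \emph{some} homotopy witnessing $[d_{FT_{\aa}}, \zeta'] = \varphi$, and one must invoke uniqueness of $\Delta e$-deformations (Proposition \ref{prop: inv_uniq_yify}) rather than an on-the-nose equality. I would not expect to need the explicit form of the dot-sliding homotopies from Proposition \ref{prop: dot_slide} here; the abstract uniqueness statement suffices, and this is in fact the whole point of packaging the argument through the obstruction theory of Section \ref{subsec: gauss_elim} rather than through explicit models as in \cite{Con23}.
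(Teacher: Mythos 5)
Your approach is genuinely different from the paper's, and it would work, but it is heavier than necessary. The paper's proof does not go through $\mathrm{Prop.}~\ref{prop: sdr_deformation_lifting}$ at all; it simply observes that since $FT_{\aa}$ and $C_n$ are both invertible up to homotopy, the morphism complex $\Hom_{\CS(\SSBim)}(FT_{\aa}, C_n)$ is homotopy equivalent to $\End_{\CS(\SSBim)}(\strand{\aa})$, which is concentrated in homological degree~$0$, and then applies $\mathrm{Prop.}~\ref{prop: lifting_maps}$ directly to lift $p$ and $s$ to curved maps. This is both shorter and more robust: it requires only that the cohomology of the $\Hom$ complex be concentrated in non-negative degrees, with no hypotheses at all on the retraction beyond it being a homotopy equivalence.

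The place where your argument actually has a gap is exactly the one you flag: $\mathrm{Prop.}~\ref{prop: sdr_deformation_lifting}$ requires the strong deformation retraction to satisfy the side conditions, and Remark~\ref{rem: hrw_min_model} only asserts that the data of \cite{HRW21b} is a strong deformation retraction, not that it satisfies them. Your proposed remedies (``clean up $h$ by standard HPT'' or ``the minimal-complex retraction can be taken to satisfy the side conditions'') are believable but are not established anywhere in this paper, so your proof would need to either prove that cleanup step or cite it precisely. The paper's route avoids this entirely. A second, smaller point: after applying $\mathrm{Prop.}~\ref{prop: sdr_deformation_lifting}$ you still need to identify the strict deformation of $FT_{\aa}$ arising from $\zeta' = s\zeta p + \varphi h$ with $FT_{\aa}^y$, and you correctly appeal to $\mathrm{Prop.}~\ref{prop: inv_uniq_yify}$ for this; but then you must also check that the resulting equivalence $FT_{\aa}^y \simeq C_n^y$ is a curved lift of $\{p, s\}$ (the lemma asks for a \emph{curved lift}, not merely an equivalence). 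This holds because the equivalence produced by $\mathrm{Prop.}~\ref{prop: inv_uniq_yify}$ is a curved lift of $\mathrm{id}_{FT_{\aa}}$, but that too deserves a sentence. In short: your argument buys you an honest strong deformation retraction at the curved level (strictly more than a homotopy equivalence), at the cost of the side-conditions hypothesis and an extra uniqueness step; the paper's argument buys exactly the statement of the lemma with fewer moving parts.
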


\begin{proof}
A direct computation verifies that $C_n^y$ is indeed a $F_y^{(1)}$-deformation of $C_n$. Because $FT_{\aa}$ and $C_n$ are invertible up to homotopy equivalence, there is a homotopy equivalence of $\Hom$ complexes $\Hom_{\CS(\SSBim)}(FT_{\aa}, C_n) \simeq \End_{\CS(\SSBim)}(\strand{\aa})$. The latter complex is concentrated in degree $0$, so in particular, its homology is concentrated in non-negative degree. The desired result then follows immediately from Proposition \ref{prop: lifting_maps}.
\end{proof}

A quick check shows that the map $\iota$ lifts without modification to a map of curved complexes
$\iota \colon t^2q^{-2}\strand{\aa} \otimes_R R[y_{n + 1}] \hookrightarrow C_n^y$. We can once again compute $\mathrm{Cone}(\iota)$ by Gaussian elimination.

\begin{lem} \label{lem: iota_gauss_elim_y}
There is a homotopy equivalence of $F_y^{(1)}(x_{n + 1}, x'_{n + 1})$-curved complexes

\begin{center}
\begin{tikzcd}[sep=huge]
    \mathrm{Cone}(\iota) \simeq q^n W_{\aa} \otimes_R R[y_{n + 1}]
    \arrow[r, "x_{n + 1} - x'_{n + 1}", harpoon, shift left]
    & tq^{n - 2} W_{\aa} \otimes_R R[y_{n + 1}] \arrow[l, shift left, blue, harpoon, "y_{n + 1}"].
\end{tikzcd}
\end{center}
\end{lem}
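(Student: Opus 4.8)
The plan is to reprise the proof of Lemma~\ref{lem: iota_gauss_elim} verbatim, now in the curved setting, relying on the fact that Gaussian elimination (Proposition~\ref{prop: gauss_elim}, together with its strong-deformation-retraction refinement in Corollary~\ref{cor: gauss_elim_removal}) applies without change to curved complexes. First I would write out $\mathrm{Cone}(\iota)$ explicitly. Since $\iota$ is the inclusion of the final term $t^2q^{-2}\strand{\aa}\otimes_R R[y_{n+1}]$ of $C_n^y$, sitting in homological degree $2$, its cone is the twist $\mathrm{tw}_\iota\big((tq^{-2}\strand{\aa}\otimes_R R[y_{n+1}])\oplus C_n^y\big)$, whose chain bimodules are $q^nW_{\aa}\otimes_R R[y_{n+1}]$ in homological degree $0$; the pair $tq^{n-2}W_{\aa}\otimes_R R[y_{n+1}]$ and $tq^{-2}\strand{\aa}\otimes_R R[y_{n+1}]$ in homological degree $1$; and $t^2q^{-2}\strand{\aa}\otimes_R R[y_{n+1}]$ in homological degree $2$. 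The connection consists of the three components of $C_n^y$ (the forward map $x_{n+1}-x'_{n+1}$, the forward unzip map, and the backward $y_{n+1}$) together with the isomorphism $\varphi\colon tq^{-2}\strand{\aa}\otimes_R R[y_{n+1}]\to t^2q^{-2}\strand{\aa}\otimes_R R[y_{n+1}]$ coming from $\iota$.

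Next I would Gaussian-eliminate the acyclic summand $\varphi$, i.e. cancel the degree-$1$ bimodule $tq^{-2}\strand{\aa}\otimes_R R[y_{n+1}]$ against the degree-$2$ bimodule $t^2q^{-2}\strand{\aa}\otimes_R R[y_{n+1}]$. The key point, exactly as in the undeformed case, is that the correction term in Proposition~\ref{prop: gauss_elim} (equivalently, the correction to the connection on the complementary summand in Corollary~\ref{cor: gauss_elim_removal}) vanishes here: the only connection component \emph{leaving} the cancelled degree-$1$ bimodule is $\varphi$ itself, since that summand is the shifted copy of $t^2q^{-2}\strand{\aa}\otimes_R R[y_{n+1}]$ introduced in forming the cone and carries no other outgoing arrows. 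Consequently no new connection components are created on the complementary summand, and in particular the backward $y_{n+1}$-arrow of $C_n^y$ is inherited unchanged.

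Finally I would identify the complementary summand with the claimed complex: its chain bimodules are $q^nW_{\aa}\otimes_R R[y_{n+1}]$ in homological degree $0$ and $tq^{n-2}W_{\aa}\otimes_R R[y_{n+1}]$ in homological degree $1$, with connection given by the forward component $x_{n+1}-x'_{n+1}$ and the backward component $y_{n+1}$ copied verbatim from $C_n^y$; since $(x_{n+1}-x'_{n+1})\otimes y_{n+1}=F_y^{(1)}(x_{n+1},x'_{n+1})$, this is an honest $F_y^{(1)}$-curved complex. Corollary~\ref{cor: gauss_elim_removal} then yields the asserted homotopy equivalence. I do not anticipate a genuine obstacle; the only points requiring care are the homological-shift bookkeeping in $\mathrm{Cone}(\iota)$ and the verification that the Gaussian-elimination correction term vanishes, both of which are routine.
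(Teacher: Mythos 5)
Your proof is correct and matches the paper's (implicit) argument: the paper gives no formal proof, only the remark "We can once again compute $\mathrm{Cone}(\iota)$ by Gaussian elimination," and your write-up is precisely that Gaussian elimination, with the essential observation spelled out — namely that since $\varphi = \iota$ is the only connection component leaving the cancelled degree-$1$ summand $tq^{-2}\strand{\aa}\otimes_R R[y_{n+1}]$, the correction term $-\gamma\varphi^{-1}\kappa$ of Proposition~\ref{prop: gauss_elim} vanishes and the backward $y_{n+1}$-arrow is inherited unchanged.
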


The connection on the right-hand side of the equivalence of Lemma \ref{lem: iota_gauss_elim_y} once again resembles the connection on $K^{(1^{n + 1}), y}_{(1^{n + 1})}$. By exactly the same reasoning as in Proposition \ref{prop: fin_proj_recurs_thin}, we obtain the following deformed analog:

\begin{prop} \label{prop: yfin_proj_recurs_thin}
There is a homotopy equivalence of $F_y^{(1^{n + 1})}(\mathbb{X}_{n + 1}, \mathbb{X}'_{n + 1})$-curved complexes
\[
\mathrm{Cone}(\yfray{n}{\blam}(\iota)) \simeq q^n K^{(1^{n + 1}), y}_{(1^{n + 1})}.
\]
\end{prop}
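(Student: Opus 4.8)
The plan is to reproduce the proof of Proposition~\ref{prop: fin_proj_recurs_thin} almost verbatim, with $\fray{n}{\blam}$ replaced by $\yfray{n}{\blam}$, Lemma~\ref{lem: iota_gauss_elim} replaced by Lemma~\ref{lem: iota_gauss_elim_y}, and $C_n$ replaced by $C_n^y$. The only structural subtlety is that the $\iota$ appearing in the statement is the curved lift $t^2q^{-2}\strand{\aa}\otimes_R R[y_{n+1}]\hookrightarrow C_n^y$, so I first need $\yfray{n}{\blam}$ to act on curved complexes of the form $\mathrm{tw}_\bullet(\bullet\otimes_R R[y_{n+1}])$. This is exactly the (``less natural'' but perfectly well-defined) extension of $\yfray{n}{\blam}$ mentioned in the footnote to Definition~\ref{def: ufray_curved}; it is especially harmless here because $y_{n+1}$ is attached to the passive $1$-labeled strand, which $\yfray{n}{\blam}$ never touches, so it is simply carried along. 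With this extension in hand---which is an additive functor that preserves mapping cones and homotopy equivalences of such curved complexes---functoriality gives $\mathrm{Cone}(\yfray{n}{\blam}(\iota))\cong\yfray{n}{\blam}(\mathrm{Cone}(\iota))$.

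Next I would invoke Lemma~\ref{lem: iota_gauss_elim_y}: since the Gaussian elimination of Proposition~\ref{prop: gauss_elim} furnishes an honest isomorphism of curved complexes, applying $\yfray{n}{\blam}$ reduces the problem to computing $\yfray{n}{\blam}$ of the two-term curved complex whose chain objects are $q^nW_{\aa}\otimes_R R[y_{n+1}]$ and $tq^{n-2}W_{\aa}\otimes_R R[y_{n+1}]$, whose forward differential is $x_{n+1}-x'_{n+1}$, and whose backward (blue) differential is $y_{n+1}$. Unwinding Definitions~\ref{def: rfray},~\ref{def: fray}, and~\ref{def: y_fray}, applying $\yfray{n}{\blam}$ to this complex frays the $n$-labeled strand, turning $W_{\aa}$ into $\rfray{n}{\blam}(W_{\aa})=W_{(1^{n+1})}$; installs the Koszul differential $\sum_{j=1}^{n}(x_j-x'_j)\otimes\theta_{j1}$ in the $n$ separated strands, with the cone's $x_{n+1}-x'_{n+1}$ supplying the $(n+1)$-st Koszul term; and installs the backward differentials $\sum_{j=1}^{n}1\otimes\theta_{j1}^\vee y_j$, with the $y_{n+1}$-arrow of Lemma~\ref{lem: iota_gauss_elim_y} supplying the $(n+1)$-st such term. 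Matching connections and curvatures, the total curvature is $\sum_{i=1}^{n+1}(x_i-x'_i)\otimes y_i=F_y^{(1^{n+1})}(\mathbb{X}_{n+1},\mathbb{X}'_{n+1})$ and the resulting curved complex is, up to the overall $q^n$ shift, the deformed finite frayed projector $\yfray{n+1}{(1^{n+1})}(\strand{(n+1)})=K^{(1^{n+1}),y}_{(1^{n+1})}$ of Definition~\ref{def: yfray_fin_proj}, which is the asserted homotopy equivalence.

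I do not expect a genuine obstacle here---this is precisely why the surrounding text claims the result follows ``by exactly the same reasoning'' as Proposition~\ref{prop: fin_proj_recurs_thin}---so the ``hard part'' is really just notational bookkeeping. Concretely, the two points I would be careful about are (a) verifying that the extension of $\yfray{n}{\blam}$ past the parameter $y_{n+1}$ is functorial and commutes with cones (routine, since $y_{n+1}$ lives on an untouched tensor factor), and (b) matching the twist $\delta$ of Definition~\ref{def: y_fray} together with the Gaussian-elimination $y_{n+1}$-arrow against the defining connection of $K^{(1^{n+1}),y}_{(1^{n+1})}$---that is, checking the $\theta_{j1}^\vee$-contractions and $y_i$-multiplications assemble with the correct signs. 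Both are the direct deformed analogues of computations already performed (compare Lemma~\ref{lem: c_ny} and the proof of Proposition~\ref{prop: fin_proj_recurs_thin}), so I would cite those rather than redo them.
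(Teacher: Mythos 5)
Your proposal is correct and takes essentially the same approach as the paper, which simply declares the result follows ``by exactly the same reasoning as Proposition \ref{prop: fin_proj_recurs_thin}'' and gives no separate proof. You also correctly flag and dispose of the one point the paper leaves implicit---the extension of $\yfray{n}{\blam}$ to $F_y^{(1)}$-curved complexes with the extra parameter $y_{n+1}$ on the passive strand---which is harmless for exactly the reason you state.
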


We can again repackage the statement of Proposition \ref{prop: yfin_proj_recurs_thin} to emphasize the role of $K_{(1^n), y}^{(1^n)}$. Post-composing with $s^y$ induces a homotopy equivalence $\mathrm{Cone}(s^y \circ \iota) \simeq \mathrm{Cone}(\iota)$. After applying $\yfray{n}{\blam}$, we may consider $\yfray{n}{\blam}(s^y \circ \iota)$ as a map from $t^2q^{-2} \yfray{n}{\blam}(\strand{\aa}) \otimes_R R[y_{n + 1}]$ to $\yfray{n}{\blam}(FT^y_{\aa})$. The former is exactly $t^2q^{-2} \left( \left( \yfinproj{\blam} \boxtimes \strand{(1)} \right) \otimes_R R[y_{n + 1}] \right)$, and by Corollary \ref{cor: braid_yfin_slide}, the latter is homotopy equivalent to $\left( \left( \yfinproj{\blam} \boxtimes \strand{(1)} \right) \otimes_R R[y_{n + 1}] \right) \star J_{n + 1}^y$, where $J_{n + 1}^y$ is the fully $\Delta e$-deformed Rickard complex associated to $J_{n + 1}$. After shifting, we immediately obtain the following:

\begin{cor}
    There is a map of $F_y^{(1^{n + 1})}(\mathbb{X}_{n + 1}, \mathbb{X}'_{n + 1})$-curved complexes
    
    \[
    \kappa^y \colon t^2q^{-2 - n} \left( \left( \yfinproj{\blam} \boxtimes \strand{(1)} \right) \otimes_R R[y_{n + 1}] \right)
    \to q^{-n} \left( \left( \yfinproj{\blam} \boxtimes \strand{(1)} \right) \otimes_R R[y_{n + 1}] \right) \star J_{n + 1}^y
    \]
    satisfying $K_{(1^{n + 1})}^{(1^{n + 1}), y} \simeq \mathrm{Cone}(\kappa^y)$.

\end{cor}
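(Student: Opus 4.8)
The plan is to run exactly the same argument that produced Corollary~\ref{cor: fin_proj_skein} from Proposition~\ref{prop: fin_proj_recurs_thin}, now in the $\Delta e$-deformed setting, using Proposition~\ref{prop: yfin_proj_recurs_thin} in place of Proposition~\ref{prop: fin_proj_recurs_thin} and the curved lift $s^y$ of Lemma~\ref{lem: c_ny} in place of the map $s$ of Remark~\ref{rem: hrw_min_model}. First I would post-compose the inclusion $\iota \colon t^2q^{-2}\strand{\aa} \otimes_R R[y_{n+1}] \hookrightarrow C_n^y$ with the homotopy equivalence $s^y \colon C_n^y \to FT_{\aa}^y$; since $s^y$ is a homotopy equivalence of $F_y^{(1)}$-curved complexes, it induces a homotopy equivalence $\mathrm{Cone}(s^y \circ \iota) \simeq \mathrm{Cone}(\iota)$, and hence by Proposition~\ref{prop: yfin_proj_recurs_thin} a homotopy equivalence
\[
\mathrm{Cone}(\yfray{n}{\blam}(s^y \circ \iota)) \simeq q^n K_{(1^{n+1})}^{(1^{n+1}), y}
\]
of $F_y^{(1^{n+1})}(\mathbb{X}_{n+1}, \mathbb{X}'_{n+1})$-curved complexes. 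Here I am using that $\yfray{n}{\blam}$ is a functor on curved complexes, so it takes mapping cones to mapping cones, and that the curvature bookkeeping works out: $\yfray{n}{\blam}$ applied to an $F_y^{(1)}$-deformation of a complex of bimodules in $\SSBim_{\aa}^{\bb}$ produces an $F_y^{(1^{n+1})}$-curved complex after identifying the deformation parameters on the $1$-labeled strand and the separated strands, exactly as in the proof of Proposition~\ref{prop: yfin_proj_recurs_thin}.

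Next I would identify the source and target of $\yfray{n}{\blam}(s^y \circ \iota)$. The source is $\yfray{n}{\blam}(t^2q^{-2}\strand{\aa} \otimes_R R[y_{n+1}])$, which by Definitions~\ref{def: y_fray} and~\ref{def: yfray_fin_proj} is exactly $t^2q^{-2}\left( \left( \yfinproj{\blam} \boxtimes \strand{(1)} \right) \otimes_R R[y_{n+1}] \right)$ — the extra factor $R[y_{n+1}]$ is carried along as a distant tensor factor untouched by the functor, and the $\boxtimes \strand{(1)}$ records the passive $1$-labeled strand which is not frayed. The target is $\yfray{n}{\blam}(FT_{\aa}^y)$, and by Corollary~\ref{cor: braid_yfin_slide} applied to $\beta_{\aa} = FT_{\aa}$ this is homotopy equivalent to $\left( \left( \yfinproj{\blam} \boxtimes \strand{(1)} \right) \otimes_R R[y_{n+1}] \right) \star J_{n+1}^y$, where $J_{n+1}^y$ is the fully $\Delta e$-deformed Rickard complex of the Jucys--Murphy braid $J_{n+1}$ (this uses that the cabled full twist on the $(n,1)$-colored two-strand braid, with the $n$-strand frayed to $(1^n)$, is the cable $FT_{\aa}^{\blam}$, which in turn is the Jucys--Murphy braid as in Figure~\eqref{fig: braids}). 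Post-composing $\yfray{n}{\blam}(s^y \circ \iota)$ with this homotopy equivalence does not change the homotopy equivalence class of the mapping cone, so setting $\kappa^y$ to be this composite (and shifting by $q^{-n}$ to absorb the factor $q^n$) gives a map
\[
\kappa^y \colon t^2q^{-2-n}\left( \left( \yfinproj{\blam} \boxtimes \strand{(1)} \right) \otimes_R R[y_{n+1}] \right) \to q^{-n}\left( \left( \yfinproj{\blam} \boxtimes \strand{(1)} \right) \otimes_R R[y_{n+1}] \right) \star J_{n+1}^y
\]
with $K_{(1^{n+1})}^{(1^{n+1}), y} \simeq \mathrm{Cone}(\kappa^y)$, as desired.

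The only genuinely new input beyond the undeformed case is Lemma~\ref{lem: c_ny}, which supplies the curved lift $s^y$; everything else is a mechanical transcription. The main point requiring a little care — the step I would flag as the potential obstacle — is checking that the curvatures are matched correctly at each stage, in particular that the $F_y^{(1)}$-curvature carried by $\iota$ and $s^y$ on the $1$-labeled passive strand becomes, after applying $\yfray{n}{\blam}$ and identifying $y_{n+1}$ with the appropriate deformation parameter in $\mathbb{Y}_{(1^{n+1})}$, the full $F_y^{(1^{n+1})}(\mathbb{X}_{n+1}, \mathbb{X}'_{n+1})$-curvature on $K_{(1^{n+1})}^{(1^{n+1}), y}$ and on the deformed cable. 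This is precisely the curvature computation carried out in the proof of Proposition~\ref{prop: yfin_proj_recurs_thin} together with the horizontal-composition-of-deformations compatibility of Remark~\ref{rem: y_ify_tensors} (the curvatures add under $\star$), so it goes through, but it is the place where one must be most attentive to notation.
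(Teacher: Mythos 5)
Your proposal is correct and follows essentially the same route as the paper: post-compose $\iota$ with the curved lift $s^y$ from Lemma \ref{lem: c_ny}, apply $\yfray{n}{\blam}$, use Proposition \ref{prop: yfin_proj_recurs_thin} to identify the cone, use Corollary \ref{cor: braid_yfin_slide} to identify the target, and shift by $q^{-n}$. The paper is terser about the curvature bookkeeping you flag at the end, but the underlying argument and ingredients are identical.
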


\subsubsection{Infinite Frayed Projector} \label{subsec: inf_proj}

Next, we turn our attention to the infinite frayed projector $(P^{(1^{n + 1})}_{(1^{n + 1})})^{\vee}$. Our goal will be to express this projector as the cone of a morphism between two complexes, as in Proposition \ref{prop: fin_proj_recurs_thin}, and to interpret this result as a skein relation involving $(P^{(1^n)}_{(1^n)})^{\vee}$, as in Corollary \ref{cor: fin_proj_skein}.

Adapting these two statements to the infinite frayed projector requires three modifications. First, we use the functor $\ufray{n}{\blam}$ instead of $\fray{n}{\blam}$. Second, in order to get an \textit{uncurved} chain complex out the other side of this functor, we need to input $\Delta e$-deformed complexes to begin with (recall Remark \ref{rem: uncurving_ufray}). Finally, applying $\ufray{n}{\blam}$ only accounts for the first $n$ deformation parameters $u_1, \dots, u_n$; we will need to add the final parameter $u_{n + 1}$ in by hand via an infinite ladder construction.

\begin{rem} \label{rem: infinite_ladder}
The presence of the infinite ladder construction involving $u_{n + 1}$ is common in the categorified projector literature. It was first used in \cite{Hog18} to establish a skein relation for the row-colored projector, then again in \cite{EH17a, EH17b} in the more general framework of categorical diagonalization. The main advancement in \cite{Con23} that allowed us to compute column-colored homology of torus knots was adapting this framework to the column-colored projector. The results in this Subsection should be viewed as streamlining that adaptation using computations on two-strand braids.
\end{rem}

Let $FT_{\aa}^u$ denote the deformed full twist on 2 strands with $\Delta e$-curvature along only the $n$-labeled strand; we label the alphabet of deformation parameters
for this complex $\mathbb{U}_n := \{u_1, \dots, u_n\}$, where $\mathrm{deg}(u_i) = t^2q^{-2i}$. Explicitly, the curvature associated to this complex is of the form

\[
F_u^{(n)} = \sum_{i = 1}^n (e_i(\mathbb{X}_n) - e_i(\mathbb{X}'_n)) \otimes u_i \in \mathrm{Sym}^{\aa}(\mathbb{X}_{n + 1}, \mathbb{X}_{n + 1}') \otimes_R R[\mathbb{U}_n].
\]

Let $I_{\aa} \subset \mathrm{Sym}^{\aa}(\mathbb{X}_{n + 1}, \mathbb{X}'_{n + 1})$ denote the ideal
generated by the differences $e_i(\mathbb{X}_{n + 1}) - e_i(\mathbb{X}'_{n + 1})$
for $1 \leq i \leq n + 1$, and recall that up to an overall grading shift, we have an algebra isomorphism $W_{\aa} \cong \mathrm{Sym}^{\aa}(\mathbb{X}_{n + 1}, \mathbb{X}'_{n + 1})/I_{\aa}$.

\begin{lem} \label{lem: n1hopf_thick_twist}
    For each $1 \leq i \leq n$, set

    \[
    g_i(\mathbb{X}_{n + 1}, \mathbb{X}'_{n + 1}) := \sum_{j = 1}^i (-x_{n + 1}')^{j - 1} e_{i - j}(\mathbb{X}_n)
    \in \mathrm{Sym}^{\aa}(\mathbb{X}_{n + 1}, \mathbb{X}'_{n + 1}).
    \]

    Then $(x_{n + 1} - x_{n + 1}') g_i \equiv e_i(\mathbb{X}'_n) - e_i(\mathbb{X}_n)$
    modulo $I_{\aa}$ for each $i$.
\end{lem}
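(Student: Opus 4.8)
The plan is to prove the congruence directly inside the quotient ring $S := \mathrm{Sym}^{\aa}(\mathbb{X}_{n+1}, \mathbb{X}'_{n+1})/I_{\aa}$ by induction on $i$, using only the single-variable ``Pascal'' identity $e_k(\mathbb{X}_{n+1}) = e_k(\mathbb{X}_n) + x_{n+1}\,e_{k-1}(\mathbb{X}_n)$ and its primed analogue (the identity already invoked at the end of the proof of Lemma~\ref{lem: thin_a_compare}). Two elementary facts drive everything. First, peeling off the $j=1$ summand in the definition of $g_i$ and reindexing the remaining terms by $j \mapsto j+1$ gives the recursion $g_i = e_{i-1}(\mathbb{X}_n) - x'_{n+1}\,g_{i-1}$, with initial value $g_1 = e_0(\mathbb{X}_n) = 1$. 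Second, for each $1 \le k \le n+1$, applying the Pascal identity to the generator $e_k(\mathbb{X}_{n+1}) - e_k(\mathbb{X}'_{n+1})$ of $I_{\aa}$ shows that in $S$ one has
\[
x_{n+1}\,e_{k-1}(\mathbb{X}_n) - x'_{n+1}\,e_{k-1}(\mathbb{X}'_n) \equiv e_k(\mathbb{X}'_n) - e_k(\mathbb{X}_n).
\]

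With these in hand, the induction is short. The base case $i=1$ is precisely the displayed congruence with $k=1$, since $(x_{n+1} - x'_{n+1})g_1 = x_{n+1} - x'_{n+1}$. For the inductive step I would multiply the recursion for $g_i$ by $x_{n+1} - x'_{n+1}$, substitute the inductive hypothesis $(x_{n+1}-x'_{n+1})g_{i-1} \equiv e_{i-1}(\mathbb{X}'_n) - e_{i-1}(\mathbb{X}_n)$, and simplify; the two terms proportional to $x'_{n+1}e_{i-1}(\mathbb{X}_n)$ cancel, leaving $x_{n+1}e_{i-1}(\mathbb{X}_n) - x'_{n+1}e_{i-1}(\mathbb{X}'_n)$, which by the displayed congruence with $k=i$ equals $e_i(\mathbb{X}'_n) - e_i(\mathbb{X}_n)$ in $S$. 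This closes the induction for all $1 \le i \le n$ (in fact up to $i = n+1$).

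I do not expect a real obstacle here; the argument is essentially bookkeeping once the recursion for $g_i$ and the rewriting of the generators of $I_{\aa}$ are set up. The only points demanding a moment's care are verifying that the generator index $k=i$ used at the inductive step is legitimate (it is, since $i \le n+1$) and checking the telescoping cancellation of the $x'_{n+1}$-terms. As a cross-check (and an alternative write-up if the generating-function style is preferred), summing the recursion yields $\sum_{i \ge 1} g_i t^i = tE(\mathbb{X}_n,t)/(1 + x'_{n+1}t)$ in $S[[t]]$, whence $(x_{n+1}-x'_{n+1})\sum_{i\ge1} g_i t^i = (1+x_{n+1}t)E(\mathbb{X}_n,t)/(1+x'_{n+1}t) - E(\mathbb{X}_n,t)$, and this is congruent to $E(\mathbb{X}'_n,t) - E(\mathbb{X}_n,t)$ modulo $I_{\aa}$ because $(1+x_{n+1}t)E(\mathbb{X}_n,t) = E(\mathbb{X}_{n+1},t) \equiv E(\mathbb{X}'_{n+1},t) = (1+x'_{n+1}t)E(\mathbb{X}'_n,t)$; extracting the coefficient of $t^i$ for $1 \le i \le n$ recovers the claim.
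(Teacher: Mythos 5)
Your proposal is correct and matches the paper's own proof essentially line for line: both establish the recursion $g_i = e_{i-1}(\mathbb{X}_n) - x'_{n+1}g_{i-1}$ with $g_1 = 1$, induct on $i$, and at each step use the Pascal identity $e_k(\mathbb{X}_{n+1}) = e_k(\mathbb{X}_n) + x_{n+1}e_{k-1}(\mathbb{X}_n)$ together with the generator $e_k(\mathbb{X}_{n+1}) - e_k(\mathbb{X}'_{n+1}) \in I_{\aa}$ to convert $x_{n+1}e_{i-1}(\mathbb{X}_n) - x'_{n+1}e_{i-1}(\mathbb{X}'_n)$ into $e_i(\mathbb{X}'_n) - e_i(\mathbb{X}_n)$. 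The generating-function cross-check you append is a nice compact repackaging of the same computation, but the inductive core is exactly what the paper does.
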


\begin{proof}
    We proceed by induction. When $i = 1$, we have $g_i = 1$, so $(x_{n + 1} - x_{n + 1}')
    g_1 = x_{n + 1} - x_{n + 1}'$. Since $e_1(\mathbb{X}_n + x_{n + 1}) =
    e_1(\mathbb{X}_n) + x_{n + 1}$ (and similarly for the primed alphabets), we have
    $x_{n + 1} - x_{n + 1}' \equiv e_1(\mathbb{X}'_n) - e_1(\mathbb{X}_n)$ modulo $I_{\aa}$.

    Now suppose $i \geq 2$, and observe that $g_i = e_{i - 1}(\mathbb{X}_n) -
    x_{n + 1}'g_{i - 1}$. By the inductive hypothesis, we have

    \begin{align*}
        (x_{n + 1} - x'_{n + 1}) g_i & = (x_{n + 1} - x'_{n + 1})
        (e_{i - 1}(\mathbb{X}_n) - x'_{n + 1}g_{i - 1}) \\
        & = (x_{n + 1} - x'_{n + 1}) e_{i - 1}(\mathbb{X}_n) - x'_{n + 1}
        (x_{n + 1} - x'_{n + 1}) g_{i - 1} \\
        & \equiv (x_{n + 1} - x'_{n + 1}) e_{i - 1}(\mathbb{X}_n) -
        x'_{n + 1} (e_{i-1}(\mathbb{X}'_n) - e_{i-1}(\mathbb{X}_n)) \ \ \mathrm{modulo } \ I \\
        & = x_{n + 1} e_{i - 1}(\mathbb{X}_n) - x'_{n + 1} e_{i - 1}(\mathbb{X}'_n) \\
        & = (e_i(\mathbb{X}_n + x_{n + 1}) - e_i(\mathbb{X}_n))
        - (e_i(\mathbb{X}'_n + x'_{n + 1}) - e_i(\mathbb{X}'_n)) \\
        & \equiv e_i(\mathbb{X}'_n) - e_i(\mathbb{X}_n) \ \ \mathrm{modulo} \ I_{\aa}.
    \end{align*}

    \vspace{1em}
\end{proof}

\begin{lem} \label{lem: c_nu}
Let $C_n^u$ be the strict deformation of $C_n$ via the twist indicated in blue below:

\begin{center}
\begin{tikzcd}[sep=huge]
C^u_n := q^n W_{\aa} \otimes_R R[\mathbb{U}_n] \arrow[r, shift left, harpoon, "x_{n + 1} - x'_{n + 1}"]
& tq^{n - 2}W_{\aa} \otimes_R R[\mathbb{U}_n] \arrow[l, shift left, blue, harpoon, "-\sum_{i = 1}^n g_i u_i"] \arrow[r, "unzip"]
& t^2q^{-2}\strand{\aa} \otimes_R R[\mathbb{U}_n]
\end{tikzcd}
\end{center}
Then the homotopy equivalence of Remark \ref{rem: hrw_min_model} admits a curved lift to a homotopy equivalence

\begin{center}
\begin{tikzcd}[sep=huge]
FT^u_{\aa} \arrow[r, shift left, harpoon, "p^u"] & C^u_n \arrow[l, shift left, harpoon, "s^u"]
\end{tikzcd}
\end{center}
of $F_u^{(n)}$-curved complexes.
\end{lem}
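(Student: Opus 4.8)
The plan is to mimic the strategy of Lemma \ref{lem: c_ny} verbatim, replacing the single-parameter argument there with the multi-parameter bulk curvature $F_u^{(n)}$. The proof splits into two parts: first, verifying that $C_n^u$ as written is an honest $F_u^{(n)}$-deformation of $C_n$ (i.e. that the displayed blue twist satisfies the Maurer--Cartan equation), and second, producing the curved lift $p^u, s^u$ of the strong deformation retraction of Remark \ref{rem: hrw_min_model} via Proposition \ref{prop: lifting_maps}.

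For the first part, I would invoke Proposition \ref{prop: strict_def}: the twist $\Delta = -\sum_{i=1}^n g_i u_i$ (thought of as a degree-$1$ endomorphism sending the $tq^{n-2}W_{\aa}$ term back to the $q^n W_{\aa}$ term by multiplication by $-g_i$) is linear in $\mathbb{U}_n$, so it defines a strict $F_u^{(n)}$-deformation precisely when (1) $[d_{C_n}, -g_i] = e_i(\mathbb{X}_n) - e_i(\mathbb{X}'_n)$ for each $i$, and (2) the family $\{-g_i\}$ pairwise graded-commutes. Condition (2) is immediate since each $g_i$ is an element of the commutative ring $\mathrm{Sym}^{\aa}(\mathbb{X}_{n+1},\mathbb{X}'_{n+1})$ (acting centrally) and the $u_i$ are even variables. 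Condition (1) is exactly the content of Lemma \ref{lem: n1hopf_thick_twist}: the only nonzero component of the internal differential $d_{C_n}$ landing in the relevant spot is multiplication by $x_{n+1} - x'_{n+1}$ between the first two terms, so $[d_{C_n}, -g_i]$ computes (up to sign bookkeeping arising from the homological-degree conventions) to $-(x_{n+1}-x'_{n+1})g_i$, which Lemma \ref{lem: n1hopf_thick_twist} identifies with $e_i(\mathbb{X}'_n) - e_i(\mathbb{X}_n)$ modulo $I_{\aa}$ --- and $I_{\aa}$ annihilates $W_{\aa} \cong \mathrm{Sym}^{\aa}(\mathbb{X}_{n+1},\mathbb{X}'_{n+1})/I_{\aa}$. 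One must also check the $g_i u_i$ component commutes past the $unzip$ differential so that $\Delta^2$ has no $q^{-2}\strand{\aa}$-to-$W_{\aa}$ contribution; this is automatic since $\Delta$ only acts on the first two terms and $\Delta^2 = 0$ as an endomorphism there (it raises homological degree by $2$ but the complex $q^n W_{\aa} \to tq^{n-2}W_{\aa}$ has only two terms).

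For the second part, the argument is identical in form to Lemma \ref{lem: c_ny}. Since $FT_{\aa}$ and $C_n$ are both invertible up to homotopy equivalence in $K^b(\SSBim_{\aa}^{\bb})$ --- $FT_{\aa}$ as a Rickard complex with inverse $FT_{\aa}^{\vee}$, and $C_n \simeq FT_{\aa}$ by Remark \ref{rem: hrw_min_model} --- there is a homotopy equivalence of $\Hom$ complexes $\Hom_{\CS(\SSBim)}(FT_{\aa}, C_n) \simeq \End_{\CS(\SSBim)}(\strand{\aa})$, and the right-hand side is concentrated in homological degree $0$, hence has homology concentrated in non-negative degrees. Both $FT_{\aa}^u$ and $C_n^u$ are $F_u^{(n)}$-deformations of $FT_{\aa}$ and $C_n$ respectively (the former by Corollary \ref{cor: curved_braids}, the latter just established). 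Proposition \ref{prop: lifting_maps} therefore produces a curved lift $p^u$ of the chain map $p$, and since $p$ is a homotopy equivalence, so is $p^u$; a second application (or the symmetric statement) yields $s^u$ lifting $s$. Strictly, to get the \emph{full} retraction data rather than merely a homotopy equivalence one can instead invoke Proposition \ref{prop: inv_uniq_yify} to conclude $FT_{\aa}^u \simeq C_n^u$ and then lift the homotopy $h$ as well; I would remark that since we only use the existence of the equivalence downstream (in Proposition \ref{prop: fin_proj_recurs_thin}'s analog and Corollary \ref{cor: fin_proj_skein}'s analog), Proposition \ref{prop: lifting_maps} suffices.

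The main obstacle is purely bookkeeping rather than conceptual: one must be careful that the endomorphism $-\sum g_i u_i$, which lives in the ambient bimodule endomorphism ring \emph{before} passing to the quotient defining $W_{\aa}$, is genuinely well-defined on $W_{\aa} \otimes_R R[\mathbb{U}_n]$ and that the Maurer--Cartan computation takes place in $\End(W_{\aa})$ where $I_{\aa}$ acts as zero --- this is why Lemma \ref{lem: n1hopf_thick_twist} is stated as a congruence modulo $I_{\aa}$ rather than an equality. I would spell out this point explicitly, noting that $g_i$ descends to a well-defined element of $W_{\aa}$ and that the identity $[d, -g_i] = e_i(\mathbb{X}_n) - e_i(\mathbb{X}'_n)$ holds \emph{in} $\End_{\CS(\SSBim)}(C_n)$ precisely because the difference lies in $I_{\aa}$. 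Everything else is a routine sign check against the conventions of Section \ref{sec: hom_alg}.
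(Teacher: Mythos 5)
Your proof takes essentially the same route as the paper: verify the Maurer--Cartan equation for the twist via Proposition \ref{prop: strict_def} and Lemma \ref{lem: n1hopf_thick_twist}, then lift the strong deformation retraction of Remark \ref{rem: hrw_min_model} via Proposition \ref{prop: lifting_maps}, reusing the $\Hom$-complex concentration argument from Lemma \ref{lem: c_ny}. The paper's own proof is just two sentences citing exactly these two ingredients, so your exposition is a correct expansion of it.

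One small slip worth flagging: you write that $-(x_{n+1}-x'_{n+1})g_i$ is identified by Lemma \ref{lem: n1hopf_thick_twist} with $e_i(\mathbb{X}'_n) - e_i(\mathbb{X}_n)$ modulo $I_{\aa}$. That lemma actually gives $(x_{n+1}-x'_{n+1})g_i \equiv e_i(\mathbb{X}'_n) - e_i(\mathbb{X}_n)$, so the \emph{negative} is $e_i(\mathbb{X}_n) - e_i(\mathbb{X}'_n)$ --- which is the $u_i$-coefficient of $F_u^{(n)}$ that condition (1) of Proposition \ref{prop: strict_def} actually requires. As written, your displayed equality would give curvature $-F_u^{(n)}$; you clearly mean the right thing (and hedge about sign bookkeeping), but the transcription as stated contradicts the conclusion you draw from it. Also, to close the Maurer--Cartan check, one should note that $[d,-g_i]$ vanishes on the third term $\strand{\aa}$, which is consistent because $e_i(\mathbb{X}_n)$ and $e_i(\mathbb{X}'_n)$ act identically on the identity bimodule; you implicitly fold this into the ``$I_{\aa}$ annihilates $W_{\aa}$'' remark, but the $\strand{\aa}$ case is a separate (if trivial) check.
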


\begin{proof}
    That $C^u_n$ is an $F_u^{(n)}$-curved complex follows immediately from Lemma \ref{lem: n1hopf_thick_twist}. The desired homotopy equivalence then follows exactly as in the proof of Lemma \ref{lem: c_ny}.
\end{proof}

The map $\iota$ lifts without modification to a map of curved complexes
$\iota \colon t^2q^{-2} \strand{\aa} \otimes_R R[\mathbb{U}_n] \hookrightarrow C^u_n$, and we can once again compute its cone by Gaussian elimination.

\begin{lem} \label{lem: iota_gauss_elim_u}
There is a homotopy equivalence of $F_u^{(n)}(\mathbb{X}_{n + 1}, \mathbb{X}'_{n + 1})$-curved complexes

\begin{center}
\begin{tikzcd}[sep=huge]
\mathrm{Cone}(\iota) \simeq q^n W_{\aa} \otimes_R R[\mathbb{U}_n] \arrow[r, shift left, harpoon, "x_{n + 1} - x'_{n + 1}"]
& tq^{n - 2}W_{\aa} \otimes_R R[\mathbb{U}_n] \arrow[l, shift left, blue, harpoon, "-\sum_{i = 1}^n g_i u_i"] .
\end{tikzcd}
\end{center}

\end{lem}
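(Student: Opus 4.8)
The plan is to prove this exactly as Lemmas \ref{lem: iota_gauss_elim} and \ref{lem: iota_gauss_elim_y} were proved, namely by a single application of Gaussian elimination (Proposition \ref{prop: gauss_elim}) in the curved setting. First I would unwind the mapping cone: since $\iota$ is the inclusion of the terminal term $t^2 q^{-2}\strand{\aa}\otimes_R R[\mathbb{U}_n]$ of $C_n^u$ (which, as a chain map, has source concentrated in homological degree $2$), the complex $\mathrm{Cone}(\iota) = \mathrm{tw}_{\iota}\big(tq^{-2}\strand{\aa}\otimes_R R[\mathbb{U}_n] \oplus C_n^u\big)$ has the form
\[
q^n W_{\aa}\otimes_R R[\mathbb{U}_n] \longrightarrow \Big(tq^{n-2}W_{\aa}\otimes_R R[\mathbb{U}_n]\Big)\oplus\Big(tq^{-2}\strand{\aa}\otimes_R R[\mathbb{U}_n]\Big) \longrightarrow t^2q^{-2}\strand{\aa}\otimes_R R[\mathbb{U}_n],
\]
where the forward differential restricts to $x_{n+1}-x_{n+1}'$ on the first degree-$1$ summand, to the unzip map $tq^{n-2}W_{\aa}\to t^2q^{-2}\strand{\aa}$ on that same summand, and to an isomorphism $\varphi\colon tq^{-2}\strand{\aa}\xrightarrow{\ \sim\ } t^2q^{-2}\strand{\aa}$ on the new cone summand; the connection also retains the backward component $-\sum_{i=1}^n g_i u_i$ supplied by Lemma \ref{lem: c_nu}.

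Next I would Gaussian-eliminate the contractible summand $K = \big(tq^{-2}\strand{\aa}\otimes_R R[\mathbb{U}_n] \xrightarrow{\ \varphi\ } t^2q^{-2}\strand{\aa}\otimes_R R[\mathbb{U}_n]\big)$ using Proposition \ref{prop: gauss_elim}, which applies verbatim to curved complexes (cf. the footnote to that Proposition). In the notation of \eqref{eq: gauss_elim} one has $B_1 = tq^{-2}\strand{\aa}\otimes_R R[\mathbb{U}_n]$, $B_2 = t^2q^{-2}\strand{\aa}\otimes_R R[\mathbb{U}_n]$, $D = tq^{n-2}W_{\aa}\otimes_R R[\mathbb{U}_n]$, $A = q^n W_{\aa}\otimes_R R[\mathbb{U}_n]$, and both $E$ and $F$ are zero because $B_2$ is the terminal term; moreover $B_1$ receives no component of the differential. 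Consequently the Gaussian-elimination correction $\epsilon - \gamma\varphi^{-1}\kappa$ is vacuous, and the change-of-basis isomorphism differs from the identity only in components involving $\varphi^{-1}$ together with maps into or out of $\{B_1, B_2\}$; in particular it does not touch the backward arrow $D \to A$. The complement $C$ to $K$ in $\mathrm{Cone}(\iota)$ is therefore precisely the two-term curved complex $q^n W_{\aa}\otimes_R R[\mathbb{U}_n] \xrightarrow{\,x_{n+1}-x_{n+1}'\,} tq^{n-2}W_{\aa}\otimes_R R[\mathbb{U}_n]$ with backward connection $-\sum_{i=1}^n g_i u_i$, which is exactly the claimed complex, and Proposition \ref{prop: gauss_elim} gives $\mathrm{Cone}(\iota)\cong K\oplus C\simeq C$.

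The computation is routine; the only point that needs care — and the place I expect any subtlety to hide — is verifying that the backward component $-\sum_{i=1}^n g_i u_i$ genuinely survives Gaussian elimination unmodified. This follows from the observation above that the eliminated summand $K$ is terminal and isolated on the receiving side, so that all corrections introduced by Proposition \ref{prop: gauss_elim} are supported away from the pair $(A,D)$; alternatively one can simply note that this is the exact analog of the situation in Lemma \ref{lem: iota_gauss_elim_y}, where the backward $y_{n+1}$-component is likewise unaffected, with $y_{n+1}$ replaced by the family $\{g_i u_i\}_{i=1}^n$ of Lemma \ref{lem: n1hopf_thick_twist}. That $C$ is a bona fide $F_u^{(n)}$-curved complex is automatic, since it is a direct summand of the $F_u^{(n)}$-curved complex $\mathrm{Cone}(\iota)$.
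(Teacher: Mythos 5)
Your proof is correct and takes the same approach the paper intends (the paper omits a written proof, treating this as the exact analog of Lemmas \ref{lem: iota_gauss_elim} and \ref{lem: iota_gauss_elim_y}). You correctly identify that the only point requiring care is the survival of the backward $-\sum_{i=1}^n g_i u_i$ arrow through Gaussian elimination, and your verification — that with $E = F = 0$, $\alpha = 0$, and $\gamma = 0$, the change-of-basis isomorphism of Proposition \ref{prop: gauss_elim} fixes the pair $(A, D)$ and hence conjugates the backward component $D \to A$ to itself — is sound.
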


Next, we turn our attention to upgrading the statement of Lemma \ref{lem: iota_gauss_elim_u} to include a new parameter $u_{n + 1}$. We begin with a technical digression to establish the existence of some morphisms involved in this upgrade.

\begin{lem} \label{lem: n1_hopf_homology}
    $H^{\bullet}(FT_{\aa}) \cong q^{-2n} \strand{\aa}$ as $\mathbb{Z}_q \times \mathbb{Z}_t$-graded bimodules.
\end{lem}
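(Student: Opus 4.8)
The plan is to transport the computation to the small model $C_n$ and run a long exact sequence. By Remark~\ref{rem: hrw_min_model} there is a homotopy equivalence $FT_{\aa}\simeq C_n$, and homotopy equivalent complexes have isomorphic cohomology, so it suffices to compute $H^{\bullet}(C_n)$. I would use the inclusion $\iota\colon t^2q^{-2}\strand{\aa}\hookrightarrow C_n$ of the last term, which sits in an exact triangle $t^2q^{-2}\strand{\aa}\xrightarrow{\iota}C_n\to\mathrm{Cone}(\iota)\xrightarrow{+1}$, together with Lemma~\ref{lem: iota_gauss_elim}, which identifies $\mathrm{Cone}(\iota)$ with the two-term complex $q^nW_{\aa}\xrightarrow{x_{n+1}-x_{n+1}'}tq^{n-2}W_{\aa}$ in homological degrees $0$ and $1$. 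The resulting long exact cohomology sequence then yields $H^0(C_n)\cong K$, $H^1(C_n)\cong\ker\delta_*$, and $H^2(C_n)\cong\mathrm{coker}(\delta_*)$, where $K:=\ker\bigl(x_{n+1}-x_{n+1}'\colon W_{\aa}\to W_{\aa}\bigr)$ and $\delta_*\colon Q:=W_{\aa}/(x_{n+1}-x_{n+1}')W_{\aa}\to\strand{\aa}$ is the connecting map, which is induced by the unzip differential of $C_n$; this is well defined precisely because $d^2=0$ forces unzip to annihilate $(x_{n+1}-x_{n+1}')W_{\aa}$.

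For the two commutative-algebra inputs $K$ and $Q$ I would use the algebra isomorphism $W_{\aa}\cong\mathrm{Sym}^{\aa}(\mathbb{X}_{n+1},\mathbb{X}'_{n+1})/I_{\aa}$ recalled in the text. Writing $A:=\mathrm{Sym}^{\aa}(\mathbb{X}_{n+1})$ and $B:=\mathrm{Sym}^{(n+1)}(\mathbb{X}_{n+1})$, one has $W_{\aa}\cong A\otimes_B A\cong B[u,u']/(p(u),p(u'))$ with $u\leftrightarrow x_{n+1}$, $u'\leftrightarrow x_{n+1}'$ and $p(T)=\prod_{i=1}^{n+1}(T-x_i)$ the minimal polynomial of $x_{n+1}$ over $B$. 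The element $g:=\bigl(p(u)-p(u')\bigr)/(u-u')$ is homogeneous, satisfies $(u-u')g=0$ in $W_{\aa}$, and coincides (up to sign) with the $i=n+1$ instance of the polynomial $g_i$ of Lemma~\ref{lem: n1hopf_thick_twist}, for which $(x_{n+1}-x_{n+1}')g_{n+1}\equiv e_{n+1}(\mathbb{X}_n')-e_{n+1}(\mathbb{X}_n)=0$ because $\mathbb{X}_n$ has only $n$ letters. Since $B[u,u']/(p(u))\cong A[u']$ is a polynomial ring, hence a domain, a direct check shows $\mathrm{Ann}(u-u')=(g)$ and that multiplication by $g$ induces a bimodule isomorphism $Q=W_{\aa}/(u-u')W_{\aa}\xrightarrow{\ \sim\ }(g)=K$; combined with the evident identification $(A\otimes_B A)/(u-u')\cong A$, this shows $Q$ and $K$ are grading shifts of $\strand{\aa}$, and reassembling the shifts (those built into $W_{\aa}$ and $C_n$ and the internal degree of $g$) identifies $H^0(C_n)=K$ with $q^{-2n}\strand{\aa}$.

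It remains to prove $H^1(C_n)=H^2(C_n)=0$, i.e.\ that $\delta_*\colon Q\to\strand{\aa}$ is an isomorphism. As $Q$ and $\strand{\aa}$ are both rank-one free $\strand{\aa}$-bimodules up to shift, $\delta_*$ is either zero or an isomorphism up to grading shift, so I only need to rule out $\delta_*=0$; this is where the precise normalization of the minimal model enters. One way is to identify the unzip differential explicitly — up to a shift it is the surjective ``merge'' bimodule map $W_{\aa}\to\strand{\aa}$ of the web calculus — and check that it does not vanish on the copy of $\strand{\aa}$ sitting inside $Q$; alternatively, if $\delta_*$ vanished then the graded Euler characteristic of $C_n$ would fail to be a grading shift of $\strand{\aa}$, contradicting invertibility of the Rickard complex $FT_{\aa}$. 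Granting this, the long exact sequence collapses and gives $H^{\bullet}(FT_{\aa})=H^{\bullet}(C_n)\cong q^{-2n}\strand{\aa}$, concentrated in homological degree $0$. I expect this last step — showing the unzip-induced connecting map $\delta_*$ is nonzero, which is exactly what forces the two higher cohomology groups to cancel against $\strand{\aa}$ — to be the main obstacle; the identification of $K$ and $Q$ is a routine complete-intersection (regular sequence) computation in the spirit of Proposition~\ref{prop: kosz_reg_seq}, and the grading-shift bookkeeping, while tedious, is mechanical.
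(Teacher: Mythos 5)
Your proposal takes a genuinely different route from the paper's. The paper applies digon removal and fork-sliding to identify $[n]!\, FT_{\aa}$ with $(_{\aa}M_{(\blam, 1)}) \star J_{n+1} \star (_{(\blam,1)}S_{\aa})$, imports the known homology $H^{\bullet}(J_{n+1}) \cong q^{-2n}\strand{(1^{n+1})}$ of the Jucys--Murphy element from Lemma 19.40 of \cite{EMTW20}, pushes this through the exact merge and split functors, and cancels the $[n]!$ via Krull--Schmidt. You instead pass to the minimal model $C_n$ via Remark~\ref{rem: hrw_min_model}, run the cone long exact sequence for $\iota$, and compute the kernel and cokernel of $x_{n+1}-x'_{n+1}$ on $W_{\aa}$ directly from the complete-intersection presentation $W_{\aa}\cong B[u,u']/(p(u),p(u'))$. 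Your route is more self-contained --- it does not rely on the Rouquier-complex computation in \cite{EMTW20} --- and the commutative algebra identifying $K=(g)$ and $Q\cong\strand{\aa}$ is correct; this is a reasonable trade against the extra hands-on work.

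On the step you rightly flag as the crux: of your two suggestions for showing $\delta_*$ is an isomorphism, option (a) works and is in fact cleaner than you indicate. The unzip differential $W_{\aa}\to\strand{\aa}$ is precisely the multiplication $A\otimes_B A\to A$. Writing $A\otimes_B A\cong A[u']/(p(u'))$, this map sends $u'\mapsto u$, and its kernel is exactly $(u'-u)$: modulo $(u'-u)$ one has $A[u']/(u'-u,p(u'))\cong A/(p(u))=A$. So $\delta_*$ is not merely ``nonzero on a copy of $\strand{\aa}$ sitting inside $Q$'' --- it \emph{is} the isomorphism $Q = W_{\aa}/(u-u')W_{\aa}\xrightarrow{\ \sim\ }\strand{\aa}$, with no further verification required, and the long exact sequence collapses.

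Option (b), however, does not work. The graded Euler characteristic of $C_n$ is the alternating sum of the $q$-dimensions of its chain bimodules, hence is independent of the differential: it equals $q^n[W_{\aa}]-q^{n-2}[W_{\aa}]+q^{-2}[\strand{\aa}]$ whether or not $\delta_*$ vanishes. Invertibility of $FT_{\aa}$ constrains the Euler characteristic of the complex, which it does regardless, and not the isomorphism type of its homology in each degree; $\star$ does not commute with $H^{\bullet}$, so invertibility places no direct constraint on $H^1$ or $H^2$. Discard (b) and use (a).
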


\begin{proof}
    Repeated application of the digon removal isomorphism of Proposition \ref{prop: web_rels} and the fork-sliding homotopy equivalence of Proposition \ref{prop: fork_slide} results in a sequence of homotopy equivalences

    \begin{gather*}
    [n]!
    \begin{tikzpicture}[anchorbase,scale=.5,tinynodes]
    \draw[webs] (1.5,0) node[below]{$1$} to[out=90,in=270] (0,1.5);
    \draw[line width=5pt, color=white] (0,0) to[out=90,in=270] (1.5,1.5);
    \draw[webs] (0,0) node[below]{$n$} to[out=90,in=270] (1.5,1.5);
    \draw[webs] (1.5,1.5) to[out=90,in=270] (0,3) node[above,yshift=-2pt]{$n$};
    \draw[line width=5pt, color=white] (0,1.5) to[out=90,in=270] (1.5,3);
    \draw[webs] (0,1.5) to[out=90,in=270] (1.5,3) node[above,yshift=-2pt]{$1$};
    \end{tikzpicture}
    \cong
    \begin{tikzpicture}[anchorbase,scale=.5,tinynodes]
    \draw[webs] (1.5,0) node[below]{$1$} to[out=90,in=270] (0,1.5);
    \draw[line width=5pt,color=white] (0,0) to[out=90,in=270] (1.5,1.5);
    \draw[webs] (0,0) node[below]{$n$} to[out=90,in=270] (1.5,1.5);
    \draw[webs] (1.5,1.5) to[out=90,in=270] (0,3);
    \draw[line width=5pt, color=white] (0,1.5) to[out=90,in=270] (1.5,3);
    \draw[webs] (0,1.5) to[out=90,in=270] (1.5,3);
    \draw[webs] (0,3) to[out=180,in=270] node[pos=.5,left]{$1$} (-.7,3.7);
    \node at (0,3.8) {$\dots$};
    \draw[webs] (0,3) to[out=0,in=270] node[pos=.5,right]{$1$} (.7,3.7);
    \draw[webs] (-.7,3.7) to[out=90,in=180] (0,4.4);
    \draw[webs] (.7,3.7) to[out=90,in=0] (0,4.4);
    \draw[webs] (0,4.4) to (0,5) node[above]{$n$};
    \draw[webs] (1.5,3) to (1.5,5) node[above]{$1$};
    \end{tikzpicture}
    \simeq
    \begin{tikzpicture}[scale=.5,anchorbase,tinynodes]
    \draw[webs] (3,0) to[out=90,in=0] (1.5,.75);
    \draw[webs] (1.5,.75) to[out=180,in=270] (0,1.5);
    \draw[line width=5pt, color=white] (.8,0) to (.8,1.5);
    \draw[webs] (.8,0) to (.8,1.5);
    \draw[line width=5pt, color=white] (2.2,0) to (2.2,1.5);
    \draw[webs] (2.2,0) to (2.2,1.5);
    \node at (1.5,.2){$\dots$};
    \draw[webs] (.8,1.5) to (.8,3);
    \draw[webs] (2.2,1.5) to (2.2,3);
    \node at (1.5,2.8){$\dots$};
    \draw[line width=5pt, color=white] (0,1.5) to[out=90,in=180] (1.5,2.25);
    \draw[line width=5pt, color=white] (1.5,2.25) to[out=0,in=270] (3,3);
    \draw[webs] (0,1.5) to[out=90,in=180] (1.5,2.25);
    \draw[webs] (1.5,2.25) to[out=0,in=270] (3,3);
    \draw[webs] (.8,0) to[out=270,in=180] (1.5,-.7);
    \draw[webs] (2.2,0) to[out=270,in=0] (1.5,-.7);
    \draw[webs] (1.5,-.7) to (1.5,-1.2) node[below]{$n$};
    \draw[webs] (.8,3) to[out=90,in=180] (1.5,3.7);
    \draw[webs] (2.2,3) to[out=90,in=0] (1.5,3.7);
    \draw[webs] (1.5,3.7) to (1.5,4.2) node[above]{$n$};
    \draw[webs] (3,0) to (3,-1.2) node[below]{$1$};
    \draw[webs] (3,3) to (3,4.2) node[above]{$1$};
    \end{tikzpicture}
    = (_{\aa} M_{(\blam, 1)}) \star J_{n + 1} \star (_{(\blam, 1)} S_{\aa}).
    \end{gather*}

    At the level of cohomology, this becomes an isomorphism

    \[
    [n]! H^{\bullet}(FT_{\aa}) \cong H^{\bullet} \left( (_{\aa} M_{(\blam, 1)}) \star J_{n + 1} \star (_{(\blam, 1)} S_{\aa}) \right).
    \]
    
    Since $(_{\aa} M_{(\blam, 1)})$ (resp. $(_{(\blam, 1)} S_{\aa})$) is free as a right (resp. left) $\mathrm{Sym}^{(\blam, 1)}(\mathbb{X})$-module, the functor $(_{\aa} M_{(\blam, 1)}) \star -$ (resp. $- \star (_{(\blam, 1)} S_{\aa})$) is exact. As a consequence, we obtain an isomorphism

    \[
    H^{\bullet} \left( (_{\aa} M_{(\blam, 1)}) \star J_{n + 1} \star (_{(\blam, 1)} S_{\aa}) \right) \cong (_{\aa} M_{(\blam, 1)}) \star H^{\bullet}(J_{n + 1}) \star (_{(\blam, 1)} S_{\aa}).
    \]
    By Lemma 19.40 of \cite{EMTW20}, $H^{\bullet}(J_{n + 1}) \cong q^{-2n} \strand{(1^{n + 1})}$ as $\mathbb{Z}_q \times \mathbb{Z}_t$-graded bimodules. Again by the digon removal relation of Proposition \ref{prop: blamgon_removal}, we obtain

    \[
    [n]!H^{\bullet}(FT_{\aa}) \cong q^{-2n} (_{\aa} M_{(\blam, 1)}) \star (_{(\blam, 1)} S_{\aa}) \cong [n]! q^{-2n} \strand{\aa}.
    \]

    The desired result then follows from the fact that $\SSBim_{\aa}^{\aa}$ is a Krull--Schmidt category.
\end{proof}

\begin{lem} \label{lem: qi}
Retain notation as in Lemma \ref{lem: n1hopf_thick_twist}, and set
$g_{n + 1}(\mathbb{X}_{n + 1}, \mathbb{X}'_{n + 1}) := e_n(\mathbb{X}_n) - x'_{n + 1}g_n(\mathbb{X}_{n + 1}, \mathbb{X}'_{n + 1})$. Let $\psi \colon q^{2n} \strand{\aa} \to C_n$
be the map depicted in blue below:

\begin{center}
\begin{tikzcd}[sep=huge]
    q^{2n} \strand{\aa} \arrow[d, blue, "g_{n + 1}"] \\
    q^nW_{\aa} \arrow[r, "x_{n + 1} - x'_{n + 1}"] & tq^{n - 2} W_{\aa}
    \arrow[r, "unzip"] & t^2 q^{-2} \strand{\aa}
\end{tikzcd}
\end{center}

Then $\psi$ is a quasi-isomorphism.
\end{lem}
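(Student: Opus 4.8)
The plan is to prove that $\psi$ is a quasi-isomorphism by reducing everything to cohomological degree zero. By Remark~\ref{rem: hrw_min_model} we have $C_n \simeq FT_{\aa}$, so Lemma~\ref{lem: n1_hopf_homology} gives $H^{\bullet}(C_n) \cong H^{\bullet}(FT_{\aa}) \cong q^{-2n}\strand{\aa}$, concentrated in homological degree $0$; in particular $H^1(C_n) = H^2(C_n) = 0$. The source $q^{2n}\strand{\aa}$ of $\psi$ is likewise concentrated in homological degree $0$, so the long exact sequence of $\mathrm{Cone}(\psi)$ shows that $\psi$ is a quasi-isomorphism if and only if the induced map
\[
H^0(\psi) \colon q^{2n}\strand{\aa} \longrightarrow H^0(C_n) = \ker\!\big( x_{n+1} - x'_{n+1} \colon q^n W_{\aa} \to tq^{n-2}W_{\aa} \big)
\]
is a bijection. (That $\psi$ is a chain map is the assertion $(x_{n+1} - x'_{n+1})g_{n+1} \in I_{\aa}$, which follows from Lemma~\ref{lem: n1hopf_thick_twist} with $i = n$ together with the identities $e_{n+1}(\mathbb{X}_{n+1}) = x_{n+1}e_n(\mathbb{X}_n)$ and $e_{n+1}(\mathbb{X}'_{n+1}) = x'_{n+1}e_n(\mathbb{X}'_n)$.)

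The crucial input is an explicit presentation of $W_{\aa} \cong \mathrm{Sym}^{\aa}(\mathbb{X}_{n+1}, \mathbb{X}'_{n+1})/I_{\aa}$ for $\aa = (n,1)$. A short computation from the formula $g_n = \sum_{j=1}^n (-x'_{n+1})^{j-1}e_{n-j}(\mathbb{X}_n)$ of Lemma~\ref{lem: n1hopf_thick_twist} gives $g_{n+1} = \prod_{x \in \mathbb{X}_n}(x - x'_{n+1})$, so setting $q(T) := \prod_{x \in \mathbb{X}_n}(T - x)$ we have $g_{n+1} = (-1)^n q(x'_{n+1})$ and $p(T) := (T - x_{n+1})q(T) = \prod_{v \in \mathbb{X}_{n+1}}(T - v)$. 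Now the generators $e_i(\mathbb{X}_{n+1}) - e_i(\mathbb{X}'_{n+1})$ of $I_{\aa}$ identify the coefficients of $\prod_{v \in \mathbb{X}_{n+1}}(T - v)$ and $\prod_{w \in \mathbb{X}'_{n+1}}(T - w)$ in $W_{\aa}[T]$, so evaluating the common polynomial at $T = x'_{n+1}$ yields $p(x'_{n+1}) = 0$ in $W_{\aa}$. Moreover $\mathrm{Sym}^{\aa}(\mathbb{X}')$ is free over $\mathrm{Sym}^{(N)}(\mathbb{X}')$ with basis $1, x'_{n+1}, \dots, (x'_{n+1})^n$ (cf. \cite{ESW14}), so $W_{\aa} = \mathrm{Sym}^{\aa}(\mathbb{X}) \otimes_{\mathrm{Sym}^{(N)}(\mathbb{X})} \mathrm{Sym}^{\aa}(\mathbb{X}')$ is free of rank $n+1$ as a left $\mathrm{Sym}^{\aa}(\mathbb{X})$-module with basis $1, x'_{n+1}, \dots, (x'_{n+1})^n$; combined with $p(x'_{n+1}) = 0$ this gives $W_{\aa} \cong \mathrm{Sym}^{\aa}(\mathbb{X})[x'_{n+1}]/(p(x'_{n+1}))$.

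Granting this, injectivity of $H^0(\psi)$ is immediate: $H^0(\psi)$ is multiplication by $g_{n+1} = (-1)^n(x'_{n+1})^n + (\text{lower powers of } x'_{n+1})$, and if $p \in \mathrm{Sym}^{\aa}(\mathbb{X})$ satisfies $p\,g_{n+1} = 0$ in $W_{\aa}$, comparing the $(x'_{n+1})^n$-coefficients in the basis above forces $p = 0$. For surjectivity onto $\ker(x_{n+1} - x'_{n+1})$, note that since $T - x_{n+1}$ is a nonzerodivisor in $\mathrm{Sym}^{\aa}(\mathbb{X})[T]$, an element of $\mathrm{Sym}^{\aa}(\mathbb{X})[T]/(p(T))$ is annihilated by $x_{n+1} - x'_{n+1}$ if and only if it lies in the image of multiplication by $q(T) = p(T)/(T - x_{n+1})$; reducing modulo $p$, this image is the free rank-one submodule $q(x'_{n+1})\cdot\mathrm{Sym}^{\aa}(\mathbb{X}) = g_{n+1}\cdot\mathrm{Sym}^{\aa}(\mathbb{X})$, which is exactly the image of $H^0(\psi)$. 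Therefore $H^0(\psi)$ is a bijection and $\psi$ is a quasi-isomorphism.

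I expect the main obstacle to be the second paragraph: establishing that $W_{(n,1)}$ is free over $\mathrm{Sym}^{(n,1)}(\mathbb{X})$ on the powers $1, x'_{n+1}, \dots, (x'_{n+1})^n$ with $x'_{n+1}$ satisfying $p(x'_{n+1}) = 0$, and recognizing the map $g_{n+1}$ as multiplication by $p(T)/(T - x_{n+1})$ evaluated at $x'_{n+1}$. Once this structure is in hand, the kernel computation, the injectivity check, and the reduction to $H^0$ via Lemma~\ref{lem: n1_hopf_homology} are all routine.
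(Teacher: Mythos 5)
Your proof is correct, but it takes a genuinely different route from the paper's. Both proofs start the same way: the chain-map check and the observation (via Lemma~\ref{lem: n1_hopf_homology} and $C_n \simeq FT_{\aa}$) that $H^{\bullet}(C_n)$ is concentrated in homological degree $0$, so the question reduces to whether $H^0(\psi)$ is a bijection. From there the paper argues abstractly: it produces a second chain map $\psi'$ given by inclusion of $H^0(C_n)$, observes that the $\Hom$-space of bimodule morphisms $\strand{\aa} \to W_{\aa}$ in the relevant $q$-degree is one-dimensional, concludes that $\psi = c\psi'$ for some scalar $c$, and then uses $\psi \neq 0$ and $R$ a field to get $c \neq 0$. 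You instead give an explicit module-theoretic computation: presenting $W_{\aa} \cong \mathrm{Sym}^{\aa}(\mathbb{X})[x'_{n+1}]/(p(x'_{n+1}))$ with $p(T) = \prod_{v\in\mathbb{X}_{n+1}}(T-v)$, recognizing $g_{n+1}$ as $(-1)^n q(x'_{n+1})$ where $q(T) = p(T)/(T-x_{n+1})$, and then checking by hand that multiplication by $g_{n+1}$ is injective (top coefficient) and surjects onto $\ker(x_{n+1}-x'_{n+1})$ (nonzerodivisor argument in the UFD $\mathrm{Sym}^{\aa}(\mathbb{X})[T]$). Your version is longer and needs the freeness of $\mathrm{Sym}^{\aa}(\mathbb{X}')$ over $\mathrm{Sym}(\mathbb{X}')$ on $1,\dots,(x'_{n+1})^n$ (standard but worth citing carefully), but in exchange it produces an explicit description of $H^0(C_n)$ and does not rely on the one-dimensionality of a $\Hom$-space, which the paper leaves as a "straightforward computation." Either argument is fine; yours is effectively a spelled-out version of that straightforward computation.
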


\begin{proof}
By the same computation as in the proof of Lemma \ref{lem: n1hopf_thick_twist},
we have $(x_{n + 1} - x'_{n + 1}) g_{n + 1} \equiv e_{n + 1}(\mathbb{X}_n) -
e_{n + 1}(\mathbb{X}'_n) \equiv 0$ modulo $I_{\aa}$, so $\psi$ is a chain map.
Since $g_{n + 1} \neq 0 \in W_{\aa}$, $\psi$ is nonzero as well.

On the other hand, by Lemma \ref{lem: n1_hopf_homology}, there is a chain map
$\psi'$ with the same domain and codomain as $\psi$ given by inclusion of $0^{th}$ cohomology.
A straightforward computation gives that the space of degree $q^n$ bimodule homomorphisms
from $\strand{\aa}$ to $W_{\aa}$ is $1$-dimensional, so $\psi = c \psi'$ for some scalar $c \in R$. In particular, $\psi$ and $\psi'$ induce the same maps on cohomology up to a scalar. Now, $\psi'$ induces an isomorphism on cohomology by design. Since $R$ is a field, $\psi$ must induce an isomorphism on cohomology as well.
\end{proof}

A quick check shows that $\psi$ lifts without modification to a map of $F_u^{(n)}$-curved complexes

\[
\psi \colon q^{2n}\strand{\aa} \otimes_R R[\mathbb{U}_n] \to C_n^u.
\]
Now, let $u_{n + 1}$ be a deformation parameter of degree $\mathrm{deg}(u_{n + 1}) = t^2q^{-2(n + 1)}$, and set $\mathbb{U}_{n + 1} := \mathbb{U}_n \cup \{u_{n + 1}\}$. We may consider the same curvature $F_u^{(n)}$ as constructed from this larger alphabet:

\[
F_u^{(n)}(\mathbb{X}_{n + 1}, \mathbb{X}_{n + 1}') = \sum_{i = 1}^n (e_i(\mathbb{X}_n) - e_i(\mathbb{X}'_n)) \otimes u_i \in \mathrm{Sym}^{\aa}(\mathbb{X}_{n + 1}, \mathbb{X}'_{n + 1}) \otimes_R R[\mathbb{U}_{n + 1}].
\]
The curved complexes $\strand{\aa} \otimes_R R[\mathbb{U}_n], C_n^u \in \YS_{F_u^{(n)}}(\SSBim; R[\mathbb{U}_n])$ lift automatically to curved complexes

\[
\strand{\aa} \otimes_R R[\mathbb{U}_{n + 1}], C_n^u \otimes_R R[u_{n + 1}] \in \YS_{F_u^{(n)}}(\SSBim; R[\mathbb{U}_{n + 1}]).
\]

\begin{prop} \label{prop: inf_proj_recurs_thin}
	Set 
	
	\[
	\Phi := \iota \otimes 1 + \psi \otimes u_{n + 1} \colon t^2q^{-2} \strand{\aa} \otimes_R R[\mathbb{U}_{n + 1}] \to C_n^u \otimes_R R[u_{n + 1}].
	\]
	Then there is a homotopy equivalence of chain complexes 
    
    \[
    \mathrm{Cone}(\ufray{n}{\blam}(\Phi)) \simeq q^n \left( P^{(1^{n + 1})}_{(1^{n + 1})} \right)^{\vee} 
    \]
    in $\CS(\SBim_n)$.
\end{prop}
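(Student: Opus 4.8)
The plan is to mirror the proof of Proposition \ref{prop: fin_proj_recurs_thin}, carrying deformations along and grafting on the extra parameter $u_{n+1}$ by hand, so that the whole statement reduces to a computation with the small complexes $C_n$, $C_n^u$ and the maps $\iota$, $\psi$ of Definition \ref{def: c_n}, Lemma \ref{lem: c_nu}, and Lemma \ref{lem: qi}; this replaces the non-constructive obstruction theory used in \cite{Con23}. First I would observe that $\ufray{n}{\blam}$ is additive and is defined objectwise together with a twist, hence sends mapping cones of chain maps to mapping cones and preserves homotopy equivalences (regard $\mathrm{Cone}(\Phi)$ as a $\mathbb{U}_n$-deformation over the ground ring $R[u_{n+1}]$, with $u_{n+1}$ inert under the functor). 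Consequently $\mathrm{Cone}(\ufray{n}{\blam}(\Phi)) \cong \ufray{n}{\blam}(\mathrm{Cone}(\Phi))$, where $\mathrm{Cone}(\Phi)$ is the $F_u^{(n)}$-curved complex over $\SSBim_{\aa}^{\bb}$ given by twisting $\mathrm{Cone}(\iota)\otimes_R R[u_{n+1}]$ by $\psi \otimes u_{n+1}$. So it suffices to identify $\ufray{n}{\blam}(\mathrm{Cone}(\Phi))$.

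The second step is to Gaussian-eliminate $\mathrm{Cone}(\Phi)$. Applying the Gaussian-elimination isomorphism of Lemma \ref{lem: iota_gauss_elim_u} (which cancels the cone-added copy of $\strand{\aa}$ against the last term of $C_n^u$), and using Proposition \ref{prop: hpt_curv_iso} to transport the residual twist $\psi \otimes u_{n+1}$ through this isomorphism, one finds that $\mathrm{Cone}(\Phi)$ is isomorphic, as an $F_u^{(n)}$-curved complex over $R[\mathbb{U}_{n+1}]$, to the two-term complex $q^n W_{\aa} \otimes_R R[\mathbb{U}_{n+1}] \to tq^{n-2} W_{\aa} \otimes_R R[\mathbb{U}_{n+1}]$ with forward differential $x_{n+1} - x'_{n+1}$ and backward twist $-\sum_{i=1}^{n+1} g_i u_i$ --- the same complex as in Lemma \ref{lem: iota_gauss_elim_u}, but with the backward twist now running up to $i = n+1$, with $g_{n+1}$ as in Lemma \ref{lem: qi}. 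The substance of this step, which I expect to be the main obstacle, is the computation that the Gaussian-elimination correction induced by $\psi \otimes u_{n+1}$, namely $-(\psi \otimes u_{n+1})\circ \iota^{-1} \circ (\text{unzip})$, equals multiplication by $-g_{n+1}$ tensored with $u_{n+1}$; this uses that $\psi$ is multiplication by $g_{n+1}$ into the $W_{\aa}$-summand, the recursion $g_{n+1} = e_n(\mathbb{X}_n) - x'_{n+1} g_n$, and the precise behaviour of the unzip morphism --- it is the one-degree-higher analogue of Lemma \ref{lem: n1hopf_thick_twist} and needs some care with the unzip map and the grading conventions.

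The third step is to apply $\ufray{n}{\blam}$ to this two-term complex and recognise the output. By the identity $\rfray{n}{\blam}(W_{\aa}) = W_{\aa^{\blam}} = W_{(1^{n+1})}$ (and the associativity isomorphisms of Proposition \ref{prop: web_rels}), $\fray{n}{\blam}(W_{\aa})$ is, up to a quantum shift, the Koszul complex on $W_{(1^{n+1})}$ for $x_1 - x'_1, \dots, x_n - x'_n$; the forward differential $x_{n+1} - x'_{n+1}$ and the (multiplication) backward twist $-\sum g_i u_i$ are carried through unchanged, and $\ufray{n}{\blam}$ adds the twist $-\gamma = -\sum_{i=1}^n \eta_i u_i$ of Definition \ref{def: ufray}. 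Unrolling the homological direction carrying $x_{n+1} - x'_{n+1}$ into a formal Koszul variable $\theta_{n+1,1}$ (Remark \ref{rem: kosz_grading_shift}) then exhibits $\ufray{n}{\blam}(\mathrm{Cone}(\Phi))$ as a strict $\mathbb{U}_{n+1}$-deformation of $q^n K^{(1^{n+1})}_{(1^{n+1})}$.

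Finally, I would match this deformation with $q^n \bigl(P^{(1^{n+1})}_{(1^{n+1})}\bigr)^{\vee} = q^n \ufray{n+1}{(1^{n+1})}(\strand{(n+1)})$, whose connection is $-\gamma' = -\sum_{i=1}^{n+1}\bigl(\sum_{j=1}^{n+1} a^{(1^{n+1})}_{ij1}\theta_{j,1}^{\vee}\bigr)u_i$. With the coherent choice of interpolation polynomials from Lemma \ref{lem: thin_a_compare}, the two $F_u^{(n+1)}$-deforming families $\{\xi_i\}$ and $\{\eta_i^{(n+1)}\}$ on $q^n K^{(1^{n+1})}_{(1^{n+1})}$ are related by the unipotent triangular identity $\xi_i = \eta_i^{(n+1)} - x'_{n+1}\xi_{i-1}$ (with $\xi_0 = 0$), which follows from $g_i = e_{i-1}(\mathbb{X}_n) - x'_{n+1} g_{i-1}$; applying Corollary \ref{cor: kosz_base_change_rep} inductively in $i$ (or, equivalently, adjusting the $a^{(1^{n+1})}_{ij1}$ directly using the freedom in Lemma \ref{lem: a_ijk_exist}) yields the desired homotopy equivalence of curved complexes. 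Since $F_u^{(n+1)}$ vanishes on $W_{(1^{n+1})}$, both sides are genuine chain complexes (Remark \ref{rem: uncurving_ufray}, Definition \ref{def: fray_inf_proj}), which completes the proof.
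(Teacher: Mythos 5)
Your Steps 1--3 track the paper's argument: functoriality reduces the claim to identifying $\ufray{n}{\blam}(\mathrm{Cone}(\Phi))$; Gaussian elimination (the paper does this one $u_{n+1}$-degree at a time across the semi-infinite ladder, whereas you propose Lemma \ref{lem: iota_gauss_elim_u} plus Proposition \ref{prop: hpt_curv_iso}, which amounts to the same computation) produces a two-term ladder with backward connection $-\sum_{i=1}^{n+1}g_i u_i$; and applying $\ufray{n}{\blam}$ and introducing $\theta_{n+1}$ realizes $\ufray{n}{\blam}(\mathrm{Cone}(\Phi))$ as a strict $\mathbb{U}_{n+1}$-deformation of $q^n K^{(1^{n+1})}_{(1^{n+1})}$ governed by the extended family $a^{\blam}_{i,n+1,1}:=g_i$, $a^{\blam}_{n+1,j,1}:=0$. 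Your flag that the Gaussian-elimination correction must be verified to equal $-g_{n+1}\otimes u_{n+1}$ is well placed.

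Step 4, however, has a genuine gap. You correctly spot the triangular relation $\xi_i = \eta_i^{(n+1)} - x'_{n+1}\xi_{i-1}$, but then try to pass between the two connections via Corollary \ref{cor: kosz_base_change_rep}. That corollary requires each difference $\xi_i - \eta_i^{(n+1)} = -x'_{n+1}\xi_{i-1}$ to be of the form $[d, h_i]$ for some nilpotent $h_i$; here the difference is closed (both families deform the same vanishing curvature $F_u^{(n+1)}$ on $K^{(1^{n+1})}_{(1^{n+1})}$) but not exact --- for instance $\xi_1 = \sum_j \theta_j^{\vee}$, while the image of $[d,-]$ in that degree is spanned by elements of the form $(x_j - x'_j)\theta_k^{\vee} - (x_k - x'_k)\theta_j^{\vee}$, all of which carry polynomial coefficients. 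Your parenthetical alternative also fails: the extended $a^{\blam}_{ij1}$ (with the $g_i$'s appended) satisfy the interpolation identity of Lemma \ref{lem: a_ijk_exist} only modulo $I_{\aa}$, not in $\mathrm{Sym}^{(1^{n+1})}(\mathbb{X}, \mathbb{X}')$, so they are not a legitimate alternative choice of $a^{(1^{n+1})}_{ij1}$ in the first place. What the paper actually does at this stage is neither of your two options: it conjugates $\tau_n$ by the graded ring automorphism $u_i \mapsto u_i + x'_{n+1}u_{i+1}$ of the coefficient ring $R[\mathbb{X}, \mathbb{X}'][\mathbb{U}_{n+1}]$, with inverse $u_i \mapsto \sum_{k \ge 0}(-x'_{n+1})^k u_{i+k}$. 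This is an isomorphism acting on the $\mathbb{U}$-factor rather than on the $\SSBim$-factor; it manifestly preserves the underlying bigraded bimodule, and the coherence identity $a^{(\blam,1)}_{ij1} = a^{\blam}_{ij1} + x'_{n+1}a^{\blam}_{i-1,j,1}$ from Lemma \ref{lem: thin_a_compare} says exactly that it converts $\tau_n$ into $\tau_{n+1}$. To repair your proof, replace the invocation of Corollary \ref{cor: kosz_base_change_rep} with this change of $\mathbb{U}$-variables.
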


\begin{proof}
    We consider $\ufray{n}{\blam}(\Phi)$ as a chain map between uncurved complexes by unrolling, as described in Remark \ref{rem: infinite_ladder}. In fact, because the variable $u_{n + 1}$ does not play a role in the curvature $F_u^{(n)}$, we can unroll along this variable \textit{before} applying $\ufray{n}{\blam}$ and consider $\Phi$ as a morphism in the category $\YS_{F_u^{(n)}}(\SSBim; R[\mathbb{U}_n])$. Having done this, we can begin simplifying $\mathrm{Cone}(\Phi)$ using Gaussian elimination in that category. We can depict this cone as a semi-infinite ladder in which the degree of $u_{n + 1}$ increases as we travel downwards:

    \begin{center}
    \begin{tikzcd}[row sep=small]
        & q^nW_{\aa} \otimes_R R[\mathbb{U}_n] \arrow[dr, "x_{n + 1} - x'_{n + 1}", harpoon, shift left]
        & & \\
        & & tq^{n - 2} W_{\aa} \otimes_R R[\mathbb{U}_n]
        \arrow[ul, harpoon, shift left, blue, "-\sum_{i = 1}^n g_i u_i"] \arrow[dr, "unzip"] & \\
        tq^{-2} \strand{\aa} \otimes_R R[\mathbb{U}_n] \arrow[rrr, "1"] \arrow[dr, "g_{n + 1} u_{n + 1}"']
        & & & t^2q^{-2} \strand{\aa} \otimes_R R[\mathbb{U}_n] \\
        & u_{n + 1} q^n W_{\aa} \otimes_R R[\mathbb{U}_n]
        \arrow[dr, "x_{n + 1} - x'_{n + 1}", harpoon, shift left] & & \\
        & & u_{n + 1} tq^{n - 2} W_{\aa} \otimes_R R[\mathbb{U}_n]
        \arrow[ul, harpoon, shift left, blue, "-\sum_{i = 1}^n g_i u_i"] \arrow[dr, "unzip"] & \\
        u_{n + 1} tq^{-2} \strand{\aa} \otimes_R R[\mathbb{U}_n] \arrow[rrr, "1"]
        \arrow[dr, "g_{n + 1} u_{n + 1}"'] & & & u_{n + 1} t^2q^{-2} \strand{\aa} \otimes_R R[\mathbb{U}_n] \\
        & u_{n + 1}^2 q^n W_{\aa} \otimes_R R[\mathbb{U}_n]
        \arrow[dr, "x_{n + 1} - x'_{n + 1}", harpoon, shift left] & & \\
        & & u_{n + 1}^2 tq^{n - 2} W_{\aa} \otimes_R R[\mathbb{U}_n]
        \arrow[ul, harpoon, shift left, blue, "-\sum_{i = 1}^n g_i u_i"] \arrow[dr, "unzip"] & \\
        u_{n + 1}^2 tq^{-2} \strand{\aa} \otimes_R R[\mathbb{U}_n] \arrow[rrr, "1"]
        \arrow[dr, "g_{n + 1} u_{n + 1}"'] & & & u_{n + 1}^2 t^2q^{-2} \strand{\aa} \otimes_R R[\mathbb{U}_n] \\
        \dots & \dots & \dots & \dots
    \end{tikzcd}
    \end{center}

    \vspace{1em}

    After Gaussian elimination along each horizontal identity arrow,
    we are left with a new ladder of the form

    \begin{center}
    \begin{tikzcd}[sep=huge]
    q^nW_{\aa} \otimes_R R[\mathbb{U}_n] \arrow[r, shift left, harpoon, "x_{n + 1} - x'_{n + 1}"]
    & tq^{n - 2} W_{\aa} \otimes_R R[\mathbb{U}_n]
    \arrow[l, shift left, harpoon, blue, "-\sum_{i = 1}^n g_iu_i"]
    \arrow[dl, "-g_{n + 1} u_{n + 1}"] \\
    u_{n + 1} q^nW_{\aa} \otimes_R R[\mathbb{U}_n]
    \arrow[r, shift left, harpoon, "x_{n + 1} - x'_{n + 1}"]
    & u_{n + 1} tq^{n - 2} W_{\aa} \otimes_R R[\mathbb{U}_n]
    \arrow[l, shift left, harpoon, blue, "-\sum_{i = 1}^n g_iu_i"]
    \arrow[dl, "-g_{n + 1} u_{n + 1}"] \\
    u_{n + 1}^2 q^nW_{\aa} \otimes_R R[\mathbb{U}_n]
    \arrow[r, shift left, harpoon, "x_{n + 1} - x'_{n + 1}"]
    & u_{n + 1}^2 tq^{n - 2} W_{\aa} \otimes_R R[\mathbb{U}_n]
    \arrow[l, shift left, harpoon, blue, "-\sum_{i = 1}^n g_iu_i"]
    \arrow[dl, "-g_{n + 1} u_{n + 1}"] \\
    \dots & \dots
    \end{tikzcd}
    \end{center}

    \vspace{1em}

    We can express this complex more compactly as follows.
    Let $\theta_{n + 1}$ be a formal odd variable of degree $\mathrm{deg}(\theta_{n + 1})
    = q^{-2}t$; then we have

    \[
    \mathrm{Cone}(\Phi) \simeq \mathrm{tw}_{\overline{\tau}}
    \left( q^n W_{\aa} \otimes_R R[\theta_{n + 1}] \otimes_R
    R[\mathbb{U}_{n + 1}] \right); \quad \overline{\tau} = (x_{n + 1} - x'_{n + 1})
    \otimes \theta_{n + 1} \otimes 1 - \sum_{i = 1}^{n + 1} g_i
    \otimes \theta_{n + 1}^{\vee} \otimes u_i.
    \]

    After applying $\ufray{n}{\blam}$ and regrading, we obtain

    \begin{align*}
        \mathrm{Cone}(\ufray{n}{\blam}(\Phi)) & = \ufray{n}{\blam}(\mathrm{Cone}(\Phi)) \simeq
        q^n \mathrm{tw}_{\tau_n} \left( W_{(\blam, 1)} \otimes_R R[\Theta_{n + 1}, \mathbb{U}_{n + 1}] \right); \\
        \tau_n & = \sum_{j = 1}^{n + 1} (x_i - x'_i) \otimes \theta_j -
        \sum_{i = 1}^n \sum_{j = 1}^n a_{ij1}^{\blam}(\mathbb{X}_n, \mathbb{X}'_n)
        \otimes \theta^{\vee}_j u_i - \sum_{i = 1}^{n + 1} g_i \otimes
        \theta_{n + 1}^{\vee} u_i.
    \end{align*}
    Here $\Theta_{n + 1} = \{\theta_1, \dots, \theta_n, \theta_{n + 1}\}$, where $\theta_1, \dots, \theta_n$ are the usual deformation parameters of degree $\mathrm{deg}(\theta_i) = q^{-2i}t$ associated to the functor $\ufray{n}{\blam}$.
    
    For ease of notation, we set 
    
    \[
    a^{\blam}_{i, n+1, 1} := g_i(\mathbb{X}_{n + 1}, \mathbb{X}'_{n + 1})
    \]
    for each $1 \leq i \leq n$ and
    
    \[
    a^{\blam}_{n + 1, j, 1} := 0
    \]
    for each $1 \leq j \leq n$. In this notation, we may rewrite $\tau_n$ as
    
    \[
    \tau_n = \sum_{j = 1}^{n + 1} (x_i - x'_i) \otimes \theta_j
        - \sum_{i = 1}^{n + 1} \sum_{j = 1}^{n + 1} a_{ij1}^{\blam}
        \otimes \theta^{\vee}_j u_i
    \]

    It remains to compare $\mathrm{Cone}(\ufray{n}{\blam}(\Phi))$
    to $q^n (P^{(1^{n + 1})}_{(1^{n + 1})})^{\vee} = q^n \ufray{n + 1}{(1^{n + 1})}
    (\strand{(n + 1)})$. The latter is a convolution with the same underlying
    bimodule as the former but a different twist:

    \begin{align*}
        q^n (P^{(1^{n + 1})}_{(1^{n + 1})})^{\vee} & =
        q^n \mathrm{tw}_{\tau_{n + 1}} \left( W_{(\blam, 1)} \otimes_R
        R[\Theta_{n + 1}, \mathbb{U}_{n + 1}] \right); \\
        \tau_{n + 1} & = \sum_{j = 1}^{n + 1} (x_i - x'_i) \otimes \theta_j
        - \sum_{i = 1}^{n + 1} \sum_{j = 1}^{n + 1} a_{ij1}^{(\blam, 1)}
        (\mathbb{X}_{n + 1}, \mathbb{X}'_{n + 1})
        \otimes \theta^{\vee}_j u_i.
    \end{align*}
    Since the $\mathbb{U}_{n + 1}$-degree $0$ components of $\tau_n$ and $\tau_{n + 1}$ agree,
    to compare $\tau_n$ and $\tau_{n + 1}$, it suffices to compute
    $a^{(\blam, 1)}_{ij1} - a^{\blam}_{ij1}$ for each $i, j$.
    We use Lemma \ref{lem: thin_a_compare} throughout.

    \vspace{1em}
    
    \textbf{Case 1: $j = n + 1$.} In this case, for each $1 \leq i \leq n + 1$, we have

    \begin{align*}
    a^{(\blam, 1)}_{ij1} - a^{\blam}_{ij1} & = e_{i - 1}(\mathbb{X}_n) - g_i \\
    & = e_{i - 1}(\mathbb{X}_n) - (e_{i - 1}(\mathbb{X}_n) - x'_{n + 1} g_{i - 1}) \\
    & = x_{n + 1}' a^{\blam}_{i - 1, j, 1}.
    \end{align*}

    \vspace{1em}
    
    Here we take as convention that  $a_{0j1}^{\blam}(\mathbb{X}_n, \mathbb{X}'_n) = 0$ for each $j$.

    \textbf{Case 2: $i = n + 1$, $j \neq n + 1$.} In this case $a^{\blam}_{ij1} = 0$
    for each $j$, so we obtain

    \begin{align*}
    a^{(\blam, 1)}_{n + 1,j1} - a^{\blam}_{n + 1,j1} & = x'_{n + 1} a_{nj1}^{\blam}
    + a_{n + 1, j1}^{\blam} \\
    & = x_{n + 1}' a^{\blam}_{n, j, 1}.
    \end{align*}

    \vspace{1em}

    \textbf{Case 3: $i, j \leq n$.} In this case, Lemma \ref{lem: thin_a_compare}
    immediately gives

    \[
    a^{(\blam, 1)}_{ij1} - a^{\blam}_{ij1} = x_{n + 1}' a^{\blam}_{i - 1, j, 1}(\mathbb{X}_n, \mathbb{X}'_n)
    \]

    \vspace{1em}

    In all cases, we obtain
    $a^{(\blam, 1)}_{ij1} - a^{\blam}_{ij1} = x_{n + 1}' a^{\blam}_{i - 1, j, 1}$.
    Since this expression is uniform for each $i, j$, we may transform $\tau_n$ into
    $\tau_{n + 1}$ via a change of basis in the variables $\{u_i\}$.
    Formally, consider the dg-module endomorphism of $W_{(\blam, 1)} \otimes_R
    R[\mathbb{U}_{n + 1}]$ (with trivial differential) of the form
    
    \[
    u_i \mapsto \sum_{k = 0}^{n + 1 - i} (-x'_{n + 1})^k u_{j + k}.
    \]
    This is an isomorphism with
    inverse
    
    \[
    u_i \mapsto u_i + x'_{n + 1} u_{i + 1}.
    \]
    Applying this isomorphism termwise
    in each $\Theta_{\blam}$-degree to $q^{-n} \ufray{n}{\blam}(\mathrm{Cone}(\Phi))$
    preserves the underlying doubly-graded bimodule and takes $\tau_n$ to $\tau_{n + 1}$,
    hence exhibits the desired isomorphism.
\end{proof}

We can again repackage the statement of Proposition \ref{prop: inf_proj_recurs_thin} to emphasize the role of $\left( P_{(1^n)}^{(1^n)} \right)^{\vee}$. Post-composing with $s^u$ in each $u_{n + 1}$-degree induces a homotopy equivalence $\mathrm{Cone}(s^u \circ \Phi) \simeq \mathrm{Cone}(\Phi)$. After applying $\ufray{n}{\blam}$, we may consider $\ufray{n}{\blam}(s^u \circ \Phi)$ as a map from $t^2q^{-2} \ufray{n}{\blam}(\strand{\aa}) \otimes_R R[u_{n + 1}]$ to $\ufray{n}{\blam}(FT^u_{\aa}) \otimes_R R[u_{n + 1}]$. The former is exactly $t^2q^{-2} \left( \left( \infproj{\blam} \boxtimes \strand{(1)} \right) \otimes_R R[u_{n + 1}] \right)$, and by Corollary \ref{cor: braid_inf_slide}, the latter is homotopy equivalent to $\left( \left( \infproj{\blam} \boxtimes \strand{(1)} \right) \star J_{n + 1} \right) \otimes_R R[u_{n + 1}]$. After shifting, we immediately obtain the following:

\begin{cor} \label{cor: inf_proj_recurs}
    Let $u_{n + 1}$ be a formal variable of degree $\mathrm{deg}(u_{n + 1}) = t^2q^{-2(n + 1)}$.
    Then there is a map of chain complexes
    
    \[
    \kappa^u \colon t^2q^{-2 - n} \left( \left( \infproj{\blam} \boxtimes \strand{(1)} \right) \otimes_R R[u_{n + 1}] \right)
    \to q^{-n} \left( \left( \infproj{\blam} \boxtimes \strand{(1)} \right) \star J_{n + 1}^y \right) \otimes_R R[u_{n + 1}]
    \]
    satisfying $\left( P_{(1^{n + 1})}^{(1^{n + 1})} \right)^{\vee} \simeq \mathrm{Cone}(\kappa^u)$.
\end{cor}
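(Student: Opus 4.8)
\textbf{Proof proposal for Corollary \ref{cor: inf_proj_recurs}.}

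The plan is to follow the same repackaging strategy used to derive Corollary \ref{cor: fin_proj_skein} from Proposition \ref{prop: fin_proj_recurs_thin} and Corollary~\ref{cor: inf_proj_recurs}'s deformed analog from Proposition \ref{prop: yfin_proj_recurs_thin}, now in the presence of bulk curvature. The starting point is Proposition \ref{prop: inf_proj_recurs_thin}, which gives a homotopy equivalence $\mathrm{Cone}(\ufray{n}{\blam}(\Phi)) \simeq q^n (P^{(1^{n+1})}_{(1^{n+1})})^{\vee}$, where $\Phi = \iota \otimes 1 + \psi \otimes u_{n+1} \colon t^2q^{-2}\strand{\aa} \otimes_R R[\mathbb{U}_{n+1}] \to C_n^u \otimes_R R[u_{n+1}]$.

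First I would post-compose $\Phi$ with the curved homotopy equivalence $s^u \colon C_n^u \to FT_{\aa}^u$ of Lemma \ref{lem: c_nu} — more precisely, with $s^u \otimes \mathrm{id}_{R[u_{n+1}]}$, working in each $u_{n+1}$-degree. Since $s^u$ is a homotopy equivalence of $F_u^{(n)}$-curved complexes, post-composition induces a homotopy equivalence of mapping cones $\mathrm{Cone}(s^u \circ \Phi) \simeq \mathrm{Cone}(\Phi)$, and applying the (exact) functor $\ufray{n}{\blam}$ preserves this: $\mathrm{Cone}(\ufray{n}{\blam}(s^u \circ \Phi)) \simeq q^n (P^{(1^{n+1})}_{(1^{n+1})})^{\vee}$. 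Next I would identify the domain and codomain of $\ufray{n}{\blam}(s^u \circ \Phi)$. The domain is $\ufray{n}{\blam}(t^2q^{-2}\strand{\aa} \otimes_R R[u_{n+1}]) = t^2q^{-2}(\ufray{n}{\blam}(\strand{\aa})) \otimes_R R[u_{n+1}] = t^2q^{-2}((\infproj{\blam} \boxtimes \strand{(1)}) \otimes_R R[u_{n+1}])$, using Definition \ref{def: fray_inf_proj} and the implicit inclusion $\SSBim_N \hookrightarrow \SSBim_{N+1}$ of Remark \ref{rem: bim_implicit_inc} to recognize $\ufray{n}{\blam}(\strand{(n,1)}) = \infproj{\blam} \boxtimes \strand{(1)}$. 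The codomain is $\ufray{n}{\blam}(FT_{\aa}^u \otimes_R R[u_{n+1}]) = \ufray{n}{\blam}(FT_{\aa}^u) \otimes_R R[u_{n+1}]$, which by Corollary \ref{cor: braid_inf_slide} (applied with $\beta_{\aa} = FT_{\aa}$, noting that $FT_{\aa}^u$ is precisely the $\Delta e$-deformed Rickard complex $C^u(FT_{\aa})$ with curvature on the bulk strand only, and that $\ufray{n}{\blam}$ of it is a genuine chain complex by Remark \ref{rem: uncurving_ufray}) is homotopy equivalent to $(\infproj{\blam} \star FT_{\aa}^{\blam}) \otimes_R R[u_{n+1}]$; here $FT_{\aa}^{\blam}$ is the cabled full twist, which by the digon-removal/fork-sliding argument in the proof of Lemma \ref{lem: n1_hopf_homology} is homotopy equivalent (up to $[n]!$ and a shift) to $(_{\aa} M_{(\blam,1)}) \star J_{n+1} \star (_{(\blam,1)} S_{\aa})$, and after absorbing the merge/split into $\infproj{\blam}$ one reaches $(\infproj{\blam} \boxtimes \strand{(1)}) \star J_{n+1}$. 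Post-composing with this homotopy equivalence does not change the homotopy type of the cone.

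Assembling these identifications, $\ufray{n}{\blam}(s^u \circ \Phi)$ becomes (up to homotopy equivalence of cones and the overall $q^n$ shift, which I absorb by a global $q$-degree shift as in the passage to Corollary \ref{cor: fin_proj_skein}) a chain map $\kappa^u \colon t^2q^{-2-n}((\infproj{\blam} \boxtimes \strand{(1)}) \otimes_R R[u_{n+1}]) \to q^{-n}((\infproj{\blam} \boxtimes \strand{(1)}) \star J_{n+1}^y) \otimes_R R[u_{n+1}]$ with $\mathrm{Cone}(\kappa^u) \simeq (P^{(1^{n+1})}_{(1^{n+1})})^{\vee}$, which is the claim. (The braid $J_{n+1}^y$ appearing in the codomain is the appropriate deformed Rickard complex of $J_{n+1}$; its precise decoration is inherited from the deformation already carried by $\ufray{n}{\blam}(FT_{\aa}^u)$ through the fork-slide of Proposition \ref{prop: ufray_fork_slide}.) I expect the main obstacle to be bookkeeping rather than mathematics: one must carefully track which strand carries the $\Delta e$-curvature at each stage — bulk curvature $F_u^{(n)}$ on $FT_{\aa}^u$ gets converted by $\ufray{n}{\blam}$ (via the sign convention of Remark \ref{rem: bulk_neg} and the cancellation discussed in Remark \ref{rem: uncurving_ufray}) into an uncurved complex, while the extra parameter $u_{n+1}$ rides along inertly throughout — and confirm that the grading shifts match those recorded in the statement, exactly as in the finite and $y$-ified cases.
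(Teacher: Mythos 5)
Your overall strategy is exactly the paper's: post-compose $\Phi$ with $s^u$ in each $u_{n+1}$-degree, apply $\ufray{n}{\blam}$, and identify domain and codomain via Corollary \ref{cor: braid_inf_slide}. Two points need correcting, though. First, the step where you route the codomain through Lemma \ref{lem: n1_hopf_homology} is wrong: $FT_{\aa}^{\blam}$ is \emph{by definition} the colored braid obtained by replacing the $n$-labeled strand of $FT_{(n,1)}$ with a $(1^n)$-cable, and that braid \emph{is} $J_{n+1}$ — there is nothing to prove and no $[n]!$ factor or merge/split bimodules involved. The digon-removal argument in Lemma \ref{lem: n1_hopf_homology} instead establishes $[n]! \, FT_{\aa} \simeq (_{\aa} M_{(\blam,1)}) \star J_{n+1} \star (_{(\blam,1)} S_{\aa})$ as complexes in $\SSBim_{\aa}^{\aa}$, which lives in a different morphism category than $FT_{\aa}^{\blam}$; writing a homotopy equivalence between them is a category error. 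Corollary \ref{cor: braid_inf_slide} alone gives $\ufray{n}{\blam}(FT_{\aa}^{u_n}) \simeq \infproj{\blam} \star FT_{\aa}^{\blam} = (\infproj{\blam} \boxtimes \strand{(1)}) \star J_{n+1}$.

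Second, your attempt to rationalize the $J_{n+1}^y$ in the Corollary statement is misplaced. As you yourself note via Remark \ref{rem: uncurving_ufray}, the bulk $\Delta e$-curvature on $FT^u_{\aa}$ is cancelled by the $-\gamma$ twist in $\ufray{n}{\blam}$, leaving a genuine (uncurved) chain complex, and Corollary \ref{cor: braid_inf_slide} produces the \emph{undeformed} cabled braid $\beta_{\aa}^{\blam}$ on the right. The codomain is therefore $(\infproj{\blam} \boxtimes \strand{(1)}) \star J_{n+1}$, not $J_{n+1}^y$ — the $y$-decoration in the Corollary statement is a typo (compare the genuinely deformed Corollary \ref{cor: yinf_proj_recurs}, where $J_{n+1}^y$ is correct, and the paper's own explanatory text preceding Corollary \ref{cor: inf_proj_recurs}, which writes $J_{n+1}$). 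You should not try to manufacture a deformation that isn't there; the uncurving is exactly what makes the infinite case parallel the finite one.
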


\subsubsection{Deformed Infinite Frayed Projector}

Again, the entire story of Section \ref{subsec: inf_proj} admits a deformed analog. This construction essentially just merges the deformations of Sections \ref{subsec: passive_rec} and \ref{subsec: inf_proj}. Since the proofs in this construction exactly mimic the proofs in those Sections, we omit them throughout, commenting only when modifications are necessary.

Let $FT^{y, u}_{\aa}$ denote the
deformed full twist on $2$ strands with $\Delta e$-curvature on each strand. We
retain the notation $y_{n + 1}$ for the deformation parameter on the $1$-labeled strand
and $\mathbb{U}_n$ for the deformation alphabet on the $n$-labeled strand.

\begin{lem} \label{lem: c_nyu}
Let $C_n^{y, u}$ denote the strict deformation of $C_n$ via both twists indicated in blue in Lemmata \ref{lem: c_ny} and \ref{lem: c_nu}. Then the homotopy equivalence of Remark \ref{rem: hrw_min_model} admits a curved lift to a homotopy equivalence

\begin{center}
\begin{tikzcd}[sep=huge]
FT^{y, u}_{\aa} \arrow[r, shift left, harpoon, "p^{y, u}"] & C^{y, u}_n \arrow[l, shift left, harpoon, "s^{y, u}"]
\end{tikzcd}
\end{center}
of $F_y^{(1)} + F_u^{(n)}$-curved complexes.
\end{lem}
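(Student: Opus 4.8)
The plan is to follow the template already used in the proofs of Lemmata \ref{lem: c_ny} and \ref{lem: c_nu}, simply superimposing the two deformations. First I would check that $C_n^{y,u}$ is a legitimate $(F_y^{(1)}+F_u^{(n)})$-curved complex. Writing $\Delta_y$ for the twist of Lemma \ref{lem: c_ny} and $\Delta_u$ for the twist of Lemma \ref{lem: c_nu}, the connection on $C_n^{y,u}$ is $d_{C_n}\otimes 1 + \Delta_y + \Delta_u$, and by Proposition \ref{prop: strict_def} it suffices to verify that the endomorphisms $\xi^y$ (the backward arrow labeled $y_{n+1}$) together with $\xi^u_1,\dots,\xi^u_n$ (the backward arrows assembling $-\sum_i g_i u_i$) form an $(F_y^{(1)}+F_u^{(n)})$-deforming family on $C_n$. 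The identities $[d_{C_n},\xi^y]=x_{n+1}-x_{n+1}'$, $[d_{C_n},\xi^u_i]=e_i(\mathbb{X}_n)-e_i(\mathbb{X}_n')$, and the pairwise commutativity of the $\xi^u_i$ are precisely what Lemmata \ref{lem: c_ny} and \ref{lem: c_nu} establish. The only genuinely new point is that $\xi^y$ graded-commutes with each $\xi^u_i$; but every one of these endomorphisms has its sole nonzero matrix entry equal to the map $tq^{n-2}W_\aa\to q^nW_\aa$, so all composites among them vanish and the commutators are trivially zero. Consequently no mixed $y_{n+1}u_i$-term can appear in the square of the connection, and the Maurer--Cartan equation reduces term-by-term to the two lemmas already proved.

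Second, I would lift the strong deformation retraction of Remark \ref{rem: hrw_min_model}. Exactly as in the proof of Lemma \ref{lem: c_ny}, since $FT_{\aa}$ and $C_n$ are bounded and invertible up to homotopy equivalence there are homotopy equivalences $\Hom_{\CS(\SSBim)}(FT_{\aa},C_n)\simeq\End_{\CS(\SSBim)}(\strand{\aa})$ and $\Hom_{\CS(\SSBim)}(C_n,FT_{\aa})\simeq\End_{\CS(\SSBim)}(\strand{\aa})$, and the right-hand side is concentrated in homological degree $0$, hence has homology in non-negative degrees. The relevant deformation alphabet here is $\{y_{n+1}\}\cup\mathbb{U}_n$, all of whose variables are even of positive homological degree (degrees $q^{-2}t^2$ and $q^{-2i}t^2$ respectively), so Proposition \ref{prop: lifting_maps} applies and produces curved lifts $p^{y,u}$ of $p$ and $s^{y,u}$ of $s$, each of which is again a homotopy equivalence. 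This gives the asserted curved homotopy equivalence between $FT^{y,u}_{\aa}$ and $C^{y,u}_n$.

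This is essentially bookkeeping, and the only spot requiring any care is the cross-term vanishing in the first step; the common support of $\xi^y$ and the $\xi^u_i$ makes it immediate, so there is no real obstacle beyond merging the two earlier proofs. If one prefers, one may alternatively observe that $FT^{y,u}_{\aa}$ is the unique $(F_y^{(1)}+F_u^{(n)})$-deformation of $FT_{\aa}$ up to homotopy equivalence by Proposition \ref{prop: inv_uniq_yify}, which identifies it with $C^{y,u}_n$ abstractly; but invoking Proposition \ref{prop: lifting_maps} has the advantage of yielding the lift of the \emph{specified} maps $p$ and $s$, which is what is used in the subsequent constructions.
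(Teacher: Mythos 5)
Your proposal is correct and follows exactly the approach the paper indicates for this lemma (the paper omits the proof outright, stating beforehand that the proofs in this section ``exactly mimic'' those of Lemmata \ref{lem: c_ny} and \ref{lem: c_nu}). The one genuinely new point --- that the $y$- and $u$-twists graded-commute so that no mixed $y_{n+1}u_i$ terms appear in the Maurer--Cartan equation --- you dispose of cleanly by observing that all of these endomorphisms of $C_n$ share the same single nonzero matrix entry $tq^{n-2}W_{\aa}\to q^n W_{\aa}$, so every composite among them vanishes and the cross-terms are trivially zero; the lifting step is then verbatim the argument of Lemma \ref{lem: c_ny} via Proposition \ref{prop: lifting_maps}.
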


The map $\iota$ again lifts without modification to a map of curved complexes
$\iota \colon t^2q^{-2} \strand{(\aa)} \otimes_R R[y_{n + 1}, \mathbb{U}_n] \hookrightarrow C^{y, u}_n$, and we can once again compute its cone by Gaussian elimination.

\begin{lem} \label{lem: iota_gauss_elim_yu}
There is a homotopy equivalence of $F_y^{(1)} + F_u^{(n)}$-curved complexes

\begin{center}
\begin{tikzcd}[sep=huge]
\mathrm{Cone}(\iota) \simeq q^n W_{\aa} \otimes_R R[y_{n + 1}, \mathbb{U}_n] \arrow[r, shift left, harpoon, "x_{n + 1} - x'_{n + 1}"]
& tq^{n - 2}W_{\aa} \otimes_R R[y_{n + 1}, \mathbb{U}_n] \arrow[l, shift left, blue, harpoon, "y_{n + 1} -\sum_{i = 1}^n g_i u_i"] .
\end{tikzcd}
\end{center}
\end{lem}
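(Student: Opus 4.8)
The plan is to mimic exactly the proof of Lemma~\ref{lem: iota_gauss_elim_u}, which itself mimics Lemma~\ref{lem: iota_gauss_elim}, now carrying the two blue twists from Lemmata~\ref{lem: c_ny} and~\ref{lem: c_nu} simultaneously. First I would invoke Lemma~\ref{lem: c_nyu} to pass from $\mathrm{Cone}(\iota)$ computed inside $FT_{\aa}^{y,u}$ to the same cone computed inside the minimal model $C_n^{y,u}$, using the curved homotopy equivalence $s^{y,u}\colon C_n^{y,u}\to FT_{\aa}^{y,u}$ — equivalently, simply work with $C_n^{y,u}$ directly since the equivalence does not affect the homotopy type of the cone. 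Then $\mathrm{Cone}(\iota)$ is the three-term complex $C_n^{y,u}$ with an extra copy of $t^2q^{-2}\strand{\aa}\otimes_R R[y_{n+1},\mathbb{U}_n]$ adjoined along $\iota$ in the usual mapping-cone fashion.

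Next I would apply Gaussian elimination, Proposition~\ref{prop: gauss_elim}, to cancel the identity arrow connecting the new summand $t^2q^{-2}\strand{\aa}$ (from the cone) against the final term $t^2q^{-2}\strand{\aa}$ of $C_n^{y,u}$, exactly as in the undeformed case of Lemma~\ref{lem: iota_gauss_elim} and the bulk-deformed case of Lemma~\ref{lem: iota_gauss_elim_u}. This is legitimate because Gaussian elimination works verbatim for curved complexes (as noted after Proposition~\ref{prop: gauss_elim}, citing \cite{BN07}), so the $F_y^{(1)}+F_u^{(n)}$-curved structure is preserved. What remains after this cancellation is the two-term complex
\[
q^n W_{\aa}\otimes_R R[y_{n+1},\mathbb{U}_n]\ \xrightleftharpoons[\ -\sum_{i=1}^n g_i u_i + y_{n+1}\ ]{\ x_{n+1}-x'_{n+1}\ }\ tq^{n-2}W_{\aa}\otimes_R R[y_{n+1},\mathbb{U}_n],
\]
where the forward differential is $x_{n+1}-x'_{n+1}$ and the blue backward twist is the sum of the two twists from Lemmata~\ref{lem: c_ny} and~\ref{lem: c_nu}; the only subtlety is checking that the new backward arrow produced by Gaussian elimination (built from $\varphi^{-1}\kappa$ and $\gamma\varphi^{-1}$ type terms) contributes nothing beyond what is already present, which follows because $\iota$ hits only the final summand and all twists are concentrated between the first two terms.

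The one point requiring genuine verification — and the main (mild) obstacle — is that the displayed two-term complex is indeed an $F_y^{(1)}+F_u^{(n)}$-curved complex, i.e.\ that the square of the connection equals the prescribed curvature. This amounts to checking the Maurer--Cartan relation for the combined twist, which decomposes into the $y_{n+1}$-linear part (handled by the identity $e_1(\mathbb{X}_{n+1})-e_1(\mathbb{X}'_{n+1})\equiv x_{n+1}-x'_{n+1}$ underlying Lemma~\ref{lem: c_ny}), the $u_i$-linear parts (handled by Lemma~\ref{lem: n1hopf_thick_twist}, which gives $(x_{n+1}-x'_{n+1})g_i\equiv e_i(\mathbb{X}'_n)-e_i(\mathbb{X}_n)$ modulo $I_{\aa}$), and the cross terms in $y_{n+1}u_i$ and $u_iu_j$, which all vanish because $y_{n+1},u_1,\dots,u_n$ are mutually commuting even deformation parameters and the endomorphisms $x_{n+1}-x'_{n+1}$, $g_i$, $1$ all commute in $\End(W_{\aa})$. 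Since $C_n^{y,u}$ was already verified to be such a curved complex in Lemma~\ref{lem: c_nyu}, and Gaussian elimination preserves curvature, this verification is automatic; I would state it briefly and refer back to Lemmata~\ref{lem: c_ny}, \ref{lem: c_nu}, and~\ref{lem: n1hopf_thick_twist} rather than redoing the computation.
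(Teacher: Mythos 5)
Your proposal is correct and matches the paper's (unwritten) approach: the paper omits the proofs in this subsection and simply says the cone can be computed by Gaussian elimination, which is exactly what you do. One small misreading to flag: the paper defines $\iota$ as mapping \emph{directly} into $C_n^{y,u}$, not into $FT_{\aa}^{y,u}$, so there is no need to pass through Lemma~\ref{lem: c_nyu} or the equivalence $s^{y,u}$ at all; your parenthetical ``equivalently, simply work with $C_n^{y,u}$ directly'' is what the paper actually does, and the preceding sentence is superfluous. The key point in your second paragraph---that the Gaussian elimination creates no new arrows because nothing maps into the summand $tq^{-2}\strand{\aa}$ coming from the cone, and nothing maps out of the terminal $t^2q^{-2}\strand{\aa}$---is precisely what makes the elimination painless; it might be cleaner to state it in terms of the absence of arrows into $B_1$ and out of $B_2$ rather than where the twists are ``concentrated.'' Finally, the identity you cite for the $y_{n+1}$-part, $e_1(\mathbb{X}_{n+1}) - e_1(\mathbb{X}'_{n+1}) \equiv x_{n+1} - x'_{n+1}$, is not quite the point: the $F_y^{(1)}$ curvature is supported on the $1$-labeled strand, so the relevant elementary symmetric difference in the alphabet $\{x_{n+1}\}$ is already $x_{n+1}-x'_{n+1}$ on the nose, and the Maurer--Cartan relation for the $y_{n+1}$-component requires no reduction modulo $I_{\aa}$.
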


A quick check shows that the map $\psi$ of Lemma \ref{lem: qi} lifts without modification to a map of $F_y^{(1)} + F_u^{(n)}$-curved complexes

\[
\psi \colon q^{2n}\strand{\aa} \otimes_R R[y_{n + 1}, \mathbb{U}_n] \to C_n^{y, u}.
\]

Now let $u_{n + 1}$, $\mathbb{U}_{n + 1}$ be defined as in Section \ref{subsec: inf_proj}. As in that Section, we automatically obtain curved complexes

\[
\strand{\aa} \otimes_R R[y_{n + 1}, \mathbb{U}_{n + 1}], C_n^{y, u} \otimes_R R[u_{n + 1}] \in \YS_{F_y^{(1)} + F_u^{(n)}}(\SSBim; R[y_{n + 1}, \mathbb{U}_{n + 1}]).
\]

\begin{prop} \label{prop: yinf_proj_recurs_thin}
	Set 
	
	\[
	\Phi := \iota \otimes 1 + \psi \otimes u_{n + 1} \colon t^2q^{-2} \strand{\aa} \otimes_R R[y_{n + 1}, \mathbb{U}_{n + 1}] \to C_n^{y, u} \otimes_R R[u_{n + 1}].
	\]
	Then there is a homotopy equivalence of $F_y^{(1^{n + 1})}$-curved complexes 
    
    \[
    \mathrm{Cone}(\yufray{n}{\blam}(\Phi)) \simeq q^n \left( P^{(1^{n + 1}), y}_{(1^{n + 1})} \right) ^{\vee}.
    \]
\end{prop}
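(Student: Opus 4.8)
The plan is to follow the proof of Proposition \ref{prop: inf_proj_recurs_thin} essentially verbatim, carrying along the extra separated-strand deformation parameters. As in that proof, the key initial observation is that $u_{n + 1}$ does not appear in the curvature $F_y^{(1)} + F_u^{(n)}$, so I would unroll along $u_{n + 1}$ \emph{before} applying $\yufray{n}{\blam}$ and regard $\Phi = \iota \otimes 1 + \psi \otimes u_{n + 1}$ as a morphism of $F_y^{(1)} + F_u^{(n)}$-curved complexes over $R[y_{n + 1}, \mathbb{U}_n]$. Invoking Lemma \ref{lem: iota_gauss_elim_yu} and the curved lift of $\psi$, the cone $\mathrm{Cone}(\Phi)$ is then the same semi-infinite ladder that appears in the proof of Proposition \ref{prop: inf_proj_recurs_thin}, with the single difference that each backwards horizontal arrow now carries the twist $y_{n + 1} - \sum_{i = 1}^n g_i u_i$ instead of $-\sum_{i = 1}^n g_i u_i$; here $g_i$ and $g_{n + 1} = e_n(\mathbb{X}_n) - x'_{n + 1} g_n$ are the polynomials of Lemmas \ref{lem: n1hopf_thick_twist} and \ref{lem: qi}.

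Second, I would perform Gaussian elimination (Proposition \ref{prop: gauss_elim}) along each horizontal identity arrow, collapsing the ladder as before; introducing a formal odd variable $\theta_{n + 1}$ of degree $q^{-2}t$, the result is a twist of $q^n W_{\aa} \otimes_R R[\theta_{n + 1}, y_{n + 1}, \mathbb{U}_{n + 1}]$ whose connection is the connection $\overline\tau$ from the undeformed case \emph{plus} the single extra summand $1 \otimes \theta_{n + 1}^{\vee} \otimes y_{n + 1}$. Applying $\yufray{n}{\blam}$ and regrading then yields a twist of $W_{(\blam, 1)} \otimes_R R[\Theta_{n + 1}, \mathbb{Y}_{(\blam, 1)}, \mathbb{U}_{n + 1}]$, where $\mathbb{Y}_{(\blam, 1)} = \mathbb{Y}_{\blam} \cup \{y_{n + 1}\}$, with connection $\tau_n$ equal to the undeformed $\tau_n$ together with the separated contraction terms $\sum_{j = 1}^{n + 1} 1 \otimes \theta_j^{\vee} y_j$. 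I would then compare this with $q^n \left( P^{(1^{n + 1}), y}_{(1^{n + 1})} \right)^{\vee} = q^n \yufray{n + 1}{(1^{n + 1})}(\strand{(n + 1)})$, whose connection $\tau_{n + 1}$ has exactly the same shape but with $a_{ij1}^{(\blam, 1)}(\mathbb{X}_{n + 1}, \mathbb{X}'_{n + 1})$ in place of the polynomials $a_{ij1}^{\blam}$, using the conventions $a_{i, n + 1, 1}^{\blam} := g_i$ and $a_{n + 1, j, 1}^{\blam} := 0$.

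The comparison splits into two pieces exactly as in the undeformed proof. The $\mathbb{Y}_{(\blam, 1)}$-linear part of $\tau_n$ is literally $\sum_{j} 1 \otimes \theta_j^{\vee} y_j$, which is \emph{identical} to the $\mathbb{Y}_{(\blam, 1)}$-linear part of $\tau_{n + 1}$, so no work is required there. The remaining discrepancy is governed entirely by $a_{ij1}^{(\blam, 1)} - a_{ij1}^{\blam} = x'_{n + 1} a_{i - 1, j, 1}^{\blam}$, which follows from Lemma \ref{lem: thin_a_compare} together with the explicit form of $g_i$ in the boundary cases, and is cleared by the same change of basis $u_i \mapsto \sum_{k \geq 0} (-x'_{n + 1})^k u_{i + k}$, applied termwise in each $\Theta_{n + 1}$-degree and acting as the identity on the $\theta$ and $y$ variables. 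Since that substitution is a dg-module automorphism preserving the underlying bimodule and all gradings and takes $\tau_n$ to $\tau_{n + 1}$, it provides the desired homotopy equivalence of $F_y^{(1^{n + 1})}$-curved complexes.

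I expect the main obstacle to be purely organizational: one must keep the four alphabets $\Theta_{n + 1}$, $\mathbb{Y}_{\blam}$, $\{y_{n + 1}\}$, and $\mathbb{U}_{n + 1}$ straight and check that the $u$-change of basis commutes past the $y$-contraction terms. The latter is immediate since the substitution involves only the $u$ variables, has trivial differential, and everything in sight graded-commutes; in particular no new obstruction-theoretic input is needed beyond Lemma \ref{lem: c_nyu}, which already supplies the curved lift of the minimal model $C_n^{y, u}$.
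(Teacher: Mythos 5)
Your proof is correct and takes essentially the same approach as the paper's, whose own proof is precisely the sketch you are elaborating: carry the extra separated-strand deformation through the proof of Proposition \ref{prop: inf_proj_recurs_thin}, observe that the $y_{n+1}$ terms do not interfere with the Gaussian elimination, and note that the $u$-change of basis fixes the $\mathbb{U}_{n+1}$-degree $0$ (equivalently $y$-contraction) part of the connection.
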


\begin{proof}
Identical to the proof of Proposition \ref{prop: inf_proj_recurs_thin}. The presence of the variable $y_{n + 1}$ does not affect the Gaussian elimination involved in simplifying $\mathrm{Cone}(\Phi)$. Applying $\yufray{n}{\blam}$, rather than $\ufray{n}{\blam}$, to $\mathrm{Cone}(\Phi)$ only adds $\mathbb{U}_{n + 1}$-degree $0$ components to the resulting connection; these components agree exactly with the corresponding components of the connection on $\left( P^{(1^{n + 1}), y}_{(1^{n + 1})} \right)^{\vee}$. Finally, the same change of basis in the variables $\{u_i\}$ also works to establish this homotopy equivalence without modification.
\end{proof}

We can once again repackage the statement of Proposition \ref{prop: yinf_proj_recurs_thin} to emphasize the role of $\left( P^{(1^n), y}_{(1^n)} \right)^{\vee}$. Post-composing with $s^{y, u}$ in each $u_{n + 1}$-degree induces a homotopy equivalence $\mathrm{Cone}(s^{y, u} \circ \Phi) \simeq \mathrm{Cone}(\Phi)$. After applying $\yufray{n}{\blam}$, we may consider $\yufray{n}{\blam}(s^{y, u} \circ \Phi)$ as a map from $t^2q^{-2} \yufray{n}{\blam}(\strand{\aa}) \otimes_R R[y_{n + 1}, u_{n + 1}]$ to $\yufray{n}{\blam}(FT^{y, u}_{\aa}) \otimes_R R[u_{n + 1}]$. The former is exactly 
\[
t^2q^{-2} \left( \left( \yinfproj{\blam} \boxtimes \strand{(1)} \right) \otimes_R R[y_{n + 1}] \right) \otimes_R R[u_{n + 1}],
\]
and by Corollary \ref{cor: braid_yinf_slide}, the latter is homotopy equivalent to 
\[
\left( \left( \left( \yinfproj{\blam} \boxtimes \strand{(1)} \right) \otimes_R R[y_{n + 1}] \right) \star J_{n + 1}^y \right) \otimes_R R[u_{n + 1}].
\]
After shifting, we immediately obtain the following:

\begin{cor} \label{cor: yinf_proj_recurs}
    Let $u_{n + 1}$ be a formal variable of degree $\mathrm{deg}(u_{n + 1}) = t^2q^{-2(n + 1)}$.
    Then there is a map of chain complexes
    
    \begin{multline*}
    \kappa^{y, u} \colon t^2q^{-2 - n} \left( \left( \yinfproj{\blam} \boxtimes \strand{(1)} \right) \otimes_R R[y_{n + 1}] \right) \otimes_R R[u_{n + 1}] \\
    \to q^{-n} \left( \left( \left( \yinfproj{\blam} \boxtimes \strand{(1)} \right) \otimes_R R[y_{n + 1}] \right) \star J_{n + 1}^y \right) \otimes_R R[u_{n + 1}]
    \end{multline*}
    satisfying $\left( P_{(1^{n + 1}), y}^{(1^{n + 1})} \right)^{\vee} \simeq \mathrm{Cone}(\kappa^{y, u})$.
\end{cor}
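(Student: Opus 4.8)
The plan is to run the exact same argument used for Corollary \ref{cor: inf_proj_recurs} in Section \ref{subsec: inf_proj}, with the deformation on the passive $1$-labeled strand turned on throughout, and to rely on the fact (already established in Proposition \ref{prop: yinf_proj_recurs_thin}) that $\mathrm{Cone}(\yufray{n}{\blam}(\Phi)) \simeq q^n \left( P^{(1^{n + 1}), y}_{(1^{n + 1})} \right)^{\vee}$. The only new ingredient relative to Corollary \ref{cor: inf_proj_recurs} is bookkeeping for the extra deformation parameter $y_{n + 1}$ and the use of the deformed fork-slide Corollary \ref{cor: braid_yinf_slide} in place of Corollary \ref{cor: braid_inf_slide}.

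First I would recall that $s^{y, u} \colon C_n^{y, u} \to FT_{\aa}^{y, u}$ is a homotopy equivalence of $F_y^{(1)} + F_u^{(n)}$-curved complexes by Lemma \ref{lem: c_nyu}. Forming $s^{y, u} \otimes \mathrm{id}_{R[u_{n + 1}]}$ and post-composing with $\Phi$ of Proposition \ref{prop: yinf_proj_recurs_thin} produces a chain map $s^{y, u} \circ \Phi \colon t^2 q^{-2} \strand{\aa} \otimes_R R[y_{n + 1}, \mathbb{U}_{n + 1}] \to FT^{y, u}_{\aa} \otimes_R R[u_{n + 1}]$ with $\mathrm{Cone}(s^{y, u} \circ \Phi) \simeq \mathrm{Cone}(\Phi)$, since post-composing the map in a mapping cone with a homotopy equivalence does not change its homotopy equivalence class. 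Next I would apply $\yufray{n}{\blam}$ to this map. As in Section \ref{subsec: inf_proj}, since $u_{n + 1}$ plays no role in the curvature $F_u^{(n)}$ and the target curvature of $\yufray{n}{\blam}$ is $F_y^{(1^{n + 1})}$ (the bulk $-F_u^{(n)}$ is cancelled against the $F_u^{(n)}$ appearing in the deformed Rickard complex — recall Remark \ref{rem: uncurving_ufray}), one obtains a chain map of $F_y^{(1^{n + 1})}$-curved complexes. By functoriality, $\mathrm{Cone}(\yufray{n}{\blam}(s^{y, u} \circ \Phi)) \simeq \mathrm{Cone}(\yufray{n}{\blam}(\Phi)) \simeq q^n \left( P^{(1^{n + 1}), y}_{(1^{n + 1})} \right)^{\vee}$ by Proposition \ref{prop: yinf_proj_recurs_thin}. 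Then I would identify the source and target of $\yufray{n}{\blam}(s^{y, u} \circ \Phi)$: the source is $\yufray{n}{\blam}(t^2 q^{-2} \strand{\aa}) \otimes_R R[y_{n + 1}, u_{n + 1}] = t^2 q^{-2} \left( \left( \yinfproj{\blam} \boxtimes \strand{(1)} \right) \otimes_R R[y_{n + 1}] \right) \otimes_R R[u_{n + 1}]$, using Definitions \ref{def: yufray} and \ref{def: fray_yinf_proj}; and the target is $\yufray{n}{\blam}(FT^{y, u}_{\aa}) \otimes_R R[u_{n + 1}]$, which by the deformed fork-slide Corollary \ref{cor: braid_yinf_slide} (applied to $\beta_{\aa} = FT_{\aa}$, remembering $J_{n + 1}^y$ is the fully $\Delta e$-deformed Rickard complex of the Jucys--Murphy braid, as in the proof of Lemma \ref{lem: n1_hopf_homology}) is homotopy equivalent to $\left( \left( \left( \yinfproj{\blam} \boxtimes \strand{(1)} \right) \otimes_R R[y_{n + 1}] \right) \star J_{n + 1}^y \right) \otimes_R R[u_{n + 1}]$. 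Post-composing $\yufray{n}{\blam}(s^{y, u} \circ \Phi)$ with this last homotopy equivalence again does not change the homotopy equivalence class of the cone. Finally, shifting by $q^{-n}$ to absorb the $q^n$ yields a map $\kappa^{y, u}$ with the stated source and target and $\left( P_{(1^{n + 1}), y}^{(1^{n + 1})} \right)^{\vee} \simeq \mathrm{Cone}(\kappa^{y, u})$.

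Since Proposition \ref{prop: yinf_proj_recurs_thin} and its predecessors are already in hand, I do not expect a serious obstacle; the closest thing to a subtle point is ensuring the curvature bookkeeping is consistent when we simultaneously carry the separated-strand deformation $\mathbb{Y}_{\blam}$ (introduced by $\yufray{n}{\blam}$), the passive/bulk-on-one-strand deformation $y_{n + 1}$, and the bulk deformation parameters $\mathbb{U}_{n + 1}$ — i.e., checking that $\yufray{n}{\blam}(FT^{y,u}_{\aa}) \otimes_R R[u_{n+1}]$ really is a $0$-curved complex after unrolling, exactly as in Remark \ref{rem: uncurving_ufray}, so that $\mathrm{Cone}$ makes sense as an honest chain complex. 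As in the proof of Proposition \ref{prop: yinf_proj_recurs_thin}, this is immediate because $y_{n + 1}$ contributes only to the separated-strand curvature $F_y^{(1^{n+1})}$ (matching on both sides), while the bulk curvatures $-F_u^{(n)}$ (from $\ufray{n}{\blam}$) and $F_u^{(n)}$ (from the deformed Rickard complex) cancel and $u_{n+1}$ is inert. Given that, the proof is a routine merge of the arguments of Sections \ref{subsec: passive_rec} and \ref{subsec: inf_proj}, and I would state it as such, citing Proposition \ref{prop: yinf_proj_recurs_thin}, Corollary \ref{cor: braid_yinf_slide}, and Lemma \ref{lem: c_nyu}.
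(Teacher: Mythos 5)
Your proposal is correct and follows essentially the same route as the paper: post-compose $\Phi$ with $s^{y,u}$ (Lemma \ref{lem: c_nyu}), apply $\yufray{n}{\blam}$, invoke Proposition \ref{prop: yinf_proj_recurs_thin} to identify the cone, and use Corollary \ref{cor: braid_yinf_slide} to rewrite the target as $\left(\left(\yinfproj{\blam} \boxtimes \strand{(1)}\right) \otimes_R R[y_{n+1}]\right) \star J_{n+1}^y$, then shift by $q^{-n}$. The only minor slip is calling the result ``$0$-curved after unrolling'': the cone lives in $\YS_{F_y^{(1^{n+1})}}$, not $\YS_0$, and $\mathrm{Cone}$ is taken in that curved category as in Example \ref{ex: cones}; the curvature that cancels is the bulk $-F_u^{(n)}$ against $F_u^{(n)}$, leaving the separated deformation intact on both sides.
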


\subsection{Infinite Frayed Projectors are Categorical Idempotents} \label{subsec: idem}

We have promised throughout that in the case $\blam = (1^n)$, our infinite frayed projector
$\infproj{\blam}$ is homotopy equivalent to the (unital) Abel--Hogancamp projector
$P_{(1^n)}^{\vee}$. In this section we make good on this promise by showing that
both complexes satisfy the same universal property of unital idempotents.

We begin by establishing some notation. Recall that we have fixed $n \geq 1$, $N = n + 1$,
$\aa = \bb = (n, 1)$, $\blam = (1^n) \vdash n$. For each \textcolor{revisions}{composition} $\bnu \vdash n$ or
$\bnu \vdash N$, let $\mathcal{I}_{\bnu}$ denote the full subcategory of
$K(\SSBim_{\bnu}^{\bnu})$ generated by complexes whose chain bimodules are
(direct sums of shifts of) $W_{\bnu}$. It is well known that $\mathcal{I}_{\bnu}$ is a
two-sided tensor ideal of $K(\SSBim_{\bnu}^{\bnu})$ (under the horizontal composition $\star$).
Let $\mathcal{I}_{\bnu}^{\perp}$ (respectively $^{\perp} \mathcal{I}_{\bnu}$) denote
the full subcategory of $K(\SBim_n)$ of complexes $C$ satisfying
$C \star W_{\bnu} \simeq 0$ (respectively $W_{\bnu} \star C \simeq 0$).

The following is Theorem 2.17 in \cite{AH17}
\footnote{Adapted to describe a unital, rather than counital, idempotent.}:

\begin{thm} \label{thm: AH_projector}
    There is a complex $P_{1^n}^{\vee} \in K^+(\SBim_n)$ and a chain map
    $\eta_n \colon \strand{(1^n)} \to P_{1^n}^{\vee}$ satisfying

    \begin{itemize}
        \item[(P1)] $P_{1^n}^{\vee} \in \mathcal{I}_{\blam}$

        \item[(P2)] $\mathrm{Cone}(\eta_n) \in \mathcal{I}_{\blam}^{\perp}
        \cap ^{\perp} \mathcal{I}_{\blam} $.
    \end{itemize}

    Furthermore, the pair $(P_{1^n}^{\vee}, \eta)$ is uniquely characterized by (P1) and (P2)
    up to canonical homotopy equivalence, in the sense that for any pair $(Q, \eta')$
    satisfying (P1) and (P2), there is a unique (up to homotopy) chain map
    $\phi \colon P_{1^n}^{\vee} \to Q$ satisfying $\eta' \sim \phi \circ \eta_n$,
    and $\phi$ is a homotopy equivalence.
\end{thm}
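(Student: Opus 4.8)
The statement to prove is Theorem \ref{thm: AH_projector}, which asserts existence and uniqueness of the Abel--Hogancamp unital projector. Since this is quoted from \cite{AH17}, a full original proof is not needed; the proof proposal should explain how one recovers this within the present framework and cite appropriately.

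\textbf{Proof strategy.} The plan is to obtain $P_{1^n}^{\vee}$ directly as the infinite frayed projector $\infproj{\blam}$ of Definition \ref{def: fray_inf_proj} for $\blam = (1^n)$ (up to an overall $q$-shift) and to verify that it, together with a unit map $\eta_n$ built from the structure maps of the fray functor, satisfies (P1) and (P2); uniqueness then follows from a standard Fitting-type argument in the triangulated category $K^+(\SBim_n)$. I would proceed in the following order.

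First, I would construct the unit map. The identity bimodule $\strand{(1^n)}$ maps into $\infproj{\blam}$ via the composite of the obvious inclusion $\strand{(1^n)} \hookrightarrow W_{(1^n)} = \rfray{n}{\blam}(\strand{(n)})$ (coming from the cap-cup adjunction unit, or equivalently the unit $\strand{(n)} \to W_{\blam}$ of the merge-split bimodule) with the inclusion of the bottom-$\Theta_{\blam}$-degree, bottom-$\mathbb{U}_n$-degree term of the twisted complex $\ufray{n}{\blam}(\strand{(n)})$; one checks this is a closed degree $0$ chain map because the Koszul differential $\alpha$ and the bulk twist $\gamma$ both strictly raise filtration degree. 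Call this map $\eta_n$, appropriately $q$-shifted.

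Second, I would verify (P1): every chain bimodule of $\infproj{\blam}$ is a shift of $W_{(1^n)}$, since $\rfray{n}{\blam}(\strand{(n)}) = W_{(1^n)}$ and the extension of scalars by the polynomial algebras $R[\Theta_{\blam}, \mathbb{U}_n]$ only introduces shifted copies of this one bimodule. Hence $\infproj{\blam} \in \mathcal{I}_{\blam}$ on the nose. Third, I would verify (P2): I must show $\mathrm{Cone}(\eta_n) \star W_{\blam} \simeq 0$ and $W_{\blam} \star \mathrm{Cone}(\eta_n) \simeq 0$. Using Corollary \ref{cor: braid_inf_slide} with the trivial braid (or directly the definition of $\ufray{n}{\blam}$) together with Proposition \ref{prop: blamgon_removal}, one computes $W_{\blam} \star \infproj{\blam}$ and finds that the Koszul differential on the bulk alphabet becomes contractible — concretely, $e_k(\mathbb{X}_j) - e_k(\mathbb{X}'_j)$ acts nullhomotopically after merging, so repeated Gaussian elimination (Corollary \ref{cor: gauss_elim_removal}) collapses $W_{\blam} \star \infproj{\blam}$ onto $W_{\blam} \star \strand{(1^n)}$ compatibly with $\eta_n$, forcing the cone to be contractible after tensoring with $W_{\blam}$ on either side.

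\textbf{Main obstacle.} The hard part will be the uniqueness clause, which is genuinely the content of \cite{AH17} Theorem 2.17. The argument is: given $(Q, \eta')$ satisfying (P1) and (P2), one shows $\Hom_{K^+(\SBim_n)}(\mathrm{Cone}(\eta_n), Q) = 0$ and $\Hom_{K^+(\SBim_n)}(t^{-1}\mathrm{Cone}(\eta_n), P_{1^n}^{\vee}) = 0$, using that $Q, P_{1^n}^{\vee} \in \mathcal{I}_{\blam}$ while $\mathrm{Cone}(\eta_n) \in {}^{\perp}\mathcal{I}_{\blam}$; applying $\Hom(-, Q)$ to the triangle $\strand{(1^n)} \to P_{1^n}^{\vee} \to \mathrm{Cone}(\eta_n)$ then shows $\eta'$ factors uniquely through $\eta_n$ via some $\phi$, and applying $\Hom(-, P_{1^n}^{\vee})$ to the triangle for $\eta'$ produces an inverse up to homotopy. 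The orthogonality vanishing requires knowing that $\mathcal{I}_{\blam}$ is a two-sided tensor-ideal and interacts correctly with Hom spaces — this is where one leans most heavily on the structural results about $\SBim_n$ (Krull--Schmidt, the ideal structure) rather than on the fray-functor machinery. I would present this as a citation to \cite{AH17}, noting that the only new input here is the identification of their $P_{1^n}^{\vee}$ with our $\infproj{\blam}$, which is exactly Theorem \ref{thm: proj_agreement} proved later in this section.
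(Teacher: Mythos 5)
Theorem \ref{thm: AH_projector} is a recalled result, not a new one: the paper's ``proof'' is the single sentence ``This is Theorem 2.17 in \cite{AH17},'' adapted from the counital to the unital convention. You acknowledge this at the start and again at the end, but the body of your proposal is actually an outline of a proof of a \emph{different} theorem, namely Theorem \ref{thm: inf_proj_idem}, which asserts that the complex $q^{-n(n-1)/2}\infproj{\blam}$ built in this paper also satisfies (P1) and (P2). Those are logically distinct statements: \ref{thm: AH_projector} is the existence-and-uniqueness axiom imported from \cite{AH17} that \emph{enables} the comparison in Theorems \ref{thm: proj_agreement} and \ref{thm: yproj_agreement}, and \ref{thm: inf_proj_idem} is the new verification. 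Conflating them creates a genuine circularity in your final paragraph: you propose to identify $P_{1^n}^{\vee}$ with $\infproj{\blam}$ via Theorem \ref{thm: proj_agreement}, but \ref{thm: proj_agreement} is proved \emph{by invoking the uniqueness clause of} \ref{thm: AH_projector}, so it cannot be used as input to that theorem.

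There is also a methodological gap in your sketch of (P2). You propose to show $W_{\blam}\star\mathrm{Cone}(\eta_n)\simeq 0$ directly by arguing that after merging, the Koszul generators $e_k(\mathbb X_j)-e_k(\mathbb X_j')$ act nullhomotopically and then performing Gaussian elimination. The paper does not argue this way, and a direct Gaussian-elimination argument on the infinite ladder $\infproj{\blam}$ is delicate: it requires simultaneous simplification over the lower-finite poset $\mathbb{U}_n$ and a homological local-finiteness hypothesis (Proposition \ref{prop: simul_simp}, Remark \ref{rem: locally_finite}). The paper's actual route to (P2) is the inductive construction of $\tilde\eta_n$ in Proposition \ref{prop: building_eta}, which feeds the two-strand recursion (Proposition \ref{prop: inf_proj_recurs_thin}) through the exactness criterion of Proposition \ref{prop: ufray_perp_cond} at each stage. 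If you want to present a self-contained alternative to the \cite{AH17} citation you would need to fill in those steps, not gesture at them; but for the theorem as stated, the honest answer is just the citation.
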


In the language of \cite{Hog17}, Properties (P1) and (P2) of Theorem
\ref{thm: AH_projector} can be rephrased as the statement that $P_{1^n}^{\vee}$
is a \textit{unital idempotent} projecting onto the cell ideal $\mathcal{I}_{\blam}$.
The general theory of such idempotents then gives the resulting uniqueness up to unique
(up to homotopy) homotopy equivalence;
we refer the interested reader to \cite{Hog17} for further details.

Our goal in this section is to prove that (a shift of) our $\infproj{\blam}$
is homotopy equivalent to $P_{1^n}^{\vee}$ by showing that $\infproj{\blam}$
also satisfies the conditions of Theorem \ref{thm: AH_projector}.

\begin{thm} \label{thm: inf_proj_idem}
    There is a chain map $\tilde{\eta}_n \colon \strand{(1^n)} \to q^{-\frac{n(n - 1)}{2}} \infproj{\blam}$
    satisfying the conditions of Theorem \ref{thm: AH_projector}.
\end{thm}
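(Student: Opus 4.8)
The plan is to verify that $q^{-n(n-1)/2}\infproj{\blam}$ satisfies properties (P1) and (P2) of Theorem~\ref{thm: AH_projector}; uniqueness of unital idempotents then forces a canonical homotopy equivalence $q^{-n(n-1)/2}\infproj{\blam} \simeq P_{1^n}^{\vee}$. Property (P1) is almost immediate: by construction $\infproj{\blam} = \ufray{n}{\blam}(\strand{(n)})$ is a twist of $\fray{n}{\blam}(\strand{(n)}) \otimes_R R[\mathbb{U}_n]$, and every chain bimodule of $\fray{n}{\blam}(\strand{(n)})$ is a shift of $\rfray{n}{\blam}(\strand{(n)}) = W_{(1^n)} = W_{\blam}$ by Example~\ref{ex: fray_fin_proj}, hence $\infproj{\blam} \in \mathcal{I}_{\blam}$. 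For the unit map, I would take $\tilde{\eta}_n$ to be (a shift of) the composite of the Koszul unit $\strand{(1^n)} \hookrightarrow \fray{n}{\blam}(\strand{(n)})$ with the deformation inclusion into $\ufray{n}{\blam}(\strand{(n)})$; concretely this is the inclusion of the lowest $\Theta_{\blam}$- and $\mathbb{U}_n$-degree summand, which is a chain map since $\strand{(1^n)}$ is concentrated in homological degree $0$ and the differential on $\infproj{\blam}$ strictly raises these degrees. The quantum shift $q^{-n(n-1)/2}$ is chosen so that $\strand{(1^n)}$ sits in internal degree $0$, matching the normalization in \cite{AH17}.

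\textbf{The main work: property (P2).} The hard part will be showing $\mathrm{Cone}(\tilde{\eta}_n) \in \mathcal{I}_{\blam}^{\perp} \cap {}^{\perp}\mathcal{I}_{\blam}$, i.e. that $\mathrm{Cone}(\tilde{\eta}_n) \star W_{\blam} \simeq 0$ and $W_{\blam} \star \mathrm{Cone}(\tilde{\eta}_n) \simeq 0$. By symmetry it suffices to treat one side. The natural approach is to use the skein recursion of Corollary~\ref{cor: inf_proj_recurs}: we have $\infproj{\blam} \simeq \mathrm{Cone}(\kappa^u)$ where $\kappa^u$ relates $\bigl( \infproj{\blam} \boxtimes \strand{(1)} \bigr) \otimes_R R[u_{n+1}]$ to $\bigl( \infproj{\blam} \boxtimes \strand{(1)} \bigr) \star J_{n+1}^y \otimes_R R[u_{n+1}]$. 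I would argue by induction on $n$: the case $n=1$ is a direct computation (the two-strand full twist with curvature, where $\infproj{(1)}$ is essentially $P_{(1^1)}^\vee \boxtimes$ a polynomial factor), and the inductive step uses that smashing with $W_{(1^{n+1})}$ kills $J_{n+1}$-type terms after applying the inductive hypothesis on the $n$-strand piece, since $W_{(1^{n+1})}$ absorbs the Jucys--Murphy braid up to the behavior already recorded in Lemma~\ref{lem: n1_hopf_homology} and Lemma~\ref{lem: qi}. Concretely, smashing $\mathrm{Cone}(\tilde{\eta}_n)$ with $W_{(1^{n+1})} = W_{\blam} \star (\text{merge-split on the last two columns})$ lets us split off a copy of $W_{\blam}$; by induction $\mathrm{Cone}(\tilde{\eta}_{n-1}) \star W_{(1^{n-1})} \simeq 0$, and the remaining contribution telescopes along the infinite ladder of Corollary~\ref{cor: inf_proj_recurs} to zero because each rung, after smashing, becomes nullhomotopic via the digon-removal and fork-sliding relations of Proposition~\ref{prop: web_rels} and Proposition~\ref{prop: fork_slide}.

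\textbf{Assembling the argument.} Once (P1) and (P2) are in hand, Theorem~\ref{thm: AH_projector} provides the unique chain map $\phi \colon P_{1^n}^{\vee} \to q^{-n(n-1)/2}\infproj{\blam}$ with $\tilde{\eta}_n \sim \phi \circ \eta_n$, and $\phi$ is automatically a homotopy equivalence; this is exactly the content of the theorem. I expect the genuine obstacle to be bookkeeping in the inductive step for (P2): one must carefully track the deformation parameters $\mathbb{U}_n$ and the auxiliary Koszul variables $\Theta_{\blam}$ through the smashing with $W_{(1^{n+1})}$, ensuring that the killing of $J$-braid terms is compatible with the curved structure (so that one stays within the framework of Remark~\ref{rem: uncurving_ufray} and genuinely obtains contractibility of an uncurved complex, not merely of its $\mathbb{U}$-degree-$0$ part). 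A clean way to organize this is to apply the functor $\ufray{n}{\blam}$ to the identity $\mathrm{Cone}(\text{unit}) \star W_{(1)} \simeq 0$ valid already at the level of the two-strand full twist $C_n$, leveraging exactness of $\rfray{n}{\blam}$ (Proposition~\ref{prop: rfray_exact}) and the fork-slide compatibility of Proposition~\ref{prop: ufray_fork_slide} to reduce the $(n+1)$-strand statement to the $n$-strand statement plus an explicit two-strand computation.
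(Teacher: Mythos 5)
Your high-level strategy — verify (P1) and (P2), induct on $n$, use the two-strand full twist and its deformations as the computational engine, and appeal to uniqueness of unital idempotents — does match the paper's. However, two essential mechanisms are left vague in ways that would need genuine work to repair.

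First, the unit map. You take $\tilde{\eta}_n$ to be the inclusion of the lowest polynomial-degree summand. The paper instead constructs $\tilde{\eta}_{n+1}$ \emph{recursively} as the composite $\upsilon \circ \ufray{n}{\blam}(j) \circ \ufray{n}{\blam}(\psi) \circ (\tilde{\eta}_n \boxtimes 1)$, where $\psi$ is the quasi-isomorphism of Lemma~\ref{lem: qi}. This specific choice matters: the proof of (P2) proceeds by writing $\mathrm{Cone}(\tilde{\eta}_{n+1})$ as a convolution, indexed over powers of $u_{n+1}$, of copies of $\mathrm{Cone}(\ufray{n}{\blam}(\psi))$ together with one copy of $\mathrm{Cone}(\ufray{n}{\blam}(\psi)\circ(\tilde{\eta}_n\boxtimes 1))$. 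Your naive inclusion is not obviously homotopic to this composite, and even if it were, you would still need the recursive form to run the induction.

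Second, and more centrally, the device that kills the cone after smashing with $W_{\blam}$ is not ``telescoping via digon removal and fork slides'' but a clean homological-algebra argument: a quasi-isomorphism between bounded-above complexes has acyclic cone; exactness of $\rfray{n}{\bnu}$ (Proposition~\ref{prop: rfray_exact}) preserves acyclicity; and an acyclic bounded-above complex whose terms are sums of shifts of $W_{\bnu}$ is contractible (Lemma~\ref{lem: ah_perp_2}). This is packaged as Propositions~\ref{prop: rfray_perp_cond}, \ref{prop: fray_perp_cond}, and \ref{prop: ufray_perp_cond}, which directly place each $\mathrm{Cone}(\ufray{n}{\blam}(\psi))$ in $\mathcal{I}_{(1^{n+1})}^\perp\cap {}^\perp\mathcal{I}_{(1^{n+1})}$. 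Passing from those individual facts to the full infinite ladder then requires simultaneous simplification (Proposition~\ref{prop: simul_simp}) over an upper-finite poset; an informal ``telescoping'' argument over an infinite diagram is not by itself a homotopy equivalence. Your closing suggestion to apply $\ufray{n}{\blam}$ to ``$\mathrm{Cone}(\text{unit})\star W_{(1)}\simeq 0$'' is not well-posed as stated (tensoring with $W_{(1)}=\strand{(1)}$ is trivial), and in any case would not supply the contractibility criterion; the paper's route through acyclicity and freeness over $W_{\bnu}$ is the step you are missing.
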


That $q^{-\frac{n(n - 1)}{2}} \infproj{\blam} = q^{-\frac{n(n - 1)}{2}} \ufray{n}{\blam}(\strand{(n)})$ satisfies (P1) follows immediately from the definition of $\ufray{n}{\blam}$. The construction of a map $\tilde{\eta}_n$ satisfying (P2) is lengthy and occupies the remainder of this Section.

We begin by establishing a useful criterion for membership in $\mathcal{I}_{\bnu}^{\perp}
\cap ^{\perp} \mathcal{I}_{\bnu}$ for an arbitrary \textcolor{revisions}{composition} $\bnu \vdash N$. The
following three Lemmata are taken directly from Section 2.2 of \cite{AH17} with
slight adaptations reflecting a shift from Soergel bimodules to singular
Soergel bimodules; setting $\bnu = (1^n)$ recovers the statements in that work.

\begin{lem} \label{lem: ah_perp_1}
    The functors $W_{\bnu} \star -$ and $- \star W_{\bnu} \colon \SSBim_{\bnu}^{\bnu}
    \to \SSBim_{\bnu}^{\bnu}$ are exact.
\end{lem}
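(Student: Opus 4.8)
\textbf{Proof proposal for Lemma \ref{lem: ah_perp_1}.}

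The plan is to reduce the statement to flatness of the relevant rings of partially symmetric polynomials, exactly as in Proposition \ref{prop: rfray_exact}. Recall that $W_{\bnu} = \left( {}_{\bnu} S_{(N)} \right) \star \left( {}_{(N)} M_{\bnu} \right)$ by Definition \ref{def: full_merge_split}, so that horizontal composition with $W_{\bnu}$ factors as composition with a split bimodule followed by composition with a merge bimodule. It therefore suffices to check that each of the four functors ${}_{\bnu} S_{(N)} \star -$, ${}_{(N)} M_{\bnu} \star -$, $- \star {}_{\bnu} S_{(N)}$, and $- \star {}_{(N)} M_{\bnu}$ is exact on $\SSBim_{\bnu}^{\bnu}$ (or more precisely on the appropriate $\Hom$-categories). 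Up to an overall quantum shift, each of these functors is extension of scalars along the inclusion $\mathrm{Sym}^{(N)}(\mathbb{X}) \hookrightarrow \mathrm{Sym}^{\bnu}(\mathbb{X})$, viewed as a left or right module map; for instance $- \star {}_{(N)} M_{\bnu}$ is $- \otimes_{\mathrm{Sym}^{\bnu}(\mathbb{X})} \mathrm{Sym}^{(N)}(\mathbb{X})$ up to a shift, where $\mathrm{Sym}^{(N)}(\mathbb{X})$ is regarded as a $\left( \mathrm{Sym}^{(N)}(\mathbb{X}), \mathrm{Sym}^{\bnu}(\mathbb{X}) \right)$-bimodule.

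The key step is then the observation, used already in the proof of Proposition \ref{prop: rfray_exact}, that $\mathrm{Sym}^{\bnu}(\mathbb{X})$ is free — hence flat — as a left or right module over $\mathrm{Sym}^{(N)}(\mathbb{X}) = \mathrm{Sym}(\mathbb{X})$ via this inclusion; this is classical and is spelled out in \cite{ESW14}. Since tensoring with a flat module is exact and quantum shifts are exact, each of the four functors above is exact, and therefore so is their composite $W_{\bnu} \star -$ (and symmetrically $- \star W_{\bnu}$). I would phrase this in one short paragraph, citing Proposition \ref{prop: rfray_exact} or \cite{ESW14} for the flatness input and noting that $W_{\bnu} \star -$ and $- \star W_{\bnu}$ preserve the subcategory $\SSBim_{\bnu}^{\bnu}$ by Proposition \ref{prop: blamgon_removal} and its Corollary (since $W_{\bnu} \star W_{\bnu}$ is a direct sum of shifts of $W_{\bnu}$), so that the statement makes sense as phrased.

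I do not anticipate a genuine obstacle here: the content is entirely the flatness of $\mathrm{Sym}^{\bnu}(\mathbb{X})$ over $\mathrm{Sym}(\mathbb{X})$, which is standard and already invoked earlier in the paper. The only mild subtlety is bookkeeping — making sure that ``exact'' is interpreted correctly for a functor between categories of complexes of bimodules (i.e. it preserves short exact sequences of chain bimodules in each homological degree, equivalently sends acyclic complexes to acyclic complexes), and that the merge/split decomposition of $W_{\bnu}$ is applied on the correct side for the left-module versus right-module statements. This is purely routine and can be dispatched in a sentence.
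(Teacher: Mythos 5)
Your proposal matches the paper's proof in its essentials: decompose $W_{\bnu} = ({}_{\bnu} S_{(N)}) \star ({}_{(N)} M_{\bnu})$, argue that horizontal composition with each factor is exact because the relevant module is flat, and invoke the freeness of $\mathrm{Sym}^{\bnu}(\mathbb{X})$ over $\mathrm{Sym}(\mathbb{X})$ (already used in Proposition~\ref{prop: rfray_exact}, citing \cite{ESW14}). The paper treats $W_{\bnu} \star -$ and declares $- \star W_{\bnu}$ analogous, whereas you spell out both; that is a presentational difference only.

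One slip worth flagging: your ``for instance'' identification of $- \star {}_{(N)} M_{\bnu}$ as $- \otimes_{\mathrm{Sym}^{\bnu}(\mathbb{X})} \mathrm{Sym}^{(N)}(\mathbb{X})$ is not correct. By definition $D \star {}_{(N)} M_{\bnu} = D \otimes_{\mathrm{Sym}^{(N)}(\mathbb{X})} {}_{(N)} M_{\bnu}$, and ${}_{(N)} M_{\bnu}$ is (up to shift) $\mathrm{Sym}^{\bnu}(\mathbb{X})$ with its left $\mathrm{Sym}^{(N)}$-action by restriction — not $\mathrm{Sym}^{(N)}(\mathbb{X})$ with a (nonexistent) right $\mathrm{Sym}^{\bnu}$-module structure. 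Relatedly, it is not accurate to say all four functors are extension of scalars along the \emph{same} inclusion: $({}_{(N)} M_{\bnu}) \star -$ is just restriction of the left action (exact because ${}_{(N)} M_{\bnu}$ is cyclic free as a right $\mathrm{Sym}^{\bnu}$-module, which is how the paper phrases it), whereas the other factors genuinely use freeness of $\mathrm{Sym}^{\bnu}$ over $\mathrm{Sym}^{(N)}$. These are bookkeeping errors rather than gaps; correcting the identifications leaves the argument intact and aligned with the paper's.
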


\begin{proof}
    We consider the functor $W_{\bnu} \star -$; the corresponding proof for $- \star W_{\bnu}$ is exactly analogous. Since $W_{\bnu} = (_{\bnu} S_{(N)}) \star (_{(N)} M_{\bnu})$, we can express $W_{\bnu} \star -$ as the composition of functors

    \[
    W_{\bnu} \star - = \left( (_{\bnu} S_{(N)}) \star - \right) \circ \left( (_{(N)} M_{\bnu}) \star - \right).
    \]
    We have already observed that the left factor of this composition is exact in Proposition \ref{prop: rfray_exact}. Meanwhile, $(_{(N)} M_{\bnu}) \cong \mathrm{Sym}(\mathbb{X}_N) \otimes_{\mathrm{Sym}(\mathbb{X}_N)} \mathrm{Sym}^{\bnu}(\mathbb{X}_N)$ is free (in fact, cyclic) as a right $\mathrm{Sym}^{\bnu}(\mathbb{X}_N)$-module, it is also flat, and the right factor is exact as well.
\end{proof}

\begin{lem} \label{lem: ah_perp_2}
    Let $C \in \mathcal{I}_{\bnu}^-$ be an arbitrary complex. Then $C$ is contractible if and only
    if $C$ is acyclic.
\end{lem}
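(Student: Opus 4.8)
The statement is a standard fact: over a suitable category, a bounded-above complex whose chain objects lie in the cell ideal $\mathcal{I}_{\bnu}$ is contractible as soon as it is acyclic. Contractibility always implies acyclicity (after applying an additive functor to a genuine abelian category, or by the existence of a nullhomotopy), so the content is the converse. The plan is to reduce to an abelian setting where ``acyclic plus degreewise-projective (or flat) implies contractible'' is available, using that the chain bimodules of $C$ are direct sums of shifts of $W_{\bnu}$.

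First I would observe that every chain bimodule of $C$ is a direct sum of grading shifts of $W_{\bnu}$, and that (up to a grading shift) $W_{\bnu} \cong \mathrm{Sym}^{\bnu}(\mathbb{X}) \otimes_{\mathrm{Sym}(\mathbb{X})} \mathrm{Sym}^{\bnu}(\mathbb{X})$ is free as a left (and as a right) $\mathrm{Sym}^{\bnu}(\mathbb{X})$-module, as used already in the proof of Lemma \ref{lem: ah_perp_1}. Next I would pass to the category of honest $\mathrm{Sym}^{\bnu}(\mathbb{X})$-modules (forgetting one side of the bimodule structure), where $C$ becomes a bounded-above complex of free — hence projective — modules. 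A bounded-above complex of projectives over a ring is contractible if and only if it is acyclic: given acyclicity, one builds a contracting homotopy by descending induction on homological degree, splitting each short exact sequence $0 \to Z_k \to C_k \to Z_{k - 1} \to 0$ using projectivity of $C_{k-1}$ (the boundedness-above ensures the induction has a base case and the homotopy is well-defined in each degree). This produces a nullhomotopy $h$ of $\mathrm{id}_C$ as a map of $\mathrm{Sym}^{\bnu}(\mathbb{X})$-modules.

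The one point requiring care — and the main (mild) obstacle — is that the homotopy $h$ produced this way is only a morphism of one-sided modules, not necessarily of bimodules, so a priori it does not live in $\End_{\CS(\SSBim_{\bnu}^{\bnu})}(C)$. I would handle this exactly as in \cite{AH17}: either (i) note that $\mathrm{Sym}^{\bnu}(\mathbb{X})\otimes_R \mathrm{Sym}^{\bnu}(\mathbb{X})^{\mathrm{op}}$ is again such that $W_{\bnu}$ is projective over it, so that running the same argument in the category of genuine bimodules (modules over the enveloping algebra) directly yields a bimodule nullhomotopy; or (ii) invoke that $C$ being contractible over $\mathrm{Sym}^{\bnu}(\mathbb{X})$ on one side is equivalent to $C$ being an acyclic bounded-above complex of projectives, a property that is detected after the forgetful functor and then upgraded. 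In either case the conclusion is that $\mathrm{id}_C \sim 0$ in $K(\SSBim_{\bnu}^{\bnu})$, i.e.\ $C$ is contractible. I expect this to be a short argument once the projectivity of $W_{\bnu}$ over the relevant (one-sided or enveloping) polynomial ring is invoked; the only subtlety is being careful that all maps in the contracting homotopy respect the full bimodule structure.
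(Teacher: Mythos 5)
You have correctly identified the crux of the lemma — the contracting homotopy obtained by viewing $C$ as a one-sided complex need not \emph{a priori} be a morphism of bimodules — but neither of your proposed resolutions works, and the paper's actual route is different.

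Your fix (i) is false: $W_{\bnu}$ is \emph{not} projective over the enveloping algebra $S^e := \mathrm{Sym}^{\bnu}(\mathbb{X}) \otimes_R \mathrm{Sym}^{\bnu}(\mathbb{X})^{\mathrm{op}}$. Indeed $W_{\bnu} \cong \mathrm{Sym}^{\bnu}(\mathbb{X}) \otimes_{\mathrm{Sym}(\mathbb{X})} \mathrm{Sym}^{\bnu}(\mathbb{X})$ is the quotient of $S^e$ by the ideal generated by the differences $e_i(\mathbb{X}) \otimes 1 - 1 \otimes e_i(\mathbb{X}')$, a nontrivial regular sequence, so its projective dimension over $S^e$ is $N > 0$. (Already for $\bnu = (1^N)$, $W_{\bnu} = B_{w_0}$ is not projective over $R[\mathbb{X},\mathbb{X}']$.) So running the ``bounded-above + degreewise projective $\Rightarrow$ contractible'' argument over $S^e$ fails at the first step. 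Your fix (ii) is a restatement of what needs proving — that a one-sided contraction can be ``upgraded'' to a bimodule one — without a mechanism, and it is precisely this step that is the content of the lemma.

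The paper's argument instead exploits that \emph{every chain bimodule of $C$ is a sum of shifts of $W_{\bnu}$}, which is much stronger than being $\mathcal{I}_{\bnu}$-free on one side: since the left and right polynomial actions on $W_{\bnu}$ both factor through multiplication in the ring $W_{\bnu}$ itself, the \emph{entire bimodule action} on $C$ factors through $W_{\bnu} \cong \mathrm{Sym}^{\bnu}(\mathbb{X}) \otimes_{\mathrm{Sym}(\mathbb{X})} \mathrm{Sym}^{\bnu}(\mathbb{X})$. Thus $C$ is a bounded-above acyclic complex of \emph{free $W_{\bnu}$-modules}, hence contractible over $W_{\bnu}$; and a $W_{\bnu}$-linear contracting homotopy automatically commutes with the original bimodule action, since that action is $W_{\bnu}$-multiplication. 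That is the ingredient your proposal is missing. If you want to salvage your plan, replace ``projective over $S^e$'' with ``free over $W_{\bnu}$, through which the bimodule structure factors.''
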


\begin{proof}
    All contractible complexes are acyclic, so assume conversely that $C$ is acyclic.
    Up to homotopy equivalence, we may assume that each chain bimodule of $C$ is a direct sum
    of shifted copies of $W_{\bnu}$. The bimodule action of $\mathrm{Sym}^{\bnu}(\mathbb{X}_N, \mathbb{X}'_N)$
    on $C$ factors through $\mathrm{Sym}^{\bnu}(\mathbb{X}_N) \otimes_{\mathrm{Sym}(\mathbb{X}_N)} \mathrm{Sym}^{\bnu}(\mathbb{X}_N)$,
    so we may regard $C$ as a complex of $\mathrm{Sym}^{\bnu}(\mathbb{X}_N) \otimes_{\mathrm{Sym}(\mathbb{X}_N)} \mathrm{Sym}^{\bnu}(\mathbb{X}_N)$-modules.
    Recall that up to regrading, there is a ring isomorphism 
    
    \[
    W_{\bnu} \cong \mathrm{Sym}^{\bnu}(\mathbb{X}_N) \otimes_{\mathrm{Sym}(\mathbb{X}_N)} \mathrm{Sym}^{\bnu}(\mathbb{X}_N),
    \]
    so $C$ is in fact a bounded above acyclic complex of free
    $W_{\bnu}$-modules. Such complexes are known to be contractible.

    Now, let $h \in \END^{-1}(C)$ denote the contracting homotopy satisfying $[d_C, h] = 0$ in the category
    $\mathrm{Ch}(W_{\bnu} - \mathrm{Mod})$. Each component
    of $h$ is a homomorphism of $W_{\bnu}$-modules,
    and so it commutes with the $\mathrm{Sym}^{\bnu}(\mathbb{X}_N) \otimes_{\mathrm{Sym}(\mathbb{X}_N)} \mathrm{Sym}^{\bnu}(\mathbb{X}_N)$-action on
    each chain bimodule of $C$. Then $h$ \textit{also} commutes with the original
    $\mathrm{Sym}^{\bnu}(\mathbb{X}_N, \mathbb{X}'_N)$ action on each chain bimodule of $C$,
    so $h$ is a perfectly good contracting homotopy in the
    category $\CS(\SSBim_{\bnu}^{\bnu})$ as well.
\end{proof}

\begin{cor} \label{cor: ah_perp_3}
    Let $Z \in \mathrm{K}^- \left( \SSBim_{\bnu}^{\bnu} \right)$ be acyclic.
    Then $Z \in \mathcal{I}_{\bnu}^{\perp} \cap ^{\perp} \mathcal{I}_{\bnu}$.
\end{cor}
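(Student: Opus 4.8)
The plan is to deduce the corollary from Lemmata \ref{lem: ah_perp_1} and \ref{lem: ah_perp_2} by a direct chase. Fix an acyclic complex $Z \in \mathrm{K}^-(\SSBim_{\bnu}^{\bnu})$. I want to show $Z \star W_{\bnu} \simeq 0$ and $W_{\bnu} \star Z \simeq 0$; by symmetry it suffices to treat one of these, say $Z \star W_{\bnu}$ (the argument for $W_{\bnu} \star Z$ is word-for-word identical with the roles of left and right actions swapped).

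First I would observe that the horizontal composition $Z \star W_{\bnu}$ lands in the ideal $\mathcal{I}_{\bnu}$: each chain bimodule of $W_{\bnu}$ is a shift of $W_{\bnu}$, and since $\mathcal{I}_{\bnu}$ is a two-sided tensor ideal of $K(\SSBim_{\bnu}^{\bnu})$, the complex $Z \star W_{\bnu}$ has chain bimodules which are direct sums of shifts of $W_{\bnu}$. (If one prefers to work with honest complexes rather than up to homotopy, note $Z$ is bounded above, so $Z \star W_{\bnu} \in \mathcal{I}_{\bnu}^-$ on the nose.) Next, I would argue that $Z \star W_{\bnu}$ is acyclic. This is where Lemma \ref{lem: ah_perp_1} enters: the functor $- \star W_{\bnu}$ is exact, hence commutes with cohomology in the sense that $H^{\bullet}(Z \star W_{\bnu}) \cong H^{\bullet}(Z) \star W_{\bnu}$; since $Z$ is acyclic, $H^{\bullet}(Z) = 0$, and therefore $H^{\bullet}(Z \star W_{\bnu}) = 0$ as well. (Concretely: an exact functor applied to an acyclic complex yields an acyclic complex, since it preserves the images and kernels of differentials.)

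Finally, I would invoke Lemma \ref{lem: ah_perp_2} with $C := Z \star W_{\bnu} \in \mathcal{I}_{\bnu}^-$: since this complex is acyclic, it is contractible, i.e. $Z \star W_{\bnu} \simeq 0$, which is precisely the statement $Z \in \mathcal{I}_{\bnu}^{\perp}$. Running the mirror-image argument with $W_{\bnu} \star -$ (also exact by Lemma \ref{lem: ah_perp_1}) gives $W_{\bnu} \star Z \simeq 0$, so $Z \in {}^{\perp}\mathcal{I}_{\bnu}$, and hence $Z \in \mathcal{I}_{\bnu}^{\perp} \cap {}^{\perp}\mathcal{I}_{\bnu}$ as claimed. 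I do not anticipate a serious obstacle here; the only point requiring minor care is checking that the isomorphism $H^{\bullet}(Z \star W_{\bnu}) \cong H^{\bullet}(Z) \star W_{\bnu}$ is legitimate, which follows formally from exactness of $- \star W_{\bnu}$ together with boundedness of $Z$, and that Lemma \ref{lem: ah_perp_2} indeed applies since $Z \star W_{\bnu}$ has the required form.
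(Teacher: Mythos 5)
Your proof is correct and follows essentially the same route as the paper: use the tensor-ideal property to place $Z \star W_{\bnu}$ and $W_{\bnu} \star Z$ in $\mathcal{I}_{\bnu}^-$, invoke exactness of $- \star W_{\bnu}$ and $W_{\bnu} \star -$ from Lemma \ref{lem: ah_perp_1} to conclude these complexes are acyclic, and then apply Lemma \ref{lem: ah_perp_2} to conclude they are contractible. The paper's proof is simply a more compressed version of exactly these three steps.
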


\begin{proof}
    Since $\mathcal{I}_{\bnu}$ is a two-sided tensor ideal,
    we have $W_{\bnu} \star Z, Z \star W_{\bnu} \in \mathcal{I}_{\bnu}^-$.
    Each of these complexes is acylcic by Lemma \ref{lem: ah_perp_1},
    and is therefore contractible by Lemma \ref{lem: ah_perp_2}.
\end{proof}

We will use the following consequence of Corollary \ref{cor: ah_perp_3}
repeatedly in our proof of Theorem \ref{thm: inf_proj_idem}.

\begin{prop} \label{prop: rfray_perp_cond}
    Let $C, C' \in \CS^-(\SSBim_{\aa}^{\bb})$ be given,
    and suppose $f \colon C \to C'$ is a quasi-isomorphism.
    Then $\mathrm{Cone}(\rfray{n}{\bnu}(f)) \in \mathcal{I}_{(\bnu, 1)}^{\perp}
    \cap ^{\perp} \mathcal{I}_{(\bnu, 1)}$ for each \textcolor{revisions}{composition} $\bnu \vdash n$.
\end{prop}

\begin{proof}
    Since $f$ is a quasi-isomorphism, $\mathrm{Cone}(f)$ is acyclic.
    $\rfray{n}{\bnu}$ is exact by Proposition \ref{prop: rfray_exact}, so $\rfray{n}{\bnu}(\mathrm{Cone}(f))$
    is acyclic as well. The desired result follows from a direct application of
    Corollary \ref{cor: ah_perp_3} in the category
    $K^- \left( \SSBim_{(\bnu, 1)}^{(\bnu, 1)} \right)$.
\end{proof}

We wish to promote the statement of Proposition \ref{prop: rfray_perp_cond} to its analog
for the functors $\fray{n}{\bnu}$ and $\ufray{n}{\bnu}$. To do this, we use the technology of
\textit{simultaneous simplification}.

\begin{defn}
    A poset is \textit{upper-finite} if it satisfies the ascending chain condition (ACC):
    any chain $i_1 < i_2 < \dots $ is finite. A poset is \textit{lower-finite} if it
    satisfies the descending chain condition (DCC): any chain $i_1 > i_2 > \dots $ is finite.
\end{defn}

The following is Proposition 4.20 from \cite{EH17a}; we simply state the result
without proof and invite the interested reader to see that work for further discussion.

\begin{prop} \label{prop: simul_simp}
    Let $(I, \leq)$ be an upper finite poset and
    $C = \mathrm{tw} \left( \bigoplus_{i \in I} C_i \right)$ for some complexes $C_i$ and
    some unspecified twist. Suppose for each $i \in I$ there are complexes $D_i$ satisfying
    $C_i \simeq D_i$. Then $C \simeq \mathrm{tw} \left( \bigoplus_{i \in I} D_i \right)$
    for some twist.
    
    Similarly, let $(I, \leq)$ be a lower finite poset
    and $C = \mathrm{tw} \left( \prod_{i \in I} C_i \right)$ for some complexes $C_i$ and
    some unspecified twist. Suppose for each $i \in I$ there are complexes $D_i$ satisfying
    $C_i \simeq D_i$. Then $C \simeq \mathrm{tw} \left( \prod_{i \in I} D_i \right)$
    for some twist.
\end{prop}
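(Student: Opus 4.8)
The plan to prove Proposition \ref{prop: simul_simp} is to run the homological perturbation lemma, viewing the twist on $C$ as a perturbation of the ``split'' differential on $\bigoplus_{i \in I} C_i$. First I would upgrade each homotopy equivalence $C_i \simeq D_i$ to a strong deformation retraction in the sense of Definition \ref{def: sdr} satisfying the side conditions; this is a standard maneuver (one may, for instance, replace $D_i$ by a suitable summand of a mapping cone and quote Corollary \ref{cor: gauss_elim_removal}, or simply modify the contracting homotopy in the usual way). Write the retraction data as $\{f_i, g_i, h_i\}$. In the upper-finite case I would then take the direct sum of these retractions to obtain a strong deformation retraction $\{F, G, H\}$, with $F = \bigoplus_i f_i$, $G = \bigoplus_i g_i$, $H = \bigoplus_i h_i$, from $\big(\bigoplus_{i \in I} C_i, \bigoplus_i d_{C_i}\big)$ onto $\big(\bigoplus_{i \in I} D_i, \bigoplus_i d_{D_i}\big)$; the essential feature is that $H$ is \emph{diagonal}, i.e. each $h_i$ preserves the summand indexed by $i$. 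In the lower-finite case one instead takes the product of the retractions.

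Next I would write $C = \mathrm{tw}_{\alpha}\big(\bigoplus_{i \in I} C_i\big)$ for the given one-sided twist $\alpha$ and feed the data $\{F, G, H\}$ together with the perturbation $\alpha$ into the basic perturbation lemma (see e.g. \cite{Mar01, Hog20}). Its output is a new strong deformation retraction from $C$ onto $\mathrm{tw}_{\beta}\big(\bigoplus_{i \in I} D_i\big)$, where $\beta = G\big(\sum_{k \geq 0} (\alpha H)^k \alpha\big)F$ and the perturbed homotopy is $\sum_{k \geq 0} H(\alpha H)^k$. Since $F$ and $G$ are diagonal and $\alpha$ is a one-sided twist, each composite appearing in $\beta$ moves strictly up the poset, so $\beta$ is again a one-sided twist; moreover the perturbation lemma guarantees the new connection $\bigoplus_i d_{D_i} + \beta$ squares to the curvature, so $\mathrm{tw}_{\beta}$ is legitimate. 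Being a strong deformation retraction, this in particular gives the homotopy equivalence $C \simeq \mathrm{tw}_{\beta}\big(\bigoplus_{i \in I} D_i\big)$, which is precisely the asserted conclusion. The dual argument with products of retractions handles the lower-finite statement verbatim.

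The main obstacle — and the only point where the chain conditions on $(I, \leq)$ enter — is checking that the infinite sums $\sum_{k \geq 0}(\alpha H)^k$ in the perturbation lemma converge in the sense of being locally finite, which is what makes $\alpha$ a ``small'' perturbation. Here I would use that $\alpha$ is one-sided together with the diagonality of $H$: a term in $(\alpha H)^k$ evaluated on a fixed summand $C_j$ is supported on summands $C_i$ reached by a length-$k$ chain of nonzero components of $\alpha$, and the one-sided condition forces such a chain to move strictly up the poset order. The ascending chain condition bounds the length of such chains, so $(\alpha H)^k$ vanishes on $C_j$ for $k$ large (depending on $j$), and all the relevant sums are finite on each summand; dually, the descending chain condition together with lower-triangularity of a one-sided twist on a product handles the second case. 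I would also remark that one can avoid the infinite-sum bookkeeping entirely by an inductive argument: filter $I$ by its finite order ideals, apply finitely many Gaussian eliminations (Corollary \ref{cor: gauss_elim_removal}) — equivalently the finite perturbation lemma — at each stage, and pass to the colimit over the filtration (and the limit in the product case). The perturbation-lemma route is cleaner, but the inductive route makes the role of the chain conditions especially transparent.
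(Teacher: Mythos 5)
The paper does not prove this statement; it is quoted verbatim from Elias--Hogancamp (EH17a, Prop.\ 4.20), with a note that the reader should consult that work for the argument. Your overall strategy --- upgrade each $C_i \simeq D_i$ to a strong deformation retraction with side conditions, take the diagonal (direct sum or product) of these retractions, and feed the one-sided twist $\alpha$ into the homological perturbation lemma --- is in fact the right one and matches the style of argument in EH. However, the convergence step, which you correctly identify as the only place the chain conditions enter, has two genuine gaps as written.

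First, you assert that ``the one-sided condition forces such a chain to move strictly up the poset order.'' The paper's definition of a one-sided twist is $\alpha_{ij} = 0$ whenever $i \leq j$; a nonzero component $\alpha_{ij}$ therefore requires $i \not\leq j$, which permits $i$ and $j$ to be \emph{incomparable}, not only $i > j$. A ``chain of nonzero components of $\alpha$'' is thus a sequence $j_0, j_1, \dots$ with each $j_{\ell+1} \not\leq j_\ell$, which need not be an ascending chain at all; ACC then says nothing about it. Second, even granting strict ascent, ``the ascending chain condition bounds the length of such chains'' conflates two different things: ACC guarantees that any individual ascending chain terminates, but it does \emph{not} give a uniform bound on the lengths of ascending chains emanating from a fixed $j$. (For example, take $j < a_{n,1} < a_{n,2} < \dots < a_{n,n} < i$ for each $n \geq 1$, with elements in distinct ``columns'' incomparable; this poset satisfies ACC, yet has chains from $j$ to $i$ of every length.) Consequently the conclusion ``$(\alpha H)^k$ vanishes on $C_j$ for $k$ large'' does not follow, and the well-definedness of $\sum_{k \geq 0}(\alpha H)^k$ --- both componentwise finiteness and finiteness of the support in $\bigoplus_i D_i$ --- is not established. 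Your alternative ``filter by finite order ideals and pass to a colimit'' sketch suffers from the same defect: ACC does not give a filtration of $I$ by finite order ideals exhausting $I$. To close the gap one needs the stronger local-finiteness / convergence hypotheses that EH actually impose on convolutions (and which the paper itself invokes as ``homologically locally finite'' in Remark \ref{rem: locally_finite} when it applies the lower-finite half of the statement); with those in place, the perturbation sums are locally finite and your argument goes through.
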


\begin{rem} \label{rem: locally_finite}
    Strictly speaking, the convolutions we consider below are always (twists of)
    direct \textit{sums} of the form $\mathrm{tw} \left( \bigoplus_{i \in I} C_i \right)$,
    never direct products. That said, we will only ever apply the version of
    Proposition \ref{prop: simul_simp} dealing with lower finite posets to
    \textit{homologically locally finite} complexes in the sense of \cite{EH17a}.
    Roughly speaking, this condition guarantees that each direct product chain bimodule has
    only finitely many nonzero factors. This renders the distinction between
    direct product and direct sum irrelevant up to isomorphism.
\end{rem}

\begin{prop} \label{prop: fray_perp_cond}
    Let $C, C', f$ be as in Proposition \ref{prop: rfray_perp_cond}.
    Then $\mathrm{Cone}(\fray{n}{\bnu}(f)) \in \mathcal{I}_{(\bnu, 1)}^{\perp}
    \cap ^{\perp} \mathcal{I}_{(\bnu, 1)}$ for each \textcolor{revisions}{composition} $\bnu \vdash n$.
\end{prop}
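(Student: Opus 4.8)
The plan is to bootstrap from Proposition \ref{prop: rfray_perp_cond} using the fact that $\fray{n}{\bnu}$ is obtained from $\rfray{n}{\bnu}$ by taking a Koszul complex, together with simultaneous simplification (Proposition \ref{prop: simul_simp}). First I would recall that $\fray{n}{\bnu}(C)$ is, by Definition \ref{def: fray}, a twist of $\rfray{n}{\bnu}(C) \otimes_R R[\Theta_{\bnu}]$ by the Koszul differential $\alpha$, and similarly for $C'$. Since $f$ is a chain map, $\fray{n}{\bnu}(f)$ is a closed degree $0$ morphism between these twisted complexes, and $\mathrm{Cone}(\fray{n}{\bnu}(f))$ is the twist by $\alpha \oplus \alpha$ (plus the cone differential $\fray{n}{\bnu}(f)$) of $\mathrm{Cone}(\rfray{n}{\bnu}(f)) \otimes_R R[\Theta_{\bnu}]$. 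In other words, $\mathrm{Cone}(\fray{n}{\bnu}(f))$ is a convolution (indexed by the monomials in $\Theta_{\bnu}$, an upper-finite poset since $\Theta_{\bnu}$ is a finite alphabet of positive-homological-degree odd variables — so in fact each graded piece is finite) whose associated-graded pieces are all shifted copies of $\mathrm{Cone}(\rfray{n}{\bnu}(f))$.

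Next I would observe that by Proposition \ref{prop: rfray_perp_cond}, each such associated-graded piece $\mathrm{Cone}(\rfray{n}{\bnu}(f))$ lies in $\mathcal{I}_{(\bnu, 1)}^{\perp} \cap {}^{\perp} \mathcal{I}_{(\bnu, 1)}$; that is, each becomes contractible after applying $W_{(\bnu, 1)} \star -$ or $- \star W_{(\bnu, 1)}$. Since horizontal composition with $W_{(\bnu, 1)}$ is additive and commutes with formal extension of scalars by $R[\Theta_{\bnu}]$, applying $W_{(\bnu, 1)} \star -$ to $\mathrm{Cone}(\fray{n}{\bnu}(f))$ yields a convolution over the same poset whose associated-graded pieces are $W_{(\bnu, 1)} \star \mathrm{Cone}(\rfray{n}{\bnu}(f)) \simeq 0$. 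By Proposition \ref{prop: simul_simp} (the upper-finite version, or the lower-finite version restricted to each finite graded piece as in Remark \ref{rem: locally_finite}), a convolution all of whose pieces are contractible is itself contractible. Hence $W_{(\bnu, 1)} \star \mathrm{Cone}(\fray{n}{\bnu}(f)) \simeq 0$, and symmetrically $\mathrm{Cone}(\fray{n}{\bnu}(f)) \star W_{(\bnu, 1)} \simeq 0$, which is exactly the claim.

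The main subtlety — and the step I would be most careful about — is the interchange of the cone construction, the twist defining $\fray{n}{\bnu}$, and horizontal composition with $W_{(\bnu, 1)}$: one must check that $\mathrm{Cone}(\fray{n}{\bnu}(f))$ genuinely is a convolution of shifts of $\mathrm{Cone}(\rfray{n}{\bnu}(f))$ with the $\Theta_{\bnu}$-filtration refining the cone filtration, so that Proposition \ref{prop: simul_simp} applies verbatim after hitting everything with the exact functor $W_{(\bnu, 1)} \star -$ (exactness from Lemma \ref{lem: ah_perp_1}, or directly from Proposition \ref{prop: rfray_exact} as used in its proof). A clean way to phrase this is: $\fray{n}{\bnu}$ is an endofunctor that sends a chain map $f$ to $\fray{n}{\bnu}(f)$ and commutes with forming mapping cones in the sense that $\fray{n}{\bnu}(\mathrm{Cone}(f)) \cong \mathrm{Cone}(\fray{n}{\bnu}(f))$ — this is immediate since $\fray{n}{\bnu}$ is additive and the Koszul twist $\alpha$ is central (so it commutes with the cone differential) — so $\mathrm{Cone}(\fray{n}{\bnu}(f)) = \fray{n}{\bnu}(\mathrm{Cone}(f))$, and then one only needs that $\fray{n}{\bnu}$ of an acyclic complex, after $\star W_{(\bnu,1)}$, is contractible. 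Once that functorial compatibility is in hand the argument is short. I would expect the analogous statement for $\ufray{n}{\bnu}$ (which presumably follows in the next proposition) to require slightly more care, since there the extra curvature twist $-\gamma$ is not central in the same trivial way and one must also ensure the relevant convolutions are homologically locally finite to invoke Remark \ref{rem: locally_finite}; but for $\fray{n}{\bnu}$ the finite alphabet $\Theta_{\bnu}$ makes all convolutions genuinely finite in each degree, so no such worry arises here.
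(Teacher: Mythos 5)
Your argument is essentially identical to the paper's: both identify $\mathrm{Cone}(\fray{n}{\bnu}(f))$ with $\fray{n}{\bnu}(\mathrm{Cone}(f))$, view it as a finite (hence upper-finite) $\Theta_{\bnu}$-indexed convolution whose pieces are shifted copies of $\mathrm{Cone}(\rfray{n}{\bnu}(f))$, and conclude from Proposition \ref{prop: rfray_perp_cond} via simultaneous simplification (Proposition \ref{prop: simul_simp}). If anything you are slightly more careful than the paper at one point, since you explicitly apply $W_{(\bnu,1)} \star -$ and $- \star W_{(\bnu,1)}$ to the convolution before invoking simultaneous simplification, which is what the paper's compressed invocation of Proposition \ref{prop: rfray_perp_cond} implicitly requires, given that membership in $\mathcal{I}_{(\bnu,1)}^{\perp} \cap {}^{\perp}\mathcal{I}_{(\bnu,1)}$ is detected exactly by those tensorings.
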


\begin{proof}
    By definition, we have

    \[
    \mathrm{Cone}(\fray{n}{\bnu}(f)) \cong \fray{n}{\bnu}
    \left( \mathrm{Cone}(f) \right) = \mathrm{tw}_{\alpha}
    \left( \rfray{n}{\bnu} \left(\mathrm{Cone}(f) \right) \otimes_R R[\Theta_{\bnu}] \right).
    \]
    
    Consider the monomial basis $I$ of $R[\Theta_{\bnu}]$ as a poset ordered by homological degree.
    Then $I$ is finite, so it is certainly upper finite. Since $\mathrm{Cone}(f) \simeq 0$
    by Propositon \ref{prop: rfray_perp_cond}, Proposition \ref{prop: simul_simp} gives that
    $\mathrm{Cone}(\fray{n}{\bnu}(f))$ is homotopy equivalent to a convolution of $0$
    complexes and so is itself contractible.
\end{proof}

The same reasoning does \textit{not} immediately extend to the functor $\ufray{n}{\bnu}$,
as the monomial basis of $R[\mathbb{U}_n]$ is not upper finite, but \textit{lower} finite. Application of simultaneous simplification over this poset to the sort
of convolutions we consider here requires us to restrict our attention to
homologically locally finite complexes; see Remark \ref{rem: locally_finite}.

\begin{prop} \label{prop: ufray_perp_cond}
    Let $C, C' \in \CS^b(\SSBim_{\aa}^{\bb})$ be given,
    and suppose $f \colon C \to C'$ is a quasi-isomorphism. Then
    $\mathrm{Cone}(\ufray{n}{\bnu}(f)) \in 
    \mathcal{I}_{(\bnu, 1)}^{\perp} \cap ^{\perp} \mathcal{I}_{(\bnu, 1)}$
    for each \textcolor{revisions}{composition} $\bnu \vdash n$.
\end{prop}

\begin{proof}
    By definition, we have

    \[
    \mathrm{Cone}(\ufray{n}{\bnu}(f)) \cong \ufray{n}{\bnu}
    \left( \mathrm{Cone}(f) \right) = \mathrm{tw}_{\gamma}
    \left( \fray{n}{\bnu} \left(\mathrm{Cone}(f) \right) \otimes_R
    R[\mathbb{U}_n] \right).
    \]

    \vspace{1em}
    
    Let $J$ denote the monomial basis of $R[\mathbb{U}_n]$, considered as a poset ordered by homological degree.
    Then the homological degree of $u_i$ is non-negative for each $1 \leq i \leq n$,
    so $J$ is lower finite. In fact $J$ is \textit{strictly} lower finite:
    for each $d \in \mathbb{Z}$, all but finitely many elements of $J$ have
    homological degree exceeding $d$. The complex
    $\fray{n}{\bnu} \left( \mathrm{Cone}(f) \right)$ is bounded below in homological degree,
    so the $J$-indexed convolution considered above is in fact homologically locally finite.
    Since $\fray{n}{\bnu} \left( \mathrm{Cone}(f) \right) \simeq 0$ by Proposition
    \ref{prop: fray_perp_cond}, Proposition \ref{prop: simul_simp} gives that
    $\mathrm{Cone}(\ufray{n}{\bnu}(f))$ is homotopy equivalent to a convolution
    of $0$ complexes and so is itself contractible.
\end{proof}

We are now prepared to define the map $\tilde{\eta}_n$. We proceed by induction. When $n = 1$, unrolling the chain complex $\ufray{n}{\blam}$ gives

    \begin{center}
    \begin{tikzcd}
        \infproj{(1)} = \strand{(1)} \arrow[r, "0"] & tq^{-2} \strand{(1)} \arrow[r, "u_1"] & u_1 \strand{(1)} \arrow[r, "0"] & u_1 tq^{-2} \strand{(1)} \arrow[r, "u_1"] & u_1^2 \strand{(1)} \arrow[r, "0"] & u_1^2 tq^{-2} \strand{(1)} \arrow[r, "u_1"] & \dots
    \end{tikzcd}
    \end{center}
    We can perform Gaussian elimination along each of the nonzero components of the differential, furnishing a homotopy equivalence

    \[
    \tilde{\eta}_1 \colon \strand{(1)} \to \infproj{(1)}.
    \]
    Note that $\mathrm{Cone}(\tilde{\eta}_1) \simeq 0$, so (P2) is satisfied.
    
    For the inductive step, recall the chain map $\psi \colon q^{2n} \strand{(n, 1)} \to C^u_n$ of Lemma \ref{lem: qi}. Applying $\ufray{n}{\blam}$, we obtain a chain map
	
	\[
	\ufray{n}{\blam}(\psi) \colon q^{2n} \left( \infproj{\blam} \boxtimes \strand{(1)} \right) \to \ufray{n}{\blam}(C_n^u).
	\]
    Let $j \colon C^u_n \hookrightarrow \mathrm{Cone}(\Phi)$ denote the obvious degree $0$ chain map given by inclusion in $u_{n + 1}$-degree $0$. Finally, let
    
    \begin{center}
    \begin{tikzcd}
    \mathrm{Cone}(\ufray{n}{\blam}(\Phi)) \arrow[r, "\upsilon"] & q^n \left( P^{(1^{n + 1})}_{(1^{n + 1})} \right)^{\vee}
    \end{tikzcd}
    \end{center}
    denote the homotopy equivalence of Proposition \ref{prop: inf_proj_recurs_thin}.
    
    \begin{prop} \label{prop: building_eta}
    Fix $n \geq 1$, and suppose $\tilde{\eta}_n \colon \strand{(1^n)} \to q^{-\frac{n(n - 1)}{2}} \infproj{\blam}$ satisfies (P2). Define $\tilde{\eta}_{n + 1}$ to be the composition
    
    \begin{center}
	\begin{tikzcd}[column sep=large, row sep=small]
\strand{(1^{n + 1})} \arrow[r, "\tilde{\eta}_n \boxtimes 1"]
& q^{-\frac{n(n - 1)}{2}} \left( \infproj{\blam} \boxtimes \strand{(1)} \right) \arrow[r, "\ufray{n}{\blam}(\psi)"]
\arrow[d, phantom, ""{coordinate, name=Z}]
& q^{-\frac{n(n + 3)}{2}} \ufray{n}{\blam}(C^u_n) \arrow[dll,
"\ufray{n}{\blam}(j)",
rounded corners,
to path={ -- ([xshift=2ex]\tikztostart.east)
|- (Z) [near end]\tikztonodes
-| ([xshift=-2ex]\tikztotarget.west)
-- (\tikztotarget)}] \\
q^{-\frac{n(n + 3)}{2}} \mathrm{Cone}(\ufray{n}{\blam}(\Phi)) \arrow[r, "\upsilon"]
& q^{-\frac{(n + 1)n}{2}} (P_{(1^{n + 1})}^{(1^{n + 1})})^{\vee}
\end{tikzcd}
\end{center}
    
    Then $\tilde{\eta}_{n + 1}$ satisfies (P2).
    \end{prop}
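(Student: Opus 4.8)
The plan is to compute $\mathrm{Cone}(\tilde\eta_{n+1})$ and show it lies in $\mathcal{K}:=\mathcal{I}_{(1^{n+1})}^{\perp}\cap{}^{\perp}\mathcal{I}_{(1^{n+1})}$. Note first that $\mathcal{K}$ is a thick triangulated subcategory of $K^{+}(\SBim_{n+1})$: the functors $-\star W_{(1^{n+1})}$ and $W_{(1^{n+1})}\star-$ are exact by Lemma \ref{lem: ah_perp_1}, hence triangulated, so their joint ``kernel'' $\mathcal{K}$ is closed under mapping cones, shifts, and direct summands. Since $\upsilon$ is a homotopy equivalence and $\ufray{n}{\blam}$ is a functor, $\mathrm{Cone}(\tilde\eta_{n+1})\simeq\mathrm{Cone}\big(\ufray{n}{\blam}(j\circ\psi)\circ(\tilde\eta_n\boxtimes\mathrm{id})\big)$, and by the octahedral axiom this cone sits in a distinguished triangle with $\mathrm{Cone}(\tilde\eta_n\boxtimes\mathrm{id})$ and $\mathrm{Cone}(\ufray{n}{\blam}(j\circ\psi))$. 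It therefore suffices to prove that each of these two cones lies in $\mathcal{K}$.

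For the first, $\mathrm{Cone}(\tilde\eta_n\boxtimes\mathrm{id})=\mathrm{Cone}(\tilde\eta_n)\boxtimes\strand{(1)}$, and by the inductive hypothesis $\mathrm{Cone}(\tilde\eta_n)\in\mathcal{I}_{(1^n)}^{\perp}\cap{}^{\perp}\mathcal{I}_{(1^n)}$. Using the associativity relations of Proposition \ref{prop: web_rels} one factors, up to a $q$-shift,
\[
W_{(1^{n+1})}\;\cong\;\big({}_{(1^n)}S_{(n)}\boxtimes\strand{(1)}\big)\star W_{(n,1)}\star\big({}_{(n)}M_{(1^n)}\boxtimes\strand{(1)}\big),
\]
and since $-\star{}_{(n)}M_{(1^n)}$ and ${}_{(1^n)}S_{(n)}\star-$ are, up to a shift, extension of scalars along the free extension $\mathrm{Sym}(\mathbb{X}_n)\hookrightarrow R[\mathbb{X}_n]$ (hence a direct sum of grading-shifted copies of the identity functor), they are conservative on the homotopy category. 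Thus $\mathrm{Cone}(\tilde\eta_n)\star W_{(1^n)}\simeq 0$ forces $\mathrm{Cone}(\tilde\eta_n)\star{}_{(1^n)}S_{(n)}\simeq 0$, and the displayed factorization then gives $\big(\mathrm{Cone}(\tilde\eta_n)\boxtimes\strand{(1)}\big)\star W_{(1^{n+1})}\simeq 0$; the left-handed vanishing is symmetric.

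For the second, use functoriality $\ufray{n}{\blam}(j\circ\psi)=\ufray{n}{\blam}(j)\circ\ufray{n}{\blam}(\psi)$ and split off $\psi$. By Lemma \ref{lem: qi} the underlying map $q^{2n}\strand{\aa}\to C_n$ is a quasi-isomorphism of bounded complexes of singular Soergel bimodules, so its cone is contractible; the curved lift then has contractible cone as a deformation (its homotopy equivalence to $0$ lifts by Proposition \ref{prop: lifting_maps}), so $\ufray{n}{\blam}(\psi)$ is itself a homotopy equivalence and $\mathrm{Cone}(\ufray{n}{\blam}(j\circ\psi))\simeq\mathrm{Cone}(\ufray{n}{\blam}(j))=\ufray{n}{\blam}(\mathrm{Cone}(j))$. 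Finally I would identify $\mathrm{Cone}(j)$: writing $j$ as the $u_{n+1}$-degree-$0$ inclusion $C_n^u\hookrightarrow C_n^u\otimes_R R[u_{n+1}]$ followed by the canonical map of that complex into $\mathrm{Cone}(\Phi)$, the octahedral axiom presents $\mathrm{Cone}(j)$ as an extension of a shift of $t^2q^{-2}\strand{\aa}\otimes_R R[\mathbb{U}_{n+1}]$ by $\bigoplus_{k\geq 1}u_{n+1}^k\,C_n^u$ — equivalently, $\mathrm{Cone}(j)$ is the ``tail'' of the semi-infinite ladder for $\mathrm{Cone}(\Phi)$ analyzed in the proof of Proposition \ref{prop: inf_proj_recurs_thin}. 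Applying $\ufray{n}{\blam}$ and Corollary \ref{cor: braid_inf_slide} (which rewrites $\ufray{n}{\blam}(C_n^u)\simeq\ufray{n}{\blam}(FT_{\aa}^{u})\simeq(\infproj{\blam}\boxtimes\strand{(1)})\star J_{n+1}$), one sees that $\ufray{n}{\blam}(\mathrm{Cone}(j))$ is a convolution all of whose chain bimodules lie in $\mathcal{I}_{(1^{n+1})}$, so by Lemma \ref{lem: ah_perp_2} it lies in $\mathcal{K}$ as soon as it is acyclic; this last point is the crux, and I expect it to be the main obstacle — it requires running the Gaussian eliminations of Proposition \ref{prop: inf_proj_recurs_thin} curve-equivariantly (recall $C_n^u$ and $\mathrm{Cone}(\Phi)$ are genuinely curved over $R[\mathbb{U}_n]$ and become chain complexes only after $\ufray{n}{\blam}$, cf.\ Remark \ref{rem: uncurving_ufray}) and invoking the $W$-absorption relation (Proposition \ref{prop: blamgon_removal}, as in Lemma \ref{lem: n1_hopf_homology}) to check acyclicity by hand rather than mere membership in the cell ideal. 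A cleaner alternative organization is to verify (P2) in the equivalent form ``$\tilde\eta_{n+1}\star W_{(1^{n+1})}$ and $W_{(1^{n+1})}\star\tilde\eta_{n+1}$ are homotopy equivalences'' and propagate this through the composite one factor at a time, using $W$-absorption and Corollary \ref{cor: braid_inf_slide} to collapse the $\ufray{n}{\blam}$-factors onto the already-completed two-strand computation.
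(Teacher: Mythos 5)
Your argument for $\mathrm{Cone}(\tilde\eta_n\boxtimes 1)\in \mathcal{I}_{(1^{n+1})}^{\perp}\cap{}^{\perp}\mathcal{I}_{(1^{n+1})}$ is correct, and the conservativity route via the factorization of $W_{(1^{n+1})}$ is a pleasant variant of what the paper does (which instead uses digon removal to write $[n]!\,W_{(1^{n+1})}\cong W_{(1^{n+1})}\star(W_{(1^n)}\boxtimes\strand{(1)})$ and then applies the inductive hypothesis directly). The octahedral reduction to the two cones is also a legitimate reorganization.

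The argument breaks, however, at the claim that $\ufray{n}{\blam}(\psi)$ is a homotopy equivalence. The map $\psi\colon q^{2n}\strand{\aa}\to C_n$ is a quasi-isomorphism, but it is emphatically not a homotopy equivalence, and neither is its cone contractible: Lemma \ref{lem: ah_perp_2} says acyclic implies contractible only for complexes lying in $\mathcal{I}_{\bnu}^{-}$, whereas $\mathrm{Cone}(\psi)$ has $\strand{\aa}$ among its chain bimodules. If $\mathrm{Cone}(\psi)$ were contractible then $FT_{\aa}\simeq C_n\simeq q^{2n}\strand{\aa}$, i.e.\ the $(n,1)$-colored full twist Rickard complex would be the identity up to a shift, which is false. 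Consequently the appeal to Proposition \ref{prop: lifting_maps} to lift a contraction is vacuous (there is nothing to lift), and the reduction of $\mathrm{Cone}\bigl(\ufray{n}{\blam}(j\circ\psi)\bigr)$ to $\ufray{n}{\blam}(\mathrm{Cone}(j))$ is not justified by the stated reasoning. What is true, and what the paper uses, is only the weaker statement $\mathrm{Cone}(\ufray{n}{\blam}(\psi))\in \mathcal{I}_{(1^{n+1})}^{\perp}\cap{}^{\perp}\mathcal{I}_{(1^{n+1})}$ via Proposition \ref{prop: ufray_perp_cond}. One could still pass to $\mathrm{Cone}(\ufray{n}{\blam}(j))$ using this weaker statement and thickness of $\mathcal{I}_{(1^{n+1})}^{\perp}\cap{}^{\perp}\mathcal{I}_{(1^{n+1})}$, but the resulting problem — showing $\ufray{n}{\blam}(\mathrm{Cone}(j))$ lies in this subcategory — is exactly the hard semi-infinite acyclicity analysis you flag as the ``crux,'' and you do not carry it out. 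Note that Proposition \ref{prop: ufray_perp_cond} will not apply directly since $\mathrm{Cone}(j)$ is not a cone of a quasi-isomorphism of bounded complexes in the required sense.

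The paper sidesteps this entirely: rather than isolating $j$, it regards $\mathrm{Cone}(\eta'_{n+1})$ (where $\eta'_{n+1}$ is $\tilde\eta_{n+1}$ with the homotopy equivalence $\upsilon$ stripped off) as a single upper-finite convolution whose terms are $u_{n+1}^{k}\,\mathrm{Cone}(\ufray{n}{\blam}(\psi))$ for $k\ge 1$ together with the single term $\mathrm{Cone}\bigl(\ufray{n}{\blam}(\psi)\circ(\tilde\eta_n\boxtimes 1)\bigr)$. Each convolution term is placed in $\mathcal{I}_{(1^{n+1})}^{\perp}\cap{}^{\perp}\mathcal{I}_{(1^{n+1})}$ separately — the former by Proposition \ref{prop: ufray_perp_cond}, the latter by factoring through the induction and the $W$-absorption you already invoke — and simultaneous simplification (Proposition \ref{prop: simul_simp}) then concludes. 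No analysis of $\mathrm{Cone}(j)$ is required, which is why the paper's decomposition of the ladder is chosen as it is. I would encourage you to either rewrite the second leg of your octahedral argument along these lines, or, if you wish to keep the $j$-route, to supply the missing acyclicity argument without assuming $\ufray{n}{\blam}(\psi)$ is a homotopy equivalence.
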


\begin{proof}
	Define $\eta_{n + 1}'$ to be the composition of all the maps defining $\tilde{\eta}_{n + 1}$ except $\upsilon$. Because $\upsilon$ is a homotopy equivalence, we have a homotopy equivalence $\mathrm{Cone}(\eta'_{n + 1}) \simeq \mathrm{Cone}(\tilde{\eta}_{n + 1})$. Because $\mathcal{I}_{\blam}^{\perp} \cap ^{\perp} \mathcal{I}_{\blam}$ is closed under homotopy equivalence, it therefore suffices to show that $\eta'_{n + 1}$ satisfies (P2).
	
	The map $\eta'_{n + 1}$ fits nicely into the infinite ladder description of $\mathrm{Cone}(\ufray{n}{\blam}(\Phi))$, as indicated graphically by the blue arrow in the diagram below:

    \begin{center}
    \begin{tikzcd}[column sep=large]
        \strand{(1^{n + 1})} \arrow[dr, blue, "\ufray{n}{\blam}(\psi) \circ (\tilde{\eta}_n \boxtimes 1)"] & \\
        tq^{-\frac{n(n + 3)}{2} - 2} \infproj{\blam} \boxtimes \strand{(1)}
        \arrow[r, "\ufray{n}{\blam}(\iota)"] \arrow[dr, "u_{n + 1} \ufray{n}{\blam}(\psi)"]
        & q^{-\frac{n(n + 3)}{2}} \ufray{n}{\blam}(C^u_n) \\
        u_{n + 1} tq^{-\frac{n(n + 3)}{2} - 2} \infproj{\blam} \boxtimes \strand{(1)}
        \arrow[r, "\ufray{n}{\blam}(\iota)"] \arrow[dr, "u_{n + 1} \ufray{n}{\blam}(\psi)"]
        & u_{n + 1} q^{-\frac{n(n + 3)}{2}} \ufray{n}{\blam}(C^u_n) \\
        u_{n + 1}^2 tq^{-\frac{n(n + 3)}{2} - 2} \infproj{\blam} \boxtimes \strand{(1)}
        \arrow[r, "\ufray{n}{\blam}(\iota)"] \arrow[dr, "u_{n + 1} \ufray{n}{\blam}(\psi)"]
        & u_{n + 1}^2 q^{-\frac{n(n + 3)}{2}} \ufray{n}{\blam}(C^u_n) \\
        \dots & \dots
    \end{tikzcd}
    \end{center}

    By reassociating, we may write $\mathrm{Cone}(\eta'_{n + 1})$ as a convolution of the form

    \begin{center}
    \begin{tikzcd}[sep=tiny]
        \mathrm{Cone}(\eta'_{n + 1}) \simeq
        \dots \arrow[r] & u_{n + 1} \mathrm{Cone} \left( \ufray{n}{\blam}(\psi) \right) \arrow[r]
        & \mathrm{Cone} \left( \ufray{n}{\blam}(\psi) \right) \arrow[r]
        & \mathrm{Cone} \left( \ufray{n}{\blam}(\psi) \circ (\tilde{\eta}_n \boxtimes 1) \right).
    \end{tikzcd}
    \end{center}
    This is an upper finite convolution, so by Proposition \ref{prop: simul_simp},
    it suffices to show that each term in this convolution lives in
    $\mathcal{I}_{(1^{n + 1})}^{\perp} \cap ^{\perp} \mathcal{I}_{(1^{n + 1})}$.

    Both $\strand{\aa}$ and $C_n$ are bounded in homological degree,
    so $\mathrm{Cone} \left( \ufray{n}{\blam} (\psi) \right) \in
    \mathcal{I}_{(1^{n + 1})}^{\perp} \cap ^{\perp} \mathcal{I}_{(1^{n + 1})}$ by
    Proposition \ref{prop: ufray_perp_cond}. It remains to show that $\mathrm{Cone}(\ufray{n}{\blam}(\psi) \circ (\tilde{\eta}_n \boxtimes 1))
    \in \mathcal{I}_{(1^{n + 1})}^{\perp} \cap ^{\perp} \mathcal{I}_{(1^{n + 1})}$.
    
	We show inclusion in $\mathcal{I}_{(1^{n + 1})}^{\perp}$; inclusion in $^{\perp} \mathcal{I}_{(1^{n + 1})}$ is exactly analogous. To see this inclusion, we need to show that the complex
	
	\[
	W_{(1^{n + 1})} \star \mathrm{Cone}(\ufray{n}{\blam}(\psi) \circ (\tilde{\eta}_n \boxtimes 1)) \simeq \mathrm{Cone} \left( 1_{W_{(1^{n + 1})}} \star \left( \ufray{n}{\blam}(\psi) \circ (\tilde{\eta}_n \boxtimes 1) \right) \right)
	\]
    is contractible. This in turn is true if and only if the chain map
    
    \[
    1_{W_{(1^{n + 1})}} \star \left( \ufray{n}{\blam}(\psi) \circ (\tilde{\eta}_n \boxtimes 1) \right) = \left( 1_{W_{(1^{n + 1})}} \star \ufray{n}{\blam}(\psi) \right) \circ \left( 1_{W_{(1^{n + 1})}} \star (\tilde{\eta}_n \boxtimes 1) \right)
    \]
    is a homotopy equivalence. To show this composition is a homotopy equivalence, it suffices to show that each factor is a homotopy equivalence.
    
    We have already seen that $\left( 1_{W_{(1^{n + 1})}} \star \ufray{n}{\blam}(\psi) \right)$ is a homotopy equivalence (since $\ufray{n}{\blam}(\psi) \in \mathcal{I}_{(1^{n + 1})}^{\perp} \cap ^{\perp} \mathcal{I}_{(1^{n + 1})}$). To see that $\left( 1_{W_{(1^{n + 1})}} \star (\tilde{\eta}_n \boxtimes 1) \right)$ is a homotopy equivalence, it again suffices to show that $W_{(1^{n + 1})} \star \mathrm{Cone}(\tilde{\eta}_n \boxtimes 1)$ is contractible. Repeated removal of digons via Proposition \ref{prop: web_rels} gives a chain isomorphism

    \[
    [n]! W_{(1^{n + 1})} \star \mathrm{Cone}(\tilde{\eta}_n \boxtimes 1)
    \cong W_{(1^{n + 1})} \star  (W_{\blam} \boxtimes \strand{(1)})
    \star \mathrm{Cone}(\tilde{\eta}_n \boxtimes 1).
    \]
    Recall that $(W_{\blam} \boxtimes \strand{(1)}) \star \mathrm{Cone}(\tilde{\eta}_n \boxtimes 1)$
    is contractible by the inductive hypothesis. Then $W_{(1^{n + 1})}
    \star \mathrm{Cone}(\tilde{\eta}_n \boxtimes 1)$ is a summand of a contractible complex,
    so must itself be contractible.
\end{proof}

Before moving on, we pause to establish some straightforward consequences of Theorem \ref{thm: inf_proj_idem}.

\begin{thm} \label{thm: proj_agreement}
    There is a (unique up to homotopy) chain map $\phi \colon P_{1^n}^{\vee}
    \to q^{-\frac{n(n - 1)}{2}} \infproj{\blam}$ satisfying $\tilde{\eta}_n \sim \phi \circ \eta_n$, and $\phi$ is
    a homotopy equivalence.
\end{thm}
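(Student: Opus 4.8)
The plan is to invoke the general theory of unital idempotents from \cite{Hog17} directly. By Theorem \ref{thm: AH_projector}, the pair $(P_{1^n}^{\vee}, \eta_n)$ is uniquely characterized up to canonical homotopy equivalence by properties (P1) and (P2). By Theorem \ref{thm: inf_proj_idem}, the complex $q^{-\frac{n(n-1)}{2}} \infproj{\blam}$ together with the map $\tilde{\eta}_n$ also satisfies (P1) and (P2): property (P1) is immediate from the definition of $\ufray{n}{\blam}$ (after unrolling, all chain bimodules of $\infproj{\blam}$ are direct sums of shifts of $W_{(1^n)}$, so $q^{-\frac{n(n-1)}{2}} \infproj{\blam} \in \mathcal{I}_{\blam}$), and property (P2) is exactly the content of Theorem \ref{thm: inf_proj_idem}.

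First I would recall the uniqueness clause of Theorem \ref{thm: AH_projector}: for any pair $(Q, \eta')$ satisfying (P1) and (P2), there is a unique-up-to-homotopy chain map $\phi \colon P_{1^n}^{\vee} \to Q$ with $\eta' \sim \phi \circ \eta_n$, and moreover this $\phi$ is automatically a homotopy equivalence. Applying this with $Q = q^{-\frac{n(n-1)}{2}} \infproj{\blam}$ and $\eta' = \tilde{\eta}_n$ yields precisely the desired statement. So the proof is essentially a one-line deduction, provided the hypotheses of the uniqueness clause are verified.

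The only point requiring genuine attention — which I would flag as the main (and in fact only) obstacle — is confirming that $q^{-\frac{n(n-1)}{2}} \infproj{\blam}$ lies in $K^+(\SBim_N)$ and that $\tilde{\eta}_n$ is genuinely a chain map $\strand{(1^n)} \to q^{-\frac{n(n-1)}{2}} \infproj{\blam}$ landing in this homotopy category, so that the machinery of \cite{Hog17} applies on the nose. The bounded-below condition on $\infproj{\blam}$ after unrolling follows from the fact that $\fray{n}{\blam}(\strand{(n)})$ is bounded and the deformation parameters $u_i$ all have positive homological degree, so the semi-infinite ladder grows only in the positive direction; the map $\tilde{\eta}_n$ was constructed in Proposition \ref{prop: building_eta} as a composite of honest chain maps. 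Once these bookkeeping points are in place, the statement follows verbatim from the uniqueness portion of Theorem \ref{thm: AH_projector}, and one records that $\phi$ is a homotopy equivalence as part of that same citation.

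Concretely, I would write: ``Both $P_{1^n}^{\vee}$ (with $\eta_n$) and $q^{-\frac{n(n-1)}{2}}\infproj{\blam}$ (with $\tilde{\eta}_n$) satisfy conditions (P1) and (P2) of Theorem \ref{thm: AH_projector}, the former by hypothesis and the latter by Theorem \ref{thm: inf_proj_idem} together with the observation that $\infproj{\blam} \in \mathcal{I}_{\blam}$. The claim is then immediate from the uniqueness clause of Theorem \ref{thm: AH_projector}.'' This is short enough that I would not expect to need any lemmas beyond what is already stated.
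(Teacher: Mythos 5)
Your proof is correct and takes exactly the same route as the paper, which simply cites the uniqueness clause of Theorem \ref{thm: AH_projector} applied to the pair $\bigl(q^{-\frac{n(n-1)}{2}}\infproj{\blam}, \tilde{\eta}_n\bigr)$, whose hypotheses (P1) and (P2) are supplied by Theorem \ref{thm: inf_proj_idem}. The extra bookkeeping you flag (boundedness below and $\tilde{\eta}_n$ being an honest chain map) is reasonable diligence but was already established in the preceding construction, so the paper omits it.
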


\begin{proof}
    Immediate consequence of the uniqueness portion of Theorem \ref{thm: AH_projector}.
\end{proof}

\begin{thm} \label{thm: yproj_agreement}
    The homotopy equivalence $\phi$ of Theorem \ref{thm: proj_agreement} lifts to a homotopy equivalence of $F_y^{\blam}$-curved complexes $\phi^y \colon \left( P_{(1^n)}^{\vee} \right)^y \simeq q^{-\frac{n(n - 1)}{2}} \yinfproj{\blam}$.
\end{thm}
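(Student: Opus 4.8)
The strategy is to promote the homotopy equivalence $\phi$ of Theorem \ref{thm: proj_agreement} to the $\Delta e$-deformed setting by invoking the deformation-lifting machinery established in Section \ref{subsec: def}, exactly as was done for the undeformed map in the Abel--Hogancamp argument. First I would recall that both $\left( P_{(1^n)}^{\vee} \right)^y$ and $q^{-\frac{n(n-1)}{2}} \yinfproj{\blam}$ are $F_y^{\blam}$-deformations of the underlying complexes $P_{(1^n)}^{\vee}$ and $q^{-\frac{n(n-1)}{2}} \infproj{\blam}$, respectively: for $\yinfproj{\blam}$ this is immediate from Definition \ref{def: yufray} applied to $\strand{(n)}$ (with trivial $\Delta_C$ and $Z_C = 0$), and for $\left( P_{(1^n)}^{\vee} \right)^y$ this is the $y$-ification constructed in \cite{Con23}. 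The key point is that the alphabet of deformation parameters $\mathbb{Y}_{\blam}$ consists of variables $y_{j1}$ of positive homological degree $q^{-2j}t^2$, which are even, so we are in precisely the situation required by Propositions \ref{prop: inv_uniq_yify} and \ref{prop: lifting_maps}.

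The main step is to verify the hypotheses of Proposition \ref{prop: lifting_maps} for the chain map $\phi \colon P_{(1^n)}^{\vee} \to q^{-\frac{n(n-1)}{2}} \infproj{\blam}$. What needs to be checked is that $H^{\bullet}\left( \Hom_{\CS(\SBim_n)}\left( P_{(1^n)}^{\vee}, q^{-\frac{n(n-1)}{2}} \infproj{\blam} \right) \right)$ is concentrated in non-negative homological degrees. Since $\phi$ is already a homotopy equivalence by Theorem \ref{thm: proj_agreement}, this $\Hom$-complex is homotopy equivalent to $\End_{\CS(\SBim_n)}\left( P_{(1^n)}^{\vee} \right)$, and the relevant cohomological concentration statement for this endomorphism complex is a standard fact about the unital idempotent $P_{(1^n)}^{\vee}$ (it follows from the universal property (P1), (P2) of Theorem \ref{thm: AH_projector}: maps out of $P_{(1^n)}^{\vee}$ factor through $P_{(1^n)}^{\vee}$ up to homotopy, which forces the negative cohomology to vanish; this is the same argument used in \cite{Hog17, AH17}). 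Note that $P_{(1^n)}^{\vee}$ lives in $\CS^{+}(\SBim_n)$, which is the bounded-below setting that Proposition \ref{prop: lifting_maps} requires. Granting this, Proposition \ref{prop: lifting_maps} produces a curved lift $\phi^y \colon \left( P_{(1^n)}^{\vee} \right)^y \to q^{-\frac{n(n-1)}{2}} \yinfproj{\blam}$ of $\phi$, and since $\phi$ is a homotopy equivalence, the same Proposition guarantees that $\phi^y$ is a homotopy equivalence of $F_y^{\blam}$-curved complexes.

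There is one subtlety to address: Proposition \ref{prop: lifting_maps} requires both source and target to be $F$-deformations of the given complexes for the \emph{same} curvature $F$, and here $F = F_y^{\blam}$ as an element of the center over the alphabet $R[\mathbb{Y}_{\blam}]$. One must check that the two $y$-ifications in play --- the one built into $\yinfproj{\blam} = \yufray{n}{\blam}(\strand{(n)})$ via the twist $\delta = \sum_j \theta_{j1}^{\vee} y_{j1}$, and the one defining $\left( P_{(1^n)}^{\vee} \right)^y$ in \cite{Con23} --- both have curvature $F_y^{\blam}(\mathbb{X}, \mathbb{X}')$ with respect to the \emph{same} identification of deformation parameters. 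This follows from the definitions once one matches variables (the $y_{j1}$ of $\yufray{n}{\blam}$ with the $y_j$ of the Gorsky--Hogancamp--style $y$-ification, as discussed in the Interregnum preceding Subsection \ref{sec: yfray}). Alternatively, uniqueness of $\Delta e$-deformations (Proposition \ref{prop: inv_uniq_yify}) can be invoked once one knows \emph{some} deformation lift exists and both complexes are invertible up to a polynomial in $q$ inside a Krull--Schmidt category; this gives the homotopy equivalence $\phi^y$ without needing to track that the lift restricts to $\phi$, but the cleaner statement (that $\phi$ itself lifts) comes from Proposition \ref{prop: lifting_maps} directly.

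\textbf{Main obstacle.} The only real work is confirming the cohomological vanishing hypothesis of Proposition \ref{prop: lifting_maps}, i.e.\ that $H^{<0}\left( \End_{\CS(\SBim_n)}\left( P_{(1^n)}^{\vee} \right) \right) = 0$. This is exactly the kind of statement that the theory of unital idempotents in \cite{Hog17} is designed to deliver --- for a unital idempotent $P$ with unit $\eta \colon \strand{} \to P$, one has $\Hom(P, P) \simeq \Hom(\strand{}, P)$ via precomposition with $\eta$ (because $\mathrm{Cone}(\eta) \in \mathcal{I}_{\blam}^{\perp} \cap {}^{\perp}\mathcal{I}_{\blam}$ and $P \in \mathcal{I}_{\blam}$), and $\Hom(\strand{}, P)$ is visibly concentrated in non-negative degrees since $P \in K^{+}$. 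So the obstacle is more bookkeeping than substance; the essential input is already packaged in Theorem \ref{thm: AH_projector} and the surrounding discussion.
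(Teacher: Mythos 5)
Your proposal is correct and takes essentially the same approach as the paper: both verify the cohomological-concentration hypothesis of Proposition \ref{prop: lifting_maps} by identifying the $\Hom$-complex with $\End_{\CS(\SBim_n)}(P_{(1^n)}^{\vee})$ (the paper citing Theorem 1.9 of \cite{AH17} directly, you re-deriving it from the unital-idempotent universal property, which is the same underlying fact). The discussion of matching the $\mathbb{Y}_{\blam}$-alphabet conventions is a reasonable extra sanity check, but the paper treats it as implicit.
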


\begin{proof}
    By Theorem 1.9 of \cite{AH17}, the complex

    \[
    \Hom_{\CS(\SBim_n)}(P_{(1^n)}, \infproj{\blam}) \simeq \End_{\CS(\SBim_n)}(P_{(1^n)})
    \]
    has homology concentrated in non-negative degrees. The desired result then follows immediately from Proposition \ref{prop: lifting_maps}.
\end{proof}

\section{Applications to Link Homology} \label{sec: link_hom}

In this section, we use the various Fray functors of Section \ref{sec: fray_functors} to construct several families of homological invariants of colored links parametrized by \textcolor{revisions}{composition}s $\blam \vdash n$. We demonstrate that (the triply-graded dimensions of) these invariants are related to the colored HOMFLYPT homology due to Webster--Williamson \cite{WW17} and its deformation due to Hogancamp--Rose--Wedrich \cite{HRW21} by multiplication by a polynomial in $q$. In the special case $\blam = (1^n) \vdash n$, we recover the various incarnations ($y$-ified/un $y$-ified, finite/infinite) of projector-colored HOMFLYPT homology. As a result, we are able to explicitly relate these two invariants, realizing the conjectural program of Section 1.4.1 of \cite{Con23}.

\subsection{Intrinsically-Colored Link Homology} \label{sec: int_col_hom}

\begin{defn} \label{def: hoch_hom}
    Let $S$ be a $\mathbb{Z}_q$-graded $R$-algebra and $M$ a $\mathbb{Z}_q$-graded $(S, S)$-bimodule. Consider $M$ as a graded module over the enveloping algebra $S^e := S \otimes_R S^{\mathrm{op}}$. Then the \textit{$i^{\text{th}}$ Hochschild homology} of $M$ is the $\mathbb{Z}_q$-graded $R$-module

    \[
    HH_i(M) := \mathrm{Tor}_i^{S^e}(S, M).
    \]
    
    For each $i$, consider $HH_i(M)$ as a $\mathbb{Z}_a \times \mathbb{Z}_q$-graded $R$-module with trivial $\mathbb{Z}_a$-grading. The \textit{total Hochschild homology} of $M$ is the $\mathbb{Z}_a \times \mathbb{Z}_q$-graded $R$-module

    \[
    HH_{\bullet}(M) := \bigoplus_{i \geq 0} a^{-i} HH_i(M).
    \]

    We view $HH_{\bullet}$ as a functor from $(S, S)-\mathrm{Bim}[\mathbb{Z}_q]$ to $R-\mathrm{Mod}[\mathbb{Z}_a \times \mathbb{Z}_q]$.
\end{defn}

The functor $HH_{\bullet}$ has a natural extension to complexes of graded $(S, S)$-bimodules. Explicitly, given a complex $(X, d_X) = \mathrm{tw}_{d_X} \left( \bigoplus_{j \in \mathbb{Z}} t^j X^j \right) \in \CS((S, S)-\mathrm{Bim}[\mathbb{Z}_q])$, we set

\[
HH_{\bullet}(X, d_X) := \mathrm{tw}_{HH_{\bullet}(d_X)} \left( \bigoplus_{j \in \mathbb{Z}} t^j HH_{\bullet}(X^j) \right) \in \CS(R-\mathrm{Mod}[\mathbb{Z}_a \times \mathbb{Z}_q]).
\]

Objects of $\CS(R-\mathrm{Mod}[\mathbb{Z}_a \times \mathbb{Z}_q])$ are pairs consisting of an underlying $\mathbb{Z}_a \times \mathbb{Z}_q \times \mathbb{Z}_t$-graded $R$-module and a degree $t$ differential, where the grading conventions are as in Example \ref{ex: tri_grading}. Since $HH_{\bullet}$ is additive, it also extends to a functor between homotopy categories

\[
HH_{\bullet} \colon K((S, S)-\mathrm{Bim}[\mathbb{Z}_q]) \to K(R-\mathrm{Mod}[\mathbb{Z}_a \times \mathbb{Z}_q])
\]

The following is Proposition 5.3 in \cite{HRW21}, adapted to our notation.

\begin{prop} \label{prop: hh_is_trace}
    Let $S, S'$ be $\mathbb{Z}_q$-graded $R$-algebras. Suppose $X \in \CS((S, S') - \mathrm{Bim}[\mathbb{Z}_q])$, $Y \in \CS((S', S) - \mathrm{Bim}[\mathbb{Z}_q])$ are complexes of $\mathbb{Z}_q$-graded bimodules which are projective as $S$ or $S'$-modules. Then there is a natural isomorphism of $\mathbb{Z}_a \times \mathbb{Z}_q \times \mathbb{Z}_t$-graded complexes of $R$-modules

    \[
    HH_{\bullet}(X \otimes_{S'} Y) \cong HH_{\bullet}(Y \otimes_S X).
    \]
\end{prop}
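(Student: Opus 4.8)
The statement is the classical ``trace property'' (cyclicity) of Hochschild homology, promoted to the level of complexes of graded bimodules. The plan is to reduce to the well-known isomorphism at the level of a single bimodule and then propagate it through the totalization, using naturality to conclude that it assembles into a chain isomorphism.

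First I would recall the single-bimodule statement: for $\mathbb{Z}_q$-graded $R$-algebras $S, S'$ and bimodules $M \in (S, S')-\mathrm{Bim}[\mathbb{Z}_q]$, $N \in (S', S)-\mathrm{Bim}[\mathbb{Z}_q]$ which are projective on the relevant one-sided side, there is a natural isomorphism $HH_\bullet(M \otimes_{S'} N) \cong HH_\bullet(N \otimes_S M)$. The cleanest way to produce this is via the bar resolution: both sides compute $\mathrm{Tor}$ over the enveloping algebra, and one writes down the standard cyclic-rotation chain map on bar complexes (sending $a_0 \otimes a_1 \otimes \cdots \otimes a_p \otimes (m \otimes n)$ to an appropriately signed rearrangement realizing $m \otimes n \leftrightarrow n \otimes m$). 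One checks it is a quasi-isomorphism — here the one-sided projectivity hypothesis enters to guarantee the relevant bar-type complexes are genuine projective resolutions after tensoring, so the chain map computes the claimed $\mathrm{Tor}$ isomorphism; this is where I would cite the standard reference (e.g. Weibel) rather than reproduce the computation. The signs must be tracked carefully because we work in the $\mathbb{Z}_a \times \mathbb{Z}_q \times \mathbb{Z}_t$-graded setting of Example \ref{ex: tri_grading}, but the quantum grading is central and the Hochschild grading $\mathbb{Z}_a$ carries the only nontrivial sign interaction.

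Next I would pass to complexes. Write $X = \mathrm{tw}_{d_X}\left( \bigoplus_{j} t^j X^j \right)$ and $Y = \mathrm{tw}_{d_Y}\left( \bigoplus_{k} t^k Y^k \right)$. Then $X \otimes_{S'} Y$ is the totalization $\mathrm{tw}\left( \bigoplus_{j,k} t^{j+k} X^j \otimes_{S'} Y^k \right)$ with differential built from $d_X \otimes 1 \pm 1 \otimes d_Y$, and likewise for $Y \otimes_S X$. Applying $HH_\bullet$ termwise and invoking the single-bimodule isomorphism on each summand $X^j \otimes_{S'} Y^k$ gives, for each bidegree $(j,k)$, an isomorphism $HH_\bullet(X^j \otimes_{S'} Y^k) \cong HH_\bullet(Y^k \otimes_S X^j)$. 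Because this family of isomorphisms is \emph{natural} in both bimodule arguments, it intertwines the images under $HH_\bullet$ of all the structure maps $d_X$ and $d_Y$: each component of the differential on $HH_\bullet(X \otimes_{S'} Y)$ is $HH_\bullet$ applied to a morphism of bimodules of the form $f \otimes \mathrm{id}$ or $\mathrm{id} \otimes g$, and naturality of the cyclic-rotation isomorphism with respect to such morphisms is precisely the commuting square needed. Collecting the termwise isomorphisms therefore yields an isomorphism of $\mathbb{Z}_a \times \mathbb{Z}_q \times \mathbb{Z}_t$-graded complexes. Finally, one records that the isomorphism is natural in $X$ and $Y$, which follows again from naturality of the bimodule-level map.

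The main obstacle I anticipate is sign bookkeeping in two places simultaneously: the Koszul signs in the cyclic rotation $M \otimes N \to N \otimes M$ (which produce a sign $(-1)^{|M||N|}$ or similar depending on the chosen convention for the internal gradings), and the signs in the totalization differential, which must be compatible so that the termwise maps genuinely commute with the differentials rather than merely up to sign. Getting these to line up may require choosing the cyclic-rotation chain map with a specific sign normalization tuned to the totalization conventions of Section \ref{sec: hom_alg}. Everything else is either standard homological algebra (quoted from the literature) or a formal consequence of naturality and simultaneous application over the index poset, in the spirit of the simultaneous-simplification arguments already used in the paper.
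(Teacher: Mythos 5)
Your outline is correct, and it is worth noting that the paper itself does not prove this proposition: it simply cites Proposition 5.3 of \cite{HRW21}, which in turn rests on the same classical trace/cyclicity property of Hochschild homology that you are sketching. So there is no discrepancy with the paper's proof, only a difference in how much is spelled out.

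Within your sketch, the overall structure is sound: establish the single-bimodule isomorphism $HH_\bullet(M \otimes_{S'} N) \cong HH_\bullet(N \otimes_S M)$ first, then pass to totalizations of complexes using naturality. Two points you flagged are indeed the only places that need care. First, the one-sided projectivity hypothesis is exactly what makes $M \otimes_{S'} N$ and $N \otimes_{S} M$ compute the balanced derived tensor product (equivalently, what lets the bar-type resolution remain a projective resolution after the intermediate tensor); this is the standard hypothesis in the trace property and your identification of where it enters is correct. Second, the compatibility of the Koszul sign in the swap $M \otimes N \to N \otimes M$ with the totalization signs is the genuine bookkeeping issue, but it is handled by noting that the isomorphism you build is an honest natural isomorphism of bifunctors $(M, N) \mapsto HH_\bullet(M \otimes_{S'} N)$ and $(M, N) \mapsto HH_\bullet(N \otimes_S M)$ on $((S,S')\text{-}\mathrm{Bim}) \times ((S',S)\text{-}\mathrm{Bim})$; since the differentials on the totalized complexes arise from applying these bifunctors to the morphisms $d_X \otimes \mathrm{id}$ and $\mathrm{id} \otimes d_Y$, naturality forces the termwise isomorphisms to intertwine them on the nose, signs included. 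In other words, once the bimodule-level map is a natural transformation of bifunctors, no further sign-tuning is required at the complex level; the totalization signs are carried along automatically. This is precisely the point you gesture at with ``naturality is the commuting square needed,'' and it is correct.
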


We now restrict our attention to complexes of singular Soergel bimodules with the usual coefficient algebras $S = \mathrm{Sym}^{\aa}(\mathbb{X}_N)$ for some \textcolor{revisions}{composition} $\aa \vdash N$. Note that the conditions of Proposition \ref{prop: hh_is_trace} are always satisfied in this case, since every singular Soergel bimodule is free as a left or right module.

\begin{defn} \label{def: balanced}
    We call a complex $X \in \CS(\SSBim_N)$ \textit{balanced} if $X \in \CS(\SSBim_{\aa}^{\aa})$ for some \textcolor{revisions}{composition} $\aa \vdash N$.
    Similarly, we call a colored braid $\beta_{\aa} \in \BGpd{\aa}{\bb}$ \textit{balanced} if $\bb = \aa$.
\end{defn}

Notice that when $\beta_{\aa}$ is a balanced braid, its Rickard complex $C(\beta_{\aa})$ is a balanced complex. In particular, $C(\beta_{\aa})$ is a complex of graded $\mathrm{Sym}^{\aa}(\mathbb{X})$-bimodules and is therefore a potential target for $HH_{\bullet}$ with coefficients $S = \mathrm{Sym}^{\aa}(\mathbb{X})$. In keeping with the established literature, the homology of the resulting complex (appropriately normalized) will be our first model of colored, triply-graded link homology. First, in order to correctly describe the normalization, we describe some statistics associated to $\beta_{\aa}$.

\begin{defn} \label{def: braid_stats}
    Given a colored Artin braid generator $x = (\sigma_i)^{\pm 1}_{(a, b)}$, we define its \textcolor{revisions}{\textit{colored writhe}} by $\epsilon(x) = \pm ab$. Given a colored braid $\beta_{\aa} \in \mathfrak{Br}_n$, we define the \textit{colored writhe} $\epsilon(\beta_{\aa})$ as the sum of the colored writhes of the Artin generators in a braid word\footnote{One can check that this sum is preserved by the braid relations, so $\epsilon(\beta_{\aa})$ is independent of a choice of braid word.} representation of $\beta_{\aa}$. If $\beta_{\aa}$ is a balanced braid for $\aa = (a_1, \dots, a_n)$, we set

    \[
    N(\beta_{\aa}) := \sum_{i = 1}^n a_i; \quad \eta(\beta_{\aa}) = \sum_{i \in \pi_0(\beta)} a_{[i]}
    \]
    Here $\pi_0(\beta)$ denotes the set of orbits under the action of the permutation associated to $\beta$ acting on the set $\{1, \dots, n\}$.
\end{defn}

\begin{defn} \label{def: kr_hom}
    Let $\beta_{\aa}$ be a balanced colored braid. Then the \textit{intrinsically-colored, triply-graded Khovanov--Rozansky complex} associated to $\beta_{\aa}$ is

    \[
    C_{KR}(\beta_{\aa}) := (at^{-1})^{\frac{1}{2} (\epsilon(\beta_{\aa}) + N(\beta_{\aa}) - \eta(\beta_{\aa}))} q^{-\epsilon(\beta_{\aa})} HH_{\bullet}(C(\beta_{\aa})).
    \]
    The \textit{intrinsically-colored, triply-graded Khovanov--Rozansky homology} of $\beta_{\aa}$ is the $\mathbb{Z}_a \times \mathbb{Z}_q \times \mathbb{Z}_t$-graded $R$-module

    \[
    H_{KR}(\beta_{\aa}) := H^{\bullet}(C_{KR}(\beta_{\aa})).
    \]
\end{defn}

Given a balanced colored braid $\beta_{\aa}$, its topological closure $\hat{\beta}_{\aa}$ is an oriented colored link. The following is Theorem 5.19 in \cite{HRW21}.

\begin{thm} \label{thm: HKR_link_inv}
    A change of braid representatives $\beta_{\aa}$ for a framed, oriented, colored link $\mathcal{L}$ induces a homotopy equivalence between the corresponding complexes $C_{KR}(\beta_{\aa})$. Consequently, $H_{KR}(\mathcal{L}) := H_{KR}(\beta_{\aa})$ is a well-defined invariant of the framed, oriented, colored link $\mathcal{L}$ up to isomorphism. Changing framing by $\pm 1$ on a $b$-colored component shifts $H_{KR}(\mathcal{L})$ by $(at^{-1})^{\pm \frac{1}{2} b(b - 1)}$.
\end{thm}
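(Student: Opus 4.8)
\textbf{Proof strategy for Theorem \ref{thm: HKR_link_inv}.}

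The plan is to follow the standard Markov-move argument, but carefully tracking the normalization statistics $\epsilon$, $N$, and $\eta$ of Definition \ref{def: braid_stats}. By a colored version of Markov's theorem, two balanced colored braids $\beta_{\aa} \in \BGpd{\aa}{\aa}$ and $\gamma_{\bb} \in \BGpd{\bb}{\bb}$ have isotopic topological closures as framed, oriented, colored links if and only if they are related by a finite sequence of (i) braid relations, (ii) conjugation (colored Markov move I) $\beta_{\aa} \leftrightarrow \alpha \beta_{\aa} \alpha^{-1}$, and (iii) colored stabilization (colored Markov move II) $\beta_{\aa} \leftrightarrow (\beta_{\aa} \boxtimes \strand{(b)}) \star (\sigma_n)^{\pm 1}$, adding or removing a $b$-colored strand with a single positive or negative crossing. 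The statement about framing is handled by a fourth local move: inserting or removing a full positive/negative kink $(\sigma_i)^{\pm 2}$ on a single $b$-colored strand. First I would check that $C_{KR}$ is unchanged (up to homotopy equivalence, hence $H_{KR}$ up to isomorphism) under each of these moves, then separately compute the framing shift.

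For (i), Rickard complexes satisfy the braid relations up to canonical homotopy equivalence (Proposition 3.25 in \cite{HRW21}, cited after Definition \ref{def: rickard_complex_words}), and the statistics $\epsilon(\beta_{\aa})$, $N(\beta_{\aa})$, $\eta(\beta_{\aa})$ are manifestly invariant under braid relations (as noted in the footnote to Definition \ref{def: braid_stats}), so $C_{KR}$ is unchanged on the nose in its normalization prefactor and up to homotopy equivalence in $HH_{\bullet}(C(\beta_{\aa}))$ since $HH_{\bullet}$ is a functor to the homotopy category. For (ii), write $C(\alpha \beta_{\aa} \alpha^{-1}) \simeq C(\alpha) \star C(\beta_{\aa}) \star C(\alpha)^{\vee}$; applying Proposition \ref{prop: hh_is_trace} (the trace property of $HH_{\bullet}$) with $X = C(\alpha) \star C(\beta_{\aa})$ and $Y = C(\alpha)^{\vee}$ gives $HH_{\bullet}(C(\alpha) \star C(\beta_{\aa}) \star C(\alpha)^{\vee}) \cong HH_{\bullet}(C(\alpha)^{\vee} \star C(\alpha) \star C(\beta_{\aa})) \simeq HH_{\bullet}(C(\beta_{\aa}))$, using that $C(\alpha)^{\vee} \star C(\alpha) \simeq \strand{\aa'}$; and one checks $\epsilon$, $N$, $\eta$ are conjugation-invariant (here $\eta$ is invariant because conjugation does not change the cycle type of the underlying permutation nor the colors assigned to orbits). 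For (iii), the key input is the colored Markov II computation for Hochschild homology: $HH_{\bullet}\big( (C(\beta_{\aa}) \boxtimes \strand{(b)}) \star C((\sigma_n)^{\pm 1}) \big)$ is, up to an explicit shift by a power of $a$, $q$, and $t$, isomorphic to $HH_{\bullet}(C(\beta_{\aa}))$. This is where the bulk of the work lies: one must compute $HH_{\bullet}$ of the two-strand positive and negative Rickard complexes $C_{b,b'}$, $C_{b,b'}^{\vee}$ relative to a closed-off $b$-colored strand, extract the shift, and verify that the chosen normalization $(at^{-1})^{\frac{1}{2}(\epsilon + N - \eta)} q^{-\epsilon}$ exactly cancels it. The point of the stabilization normalization is precisely that adding a $b$-colored Markov strand with a $+$ crossing changes $\epsilon$ by $+b^2$ (if crossing with a $b$-colored strand; more generally by $\pm b \cdot (\text{color of the other strand})$, but for stabilization the other strand is the new one, so it is the square of its own color in the relevant degenerate sense — here one must be careful: in colored stabilization the new strand has color $b$ and crosses the adjacent strand of the existing braid, so $\epsilon$ changes by $\pm b a_n$), changes $N$ by $+b$, and changes $\eta$ by $0$ when the new strand merges into an existing orbit or by $+b$ when it forms its own orbit — but for a genuine stabilization it always merges, so $\Delta\eta = 0$, and these combine with the computed $HH_{\bullet}$-shift to give invariance. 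I would verify the arithmetic here case by case on the sign of the crossing.

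For the framing statement, inserting $(\sigma_i)^{\pm 2}$ on a $b$-colored component changes $\epsilon$ by $\pm 2b^2$ and leaves $N$, $\eta$, and the underlying permutation unchanged, while the effect on $HH_{\bullet}(C(\beta_{\aa}))$ of inserting a full twist on a single $b$-colored strand can be computed from the minimal model of the two-strand full twist (compare Definition \ref{def: c_n}, which handles the $(n,1)$ case, with the general $(b,b')$ case reducing via closure); one finds $HH_{\bullet}$ is shifted by an explicit monomial, and combining with the change in the normalization prefactor yields a net shift of $(at^{-1})^{\pm\frac{1}{2}b(b-1)}$. The main obstacle I anticipate is the colored Markov II computation: unlike the uncolored case, one cannot simply quote the classical "HH of a Bott--Samelson" formula, and the identification of $HH_{\bullet}$ of $(C(\beta_{\aa}) \boxtimes \strand{(b)}) \star C(\sigma_n)$ with a shift of $HH_{\bullet}(C(\beta_{\aa}))$ requires analyzing the two-strand Rickard complex $C_{a_n, b}$ after partially closing, which involves the web calculus of Proposition \ref{prop: web_rels} together with the interaction of $HH_{\bullet}$ and the merge/split bimodules $_{\bb}M_{\aa}$, $_{\aa}S_{\bb}$. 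Since this precise statement (colored Markov invariance of $HH_{\bullet}$ of Rickard complexes) is exactly Theorem 5.19 of \cite{HRW21} in the undeformed case, I would cite that reference for the core homotopy equivalence and restrict the proof here to verifying that our normalization conventions in Definition \ref{def: kr_hom} match theirs and produce the stated framing behavior.
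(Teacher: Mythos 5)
The paper gives no proof of this statement: it is introduced with the line ``The following is Theorem 5.19 in \cite{HRW21}'' and cited directly, which is exactly where your proposal also ends up — you sketch the Markov-move argument and the bookkeeping of $\epsilon$, $N$, $\eta$ but ultimately defer the colored Markov II computation to \cite{HRW21}. So your approach matches the paper's, though you do substantially more expository work than the paper itself (which does none); the one small wrinkle in your sketch is the parenthetical hedging about whether a stabilizing crossing contributes $\pm b a_n$ or $\pm b^2$ to $\epsilon$ — since colors are constant on link components, the adjacent strand in a genuine Markov stabilization on a $b$-colored component is necessarily $b$-colored, so $\pm b^2$ is correct and the hedge was unnecessary.
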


\subsection{Deformed Intrinsically-Colored Link Homology} \label{sec: def_int_col_hom}

Hogancamp--Rose--Wedrich in \cite{HRW21} define a deformation of the intrinsically-colored, triply-graded Khovanov--Rozansky homology assigned to a colored link. Their procedure largely mimics that of Definition \ref{def: kr_hom} but takes as input a version of the \textit{deformed} Rickard complex of Corollary \ref{cor: curved_braids}. To properly define their invariant requires that we briefly pass from $\Delta e$-curvature modeled on differences of elementary symmetric functions to $\Delta p$-curvature modeled on differences of power sum symmetric functions.

\begin{defn} \label{def: delta_p_curv}
    Let $\aa = (a_1, \dots, a_n) \vdash N$, $\sigma \in \SG_n$ be given, and recall the decomposition $\mathbb{X} = \mathbb{X}_1 \sqcup \dots \sqcup \mathbb{X}_n$ of an alphabet $\mathbb{X}$ of size $N$ as in Section \ref{sec: symm_poly}. For each pair of positive integers $1 \leq j \leq n$ and $1 \leq k \leq a_j$, let $\dot{v}_{jk}$ be a formal variable of degree $\mathrm{deg}(\dot{v}_{jk}) = q^{-2k}t^2$. Let $\dot{\mathbb{V}}_{\aa}$ denote the alphabet of all such variables. Then the \textit{$\Delta p$-curvature} associated to the pair $(\aa, \sigma)$ is the polynomial

    \[
    F_{\sigma, \dot{v}}^{\aa}(\mathbb{X}, \mathbb{X}') := \sum_{j = 1}^n \sum_{k = 1}^{a_j} \frac{1}{k} \left(p_k(\mathbb{X}_{\sigma(j)}) - p_k(\mathbb{X}'_j) \right) \otimes \dot{v}_{jk} \in \mathrm{Sym}^{\aa}_{\sigma}(\mathbb{X}, \mathbb{X}') \otimes_R R[\dot{\mathbb{V}}_{\aa}].
    \]

    As in Definition \ref{def: delta_e_curv}, we reserve the notation $F_{\dot{v}}^{\aa}(\mathbb{X}, \mathbb{X}')$ for the special case $\sigma = e$. We also suppress the alphabets $\mathbb{X}, \mathbb{X}'$, writing just $F_{\sigma, {\dot{v}}}^{\aa}$, when they are clear from context.
\end{defn}

\begin{defn} \label{def: y-ification_power_sums}
    We call a curved complex $\mathrm{tw}_{\Delta_X}(X \otimes R[\dot{\mathbb{V}_{\aa}}]) \in \YS_{F_{\sigma, {\dot{v}}}^{\aa}}(\Bim_N; R[\dot{\mathbb{V}}_{\aa}])$ a \textit{$\Delta p$-deformation} of the underlying complex $X$.
\end{defn}

\textit{A priori} it is not guaranteed that a complex which admits a $\Delta e$-deformation also admits a $\Delta p$-deformation and vice versa. The following Lemma gives a dictionary for transitioning between these two notions. Here $h_i(\mathbb{X})$ denotes the degree $i$ complete symmetric polynomial in the alphabet $\mathbb{X}$, and $\mathbb{X} - \mathbb{X}'$ is a formal difference of alphabets; see Section 2 of \cite{HRW21} for precise definitions.

\begin{lem} \label{lem: alph_soup_dict}
    Fix $a \geq 1$, and let $\mathbb{X}, \mathbb{X}', \mathbb{U}, \dot{\mathbb{V}}$ be alphabets of size $a$ with degrees $\mathrm{deg}(x_i) = \mathrm{deg}(x'_i) = q^2$, $\mathrm{deg}(u_i) = \mathrm{deg}(\dot{v}_i) = t^2q^{-2i}$ for each $1 \leq i \leq a$. Consider the polynomials
    
    \[
    \psi_i^a(\mathbb{X}, \mathbb{X}', \mathbb{U}) := \sum_{j = i}^a \sum_{k = j}^a \frac{i}{j} h_{j - i}(\mathbb{X} - \mathbb{X}') e_{k - j}(\mathbb{X}) u_k \in \mathrm{Sym}(\mathbb{X}, \mathbb{X}') \otimes_R R[\mathbb{U}];
    \]
    
    \[
    \rho_i^a(\mathbb{X}, \mathbb{X}', \dot{\mathbb{V}}) := \sum_{j = 1}^a \sum_{k = j}^a (-1)^{i + k - j - 1} \frac{j}{k} h_{j - i}(\mathbb{X}) e_{k - j}(\mathbb{X} - \mathbb{X}') \dot{v}_k \in \mathrm{Sym}(\mathbb{X}, \mathbb{X}') \otimes_R R[\dot{\mathbb{V}}].
    \]
    Then the assignment

    \[
    \dot{v}_i := \psi_i^a(\mathbb{X}, \mathbb{X}', \mathbb{U}), \quad u_i := \rho_i^a(\mathbb{X}, \mathbb{X}', \dot{\mathbb{V}})
    \]
    is a mutually inverse change of variables inducing a $\mathbb{Z}_q \times \mathbb{Z}_t$-graded $R$-algebra isomorphism $R[\mathbb{X}, \mathbb{X}', \mathbb{U}] \cong R[\mathbb{X}, \mathbb{X}', \dot{\mathbb{V}}]$ taking $F^{(a)}_u(\mathbb{X}, \mathbb{X}')$ to $F^{(a)}_{\dot{v}}(\mathbb{X}, \mathbb{X}')$.
\end{lem}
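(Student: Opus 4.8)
The statement to prove is Lemma \ref{lem: alph_soup_dict}: the explicit polynomials $\psi_i^a$ and $\rho_i^a$ furnish mutually inverse changes of variables inducing a graded $R$-algebra isomorphism $R[\mathbb{X}, \mathbb{X}', \mathbb{U}] \cong R[\mathbb{X}, \mathbb{X}', \dot{\mathbb{V}}]$ carrying $F_u^{(a)}$ to $F_{\dot v}^{(a)}$. The overall strategy is a direct generating-function computation, reducing the three separate claims (well-definedness/homogeneity, mutual inversion, compatibility with curvature) to identities among symmetric functions in $\mathbb{X}$ and $\mathbb{X}'$ that follow from the standard generating-function relations of Definition \ref{def: sym_poly_bases} between the $e_i$, $h_i$, and $p_i$.

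First I would record degree bookkeeping: since $\mathrm{deg}(h_{j-i}(\mathbb{X} - \mathbb{X}')) = q^{2(j-i)}$, $\mathrm{deg}(e_{k-j}(\mathbb{X})) = q^{2(k-j)}$, and $\mathrm{deg}(u_k) = t^2 q^{-2k}$, every monomial appearing in $\psi_i^a$ has degree $q^{2(j-i)} \cdot q^{2(k-j)} \cdot t^2 q^{-2k} = t^2 q^{-2i}$, matching $\mathrm{deg}(\dot v_i)$; symmetrically for $\rho_i^a$ and $u_i$. This shows the assignment respects the $\mathbb{Z}_q \times \mathbb{Z}_t$-grading, so that once we know the two substitutions are mutually inverse, they automatically define a graded $R$-algebra isomorphism fixing $R[\mathbb{X}, \mathbb{X}']$. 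For the mutual-inversion step, the clean approach is to package the variables into generating functions: introduce a formal parameter $T$ and set, e.g., $U(T) = \sum_{k \geq 1} u_k T^k$, $\dot V(T) = \sum_{k \geq 1} \dot v_k T^k$ (truncating mod $T^{a+1}$), and rewrite the definitions of $\psi_i^a$, $\rho_i^a$ using the generating functions $E(\mathbb{X}, T) = \prod_{x}(1 + xT)$ and $H(\mathbb{X}, T) = \prod_x (1-xT)^{-1}$, together with $P(\mathbb{X},T)$. The factor $\tfrac{i}{j}$ (resp. $\tfrac{j}{k}$) and the shift indices strongly suggest that after multiplying by appropriate powers of $T$ and summing, $\psi$ and $\rho$ become the coefficients of a single identity of the form $\dot V = (\text{something built from } E, H, P) \cdot U$ and conversely, where the two ``somethings'' are visibly inverse power series (one being essentially $E(\mathbb{X}-\mathbb{X}', T)$ or its reciprocal, the signs $(-1)^{i+k-j-1}$ in $\rho$ being exactly what converts $e_{k-j}(\mathbb{X} - \mathbb{X}')$ between the two alphabet-difference conventions). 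Verifying that these two power series multiply to $1$ is then the crux of the inversion claim.

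For the curvature-compatibility step, recall $F_u^{(a)}(\mathbb{X}, \mathbb{X}') = \sum_{k=1}^a (e_k(\mathbb{X}) - e_k(\mathbb{X}')) \otimes u_k$ and $F_{\dot v}^{(a)}(\mathbb{X}, \mathbb{X}') = \sum_{k=1}^a \tfrac{1}{k}(p_k(\mathbb{X}) - p_k(\mathbb{X}')) \otimes \dot v_k$. Substituting $\dot v_i = \psi_i^a(\mathbb{X},\mathbb{X}',\mathbb{U})$ into $F_{\dot v}^{(a)}$ and collecting the coefficient of each $u_k$, one must show it equals $e_k(\mathbb{X}) - e_k(\mathbb{X}')$. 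In generating-function language this amounts to the classical Newton-type identity relating $\log E(\mathbb{X}, T)$, $P(\mathbb{X}, T)$, and the ``alphabet difference'' $\mathbb{X} - \mathbb{X}'$: the sum $\sum_k \tfrac{1}{k}(p_k(\mathbb{X}) - p_k(\mathbb{X}')) T^k = \log\bigl(E(\mathbb{X}, T)/E(\mathbb{X}', T)\bigr) = \log H(\mathbb{X} - \mathbb{X}', -T)$ or similar, and the coefficients $h_{j-i}(\mathbb{X} - \mathbb{X}')$, $e_{k-j}(\mathbb{X})$ appearing in $\psi_i^a$ are precisely the ones that re-expand this logarithm back into the $e_k$ basis when multiplied against the $u_k$. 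I would carry this out by manipulating the formal power series identity rather than by index-chasing.

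The main obstacle will be the bookkeeping in the generating-function reformulation: getting the truncation (mod $T^{a+1}$), the index ranges $j \leq k \leq a$, $i \leq j$, and the rational coefficients $i/j$, $j/k$ to line up so that the claimed identities really are instances of $\log$, $\exp$, or reciprocal relations among $E$, $H$, $P$ — and in particular verifying that the sign pattern $(-1)^{i+k-j-1}$ in $\rho_i^a$ is exactly the inverse substitution and not off by a global sign or an index shift. Once the correct generating-function statements are isolated, each is a standard symmetric-function identity (and these are exactly the identities underlying the $\Delta e$ vs.\ $\Delta p$ comparison in \cite{HRW21}, Section 2), so the remaining work is routine. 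It may be cleanest to cite \cite{HRW21} for the generating-function identities themselves and present only the degree argument and the reduction in detail.
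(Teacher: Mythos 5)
The paper's proof is a one-line citation to Corollary 4.6 and Lemma 5.34 of \cite{HRW21}, so any amount of honest computation constitutes a genuinely different (more self-contained) route. Your degree bookkeeping is correct and worth recording, and the generating-function strategy you outline — packaging $\mathbb{U}$ and $\dot{\mathbb{V}}$ into formal power series, expressing $\psi$ and $\rho$ via $E$, $H$, $P$, and reducing both mutual inversion and curvature compatibility to $\log$/$\exp$/reciprocal identities among these — is exactly the mechanism underlying the HRW21 results being cited, so the approach is sound. What each buys: the paper's citation is short and avoids reproving standard symmetric-function manipulations, at the cost of requiring the reader to chase down HRW21's conventions (which differ in small ways — e.g.\ they work in the power-sum model and you must match their $\Delta p$ normalization $\tfrac{1}{k}p_k$ against your $\Delta e$ one); your approach would make the lemma self-contained and verifiable in place. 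That said, as written your proposal is a plan rather than a proof: the crucial power-series identities are only gestured at, and the one you do write out, $\sum_k \tfrac{1}{k}(p_k(\mathbb{X}) - p_k(\mathbb{X}')) T^k = \log\bigl(E(\mathbb{X},T)/E(\mathbb{X}',T)\bigr)$, has a sign error (the right-hand side equals $\sum_k \tfrac{(-1)^{k-1}}{k}(p_k - p'_k)T^k$; the unsigned version uses $H$, not $E$). You flag exactly this kind of sign/index hazard as "the main obstacle," which is good self-awareness, but it means the proof is not yet carried through. Since you also explicitly acknowledge that citing HRW21 may be cleanest, you effectively converge with the paper's choice; the proposal is correct in approach but incomplete in execution.
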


\begin{proof}
    This follows immediately from Corollary 4.6 and Lemma 5.34 of \cite{HRW21}.
\end{proof}

\begin{cor} \label{cor: alph_soup_full}
    Let $\aa = (a_1, \dots, a_n) \vdash N$, $\sigma \in \SG_n$ be given, and recall the alphabets $\mathbb{U}_{\aa}$, $\dot{\mathbb{V}}_{\aa}$ of Definitions \ref{def: delta_e_curv} and \ref{def: delta_p_curv}. Then the assignment

    \[
    \dot{v}_{jk} := \psi_k^{a_j}(\mathbb{X}_{\sigma(j)}, \mathbb{X}'_j, \mathbb{U}_j); \quad u_{jk} := \rho_k^{a_j}(\mathbb{X}_{\sigma(j)}, \mathbb{X}'_j, \dot{\mathbb{V}}_j)
    \]
    is a mutually inverse change of variables inducing a $\mathbb{Z}_q \times \mathbb{Z}_t$-graded $R$-algebra isomorphism
    
    \begin{center}
    \begin{tikzcd}
    \mathrm{Sym}^{\aa}_{\sigma}(\mathbb{X}, \mathbb{X}') \otimes_R R[\mathbb{U}_{\aa}] \arrow[r, harpoon, shift left, "\rho_{\aa}"] & \mathrm{Sym}^{\aa}_{\sigma}(\mathbb{X}, \mathbb{X}') \otimes_R R[\dot{\mathbb{V}}_{\aa}] \arrow[l, harpoon, shift left, "\psi_{\aa}"]
    \end{tikzcd}
    \end{center}
    which interchanges $F_{\sigma, u}^{\aa}$ and $F_{\sigma, \dot{v}}^{\aa}$.
\end{cor}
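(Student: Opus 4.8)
The plan is to reduce Corollary \ref{cor: alph_soup_full} to the single-alphabet statement of Lemma \ref{lem: alph_soup_dict} applied ``in parallel'' across all the external alphabet pairs of the decomposition $\aa$. First I would recall that, by the definitions of $\mathbb{U}_{\aa}$ and $\dot{\mathbb{V}}_{\aa}$ in Definitions \ref{def: delta_e_curv} and \ref{def: delta_p_curv}, these alphabets decompose as disjoint unions $\mathbb{U}_{\aa} = \bigsqcup_{j = 1}^n \mathbb{U}_j$ and $\dot{\mathbb{V}}_{\aa} = \bigsqcup_{j = 1}^n \dot{\mathbb{V}}_j$, where $\mathbb{U}_j = \{u_{j1}, \dots, u_{ja_j}\}$ and $\dot{\mathbb{V}}_j = \{\dot{v}_{j1}, \dots, \dot{v}_{ja_j}\}$; similarly $\mathbb{X} = \mathbb{X}_1 \sqcup \dots \sqcup \mathbb{X}_n$ and $\mathbb{X}' = \mathbb{X}'_1 \sqcup \dots \sqcup \mathbb{X}'_n$ from Section \ref{sec: symm_poly}. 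Correspondingly, there are $R$-algebra tensor decompositions
\[
\mathrm{Sym}^{\aa}_{\sigma}(\mathbb{X}, \mathbb{X}') \otimes_R R[\mathbb{U}_{\aa}] \cong \bigotimes_{j = 1}^n \Big( \mathrm{Sym}(\mathbb{X}_{\sigma(j)}) \otimes_R \mathrm{Sym}(\mathbb{X}'_j) \otimes_R R[\mathbb{U}_j] \Big),
\]
and likewise with $\dot{\mathbb{V}}_{\aa}$ in place of $\mathbb{U}_{\aa}$, using the fundamental theorem of symmetric polynomials as in Section \ref{sec: symm_poly}.

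Next I would observe that the proposed substitution is ``block diagonal'' with respect to this tensor decomposition: the formula $\dot{v}_{jk} := \psi_k^{a_j}(\mathbb{X}_{\sigma(j)}, \mathbb{X}'_j, \mathbb{U}_j)$ only involves variables from the $j$-th block, and similarly for $u_{jk} := \rho_k^{a_j}(\mathbb{X}_{\sigma(j)}, \mathbb{X}'_j, \dot{\mathbb{V}}_j)$. Therefore the maps $\rho_{\aa}$ and $\psi_{\aa}$ are exactly the tensor products $\bigotimes_{j = 1}^n \rho_j$ and $\bigotimes_{j = 1}^n \psi_j$, where $\rho_j, \psi_j$ are the single-block maps obtained from Lemma \ref{lem: alph_soup_dict} with $a = a_j$, $\mathbb{X} = \mathbb{X}_{\sigma(j)}$, $\mathbb{X}' = \mathbb{X}'_j$. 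Lemma \ref{lem: alph_soup_dict} tells us each $\rho_j$ and $\psi_j$ is a $\mathbb{Z}_q \times \mathbb{Z}_t$-graded $R$-algebra isomorphism, with $\psi_j \circ \rho_j = \mathrm{id}$ and $\rho_j \circ \psi_j = \mathrm{id}$, and that $\rho_j$ carries $F^{(a_j)}_u(\mathbb{X}_{\sigma(j)}, \mathbb{X}'_j)$ to $F^{(a_j)}_{\dot{v}}(\mathbb{X}_{\sigma(j)}, \mathbb{X}'_j)$. Since a tensor product of graded algebra isomorphisms is again a graded algebra isomorphism, and since mutual inverseness is preserved by tensoring, $\rho_{\aa}$ and $\psi_{\aa}$ are mutually inverse $\mathbb{Z}_q \times \mathbb{Z}_t$-graded $R$-algebra isomorphisms.

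Finally, to check the curvature statement, I would simply add up the per-block identities. By Definition \ref{def: delta_e_curv}, $F_{\sigma, u}^{\aa} = \sum_{j = 1}^n \big( \text{$j$-th block of } F_u^{(a_j)}(\mathbb{X}_{\sigma(j)}, \mathbb{X}'_j) \big)$, and by Definition \ref{def: delta_p_curv}, $F_{\sigma, \dot{v}}^{\aa} = \sum_{j = 1}^n \big( \text{$j$-th block of } F_{\dot{v}}^{(a_j)}(\mathbb{X}_{\sigma(j)}, \mathbb{X}'_j) \big)$ (the factors $\tfrac1k$ are already built into the power-sum curvature convention, matching the $\psi$'s and $\rho$'s in Lemma \ref{lem: alph_soup_dict}). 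Applying $\rho_{\aa} = \bigotimes_j \rho_j$ termwise and invoking the single-block statement $\rho_j\big(F^{(a_j)}_u\big) = F^{(a_j)}_{\dot{v}}$ gives $\rho_{\aa}(F_{\sigma, u}^{\aa}) = F_{\sigma, \dot{v}}^{\aa}$, and symmetrically for $\psi_{\aa}$. I do not expect any genuine obstacle here: the entire content is bookkeeping to confirm the substitution respects the block structure so that Lemma \ref{lem: alph_soup_dict} (hence Corollary 4.6 and Lemma 5.34 of \cite{HRW21}) applies verbatim in each block; the mildest care needed is to note that the symmetrizing twist by $\sigma$ only permutes \emph{which} unprimed alphabet sits in each block and does not mix blocks, so the decompositions above are valid as stated.
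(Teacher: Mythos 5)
Your proposal is correct and is precisely the intended (unwritten) proof of this corollary: decompose $\mathrm{Sym}^{\aa}_{\sigma}(\mathbb{X}, \mathbb{X}') \otimes_R R[\mathbb{U}_{\aa}]$ into the tensor product of single-pair blocks $\mathrm{Sym}(\mathbb{X}_{\sigma(j)}) \otimes_R \mathrm{Sym}(\mathbb{X}'_j) \otimes_R R[\mathbb{U}_j]$, observe that the proposed substitution is block diagonal, and apply Lemma \ref{lem: alph_soup_dict} on each block. The only small detail worth flagging explicitly --- which you do correctly note at the end --- is that the twist by $\sigma$ is accounted for simply by pairing $\mathbb{X}_{\sigma(j)}$ with $\mathbb{X}'_j$ in the $j$-th block, and the indexing $|\mathbb{X}_{\sigma(j)}| = a_j$ makes these blocks the correct inputs to Lemma \ref{lem: alph_soup_dict} with $a = a_j$.
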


\begin{cor} \label{cor: e_to_p_curvature}
    Let $\mathrm{tw}_{\Delta_X}(X \otimes_R R[\mathbb{U}_{\aa}])$ be a strict $\Delta e$-deformation of $X$, and consider the decomposition of $\Delta_X$ into $\mathbb{U}_{\aa}$-homogeneous components

    \[
    \Delta_X = \sum_{j = 1}^n \sum_{k = 1}^{a_j} h_{jk} \otimes u_{jk}.
    \]
    Set

    \[
    \rho_{\aa}(\Delta_X) := \sum_{j = 1}^n \sum_{k = 1}^{a_j} h_{jk} \otimes \rho_k^{a_j} \left( \mathbb{X}_{\sigma(j)}, \mathbb{X}'_j, \dot{\mathbb{V}}_j \right) \in \End(X) \otimes_{\mathrm{Sym}^{\aa}_{\sigma}(\mathbb{X}, \mathbb{X}')} (\mathrm{Sym}^{\aa}_{\sigma}(\mathbb{X}, \mathbb{X}') \otimes_R R[\dot{\mathbb{V}}_{\aa}]).
    \]
    Then $\mathrm{tw}_{\rho_{\aa}(\Delta_X)}(X \otimes_R R[\dot{\mathbb{V}}_{\aa}])$ is a $\Delta p$-deformation of $X$.
\end{cor}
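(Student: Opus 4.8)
The plan is to reduce the statement directly to Corollary \ref{cor: alph_soup_full} by tracking how a strict deformation transforms under the change of variables $\rho_{\aa}$. First I would note that since $\mathrm{tw}_{\Delta_X}(X \otimes_R R[\mathbb{U}_{\aa}])$ is a strict $\Delta e$-deformation of $X$, Proposition \ref{prop: strict_def} guarantees that the $\mathbb{U}_{\aa}$-homogeneous components $h_{jk} \in \End(X)$ satisfy $[d_X, h_{jk}] = e_k(\mathbb{X}_{\sigma(j)}) - e_k(\mathbb{X}'_j)$ and pairwise graded-commute. The key observation is that applying $\rho_{\aa}$ to the coefficient ring simply substitutes the polynomial $\rho_k^{a_j}(\mathbb{X}_{\sigma(j)}, \mathbb{X}'_j, \dot{\mathbb{V}}_j)$ for each formal variable $u_{jk}$; since each $\rho_k^{a_j}$ lies in $\mathrm{Sym}^{\aa}_{\sigma}(\mathbb{X}, \mathbb{X}') \otimes_R R[\dot{\mathbb{V}}_j]$, which acts centrally on $X$ (by the natural inclusion $\mathrm{Sym}^{\aa}_{\sigma}(\mathbb{X}, \mathbb{X}') \hookrightarrow Z(\SSBim[\mathbb{Z}_t])$ recalled in Section \ref{sec: curv_rick}), the resulting $\rho_{\aa}(\Delta_X)$ is a well-defined endomorphism of $X \otimes_R R[\dot{\mathbb{V}}_{\aa}]$ in degree $t$.

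Next I would verify the Maurer--Cartan equation for $\rho_{\aa}(\Delta_X)$ with the new curvature. The cleanest route is to observe that $\rho_{\aa}$ extends to a $\mathbb{Z}_q \times \mathbb{Z}_t$-graded $R$-algebra isomorphism $\End_{\SSBim[\mathbb{Z}_t]}(X) \otimes_{\mathrm{Sym}^{\aa}_{\sigma}} \left( \mathrm{Sym}^{\aa}_{\sigma}(\mathbb{X}, \mathbb{X}') \otimes_R R[\mathbb{U}_{\aa}] \right) \cong \End_{\SSBim[\mathbb{Z}_t]}(X) \otimes_{\mathrm{Sym}^{\aa}_{\sigma}} \left( \mathrm{Sym}^{\aa}_{\sigma}(\mathbb{X}, \mathbb{X}') \otimes_R R[\dot{\mathbb{V}}_{\aa}] \right)$ which is the identity on $\End(X)$ and acts on coefficients via Corollary \ref{cor: alph_soup_full}. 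Since this isomorphism commutes with the differential $[d_X, -]$ (it fixes $\End(X)$ and acts $\mathrm{Sym}^{\aa}_{\sigma}$-linearly), applying it to the Maurer--Cartan identity $[d_X \otimes 1, \Delta_X] + \Delta_X^2 = F_{\sigma, u}^{\aa}$ yields $[d_X \otimes 1, \rho_{\aa}(\Delta_X)] + \rho_{\aa}(\Delta_X)^2 = \rho_{\aa}(F_{\sigma, u}^{\aa}) = F_{\sigma, \dot{v}}^{\aa}$, where the last equality is exactly the content of Corollary \ref{cor: alph_soup_full}. This shows $\rho_{\aa}(\Delta_X)$ is a Maurer--Cartan element for $F_{\sigma, \dot{v}}^{\aa}$, so $\mathrm{tw}_{\rho_{\aa}(\Delta_X)}(X \otimes_R R[\dot{\mathbb{V}}_{\aa}])$ is an $F_{\sigma, \dot{v}}^{\aa}$-curved complex.

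Finally I would check that this curved complex is genuinely a $\Delta p$-deformation of $X$ in the sense of Definition \ref{def: y-ification_power_sums}, i.e. that its $\dot{\mathbb{V}}_{\aa}$-degree $0$ component recovers $d_X$. Expanding each $\rho_k^{a_j}(\mathbb{X}_{\sigma(j)}, \mathbb{X}'_j, \dot{\mathbb{V}}_j)$ as a polynomial in $\dot{\mathbb{V}}_j$, one sees from the defining formula in Lemma \ref{lem: alph_soup_dict} that $\rho_k^{a_j}$ has no constant term (every monomial carries a factor $\dot{v}_\ell$ for some $\ell$), so $\rho_{\aa}(\Delta_X)$ has trivial $\dot{\mathbb{V}}_{\aa}$-degree $0$ component and the underlying differential is unchanged. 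I expect the main obstacle to be purely bookkeeping: confirming that the change-of-variables isomorphism of Corollary \ref{cor: alph_soup_full} indeed extends $\mathrm{Sym}^{\aa}_{\sigma}$-linearly to the endomorphism algebra tensored with the coefficient ring in a way compatible with the differential $[d_X, -]$, so that "apply $\rho_{\aa}$ to the Maurer--Cartan equation" is legitimate. This is where one must be careful that $\rho_{\aa}$ fixes the $\End(X)$ factor (which it does, since $\rho_{\aa}$ only renames the $u$-variables) and respects the grading and sign conventions; once that compatibility is in hand, the result is immediate.
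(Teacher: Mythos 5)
Your argument is correct and is essentially the only reasonable way to deduce this as a corollary of Corollary \ref{cor: alph_soup_full}, which is presumably why the paper states it without proof. The key points you identify are the right ones: $\rho_{\aa}$ fixes $\mathrm{Sym}^{\aa}_\sigma(\mathbb{X},\mathbb{X}')$ pointwise and is a graded ring isomorphism on coefficients, so conjugating by $1 \otimes \rho_{\aa}$ is compatible with $[d_X \otimes 1, -]$ and with taking squares (there are no middle-interchange signs to worry about since all deformation parameters, and hence all $\rho^{a_j}_k$, have even homological degree); combined with the fact that $\rho_{\aa}$ sends $F^{\aa}_{\sigma,u}$ to $F^{\aa}_{\sigma,\dot v}$, the Maurer--Cartan equation transports. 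Your final observation — that each $\rho^{a_j}_k$ has no $\dot{\mathbb{V}}$-constant term, so the underlying differential is unchanged — is needed precisely to justify the notation $\mathrm{tw}_{\rho_{\aa}(\Delta_X)}(X \otimes_R R[\dot{\mathbb{V}}_{\aa}])$, i.e.\ that the $\dot{\mathbb{V}}$-degree-$0$ part of the new connection is $d_X \otimes 1$; good that you did not skip this.
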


\begin{defn}
    Let $\beta_{\aa}$ be a \textcolor{revisions}{colored} braid, and let $C^u(\beta_{\aa}) = \mathrm{tw}_{\Delta_{\beta}}(C(\beta_{\aa}) \otimes_R R[\mathbb{U}_{\aa}])$ denote its $\Delta e$-deformed Rickard complex. Set

    \[
    C^{\dot{v}}(\beta_{\aa}) := \mathrm{tw}_{\rho_{\aa}(\Delta_{\beta})}(C(\beta_{\aa}) \otimes_R R[\dot{\mathbb{V}}_{\aa}]).
    \]
    By Corollary \ref{cor: e_to_p_curvature}, $C^{\dot{v}}(\beta_{\aa})$ is a $\Delta p$-deformation of $C(\beta_{\aa})$, called the \textit{$\Delta p$-deformed Rickard complex} for the colored braid $\beta_{\aa}$.
\end{defn}

We can extend Hochschild homology to curved complexes as follows.

\begin{defn} \label{def: hh_curved}
	Suppose $\mathbb{U} = \{u_1, \dots, u_n\}$ is an alphabet of deformation parameters of even homological degree as in Definition \ref{def: curved_lift}, and let $\mathrm{tw}_{\Delta_X}(X \otimes_R S) \in \YS_F(\SSBim_{\bb}^{\bb}; R[\mathbb{U}])$ be an $F$-deformation of some complex $X$. We decompose

    \[
    \Delta_X = \sum_{{\bf{v}} \in \mathbb{Z}_{\geq 0}^n} (\Delta_X)_{u^{\bf{v}}} \otimes u^{\bf{v}}
    \]
    as in Section \ref{subsec: def}. Set

    \[
    HH_{\bullet}(\Delta_X) := \sum_{{\bf{v}} \in \mathbb{Z}_{\geq 0}^n} HH_{\bullet}((\Delta_X)_{u^{\bf{v}}}) \otimes u^{\bf{v}}.
    \]
    Then the \textit{Hochschild homology} of $\mathrm{tw}_{\Delta_X}(X \otimes_R S)$ is the curved complex

    \[
    HH_{\bullet}(\mathrm{tw}_{\Delta_X}(X \otimes_R S)) := \mathrm{tw}_{HH_{\bullet}(\Delta_X)}(HH_{\bullet}(X) \otimes_R S) \in \YS_{HH_{\bullet}(F)}(R-\mathrm{Mod}[\mathbb{Z}_a \times \mathbb{Z}_q]; R[\mathbb{U}]).
    \]
\end{defn}

By construction, Hochschild homology identifies the action of (symmetric polynomials in) $\mathbb{X}_i$ with the action of (symmetric polynomials in) $\mathbb{X}'_i$. Extending this identification $\dot{\mathbb{V}}_{\aa}$-linearly takes the usual $\Delta p$-curvature $F_{\sigma, \dot{v}}^{\aa}(\mathbb{X}, \mathbb{X}')$ to

\begin{align*}
HH_{\bullet}(F_{\sigma, \dot{v}}^{\aa}(\mathbb{X}, \mathbb{X}')) & = \sum_{j = 1}^n \sum_{k = 1}^{a_j} \frac{1}{k} \left(p_k(\mathbb{X}_{\sigma(j)}) - p_k(\mathbb{X}_j) \right) \otimes \dot{v}_{jk} \\
& = \sum_{j = 1}^n \sum_{k = 1}^{a_j} \frac{1}{k} p_k(\mathbb{X}_j) \otimes (\dot{v}_{\sigma^{-1}(j)k} - \dot{v}_{jk}).
\end{align*}

This vanishes upon identifying $\dot{v}_{\sigma^{-1}(j)k} \sim \dot{v}_{jk}$. Motivated by this observation, the authors of \cite{HRW21} define \textit{bundled} $\Delta p$-curvature as follows.

\begin{defn} \label{def: bundled_delta_p_curv}
    Let $\aa = (a_1, \dots, a_n) \vdash N$, $\sigma \in \SG_n$ be given such that $\sigma \cdot \aa = \aa$, and let $\Omega(\sigma)$ denote the orbits of the permutation $\sigma$ acting on the set $\{1, \dots, n\}$. For each pair of positive integers $1 \leq j \leq |\Omega(\sigma)|$ and $1 \leq k \leq a_j$, let $\dot{v}_{[j]k}$ be a formal variable of degree $\mathrm{deg}(\dot{v}_{[j]k}) = q^{-2k}t^2$. Let $\dot{\mathbb{V}}_{\aa}^{\Omega(\sigma)}$ denote the alphabet of all such variables. Then the \textit{bundled $\Delta p$-curvature} associated to the pair $(\aa, \sigma)$ is the polynomial

    \[
    F_{\sigma, [\dot{v}]}^{\aa}(\mathbb{X}, \mathbb{X}') := \sum_{j = 1}^n \sum_{k = 1}^{a_j} \frac{1}{k} \left(p_k(\mathbb{X}_{\sigma(j)}) - p_k(\mathbb{X}'_j) \right) \otimes \dot{v}_{[j]k} \in \mathrm{Sym}^{\aa}_{\sigma}(\mathbb{X}, \mathbb{X}') \otimes_R R[\dot{\mathbb{V}}_{\aa}^{\Omega(\sigma)}].
    \]

    As in Definition \ref{def: delta_e_curv}, we reserve the notation $F_{[\dot{v}]}^{\aa}(\mathbb{X}, \mathbb{X}')$ for the special case $\sigma = e$.
\end{defn}

\begin{defn} \label{def: bundled_y-ification_power_sums}
    We call a curved complex $\mathrm{tw}_{\Delta_X}(X \otimes_R R[\dot{\mathbb{V}}_{\aa}^{\Omega(\sigma)}]) \in \YS_{F_{\sigma, {[\dot{v}]}}^{\aa}}(\Bim_N; R[\dot{\mathbb{V}}_{\aa}^{\Omega(\sigma)}])$ a \textit{bundled $\Delta p$-deformation} of the underlying complex $X$.
\end{defn}

\begin{lem} \label{lem: delta_p_bundling}
    Let $\mathrm{tw}_{\Delta_X}(X \otimes_R R[\dot{\mathbb{V}}_{\aa}])$ be a $\Delta p$-deformation of $X$. Let $\pi_{\dot{v}} \colon R[\dot{\mathbb{V}}_{\aa}] \to R[\dot{\mathbb{V}}_{\aa}^{\Omega(\sigma)}]$ denote the quotient map $\dot{v}_{jk} \mapsto \dot{v}_{[j]k}$. Define $\pi_{\dot{v}}(\Delta_X)$ by acting on $\dot{\mathbb{V}}$-homogeneous tensor factors as usual. Then $\mathrm{tw}_{\pi_{\dot{v}}(\Delta_X)} \left( X \otimes_R R[\dot{\mathbb{V}}_{\aa}^{\Omega(\sigma)}] \right)$is a bundled $\Delta p$-deformation of $X$.
\end{lem}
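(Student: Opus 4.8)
The statement is essentially a compatibility check: we already know (from Corollary \ref{cor: e_to_p_curvature}, via the change of variables of Corollary \ref{cor: alph_soup_full}) that $X$ admits a $\Delta p$-deformation $\mathrm{tw}_{\Delta_X}(X \otimes_R R[\dot{\mathbb{V}}_{\aa}])$ with curvature $F_{\sigma, \dot{v}}^{\aa}$, and we want to descend along the ring quotient $\pi_{\dot{v}} \colon R[\dot{\mathbb{V}}_{\aa}] \to R[\dot{\mathbb{V}}_{\aa}^{\Omega(\sigma)}]$. The plan is to verify directly that $\pi_{\dot{v}}(\Delta_X)$ still satisfies the Maurer--Cartan equation \eqref{eq: maurer_cartan}, now with respect to the bundled curvature $F_{\sigma, [\dot{v}]}^{\aa}$, so that $\mathrm{tw}_{\pi_{\dot{v}}(\Delta_X)}(X \otimes_R R[\dot{\mathbb{V}}_{\aa}^{\Omega(\sigma)}])$ is a bona fide curved complex of the required form, and then to observe that its $\dot{\mathbb{V}}$-degree $0$ component is unchanged, so it is genuinely a \emph{deformation of $X$} in the sense of Definition \ref{def: curved_lift}.

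First I would recall that $\pi_{\dot{v}}$ is a morphism of $\mathbb{Z}_q \times \mathbb{Z}_t$-graded $R$-algebras, and that tensoring $X \otimes_R (-)$ along this quotient, together with applying $\pi_{\dot{v}}$ coefficient-wise to connections and curvatures, is a functor from curved complexes over $R[\dot{\mathbb{V}}_{\aa}]$ to curved complexes over $R[\dot{\mathbb{V}}_{\aa}^{\Omega(\sigma)}]$ that preserves the Maurer--Cartan equation: applying a ring homomorphism to both sides of $[d_X \otimes 1, \Delta_X] + \Delta_X^2 = F_{\sigma, \dot{v}}^{\aa}$ yields $[d_X \otimes 1, \pi_{\dot{v}}(\Delta_X)] + \pi_{\dot{v}}(\Delta_X)^2 = \pi_{\dot{v}}(F_{\sigma, \dot{v}}^{\aa})$ inside $\End(X) \otimes_{\mathrm{Sym}^{\aa}_{\sigma}(\mathbb{X},\mathbb{X}')}(\mathrm{Sym}^{\aa}_{\sigma}(\mathbb{X},\mathbb{X}') \otimes_R R[\dot{\mathbb{V}}_{\aa}^{\Omega(\sigma)}])$. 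The only genuine content is then the identification $\pi_{\dot{v}}(F_{\sigma, \dot{v}}^{\aa}(\mathbb{X},\mathbb{X}')) = F_{\sigma, [\dot{v}]}^{\aa}(\mathbb{X},\mathbb{X}')$, which is immediate from comparing Definition \ref{def: delta_p_curv} with Definition \ref{def: bundled_delta_p_curv}: applying $\dot{v}_{jk} \mapsto \dot{v}_{[j]k}$ to $\sum_{j}\sum_{k} \tfrac{1}{k}(p_k(\mathbb{X}_{\sigma(j)}) - p_k(\mathbb{X}'_j)) \otimes \dot{v}_{jk}$ produces exactly the defining expression for $F_{\sigma, [\dot{v}]}^{\aa}$. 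I would also note that $\pi_{\dot{v}}(F_{\sigma, \dot{v}}^{\aa})$ lies in the center of $\SSBim_{\bb}^{\bb}[\dot{\mathbb{V}}_{\aa}^{\Omega(\sigma)}]$ for the same reason $F_{\sigma, \dot{v}}^{\aa}$ is central (it is a tensor of central symmetric polynomials with even-degree deformation parameters, exactly as in Proposition \ref{prop: def_curv_central}), so the new curved complex is well-formed.

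Finally, to conclude that $\mathrm{tw}_{\pi_{\dot{v}}(\Delta_X)}(X \otimes_R R[\dot{\mathbb{V}}_{\aa}^{\Omega(\sigma)}])$ is a \emph{deformation} of $X$ rather than merely a curved complex, I would check the $\dot{\mathbb{V}}$-degree $0$ condition of Definition \ref{def: curved_lift}: since $\pi_{\dot{v}}$ sends each $\dot{v}_{jk}$ to a generator $\dot{v}_{[j]k}$ and hence preserves the grading by total $\dot{\mathbb{V}}$-degree, the degree $0$ component of $\pi_{\dot{v}}(\Delta_X)$ equals the degree $0$ component of $\Delta_X$, which is $d_X \otimes 1$ because $\mathrm{tw}_{\Delta_X}(X \otimes_R R[\dot{\mathbb{V}}_{\aa}])$ was assumed to be a $\Delta p$-deformation of $X$; in particular $d_X^2 = 0$ is inherited. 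I do not expect any real obstacle here — the statement is a functoriality-of-scalar-extension argument, and the only thing to be careful about is bookkeeping the identification of coefficient rings and the placement of the quotient map relative to the tensor factor $\End(X)$, which commutes with everything because $\pi_{\dot{v}}$ acts purely on the $R[\dot{\mathbb{V}}_{\aa}]$-factor. If one prefers, the entire argument can be phrased in one line: apply the $R$-algebra map $\mathrm{id}_{\mathrm{Sym}^{\aa}_{\sigma}(\mathbb{X},\mathbb{X}')} \otimes \pi_{\dot{v}}$ to the Maurer--Cartan identity witnessing the $\Delta p$-deformation, and read off the result.
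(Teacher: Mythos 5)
The paper gives no proof of Lemma~\ref{lem: delta_p_bundling} (it is stated without proof, exactly as the parallel $\Delta e$ statement, Lemma~\ref{lem: delta_e_bundling}), so there is no paper argument to compare against. Your proof is correct and is the argument one would naturally supply: decompose $\Delta_X$ into $\dot{\mathbb{V}}_{\aa}$-homogeneous components as in \eqref{eq: morphism_components}, observe that applying the ring homomorphism $\pi_{\dot{v}}$ coefficient-wise preserves both $[d_X \otimes 1, -]$ (since $d_X \otimes 1$ has trivial $\dot{\mathbb{V}}$-degree) and squaring (since $\pi_{\dot{v}}(\dot{v}^{\bf{v}}\dot{v}^{\bf{w}}) = \pi_{\dot{v}}(\dot{v}^{\bf{v}})\pi_{\dot{v}}(\dot{v}^{\bf{w}})$ and all $\dot{v}_{jk}$ have even homological degree so there are no sign subtleties), and then read off $\pi_{\dot{v}}(F_{\sigma,\dot{v}}^{\aa}) = F_{\sigma,[\dot{v}]}^{\aa}$ directly from Definitions~\ref{def: delta_p_curv} and~\ref{def: bundled_delta_p_curv}. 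Your final check that the $\dot{\mathbb{V}}$-degree $0$ component is preserved (so the result is genuinely a deformation of $(X,d_X)$ in the sense of Definition~\ref{def: curved_lift}) is also necessary and is handled correctly.

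One small imprecision worth flagging: your closing ``one-line'' version proposes applying $\mathrm{id}_{\mathrm{Sym}^{\aa}_{\sigma}(\mathbb{X},\mathbb{X}')} \otimes \pi_{\dot{v}}$ to the Maurer--Cartan identity. That is the right map to apply to the curvature $F_{\sigma,\dot{v}}^{\aa} \in \mathrm{Sym}^{\aa}_{\sigma}(\mathbb{X},\mathbb{X}') \otimes_R R[\dot{\mathbb{V}}_{\aa}]$, but the connection $\Delta_X$ lives in $\End(X) \otimes_{\mathrm{Sym}^{\aa}_{\sigma}(\mathbb{X},\mathbb{X}')} (\mathrm{Sym}^{\aa}_{\sigma}(\mathbb{X},\mathbb{X}') \otimes_R R[\dot{\mathbb{V}}_{\aa}])$, not in $\mathrm{Sym} \otimes R[\dot{\mathbb{V}}]$, so the map that acts on the Maurer--Cartan equation as a whole is really ``$\pi_{\dot{v}}$ applied on the $R[\dot{\mathbb{V}}_{\aa}]$-factor, identity on $\End(X)$,'' i.e.\ exactly the coefficient-wise operation your main argument describes. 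The body of your proof has this right; just tighten the phrasing of the summary if you keep it.
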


We are now prepared to define the deformed invariant of \cite{HRW21}. The following is a power-sum analog of their Definition 5.27.

\begin{defn}
    Let $\mathcal{L}$ be a colored, oriented, framed link which is presented as a balanced braid closure $\mathcal{L} = \hat{\beta}_{\aa}$, and let $\sigma$ denote the permutation induced by $\beta$. Let

    \[
    \overline{\Delta}_{\beta} \in \End_{\CS(\SSBim)}(C(\beta_{\aa})) \otimes_R R[\dot{\mathbb{V}}_{\aa}^{\Omega(\sigma)}]
    \]
    denote the connection on the bundled $\Delta p$-deformation of $C(\beta_{\aa})$ obtained from the $\Delta p$-deformed Rickard complex through Lemma \ref{lem: delta_p_bundling}. Set

    \[
    \YS HH_{\bullet}(\beta_{\aa}) := \mathrm{tw}_{HH_{\bullet}(\overline{\Delta}_{\beta})} \left( HH_{\bullet}(C(\beta_{\aa})) \otimes_R R[\dot{\mathbb{V}}_{\aa}^{\Omega(\sigma)}] \right).
    \]
    Then the \textit{deformed, intrinsically-colored, triply-graded link homology} $\YS H_{KR}(\mathcal{L})$ is the $\mathbb{Z}_a \times \mathbb{Z}_q \times \mathbb{Z}_t$-graded $R$-module given by taking homology of the complex

    \[
    \YS C_{KR}(\beta_{\aa}) := (at^{-1})^{\frac{1}{2} (\epsilon(\beta_{\aa}) + N(\beta_{\aa}) - \eta(\beta_{\aa}))} q^{-\epsilon(\beta_{\aa})} \YS HH_{\bullet}(\beta_{\aa}).
    \]
\end{defn}

\begin{thm}[\cite{HRW21}, Theorem 5.30] \label{thm: def_kr_inv}
    $\YS H_{KR}(\mathcal{L})$ is a well-defined invariant of framed, oriented, colored links.
\end{thm}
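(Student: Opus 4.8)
The plan is to deduce Theorem \ref{thm: def_kr_inv} exactly as in \cite{HRW21}, but streamlined by the observation that $\YS HH_{\bullet}(\beta_{\aa})$ is built functorially from the $\Delta p$-deformed Rickard complex, which in turn is a strict deformation of the ordinary Rickard complex $C(\beta_{\aa})$ and hence enjoys all the uniqueness properties of Corollary \ref{cor: curved_braids}. Concretely, to show $\YS H_{KR}(\mathcal{L})$ is a well-defined invariant I would verify three things: (1) invariance under a change of balanced braid word representing a fixed colored braid $\beta_{\aa}$; (2) invariance under conjugation $\beta_{\aa} \rightsquigarrow \gamma \beta_{\aa} \gamma^{-1}$ (Markov I); and (3) invariance under positive and negative stabilization (Markov II), up to the stated normalization factors involving $\epsilon, N, \eta$. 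Since $\mathcal{L} = \hat{\beta}_{\aa}$ determines $\beta_{\aa}$ only up to these moves, establishing (1)--(3) suffices.

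First I would handle (1): two braid words for $\beta_{\aa}$ give canonically homotopy equivalent Rickard complexes $C(\beta_{\aa})$ (Proposition 3.25 of \cite{HRW21}, quoted after Definition \ref{def: rickard_complex_words}). By Proposition \ref{prop: inv_uniq_yify}, applied to the bounded, $\boxtimes$-invertible complex $C(\beta_{\aa})$, the induced $\Delta e$-deformation is unique up to homotopy equivalence of curved complexes; transporting along the change of variables of Corollary \ref{cor: alph_soup_full} and bundling via Lemma \ref{lem: delta_p_bundling} shows the bundled $\Delta p$-deformations agree up to homotopy equivalence as well. Applying the functor $HH_{\bullet}$ (Definition \ref{def: hh_curved}), which is additive and hence preserves homotopy equivalences, yields a homotopy equivalence of the curved complexes $\YS HH_{\bullet}(\beta_{\aa})$ for the two words, and the normalization constants $\epsilon, N, \eta$ are manifestly word-independent (see the footnote to Definition \ref{def: braid_stats}). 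For (2), conjugation invariance, the key input is the trace property of Hochschild homology, Proposition \ref{prop: hh_is_trace}: writing $C(\gamma \beta \gamma^{-1}_{\aa}) \simeq C(\gamma_{\aa}) \star C(\beta_{\aa}) \star C(\gamma_{\aa})^{\vee}$ and using that $HH_{\bullet}(X \otimes_{S'} Y) \cong HH_{\bullet}(Y \otimes_S X)$ naturally, one obtains $HH_{\bullet}(C(\gamma \beta \gamma^{-1}_{\aa})) \cong HH_{\bullet}(C(\beta_{\aa}))$; one then checks this isomorphism is compatible with the bundled $\Delta p$-connections (both are transported by the same trace isomorphism applied in each deformation degree), and that $\epsilon, N, \eta$ are conjugation-invariant.

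The main obstacle is (3), Markov stabilization, which is where the normalization factors actually do work and where the deformation parameters must be tracked carefully. Here I would follow the structure of the proof of \cite{HRW21} Theorem 5.30: for a positive stabilization adding a strand colored $b$, one must compute $HH_{\bullet}$ of the relevant partial-trace Rickard complex and exhibit a homotopy equivalence with $\YS HH_{\bullet}(\beta_{\aa})$ shifted by $(at^{-1})^{\frac{1}{2} b(b-1)} \cdot (\text{writhe correction})$. The curved refinement requires knowing that the stabilization homotopy equivalence on the undeformed level lifts to the $\Delta e$-deformed (equivalently, bundled $\Delta p$-deformed) level — this is exactly the content of Proposition \ref{prop: lifting_maps} together with the fact, used throughout \cite{HRW21}, that the relevant $\Hom$-complexes have homology in non-negative degrees — and then passing through $HH_{\bullet}$; the contribution of the new deformation variable $\dot{v}_{[\text{new}]k}$ is absorbed either by bundling (when the new strand joins an existing orbit) or appears as a new tensor factor (when it forms its own orbit), which is precisely why $\eta(\beta_{\aa})$, the sum of colors over orbits, is the correct correction. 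Since all of these inputs — the lifting propositions, the trace property, uniqueness of deformations, and the explicit stabilization computation — are either proved in the present paper or quoted from \cite{HRW21}, the proof is essentially an assembly; I would present it as such, citing \cite{HRW21} Theorem 5.30 for the undeformed skeleton and noting that each step lifts to the deformed setting by the homological-algebra machinery of Section \ref{sec: hom_alg}.

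\begin{proof}[Proof sketch]
The statement is \cite{HRW21} Theorem 5.30 reformulated with $\Delta p$-curvature; we indicate how it follows from the tools assembled above. Given $\mathcal{L} = \hat{\beta}_{\aa}$, the braid $\beta_{\aa}$ is determined up to choice of braid word and the Markov moves. For a change of braid word, the canonical homotopy equivalence of Rickard complexes $C(\beta_{\aa})$ lifts, by uniqueness of $\Delta e$-deformations of bounded invertible complexes (Proposition \ref{prop: inv_uniq_yify}) and the dictionary of Corollary \ref{cor: alph_soup_full} and Lemma \ref{lem: delta_p_bundling}, to a homotopy equivalence of bundled $\Delta p$-deformed Rickard complexes; applying the additive functor $HH_{\bullet}$ (Definition \ref{def: hh_curved}) and noting that $\epsilon, N, \eta$ are word-independent gives invariance of $\YS C_{KR}(\beta_{\aa})$ up to homotopy equivalence. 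For conjugation invariance, write $C(\gamma\beta\gamma^{-1}_{\aa}) \simeq C(\gamma_{\aa}) \star C(\beta_{\aa}) \star C(\gamma_{\aa})^{\vee}$ and apply the trace property of Hochschild homology (Proposition \ref{prop: hh_is_trace}) degreewise in the deformation variables; the resulting natural isomorphism is compatible with the bundled connections, and $\epsilon, N, \eta$ are conjugation-invariant. For Markov stabilization, the undeformed homotopy equivalence of \cite{HRW21} relating the stabilized partial-trace complex to $HH_{\bullet}(C(\beta_{\aa}))$, shifted by the normalization in Definition \ref{def: kr_hom}, lifts to the $\Delta e$-deformed level by Proposition \ref{prop: lifting_maps} (using that the relevant $\Hom$-complexes have homology concentrated in non-negative degrees, as in \cite{HRW21}); passing through the change of variables, bundling, and $HH_{\bullet}$ yields the deformed statement, with the new deformation variable absorbed by bundling or contributing a free tensor factor according to whether the stabilized strand joins an existing orbit, which accounts for the appearance of $\eta(\beta_{\aa})$ in the normalization. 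Assembling these three invariance statements shows $\YS H_{KR}(\mathcal{L})$ depends only on the framed, oriented, colored link $\mathcal{L}$.
\end{proof}
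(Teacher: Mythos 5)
The paper gives no proof of this statement: the theorem header reads \texttt{[\cite{HRW21}, Theorem 5.30]}, and it is simply cited from that reference as a known result. There is therefore no internal argument to compare your sketch against, and indeed the relevant background for proving it (the explicit Markov-move analysis for deformed Rickard complexes, the stabilization computation, and the verification that the trace isomorphism respects bundled connections) is not developed in the present paper. So your proposal is not a proof of a theorem of this paper; it is a reconstruction of an external proof from \cite{HRW21}.

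As a reconstruction, your high-level strategy (Markov moves lifted through deformation, trace property for conjugation, stabilization calculation with normalization factors) matches the shape of the argument in \cite{HRW21}. However, the sketch contains a concrete error in the Markov~II discussion: you write that the new deformation variable is ``absorbed by bundling or appears as a new tensor factor (when it forms its own orbit),'' but a Markov stabilization $\beta \rightsquigarrow \beta\sigma_n^{\pm 1}$ always connects the added strand to the existing component carrying strand $n$; the stabilized strand can never form its own orbit. The correction term $\eta$ in Definition~\ref{def: braid_stats} is invariant under stabilization precisely because no new orbit is created, and the statistic that changes is $N$ (and $\epsilon$). There are also several places you slide past the hard part: you invoke Proposition~\ref{prop: lifting_maps} for the stabilization equivalence without verifying the required non-negative-degree hypothesis, and you assert compatibility of the trace isomorphism with the bundled connections without argument, which is nontrivial because conjugation permutes the roles of the $\dot{\mathbb{V}}$-variables even while preserving the orbits set-theoretically. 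These are the exact points where the real work in \cite{HRW21} lives, so calling the proof ``essentially an assembly'' overstates what the quoted propositions give you for free.
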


In the comparisons of Section \ref{sec: comp_hom}, we will find it more convenient to work with a version of $\YS H_{KR}(\mathcal{L})$ modeled on bundled $\Delta e$-curvature. Hogancamp--Rose--Wedrich do not consider this form of their invariant in \cite{HRW21}; one can distill an implicit bundled $\Delta e$ model of their invariant from that work, but the relevant bundled curvature differs slightly from our Definition \ref{def: bundled_delta_e_curv}\footnote{Specifically, let $\mathbb{X}_{[j]}$ (respectively $\mathbb{X}'_{[j]}$) denote the formal sum of the alphabets associated to all strands of $\beta$ which close up to the component $[j]$ of $\hat{\beta}$. Then the bundled $\Delta e$-curvature implicit in \cite{HRW21} is $\sum_{[j] \in \Omega(\sigma)} \sum_{k = 1}^{a_j} \left(e_k(\mathbb{X}_{[\sigma(j)]}) - e_k(\mathbb{X}'_{[j]}) \right) \otimes u_{[j]k}$. Compare to Definition \ref{def: bundled_delta_e_curv}, in which we do \textit{not} bundle the $\mathbb{X}$ and $\mathbb{X}'$ alphabets in this way.}. We define a new version of this invariant that is more convenient for the applications of Section \ref{sec: comp_hom}; this definition and the proof that the two invariants are isomorphic occupy the remainder of this Section.

\begin{defn} \label{def: bundled_delta_e_curv}
    Let $\aa = (a_1, \dots, a_n) \vdash N$, $\sigma \in \SG_n$ be given such that $\sigma \cdot \aa = \aa$, and let $\Omega(\sigma)$ denote the orbits of the permutation $\sigma$ acting on the set $\{1, \dots, n\}$. For each pair $[j] \in \Omega(\sigma)$ and $1 \leq k \leq a_j$, let $u_{[j]k}$ be a formal variable of degree $\mathrm{deg}(u_{[j]k}) = q^{-2k}t^2$. Let $\mathbb{U}_{\aa}^{\Omega(\sigma)}$ denote the alphabet of all such variables. Then the \textit{bundled $\Delta e$-curvature} associated to the pair $(\aa, \sigma)$ is the polynomial

    \[
    F_{\sigma, [u]}^{\aa}(\mathbb{X}, \mathbb{X}') := \sum_{j = 1}^n \sum_{k = 1}^{a_j} \left(e_k(\mathbb{X}_{\sigma(j)}) - e_k(\mathbb{X}'_j) \right) \otimes u_{[j]k} \in \mathrm{Sym}^{\aa}_{\sigma}(\mathbb{X}, \mathbb{X}') \otimes_R R[\mathbb{U}_{\aa}^{\Omega(\sigma)}].
    \]

    As in Definition \ref{def: delta_e_curv}, we reserve the notation $F_{[u]}^{\aa}(\mathbb{X}, \mathbb{X}')$ for the special case $\sigma = e$.
\end{defn}

\begin{defn} \label{def: bundled_y-ification_elem}
    We call a curved complex $\mathrm{tw}_{\Delta_X}(X \otimes_R R[\mathbb{U}_{\aa}^{\Omega(\sigma)}]) \in \YS_{F_{\sigma, [u]}^{\aa}}(\Bim_N; R[\mathbb{U}_{\aa}^{\Omega(\sigma)}])$ a \textit{bundled $\Delta e$-deformation} of the underlying complex $X$.
\end{defn}

\begin{lem} \label{lem: delta_e_bundling}
    Let $\mathrm{tw}_{\Delta_X}(X \otimes_R R[\mathbb{U}_{\aa}])$ be a $\Delta e$-deformation of $X$. Let $\pi_u \colon R[\mathbb{U}_{\aa}] \to R[\mathbb{U}_{\aa}^{\Omega(\sigma)}]$ denote the quotient map $u_{jk} \mapsto u_{[j]k}$. Define $\pi_u(\Delta_X)$ by acting on $\mathbb{U}_{\aa}$-homogeneous summands as usual. Then $\mathrm{tw}_{\pi_u(\Delta_X)} \left( X \otimes_R R[\mathbb{U}_{\aa}^{\Omega(\sigma)}] \right)$is a bundled $\Delta e$-deformation of $X$.
\end{lem}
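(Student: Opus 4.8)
The plan is to verify directly that $\mathrm{tw}_{\pi_u(\Delta_X)}(X \otimes_R R[\mathbb{U}_{\aa}^{\Omega(\sigma)}])$ satisfies the Maurer--Cartan equation \eqref{eq: maurer_cartan} with the correct curvature, namely $F_{\sigma, [u]}^{\aa}(\mathbb{X}, \mathbb{X}')$. This is completely parallel to the proof of Lemma \ref{lem: delta_p_bundling} for power sums, so I would keep the argument brief and point to that analogy.

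First I would observe that $\pi_u \colon R[\mathbb{U}_{\aa}] \to R[\mathbb{U}_{\aa}^{\Omega(\sigma)}]$, being a surjection of graded-commutative $R$-algebras sending each generator $u_{jk}$ to the generator $u_{[j]k}$ (where $[j] \in \Omega(\sigma)$ is the orbit of $j$), induces a map on the extension-of-scalars categories. Concretely, applying $\pi_u$ to a $\mathbb{U}_{\aa}$-homogeneous decomposition $\Delta_X = \sum_{{\bf{v}}} (\Delta_X)_{u^{\bf{v}}} \otimes u^{\bf{v}}$ gives $\pi_u(\Delta_X) = \sum_{{\bf{v}}} (\Delta_X)_{u^{\bf{v}}} \otimes \pi_u(u^{\bf{v}})$, which is a well-defined degree $1$ endomorphism of $X \otimes_R R[\mathbb{U}_{\aa}^{\Omega(\sigma)}]$ because $\pi_u$ preserves degree. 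Since $\pi_u$ is an $R$-algebra homomorphism, it is compatible with the middle interchange law in the sense that applying $\pi_u$ to morphisms commutes with composition and tensor; hence $\pi_u((\delta_X)^2) = (\pi_u(\delta_X))^2$ where $\delta_X = d_X \otimes 1 + \Delta_X$ and its pushforward is $d_X \otimes 1 + \pi_u(\Delta_X)$.

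Next I would compute: since $\mathrm{tw}_{\Delta_X}(X \otimes_R R[\mathbb{U}_{\aa}])$ is a $\Delta e$-deformation, $(d_X \otimes 1 + \Delta_X)^2 = F_{\sigma, u}^{\aa}(\mathbb{X}, \mathbb{X}') = \sum_{j = 1}^n \sum_{k = 1}^{a_j}(e_k(\mathbb{X}_{\sigma(j)}) - e_k(\mathbb{X}'_j)) \otimes u_{jk}$. Applying $\pi_u$ to both sides and using the compatibility just noted, $(d_X \otimes 1 + \pi_u(\Delta_X))^2 = \pi_u(F_{\sigma, u}^{\aa}(\mathbb{X}, \mathbb{X}')) = \sum_{j = 1}^n \sum_{k = 1}^{a_j}(e_k(\mathbb{X}_{\sigma(j)}) - e_k(\mathbb{X}'_j)) \otimes u_{[j]k} = F_{\sigma, [u]}^{\aa}(\mathbb{X}, \mathbb{X}')$, which is exactly the curvature appearing in Definition \ref{def: bundled_delta_e_curv}. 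One also checks that $F_{\sigma, [u]}^{\aa}(\mathbb{X}, \mathbb{X}') \in Z(\SSBim_N[\mathbb{U}_{\aa}^{\Omega(\sigma)}])$, which follows from Proposition \ref{prop: def_curv_central} exactly as for the unbundled curvature, since the $u_{[j]k}$ are again even variables. Finally, the $\mathbb{U}_{\aa}^{\Omega(\sigma)}$-degree $0$ component of $d_X \otimes 1 + \pi_u(\Delta_X)$ is still $d_X$ (as $\pi_u$ kills no constant term and the degree $0$ part of $\Delta_X$ is assumed trivial or equal to $d_X$ in the strict case), so the result is genuinely a bundled $\Delta e$-deformation of $X$ in the sense of Definition \ref{def: bundled_y-ification_elem}. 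There is essentially no obstacle here; the only thing to be careful about is confirming that $\pi_u$ intertwines the middle interchange law, but this is immediate since $\pi_u$ is a homomorphism of graded-commutative rings and all the relevant signs depend only on homological degree, which $\pi_u$ preserves.
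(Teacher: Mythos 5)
Your proof is correct. The paper in fact states Lemma \ref{lem: delta_e_bundling} (and its $\Delta p$ analogue, Lemma \ref{lem: delta_p_bundling}) without proof, treating it as an immediate consequence of the definitions, so there is no paper argument to compare against; your write-up simply makes explicit the reasoning the paper leaves implicit. The essential observations you identify are exactly the right ones: $\pi_u$ is a degree-preserving homomorphism of graded-commutative $R$-algebras, so applying it to the $\mathbb{U}_{\aa}$-component of a morphism commutes with composition in the extended category (the middle-interchange signs depend only on degrees, which $\pi_u$ preserves); hence $\pi_u$ intertwines the Maurer--Cartan equation, sending the unbundled curvature $F^{\aa}_{\sigma,u}$ to the bundled curvature $F^{\aa}_{\sigma,[u]}$ of Definition \ref{def: bundled_delta_e_curv}. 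Centrality of $F^{\aa}_{\sigma,[u]}$ is again by Proposition \ref{prop: def_curv_central} since all $u_{[j]k}$ are even. Two small phrasing remarks: you should make explicit that the implicit hypothesis $\sigma\cdot\aa=\aa$ (needed for the orbit decomposition to be well-defined, as in Definition \ref{def: bundled_delta_e_curv}) is in force; and the clause about the ``degree $0$ part of $\Delta_X$'' is unnecessary hedging — by construction $\Delta_X = \delta_X - d_X\otimes 1$ has trivial $\mathbb{U}_{\aa}$-degree $0$ component, and since $\pi_u$ preserves $\mathbb{U}$-degree, so does $\pi_u(\Delta_X)$, giving the statement that this is a bundled deformation of the same underlying complex $(X,d_X)$.
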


As was the case with bundled $\Delta p$-curvature, the bundled $\Delta e$-curvature $F_{\sigma, [u]}^{\aa}(\mathbb{X}, \mathbb{X}')$ vanishes upon identifying $\mathbb{X}_j$ with $\mathbb{X}_j'$ for each $j$. This leads us naturally to the following definition.

\begin{defn} \label{def: e_bun_hom}
    Let $\mathcal{L}$ be a colored, oriented, framed link which is presented as a balanced braid closure $\mathcal{L} = \hat{\beta}_{\aa}$, and let $\sigma$ denote the permutation induced by $\beta$. Let

    \[
    \overline{\Delta}_{\beta} \in \End_{\CS(\SSBim)}(C(\beta_{\aa})) \otimes_R R[\mathbb{U}_{\aa}^{\Omega(\sigma)}]
    \]
    denote the connection on the bundled $\Delta e$-deformation of $C(\beta_{\aa})$ obtained from the $\Delta e$-deformed Rickard complex through Lemma \ref{lem: delta_e_bundling}. Set

    \[
    \YS_u HH_{\bullet}(\beta_{\aa}) := \mathrm{tw}_{HH_{\bullet}(\overline{\Delta}_{\beta})} \left( HH_{\bullet}(C(\beta_{\aa})) \otimes_R R[\mathbb{U}_{\aa}^{\Omega(\sigma)}] \right).
    \]
    Then the \textit{$\Delta e$-deformed, colored, triply-graded link homology} $\YS_u H_{KR}(\mathcal{L})$ is the $\mathbb{Z}_a \times \mathbb{Z}_q \times \mathbb{Z}_t$-graded $R$-module given by taking homology of the complex

    \[
    \YS_u C_{KR}(\beta_{\aa}) := (at^{-1})^{\frac{1}{2} (\epsilon(\beta_{\aa}) + N(\beta_{\aa}) - \eta(\beta_{\aa}))} q^{-\epsilon(\beta_{\aa})} \YS_u HH_{\bullet}(\beta_{\aa}).
    \]
\end{defn}

Implicit in the statement of Definition \ref{def: e_bun_hom} is the claim that $\YS_u H_{KR}(\mathcal{L})$ is a well-defined invariant of colored links. We prove this by furnishing an isomorphism to the link invariant $\YS H_{KR}(\mathcal{L})$.

\begin{thm} \label{thm: e_to_p_bundled}
    Let $\mathcal{L}$ be a framed, oriented, colored link. Then $\YS_u H_{KR}(\mathcal{L}) \cong \YS H_{KR} (\mathcal{L})$ as triply-graded $R$-modules.
\end{thm}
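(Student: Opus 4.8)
The strategy is to transport the isomorphism of Lemma \ref{lem: alph_soup_dict} (packaged in Corollary \ref{cor: alph_soup_full}) from the unbundled to the bundled setting, and then check that it is compatible with Hochschild homology and with the normalization in Definitions \ref{def: e_bun_hom} and the one preceding Theorem \ref{thm: def_kr_inv}. Since the normalizing prefactors $(at^{-1})^{\frac{1}{2}(\epsilon + N - \eta)} q^{-\epsilon}$ are identical for $\YS_u C_{KR}(\beta_{\aa})$ and $\YS C_{KR}(\beta_{\aa})$, it suffices to produce a homotopy equivalence (in fact an isomorphism of curved complexes) $\YS_u HH_{\bullet}(\beta_{\aa}) \cong \YS HH_{\bullet}(\beta_{\aa})$ for any balanced braid presentation $\mathcal{L} = \hat{\beta}_{\aa}$, and then invoke Theorem \ref{thm: def_kr_inv} to conclude $\YS_u H_{KR}(\mathcal{L})$ is a link invariant and equals $\YS H_{KR}(\mathcal{L})$.

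\textbf{Key steps.} First, I would observe that the orbit structure $\Omega(\sigma)$ is the same whether we bundle $\Delta e$- or $\Delta p$-parameters, so the bundled alphabets $\mathbb{U}_{\aa}^{\Omega(\sigma)}$ and $\dot{\mathbb{V}}_{\aa}^{\Omega(\sigma)}$ have matching index sets and degrees. The change of variables of Corollary \ref{cor: alph_soup_full} is built strand-by-strand from the polynomials $\psi_k^{a_j}, \rho_k^{a_j}$; I would check that these polynomials are \emph{orbit-invariant} in the sense that for $j, j'$ in the same orbit of $\sigma$, the substitution rules for $\dot{v}_{jk}$ and $\dot{v}_{j'k}$ differ only by relabeling alphabets that become identified under Hochschild homology. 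More precisely, after applying $HH_{\bullet}$ (which identifies the action of symmetric polynomials in $\mathbb{X}_i$ with those in $\mathbb{X}'_i$, hence all alphabets along a braid-orbit become identified), the polynomials $\psi_k^{a_j}$ and $\rho_k^{a_j}$ descend to well-defined elements of $HH_{\bullet}(C(\beta_{\aa})) \otimes_R R[\mathbb{U}_{\aa}^{\Omega(\sigma)}]$, respectively with $\dot{\mathbb{V}}$. Thus the quotient maps $\pi_u$ of Lemma \ref{lem: delta_e_bundling} and $\pi_{\dot{v}}$ of Lemma \ref{lem: delta_p_bundling} intertwine with the change of variables $\psi_{\aa}, \rho_{\aa}$, giving a commuting square that descends to a $\mathbb{Z}_q \times \mathbb{Z}_t$-graded $R$-algebra isomorphism
\[
HH_{\bullet}(C(\beta_{\aa})) \otimes_R R[\mathbb{U}_{\aa}^{\Omega(\sigma)}] \cong HH_{\bullet}(C(\beta_{\aa})) \otimes_R R[\dot{\mathbb{V}}_{\aa}^{\Omega(\sigma)}]
\]
carrying $HH_{\bullet}(F_{[u]}^{\aa})$ to $HH_{\bullet}(F_{[\dot{v}]}^{\aa})$. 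Second, I would apply this isomorphism to the connections: by Corollary \ref{cor: e_to_p_curvature} the $\Delta p$-deformed Rickard complex $C^{\dot{v}}(\beta_{\aa})$ is obtained from $C^u(\beta_{\aa})$ by the very substitution $\rho_{\aa}$, so $\rho_{\aa}(\Delta_{\beta}^u) = \Delta_{\beta}^{\dot{v}}$; bundling both sides via the respective quotient maps and applying $HH_{\bullet}$, the isomorphism of graded algebras above carries $HH_{\bullet}(\overline{\Delta}_{\beta}^u)$ to $HH_{\bullet}(\overline{\Delta}_{\beta}^{\dot{v}})$. Hence $\YS_u HH_{\bullet}(\beta_{\aa}) = \mathrm{tw}_{HH_{\bullet}(\overline{\Delta}_{\beta}^u)}(\cdots) \cong \mathrm{tw}_{HH_{\bullet}(\overline{\Delta}_{\beta}^{\dot{v}})}(\cdots) = \YS HH_{\bullet}(\beta_{\aa})$ as curved complexes, via Proposition \ref{prop: hpt_curv_iso} applied to this isomorphism (with $\alpha$ the bundled $\Delta e$-connection and the ambient isomorphism $f, g$ the scalar-extension automorphism). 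Taking homology and multiplying by the common prefactor yields $\YS_u C_{KR}(\beta_{\aa}) \cong \YS C_{KR}(\beta_{\aa})$, so $\YS_u H_{KR}(\mathcal{L}) \cong \YS H_{KR}(\mathcal{L})$; invariance of the latter under Theorem \ref{thm: def_kr_inv} gives invariance of the former.

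\textbf{Main obstacle.} The delicate point is verifying that the strandwise change of variables $\psi_k^{a_j}, \rho_k^{a_j}$ is genuinely compatible with bundling — that is, that the substitution does not mix deformation parameters across distinct orbits of $\sigma$ in a way that survives to the bundled quotient. This requires tracking which alphabets $\mathbb{X}_i, \mathbb{X}'_i$ appear in $\psi_k^{a_j}(\mathbb{X}_{\sigma(j)}, \mathbb{X}'_j, \mathbb{U}_j)$: only the alphabets indexed within a single $\sigma$-orbit occur, and after $HH_{\bullet}$ identifies $\mathbb{X}_i \sim \mathbb{X}'_i$ along the orbit, the formal difference $\mathbb{X}_{\sigma(j)} - \mathbb{X}'_j$ that appears inside $h_{j-i}(\mathbb{X}_{\sigma(j)} - \mathbb{X}'_j)$ does \emph{not} in general vanish term-by-term, so one must carefully confirm that the bundled connection is still well-defined and that the isomorphism respects it. A clean way to handle this is to note, as in the footnote comparing Definition \ref{def: bundled_delta_e_curv} with the implicit curvature of \cite{HRW21}, that we deliberately do \emph{not} bundle the $\mathbb{X}$ and $\mathbb{X}'$ alphabets, only the deformation parameters; this is exactly the freedom needed to make the strandwise substitution descend. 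I expect this bookkeeping — confirming orbit-locality of $\psi, \rho$ and the commutation of $\pi_u, \pi_{\dot{v}}$ with $\psi_{\aa}, \rho_{\aa}$ — to be the only real content of the proof, with everything else following formally from results already established.
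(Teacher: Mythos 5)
Your proposal correctly identifies the bundled change of variables $\psi_{\aa}, \rho_{\aa}$ of Corollary~\ref{cor: alph_soup_full} as the engine of the proof, and you are right to flag the orbit-dependence of the alphabets appearing in $\rho_k^{a_j}(\mathbb{X}_{\sigma(j)}, \mathbb{X}'_j, \dot{\mathbb{V}}_j)$ as the delicate point. But the resolution you sketch is not correct: the commuting square you claim --- that $\pi_u, \pi_{\dot{v}}$ intertwine $\psi_{\aa}, \rho_{\aa}$ --- actually fails, and fixing this is the bulk of the paper's proof.

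Here is why the square fails. After bundling, $u_{jk}$ and $u_{j'k}$ (for $j, j'$ in the same $\sigma$-orbit) are identified to $u_{[j]k}$, but the two substitution rules $u_{jk} \mapsto \rho_k^{a_j}(\mathbb{X}_{\sigma(j)}, \mathbb{X}'_j, \dot{\mathbb{V}}_j)$ and $u_{j'k} \mapsto \rho_k^{a_{j'}}(\mathbb{X}_{\sigma(j')}, \mathbb{X}'_{j'}, \dot{\mathbb{V}}_{j'})$ involve genuinely different $\mathbb{X}$-alphabets. Hochschild homology identifies $\mathbb{X}_i$ with $\mathbb{X}'_i$ for the \emph{same} $i$, but it does \emph{not} identify $\mathbb{X}_j$ with $\mathbb{X}_{j'}$ for distinct strands $j \neq j'$ in an orbit --- those actions are merely \emph{homotopic} on $HH_{\bullet}(C(\beta_{\aa}))$, not equal. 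So the bundled change of variables is not a well-defined automorphism of $HH_{\bullet}(C(\beta_{\aa})) \otimes_R R[\mathbb{U}_{\aa}^{\Omega(\sigma)}]$; any attempt to define it forces a choice of orbit representative, and distinct choices give genuinely distinct endomorphisms. Your claim that ``not bundling the $\mathbb{X}$-alphabets is exactly the freedom needed to make the strandwise substitution descend'' goes in the wrong direction: keeping the $\mathbb{X}$-alphabets unbundled is precisely what makes the substitution representative-dependent.

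The paper handles this in three steps you are missing. First, it \emph{fixes} a representative $r_{[j]}$ for each orbit and defines the bundled change of variables only using $\mathbb{X}_{\sigma(r_{[j]})}, \mathbb{X}'_{r_{[j]}}$, accepting the dependence on the choice. Second, using that $R$ is a field, it strictifies $HH_{\bullet}(C(\beta_{\aa}))$ to the minimal model $H^{\bullet}(HH_{\bullet}(C(\beta_{\aa})))$ with zero differential; on this minimal model the homotopic alphabet actions become literally \emph{equal}, so the twist $D''$ obtained is independent of the choice of $r_{[j]}$. Third, it swaps back to strand-local alphabets on the minimal model, recognizes the resulting curvature as exactly $F_{\sigma,[\dot v]}^{\aa}$, and invokes uniqueness of $\Delta p$-deformations (Proposition~\ref{prop: inv_uniq_yify}) to land on $\YS HH_{\bullet}(\beta_{\aa})$ up to homotopy equivalence --- not via Proposition~\ref{prop: hpt_curv_iso}, which requires an honest isomorphism that one does not have here. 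Without passage to the minimal model and the uniqueness argument, the proof does not close.
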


\begin{proof}
	Present $\mathcal{L}$ as the closure of an $n$-strand colored braid $\beta_{\aa}$ with induced permutation $\sigma \in \SG_n$ for some sequence of colors $\aa = (a_1, \dots, a_n)$. Since $\YS_u C_{KR}(\beta_{\aa})$ and $\YS C_{KR}(\beta_{\aa})$ carry the same grading shifts, it suffices to show that the homology of $\YS_u HH_{\bullet}(\beta_{\aa})$ and that of $\YS HH_{\bullet}(\beta_{\aa})$ are isomorphic.
	
	To that end, let $\mathrm{tw}_{\overline{\Delta}_{\beta}}(C(\beta_{\aa}) \otimes_R R[\mathbb{U}_{\aa}^{\Omega(\sigma)}]$ be the (strict) bundled $\Delta e$-deformation of $C(\beta_{\aa})$ from Definition \ref{def: e_bun_hom}. We break $\overline{\Delta}_{\beta}$ into $\mathbb{U}_{\aa}^{\Omega(\beta)}$ components as follows:
	
	\[
	\overline{\Delta}_{\beta} = \sum_{[j] \in \Omega(\sigma)} \sum_{k = 1}^{a_j} \left( \sum_{j \in [j]} \zeta_{jk} \right) \otimes u_{[j] k}.
	\]
	Here $\zeta_{jk} \in \End^{-1}(C(\beta_{\aa}))$ are an $F_{\sigma, u}^{\aa}$-deforming family satisfying $[d, \zeta_{jk}] = e_k(\mathbb{X}_{\sigma(j)}) - e_k(\mathbb{X}'_j)$. Applying Hochschild homology, we obtain the uncurved complex
	
	\begin{align*}
	\YS_u C_{KR}(\beta_{\aa}) & = \mathrm{tw}_{HH_{\bullet}(\overline{\Delta}_{\beta})}(HH_{\bullet}(C(\beta_{\aa})) \otimes_R R[\mathbb{U}_{\aa}^{\Omega(\sigma)}]; \\
	HH_{\bullet}(\overline{\Delta}_{\beta}) & = \sum_{[j] \in \Omega(\sigma)} \sum_{k = 1}^{a_j} \left( \sum_{j \in [j]} HH_{\bullet}(h_{jk}) \right) \otimes u_{[j] k}.
	\end{align*}
	
	Now, for each orbit $[j] \in \Omega(\sigma)$, choose some representative $r_{[j]} \in \{1, \dots, n\}$ satisfying $[r_{[j]}] = [j]$. We make a change of variables from $R[\mathbb{U}_{\aa}^{\Omega(\sigma)}]$ to a polynomial ring $R[\dot{\mathbb{V}}_{\aa}^{\Omega(\sigma)}]$ by the mutually inverse assignment
	
	\[
    \dot{v}_{[j]k} := \psi_k^{a_j}(\mathbb{X}_{\sigma(r_{[j]})}, \mathbb{X}'_{r_{[j]}}, \mathbb{U}_{[j]}); \quad u_{[j]k} := \rho_k^{a_j}(\mathbb{X}_{\sigma(r_{[j]})}, \mathbb{X}'_{r_{[j]}}, \dot{\mathbb{V}}_{[j]}).
    \]
    Applying this change of variables to $ \YS_u HH_{\bullet}(\beta_{\aa})$ produces an isomorphic uncurved complex

    \begin{align*}
    \YS_u HH_{\bullet}(\beta_{\aa}) & \cong \mathrm{tw}_{\tilde{\Delta}_{HH, \beta}} \left( HH_{\bullet} ( C(\beta_{\aa})) \otimes_{\mathrm{Sym}^{\sigma}(\mathbb{X}, \mathbb{X}')} \left( \mathrm{Sym}^{\sigma}(\mathbb{X}, \mathbb{X}') \otimes_R R[\dot{\mathbb{V}}^{\Omega(\sigma)}] \right) \right); \\
    \tilde{\Delta}_{HH, \beta} & := \sum_{[j] \in \Omega(\sigma)} \sum_{k = 1}^{a_j} \left( \sum_{j \in [j]} HH_{\bullet}(h_{jk}) \right) \otimes \rho_k^{a_j}(\mathbb{X}_{\sigma(r_{[j]})}, \mathbb{X}'_{r_{[j]}}, \dot{\mathbb{V}}_{[j]}).
    \end{align*}
    We can express the connection $\tilde{\Delta}_{HH, \beta}$ more explicitly using the expression for $\rho_k^{a_j}$ from Lemma \ref{lem: alph_soup_dict}:
    
    \begin{equation} \label{eq: equiv_connection}
    \tilde{\Delta}_{HH, \beta} = \sum_{[j] \in \Omega(\sigma)} \sum_{k = 1}^{a_j} \left( \sum_{j \in [j]}  HH_{\bullet}(h_{jk}) \left( \sum_{\ell = 1}^{a_j} \sum_{m = \ell}^{a_j} (-1)^{k + m - \ell - 1} \frac{\ell}{m} h_{\ell - k}(\mathbb{X}_{\sigma(r_{[j]})}) e_{k - j}(\mathbb{X}_{\sigma(r_{[j]})} - \mathbb{X}'_{r_{[j]}}) \right) \right) \otimes \dot{v}_{[j]k}.
    \end{equation}
    
    Because we work over a field, we can fix a homotopy equivalence $HH_{\bullet}(C(\beta_{\aa})) \simeq H^{\bullet}(HH_{\bullet}(C(\beta_{\aa})))$ of complexes of triply-graded $R$-modules, where the latter is considered as a complex with $0$ differential. By the arguments in the proof of Lemma 5.49 of \cite{HRW21}, this induces a homotopy equivalence of triply-graded $R[\dot{\mathbb{V}}^{\Omega(\sigma)}]$-modules
    
    \begin{equation} \label{eq: field_simp}
    \mathrm{tw}_{\tilde{\Delta}_{HH, \beta}} \left( HH_{\bullet} ( C(\beta_{\aa})) \otimes_R R[\dot{\mathbb{V}}^{\Omega(\sigma)}] \right) \simeq \mathrm{tw}_{D''} \left( H^{\bullet}(HH_{\bullet}(C(\beta_{\aa}))) \otimes_R R[\dot{\mathbb{V}}^{\Omega(\sigma)}] \right)
    \end{equation}
    for some $R[\dot{\mathbb{V}}^{\Omega(\sigma)}]$-linear twist $D''$.
    
    Now, the twist $D''$ is some explicit function of the coefficients in $\dot{\Delta}_{HH, \beta}$. Recall that upon applying Hochschild homology, the actions of (symmetric polynomials in) $\mathbb{X}_j$ and $\mathbb{X}'_j$ on $HH_{\bullet}(C(\beta_{\aa}))$ are identified for each $j$. Since the actions of (symmetric polynomials in) $\mathbb{X}_{\sigma(j)}$ and $\mathbb{X}'_j$ are already homotopic at the level of $C(\beta_{\aa})$, the actions of (symmetric polynomials in) the alphabets $\mathbb{X}_j$, $\mathbb{X}'_j$, $\mathbb{X}_{\sigma(j)}$, and $\mathbb{X}'_{\sigma(j)}$ on $HH_{\bullet}(C(\beta_{\aa}))$ are all homotopic. In particular, they induce \textit{equal} endomorphisms of $H^{\bullet}(HH_{\bullet}(C(\beta_{\aa})))$. Because of this, replacing each occurence of the alphabets $\mathbb{X}_{\sigma(r_{[j]})}$ and $\mathbb{X}'_{r_{[j]}}$ by the alphabets $\mathbb{X}_{\sigma(j)}$ and $\mathbb{X}'_j$ in \eqref{eq: equiv_connection} does not affect the twist $D''$ in \eqref{eq: field_simp}.
    
    By the previous paragraph, we obtain a homotopy equivalence of triply-graded $R[\dot{\mathbb{V}}^{\Omega(\sigma)}]$-modules
    
    \begin{equation} \label{eq: almost_power_sum}
    \mathrm{tw}_{\tilde{\Delta}'_{HH, \beta}} \left( HH_{\bullet} ( C(\beta_{\aa})) \otimes_R R[\dot{\mathbb{V}}^{\Omega(\sigma)}] \right) \simeq \mathrm{tw}_{D''} \left( H^{\bullet}(HH_{\bullet}(C(\beta_{\aa}))) \otimes_R R[\dot{\mathbb{V}}^{\Omega(\sigma)}] \right)
    \end{equation}
    where
    
    \begin{align*} \label{eq: equiv_connection2}
    \tilde{\Delta}'_{HH, \beta} & = \sum_{[j] \in \Omega(\sigma)} \sum_{k = 1}^{a_j} \left( \sum_{j \in [j]}  HH_{\bullet}(h_{jk}) \left( \sum_{\ell = 1}^{a_j} \sum_{m = \ell}^{a_j} (-1)^{k + m - \ell - 1} \frac{\ell}{m} h_{\ell - k}(\mathbb{X}_{\sigma(j)}) e_{k - j}(\mathbb{X}_{\sigma(j)} - \mathbb{X}'_{j}) \right) \right) \otimes \dot{v}_{[j]k} \\
    & = \sum_{[j] \in \Omega(\sigma)} \sum_{k = 1}^{a_j} \left( \sum_{j \in [j]} HH_{\bullet}(h_{jk}) \right) \otimes \rho_k^{a_j}(\mathbb{X}_{\sigma(j)}, \mathbb{X}'_{j}, \dot{\mathbb{V}}_{[j]}).
    \end{align*}
    
    By the definition of Hochschild homology of curved complexes, we can rewrite the left-hand side of \eqref{eq: almost_power_sum} as
    
    \begin{align*}
    \mathrm{tw}_{\tilde{\Delta}'_{HH, \beta}} \left( HH_{\bullet} ( C(\beta_{\aa})) \otimes_R R[\dot{\mathbb{V}}^{\Omega(\sigma)}] \right) & = HH_{\bullet} \left( \mathrm{tw}_{\tilde{\Delta}'_{\beta}} \left( C(\beta_{\aa}) \otimes_{\mathrm{Sym}^{\sigma}(\mathbb{X}, \mathbb{X}')} \left( \mathrm{Sym}^{\sigma}(\mathbb{X}, \mathbb{X}') \otimes_R R[\dot{\mathbb{V}}^{\Omega(\sigma)}] \right) \right) \right); \\
    \tilde{\Delta}'_{\beta} & := \sum_{[j] \in \Omega(\sigma)} \sum_{k = 1}^{a_j} \left( \sum_{j \in [j]} h_{jk} \right) \otimes \rho_k^{a_j}(\mathbb{X}_{\sigma(j)}, \mathbb{X}'_{j}, \dot{\mathbb{V}}_{[j]}).
    \end{align*}
	
	Now, $\mathrm{tw}_{\tilde{\Delta}'_{\beta}} \left( C(\beta_{\aa}) \otimes_{\mathrm{Sym}^{\sigma}(\mathbb{X}, \mathbb{X}')} \left( \mathrm{Sym}^{\sigma}(\mathbb{X}, \mathbb{X}') \otimes_R R[\dot{\mathbb{V}}^{\Omega(\sigma)}] \right) \right)$ is a deformation of $C(\beta_{\aa})$ with curvature

    \begin{align*}
    \sum_{[j] \in \Omega(\sigma)} \sum_{k = 1}^{a_j} \left( \sum_{j \in [j]} [d, h_{jk}] \right) \otimes \rho_k^{a_j}(\mathbb{X}_{\sigma(j)}, \mathbb{X}'_{j}, \dot{\mathbb{V}}_{[j]}) & = \sum_{[j] \in \Omega(\sigma)} \sum_{k = 1}^{a_j} \left( \sum_{j \in [j]} e_k(\mathbb{X}_{\sigma(j)}) - e_k(\mathbb{X}'_j) \right) \otimes \rho_k^{a_j}(\mathbb{X}_{\sigma(j)}, \mathbb{X}'_{j}, \dot{\mathbb{V}}_{[j]}) \\
    & = \sum_{[j] \in \Omega(\sigma)} \sum_{i = 1}^{a_j} \left( \frac{1}{k} \left( p_k(\mathbb{X}_{\sigma(j)}) - p_k(\mathbb{X}'_j) \right) \right) \otimes \dot{v}_{[j]k}.
    \end{align*}
    
    This is \textit{exactly} the bundled $\Delta p$-curvature $F_{\sigma, [\dot{v}]}^{\aa}(\mathbb{X}, \mathbb{X}')$. Such $\Delta p$-deformations of $C(\beta_{\aa})$ are unique up to homotopy equivalence by \textcolor{revisions}{Corollary} \ref{cor: inv_uniq_yify}. In particular, we must have a homotopy equivalence
    
    \[
    HH_{\bullet} \left( \mathrm{tw}_{\tilde{\Delta}'_{\beta}} \left( C(\beta_{\aa}) \otimes_{\mathrm{Sym}^{\sigma}(\mathbb{X}, \mathbb{X}')} \left( \mathrm{Sym}^{\sigma}(\mathbb{X}, \mathbb{X}') \otimes_R R[\dot{\mathbb{V}}^{\Omega(\sigma)}] \right) \right) \right) \cong \YS HH_{\bullet}(\beta_{\aa}).
    \]
\end{proof}

\subsection{Projector-Colored Link Homology} \label{sec: proj_link_hom}

So far, we have only considered families of colored link invariants obtained by modifying the complexes assigned to a crossing based on the colors of the strands involved. In this Section, we consider a well-known alternative construction given by \textit{cabling and insertion}. For our purposes, the input for this procedure consists of the following data:

\begin{itemize}
    \item A framed, oriented, colored link $\mathcal{L}$ presented as a balanced braid closure $\mathcal{L} = \beta_{\aa}$ with associated permutation $\sigma$ for some sequence of colors $\aa = (a_1, \dots, a_n)$.

    \item For each link component $[j] \in \Omega(\sigma)$, a \textcolor{revisions}{composition} $\blam_j \vdash a_j$.

    \item For each such \textcolor{revisions}{composition} $\blam_j$, a (potentially curved) complex $C_{[j]}$ over $\SSBim_{\blam_j}^{\blam_j}$ which slides past (potentially deformed) crossings. 
\end{itemize}

Given this input data, the following procedure produces a $\mathbb{Z}_a \times \mathbb{Z}_q \times \mathbb{Z}_t$-graded $R$-module:

\begin{enumerate}
    \item Choose one marked point on each component of $\mathcal{L}$ away from the crossings of $\beta_{\aa}$.

    \item For each $1 \leq j \leq n$, replace the $j^{th}$ strand of $\beta_{\aa}$ with a $\blam_j$-colored cable.

    \item Consider the (potentially deformed) Rickard complex associated to the colored braid obtained in the previous step. At each marked point, insert the complex $C_{[j]}$ into this complex (by horizontal composition $\star$).

    \item Apply $H_{KR}$ (or $\YS H_{KR}$) to the (potentially curved) complex obtained in the previous step.
\end{enumerate}

\textcolor{revisions}{This procedure requires a choice of braid presentation $\beta_{\aa}$ and marked points on each component of $\mathcal{L}$. Because the complexes $C_{[j]}$ slide past crossings up to homotopy equivalence, different choices of marked points on a given braid strand do not affect the homotopy equivalence class of the complex resulting from Step (3). Further, by Proposition \ref{prop: hh_is_trace}, moving a marked point through the ``closure" segements of $\mathcal{L}$ to different braid strands does not affect the module resulting from Step (4). These two facts show independence of the resulting module from the choice of marked points, and independence of braid representatives then follows from the usual arguments for Rickard complexes, giving a well-defined link invariant. We refer the interested reader to \cite{CK12} for further discussion of this construction.}

We have already established in Section \ref{sec: fray_functors} that each of our frayed projectors slides past (potentially deformed) crossings (see Corollaries \ref{cor: fin_proj_crossing_slide}, \ref{cor: yfin_proj_crossing_slide}, \ref{cor: inf_proj_crossing_slide}, \ref{cor: yinf_proj_crossing_slide}). It follows that choosing $C_{[j]}$ to be each type (finite/infinite, deformed/undeformed) of frayed projector gives rise to a whole \textit{family} of frayed projector-colored, triply-graded link homologies parametrized by \textcolor{revisions}{composition}s of the colors assigned to each strand.

Mixing and matching various types of frayed projectors on various components does give rise to well-defined link invariants, but we will be most interested in the case in which the same type of frayed projector is chosen for each strand. We describe each of these frayed projector-colored invariants more precisely below. 

\begin{thm} \label{thm: frayed_proj_homologies}
    Let $\mathcal{L} = \hat{\beta}_{\aa}$ be a framed, oriented, colored link with braid colors $\aa = (a_1, \dots, a_n)$. Each of the following procedures gives rise to a family of well-defined link invariants indexed by choices of \textcolor{revisions}{composition} $\blam_j \vdash a_{[j]}$.

    \begin{itemize}
        \item Cabling and inserting $C_{[j]} := K^{\blam_j}_{(1^j)}$ into the \textit{undeformed} cabled Rickard complex. We refer to this family of invariants as \textit{finite frayed projector-colored homology}.

        \item Cabling and inserting $C_{[j]} := K^{\blam_j, y}_{(1^j)}$ into the $\Delta e$-\textit{deformed} cabled Rickard complex. We refer to this family of invariants as \textit{deformed finite frayed projector-colored homology}.

        \item Cabling and inserting $C_{[j]} := (P^{\blam_j}_{(1^j)})^{\vee}$ into the \textit{undeformed} cabled Rickard complex. We refer to this family of invariants as \textit{infinite frayed projector-colored homology}.

        \item Cabling and inserting $C_{[j]} := (P^{\blam_j, y}_{(1^j)})^{\vee}$ into the $\Delta e$-\textit{deformed} cabled Rickard complex. We refer to this family of invariants as \textit{deformed infinite frayed projector-colored homology}.
    \end{itemize}
\end{thm}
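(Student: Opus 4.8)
The plan is to reduce the theorem to the general invariance criterion established implicitly in the preamble to this section, namely that the cabling-and-insertion procedure produces a well-defined link invariant provided (i) each inserted complex $C_{[j]}$ slides past the relevant (deformed or undeformed) crossings and (ii) Hochschild homology is a trace, which is Proposition \ref{prop: hh_is_trace}. Both ingredients are already available: Proposition \ref{prop: hh_is_trace} gives the required conjugation-invariance of $HH_\bullet$, and Corollaries \ref{cor: fin_proj_crossing_slide}, \ref{cor: yfin_proj_crossing_slide}, \ref{cor: inf_proj_crossing_slide}, and \ref{cor: yinf_proj_crossing_slide} provide the four sliding statements, one for each of the four families. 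So the bulk of the proof is a bookkeeping argument showing that the ``input data'' in each bullet meets the hypotheses of the general procedure, and that the resulting $\mathbb{Z}_a\times\mathbb{Z}_q\times\mathbb{Z}_t$-graded module is unchanged under the two basic moves on braid presentations: (a) conjugation $\beta_{\aa}\mapsto \gamma \beta_{\aa}\gamma^{-1}$ (Markov I) and (b) stabilization $\beta_{\aa}\mapsto \beta_{\aa}\sigma_n^{\pm1}$ on a new strand colored $1$ (Markov II), together with the choice of marked points.

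First I would fix a presentation $\mathcal{L}=\hat\beta_{\aa}$ and spell out, for a given family, that Step 2 (replacing strand $j$ by a $\blam_j$-cable) followed by Step 3 (passing to the cabled, possibly $\Delta e$-deformed, Rickard complex and inserting $C_{[j]}$ at one marked point per component) produces a complex in the appropriate category — an honest chain complex in the finite and infinite undeformed cases, and a curved complex with the bundled $\Delta e$-curvature $F^{\aa}_{\sigma,[u]}$ in the deformed cases, by Lemma \ref{lem: delta_e_bundling} and the construction of $C^u(\beta_{\aa})$ in Corollary \ref{cor: curved_braids}. Next I would establish independence of the location of the marked point on a given component: moving a marked point past a crossing is exactly the sliding property of $C_{[j]}$ (Corollary \ref{cor: fin_proj_crossing_slide} etc., in both the crossings-above-$C$ and crossings-below-$C$ forms), and moving it around the closure past the ``top'' of the braid uses Proposition \ref{prop: hh_is_trace} to cyclically reposition the relevant tensor factor. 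In the deformed cases one must check that the sliding homotopy equivalences respect the $\Delta e$-connection; but this is precisely the content of Corollaries \ref{cor: yfin_proj_crossing_slide} and \ref{cor: yinf_proj_crossing_slide}, which already package the deformed sliding statement.

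Then I would handle invariance under the Markov moves. For conjugation, write $\hat\beta_{\aa}=\widehat{\gamma\beta_{\aa}\gamma^{-1}}$ and apply Proposition \ref{prop: hh_is_trace} with $X = C(\gamma_{\aa}^{\blam})\star(\text{inserted projectors})$ and $Y = C((\beta\gamma^{-1})_{\aa}^{\blam})$ (appropriately interpreted in the cabled, deformed setting, noting that the inserted $C_{[j]}$ can be slid to sit at a marked point lying outside the support of $\gamma$, so it is carried along by the trace isomorphism); the grading shifts in Definitions \ref{def: kr_hom} and \ref{def: e_bun_hom} depend only on the conjugacy-invariant statistics $\epsilon$, $N$, $\eta$. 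For stabilization, the new $1$-colored strand is not cabled and carries the trivial decomposition $\blam=(1)$, so the new crossing $\sigma_n^{\pm1}$ is an ordinary Rouquier/Rickard crossing; the usual computation that $HH_\bullet$ of $C(\beta_{\aa}\sigma_n^{\pm 1})$ agrees with $HH_\bullet$ of $C(\beta_{\aa})$ up to the framing/shift factors (this is exactly how Theorem \ref{thm: HKR_link_inv} and Theorem \ref{thm: e_to_p_bundled}/Theorem \ref{thm: def_kr_inv} are proved in \cite{HRW21}) goes through unchanged, since the inserted projectors live on the other, unaffected strands and can be slid away from the stabilization region using the sliding property again. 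In each case one also invokes the well-known fact (Proposition 3.25 of \cite{HRW21}, cited after Definition \ref{def: rickard_complex_words}) that Rickard complexes of cabled braids satisfy the braid relations up to canonical homotopy equivalence, so the cabled Rickard complex itself depends only on the colored braid, not on a chosen word.

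The main obstacle I anticipate is the deformed cases: one must verify that at every step the homotopy equivalences used — the sliding equivalences of Section \ref{sec: fray_functors}, the trace isomorphism of Proposition \ref{prop: hh_is_trace}, and the Markov-move equivalences — are compatible with the \emph{bundled} $\Delta e$-connection $F^{\aa}_{\sigma,[u]}$ obtained via Lemma \ref{lem: delta_e_bundling}, in particular that bundling the deformation variables along link components is compatible with cyclic repositioning and with the identification of deformation alphabets carried out in Remark \ref{rem: y_ify_tensors}. Once one knows (as in the proof of Theorem \ref{thm: e_to_p_bundled}) that bundled-$\Delta e$-deformed Rickard complexes are unique up to homotopy equivalence as curved complexes (Proposition \ref{prop: inv_uniq_yify}, applicable since Rickard complexes are bounded and invertible), this compatibility becomes essentially automatic, so the real work is organizing the notation for the deformation alphabets rather than any genuinely new homological input. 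I would therefore present the undeformed finite case in full detail and then indicate the (notational) modifications needed for the other three families.
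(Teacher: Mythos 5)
Your proposal is correct and matches the paper's approach: in the paper, the theorem appears without a formal proof environment, stated as an immediate consequence of the sliding Corollaries \ref{cor: fin_proj_crossing_slide}, \ref{cor: yfin_proj_crossing_slide}, \ref{cor: inf_proj_crossing_slide}, \ref{cor: yinf_proj_crossing_slide} together with the trace property of Hochschild homology (Proposition \ref{prop: hh_is_trace}). The Markov-move, marked-point, and bundled-curvature bookkeeping that you spell out is exactly what the paper leaves implicit when it asserts that the sliding property and Proposition \ref{prop: hh_is_trace} ``guarantee that this procedure results in a well-defined link invariant.''
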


\subsection{Comparing Colored Homologies} \label{sec: comp_hom}

In this Section we relate the various frayed projector-colored homologies of Section \ref{sec: proj_link_hom} to the intrinsically-colored, triply-graded Khovanov--Rozansky homologies of Sections \ref{sec: int_col_hom} and \ref{sec: def_int_col_hom}. Recall Corollaries \ref{cor: braid_fin_slide}, \ref{cor: braid_yfin_slide}, \ref{cor: braid_inf_slide}, and \ref{cor: braid_yinf_slide}, which state that cabling and inserting a frayed projector into a pure colored braid is equivalent to applying the corresponding fray functor. We show \textcolor{revisions}{in Section \ref{subsec: hoch_fray}} below that, in the presence of separated $\Delta e$-deformation, \textcolor{revisions}{cabling and inserting a frayed projector followed by taking Hochschild homology} has the overall effect of multiplication by a polynomial in $q$.

\textcolor{revisions}{Recall that Hochschild homology is a derived version of identifying the left and right module actions of partially symmetric polynomials. Our argument below proceeds by analyzing the effect of this identification on the various twists involved in the definitions of Fray functors. These twists utilize the left and right module action at particular external labels. In the setting of projector-colored homology, these labels occur at the top and bottom of the braid strand on which the projector is inserted. It would be particularly convenient if Hochschild homology exactly identified the module actions assigned to these two labels. This is the case for a certain class of braids called \textit{pure} braids, in which the underlying permutation is trivial. By contrast, given an impure braid, the two actions involved in fraying a particular braid strand are not necessarily identified.}

\textcolor{revisions}{With this in mind, we begin by illustrating the added complexity of dealing with impure braids and discussing how to reduce our comparison of the two types of homology to the case of pure braids. This discussion is rather technical, and the reader only interested in the comparison results is welcome to skip directly to Section \ref{subsec: hoch_fray}.}

\subsubsection{Dealing with Impure Braids}

We treat the example $\beta_{\aa} = (\sigma_1)_{(2, 2)}^3 \in \mathfrak{Br}_2$, $\blam = (1, 1) \vdash 2$ with marked point on the top left in detail; the general case differs only by more cumbersome notation. \textcolor{revisions}{Our task is to analyze the effect on $C^u(\beta_{\aa})$ of first cabling and inserting an infinite deformed frayed projector $P := (P^{(1, 1), y}_{(1^2)})^{\vee}$, then applying Hochschild homology. We depict this composition diagrammatically as follows:}

\begin{center}
\begin{gather*}
\beta_{\aa} =
\begin{tikzpicture}[anchorbase,tinynodes]
    \draw[webs] (1,0) node[below]{$2$} to[out=90,in=270] (0,1);
    \draw[line width=5pt,color=white] (0,0) to[out=90,in=270] (1,1);
    \draw[webs] (0,0) node[below]{$2$} to[out=90,in=270] (1,1);
    \draw[webs] (1,1) to[out=90,in=270] (0,2);
    \draw[line width=5pt,color=white] (0,1) to[out=90,in=270] (1,2);
    \draw[webs] (0,1) to[out=90,in=270] (1,2);
    \draw[webs] (0,3) node[above]{$2$} to[out=270,in=90] node[pos=.25, circle, fill=black, scale=.5] {} (1,2);
    \draw[line width=5pt,color=white] (0,2) to[out=90,in=270] (1,3);
    \draw[webs] (0,2) to[out=90,in=270] (1,3) node[above]{$2$};
    \node at (1.8,1.4){\huge $\longmapsto$};
\end{tikzpicture}
\begin{tikzpicture}[anchorbase,tinynodes,scale=.5]
    \draw[webs] (1.5,0) to[out=90,in=270] (0,2);
    \draw[webs] (2,0) to[out=90,in=270] (.5,2);
    \draw[line width=5pt,color=white] (0,0) to[out=90,in=270] (1.5,2);
    \draw[line width=5pt,color=white] (.5,0) to[out=90,in=270] (2,2);
    \draw[webs] (0,0) to[out=90,in=270] (1.5,2);
    \draw[webs] (.5,0) to[out=90,in=270] (2,2);
    \draw[webs] (1.5,2) to[out=90,in=270] (0,4);
    \draw[webs] (2,2) to[out=90,in=270] (.5,4);
    \draw[line width=5pt,color=white] (0,2) to[out=90,in=270] (1.5,4);
    \draw[line width=5pt,color=white] (.5,2) to[out=90,in=270] (2,4);
    \draw[webs] (0,2) to[out=90,in=270] (1.5,4);
    \draw[webs] (.5,2) to[out=90,in=270] (2,4);
    \draw[webs] (1.5,4) to[out=90,in=270] (0,6);
    \draw[webs] (2,4) to[out=90,in=270] (.5,6);
    \draw[line width=5pt,color=white] (0,4) to[out=90,in=270] (1.5,6);
    \draw[line width=5pt,color=white] (.5,4) to[out=90,in=270] (2,6);
    \draw[webs] (0,4) to[out=90,in=270] (1.5,6);
    \draw[webs] (.5,4) to[out=90,in=270] (2,6);
    \draw (-.5,6) rectangle++(1.5,1);
    \node at (.25,6.4) {\large $P$};
    \draw[webs] (0,7) to (0,8);
    \draw[webs] (.5,7) to (.5,8);
    \draw[webs] (1.5,6) to (1.5,8);
    \draw[webs] (2,6) to (2,8);
    \draw[webs] (2,8) to[out=90,in=180] (2.5,8.5);
    \draw[webs] (1.5,8) to[out=90,in=180] (2.5,9);
    \draw[webs] (.5,8) to[out=90,in=180] (2.5,10);
    \draw[webs] (0,8) to[out=90,in=180] (2.5,10.5);
    \draw[webs] (2.5,8.5) to[out=0,in=90] (3,8);
    \draw[webs] (2.5,9) to[out=0,in=90] (3.5,8);
    \draw[webs] (2.5,10) to[out=0,in=90] (4.5,8);
    \draw[webs] (2.5,10.5) to[out=0,in=90] (5,8);
    \draw[webs] (2,0) to[out=270,in=180] (2.5,-.5);
    \draw[webs] (1.5,0) to[out=270,in=180] (2.5,-1);
    \draw[webs] (.5,0) to[out=270,in=180] (2.5,-2);
    \draw[webs] (0,0) to[out=270,in=180] (2.5,-2.5);
    \draw[webs] (2.5,-.5) to[out=0,in=270] (3,0);
    \draw[webs] (2.5,-1) to[out=0,in=270] (3.5,0);
    \draw[webs] (2.5,-2) to[out=0,in=270] (4.5,0);
    \draw[webs] (2.5,-2.5) to[out=0,in=270] (5,0);
    \draw[webs] (3,0) to (3,8);
    \draw[webs] (3.5,0) to (3.5,8);
    \draw[webs] (4.5,0) to (4.5,8);
    \draw[webs] (5,0) to (5,8);
\end{tikzpicture}
\end{gather*}
\end{center}

\vspace{1em}

\textcolor{revisions}{We refer to strand of $\beta_{\aa}$ containing the marked point in the diagram above as the \textit{marked strand} and the other as the \textit{unmarked strand}. By Corollary \ref{cor: braid_yinf_slide}, we can replace the operation of cabling the marked strand and inserting $P$ by an application of $\yufray{2}{(1, 1)}$ to the marked strand, with distinguished label $2$ in the top left and bottom right. After making this replacement, we can depict the resulting composition diagrammatically as follows:}


\begin{center}
\begin{gather*}
\beta_{\aa} =
\begin{tikzpicture}[anchorbase,tinynodes]
    \draw[webs] (1,0) node[below]{$2$} to[out=90,in=270] (0,1);
    \draw[line width=5pt,color=white] (0,0) to[out=90,in=270] (1,1);
    \draw[webs] (0,0) node[below]{$2$} to[out=90,in=270] (1,1);
    \draw[webs] (1,1) to[out=90,in=270] (0,2);
    \draw[line width=5pt,color=white] (0,1) to[out=90,in=270] (1,2);
    \draw[webs] (0,1) to[out=90,in=270] (1,2);
    \draw[webs] (0,3) node[above]{$2$} to[out=270,in=90] node[pos=.25, circle, fill=black, scale=.5] {} (1,2);
    \draw[line width=5pt,color=white] (0,2) to[out=90,in=270] (1,3);
    \draw[webs] (0,2) to[out=90,in=270] (1,3) node[above]{$2$};
    \node at (1.8,1.4){\huge $\longmapsto$};
\end{tikzpicture}
\mathrm{tw}_{\zeta}
\begin{tikzpicture}[anchorbase,scale=.5,tinynodes]
    \draw[webs] (1.25,-.5) to[out=90,in=180] (1.75,0);
    \draw[webs] (2.25,-.5) to[out=90,in=0] (1.75,0);
    \draw[webs] (1.75,0) to[out=90,in=270] (.25,1.5);
    \draw[webs] (0,-.5) to (0,0);
    \draw[webs] (.5,-.5) to (.5,0);
    \draw[line width=5pt,color=white] (0,0) to[out=90,in=270] (1.5,1.5);
    \draw[line width=5pt,color=white] (.5,0) to[out=90,in=270] (2,1.5);
    \draw[webs] (0,0) to[out=90,in=270] (1.5,1.5);
    \draw[webs] (.5,0) to[out=90,in=270] (2,1.5);
    \draw[webs] (1.5,1.5) to[out=90,in=270] (0,3);
    \draw[webs] (2,1.5) to[out=90,in=270] (.5,3);
    \draw[line width=5pt,color=white] (.25,1.5) to[out=90,in=270] (1.75,3);
    \draw[webs] (.25,1.5) to[out=90,in=270] (1.75,3);
    \draw[webs] (1.75,3) to[out=90,in=270] (.25,4.5);
    \draw[line width=5pt,color=white] (0,3) to[out=90,in=270] (1.5,4.5);
    \draw[line width=5pt,color=white] (.5,3) to[out=90,in=270] (2,4.5);
    \draw[webs] (0,3) to[out=90,in=270] (1.5,4.5);
    \draw[webs] (.5,3) to[out=90,in=270] (2,4.5);
    \draw[webs] (.25,4.5) to[out=180,in=270] (-.25,5);
    \draw[webs] (.25,4.5) to[out=0,in=270] (.75,5);
    \draw[webs] (1.5,4.5) to (1.5,5);
    \draw[webs] (2,4.5) to (2,5);
    \draw[webs] (2,5) to[out=90,in=180] (2.5,5.5);
    \draw[webs] (1.5,5) to[out=90,in=180] (2.5,6);
    \draw[webs] (.75,5) to[out=90,in=180] (2.5,6.75);
    \draw[webs] (-.25,5) to[out=90,in=180] (2.5,7.25);
    \draw[webs] (2.5,5.5) to[out=0,in=90] (3,5);
    \draw[webs] (2.5,6) to[out=0,in=90] (3.5,5);
    \draw[webs] (2.5,6.75) to[out=0,in=90] (4,5);
    \draw[webs] (2.5,7.25) to[out=0,in=90] (4.5,5);
    \draw[webs] (3,5) to (3,-.5);
    \draw[webs] (3.5,5) to (3.5,-.5);
    \draw[webs] (4,5) to (4,-.5);
    \draw[webs] (4.5,5) to (4.5,-.5);
    \draw[webs] (3,-.5) to[out=270,in=0] (2.5,-1);
    \draw[webs] (3.5,-.5) to[out=270,in=0] (2.5,-1.5);
    \draw[webs] (4,-.5) to[out=270,in=0] (2.5,-2);
    \draw[webs] (4.5,-.5) to[out=270,in=0] (2.5,-2.5);
    \draw[webs] (2.5,-1) to[out=180,in=270] (2.25,-.5);
    \draw[webs] (2.5,-1.5) to[out=180,in=270] (1.25,-.5);
    \draw[webs] (2.5,-2) to[out=180,in=270] (.5,-.5);
    \draw[webs] (2.5,-2.5) to[out=180,in=270] (0,-.5);
    \node at (9,2.5)[scale=1.4]{$\otimes_R R[\Theta_{(1,1)}, \mathbb{Y}_{(1,1)}, \mathbb{U}_{2}]$};
    \draw (11.5,-2) to[out=55,in=305] (11.5,7);
    \draw (-.75,-2) to[out=125,in=235] (-.75,7);
\end{tikzpicture}
\end{gather*}
\end{center}

\textcolor{revisions}{The twist $\zeta$ contains five groups of summands: three arising from applying the functor $\yufray{2}{(1,1)}$ (one each involving the alphabets $\Theta$, $\mathbb{Y}$, and $\mathbb{U}$), one arising from the $\Delta e$-deformation on the cabled unmarked strand (involving the alphabet $\mathbb{Y}$), and one arising from the $\Delta e$-deformation on the marked strand (involving the alphabet $\mathbb{U}$). The $\Theta$-portion of this twist involves the polynomial action in $\mathbb{X}$ variables assigned to the bottom right of this diagram. These are identified with the $\mathbb{X}$ variables in the top right after applying Hochschild homology, so we may freely replace $\zeta$ by a new twist $\zeta'$ by substituting the top right variables in their place. Having done this, we may now apply the natural isomorphism of Proposition \ref{prop: hh_is_trace} in each $\Theta, \mathbb{Y}$, and $\mathbb{U}$-degree to slide the merge bimodule in the bottom right of this picture to the top right. This results in the following complex:}

\begin{center}
\begin{gather*}
\beta_{\aa} =
\begin{tikzpicture}[anchorbase,tinynodes]
    \draw[webs] (1,0) node[below]{$2$} to[out=90,in=270] (0,1);
    \draw[line width=5pt,color=white] (0,0) to[out=90,in=270] (1,1);
    \draw[webs] (0,0) node[below]{$2$} to[out=90,in=270] (1,1);
    \draw[webs] (1,1) to[out=90,in=270] (0,2);
    \draw[line width=5pt,color=white] (0,1) to[out=90,in=270] (1,2);
    \draw[webs] (0,1) to[out=90,in=270] (1,2);
    \draw[webs] (0,3) node[above]{$2$} to[out=270,in=90] node[pos=.25, circle, fill=black, scale=.5] {} (1,2);
    \draw[line width=5pt,color=white] (0,2) to[out=90,in=270] (1,3);
    \draw[webs] (0,2) to[out=90,in=270] (1,3) node[above]{$2$};
    \node at (1.8,1.4){\huge $\longmapsto$};
\end{tikzpicture}
\mathrm{tw}_{\zeta'}
\begin{tikzpicture}[anchorbase,scale=.5,tinynodes]
	\draw[webs] (1.75,-.5) to (1.75,0);
    \draw[webs] (1.75,0) to[out=90,in=270] (.25,1.5);
    \draw[webs] (0,-.5) to (0,0);
    \draw[webs] (.5,-.5) to (.5,0);
    \draw[line width=5pt,color=white] (0,0) to[out=90,in=270] (1.5,1.5);
    \draw[line width=5pt,color=white] (.5,0) to[out=90,in=270] (2,1.5);
    \draw[webs] (0,0) to[out=90,in=270] (1.5,1.5);
    \draw[webs] (.5,0) to[out=90,in=270] (2,1.5);
    \draw[webs] (1.5,1.5) to[out=90,in=270] (0,3);
    \draw[webs] (2,1.5) to[out=90,in=270] (.5,3);
    \draw[line width=5pt,color=white] (.25,1.5) to[out=90,in=270] (1.75,3);
    \draw[webs] (.25,1.5) to[out=90,in=270] (1.75,3);
    \draw[webs] (1.75,3) to[out=90,in=270] (.25,4.5);
    \draw[line width=5pt,color=white] (0,3) to[out=90,in=270] (1.5,4.5);
    \draw[line width=5pt,color=white] (.5,3) to[out=90,in=270] (2,4.5);
    \draw[webs] (0,3) to[out=90,in=270] (1.5,4.5);
    \draw[webs] (.5,3) to[out=90,in=270] (2,4.5);
    \draw[webs] (.25,4.5) to[out=180,in=270] (-.25,5);
    \draw[webs] (.25,4.5) to[out=0,in=270] (.75,5);
    \draw[webs] (1.5,4.5) to[out=90,in=180] (1.75,4.75);
    \draw[webs] (2,4.5) to[out=90,in=0] (1.75,4.75);
    \draw[webs] (1.75,4.75) to (1.75,5);
    \draw[webs] (1.75,5) to[out=90,in=180] (2.5,5.75);
    \draw[webs] (.75,5) to[out=90,in=180] (2.5,6.75);
    \draw[webs] (-.25,5) to[out=90,in=180] (2.5,7.25);
    \draw[webs] (2.5,5.75) to[out=0,in=90] (3.25,5);
    \draw[webs] (2.5,6.75) to[out=0,in=90] (4,5);
    \draw[webs] (2.5,7.25) to[out=0,in=90] (4.5,5);
    \draw[webs] (3.25,5) to (3.25,-.5);
    \draw[webs] (4,5) to (4,-.5);
    \draw[webs] (4.5,5) to (4.5,-.5);
    \draw[webs] (3.25,-.5) to[out=270,in=0] (2.5,-1.25);
    \draw[webs] (4,-.5) to[out=270,in=0] (2.5,-2);
    \draw[webs] (4.5,-.5) to[out=270,in=0] (2.5,-2.5);
    \draw[webs] (2.5,-1.25) to[out=180,in=270] (1.75,-.5);
    \draw[webs] (2.5,-2) to[out=180,in=270] (.5,-.5);
    \draw[webs] (2.5,-2.5) to[out=180,in=270] (0,-.5);
    \node at (9,2.5)[scale=1.4]{$\otimes_R R[\Theta_{(1,1)}, \mathbb{Y}_{(1,1)}, \mathbb{U}_{2}]$};
    \draw (11.5,-2) to[out=55,in=305] (11.5,7);
    \draw (-.75,-2) to[out=125,in=235] (-.75,7);
\end{tikzpicture}
\end{gather*}
\end{center}

\textcolor{revisions}{The complex on the right-hand side is the Hochschild homology of some curved complex. The connection on this curved complex is a curved lift of the underlying Rickard differential by the twist $\zeta'$, which involves only the alphabets $\Theta, \mathbb{Y}, \mathbb{U}$, the $\mathbb{X}$-alphabets associated to the top of this diagram, and the various dot-sliding homotopies associated to this picture.} As a result, we may apply exactly the same fork-sliding arguments as in Section \ref{sec: ufray} to slide the merge bimodule from the top right to the bottom left. The effect on the connection is completely analogous to the various fork-sliding equivalences for Fray functors exhibited in Section \ref{sec: fray_functors}, but in different alphabets\textcolor{revisions}{; we denote this new connection by $\zeta''$. The result of these modifications is the following complex:}

\begin{center}
\begin{gather*}
\beta_{\aa} =
\begin{tikzpicture}[anchorbase,tinynodes]
    \draw[webs] (1,0) node[below]{$2$} to[out=90,in=270] (0,1);
    \draw[line width=5pt,color=white] (0,0) to[out=90,in=270] (1,1);
    \draw[webs] (0,0) node[below]{$2$} to[out=90,in=270] (1,1);
    \draw[webs] (1,1) to[out=90,in=270] (0,2);
    \draw[line width=5pt,color=white] (0,1) to[out=90,in=270] (1,2);
    \draw[webs] (0,1) to[out=90,in=270] (1,2);
    \draw[webs] (0,3) node[above]{$2$} to[out=270,in=90] node[pos=.25, circle, fill=black, scale=.5] {} (1,2);
    \draw[line width=5pt,color=white] (0,2) to[out=90,in=270] (1,3);
    \draw[webs] (0,2) to[out=90,in=270] (1,3) node[above]{$2$};
    \node at (1.8,1.4){\huge $\longmapsto$};
\end{tikzpicture}
\mathrm{tw}_{\zeta''}
\begin{tikzpicture}[anchorbase,tinynodes,scale=.75]
    \draw[webs] (1,0) to[out=90,in=270] (0,1);
    \draw[line width=5pt,color=white] (0,0) to[out=90,in=270] (1,1);
    \draw[webs] (0,0) to[out=90,in=270] (1,1);
    \draw[webs] (1,1) to[out=90,in=270] (0,2);
    \draw[line width=5pt,color=white] (0,1) to[out=90,in=270] (1,2);
    \draw[webs] (0,1) to[out=90,in=270] (1,2);
    \draw[webs] (1,2) to[out=90,in=270] (0,3);
    \draw[line width=5pt,color=white] (0,2) to[out=90,in=270] (1,3);
    \draw[webs] (0,2) to[out=90,in=270] (1,3);
    \draw[webs] (0,0) to[out=180,in=90] (-.5,-.5);
    \draw[webs] (0,0) to[out=0,in=90] (.5,-.5);
    \draw[webs] (1,0) to (1,-.5);
    \draw[webs] (0,3) to[out=180,in=270] (-.5,3.5);
    \draw[webs] (0,3) to[out=0,in=270] (.5,3.5);
    \draw[webs] (1,3) to (1,3.5);
    \draw[webs] (1,3.5) to[out=90,in=180] (1.5,4);
    \draw[webs] (.5,3.5) to[out=90,in=180] (1.5,4.5);
    \draw[webs] (-.5,3.5) to[out=90,in=180] (1.5,5);
    \draw[webs] (1.5,4) to[out=0,in=90] (2,3.5);
    \draw[webs] (1.5,4.5) to[out=0,in=90] (2.5,3.5);
    \draw[webs] (1.5,5) to[out=0,in=90] (3,3.5);
    \draw[webs] (2,3.5) to (2,-.5);
    \draw[webs] (2.5,3.5) to (2.5,-.5);
    \draw[webs] (3,3.5) to (3,-.5);
    \draw[webs] (2,-.5) to[out=270,in=0] (1.5,-1);
    \draw[webs] (2.5,-.5) to[out=270,in=0] (1.5,-1.5);
    \draw[webs] (3,-.5) to[out=270,in=0] (1.5,-2);
    \draw[webs] (1.5,-1) to[out=180,in=270] (1,-.5);
    \draw[webs] (1.5,-1.5) to[out=180,in=270] (.5,-.5);
    \draw[webs] (1.5,-2) to[out=180,in=270] (-.5,-.5);
    \node at (6,1.5)[scale=1.4]{$\otimes_R R[\Theta_{(1,1)}, \mathbb{Y}_{(1,1)}, \mathbb{U}_{2}]$};
    \draw (-.75,-2) to[out=125,in=245] (-.75,5);
    \draw (7.5,-2) to[out=55,in=315] (7.5,5);
\end{tikzpicture}
\end{gather*}
\end{center}

\textcolor{revisions}{Tracking the effect of each of these equivalences on the twist $\zeta''$, the result is exactly the twist arising from applying the functor $\yufray{2}{(1,1)}$ to the curved Rickard complex $C^u(\beta_{\aa})$ (with $\Delta e$-curvature on both strands), now with distinguished label $2$ in the top left and bottom \textit{left}. It follows that the complex on the right-hand side above is exactly $HH_{\bullet}(\yufray{2}{(1,1)}(C^u(\beta_{\aa}))$.}

\textcolor{revisions}{We emphasize again that the same analysis applies with more cumbersome notation for a general colored braid $\beta_{\aa}$ with arbitrary choice of marked point; further, by turning off various summands of the connection $\zeta$, one can easily adapt the above argument to account for undeformed and finite fray projectors. We summarize this claim below.}

\begin{prop} \label{prop: pure_suffices}
\textcolor{revisions}{Let $\beta_{\aa}$ be an arbitrary balanced colored braid which closes to a colored link $\mathcal{L}$, and pick one marked point on each component of $\mathcal{L}$. Let $\{ \blam_i \vdash k_i \}_{i \in \pi_0(\mathcal{L})}$ be compositions of the colors of the components of $\mathcal{K}$. Then the complexes obtained by the following two procedures are equivalent:}

\begin{itemize}
\item \textcolor{revisions}{Cable every strand of $\beta_{\aa}$ along the compositions $\blam_i$, insert a finite (resp. infinite, deformed finite, deformed infinite) frayed projector into the undeformed (resp. undeformed, deformed, deformed) Rickard complex of this cable, and apply Hochschild homology.}

\item \textcolor{revisions}{For each component $i \in \pi_0(\mathcal{L})$, pick the \textit{same} entry $a_i \in \aa$ in the domain and codomain which closes to the component $i$. Apply $\fray{k_i}{\blam_i}$ (resp. $\ufray{k_i}{\blam_i}$, $\yfray{k_i}{\blam_i}$, $\yufray{k_i}{\blam_i}$) with this distinguished entry $a_i$ to the undeformed (resp. deformed, undeformed, deformed) Rickard complex of $\beta_{\aa}$, then apply Hochschild homology.}
\end{itemize}
\end{prop}

\subsubsection{Hochschild Homology of Frayed Complexes} \label{subsec: hoch_fray}

Unless otherwise noted, we now fix a domain sequence $\aa$, an element $n$ of $\aa$, and a \textcolor{revisions}{composition} $\blam \vdash n$ throughout. \textcolor{revisions}{Potentially using Proposition \ref{prop: pure_suffices}, we assume throughout that the distinguished element $n$ to which we apply various Fray functors appears in the same position in the domain and codomain.}

\begin{defn}
    For each positive integer $n \geq 1$ and \textcolor{revisions}{composition} $\blam = (\lambda_1, \dots, \lambda_m) \vdash n$, set

    \[
    f_{n, \blam}(q) := \cfrac{[n]!}{\prod_{j = 1}^m [\lambda_j]!}.
    \]
\end{defn}

\begin{prop} \label{prop: tr_rfray}
    Let $C \in \CS(\SSBim_{\aa}^{\aa})$ be an arbitrary balanced complex of singular Soergel bimodules. Then there is a natural isomorphism of complexes \textcolor{revisions}{of} $\mathbb{Z}_a \times \mathbb{Z}_q \times \mathbb{Z}_t$-graded $R$-modules

    \[
    HH_{\bullet} \left( \rfray{n}{\blam}(C) \right) \cong f_{n, \blam}(q) HH_{\bullet}(C).
    \]
\end{prop}

\begin{proof}
    By definition, we have $\rfray{n}{\blam}(C) = (_{\blam}S_{(n)}) \star C \star (_{(n)} M_{\blam})$. By Proposition \ref{prop: hh_is_trace}, there is a natural graded $R$-module isomorphism

    \[
    HH_{\bullet} \left( (_{\blam}S_{(n)}) \star C \star (_{(n)} M_{\blam}) \right) \cong HH_{\bullet} \left( C \star (_{(n)} M_{\blam}) \star (_{\blam}S_{(n)}) \right).
    \]
    The desired isomorphism then follows immediately from the generalized digon removal of Proposition \ref{prop: blamgon_removal}; further, since this generalized digon removal occurs \textcolor{revisions}{on a distant tensor factor} from $C$, the resulting isomorphism is natural.
\end{proof}

\begin{prop} \label{prop: tr_fray}
    Let $C \in \CS(\SSBim_{\aa}^{\aa})$ be an arbitrary balanced complex of singular Soergel bimodules. Then there is an isomorphism of complexes of $\mathbb{Z}_a \times \mathbb{Z}_q \times \mathbb{Z}_t$-graded $R$-modules
    
    \[
    HH_{\bullet} \left(\fray{n}{\blam}(C) \right) \cong
    \left( \prod_{j = 1}^m \prod_{k = 1}^{\lambda_j} (1 + tq^{-2k}) \right) f_{n, \blam}(q) HH_{\bullet}(C).
    \]
\end{prop}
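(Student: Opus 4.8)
The strategy is to reduce the statement to Proposition \ref{prop: tr_rfray} by exhibiting $\fray{n}{\blam}(C)$ as an iterated Koszul complex whose Hochschild homology can be computed one generator at a time. Recall that by definition $\fray{n}{\blam}(C) = \mathrm{tw}_{\alpha}\left(\rfray{n}{\blam}(C) \otimes_R R[\Theta_{\blam}]\right)$, where $\alpha = \sum_{j,k}(e_k(\mathbb{X}_j) - e_k(\mathbb{X}_j'))\otimes \theta_{jk}$ is the Koszul differential. The first step is to observe that Hochschild homology, being an additive functor computed termwise in each $\Theta_{\blam}$-degree, commutes with the formation of this twist: that is, $HH_{\bullet}(\fray{n}{\blam}(C))$ is the twist of $HH_{\bullet}(\rfray{n}{\blam}(C)) \otimes_R R[\Theta_{\blam}]$ by the morphism $HH_{\bullet}(\alpha)$. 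This reduces the problem to understanding what happens to $HH_{\bullet}(\rfray{n}{\blam}(C))$ after twisting by the images of the Koszul generators $e_k(\mathbb{X}_j) - e_k(\mathbb{X}_j')$ under $HH_{\bullet}$.

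The key point is that Hochschild homology identifies the left action of (symmetric polynomials in) $\mathbb{X}_j$ with the right action of (symmetric polynomials in) $\mathbb{X}_j'$ on each chain bimodule. Therefore each endomorphism $HH_{\bullet}(e_k(\mathbb{X}_j) - e_k(\mathbb{X}_j'))$ of $HH_{\bullet}(\rfray{n}{\blam}(C))$ is the \emph{zero} map. Consequently the twisted complex $\mathrm{tw}_{HH_{\bullet}(\alpha)}(HH_{\bullet}(\rfray{n}{\blam}(C)) \otimes_R R[\Theta_{\blam}])$ has vanishing differential contribution from the Koszul part, so as a complex it is simply $HH_{\bullet}(\rfray{n}{\blam}(C)) \otimes_R R[\Theta_{\blam}]$ with its original internal differential. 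Since $R[\Theta_{\blam}]$ is an exterior algebra on the generators $\theta_{jk}$, $1 \le j \le m$, $1 \le k \le \lambda_j$, with $\mathrm{deg}(\theta_{jk}) = q^{-2k}t$, tensoring with it multiplies graded dimension by $\prod_{j=1}^m \prod_{k=1}^{\lambda_j}(1 + tq^{-2k})$. Combining this with the isomorphism $HH_{\bullet}(\rfray{n}{\blam}(C)) \cong f_{n, \blam}(q) HH_{\bullet}(C)$ of Proposition \ref{prop: tr_rfray} yields the claimed formula.

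I expect the main obstacle to be bookkeeping rather than mathematical: one must be careful about the sign conventions in Definition \ref{def: poly_alg} (the $\theta_{jk}$ are odd variables, since they have odd homological degree), verify that the twist $HH_{\bullet}(\alpha)$ really does vanish identically — which requires knowing precisely how $HH_{\bullet}$ intertwines the two bimodule actions, a fact implicit in Definition \ref{def: hoch_hom} and used throughout Section \ref{sec: comp_hom} — and confirm that ``tensoring with the exterior algebra $R[\Theta_{\blam}]$'' is compatible with the grading shift conventions of Definition \ref{def: extending_scalars_obj_general}. One subtlety worth flagging explicitly: the argument shows $\fray{n}{\blam}(C)$ is \emph{not} typically contractible by its Koszul part before taking $HH_{\bullet}$ — the vanishing of $HH_{\bullet}(e_k(\mathbb{X}_j) - e_k(\mathbb{X}_j'))$ is genuinely a feature of Hochschild homology, and this is exactly the ``almost tautological good behavior after taking Hochschild homology'' advertised in Remark \ref{rem: ah_fin_proj}. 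Once these points are checked, no further computation is needed; the proof is essentially the observation that the Koszul differential dies under $HH_{\bullet}$ and an exterior algebra contributes a product of binomial factors.
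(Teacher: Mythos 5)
Your argument matches the paper's proof exactly: both reduce to Proposition \ref{prop: tr_rfray} by observing that $HH_{\bullet}$ kills the Koszul differential $\alpha$ (since it identifies the $\mathbb{X}_j$ and $\mathbb{X}_j'$ actions), so that $HH_{\bullet}(\fray{n}{\blam}(C))$ becomes a free $R[\Theta_{\blam}]$-module over $HH_{\bullet}(\rfray{n}{\blam}(C))$, contributing the factor $\prod_{j,k}(1+tq^{-2k})$. Your flagged subtleties and the remark about non-contractibility of $\fray{n}{\blam}(C)$ before applying $HH_{\bullet}$ are correct and consistent with the paper's Remark \ref{rem: ah_fin_proj}.
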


\begin{proof}
    By functoriality of $HH_{\bullet}$ \textcolor{revisions}{and the definition of $\fray{n}{\blam}$ (Definition \ref{def: fray})}, we can write the left-hand side of the desired isomorphism explicitly as

    \[
    HH_{\bullet} \left(\fray{n}{\blam}(C) \right) = \mathrm{tw}_{HH_{\bullet}(\alpha)} \left( HH_{\bullet} \left( \rfray{n}{\blam}(C) \right) \otimes_R R[\Theta_{\blam}] \right); \quad \alpha = \sum_{j = 1}^m \sum_{k = 1}^{\lambda_j} e_k(\mathbb{X}_j)
    - e_k(\mathbb{X}'_j).
    \]
    
    Applying $HH_{\bullet}$ identifies (the action of symmetric polynomials in) the alphabets
    $\mathbb{X}_j$ and $\mathbb{X}'_j$ for each $j$, so $HH_{\bullet}(\alpha) = 0$.
    It follows that $HH_{\bullet}(\fray{n}{\blam}(C))$ is a direct sum of copies of
    $HH_{\bullet}(\rfray{n}{\blam}(C))$ in each $\Theta_{\blam}$-degree\textcolor{revisions}{. Since $R[\Theta_{\blam}]$ is an exterior algebra on generators of degree $\mathrm{deg}(\theta_{jk}) = tq^{-2k}$, we can easily calculate its graded dimension as}
    
\[
\textcolor{revisions}{\mathrm{dim}(R[\Theta_{\blam}]) = \prod_{j = 1}^m \prod_{k = 1}^{\lambda_j} (1 + tq^{-2k}),}
\]

and we obtain the
    right-hand side by a direct application of \textcolor{revisions}{Proposition \ref{prop: tr_rfray}}.
\end{proof}

In the case that $C$ is a colored braid, Proposition \ref{prop: tr_fray} gives a direct relationship between the finite frayed projector-colored homology and the intrinsically-colored, triply-graded Khovanov-Rozansky homology of its closure.

\begin{cor}
    Let $\mathcal{L}$ be a framed, oriented, colored link with sequence of colors $a_1, \dots, a_r$. For each $1 \leq i \leq r$, let $\blam_i = (\lambda_1, \dots, \lambda_m) \vdash a_i$ be a \textcolor{revisions}{composition}, and set $g_i(q, t) := \prod_{j = 1}^m \prod_{k = 1}^{\lambda_j} (1 + tq^{-2k})$. Then the graded dimension of the finite frayed projector-colored homology of $\mathcal{L}$ given by these parameters is exactly $\prod_{i = 1}^r g_i(q, t) f_{a_i, \blam_i}(q)$ times the graded dimension of $H_{KR}(\mathcal{L})$.
\end{cor}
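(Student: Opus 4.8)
The plan is to reduce the statement to a single application of Proposition \ref{prop: tr_fray} applied componentwise, combined with the cabling-and-insertion analysis already carried out earlier in Section \ref{sec: comp_hom}. First I would present $\mathcal{L}$ as a balanced braid closure $\mathcal{L} = \hat{\beta}_{\aa}$ whose closure has components indexed by $[1], \dots, [r]$ with colors $a_1, \dots, a_r$, and recall from the discussion following Theorem \ref{thm: frayed_proj_homologies} that the finite frayed projector-colored homology with parameters $\blam_1, \dots, \blam_r$ is, by definition, obtained by cabling the $[i]$-th component along $\blam_i$, inserting $K^{\blam_i}_{(1^{a_i})}$, passing to the undeformed cabled Rickard complex, and taking $HH_{\bullet}$. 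By Corollary \ref{cor: braid_fin_slide}, inserting a finite frayed projector into a (pure-strand) cabled Rickard complex is the same up to homotopy equivalence as applying the functor $\fray{a_i}{\blam_i}$ at the relevant strand. Carrying out the Hochschild-homology trace manipulations exactly as in the impure-braid analysis of Section \ref{sec: comp_hom} (sliding merge/split bimodules around the closure via Proposition \ref{prop: hh_is_trace} and the fork-sliding equivalences of Section \ref{sec: fray_functors}), this shows that the cabled-and-inserted complex, after $HH_{\bullet}$, is obtained from $C(\beta_{\aa})$ by applying the functors $\fray{a_1}{\blam_1}, \dots, \fray{a_r}{\blam_r}$, one for each component, and then taking Hochschild homology.

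Next I would apply Proposition \ref{prop: tr_fray} once per component. Strictly speaking Proposition \ref{prop: tr_fray} is stated for a single fray functor applied to a balanced complex; since the $r$ fray functors act on distinct strands (distinct external alphabets) they commute, so iterating the proposition yields a natural isomorphism of triply-graded $R$-modules
\[
HH_{\bullet}\Big( \fray{a_1}{\blam_1} \cdots \fray{a_r}{\blam_r}\big(C(\beta_{\aa})\big) \Big) \;\cong\; \left( \prod_{i = 1}^r \Big( \prod_{j} \prod_{k = 1}^{(\blam_i)_j} (1 + tq^{-2k}) \Big) f_{a_i, \blam_i}(q) \right) HH_{\bullet}\big(C(\beta_{\aa})\big),
\]
where the inner double product is exactly $g_i(q,t)$. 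Since $C_{KR}(\beta_{\aa})$ differs from $HH_{\bullet}(C(\beta_{\aa}))$ only by the overall grading shift $(at^{-1})^{\frac12(\epsilon + N - \eta)}q^{-\epsilon}$ of Definition \ref{def: kr_hom}, and the cabled/inserted invariant carries the corresponding shift built from the same braid statistics (the cabling does not change the writhe-type normalization once one inserts the projector, since $K^{\blam_i}_{(1^{a_i})}$ is concentrated appropriately), the same scalar factor relates the graded dimension of the finite frayed projector-colored homology to that of $H_{KR}(\mathcal{L})$. Taking graded dimensions (Euler characteristics in the $a,q,t$ variables) turns the complex-level isomorphism into the claimed identity $\prod_{i=1}^r g_i(q,t) f_{a_i,\blam_i}(q)$ times the graded dimension of $H_{KR}(\mathcal{L})$.

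The main obstacle is the bookkeeping for impure braids: one must be careful that sliding the frayed projectors around the braid closure (so that cabling-and-insertion becomes "apply $\fray{a_i}{\blam_i}$ once per orbit $[i] \in \Omega(\sigma)$") is legitimate, i.e. that the relevant complex actually equals $\fray{a_i}{\blam_i}$ applied \emph{in matching positions of the domain and codomain} of the Rickard complex of a pure braid, matching the bundling of alphabets. This is precisely the undeformed analogue of the argument run in Section \ref{sec: comp_hom} for $\yufray{n}{\blam}$, obtained by turning off the $\mathbb{Y}$- and $\mathbb{U}$-deformations, so I would invoke that discussion rather than repeat it, and I expect only notational (not conceptual) work is required. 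A secondary point to check is that the grading-shift normalizations on both sides genuinely agree, which follows because colored writhe and the statistics $N, \eta$ are unchanged by the insertion of a projector and $HH_{\bullet}(K^{\blam_i}_{(1^{a_i})})$ contributes only the Poincaré factors recorded above; I would state this explicitly but relegate the verification to a short remark.
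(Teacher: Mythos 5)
Your proposal is correct and matches the paper's (implicit) argument: present $\mathcal{L}$ as a balanced braid closure, use the cabling-and-insertion discussion together with Corollary \ref{cor: braid_fin_slide} and the fork-sliding/trace manipulations to reduce to applying $\fray{a_i}{\blam_i}$ once per component, then invoke Proposition \ref{prop: tr_fray} componentwise. The paper states this Corollary without proof precisely because, after the impure-braid analysis and Proposition \ref{prop: tr_fray}, it is immediate; your verification that the normalization statistics $\epsilon$, $N$, $\eta$ are unchanged by cabling is the only additional bookkeeping, and you handle it correctly.
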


\begin{prop} \label{prop: tr_yfray}
    Let $C \in \CS(\SSBim_{\aa}^{\aa})$ be an arbitrary balanced complex of singular Soergel bimodules. Then there is an isomorphism of complexes of $\mathbb{Z}_a \times \mathbb{Z}_q \times \mathbb{Z}_t$-graded $R$-modules
    
    \[
    HH_{\bullet} \left(\yfray{n}{\blam}(C) \right) \cong
    f_{n, \blam}(q) HH_{\bullet}(C).
    \]
\end{prop}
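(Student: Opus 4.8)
The plan is to parallel the proof of Proposition \ref{prop: tr_fray}, replacing the Koszul twist by the $\mathbb{Y}_{\blam}$-deformation twist and showing that it, too, trivializes after applying Hochschild homology. By the definitions of $\yfray{n}{\blam}$ and the functoriality of $HH_{\bullet}$ on curved complexes (Definition \ref{def: hh_curved}), we can write the left-hand side explicitly as
\[
HH_{\bullet}\left(\yfray{n}{\blam}(C)\right) = \mathrm{tw}_{HH_{\bullet}(\alpha) + HH_{\bullet}(\delta)}\left( HH_{\bullet}\left(\rfray{n}{\blam}(C)\right) \otimes_R R[\Theta_{\blam}, \mathbb{Y}_{\blam}] \right),
\]
where $\alpha = \sum_{j,k} (e_k(\mathbb{X}_j) - e_k(\mathbb{X}'_j))$ is the Koszul differential and $\delta = \sum_{j,k} 1 \otimes \theta_{jk}^{\vee} y_{jk}$ is the deformation twist. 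As in Proposition \ref{prop: tr_fray}, applying Hochschild homology identifies the action of symmetric polynomials in $\mathbb{X}_j$ with those in $\mathbb{X}'_j$ for each $j$, so $HH_{\bullet}(\alpha) = 0$; this is where the ``balanced'' treatment of the difference of elementary symmetric polynomials (cf. Remark \ref{rem: ah_fin_proj}) pays off.

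The key new point is to handle $HH_{\bullet}(\delta)$. Since $\delta$ acts only via $\theta_{jk}^{\vee}$ (contraction against the Koszul variables) and multiplication by the formal parameters $y_{jk}$, none of which involves the bimodule structure, $HH_{\bullet}(\delta) = \sum_{j,k} 1 \otimes \theta_{jk}^{\vee} y_{jk}$ is unchanged by $HH_{\bullet}$. Thus $HH_{\bullet}(\yfray{n}{\blam}(C))$ is the twist of $HH_{\bullet}(\rfray{n}{\blam}(C)) \otimes_R R[\Theta_{\blam}, \mathbb{Y}_{\blam}]$ by the connection $\sum_{j,k} 1 \otimes \theta_{jk}^{\vee} y_{jk}$. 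The underlying module is $HH_{\bullet}(\rfray{n}{\blam}(C))$ tensored with a polynomial ring $R[\Theta_{\blam}, \mathbb{Y}_{\blam}]$ in which each $\theta_{jk}$ is odd of degree $q^{-2k}t$ and each $y_{jk}$ is even; the twist $\theta_{jk}^{\vee} y_{jk}$ is precisely (termwise over $j,k$) the differential of a two-step acyclic complex $R[\theta_{jk}] \otimes R[y_{jk}] \xrightarrow{\theta_{jk}^{\vee} y_{jk}} R[\theta_{jk}] \otimes R[y_{jk}]$ whose homology is $R[y_{jk}]$ (this is the standard Koszul-type contraction using $\theta_{jk}^{\vee}\theta_{jk} + \theta_{jk}\theta_{jk}^{\vee} = 1$). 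Taking the tensor product over all pairs $(j,k)$ and applying simultaneous simplification (or directly, Gaussian elimination in each homological degree as in the $n=1$ case of Proposition \ref{prop: building_eta}'s construction), this twisted complex is homotopy equivalent to $HH_{\bullet}(\rfray{n}{\blam}(C)) \otimes_R R[\mathbb{Y}_{\blam}]$ with zero twist. More precisely, one performs Gaussian elimination along each isomorphism pairing a $\theta$-degree-one summand with the corresponding multiple of $y$, leaving behind exactly one copy of $HH_{\bullet}(\rfray{n}{\blam}(C))$ per monomial in $R[\mathbb{Y}_{\blam}]$ and no differential.

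Combining, we obtain $HH_{\bullet}(\yfray{n}{\blam}(C)) \simeq HH_{\bullet}(\rfray{n}{\blam}(C)) \otimes_R R[\mathbb{Y}_{\blam}]$. Since each $y_{jk}$ has quantum degree $q^{0}$ (its degree is $q^{-2k}t^2$ but graded dimensions count only the $\mathbb{Z}_a \times \mathbb{Z}_q \times \mathbb{Z}_t$ gradings, and one checks the $q$-dependence cancels with the Hochschild grading exactly as in \cite{Con23, HRW21}) --- more carefully, since $\yfinproj{\blam}$ is a $\Delta e$-deformed complex, passing to $R[\mathbb{Y}_{\blam}]$ does not alter the triply-graded Poincaré series beyond an overall normalization that is absorbed into the invariant's definition --- we may collapse this polynomial ring and conclude $HH_{\bullet}(\yfray{n}{\blam}(C)) \cong HH_{\bullet}(\rfray{n}{\blam}(C))$. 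Finally, Proposition \ref{prop: tr_rfray} gives $HH_{\bullet}(\rfray{n}{\blam}(C)) \cong f_{n,\blam}(q) HH_{\bullet}(C)$, yielding the claimed isomorphism.

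The main obstacle I anticipate is the bookkeeping around the $\mathbb{Y}_{\blam}$-grading: one must be careful that ``collapsing'' $R[\mathbb{Y}_{\blam}]$ after the Gaussian elimination really is harmless at the level of the triply-graded invariant — i.e. that the deformation parameters $y_{jk}$, being part of the ambient deformation data rather than genuine homological generators, do not contribute to the Poincaré series in the way the $\Theta_{\blam}$ variables did. This is exactly the phenomenon that distinguishes the deformed finite projector-colored homology from the undeformed one (compare the two tables in the introduction: the deformed finite case loses the factor $\prod (1 + tq^{-2j})$). The cleanest way to make this precise is to observe that $\yfray{n}{\blam}(C)$ is by construction a $\Delta e$-deformation, and the relevant link homology functor (Definition \ref{def: e_bun_hom} and the cabling-insertion procedure of Theorem \ref{thm: frayed_proj_homologies}) is set up so that $\mathbb{Y}_{\blam}$ is absorbed; alternatively one argues directly that the Gaussian elimination is $R[\mathbb{Y}_{\blam}]$-equivariant, so the result holds already over $R[\mathbb{Y}_{\blam}]$, and then the statement as written (an isomorphism of $\mathbb{Z}_a \times \mathbb{Z}_q \times \mathbb{Z}_t$-graded $R$-modules, where $R[\mathbb{Y}_{\blam}]$ is being treated as coefficients) is immediate.
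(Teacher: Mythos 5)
There is a genuine gap in your calculation of the twist-trivialization step, and it is exactly the point you yourself flagged as worrying.

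You claim that the two-step complex $R[\theta_{jk}, y_{jk}]$ with differential $\theta_{jk}^{\vee}y_{jk}$ has homology $R[y_{jk}]$, and then spend a paragraph trying to justify why the resulting factor of $R[\mathbb{Y}_{\blam}]$ can be "collapsed" or "absorbed" without changing the invariant. That factor should never appear: the homology of that complex is $R$, not $R[y_{jk}]$. Concretely, decompose $R[\theta_{jk}, y_{jk}] = R[y_{jk}] \oplus \theta_{jk}R[y_{jk}]$. The differential $\theta_{jk}^{\vee}y_{jk}$ maps $\theta_{jk}f(y_{jk}) \mapsto y_{jk}f(y_{jk})$ and is zero on $R[y_{jk}]$; it is injective on $\theta_{jk}R[y_{jk}]$ with image $y_{jk}R[y_{jk}]$. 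The homology is therefore $R[y_{jk}]/(y_{jk}) \cong R$, concentrated in $\theta$-degree zero. (Your Gaussian elimination description is the same computation: you cancel $\theta_{jk}y_{jk}^n$ against $y_{jk}^{n+1}$ for every $n\geq 0$, which leaves only the single monomial $y_{jk}^0$, not a copy of $R$ for every monomial in $y_{jk}$.)

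The paper's proof uses exactly this: after reassociating, the factor $R[\Theta_{\blam},\mathbb{Y}_{\blam}]$ with twist $\delta$ is the Koszul complex on $R[\mathbb{Y}_{\blam}]$ for the regular sequence $\{y_{jk}\}$, which by Proposition \ref{prop: kosz_reg_seq} is quasi-isomorphic to $R$. Tensoring with $HH_{\bullet}(\rfray{n}{\blam}(C))$ and invoking Proposition \ref{prop: tr_rfray} gives the claimed identification of the triply-graded homology directly, with no leftover polynomial factor to explain away. Your opening steps (rewriting as a twist of $HH_{\bullet}(\rfray{n}{\blam}(C)) \otimes_R R[\Theta_{\blam},\mathbb{Y}_{\blam}]$, observing $HH_{\bullet}(\alpha)=0$ while $\delta$ passes through $HH_{\bullet}$ unchanged) are all correct and match the paper; the error is solely in the identification of the Koszul homology, and once fixed the troublesome paragraph becomes unnecessary.
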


\begin{proof}
    As in the proof of Proposition \ref{prop: tr_fray}, we can write the
    left-hand side explicitly as

    \[
    HH_{\bullet} \left(\yfray{n}{\blam}(C) \right) = \mathrm{tw}_{HH_{\bullet}(\delta)} \left( HH_{\bullet} \left( \rfray{n}{\blam}(C) \right) \otimes_R R[\Theta_{\blam}, \mathbb{Y}_{\blam}] \right); \quad \delta = \sum_{j = 1}^m \sum_{k = 1}^{\lambda_j} 
    1 \otimes \theta^{\vee}_{jk} y_{jk}.
    \]
    By reassociating, we can recognize the result as the tensor product of
    $HH_{\bullet}(\rfray{n}{\blam}(C))$ and a Koszul complex on
    the polynomial ring $R[\mathbb{Y}_{\blam}]$ for (the action of multiplication by) its generators $\{y_{jk}\}$.
    The latter is quasi-isomorphic to $R$ by Proposition \ref{prop: kosz_reg_seq}, since these generators form a regular sequence in $R[\mathbb{Y}_{\blam}]$. The desired result then follows immediately from Proposition \ref{prop: tr_rfray}.
\end{proof}

Again, in the case that $C$ is a colored braid, Proposition \ref{prop: tr_yfray} gives a direct relationship between the deformed finite frayed projector-colored homology and the intrinsically-colored, triply-graded Khovanov--Rozansky homology of its closure.

\begin{thm} \label{thm: comparing_finite_homs}
    Let $\mathcal{L}$ be a framed, oriented, colored link with sequence of colors $a_1, \dots, a_r$. For each $1 \leq i \leq r$, let $\blam_i = (\lambda_1, \dots, \lambda_m) \vdash a_i$ be a \textcolor{revisions}{composition}. Then the graded dimension of the deformed finite frayed projector-colored homology of $\mathcal{L}$ given by these parameters is exactly $\prod_{i = 1}^r f_{a_i, \blam_i}(q)$ times the graded dimension of $H_{KR}(\mathcal{L})$.
\end{thm}

We computed the deformed finite frayed projector-colored homology\footnote{The computations in \cite{Con23} actually color by (a deformation of)
the \textit{reduced} finite projector $K_{1^n}^y$ instead of $\yfinproj{(1^n)}$; c.f.
Remark \ref{rem: ah_fin_proj}. The recursion in that work is easily adapted to our
setting and does not affect the parity of the result.} of all positive torus links with one non-trivially colored component (in particular, all colored positive torus knots) in \cite{Con23}. Theorem \ref{thm: comparing_finite_homs} allows us to immediately lift that result to a computation of $H_{KR}(\mathcal{L})$. In particular, since the result of \cite{Con23} is concentrated in even homological degrees, we immediately obtain

\begin{thm}
    Let $\mathcal{L}$ be a colored positive torus knot. Then $H_{KR}(\mathcal{L})$ is concentrated in even homological degrees.
\end{thm}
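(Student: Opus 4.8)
The proof is an immediate consequence of the machinery already in place. The plan is to combine Theorem \ref{thm: comparing_finite_homs} with the torus knot computations of \cite{Con23}. First I would recall that for a colored positive torus knot $\mathcal{L}$ with a single nontrivially-colored component of color $n$, the deformed finite frayed projector-colored homology with parameter $\blam = (1^n) \vdash n$ is precisely the deformed finite projector-colored homology computed in \cite{Con23} (up to the harmless difference between $\yfinproj{(1^n)}$ and the reduced finite projector $K_{1^n}^y$ noted in Remark \ref{rem: ah_fin_proj}, which does not affect homological parity). That computation shows this homology is concentrated in even $t$-degrees.

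Next I would invoke Theorem \ref{thm: comparing_finite_homs}: for this choice of decomposition $\blam_i = (1^{a_i})$ on each component, the graded dimension of the deformed finite frayed projector-colored homology of $\mathcal{L}$ equals $\prod_{i=1}^r f_{a_i, \blam_i}(q)$ times the graded dimension of $H_{KR}(\mathcal{L})$. The crucial point is that the multiplicative factor $\prod_i f_{a_i, \blam_i}(q)$ is a Laurent polynomial in $q$ \emph{alone} — it carries no $t$-dependence whatsoever, being a product of quantum factorials. Multiplication by a polynomial in $q$ cannot change the set of homological degrees in which a triply-graded module is supported (it merely rescales multiplicities within each fixed $t$-degree).

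Therefore, since the product side is concentrated in even $t$-degree and the scalar factor is $t$-independent and nonzero, $H_{KR}(\mathcal{L})$ itself must be concentrated in even $t$-degree. More carefully: write the Hilbert--Poincaré series of the deformed finite frayed projector-colored homology as $\left(\prod_i f_{a_i, \blam_i}(q)\right) \cdot \mathrm{Poin}(H_{KR}(\mathcal{L}))$; if $H_{KR}(\mathcal{L})$ had a nonzero class in an odd $t$-degree, then so would the left-hand side, since the factor $\prod_i f_{a_i, \blam_i}(q)$ lies in $\mathbb{N}[q, q^{-1}]$ and hence cannot cause cancellation of any monomial. This contradicts the evenness established in \cite{Con23}.

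There is essentially no obstacle here; the only point requiring care is bookkeeping, namely confirming that the torus knot computations of \cite{Con23} are stated for exactly the colored links appearing in this corollary (positive torus knots, with the single nontrivial color possibly arbitrary) and that the passage from the reduced projector $K_{1^n}^y$ used there to our $\yfinproj{(1^n)}$ respects parity — both of which are already asserted in the text immediately preceding this statement. One should also note that for components colored by $1$ the factor $f_{1,(1)}(q) = 1$ is trivial, so no information is lost on trivially-colored strands.
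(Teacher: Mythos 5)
Your proof is correct and matches the paper's argument exactly: the paper also derives this by combining Theorem \ref{thm: comparing_finite_homs} with the evenness of the deformed finite projector-colored homology of positive torus knots computed in \cite{Con23}, observing that the multiplier $\prod_i f_{a_i,\blam_i}(q) \in \mathbb{N}[q,q^{-1}]$ is $t$-independent and so cannot affect homological parity. Your additional remarks about nonnegativity of coefficients (so no cancellation is possible) and the footnoted distinction between $K_{1^n}^y$ and $\yfinproj{(1^n)}$ are the same points the paper flags.
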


This affirmatively resolves Conjecture 8.4 of \cite{HRW21} in the case of positive torus knots.

\begin{prop} \label{prop: tr_yufray}
    Let $C \in \CS(\SSBim_{\aa}^{\aa})$ be an arbitrary balanced complex of singular Soergel bimodules and $\overline{C} := \mathrm{tw}_{\Delta_C}(C \otimes_R R[\mathbb{U}_n])$ a curved twist of $C$. Then there is an isomorphism of curved complexes of $\mathbb{Z}_a \times \mathbb{Z}_q \times \mathbb{Z}_t$-graded $R$-modules
    
    \[
    HH_{\bullet} \left(\yufray{n}{\blam}(\overline{C}) \right) \cong
    f_{n, \blam}(q) HH_{\bullet}(\overline{C}).
    \]
\end{prop}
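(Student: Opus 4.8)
The plan is to mimic the proof of Proposition \ref{prop: tr_yfray}, upgrading it from uncurved to curved complexes. By the definition of $\yufray{n}{\blam}$ (Definition \ref{def: yufray}) and functoriality of $HH_{\bullet}$ on curved complexes (Definition \ref{def: hh_curved}), the left-hand side can be written explicitly as
\[
HH_{\bullet} \left( \yufray{n}{\blam}(\overline{C}) \right) = \mathrm{tw}_{HH_{\bullet}(\fray{n}{\blam}(\Delta_C) + \delta - \gamma)} \left( HH_{\bullet} \left( \rfray{n}{\blam}(C) \right) \otimes_R R[\Theta_{\blam}, \mathbb{Y}_{\blam}, \mathbb{U}_n] \right).
\]
First I would analyze each of the three summands in the connection after applying Hochschild homology. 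Since $HH_{\bullet}$ identifies the action of symmetric polynomials in $\mathbb{X}_j$ and $\mathbb{X}'_j$, we have $HH_{\bullet}(e_k(\mathbb{X}_j) - e_k(\mathbb{X}'_j)) = 0$; since the polynomials $a_{ijk}^{\blam}(\mathbb{X}_n, \mathbb{X}'_n)$ that appear in $\gamma$ (via Proposition \ref{prop: thick_curv_homotopies}) satisfy $\sum_{j,k} a_{ijk}^{\blam}(e_k(\mathbb{X}_j) - e_k(\mathbb{X}'_j)) = e_i(\mathbb{X}') - e_i(\mathbb{X})$ by Lemma \ref{lem: a_ijk_exist}, the total curvature $-F_u^{(n)}(\mathbb{X}, \mathbb{X}')$ also becomes $0$ under $HH_{\bullet}$ — consistent with the claim that the right-hand side is a genuine scalar multiple of $HH_{\bullet}(\overline{C})$. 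The remaining piece of $HH_{\bullet}(\gamma)$ is $\sum_i \sum_{j,k} a_{ijk}^{\blam} \otimes \theta_{jk}^{\vee} u_i$, which together with $HH_{\bullet}(\delta) = \sum_{j,k} 1 \otimes \theta_{jk}^{\vee} y_{jk}$ still involves only the variables $\theta_{jk}^{\vee}$, the deformation parameters, and the (now identified) $\mathbb{X}_n$-alphabet.

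Next I would perform the key algebraic reduction: the differential built from the $\theta_{jk}^{\vee}$-terms exhibits $HH_{\bullet}(\rfray{n}{\blam}(C)) \otimes_R R[\Theta_{\blam}, \mathbb{Y}_{\blam}, \mathbb{U}_n]$, with connection $HH_{\bullet}(\delta) - HH_{\bullet}(\gamma)$, as (a tensor product of $HH_{\bullet}(\overline{C})$-related data with) a Koszul-type complex. More precisely, after reassociating, the $\theta_{jk}^{\vee}$-component of the connection contracts against the $\theta_{jk}$ variables with coefficients $y_{jk} - \sum_i a_{ijk}^{\blam} u_i$; I would argue that the family $\{y_{jk} - \sum_i a_{ijk}^{\blam}(\mathbb{X}_n, \mathbb{X}_n) u_i\}_{j,k}$ forms a regular sequence in the relevant polynomial ring (it is a unipotent change of variables away from the obviously regular sequence $\{y_{jk}\}$, since the $y_{jk}$ appear linearly with unit coefficient), so Proposition \ref{prop: kosz_reg_seq} identifies this Koszul factor with $R$ up to grading shift — and in fact with no grading shift after the normalization conventions, exactly as in Proposition \ref{prop: tr_yfray}. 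Meanwhile $\fray{n}{\blam}(\Delta_C)$ acts on a distant tensor factor (the $C$-factor) and so survives untouched, reassembling into $HH_{\bullet}(\overline{C})$. Combining this with Proposition \ref{prop: tr_rfray}, which supplies the scalar $f_{n, \blam}(q)$, yields the claimed isomorphism, and all the identifications are natural because they occur distantly from $C$.

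\textbf{The main obstacle.} The delicate point will be making the "reassociation into a Koszul complex tensored with $HH_{\bullet}(\overline{C})$" rigorous in the curved setting. In Proposition \ref{prop: tr_yfray} the complex $C$ was uncurved, so the tensor decomposition was straightforward; here the presence of both $\fray{n}{\blam}(\Delta_C)$ (which may have higher-order $\mathbb{U}_n$-dependence since $\overline{C}$ is only assumed to be a general curved twist, not a strict deformation) and the cross-terms coupling $\theta_{jk}^{\vee}$, $u_i$, and the $\mathbb{X}_n$-alphabet means one must be careful that the Koszul differential and the $\fray{n}{\blam}(\Delta_C)$-differential genuinely act on orthogonal tensor factors and that contracting the former does not interfere with the latter. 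The cleanest route is probably to invoke homological perturbation / simultaneous simplification (Proposition \ref{prop: simul_simp}) along the $\Theta_{\blam}$-filtration — which is finite — performing Gaussian elimination along the contractible Koszul directions in each $\mathbb{Y}_{\blam}, \mathbb{U}_n$-degree and checking that the surviving twist on the reduced complex is precisely $HH_{\bullet}(\Delta_C)$; I expect this bookkeeping, rather than any conceptual difficulty, to be the bulk of the work.
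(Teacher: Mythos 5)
Your proposal takes essentially the same approach as the paper's proof: write out the connection after applying $HH_{\bullet}$, observe that the $\alpha$-summand dies because $HH_{\bullet}$ identifies the $\mathbb{X}_j$- and $\mathbb{X}'_j$-actions, recognize the surviving $\theta^{\vee}$-terms as a Koszul complex for the family $\{y_{jk} - \sum_i a^{\blam}_{ijk}\,u_i\}$ acting on $\mathrm{Sym}^{\blam}(\mathbb{X},\mathbb{X}')\otimes_R R[\mathbb{Y}_{\blam},\mathbb{U}_n]$, argue this is a regular sequence, collapse the Koszul factor via Proposition~\ref{prop: kosz_reg_seq}, and finish with the natural isomorphism of Proposition~\ref{prop: tr_rfray} applied in each $\mathbb{U}_n$-degree.

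The one place you diverge is in how to handle the reassociation: you flag it as the delicate step and propose Gaussian elimination / simultaneous simplification along the $\Theta_{\blam}$-filtration. The paper avoids this by a straightforward tensor-product factoring --- it sets $\mathcal{R}_{\blam} := \mathrm{Sym}^{\blam}(\mathbb{X},\mathbb{X}') \otimes_R R[\mathbb{Y}_{\blam},\mathbb{U}_n]$, defines $\mathcal{K}_{\blam}$ to be the Koszul complex on $\mathcal{R}_{\blam}$ for the family $\{1\otimes y_{jk} - \sum_i a^{\blam}_{ijk}\otimes u_i\}$, and observes that
\[
HH_{\bullet}(\yufray{n}{\blam}(\overline{C})) \cong \mathrm{tw}_{HH_{\bullet}(\rfray{n}{\blam}(\Delta_C))}\bigl(HH_{\bullet}(\rfray{n}{\blam}(C)) \otimes_R R[\mathbb{U}_n]\bigr) \otimes_{\mathrm{Sym}^{\blam}(\mathbb{X},\mathbb{X}')\otimes_R R[\mathbb{U}_n]} \mathcal{K}_{\blam}
\]
on the nose, with the $\Delta_C$-twist and the Koszul twist living on genuinely disjoint tensor factors. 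So no perturbation-theoretic argument is needed, only the quasi-isomorphism $\mathcal{K}_{\blam} \simeq \mathrm{Sym}^{\blam}(\mathbb{X},\mathbb{X}')\otimes_R R[\mathbb{U}_n]$. Your worry that higher-order $\mathbb{U}_n$-dependence of $\Delta_C$ could interfere is unfounded for exactly the reason you intuit at the end --- the twist $\fray{n}{\blam}(\Delta_C)$ is supported away from $\Theta_{\blam}, \mathbb{Y}_{\blam}$ and passes through the Koszul contraction untouched. Your regular-sequence justification (unipotent change of variables from $\{y_{jk}\}$) is a perfectly good substitute for the paper's (each element has a distinct $y_{jk}$ appearing linearly).
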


\begin{proof}
As in the proof of Proposition \ref{prop: tr_fray}, we can write the left-hand side explicitly as

\begin{align*}
    HH_{\bullet}(\yufray{n}{\blam}(\overline{C})) & = \mathrm{tw}_{HH_{\bullet}(\delta - \gamma + \rfray{n}{\blam}(\Delta_C))} \left(HH_{\bullet}(\rfray{n}{\blam}(C)) \otimes_R R[\Theta_{\blam}, \mathbb{Y}_{\blam}, \mathbb{U}_n] \right); \\
    \delta - \gamma & = \sum_{j = 1}^m \sum_{k = 1}^{\lambda_j} \left( 1 \otimes \theta_{jk}^{\vee} y_{jk} - \left( \sum_{i = 1}^n a_{ijk}(\mathbb{X}_n, \mathbb{X}_n') \otimes \theta_{jk}^{\vee} u_i \right) \right).
\end{align*}

Now, let $\mathcal{R}_{\blam} := \mathrm{Sym}^{\blam}(\mathbb{X}, \mathbb{X}') \otimes_R R[
\mathbb{Y}_{\blam}, \mathbb{U}_n]$, and let $\mathcal{K}_{\blam}$ denote the Koszul complex on
$\mathcal{R}_{\blam}$ for (the action of multiplication by) the family
$\left\{ 1 \otimes y_{jk} - \sum_{i = 1}^n a^{\blam}_{ijk}(\mathbb{X}, \mathbb{X}') \otimes u_i \right\}_{j,k}$.
By reassociating, we may rewrite the left-hand side of the desired equivalence as

\[
HH_{\bullet}(\yufray{n}{\blam}(\overline{C})) \cong \mathrm{tw}_{HH_{\bullet}(\rfray{n}{\blam}(\Delta_C))}
\Big( HH_{\bullet}(\rfray{n}{\blam}(C)) \otimes_R R[\mathbb{U}_n] \Big) \otimes_{\mathrm{Sym}^{\blam}(\mathbb{X}, \mathbb{X}') \otimes_R R[\mathbb{U}_n]} \mathcal{K}_{\blam}.
\]

Observe that the family
$\left\{ 1 \otimes y_{jk} - \sum_{i = 1}^n a^{\blam}_{ijk}(\mathbb{X}, \mathbb{X}') \otimes u_i \right\}_{j,k}$
is a regular sequence in $\mathcal{R}_{\blam}$
(e.g. since each element contains a distinct variable $y_{jk}$,
and these are algebraically independent). By Proposition \ref{prop: kosz_reg_seq}, we have a quasi-isomorphism

\[
\mathcal{K}_{\blam} \cong \mathcal{R}_{\blam}/ \langle 1 \otimes y_{jk} - \sum_{i = 1}^n a^{\blam}_{ijk}(\mathbb{X}, \mathbb{X}') \otimes u_i \rangle \cong \mathrm{Sym}^{\blam}(\mathbb{X}, \mathbb{X}') \otimes_R R[\mathbb{U}_n].
\]
As a consequence, we obtain a quasi-isomorphism

\[
HH_{\bullet}(\yufray{n}{\blam}(\overline{C})) \cong \mathrm{tw}_{HH_{\bullet}(\rfray{n}{\blam}(\Delta_C))}
\Big( HH_{\bullet}(\rfray{n}{\blam}(C)) \otimes_R R[\mathbb{U}_n] \Big).
\]
Now, since the isomorphism of Proposition \ref{prop: tr_rfray} is \textit{natural}, applying this isomorphism in each $\mathbb{U}_n$-degree gives a quasi-isomorphism

\[
HH_{\bullet}(\yufray{n}{\blam}(\overline{C})) \cong f_{n, \blam} \mathrm{tw}_{HH_{\bullet}(\Delta_C)}
\left( HH_{\bullet}(C) \otimes_R R[\mathbb{U}_n] \right).
\]
This is exactly $f_{n, \blam}(q)HH_{\bullet}(\overline{C})$.
\end{proof}

Once again, in the case that $C$ is a colored braid, Proposition \ref{prop: tr_yufray} gives a direct relationship between the deformed infinite frayed projector-colored homology and the deformed intrinsically-colored, triply-graded Khovanov--Rozansky homology of its closure.

\begin{thm} \label{thm: infproj_fray_agree}
    Let $\mathcal{L}$ be a framed, oriented, colored link with sequence of colors $a_1, \dots, a_r$. For each $1 \leq i \leq r$, let $\blam_i = (\lambda_1, \dots, \lambda_m) \vdash a_i$ be a \textcolor{revisions}{composition}. Then the graded dimension of the deformed infinite frayed projector-colored homology of $\mathcal{L}$ given by these parameters is exactly $\prod_{i = 1}^r f_{a_i, \blam_i}(q)$ times the graded dimension of $\YS H_{KR}(\mathcal{L})$.
\end{thm}

Notice that by Theorems \ref{thm: proj_agreement} and \ref{thm: yproj_agreement}, the undeformed/deformed infinite frayed projector-colored homology considered here coincides up to an overall grading shift with the non $y$-ified/$y$-ified column-colored homology considered in \cite{Con23}. Theorem \ref{thm: infproj_fray_agree} gives a direct relationship between this invariant and deformed intrinsically-colored homology, realizing the program suggested in Section 1.4.1 of \cite{Con23}.
\bibliographystyle{alpha}
\bibliography{fray_bib}

%
\end{document}